\tikzset{->-/.style={decoration={
  markings,
  mark=at position #1 with {\arrow{>}}},postaction={decorate}}}
\renewcommand{\arraystretch}{1.2}
\newcommand{\nc}{\newcommand}
\nc{\rnc}{\renewcommand}
\nc{\set}[2]{\{#1:#2\}}
\nc{\bigset}[2]{\big\{#1:#2\big\}}
\nc\vt{\widetilde{\nu}}
\rnc\O{\mathbb O}
\nc\bit{\begin{itemize}}
\nc\eit{\end{itemize}}
\nc\ben{\begin{enumerate}[label=\textup{(\roman*)},leftmargin=7mm]}
\nc\een{\end{enumerate}}
\nc{\leqR}{\leq_{\R}}
\nc{\leqL}{\leq_{\L}}
\nc{\leqJ}{\leq_{\J}}
\nc{\leqK}{\leq_{\K}}
\nc{\geqR}{\geq_{\R}}
\nc{\geqL}{\geq_{\L}}
\nc{\geqJ}{\geq_{\J}}
\newcommand{\leqdom}{\leq^{\textup{D}}}
\newcommand{\leqC}{\leq^{\textup{C}}}
\newcommand{\wedgeC}{\wedge^{\textup{C}}}
\newcommand{\veeC}{\vee^{\textup{C}}}
\nc\bp{{\bf p}}
\nc\bq{{\bf q}}
\rnc\iff{\ \Leftrightarrow\ }
\rnc\implies{\ \Rightarrow\ }
\nc\pf{\begin{proof}}
\nc\epf{\end{proof}}
\nc\epfres{\hfill\qed}
\nc\epfreseq{\tag*{\qed}}
\let\oldproofname=\proofname
\renewcommand{\proofname}{\rm\bf{\oldproofname}}
\nc\AND{\qquad\text{and}\qquad}
\nc\WHERE{\qquad\text{where}\qquad}
\nc\ANd{\quad\text{and}\quad}
\nc\anD{\ \ \ \text{and}\ \ \ }
\nc\ANDSIM{\qquad\text{and similarly}\qquad}
\nc{\COMMA}{,\qquad}
\nc{\COMMa}{,\quad}
\nc\permdec[1]{#1^{\natural}}
\nc\ext[1]{#1^\textup{E}}
\nc{\pfitem}[1]{\medskip\noindent #1}
\nc{\pfcase}[1]{\medskip\noindent {\it Case #1.}}
\nc{\pfsubcase}[1]{\medskip\noindent {\it Subcase #1.}}
\nc\im{\operatorname{im}}
\nc\LSUB{\operatorname{LSUB}}
\DeclareMathOperator{\Lay}{Lay}
\nc\Y{\mathcal Y}
\nc\B{\mathcal B}
\nc\Z{\mathcal Z}
\nc\ZZ{\mathbb Z}
\nc\F{\mathfrak F}
\nc\T{\mathcal T}
\nc\TT{\mathscr T}
\nc\PP{\mathscr P\mathcal P}
\nc\C{\mathscr C}
\nc\I{\mathcal I}
\nc\Eq{\mathbb{E}}
\nc\Part{\mathbb{P}}
\nc\cg[2]{(#1,#2)^\sharp}
\nc\Rev{\operatorname{Rev}}
\nc\cR{\mathcal R}
\nc\Ptop{P^\top}
\nc\Qtop{Q^\top}
\nc\tb[1]{\operatorname{Seq}(#1)}
\nc\RevX{{\Rev}\big([0,|X|],[0,|X|^+]\big)}
\DeclareRobustCommand\widecheck[1]{{\mathpalette\@widecheck{#1}}}
\def\@widecheck#1#2{%
    \setbox\z@\hbox{\m@th$#1#2$}%
    \setbox\tw@\hbox{\m@th$#1%
       \widehat{%
          \vrule\@width\z@\@height\ht\z@
          \vrule\@height\z@\@width\wd\z@}$}%
    \dp\tw@-\ht\z@
    \@tempdima\ht\z@ \advance\@tempdima2\ht\tw@ \divide\@tempdima\thr@@
    \setbox\tw@\hbox{%
       \raise\@tempdima\hbox{\scalebox{1}[-1]{\lower\@tempdima\box
\tw@}}}%
    {\ooalign{\box\tw@ \cr \box\z@}}}
\newcommand{\uv}[1]{\fill (#1,2)circle(.17);}
\newcommand{\lv}[1]{\fill (#1,0)circle(.17);}
\newcommand{\uvs}[1]{{\foreach \x in {#1} { \uv{\x}}}}
\newcommand{\lvs}[1]{{\foreach \x in {#1} { \lv{\x}}}}
\newcommand{\darcx}[3]{\draw(#1,0)arc(180:90:#3) (#1+#3,#3)--(#2-#3,#3) (#2-#3,#3) arc(90:0:#3);}
\newcommand{\darc}[2]{\darcx{#1}{#2}{.4}}
\newcommand{\uarcx}[3]{\draw(#1,2)arc(180:270:#3) (#1+#3,2-#3)--(#2-#3,2-#3) (#2-#3,2-#3) arc(270:360:#3);}
\newcommand{\uarc}[2]{\uarcx{#1}{#2}{.4}}
\newcommand{\stline}[2]{\draw(#1,2)--(#2,0);}
\nc{\buv}[1]{\fill (#1,2)circle(.18);}
\nc{\buvs}[1]{{
\foreach \x in {#1}
{ \buv{\x}}
}}
\nc{\blv}[1]{\fill (#1,0)circle(.18);}
\nc{\blvs}[1]{{
\foreach \x in {#1}
{ \blv{\x}}
}}
\nc{\uarcs}[1]{
{\foreach \x/\y in {#1}
{ \uarc{\x}{\y} }
}
}
\nc{\darcs}[1]{
{\foreach \x/\y in {#1}
{ \darc{\x}{\y} }
}
}
\nc{\darcxhalf}[3]{\draw(#1,0)arc(180:90:#3) (#1+#3,#3)--(#2,#3) ;}
\nc{\darchalf}[2]{\darcxhalf{#1}{#2}{.4}}
\nc{\uarcxhalf}[3]{\draw(#1,2)arc(180:270:#3) (#1+#3,1.5-#3)--(#2,1.5-#3) ;}
\nc{\uarchalf}[2]{\uarcxhalf{#1}{#2}{.4}}
\nc{\colv}[3]{\fill[#3] (#1,#2)circle(.17);}
\nc{\uvert}[1]{\fill (#1,2)circle(.2);}
\rnc{\lvert}[1]{\fill (#1,0)circle(.2);}
\nc{\custpartn}[3]{{\lower1.4 ex\hbox{
\begin{tikzpicture}[scale=.3]
\foreach \x in {#1}
{ \uvert{\x}  }
\foreach \x in {#2}
{ \lvert{\x}  }
#3 \end{tikzpicture}
}}}
\renewcommand{\P}{\mathcal P} 
\newcommand{\PB}{\mathcal{PB}} 
\newcommand{\JJ}{\mathcal{J}} 
\renewcommand{\S}{\mathcal{S}}
\newcommand{\M}{\mathcal{M}_X}
\newcommand{\Mn}{\mathcal{M}_n}
\newcommand{\MY}{\mathcal{M}_Y}
\newcommand{\MW}{\mathcal{M}_W}
\nc\MYZ{\mathcal M_{Y\cup Z}}
\renewcommand{\H}{\mathrel{\mathscr H}}
\renewcommand{\L}{\mathrel{\mathscr L}}
\newcommand{\R}{\mathrel{\mathscr R}}
\newcommand{\D}{\mathrel{\mathscr D}}
\newcommand{\J}{\mathrel{\mathscr J}}
\newcommand{\K}{\mathrel{\mathscr K}}
\newcommand{\N}{\mathbb{N}}
\newcommand{\NN}{\mathcal{N}}
\nc\HH{\mathcal H}
\newcommand{\Cong}{\operatorname{Cong}}
\newcommand{\coker}{\operatorname{coker}}
\newcommand{\dom}{\operatorname{dom}} 
\newcommand{\codom}{\operatorname{codom}}
\newcommand{\rank}{\operatorname{rank}}
\newcommand{\crank}{\operatorname{crank}}
\newcommand{\drank}{\operatorname{drank}}
\newcommand{\id}{\operatorname{id}}
\DeclareMathOperator{\cof}{cof}
\renewcommand{\c}{@{}c@{}}
\newcommand{\cend}{@{}c@{\hspace{1.5truemm}}}
\newcommand{\cstart}{@{\hspace{1.5truemm}}c@{}}
\newcommand{\cstartend}{@{\hspace{1.5truemm}}c@{\hspace{1.5truemm}}}
\newcommand{\partn}[4]{
\Big(   \hspace{-1.5 truemm}
{ \scriptsize \renewcommand*{\arraystretch}{1}
\begin{array} {\cstart|\cend}
 #1 \:&\: #2 \\ \cline{2-2}
 #3 \:&\: #4 
\rule[0mm]{0mm}{2.7truemm}
\end{array} 
}
\hspace{-1.5 truemm} \Big)
}
\newcommand{\partABCD}{\partn{A_i}{C_j}{B_i}{D_k}}
\newcommand{\partXI}[4]{
\Big(  
{\scriptsize \renewcommand*{\arraystretch}{1} \begin{array} {\c|\cend}
#1 \:&\: #2  \\
#3 \:&\: #4
\rule[0mm]{0mm}{2.7mm}
\end{array}  }
\hspace{-1.5 truemm} \Big) 
}
\newcommand{\partXX}[2]{
\Big(  \hspace{-1.5 truemm}
{\scriptsize \renewcommand*{\arraystretch}{1} \begin{array} {\cstartend}
#1   \\ \cline{1-1}
#2
\rule[0mm]{0mm}{2.7mm}
\end{array}  }
\hspace{-1.5 truemm} \Big) 
}
\newcommand{\partXXI}[8]{
\Big( 
{ \scriptsize \renewcommand*{\arraystretch}{1}
\begin{array} {\c|\c|\c|\c|\cend}
 #1 \:&\: \cdots \:&\: #2 \:&\: #3 \:&\: #4 \\ \cline{5-5}
 #5 \:&\: \cdots \:&\: #6 \:&\: #7 \:&\: #8 
\rule[0mm]{0mm}{2.7mm}
\end{array} 
}
\hspace{-1.5 truemm} \Big)
}
\newcommand{\partXXII}[6]{
\Big( 
{ \scriptsize \renewcommand*{\arraystretch}{1}
\begin{array} {\c|\c|\c|\cend}
 #1 \:&\: \cdots \:&\: #2 \:&\: #3 \\ \cline{4-4}
 #4 \:&\: \cdots \:&\: #5 \:&\: #6
\rule[0mm]{0mm}{2.7mm}
\end{array} 
}
\hspace{-1.5 truemm} \Big)
}
\newcommand{\partXXIV}[6]{
\Big( 
{ \scriptsize \renewcommand*{\arraystretch}{1}
\begin{array} {\c|\c|\cend}
 #1 \:&\: #2 \:&\: #3 \\ \cline{3-3}
 #4 \:&\: #5 \:&\: #6
\rule[0mm]{0mm}{2.7mm}
\end{array} 
}
\hspace{-1.5 truemm} \Big)
}
\newcommand{\partXXV}[4]{
\Big(  \hspace{-1.5 truemm}
{\scriptsize \renewcommand*{\arraystretch}{1} \begin{array} {\cstart|\cend}
#1 \:&\: #2  \\ \cline{1-2}
#3 \:&\: #4 
\rule[0mm]{0mm}{2.7mm}
\end{array}  }
\hspace{-1.5 truemm} \Big) 
}
\newcommand{\partpermII}[4]{
\Big(
{ \scriptsize \renewcommand*{\arraystretch}{1}
\begin{array} {\c|\c}
 #1 \:&\: #2 \\ 
 #3 \:&\: #4
\rule[0mm]{0mm}{2.7mm}
\end{array} 
}
\Big)
}
\newcommand{\partpermIII}[6]{
\Big(
{ \scriptsize \renewcommand*{\arraystretch}{1}
\begin{array} {\c|\c|\c}
 #1 \:&\: #2 \:&\: #3 \\ 
 #4 \:&\: #5 \:&\: #6 
\rule[0mm]{0mm}{2.7mm}
\end{array} 
}
\Big)
}
\nc\congsquare[4]
\nc\congsquareconnections[7]
\newtheorem{thm}{Theorem}[section]
\newtheorem{lemma}[thm]{Lemma}
\newtheorem{cor}[thm]{Corollary}
\newtheorem{prop}[thm]{Proposition}
\theoremstyle{definition}
\newtheorem{rem}[thm]{Remark}
\newenvironment{thmenum}{\begin{enumerate}[label=\textup{(\roman*)},leftmargin=9mm]}{\end{enumerate}}
\newcommand{\sd}{\mathrel{\triangle}}
\newcommand{\sm}{\setminus}
\newcommand{\restr}{{\restriction}}
\newcommand{\emptypart}{\ep_\emptyset}
\begin{document}

\title{Congruences on infinite partition and partial Brauer monoids}
\author{James East\footnote{Centre for Research in Mathematics, School of Computing, Engineering and Mathematics, Western Sydney University, Locked Bag 1797, Penrith NSW 2751, Australia. {\it Email:} {\tt j.east\,@\,westernsydney.edu.au}} 
\ and
Nik Ru\v{s}kuc\footnote{Mathematical Institute, School of Mathematics and Statistics, University of St Andrews, St Andrews, Fife KY16 9SS, UK. {\it Email:} {\tt nik.ruskuc\,@\,st-andrews.ac.uk}}}
\date{}

\maketitle

\begin{abstract}
We give a complete description of the congruences on the partition monoid $\P_X$ and the partial Brauer monoid $\PB_X$, where $X$ is an arbitrary infinite set, and also of the lattices formed by all such congruences.  Our results complement those from a recent article of East, Mitchell, Ru\v{s}kuc and Torpey, which deals with the finite case.  As a consequence of our classification result, we show that the congruence lattices of $\P_X$ and $\PB_X$ are isomorphic to each other, and are distributive and well quasi-ordered.  We also calculate the smallest number of pairs of partitions required to generate any congruence; when this number is infinite, it depends on the cofinality of certain limit cardinals.

\emph{Keywords}: Diagram monoids; Partition monoids; Partial Brauer monoids; Congruences; Well quasi-orderedness.

MSC:  20M20, 08A30, 06A06, 03E04.

\end{abstract}

\tableofcontents

\section{Introduction}\label{sect:intro}

Diagram algebras play a central role in many different areas of mathematics and science, from invariant theory \cite{Brauer1937,LZ2012} and knot theory \cite{Jones1987,Kauffman1990} to theoretical physics \cite{TL1971,Martin1994,Jones1994}.  
Typically, these algebras have bases consisting of various kinds of set partitions, which are represented and multiplied diagrammatically, hence the name.
Every diagram algebra arises as a twisted semigroup algebra~\cite{HR2005,Wilcox2007} of an underlying diagram monoid, key examples including (partial) Brauer monoids \cite{MarMaz2014,Brauer1937,Maz1998,DEG2017}, Temperley-Lieb (a.k.a.~Jones or Kauffman) monoids \cite{TL1971,BDP2002,LF2006}, Motzkin monoids \cite{DEG2017,BH2014} and partition monoids \cite{Jones1994,Martin1994,FL2011,JEgrpm}.  More background and references on diagram algebras and monoids may be found in the surveys \cite{Martin2008,Koenig2008} or in the introductions to \cite{DEEFHHL2015,EG2017}.  

The motivation for studying diagram monoids themselves comes from a number of directions.  On the one hand, some studies of diagram monoids have led immediately to important consequences for the corresponding algebras; for example, presentations of the algebras are deduced from presentations of the monoids in \cite{JE2020_TL,JEgrpm,East2018_rook,East2018_Pn}, while cellularity of the algebras (and hence a great deal of representation theoretic information) is deduced from structural properties of the monoids in \cite{Wilcox2007,DEG2017}; cf.~\cite{Xi1999}.
Families of diagram monoids are also playing an increasingly prominent role in semigroup theory itself, underpinning a number of recent studies of pseudovarieties of finite semigroups; see especially the work of Auinger and Volkov and their collaborators \cite{AV2020,ACHLV2015,ADV2012_2,ADV2012,Auinger2014,Auinger2012,CHKLV2019,KV2019}.
Diagram monoids have also provided a fruitful connection between algebra and many combinatorial themes such as graphs and matchings, lattice path enumeration, planar geometry and tilings, analysis of integer partitions, and more \cite{DEG2017,EG2017,DEEFHHL2015,MM2007,EmoJoKa,EKMW2018,AMM2015}.


The partition monoid over a set $X$, denoted $\P_X$ (and defined in Subsection \ref{subsect:M}), contains natural copies of many ``classical'' monoids \cite{JEgrpm,EF2012}, including the symmetric group $\S_X$, the full transformation monoid $\T_X$, and the symmetric and dual symmetric inverse monoids $\I_X$ and~$\JJ_X$.  The fundamental importance of these four monoids stems largely from their universality with respect to certain classes of algebraic structures.  The well-known Cayley Theorems \cite[Theorem~1.1.2]{Howie} state that every group or semigroup embeds in some $\S_X$ or $\T_X$, respectively, while the Wagner-Preston Theorem \cite[Theorem~5.1.7]{Howie} and the FitzGerald-Leech Theorem \cite[Theorem 4.1]{FL1998} state that every inverse semigroup embeds in some~$\I_X$ and also in some~$\JJ_X$.  As a result, these classical monoids have received an enormous amount of attention over the years, with the finite and infinite theories developing into somewhat separate disciplines.  
For example, a significant theme in the theory of infinite symmetric groups is exhibiting their ``largeness'' via concepts such as cofinality, the Bergman and small index properties, word universality, maximal and normal subgroups, to mention just a few \cite{CMM1996,Ball1966,BCPPW1994,ST1997,Bergman2006,DNT1986,MP1990,MN1990,Ore1951,Lyndon1990,DM1999}.  Building on this, numerous studies have compared and contrasted
other infinite classical monoids with the symmetric groups; see for example \cite{Banach1935,Sierpinski1935,MMMP2012,MMR2009,EMP2015,PS2013,HRH,HHMR,HJMP2018,Malcev1952} and the references therein.

Since $\P_X$ contains each of the above classical monoids, the partition monoids are also universal within the classes of groups, semigroups and inverse semigroups.  In this way, the Cayley, Wagner-Preston and FitzGerald-Leech Theorems may all be unified in a single result concerning embeddings in $\P_X$.  Moreover, $\P_X$ has a number of additional structural features not shared by~$\T_X$, including an involution and an ordering compatible with the product (and involution), making it an even more attractive target for embeddings.  All of this points to the fundamental importance of the partition monoids, and suggests that they are worthy of focussed study.  


While most existing investigations of partition monoids and other diagram monoids are restricted to the finite case, a number of recent studies treat the infinite case as well \cite{FitzGerald2013,JE2012,JE2017,FL2011,JEipms,JE_IBM,EF2012,EG2021}.  
One feature of virtually all articles on (finite and infinite) partition monoids is a two-way flow of ideas and methodology between diagram monoids on the one hand and classical ones on the other.  Previous studies of classical monoids have provided useful tools for studying~$\P_X$, while, in turn, diagrammatic techniques yield unifying and clarifying consequences for the classical monoids.  For example, in \cite{EF2012} the idempotent-generated subsemigroup of (finite and infinite)~$\P_X$ was described, and Howie's celebrated result on the idempotent-generated subsemigroup of~$\T_X$~\cite{Howie1966} was deduced as a corollary.  Another consequence of the main results of \cite{EF2012} is that every semigroup embeds in an idempotent-generated, ordered, involutory monoid.  
The diagrammatic methods introduced in the current article also lead to new ways to understand and prove a number of classical results \cite{Malcev1952,Liber1953,Scheiblich1973} concerning congruences, as we explain in Section~\ref{sect:OM}.

Of the above-mentioned papers on classical monoids, the most relevant to our current purposes is Mal'cev's 1952 article \cite{Malcev1952}, the main results of which classify the congruences on an arbitrary full transformation monoid.  (An excellent account of Mal'cev's paper may be found in \cite[Section~10.8]{CPbook2}.)  A congruence on an algebraic structure~$S$ is an equivalence relation compatible with all the basic operations on $S$; the set $\Cong(S)$ of all such congruences forms a lattice under inclusion.  In the cases of groups and rings, congruences correspond to normal subgroups and ideals, respectively, but congruences are not always determined by substructures in general, as is the case with monoids and categories for example.  Nevertheless, congruences are the tools for constructing quotient structures, defining kernels of homomorphisms/representations, and so on.
Mal'cev's article \cite{Malcev1952} was followed by a number of studies \cite{Liber1953,Malcev1953,Sutov1961_2,Sutov1961} classifying congruences on other classical monoids, including some of those discussed above.


The article \cite{EMRT2018} initiated the study of congruences on diagram monoids, the main results being the complete descriptions of the congruence lattices of finite partition, planar partition, Brauer, partial Brauer, Temperley-Lieb and Motzkin monoids.  Inspiration was drawn from Mal'cev's above-mentioned study of full transformation monoids \cite{Malcev1952}.  While there are some intriguing parallels between the theories of diagram and transformation monoids, the results of \cite{EMRT2018} also highlighted some striking differences.  For example, while the congruence lattices of finite full transformation monoids form chains under inclusion, the same is not true for any of the diagram monoids studied in \cite{EMRT2018}.  For each of these, the lattice has a prism-shaped lower part, the existence of which is partly explained by structural properties of the minimal ideal.

The current article is a natural sequel to \cite{EMRT2018}; it continues the study of congruences of diagram monoids, moving in the direction of infinite monoids.  Specifically, we are concerned here with the partition monoid $\P_X$ and the partial Brauer monoid $\PB_X$ over an arbitrary infinite set~$X$.  The paper has two broad goals, and we address these in two separate parts:
\bit
\item Part \ref{part:I} gives a complete classification of the congruences of infinite $\P_X$ and $\PB_X$.  
The statement of the classification is given in Section \ref{sect:statement} (see Theorem \ref{thm-main}), and the proof in Sections \ref{sect:stage1}--\ref{sect:tech}.
\item Part \ref{part:II} gives a detailed analysis of the algebraic and combinatorial structure of the congruence lattices $\Cong(\P_X)$ and $\Cong(\PB_X)$.  We describe the inclusion relation and the meet and join operations in Section~\ref{sect:order} (Theorems \ref{thm:comparisons} and \ref{thm:mj}), draw Hasse diagrams in Section \ref{sect:Hasse} (Figures \ref{fig:1layer}--\ref{fig:CongPX}), prove that the lattices are distributive and well quasi-ordered in Section \ref{sect:global} (Theorems \ref{thm:distr} and \ref{thm:wqo}), calculate the smallest sizes of generating sets in Section~\ref{sect:gen} (Theorems \ref{thm:pc}, \ref{thm:ranksCT1} and \ref{thm:ranksCT2}), and discuss connections with other results from the literature in Section \ref{sect:OM} where we also discuss possible directions for future research.
\eit

One of the intriguing consequences of our results is that the lattices $\Cong(\P_X)$ and $\Cong(\PB_X)$ are isomorphic (Corollary \ref{cor:iso}), even though the monoids $\P_X$ and $\PB_X$ are not (Proposition \ref{prop:not_iso}).  Indeed, our main results are essentially identical for both $\P_X$ and $\PB_X$, and can generally be proved with a uniform argument that works for both monoids; as an exception, some of the results in Sections \ref{sect:stage2} and \ref{sect:tech} require substantially different arguments for the two monoids, those for~$\P_X$ having a set-theoretical flavour, and those for $\PB_X$ being more combinatorial in nature.  As with the finite case \cite{EMRT2018}, certain parallels may be made with the theory of infinite transformation monoids, but at the same time a number of differences arise, some more subtle than others; these will be drawn out during the exposition.


\section{Preliminaries}\label{sect:prelim}

\subsection{General notation and conventions}\label{subsect:notation}

We work in standard ZFC set theory; see for example \cite[Chapters 1, 5 and 6]{Jech2003}.  We denote by~$\xi^+$ the successor of a cardinal $\xi$.  For cardinals $\xi_1$ and $\xi_2$, we write $[\xi_1,\xi_2]$ and $[\xi_1,\xi_2)$ for the (possibly empty) sets of all cardinals $\ze$ satisfying $\xi_1\leq\ze\leq\xi_2$ or $\xi_1\leq\ze<\xi_2$, respectively.  Unless otherwise stated, we assume all indexing sets are faithful, meaning that when we use notation such as $\set{x_i}{i\in I}$ we assume that the map $i\mt x_i$ is injective.  If $\si$ is an equivalence relation on a set $X$, and if $Y$ is a subset of $X$, we write $\si\restr_Y=\si\cap(Y\times Y)$ for the restriction of $\si$ to~$Y$; if~$Y$ is a union of $\si$-classes, we write $Y/\si$ for the set of all $\si$-classes contained in $Y$.  If $\si$ and~$\tau$ are equivalences on a set $X$, we denote by $\si\vee\tau$ the join of $\si$ and $\tau$, which is the least equivalence on $X$ containing both $\si$ and $\tau$: i.e., the transitive closure of $\si\cup\tau$.  For any set~$X$ we denote by~$\De_X=\set{(x,x)}{x\in X}$ the diagonal relation on~$X$.  The symbol $\sd$ denotes symmetric difference: if $X$ and $Y$ are sets, then $X\sd Y=(X\sm Y)\cup(Y\sm X)$.  If $G$ is a group, we write~$H\leq G$ and $N\normal G$ to indicate that $H$ is a subgroup and $N$ a normal subgroup.

\subsection{Semigroups and congruences}\label{subsect:S}

Let $S$ be a semigroup.  A (binary) relation $\si$ on $S$ is \emph{left compatible} if for all $(x,y)\in\si$ and $a\in S$, we have $(ax,ay)\in\si$.  \emph{Right compatible} relations are defined analogously.  A relation is \emph{compatible} if it is both left and right compatible.  A \emph{congruence} on $S$ is an equivalence relation that is compatible.  The set of all congruences on $S$ is denoted $\Cong(S)$; it is a lattice under inclusion.
The most obvious congruences on~$S$ are the trivial and universal congruences:
\[
\De_S=\set{(x,x)}{x\in S} \AND \nabla_S=S\times S.
\]
These are the least and greatest elements of $\Cong(S)$, respectively.  Another family of congruences comes from ideals.  Recall that a subset $I\sub S$ is an \emph{ideal} of $S$ if $ax,xa\in I$ for all $x\in I$ and $a\in S$.  For an ideal $I$, we have the so-called \emph{Rees congruence}
\[
R_I = \De_S \cup (I\times I).
\]
Further general families of congruences will be discussed in Section \ref{subsect:EMRT}.

Let $S$ be a semigroup.  As usual, $S^1$ denotes $S$ itself if $S$ is a monoid, or else the monoid obtained by adjoining an identity to $S$.  Recall that Green's pre-orders $\leqR$, $\leqL$ and $\leqJ$ are based on the inclusion ordering on principal ideals; specifically, for $x,y\in S$:
\[
x\leqR y \iff xS^1\subseteq yS^1 \COMMA
x\leqL y \iff S^1 x\subseteq S^1y \COMMA
x\leqJ y \iff S^1 xS^1\subseteq S^1yS^1 .
\]
Note that $x\leqR y \iff x\in yS^1$, with similar statements for $\leqL$ and $\leqJ$, so that Green's pre-orders may also be thought of in terms of division.
Green's $\R$, $\L$ and $\J$ relations are defined by ${\R}={\leqR}\cap{\geqR}$, ${\L}={\leqL}\cap{\geqL}$ and ${\J}={\leqJ}\cap{\geqJ}$.  Green's $\H$ and $\D$ relations are defined to be the meet and join, respectively, of $\R$ and $\L$ in the lattice of equivalences on~$S$.  That is, ${\H}={\R}\cap{\L}$, while ${\D}={\R}\vee{\L}$ is the least equivalence on $S$ containing both $\R$ and $\L$; it is well known that ${\D}={\R}\circ{\L}={\L}\circ{\R}\sub{\J}$ in any semigroup, and that ${\D}={\J}$ in any finite semigroup.  If $\K$ denotes any of Green's relations, and if $x\in S$, we write $K_x=\set{y\in S}{x\K y}$ for the $\K$-class of $x$ in $S$.  The $\leqJ$ pre-order on $S$ yields a natural partial order, denoted $\leq$, on the set $S/{\J}$ of all $\J$-classes of $S$: $J_x\leq J_y\iff x\leqJ y$.

\subsection{Partition and partial Brauer monoids}\label{subsect:M}

Let $X$ be an arbitrary set, and let $X'=\set{x'}{x\in X}$ be a disjoint copy of $X$.  The \emph{partition monoid} over $X$, denoted $\P_X$, consists of all set partitions of $X\cup X'$ under a product described below.  
So an element of $\P_X$ is of the form $\al=\set{A_i}{i\in I}$, where the $A_i$ are non-empty, pairwise disjoint subsets of $X\cup X'$ satisfying $X\cup X'=\bigcup_{i\in I}A_i$; the $A_i$ are called the \emph{blocks} of~$\al$.  
Of course $\al$ can also be viewed as an equivalence relation on $X\cup X'$; while it will be convenient to do so in the next two paragraphs, when defining the product on $\P_X$, we will generally not do so, and will always regard $\al$ as a set of subsets of $X\cup X'$ as above.

To define the product in $\P_X$, introduce yet another copy
$X''=\set{x''}{x\in X}$ of $X$, disjoint from both $X$ and $X'$.
For $\al\in\P_X$, denote by $\alpha_\downarrow$ the equivalence relation on the set $X\cup X'\cup X''$
 obtained by renaming every $x'$ into $x''$ and adding the 
diagonal $\Delta_{X'}$.
Dually, $\alpha^\uparrow$ is obtained by replacing every $x$ by $x''$ and adding the diagonal $\Delta_X$.

Now let $\al,\be\in\P_X$.  Noting that $\al_\downarrow$ and~$\be^\uparrow$ are equivalences on $X\cup X'\cup X''$, we write as usual $\al_\downarrow \vee \be^\uparrow$ for the least equivalence on $X\cup X'\cup X''$ containing both $\al_\downarrow$ and~$\be^\uparrow$, and define the \emph{product} $\alpha\beta\in\P_X$ to be $(\alpha_\downarrow\vee\beta^\uparrow)\restr_{X\cup X'}$:
i.e., the restriction of $\alpha_\downarrow\vee\beta^\uparrow$ to the set $X\cup X'$.
See below for an alternative, more visual, interpretation of this product.

Throughout the paper we will use two handy ways of representing and visualising partitions.
The first was introduced in \cite{EF2012}, harking back to the standard two-line notation for mappings on a set \cite[p241]{CPbook2}, and is defined as follows.
A non-empty subset $A$ of $X\cup X'$ is called
\bit
\item a \emph{transversal} if both $A\cap X$ and $A\cap X'$ are non-empty,
\item an \emph{upper non-transversal} if $A\subseteq X$, or
\item a \emph{lower non-transversal} if $A\subseteq X'$.
\eit
If $A$ is a transversal, then we refer to $A\cap X$ and $A\cap X'$ as the \emph{upper} and \emph{lower parts} of $A$, respectively.  
If $A\sub X$, we write $A'=\set{a'}{a\in A}\sub X'$.  If $\al\in\P_X$, we will write
\[
\al = \partABCD_{i\in I,\ j\in J,\ k\in K}
\]
to indicate that $\al$ has transversals $A_i\cup B_i'$ ($i\in I$), upper non-transversals $C_j$ ($j\in J$), and lower non-transversals $D_k'$ ($k\in K$).  Sometimes we just write $\al=\partABCD$, with the indexing sets~$I$,~$J$ and $K$ being implied, rather than explicitly named.  
Note that dashes are omitted from elements of $X'$ in this notation.
For extra convenience, some (but not necessarily all) singleton blocks of~$\al$ may be omitted from this notation; in other words, if $y\in X\cup X'$ does not belong to any of the blocks listed in $\al=\partABCD$ then $\{y\}$ is a singleton block of $\alpha$.

The second representation for partitions is more visual, and goes back to Brauer \cite{Brauer1937}.
Here, a partition $\al\in\P_X$ is represented as a graph with vertex set $X\cup X'$ and edges chosen so that its connected components are the blocks of $\al$; such a graph is not unique in general, but we identify~$\alpha$ with \emph{any} such graph.  
We think of the vertices from $X$ as \emph{upper} vertices, and those from $X'$ as \emph{lower} vertices.
The computation of the product $\alpha\beta$ of two partitions $\alpha,\beta\in\P_X$ can 
now be interpreted as follows.
Take the graphs on $X\cup X'\cup X''$ corresponding to $\alpha_\downarrow$ and $\beta^\uparrow$ (as defined above), typically drawn with vertices from $X''$ in a new middle row.
The \emph{product graph} $\Pi(\al,\be)$ is the graph on vertex set $X\cup X'\cup X''$ whose edge set is the union of the edge sets of $\al_\downarrow$ and $\be^\uparrow$.  
The product $\al\be$ is  the partition of $X\cup X'$
such that elements $u,v\in X\cup X'$ belong to the same block of $\al\be$ if and only if $u$ and~$v$ belong to the same connected component of $\Pi(\al,\be)$.

As an example, consider the partitions
\begin{align*}
\al &= \big\{ \{1,4\},\{2,3,4',5'\},\{5,6\},\{1',2',6'\},\{3'\}\big\} , \\
\be &= \big\{ \{1,2\}, \{3,4,1'\}, \{5,4',5',6'\}, \{6\}, \{2'\}, \{3'\} \big\}
\end{align*}
with $X=\{1,\ldots,6\}$. In the tableaux notation (keeping in mind the convention regarding singletons), they are written as
\[
\alpha=
\Big(   \hspace{-1.5 truemm}
{ \scriptsize \renewcommand*{\arraystretch}{1}
\begin{array} {\cstart|\c|\cend}
2,3\: &\: 1,4\: &\: 5,6 \\ \cline{2-3} 4,5\: & \multicolumn{2}{c}{1,2,6} 
\end{array} 
}
\hspace{-1.5 truemm} \Big)
\AND
\beta=
\Big(   \hspace{-1.5 truemm}
{ \scriptsize \renewcommand*{\arraystretch}{1}
\begin{array} {\cstart|\c|\cend}
3,4\: &\: 5\: &\: 1,2 \\ \cline{3-3} 1\: &\: 4,5,6\: &
\end{array} 
}
\hspace{-1.5 truemm} \Big).
\]
The computation of the product
\[
\alpha\beta=
\Big(   \hspace{-1.5 truemm}
{ \scriptsize \renewcommand*{\arraystretch}{1}
\begin{array} {\cstart|\c|\cend}
2,3\: &\: 1,4\: &\: 5,6 \\ \cline{2-3} 1,4,5,6\: & \multicolumn{2}{c}{}
\end{array} 
}
\hspace{-1.5 truemm} \Big)
\]
via the product graph is given in Figure \ref{fig:P6}.
An example with countably infinite $X$ is given in Figure \ref{fig:PBX}.

\begin{figure}
\begin{center}
\begin{tikzpicture}[scale=.5]
\begin{scope}[shift={(0,0)}]	
\uvs{1,...,6}
\lvs{1,...,6}
\uarcx14{.6}
\uarcx23{.3}
\uarcx56{.3}
\darc12
\darcx26{.6}
\darcx45{.3}
\stline34
\draw(0.6,1)node[left]{$\al=$};
\draw[->](7.5,-1)--(9.5,-1);
\end{scope}
\begin{scope}[shift={(0,-4)}]	
\uvs{1,...,6}
\lvs{1,...,6}
\uarc12
\uarc34
\darc45
\darc56
\stline31
\stline55
\draw(0.6,1)node[left]{$\be=$};
\end{scope}
\begin{scope}[shift={(10,-1)}]	
\uvs{1,...,6}
\lvs{1,...,6}
\uarcx14{.6}
\uarcx23{.3}
\uarcx56{.3}
\darc12
\darcx26{.6}
\darcx45{.3}
\stline34
\draw[->](7.5,0)--(9.5,0);
\end{scope}
\begin{scope}[shift={(10,-3)}]	
\uvs{1,...,6}
\lvs{1,...,6}
\uarc12
\uarc34
\darc45
\darc56
\stline31
\stline55
\end{scope}
\begin{scope}[shift={(20,-2)}]	
\uvs{1,...,6}
\lvs{1,...,6}
\uarcx14{.6}
\uarcx23{.3}
\uarcx56{.3}
\darc14
\darc45
\darc56
\stline21
\draw(6.4,1)node[right]{$=\al\be$};
\end{scope}
\end{tikzpicture}
\caption{Two partitions $\al,\be\in\P_X$ (left), the product graph $\Pi(\al,\be)$ (middle), and their product $\al\be\in\P_X$ (right), where $X=\{1,\ldots,6\}$.}
\label{fig:P6}
\end{center}
\end{figure}

\begin{figure}
   \begin{center}
\begin{tikzpicture}[scale=.4]
\begin{scope}[shift={(0,0)}]	
\buvs{1,...,5}
\blvs{1,...,5}
\stline11
\uarcs{2/3,4/5}
\darcs{2/3,4/5}
\draw[dotted] (6,2)--(9,2);
\draw[dotted] (6,0)--(9,0);
\draw(0.6,1)node[left]{$\al=$};
\draw[->](10.5,-1)--(12.5,-1);
\end{scope}
\begin{scope}[shift={(0,-4)}]	
\buvs{1,...,5}
\blvs{1,...,5}
\uarcs{1/2,3/4}
\darcs{1/2,3/4}
\darchalf5{5.5}
\uarchalf5{5.5}
\colv{5.5}1{white}
\draw[dotted] (6,2)--(9,2);
\draw[dotted] (6,0)--(9,0);
\draw(0.6,1)node[left]{$\be=$};
\end{scope}
\begin{scope}[shift={(13,-1)}]	
\buvs{1,...,5}
\blvs{1,...,5}
\stline11
\uarcs{2/3,4/5}
\darcs{2/3,4/5}
\draw[dotted] (6,2)--(9,2);
\draw[dotted] (6,0)--(9,0);
\draw[->](10.5,0)--(12.5,0);
\end{scope}
\begin{scope}[shift={(13,-3)}]	
\buvs{1,...,5}
\blvs{1,...,5}
\uarcs{1/2,3/4}
\darcs{1/2,3/4}
\darchalf5{5.5}
\uarchalf5{5.5}
\colv{5.5}1{white}
\draw[dotted] (6,2)--(9,2);
\draw[dotted] (6,0)--(9,0);
\end{scope}
\begin{scope}[shift={(26,-2)}]	
\buvs{1,...,5}
\blvs{1,...,5}
\uarcs{2/3,4/5}
\darcs{1/2,3/4}
\darchalf5{5.5}
\draw[dotted] (6,2)--(9,2);
\draw[dotted] (6,0)--(9,0);
\draw(9.4,1)node[right]{$=\al\be$};
\end{scope}
\end{tikzpicture}
\caption{Two partitions $\al,\be\in\P_X$ (left), the product graph $\Pi(\al,\be)$ (middle), and their product $\al\be\in\P_X$ (right), where $X=\{0,1,2,\ldots\}$.}
\label{fig:PBX}
\end{center}
\end{figure}

For a subset $Y\sub X$, we define the partition
\[
\ep_Y = \tbinom yy_{y\in Y}.
\]
It is easy to see that $\ep_Y\ep_Z=\ep_{Y\cap Z}$ for all $Y,Z\sub X$.  The partition $\epsilon_X$ is the identity element of~$\P_X$.
 A partition $\al\in\P_X$ is a unit (i.e., invertible with respect to $\ep_X$) if and only if each block of $\al$ is of the form $\{x,y'\}$ for some $x,y\in X$.  The group of all such units is clearly isomorphic to the symmetric group $\S_X$, which consists of all permutations of $X$; thus, we will identify $\S_X$ with the group of units of $\P_X$.

The element $\emptypart$, with all the blocks trivial, will also play an important role in many of our calculations, but it is worth noting that $\emptypart$ is not a zero element in $\P_X$; indeed, $\P_X$ has no zero element (unless $|X|\leq1$).

The \emph{partial Brauer monoid} over $X$, denoted $\PB_X$, is the submonoid of $\P_X$ consisting of all partitions whose blocks have size at most $2$.  Note that $\PB_X$ contains $\S_X$.  When $X$ is finite, the set of all partitions whose blocks all have size precisely  $2$ is also a monoid, known as the \emph{Brauer monoid} and denoted $\B_X$.  But when $X$ is infinite, $\B_X$ is not a submonoid, as the example in Figure \ref{fig:PBX} shows; here $\al,\be\in\B_X$ yet $\al\be\not\in\B_X$.  In fact, it was shown in \cite[Corollary 4.4]{JE_IBM} that when $X$ is infinite, any element of $\PB_X$ is the product of two elements of $\B_X$.

The \emph{domain}, \emph{codomain}, \emph{kernel}, \emph{cokernel} and \emph{rank} of a partition $\al=\partABCD\in\P_X$ are defined by
\bit
\item $\dom(\al) = \set{x\in X}{\text{$x$ belongs to a transversal of $\al$}} = \bigcup_{i\in I}A_i$,
\item $\codom(\al) = \set{x\in X}{\text{$x'$ belongs to a transversal of $\al$}} = \bigcup_{i\in I}B_i$,
\item $\ker(\al) = \bigset{(x,y)\in X\times X}{\text{$x$ and $y$ belong to the same block of $\al$}}$, the equivalence relation on $X$ 
associated with the partition $\overline{\alpha}=\set{A_i}{i\in I}\cup\set{C_j}{j\in J}$,
\item $\coker(\al) = \bigset{(x,y)\in X\times X}{\text{$x'$ and $y'$ belong to the same block of $\al$}}$, the equivalence relation on $X$ associated with the partition $\underline{\alpha}=\set{B_i}{i\in I}\cup\set{D_k}{k\in K}$,
\item $\rank(\al)=|I|$, the number of transversals of $\al$.
\eit
The above parameters allow for convenient descriptions of Green's relations and pre-orders on~$\P_X$ and~$\PB_X$:

\begin{lemma}\label{lem:Green_M}
Let $X$ be an arbitrary set, let $\M$ be either $\P_X$ or $\PB_X$, and let $\al,\be\in\M$.  Then in $\M$,
\begin{thmenum}
\item \label{it:GMiv} $\al\R\be \iff \dom(\al)=\dom(\be)$ and $\ker(\al)=\ker(\be)$,
\item \label{it:GMv}  $\al\L\be \iff \codom(\al)=\codom(\be)$ and $\coker(\al)=\coker(\be)$,
\item \label{it:GMvi} $\al\J\be \iff \al\D\be \iff \rank(\al)=\rank(\be)$,
\item \label{it:GMiii} $\al\leqJ\be \iff \rank(\al)\leq\rank(\be)$.
\end{thmenum}
\end{lemma}

\pf
This was proved in \cite[Lemma 3.1 and Theorem 3.3]{FL2011} in the case of $\M=\P_X$, using slightly different terminology.  The same proofs apply virtually unmodified to $\M=\PB_X$.
\epf

The next result follows quickly from parts \ref{it:GMvi} and \ref{it:GMiii} of Lemma \ref{lem:Green_M}.  For the statement, recall that $|X|^+$ denotes the successor cardinal to $|X|$.

\begin{cor}\label{cor:ideals_M}
Let $X$ be an arbitrary set, and let $\M$ be either $\P_X$ or $\PB_X$.  
\begin{thmenum}
\item 
\label{it:idMi}
The ideals of $\M$ are the sets $I_\xi = \set{\al\in\M}{\rank(\al)<\xi}$, for each cardinal ${\xi\in[1,|X|^+]}$,
and they form a chain under inclusion: $I_{\xi_1}\sub I_{\xi_2} \iff {\xi_1}\leq{\xi_2}$.  
\item 
\label{it:idMii}
The ${\D}={\J}$-classes of $\M$ are the sets $D_\xi = \set{\al\in\M}{\rank(\al)=\xi}$, for each cardinal $\xi\in[0,|X|]$, and
they form a chain under the $\J$-class ordering: ${D_{\xi_1}\leq D_{\xi_2} \iff {\xi_1}\leq{\xi_2}}$.
\epfres
\end{thmenum}
\end{cor}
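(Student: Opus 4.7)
The plan is to derive both parts as direct consequences of parts \ref{it:GMiii} and \ref{it:GMvi} of Lemma \ref{lem:Green_M}.

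For part \ref{it:idMii}, Lemma \ref{lem:Green_M}\ref{it:GMvi} already identifies the ${\D}={\J}$-classes of $\M$ as the level sets of the rank function. To confirm that every cardinal $\xi\in[0,|X|]$ is actually attained by some element, I would exhibit the idempotent $\ep_Y$ from Section \ref{subsect:M}, taken for any $Y\sub X$ with $|Y|=\xi$: this partition has rank exactly $\xi$ and lies in both $\P_X$ and $\PB_X$. The chain structure of the ${\J}$-class order then follows immediately from Lemma \ref{lem:Green_M}\ref{it:GMiii} together with the (well-)ordering of the cardinals.

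For part \ref{it:idMi}, I would use the standard fact that every ideal of a semigroup is a union of ${\J}$-classes and is downward-closed under $\leqJ$. Combined with part \ref{it:idMii}, this identifies the ideals of $\M$ with the down-sets of $[0,|X|]$ in the cardinal ordering. Since the cardinals are well-ordered, every such down-set has the form $[0,\xi)$ for a unique $\xi\in[1,|X|^+]$, corresponding precisely to the set $I_\xi$. Conversely, each $I_\xi$ is itself an ideal: for $\al\in I_\xi$ and $\be\in\M$, Lemma \ref{lem:Green_M}\ref{it:GMiii} gives $\rank(\al\be),\rank(\be\al)\leq\rank(\al)<\xi$. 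The containment chain $I_{\xi_1}\sub I_{\xi_2} \iff \xi_1\leq\xi_2$ is then clear from the definition.

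No step presents a real obstacle; the result is essentially a translation of Lemma \ref{lem:Green_M} into the language of ideals and ${\J}$-classes. The only minor point requiring attention is verifying that every cardinal in $[0,|X|]$ is actually attained as a rank in $\M$, so that no gaps appear in the chain of ${\J}$-classes.
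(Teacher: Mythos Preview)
Your proposal is correct and matches the paper's approach exactly: the paper simply states that the result ``follows quickly from parts \ref{it:GMiii} and \ref{it:GMvi} of Lemma \ref{lem:Green_M}'' and omits the details, which you have filled in appropriately. The only point you added beyond the paper's one-line justification is the explicit verification that every cardinal in $[0,|X|]$ is realised as a rank (via $\ep_Y$), which is indeed the one small gap worth mentioning.
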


In particular, the chains of ideals and of ${\J}={\D}$-classes of $\M$ are well-ordered, a fact that will prove crucial in what follows.  We also need to know that all group $\H$-classes of $\M$ are symmetric groups:

\begin{lemma}
Let $X$ be an arbitrary set, let $\M$ be either $\P_X$ or $\PB_X$, and let $\xi\in[0,|X|]$.  Then any group $\H$-class of $\M$ contained in $D_\xi$ is isomorphic to the symmetric group $\S_\xi$.
\end{lemma}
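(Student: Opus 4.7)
The plan is to invoke standard semigroup theory and then exhibit a convenient idempotent in $D_\xi$ whose group $\H$-class admits a transparent description.

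First, recall the classical fact (see e.g.\ \cite[Theorem~2.3.4]{CPbook2} or Green's theorem) that, in any semigroup, a group $\H$-class is precisely an $\H$-class containing an idempotent, and all group $\H$-classes within a single $\D$-class are mutually isomorphic. By Corollary \ref{cor:ideals_M}, $D_\xi$ is a single $\D$-class of $\M$, so it suffices to exhibit one idempotent $e\in D_\xi$ and prove that $H_e\cong\S_\xi$.

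Next, I would fix a subset $Y\sub X$ with $|Y|=\xi$ and take $e=\ep_Y$ in the notation of Section \ref{subsect:M}. Since $\ep_Y\ep_Y=\ep_{Y\cap Y}=\ep_Y$, this is an idempotent, and its blocks are the transversals $\{y,y'\}$ for $y\in Y$ together with the singletons $\{z\},\{z'\}$ for $z\in X\sm Y$. In particular, $\ep_Y\in\PB_X\sub\P_X$, so the same element works for both choices of $\M$. One reads off $\rank(\ep_Y)=|Y|=\xi$, $\dom(\ep_Y)=\codom(\ep_Y)=Y$, and $\ker(\ep_Y)=\coker(\ep_Y)=\De_X$.

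Then, applying parts \ref{it:GMiv} and \ref{it:GMv} of Lemma \ref{lem:Green_M}, the $\H$-class $H_{\ep_Y}={R_{\ep_Y}}\cap{L_{\ep_Y}}$ consists of precisely those $\al\in\M$ with $\dom(\al)=\codom(\al)=Y$ and $\ker(\al)=\coker(\al)=\De_X$. The latter forces every block of $\al$ meeting $X\cup X'$ to be a singleton or a transversal, and the former forces the transversals to be exactly the pairs $\{y,\si(y)'\}$ for $y\in Y$, where $\si$ is some bijection of $Y$ onto itself; all remaining blocks are singletons $\{z\},\{z'\}$ with $z\in X\sm Y$. Note that these elements automatically have all blocks of size at most $2$, so the description of $H_{\ep_Y}$ is the same whether we work in $\P_X$ or $\PB_X$.

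Finally, the map $\S_Y\to H_{\ep_Y}$ sending $\si$ to the corresponding partition $\al_\si$ is a bijection by the preceding paragraph, and a direct check using either the equivalence-relation or the product-graph interpretation of the product shows that $\al_\si\al_\tau=\al_{\si\tau}$, so the map is a group isomorphism. Since $|Y|=\xi$ we have $\S_Y\cong\S_\xi$, completing the proof. The verification that the multiplication matches is the only nontrivial step, but it is essentially immediate once one notes that in the product graph $\Pi(\al_\si,\al_\tau)$ the only edges are the transversals of $\al_\si$ and $\al_\tau$, which compose precisely as permutations of $Y$.
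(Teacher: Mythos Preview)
Your proof is correct and follows essentially the same approach as the paper: pick the idempotent $\ep_Y$ for some $Y\sub X$ with $|Y|=\xi$, invoke the standard fact that all group $\H$-classes in a $\D$-class are isomorphic, and identify $H_{\ep_Y}$ with $\S_Y\cong\S_\xi$. The paper's version is terser (it simply says ``it is easy to see that this is isomorphic to $\S_A\cong\S_\xi$''), whereas you spell out the description of $H_{\ep_Y}$ via Lemma~\ref{lem:Green_M} and verify the isomorphism explicitly; but the underlying argument is the same.
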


\pf
Fix some $A\sub X$ with $|A|=\xi$.  Since $\ep_A\in D_\xi$, \cite[Proposition 2.3.6]{Howie} says that all group $\H$-classes in $D_\xi$ are isomorphic to the $\H$-class of $\ep_A$, and it is easy to see that this is isomorphic to $\S_A\cong\S_\xi$.
\epf

One of the intriguing consequences of our main results in this paper is that the congruence lattices $\Cong(\P_X)$ and $\Cong(\PB_X)$ are isomorphic for any infinite set $X$.  This is also true in the finite case \cite[Theorems~5.4 and~6.1]{EMRT2018}.  Isomorphism of the lattices would of course be no surprise if the monoids $\P_X$ and $\PB_X$ were themselves isomorphic, but this is not the case except 
trivially for $|X|\leq1$:

\begin{prop}\label{prop:not_iso}
If $|X|\geq2$, then the monoids $\P_X$ and $\PB_X$ are not isomorphic.
\end{prop}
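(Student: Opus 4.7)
The plan is to exhibit a monoid-theoretic property $P$ that holds of $\P_X$ but not of $\PB_X$. Let $P(\M)$ be the statement: \emph{there exists an idempotent $e \in \M$ not lying in the minimum $\J$-class of $\M$ such that $efe = e$ for every idempotent $f \in \M$ not lying in the minimum $\J$-class}. By Corollary \ref{cor:ideals_M}, the $\J$-classes of $\P_X$ and of $\PB_X$ form a well-ordered chain, so the minimum $\J$-class is canonically defined and is preserved by any monoid isomorphism; hence so is $P$.

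To verify $P(\P_X)$, I would take $e := \omega$, the partition $\{X \cup X'\}$ with a single block. This is a rank-$1$ idempotent, hence lies in $D_1$. The key step is a short product-graph computation: for any $f \in \P_X$ of rank $\geq 1$, the single block of $\omega_\downarrow$ absorbs the entire middle row when forming $\omega f$, pulling (through the middle) every lower vertex in $\codom(f)$ into one big transversal with upper part $X$; a further multiplication by $\omega$ then fuses this transversal with the block $X'' \cup X'$ of $\omega^\uparrow$, leaving $\omega f \omega = \omega$.

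To show $\neg P(\PB_X)$, let $e \in E(\PB_X)$ have $\rank(e) \geq 1$. Using that blocks in $\PB_X$ have size at most $2$, I would first check that every transversal of $e$ has the form $\{a, a'\}$ and that $\dom(e) = \codom(e)$. If $\dom(e) = X$, then $e$ corresponds to an injective function $\tau : X \to X$ with $\tau^2 = \tau$, and a short fixed-point argument forces $\tau = \id_X$ and $e = \ep_X$; then $\ep_X f \ep_X = f$ differs from $\ep_X$ for any non-identity idempotent $f$ of rank $\geq 1$, and such $f$ exists because $|X| \geq 2$. Otherwise $\dom(e) \subsetneq X$; picking $y \in X \setminus \dom(e)$ and taking $f := \ep_{\{y\}}$, the key computation yields $\rank(ef) = 0$, because each transversal of $e$ routes its upper vertex $a \neq y$ through the middle vertex $a''$, which in $f^\uparrow$ sits in the middle-row singleton $\{a''\}$, severing the upper-to-lower chain. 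Hence $\rank(efe) = 0 < \rank(e)$ and $efe \neq e$.

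The main obstacle I anticipate is arranging an invariant that works uniformly across all $|X| \geq 2$: natural candidates like cardinality handle only the finite case, while invariants based on the top $\J$-class (such as the number of rank-$|X|$ idempotents) collapse in the finite case, where both $\P_X$ and $\PB_X$ have only $\ep_X$ of rank $|X|$. The property $P$ is engineered around the structural feature that truly distinguishes the two monoids, namely the existence in $\P_X$ of a rank-$1$ idempotent whose single block can absorb arbitrarily many transversals in one multiplication, something ruled out in $\PB_X$ by the size-$2$ constraint.
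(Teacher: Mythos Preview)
Your invariant $P$ is a nice idea, and the verification of $P(\P_X)$ via the single-block partition $\omega$ is correct.  However, the argument for $\neg P(\PB_X)$ has a genuine gap: the claim that every transversal of an idempotent $e\in\PB_X$ has the form $\{a,a'\}$, and that $\dom(e)=\codom(e)$, is false.  A counterexample on $X=\{1,2,3\}$ is
\[
e=\big\{\{1,2'\},\{2,3\},\{1',3'\}\big\},
\]
which one checks is idempotent, has $\dom(e)=\{1\}\neq\{2\}=\codom(e)$, and has a transversal $\{1,2'\}$ with $1\neq2$.  Consequently, your key computation ``$\rank(ef)=0$ for $f=\ep_{\{y\}}$ with $y\notin\dom(e)$'' can fail: in this example, taking $y=2$ gives $\rank(ef)=1$, because the path $1-2''-2'$ survives in the product graph.

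The fix is small.  One option is to compute $\rank(fe)$ instead: since $\dom(fe)\subseteq\dom(f)=\{y\}$ and $y\notin\dom(e)$, the upper block of $e$ containing $y$ is a non-transversal of size at most $2$, and a short product-graph check shows $y\notin\dom(fe)$, whence $\rank(fe)=0$ and $efe=e(fe)$ has rank $0$.  Alternatively, replace the dichotomy ``$\dom(e)=X$ versus $\dom(e)\subsetneq X$'' by ``$\codom(e)=X$ versus $\codom(e)\subsetneq X$'': if $\codom(e)=X$ then an argument like your $\tau^2=\tau$ reasoning (applied to the bijection $h:\dom(e)\to X$ determined by the transversals) forces $e=\ep_X$; otherwise pick $y\notin\codom(e)$, and then your original computation $\rank(ef)=0$ does go through, since every transversal $\{a,b'\}$ of $e$ has $b\in\codom(e)\not\ni y$, so $b''$ is a dead end in $f^\uparrow$.

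For comparison, the paper's proof uses a different invariant: the number of $\R$-classes containing a unique idempotent.  For infinite $X$, it shows that $\P_X$ has exactly one such $\R$-class (that of $\ep_X$), while $\PB_X$ has at least two (those of $\ep_X$ and of $\ep_{X\setminus\{x\}}$ for any $x\in X$).  Your approach has the advantage of giving a single argument for all $|X|\geq2$, whereas the paper treats finite $X$ separately by cardinality; on the other hand, the paper's invariant avoids any delicate product-graph trace in $\PB_X$.
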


\pf
This is clear for $1<|X|<\aleph_0$, as $\PB_X$ is a proper subset of $\P_X$, so we assume~$X$ is infinite.  It suffices to prove the following two claims:
\ben
\item \label{it:not_iso_1} There exists more than one $\R$-class of $\PB_X$ containing only one idempotent.
\item \label{it:not_iso_2} There exists only one $\R$-class of $\P_X$ containing only one idempotent.
\een
First note that the $\R$-class of the identity element of any monoid has only one idempotent (the identity itself).  In what follows, we make repeated use of Lemma \ref{lem:Green_M} \ref{it:GMiv}. 

\pfitem{\ref{it:not_iso_1}}  Fix some $x\in X$ and write $Y=X\sm\{x\}$.  
We will show that $\ep_Y=\binom yy$ is the only idempotent in its $\R$-class (in $\PB_X$).
Indeed, supposing  $\ep_Y\R\al=\alpha^2\in\PB_X$, we may write $\al=\partn{y}{}{y\psi}{A_i}$ where $\psi$ is some injective map $Y\to X$. 
For any~$y\in Y$, since $\{y,(y\psi)'\}$ is a block of $\al=\al^2$, there is a path from $(y\psi)''$ to $y''$ in the product graph $\Pi(\al,\al)$; but such a path must have length zero since~$\al$ has no non-trivial upper non-transversals, and this means that $y\psi=y$.  From this it quickly follows that~$\al=\ep_Y$.  

\pfitem{\ref{it:not_iso_2}}  Consider some $\R$-class $R$ of $\P_X$ not containing the identity element $\ep_X$, and let $\partABCD$ be a representative of $R$.  Since $\ep_X\not\in R$, either $I$ is empty, or $|A_i|\geq2$ for some $i\in I$, or else 
both $I$ and~$J$ are non-empty.
\bit
\item If $I$ is empty, then $\partXX{C_j}{X}$ and $\partXX{C_j}{}$ are distinct idempotents of $R$. 
\item If $|A_i|\geq2$ for some $i\in I$, then we fix distinct $x,y\in A_i$, write $L=I\sm\{i\}$, and note that $\partXXIV{A_i}{A_l}{C_j}{x}{A_l}{}$ and $\partXXIV{A_i}{A_l}{C_j}{y}{A_l}{}$ are distinct idempotents of $R$. 
\item 
If $I,J\neq\emptyset$, let
 $C=\bigcup_{j\in J}C_j\neq\emptyset$, fix some $i\in I$, write $L=I\sm\{i\}$, and note that $\partXXIV{A_i}{A_l}{C_j}{A_i}{A_l}{}$ and $\partXXIV{A_i}{A_l}{C_j}{A_i\cup C}{A_l}{}$  are distinct idempotents of $R$. \qedhere 
\eit
\epf

\part{Classification of congruences}\label{part:I}

This part of the paper is devoted to the classification of congruences on the partition monoid~$\P_X$ and partial Brauer monoid $\PB_X$ over an arbitrary infinite set $X$.  The statement of the classification theorem (Theorem \ref{thm-main}) is given in Section \ref{sect:statement}, where we also discuss the strategy of proof.  The proof itself is given in Sections \ref{sect:stage1}--\ref{sect:tech}.  Almost all we say applies equally to both $\P_X$ and $\PB_X$, so as before we will use~$\M$ to stand for either of these two monoids.

\section{The classification theorem}\label{sect:statement}

\subsection{Statement of the theorem}\label{subsect:statement}

All congruences on $\M$ are built from five basic relations.  These are denoted $R_\xi$, $\lam_\ze$, $\rho_\ze$, $\mu_\ze$ and~$\nu_N$, and will be defined shortly; their deeper significance will be discussed in more detail in subsequent sections.

First, to each ideal $I_\xi$ of $\M$, as described in Corollary \ref{cor:ideals_M} \ref{it:idMi}, there corresponds the \emph{Rees congruence}
\[
R_\xi = \De_{\M}\cup(I_\xi\times I_\xi) \qquad \text{for any } \xi\in[1,|X|^+].
\]

Next, we have the relation
\[
\mu_\zeta = \bigset{ (\alpha,\beta)\in\M\times\M}{|\alpha\sd\beta|<\zeta} 
\qquad \text{for any } \ze\in[1,|X|^+].
\]
It is important to note here that $\al$ and $\be$ are regarded as sets of subsets of $X\cup X'$, not as equivalence relations on~$X\cup X'$ (i.e., not as sets of ordered pairs).
Informally, $|\al\sd\be|$ measures the difference between~$\al$ and~$\be$, by counting the blocks belonging to only one of them,  and $\mu_\ze$ gathers together the pairs of partitions that differ by less than $\ze$. 

The next two relations are analogous to $\mu_\zeta$, but refer to the partitions
$\overline{\alpha}$ and $\underline{\alpha}$ induced by the kernel and cokernel of $\alpha$ (and defined before Lemma \ref{lem:Green_M}), respectively: 
\begin{align*}
\lambda_\zeta&=\bigset{ (\alpha,\beta)\in\M\times\M}{ |\overline{\alpha}\sd\overline{\beta}|<\zeta} &&\text{for any  $\ze\in[1,|X|^+]$,}\\
\rho_\zeta&=\bigset{ (\alpha,\beta)\in\M\times\M}{ |\underline{\alpha}\sd\underline{\beta}|<\zeta} &&\text{for any  $\ze\in[1,|X|^+]$.}
\end{align*}
We will also need the intersections of the relations $\mu_\zeta$, $\lambda_\zeta$ and $\rho_\zeta$ with the Rees congruence~$R_\eta$:
\[
\mu_\zeta^\eta=\mu_\zeta\cap R_\eta \COMMA
\lambda_\zeta^\eta=\lambda_\zeta\cap R_\eta \COMMA
\rho_\zeta^\eta=\rho_\zeta\cap R_\eta .
\]

To describe the final kind of relation, we must first introduce some further notation.  Let ${n\in [1,\aleph_0)}$ be a positive integer, and let $N$ be a normal subgroup of the symmetric group~$\S_n$, which consists of all permutations of $\{1,\ldots,n\}$.  
Consider two partitions $\alpha,\beta\in D_n$ (the $\D$-class of all rank-$n$ elements of $\M$) such that $\al\H\be$.
Suppose the transversals of $\alpha$ are $A_i\cup B_i'$ ($i=1,\ldots,n$). 
Since $\al\H\be$, the transversals of $\beta$ are $A_i \cup B_{i\phi}'$ ($i=1,\ldots,n$),
where
$\phi$ is some permutation in $\S_n$.
It is straightforward to check that if we start from a different indexing of the transversals of $\alpha$, the resulting permutation will be conjugate to $\phi$ in $\S_n$; thus, $\phi(\al,\be)=\phi$ is well defined up to conjugation.
Since $N\unlhd \S_n$ it follows that there is a well-defined relation on~$D_n$ given by
\[
\nu_N=\bigset{ (\alpha,\beta)\in D_n\times D_n}{ \alpha\H\beta \text{ and } \phi(\alpha,\beta)\in N}.
\]
Additionally, for any cardinal $\zeta\in[1,|X|^+]$, we let
\[
\lambda_\zeta^N=\lambda_\zeta^n\cup\nu_N \AND
\rho_\zeta^N=\rho_\zeta^n\cup\nu_N.
\]
Note that for any $n\in[1,\aleph_0)$,  we have $\nu_{\{\id_n\}}=\Delta_{D_n}$, and hence
\[
\lambda_\zeta^{\{\id_n\}}=\lambda_\zeta^n \AND \rho_\zeta^{\{\id_n\}}=\rho_\zeta^n.
\]

Here is our main result, stated in terms of the relations defined above.

\begin{thm}
\label{thm-main}
Let $\M$ be either the partition monoid $\P_X$ or the partial Brauer monoid $\PB_X$, where $X$ is an arbitrary infinite set.  The congruences of $\M$ are precisely $\nabla_{\M}=\M\times\M$ (the universal congruence) and the following:
\begin{enumerate}[label=\textup{(CT\arabic*)},leftmargin=12mm]
\item \label{CT1} $\lambda_{\zeta_1}^N\cap \rho_{\zeta_2}^N$, where 
\bit
\item $N$ is a normal subgroup of $\S_n$ for some $n\in[1,\aleph_0)$, 
\item $\zeta_1,\zeta_2\in \{1\}\cup [\aleph_0,|X|^+]$ if $n\leq2$,  
\item $\zeta_1,\zeta_2\in [\aleph_0,|X|^+]$ if $n\geq3$,
\eit
\item \label{CT2} 
$(\lambda_{\zeta_1}^\eta\cap \rho_{\zeta_2}^\eta) \cup \mu_{\xi_1}^{\eta_1}\cup \cdots \cup \mu_{\xi_k}^{\eta_k}$, where 
\bit
\item 
$k\geq1$, $\eta\in [\aleph_0,|X|]$, $\ze_1,\ze_2,\eta_1,\ldots,\eta_k\in [\eta,|X|^+]$, $\xi_1,\ldots,\xi_k\in \{1\}\cup[\aleph_0,\eta]$, and 
\item 
$\xi_k<\dots<\xi_1\leq \eta<\eta_1<\dots<\eta_k=|X|^+$.
\eit
\end{enumerate}
\end{thm}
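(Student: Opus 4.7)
The plan is to prove the theorem in two directions: forward, verifying that each listed relation is indeed a congruence; and backward, showing every congruence on $\M$ is either universal or matches the (CT1) or (CT2) template. The forward direction, while technical, is essentially mechanical: one checks that the building blocks $R_\xi$, $\lambda_\zeta$, $\rho_\zeta$, $\mu_\zeta$ and $\nu_N$ are each compatible with multiplication in $\M$, and then verifies that the prescribed intersections and unions preserve compatibility. Compatibility of $\lambda_\zeta$, $\rho_\zeta$ and $\mu_\zeta$ boils down to arguing that multiplying by a fixed $\gamma\in\M$ cannot increase the relevant symmetric-difference cardinality beyond $\zeta$; for $\nu_N$ one uses the fact that $N\unlhd\S_n$ to track the induced permutation under $\H$-related products.

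For the backward direction, let $\sigma\in\Cong(\M)$ with $\sigma\neq\nabla_\M$. The first step is to exploit the chain of ideals and $\D$-classes from Corollary \ref{cor:ideals_M}: define
\[
\eta(\sigma)=\sup\bigset{\rank(\alpha\beta^{-1}\text{-type witness})}{(\alpha,\beta)\in\sigma,\ \alpha\neq\beta}
\]
or, more cleanly, the largest cardinal $\eta$ such that some ``non-diagonal'' activity of $\sigma$ occurs at or below $D_\eta$. One then shows that $\sigma\subseteq R_{\eta^+}$ (or appropriately), using that an ideal is saturated by any congruence that does not relate elements of arbitrarily large rank. This reduces the analysis to studying $\sigma$ on each $D_\xi$ with $\xi\leq\eta$, and in particular to the $\H$-classes in those $\D$-classes, which by the lemma after Corollary \ref{cor:ideals_M} carry symmetric group structure.

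The second step is the case split by whether $\eta$ is finite or infinite. When $\eta=n<\aleph_0$, the compatibility of $\sigma$ with conjugation inside $\M$ forces the trace of $\sigma$ on each group $\H$-class in $D_n$ to be encoded by a single normal subgroup $N\unlhd\S_n$; off the group $\H$-classes, the action is captured by the kernel/cokernel symmetric-difference counts $\lambda^n_{\zeta_1}$ and $\rho^n_{\zeta_2}$, with the constraints on $\zeta_1,\zeta_2$ in (CT1) dictated by what counts of changes can be generated from a single non-trivial pair under left and right multiplication. This yields (CT1). When $\eta\geq\aleph_0$, one builds up a strictly increasing tower of cardinals $\xi_k<\dots<\xi_1\leq\eta<\eta_1<\dots<\eta_k=|X|^+$ by examining, at each infinite level $\zeta$, which pairs $(\alpha,\beta)$ of rank $<\zeta$ are in $\sigma$; the requirement that $\sigma$ be closed under multiplication forces this data to appear in blocks matching the $\mu^{\eta_i}_{\xi_i}$ components, with $\lambda^\eta_{\zeta_1}\cap\rho^\eta_{\zeta_2}$ recording the residual behaviour at the top level. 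This yields (CT2).

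The main obstacle, and the reason for the split between Sections \ref{sect:stage2} and \ref{sect:tech}, is the ``diagonal-filling'' step: showing that if $(\alpha,\beta)\in\sigma$ with $\alpha,\beta$ of some prescribed shape, then a whole prescribed family of further pairs must also lie in $\sigma$. For $\P_X$ this is argued set-theoretically, by sandwiching $\alpha$ and $\beta$ with suitably chosen idempotents $\varepsilon_Y$ to produce pairs witnessing the required symmetric-difference bounds; for $\PB_X$, the restriction that blocks have size $\leq 2$ rules out this direct idempotent sandwiching, and one needs a more combinatorial construction involving explicit partial pair-ups and appealing to \cite[Corollary 4.4]{JE_IBM} to manufacture the required ``test partitions'' of prescribed rank within $\PB_X$. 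Once these saturation lemmas are in place, collecting the constraints on the parameters $(N,\zeta_1,\zeta_2)$ and $(\eta,\xi_i,\eta_i)$ and matching them against the hypotheses of (CT1) and (CT2) will complete the proof.
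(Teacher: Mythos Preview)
Your outline has the right two-stage shape, but there is a genuine gap in your understanding of the key parameter $\eta(\sigma)$, and this propagates into a wrong picture of the (CT2) case.

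The correct definition is \emph{not} ``the largest cardinal where non-diagonal activity occurs'': it is (the least strict upper bound of) the set of $\kappa$ for which $\sigma$ relates some element of rank $\kappa$ to one of \emph{strictly smaller} rank. With this definition, above $\eta$ the congruence $\sigma$ can still relate distinct elements, but only within a single $D_\kappa$; that is, $\sigma=\sigma\restr_{I_\eta}\cup\bigcup_{\kappa\geq\eta}\sigma\restr_{D_\kappa}$. Your claim ``$\sigma\subseteq R_{\eta^+}$'' is then only valid when $\eta$ is finite (this is Lemma~\ref{la331}, and requires work). When $\eta$ is infinite, the restrictions $\sigma\restr_{D_\kappa}$ for $\kappa\geq\eta$ are typically non-trivial, and it is precisely these that are encoded by the $\mu_{\xi_i}^{\eta_i}$ components --- so your description ``which pairs $(\alpha,\beta)$ of rank $<\zeta$ are in $\sigma$'' has it backwards. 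The mechanism the paper uses is an order-reversing map $\Psi:[\eta,|X|]\to[0,\eta]$, $\kappa\mapsto\kappa^*=\LSUB\{|\alpha\sd\beta|:(\alpha,\beta)\in\sigma\restr_{D_\kappa}\}$; the crucial (and non-obvious) facts are that $\kappa^*\leq\eta$ always, that $\Psi$ is order-reversing, and hence that its image is finite --- this is what yields the finite list $\xi_1>\cdots>\xi_k$ and the corresponding $\eta_i$.

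Two further gaps. First, you are missing the bridge between high-rank and low-rank behaviour: the lemma that $(\alpha,\widehat\alpha)\in\sigma$ for all $\alpha$ with $\rank(\alpha)<\eta$, which reduces the analysis of $\sigma\restr_{I_\eta}$ entirely to $\sigma\restr_{D_0}$, whence the parameters $\zeta_1,\zeta_2$ emerge from kernel/cokernel symmetric differences \emph{in rank $0$}, not ``off the group $\H$-classes'' in $D_n$. Second, in the forward direction, compatibility of the individual pieces is not the issue for (CT2): each $\mu_{\xi_i}^{\eta_i}$ and $\lambda_{\zeta_1}^\eta\cap\rho_{\zeta_2}^\eta$ is already a congruence, but their \emph{union} need not be transitive a priori --- the content of Proposition~\ref{prop:CT3} is a case analysis showing transitivity survives. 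Finally, the appeal to \cite[Corollary~4.4]{JE_IBM} for $\PB_X$ is spurious; the paper's technical lemmas for $\PB_X$ instead use the meet operation $\alpha\wedge\beta$ and a counting argument on two-element blocks (``hooks'').
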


We shall refer to the two different groups \ref{CT1} and \ref{CT2} as \emph{types} of congruences.  Although the universal congruence $\nabla_{\M}$ is listed separately in Theorem \ref{thm-main}, we will think of  it as being of type \ref{CT2}, with $k=1$, $\eta=\zeta_1=\zeta_2=\eta_1=|X|^+$
and $\xi_1=1$, since
\[
\nabla_{\M}=\lam_{|X|^+}^{|X|^+}\cap\rho_{|X|^+}^{|X|^+}=\left(\lam_{|X|^+}^{|X|^+}\cap\rho_{|X|^+}^{|X|^+}\right)\cup\mu_1^{|X|^+}.
\]
Unlike for the other congruences of type \ref{CT2}, the above expression for
$\nabla_{\M}$ is not unique: indeed, we could let $\xi_1$ be any cardinal 
from $\{1\}\cup [\aleph_0,|X|^+]$.

Note that type \ref{CT1} deals with congruences of ``finite rank'': i.e., those for which there is a finite cardinal bounding the ranks of non-equal related pairs of partitions.
Type~\ref{CT2} contains all the congruences of infinite rank.

The proof of Theorem \ref{thm-main} occupies Sections \ref{sect:stage1}--\ref{sect:tech}, which form the bulk of this part of the paper. 

Before we outline the strategy of proof, it is worth ``locating'' some of the basic relations/congruences discussed above:
\bit
\item
The trivial congruence $\De_{\M}$ is of type \ref{CT1}, with $\ze_1=\ze_2=1$ and $N=\S_1$.
\item 
As noted above, we consider the universal congruence $\nabla_{\M}$ to be of type \ref{CT2}, with $k=1$, $\eta=\zeta_1=\zeta_2=\eta_1=|X|^+$ and $\xi_1=1$.
\item
If $n\in[1,\aleph_0)$, then the Rees congruence $R_n$ is of type \ref{CT1}, with $\ze_1=\ze_2=|X|^+$ and~${N=\{\id_n\}}$.
\item
If $\xi\in[\aleph_0,|X|^+]$, then $R_\xi$ is of type \ref{CT2}, with $\eta=\xi$, $k=1$, $\xi_1=1$ and ${\eta_1=\ze_1=\ze_2=|X|^+}$; 
this includes the universal congruence $\nabla_{\M}=R_{|X|^+}$.  
\item
We will see in Lemma \ref{la315} that $\mu_\xi$ is a congruence for any $\xi\in\{1\}\cup[\aleph_0,|X|^+]$.  Clearly $\mu_1=\De_{\M}$ (which is of type \ref{CT1}, as discussed above).  
If $\xi\in[\aleph_0,|X|]$, then $\mu_\xi$ is of type~\ref{CT2}, with $k=1$, $\ze_1=\ze_2=\eta=\xi_1=\xi$, $\eta_1=|X|^+$ (cf.~Lemma \ref{lem:mu}); finally, for~$\xi=|X|^+$ we have $\mu_{|X|^+}=\nabla_{\M}$.
\item
Similarly, we will see in Lemma \ref{la33} and Remark \ref{rem:lam_1^2} that $\lam_\ze^\eta$ is a congruence 
for $\ze=1$ and $\eta=1,2$, and for $\ze\in [\aleph_0,|X|^+]$ and $\eta\in[1,\ze]$.
  If $\eta<\aleph_0$, then $\lam_\ze^\eta$ is of type~\ref{CT1}, with $N=\{\id_\eta\}$, $\ze_1=\ze$ and $\ze_2=|X|^+$.  If $\eta\in[\aleph_0,|X|]$, then $\lam_\ze^\eta$ is of type~\ref{CT2}, with $\ze_1=\ze$, $\ze_2=|X|^+$, $k=1$, $\xi_1=1$ and $\eta_1=|X|^+$.  If $\eta=\ze=|X|^+$, then~${\lam_\ze^\eta=\nabla_{\M}}$.  Similar comments hold for the $\rho_\ze^\eta$ relations.
\eit

\subsection{Strategy of proof}\label{subsect:strategy}

The proof of Theorem \ref{thm-main} is broken up into two stages that are largely independent of each other, and which will be treated in Sections \ref{sect:stage1} and \ref{sect:stage2}, respectively:
\begin{description}
\item[Stage 1:]
Show that each relation listed in the theorem is indeed a congruence on $\M$.
\item[Stage 2:]
Show that any congruence on $\M$ is one of those listed in the theorem.
\end{description}
Considerations within Stage~1 naturally split into two strands: proving that the relations are equivalences (which in fact boils down to proving transitivity), and proving that they are compatible with multiplication.

The steps involved in Stage 2 are as follows: 
\begin{description}
\item[Stage 2.1:]
Given a congruence $\sigma$ on $\M$, identify its type.
\item[Stage 2.2:]
Describe how to find the relevant parameters for this type.
\item[Stage 2.3:]
Prove that the parameters fall within the prescribed ranges.
\item[Stage 2.4:]
Prove that $\sigma$ is indeed equal to the congruence from the list thus identified.
\end{description}
Most of the arguments in Sections \ref{sect:stage1} and \ref{sect:stage2} apply equally to $\M=\P_X$ or $\M=\PB_X$.  To make sure that the proof of a statement works for both monoids, we need to ensure that when the statement is interpreted in $\M=\PB_X$, any partition constructed during the proof belongs to~$\PB_X$ as well (and this might itself depend on the assumption that a partition appearing in the statement belongs to $\PB_X$).   A number of key lemmas used in Section \ref{sect:stage2} will require substantially different proofs for the two monoids, and we will postpone these proofs until Section \ref{sect:tech}.

\section{First stage of the proof: the stated relations are congruences}\label{sect:stage1}

We now embark on the first stage of the proof of Theorem \ref{thm-main}, namely the task of showing that the relations listed in the theorem are indeed congruences on $\M$, which throughout the entire section will stand for either of $\P_X$ or $\PB_X$ for a fixed infinite set $X$.  This will be achieved in Propositions~\ref{prop:CT1} and~\ref{prop:CT2} for type \ref{CT1}, and in Proposition \ref{prop:CT3} for type \ref{CT2}.

The section is structured as follows.  In Subsection \ref{subsect:EMRT} we recall some general machinery from \cite{EMRT2018} that allows for the construction of congruences in certain kinds of semigroups; we tie this in with~$\P_X$ and~$\PB_X$ in Subsection~\ref{subsect:stability}, and establish some useful inequalities in Subsection~\ref{subsect:inequalities}.  We then treat congruences of types \ref{CT1} and \ref{CT2} in Subsections \ref{subsect:CT1} and \ref{subsect:CT3}, respectively.

\subsection{General congruence constructions}\label{subsect:EMRT}

We begin with a review of some ideas from \cite{EMRT2018} that lead to the construction of several families of congruences on semigroups.  The results stated here are special cases of those in \cite{EMRT2018}, tailored to suit our purposes.

Throughout the following discussion, we fix a regular semigroup $S$ with a minimal ideal $M$.  Here,  regularity means that for every $x\in S$, we have $x=xax$ for some $a\in S$.
We also note that the minimal ideal, when it exists, is necessarily unique, and is also a $\J$-class. 

An ideal $I$ of $S$ is \emph{retractable} if there exists a homomorphism $f\colon I\to M$ such that $xf=x$ for all $x\in M$; such a map $f$ is called a \emph{retraction}.  If $I$ is retractable, then there is a unique such retraction \cite[Corollary 3.4]{EMRT2018}.  We say that a congruence $\si$ on $M$ is \emph{liftable} if $\De_S\cup\si$ is a congruence on $S$.  For any such congruence $\si$, and for any retractable ideal $I$, we define the relation
\[
R_{I,\si} = \De_S \cup \bigset{(x,y)\in I\times I}{(xf,yf)\in\si}.
\]
Note that when $\si=\nabla_M$ is the universal congruence on $M$, the relation $R_{I,\si}$ is equal to the Rees congruence $R_I=\De_S\cup(I\times I)$, as defined in Subsection \ref{subsect:S}.

A $\J$-class $J$ of $S$ is \emph{stable} if for all $x\in J$ and $a\in S$,
\[
xa\J x\implies xa\R x \AND ax\J x\implies ax\L x.
\]
Any stable $\J$-class is in fact a $\D$-class; see \cite[Lemma 3.10]{EMRT2018} or \cite[Proposition 2.3.9]{Lallement1979}.
Suppose now that~$J$ is a stable $\J$-class.  Let $G$ be a maximal subgroup of~$S$ contained in $J$ (so $G$ is the~$\H$-class of some idempotent of $J$).  For any normal subgroup $N\normal G$, we define the relation
\[
\vt_N = (J\times J) \cap \bigset{(axb,ayb)}{x,y\in N,\ a,b\in S^1}.
\]
(This relation was denoted $\nu_N$ in \cite{EMRT2018}, but we use the $\vt_N$ notation here to avoid any ambiguity with our previous use of $\nu_N$, until we establish in Lemma \ref{lem:nu_N} that the two are essentially equivalent for the monoids under consideration in this paper.)
It was shown in \cite[Lemma 3.15]{EMRT2018} that the relations $\vt_N$ are independent of the choice of maximal subgroup $G\sub J$: namely, if $G_1$ and~$G_2$ are maximal subgroups contained in $J$, and if $N_1\normal G_1$, then there exists $N_2\normal G_2$ such that~${\vt_{N_1}=\vt_{N_2}}$.

Recall that the set $S/{\J}$ of all $\J$-classes of $S$ has a natural partial order $\leq$; see Subsection~\ref{subsect:S}.  Any ideal $I$ of $S$ is a union of $\J$-classes; so too, therefore, is the complement $S\sm I$, and we may speak of $\J$-classes that are minimal in $(S\sm I)/{\J}$; such minimal $\J$-classes need not exist in general.  An \emph{IN-pair} in $S$ is a pair $(I,N)$, where $I$ is an ideal of $S$, and $N$ is a normal subgroup of a maximal subgroup contained in a stable $\J$-class that is minimal in $(S\sm I)/{\J}$.  
We say that an IN-pair $(I,N)$ is \emph{retractable} if $I$ is a retractable ideal, and if all the elements of $N$ act the same way on $M$: i.e., if $|xN|=|Nx|=1$ for all $x\in M$.  The next result is a special case of \cite[Proposition 3.22]{EMRT2018}:

\begin{lemma}\label{lem:EMRT}
Let $S$ be a regular semigroup with a stable minimal ideal $M$, and let $(I,N)$ be an IN-pair in $S$.
\begin{thmenum}
\item \label{it:EMRTi} The relation $R_I \cup \vt_N$ is a congruence on $S$.
\item \label{it:EMRTii} If $(I,N)$ is retractable, and if $\si$ is a liftable congruence on $M$, then the relation $R_{I,\si} \cup \vt_N$ is a congruence on $S$.  \epfres
\end{thmenum}
\end{lemma}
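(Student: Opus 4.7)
The plan is to reduce each part of the lemma to checking equivalence-relation axioms and two-sided compatibility, treating the Rees-type component and the $\vt_N$ component separately and then gluing them along the diagonal.

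First I would exploit the structural decomposition: $R_I$ affects only pairs from $I$ (apart from the diagonal), while $\vt_N$ is concentrated on $J$, which lies entirely in $S\sm I$. Thus the two relations share only the diagonal, so reflexivity, symmetry, and transitivity for the union reduce to verifying these properties separately on $R_I$ and on $\vt_N$. Since $R_I$ is just a Rees congruence, the outstanding task is transitivity of $\vt_N$ on $J$. Here I would use stability of $J$ — which promotes $J$ to a single $\D$-class — to pick an idempotent $e\in J$ whose group $\H$-class is $G$, and to work inside the resulting egg-box diagram. The key observation is that for $p\in J$, any representation $p=axb$ with $x\in G$ pins down the $\R$- and $\L$-classes of $p$, and that varying $x$ inside a coset of $N$ traces out exactly those elements of $H_p$ that are $\vt_N$-related to $p$. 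Transitivity then follows from $N$ being closed under multiplication.

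Next I would verify compatibility of $R_I\cup\vt_N$. Compatibility of $R_I$ is automatic. For $\vt_N$, take $(p,q)\in\vt_N$ with $p=axb$, $q=ayb$, $x,y\in N$, and any $c\in S^1$. The goal is $(cp,cq)\in R_I\cup\vt_N$. By stability and minimality of $J$ in $(S\sm I)/{\J}$, either $cp\in I$ — in which case $cq\in I$ too, since $cp$ and $cq$ factor through $ae$ (where $e$ is the identity of $G$) and hence share $\J$-class bounds — yielding the pair in $R_I$; or $cp\in J$, in which case $cp=(ca)xb$ and $cq=(ca)yb$ are already in the form required for $\vt_N$. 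Right compatibility is symmetric. The subtle point is that the permutation parameter attached to the $\H$-related pair $(cp,cq)$ must be shown to lie in $N$; this is precisely where $N\normal G$ enters, since passing from $(p,q)$ to $(cp,cq)$ conjugates that parameter by the connecting group element.

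For part (ii), $R_{I,\si}$ refines $R_I$ via the retraction $f\colon I\to M$ and the liftable congruence $\si$ on $M$. I would first check that $R_{I,\si}$ is a congruence on the ideal $I$ by composing $f$ with the quotient map $M\to M/\si$, which is well defined since $f$ is a homomorphism and $\si$ is a congruence. Liftability of $\si$ ensures that left and right multiplication by elements of $S\sm I$ respects $\si$-classes, so $R_{I,\si}$ extends to a congruence on $S$. The new interaction with $\vt_N$ appears when a $\vt_N$-pair $(p,q)$ is multiplied into $I$: I must verify that $(cp)f$ and $(cq)f$ are $\si$-related. The retractability assumption on $(I,N)$ — that every element of $N$ acts trivially on $M$ — pushes this through: the contributions of $x$ and $y$ to $(cp)f$ and $(cq)f$ collapse to the same element, so in fact $(cp)f=(cq)f$, more than enough for $\si$-equivalence.

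The main obstacle I anticipate is the bookkeeping around the permutation parameter attached to an $\H$-related pair in $J$: confirming that it is well defined up to conjugation, that it transforms correctly under left and right multiplication by $S^1$, and that normality of $N$ suffices to keep it inside $N$ after such multiplications. Once that machinery is set up cleanly, the remainder is routine case-checking.
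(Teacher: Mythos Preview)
The paper does not actually prove this lemma: it is stated as a special case of \cite[Proposition 3.22]{EMRT2018} and carries the end-of-proof symbol \verb|\epfres| directly in the statement. So there is no proof in the paper to compare against; your proposal is an attempt to supply what the paper deliberately outsources.

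Your outline is broadly on the right track, but two points are misplaced or underdeveloped. First, normality of $N$ is not needed for compatibility: once $(p,q)=(axb,ayb)$ with $x,y\in N$, the pair $(cp,cq)=((ca)xb,(ca)yb)$ already matches the defining form of $\vt_N$ verbatim. Where normality is genuinely required is in transitivity (and in showing $\vt_N$ is independent of the choice of maximal subgroup), since two witnesses $(a,b)$ and $(a',b')$ for the same element of $J$ differ by a conjugating group element, and one must show the resulting coset of $N$ is well defined. Second, in part (ii) your claim that $(cp)f=(cq)f$ needs more than ``the contributions of $x$ and $y$ collapse'': the retraction $f$ is only a homomorphism on $I$, while $ca$, $x$, $y$, $b$ need not lie in $I$, so you cannot distribute $f$ across the product. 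The argument in \cite{EMRT2018} goes through the structure of the minimal ideal $M$ and the fact that elements of $N$ act identically on $M$ on both sides; making this precise is the real content of the retractability hypothesis, and your sketch does not yet bridge that gap.
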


\subsection[Regularity, stability and (retractable) IN-pairs in $\P_X$ and $\PB_X$]{\boldmath Regularity, stability and (retractable) IN-pairs in $\P_X$ and $\PB_X$}\label{subsect:stability}

We now relate the notions introduced in Subsection \ref{subsect:EMRT} to the monoid $\M$, which we recall stands for either $\P_X$ or $\PB_X$.

First we note that $\M$ is regular.  Indeed, if $\al=\partABCD\in\M$, then with $\al^*=\partn{B_i}{D_k}{A_i}{C_j}$, we have $\al=\al\al^*\al$.  In fact, we also have $(\al^*)^*=\al$ and $(\al\be)^*=\be^*\al^*$, so that $\M$ is a so-called \emph{regular $*$-semigroup} in the sense of Nordahl and Scheiblich \cite{NS1978}.
This leads to a natural symmetry/duality that will be repeatedly invoked to shorten arguments.  

By Corollary \ref{cor:ideals_M} \ref{it:idMi}, $\M$ has a minimal ideal, namely
\[
I_1=D_0=\set{\al\in\M}{\rank(\al)=0}.
\]
For a partition $\alpha\in\M$, let $\widehat{\alpha}$ denote the unique partition of rank~$0$ with the same kernel and cokernel as $\alpha$.  In other words, if $\al=\partABCD$, then~$\wh\al=\partXXV{A_i}{C_j}{B_i}{D_k}$.
The mapping $\al\mt\wh\al$ will be used frequently throughout the paper, including to describe the retractable ideals of $\M$.

The proof of \cite[Lemma 5.2]{EMRT2018} works virtually unmodified to prove the following (but we do note a slight shift in notation: in \cite{EMRT2018}, $I_k$ was used to denote the set of all partitions of rank up to \emph{and including}~$k$):

\begin{lemma}
\label{la36}
The mapping $I_2\rightarrow I_1\colon \alpha\mapsto\widehat{\alpha}$ is a retraction.  \epfres
\end{lemma}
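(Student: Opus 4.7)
The plan is to verify the two defining properties of a retraction, namely that $\al\mapsto\wh\al$ fixes $I_1$ pointwise and is a homomorphism on $I_2$. The first property is immediate: for $\al\in I_1 = D_0$ there are no transversals, so $\al$ itself is already the unique rank-$0$ partition with the same kernel and cokernel as $\al$, whence $\wh\al=\al$.

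For the homomorphism property, the goal is to prove $\wh{\al\be}=\wh\al\wh\be$ for all $\al,\be\in I_2$. Both sides are rank-$0$ partitions: the right-hand side because $\wh\al\in I_1$ and $I_1$ is an ideal, the left-hand side by definition of the hat operator. Since a rank-$0$ partition is uniquely determined by its kernel and cokernel (it has no transversals, so its blocks are literally the blocks of $\ol\al$ and $\ul\al$), it suffices to prove $\ker(\al\be)=\ker(\wh\al\wh\be)$ together with the dual identity for cokernels. The key step I would isolate as an auxiliary lemma is the assertion that $\ker(\al\be)=\ker(\al)$ whenever $\al\in I_2$, for \emph{any} $\be\in\M$. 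To prove this I would inspect the product graph $\Pi(\al,\be)$: because $\al$ has at most one transversal, every upper non-transversal $C_j$ of $\al$ corresponds to a block of $\al_\downarrow$ containing no element of $X''$, so $C_j$ stays isolated from the middle row and survives as a block of $\al\be$; and if $\al$ has a transversal $A\cup B'$, then $A$ is the only $X$-block that can reach the middle, doing so via $B''$, so regardless of whether this path closes up into a transversal of $\al\be$ or leaves $A$ as an upper non-transversal, the upper partition of $\al\be$ is exactly $\ol\al$. Applying the $*$-duality $\al\mapsto\al^*$ (under which $\ker$ and $\coker$ swap, and $(\al\be)^*=\be^*\al^*$) then yields $\coker(\al\be)=\coker(\be)$ whenever $\be\in I_2$.

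Assembling the pieces: for $\al,\be\in I_2$ we have $\wh\al,\wh\be\in D_0\sub I_2$ with $\ker(\wh\al)=\ker(\al)$ and $\coker(\wh\be)=\coker(\be)$; applying the auxiliary lemma twice gives $\ker(\al\be)=\ker(\al)=\ker(\wh\al)=\ker(\wh\al\wh\be)$ and symmetrically for cokernels, so uniqueness of rank-$0$ partitions with prescribed kernel and cokernel forces $\wh{\al\be}=\wh\al\wh\be$. The main obstacle is the case analysis inside the product graph, where one must carefully track how the upper non-transversals and the (possible) transversal of $\al$ interact with blocks of $\be^\uparrow$ through shared vertices in the middle row; everything else is bookkeeping, and the argument transfers verbatim between $\M=\P_X$ and $\M=\PB_X$, since the hat operator preserves block-size bounds and no partitions of block size exceeding $2$ are introduced by any of the constructions.
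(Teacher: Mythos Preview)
Your proof is correct. The paper does not actually give its own argument here: it simply observes that the proof of \cite[Lemma~5.2]{EMRT2018} (the finite case) carries over unchanged, and your write-up is precisely a self-contained version of that argument, reducing the homomorphism property to the observation that $\ker(\al\be)=\ker(\al)$ whenever $\rank(\al)\leq1$.
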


Thus, the ideal $I_2$ is retractable.  It turns out that no ideal larger than $I_2$ is retractable; indeed, this can be shown directly, but also follows from Theorem \ref{thm-main} (since if any larger ideal of $\M$ was retractable, this would yield additional congruences on $\M$).  We now identify the stable ${\D}={\J}$-classes of $\M$.

\begin{lemma}\label{la37}
If $n\in[0,\aleph_0)$, then $D_n$ is a stable $\J$-class of $\M$. 
\end{lemma}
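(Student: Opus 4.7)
The plan is to verify the two stability conditions for $\al\in D_n$, $\be\in\M$, with $n\in[0,\aleph_0)$: (i) $\al\be\J\al \implies \al\be\R\al$, and (ii) $\be\al\J\al \implies \be\al\L\al$. Since $\M$ is a regular $*$-semigroup (Subsection~\ref{subsect:stability}) via the involution $\al\mapsto\al^*$ that swaps $\R$ with $\L$ while preserving $\J$, (ii) follows from (i) applied to $\be^*,\al^*$, and so the task reduces to establishing (i).

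For (i), write $\al=\partABCD$ with $|I|=n$, and let $\ga=\al\be$. The aim is to show $\dom(\al)=\dom(\ga)$ and $\ker(\al)=\ker(\ga)$, which via Lemma~\ref{lem:Green_M}\,\ref{it:GMiv} yields $\al\R\ga$. The inclusions $\dom(\ga)\subseteq\dom(\al)$ and $\ker(\al)\subseteq\ker(\ga)$ are automatic from $\ga\leqR\al$. I will argue using the product graph $\Pi(\al,\be)$: the transversals of $\ga$ correspond to those connected components of $\Pi(\al,\be)$ that meet both $X$ and $X'$. The key observation is that any upper vertex $x\in X$ can be connected to the middle row $X''$ inside $\Pi(\al,\be)$ only through the block of $\al$ containing $x$, and this is possible only when that block is a transversal of $\al$ (any upper non-transversal $C_j$ lies entirely in $X$ and so is isolated from $X''$).

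Consequently every transversal of $\ga$ must ``use'' at least one of the $n$ transversals of $\al$ in order to reach $X'$, and distinct transversals of $\ga$ (being disjoint connected components) use disjoint sets of transversals of~$\al$. Since there are $\rank(\ga)=n$ transversals of $\ga$ and only $n$ transversals of $\al$ available, pigeonhole --- relying crucially on the finiteness of $n$ --- forces a bijection under which each transversal $A_i\cup B_i'$ of $\al$ is used by a unique transversal of $\ga$, whose upper part is then exactly $A_i$. The upper non-transversals $C_j$ of $\al$ remain isolated in $\Pi(\al,\be)$ and so are also blocks of $\ga$. Thus the partition of $X$ induced by $\ga$ has the same transversal parts $\{A_i\}$ and non-transversal parts $\{C_j\}$ as that induced by $\al$, yielding the required equalities.

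The main obstacle is nothing deep, merely the careful combinatorial tracking of connected components of $\Pi(\al,\be)$ needed to justify the pigeonhole step. It is worth noting that the finiteness of $n$ is essential: for an infinite $n$ several transversals of $\al$ could coalesce inside $\ga$ without dropping the rank, so stability would generally fail outside of $D_n$ for finite $n$. No partition is constructed during the argument, so the proof is uniform across $\M=\P_X$ and $\M=\PB_X$; the partial-Brauer restriction on block sizes plays no role.
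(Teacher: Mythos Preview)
Your proof is correct and follows essentially the same approach as the paper's: both reduce to the right-hand stability condition by duality, observe via $\ga\leqR\al$ that $\ker(\ga)\supseteq\ker(\al)$, note that each upper non-transversal $C_j$ of $\al$ remains a $\ker(\ga)$-class, and then use a pigeonhole argument on the finite number $n$ of transversals to conclude that the $A_i$ are precisely the upper parts of the transversals of $\ga$. Your version spells out the product-graph mechanics more explicitly, while the paper's is terser and phrases the same observations purely in terms of kernels; the content is the same.
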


\pf
Let $\al\in D_n$ and $\be\in \M$ be arbitrary, and write $\al=\partABCD$, noting that $|I|=n<\aleph_0$.  We must show that 
\[
\al\be\J\al \implies \al\be\R\al \AND \be\al\J\al \implies \be\al\L\al.
\]
We just prove the first assertion, as the second is dual.  
Suppose $\al\be\J\al$: i.e, $\al\be\in D_n$.  Since 
$\ker(\al\be)\supseteq\ker(\al)$, each $\ker(\al\be)$-class is a union of $\ker(\al)$-classes.  Now, each~$C_j$ is a $\ker(\al\be)$-class.  
As $\rank(\alpha\beta)=n$, the $n$ sets $A_i$ must be the upper parts of distinct transversals of $\alpha\beta$.
Hence $\dom(\alpha\beta)=\dom(\alpha)$ and $\ker(\alpha\beta)=\ker(\alpha)$:
i.e., $\alpha\beta\R \alpha$ by Lemma~\ref{lem:Green_M} \ref{it:GMiv}.
\epf

It turns out that $D_\xi$ is not stable if $\xi$ is infinite; again, this can be shown directly, but also follows from Theorem \ref{thm-main}.

Next we identify the IN-pairs in $\M$.  By definition, and by Lemma \ref{la37}, these include all pairs of the form $(I_n,N)$, where $n\in [1,\aleph_0)$, and $N$ is a normal subgroup of some group $\H$-class contained in~$D_n$.  
(Once again, it will follow from Theorem \ref{thm-main} that these are \emph{all} the IN-pairs, but we do not need to know this here.)  It will be convenient to fix a particular such group $\H$-class for each $n\in [1,\aleph_0)$.  

To this end, fix any countable subset of $X$, and without loss of generality assume it is $[1,\aleph_0)=\{1,2,\ldots\}\subseteq X$.  For each $n\in [1,\aleph_0)$, we write $\ep_n=\ep_{\{1,\ldots,n\}}$ (the~$\ep_Y$ notation was defined in Subsection \ref{subsect:M}).  For any permutation $\pi\in\S_n$, we write $\permdec\pi =\binom i{i\pi}_{1\leq i\leq n}
\in\M$,
and for any $\Si\subseteq\S_n$ write $\permdec\Si=\set{\permdec\pi}{\pi\in\Si}$.  
So the $\H$-class of $\ep_n$ is precisely the set $\permdec\S_n$.
For any normal subgroup $N\unlhd \S_n$, the set $\permdec N$ is a normal subgroup of $\permdec\S_n$, and $(I_n,\permdec N)$ is an IN-pair.

Clearly the IN-pair $(I_1,\permdec\S_1)=(I_1,\{\permdec\id_1\})$ is retractable.  Beyond this obvious one, we have two more retractable IN-pairs, as the next lemma demonstrates; the proof is essentially identical to that of \cite[Lemma~5.3]{EMRT2018}.

\begin{lemma}
\label{la39}
If $N$ is either of $\{\id_2\}$ or $\S_2$, then $(I_2,\permdec N)$ is a retractable IN-pair.  \epfres
\end{lemma}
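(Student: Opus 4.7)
The plan is to verify the two conditions separately: that $(I_2,\permdec{N})$ is an IN-pair, and that it additionally satisfies the ``uniform action'' requirement that makes it retractable. The IN-pair part is essentially bookkeeping from the preceding lemmas. By Corollary~\ref{cor:ideals_M}, the $\J$-classes of $\M$ outside $I_2$ form a well-ordered chain starting with $D_2$, so $D_2$ is minimal in $(\M\sm I_2)/\J$; by Lemma~\ref{la37} applied with $n=2$ it is also stable; the $\H$-class of $\ep_2$ is the maximal subgroup $\permdec{\S_2}\sub D_2$; and $\permdec{N}\normal\permdec{\S_2}$ whenever $N\normal\S_2$. Lemma~\ref{la36} supplies the retraction $\al\mapsto\wh\al$, so $I_2$ is itself a retractable ideal; what remains is to show that each element of $\permdec{N}$ acts identically on $M=D_0$ on both sides.

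The case $N=\{\id_2\}$ is vacuous, so the task reduces to showing $\ep_2\al=\permdec{(12)}\al$ and $\al\ep_2=\al\permdec{(12)}$ for every $\al\in D_0$. Using the $*$-duality $\gamma\mapsto\gamma^*$ on $\M$, together with $\ep_2^*=\ep_2$ and $(\permdec{(12)})^*=\permdec{(12)}$, the second identity follows from the first (applied to $\al^*$, which is also of rank zero), so the heart of the matter is to show $\ep_2\al=\permdec{(12)}\al$ for all rank-zero $\al$. Here I would exploit the fact that elements of $D_0$ are uniquely determined by their kernel and cokernel, and that both products again lie in $D_0$ since it is an ideal.

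The cokernels agree for a simple structural reason: because $\al$ has no transversals, the lower row $X'$ of the product graph $\Pi(\gamma,\al)$ is never linked to the middle row $X''$ through $\al^\uparrow$, so the lower blocks of $\gamma\al$ simply copy those of $\al$, independently of whether $\gamma=\ep_2$ or $\gamma=\permdec{(12)}$. For the kernels, every $x\in X\sm\{1,2\}$ forms a singleton upper block in both products, while $1$ and $2$ lie in a common block of $\gamma\al$ iff $1''$ and $2''$ lie in a common upper block of $\al^\uparrow$: for $\gamma=\ep_2$ this goes via the bridge $1\sim 1''$, $2\sim 2''$, and for $\gamma=\permdec{(12)}$ via $1\sim 2''$, $2\sim 1''$. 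The resulting criterion---that $1$ and $2$ belong to the same upper block of $\al$---is the same in both cases. I do not anticipate a real obstacle; the partitions $\ep_2$ and $\permdec{(12)}$ both lie in $\PB_X\sub\P_X$, so the argument works uniformly for $\M=\P_X$ and $\M=\PB_X$, and the only subtlety is confirming that the product-graph reasoning is genuinely local to $\{1,2,1',2'\}$, which is precisely what the absence of transversals in $\al$ guarantees.
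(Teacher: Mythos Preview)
Your proof is correct and fills in exactly the details the paper omits (the paper simply refers to \cite[Lemma~5.3]{EMRT2018}). The verification that $(I_2,\permdec N)$ is an IN-pair, the appeal to Lemma~\ref{la36} for retractability of $I_2$, the reduction via $*$-duality, and the kernel/cokernel argument showing $\ep_2\al=\permdec{(12)}\al$ for $\al\in D_0$ are all sound and constitute the natural direct verification of the definition.
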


Each IN-pair $(I_n,\permdec N)$ leads to a congruence on $\M$, as in Lemma \ref{lem:EMRT} \ref{it:EMRTi}, each involving the relation~$\vt_{\permdec N}$ defined in Subsection~\ref{subsect:EMRT}.  The next lemma shows that this relation~$\vt_{\permdec N}$ is precisely the relation~$\nu_N$ defined in Subsection~\ref{subsect:statement}; its proof is essentially identical to that of \cite[Lemma~5.6]{EMRT2018}.

\begin{lemma}\label{lem:nu_N}
For any $n\in[1,\aleph_0)$, and for any normal subgroup $N\normal\S_n$, we have $\vt_{\permdec N}=\nu_N$.  \epfres
\end{lemma}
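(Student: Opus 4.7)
My plan is to establish the two inclusions $\vt_{\permdec N} \subseteq \nu_N$ and $\nu_N \subseteq \vt_{\permdec N}$ separately, closely mirroring the argument of \cite[Lemma~5.6]{EMRT2018}. The key ingredients are the stability of $D_n$ (Lemma~\ref{la37}), the fact that $\permdec\S_n$ is the group $\H$-class of $\ep_n$, and the normality of $N$ in $\S_n$ (which makes the condition ``$\phi(\alpha,\beta) \in N$'' well defined despite $\phi(\alpha,\beta)$ being unique only up to conjugation).

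For the inclusion $\vt_{\permdec N} \subseteq \nu_N$, I would start with a pair $(\alpha,\beta) = (axb, ayb) \in D_n \times D_n$ where $x = \permdec\sigma$ and $y = \permdec\tau$ for some $\sigma,\tau \in N$. Since ranks cannot increase under multiplication and $\rank(\alpha) = \rank(x) = n$, the intermediate products $ax$ and $ay$ also lie in $D_n$, and stability forces $ax \L x$ (so $\codom(ax) = \{1,\dots,n\}$ and $\coker(ax) = \De_X$) as well as $axb \R ax$ and $ayb \R ay$. Writing the transversals of $ax$ as $P_i \cup \{i'\}$ and using $y = x \cdot \permdec{\sigma^{-1}\tau}$, a direct product-graph computation gives the transversals of $ay$ as $P_i \cup \{(i\sigma^{-1}\tau)'\}$. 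Similarly, the fact that $axb \in D_n$ forces the elements $1,\dots,n$ to lie in $n$ distinct transversals of $b$, say $U_i \cup V_i'$ with $i \in U_i$; a further product-graph computation then shows that $axb$ and $ayb$ have transversals $P_i \cup V_i'$ and $P_i \cup V_{i\sigma^{-1}\tau}'$ respectively, sharing the same upper non-transversals $C_j$ (from $a$) and lower non-transversals $D_k'$ (from $b$). This simultaneously yields $\alpha \H \beta$ via Lemma~\ref{lem:Green_M} and $\phi(\alpha,\beta) = \sigma^{-1}\tau \in N$.

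For the reverse inclusion $\nu_N \subseteq \vt_{\permdec N}$, given $(\alpha,\beta) \in \nu_N$ I would pick a representative $\phi \in N$ of $\phi(\alpha,\beta)$ and put $\alpha, \beta$ in a common form, with transversals $P_i \cup Q_i'$ and $P_i \cup Q_{i\phi}'$ respectively ($i=1,\dots,n$), sharing upper non-transversals $C_j$ and lower non-transversals $D_k'$. I would then construct $a, b \in \M$ explicitly: $a$ has transversals $P_i \cup \{i'\}$ and upper non-transversals $C_j$; $b$ has transversals $\{i\} \cup Q_i'$ and lower non-transversals $D_k'$; the remaining ``middle'' blocks ($a$'s lower non-transversals and $b$'s upper non-transversals) are taken to be the same fixed partition of $X \setminus \{1,\dots,n\}$ so that they cancel inside the product graph. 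A direct product-graph computation then verifies $\alpha = a \cdot \permdec{\id_n} \cdot b$ and $\beta = a \cdot \permdec\phi \cdot b$, placing $(\alpha,\beta)$ into $\vt_{\permdec N}$.

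The main point requiring care is keeping the construction inside $\PB_X$ in the case $\M = \PB_X$. This is in fact automatic: any rank-$n$ element of $\PB_X$ must have each transversal $P_i \cup Q_i'$ of size at most $2$ with both parts non-empty, forcing $|P_i| = |Q_i| = 1$, so the transversals of $a$ and $b$ above have size exactly $2$; the blocks $C_j, D_k$ inherit the size-at-most-$2$ condition from $\alpha$; and the filler partition on $X \setminus \{1,\dots,n\}$ can be taken to consist of singletons. Hence the same proof works uniformly for $\P_X$ and $\PB_X$.
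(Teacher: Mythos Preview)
Your proposal is correct and follows essentially the same approach as the paper, which explicitly states that the proof is identical to that of \cite[Lemma~5.6]{EMRT2018}. Your care in verifying that the constructed elements $a,b$ remain in $\PB_X$ when $\M=\PB_X$ is exactly the additional check needed to adapt the finite-case argument to the present setting.
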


Recall that for any cardinal $1\leq\xi\leq|X|^+$, we have the Rees congruence
\[
R_\xi = \De_{\M}\cup(I_\xi\times I_\xi).
\]

\begin{lemma}\label{lem:R_N}
For any $n\in[1,\aleph_0)$ and $N\normal\S_n$, the relation $R_N$ defined by 
$
R_N = R_n\cup\nu_N
$
is a congruence on $\M$.  \epfres
\end{lemma}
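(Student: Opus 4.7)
The plan is to obtain this lemma as an essentially immediate consequence of the general machinery packaged in Lemma~\ref{lem:EMRT}~\ref{it:EMRTi}, once the relevant hypotheses have been matched to the monoid $\M$. Thus my proof would consist of verifying, one by one, that all of the ingredients required to apply that lemma are indeed available in our setting, and then translating the notation.

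First, I would note that $\M$ is a regular semigroup: for $\alpha=\partABCD$, the element $\alpha^*=\partn{B_i}{D_k}{A_i}{C_j}$ satisfies $\alpha\alpha^*\alpha=\alpha$ (as observed at the start of Subsection~\ref{subsect:stability}), and $\alpha^*$ lies in $\M$ whether $\M=\P_X$ or $\M=\PB_X$ since passing from $\alpha$ to $\alpha^*$ preserves block sizes. Second, the minimal ideal of $\M$ is $I_1=D_0$ by Corollary~\ref{cor:ideals_M}, and it is a stable $\J$-class by Lemma~\ref{la37} (applied with $n=0$). So the standing hypothesis of Lemma~\ref{lem:EMRT} is met.

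Next, I would verify that $(I_n,\permdec N)$ is an IN-pair in $\M$. The set $I_n=\set{\alpha\in\M}{\rank(\alpha)<n}$ is an ideal by Corollary~\ref{cor:ideals_M}~\ref{it:idMi}. The $\J$-class immediately above $I_n$ in the $\J$-class order is $D_n$, which is again stable by Lemma~\ref{la37} (this is where $n$ being finite is used). The element $\ep_n$ is an idempotent of $D_n$, and its $\H$-class is $\permdec\S_n$, of which $\permdec N$ is a normal subgroup. So $(I_n,\permdec N)$ meets the definition of an IN-pair recalled in Subsection~\ref{subsect:EMRT}.

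Applying Lemma~\ref{lem:EMRT}~\ref{it:EMRTi} to this IN-pair yields that $R_{I_n}\cup\vt_{\permdec N}$ is a congruence on $\M$. By definition of the Rees congruence, $R_{I_n}=R_n$; and by Lemma~\ref{lem:nu_N}, $\vt_{\permdec N}=\nu_N$. Combining these identifications gives $R_n\cup\nu_N=R_N$, completing the proof. There is no real obstacle here since all the heavy lifting has already been performed in the preceding lemmas; the only subtlety to keep in mind is that both the stability of $D_n$ and the existence of the IN-pair rely crucially on $n$ being finite, which is exactly the hypothesis of the lemma.
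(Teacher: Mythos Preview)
Your proposal is correct and matches the paper's approach exactly: the paper presents this lemma with no explicit proof (just a \qed), treating it as immediate from the discussion preceding it, which establishes that $(I_n,\permdec N)$ is an IN-pair and that $\vt_{\permdec N}=\nu_N$ via Lemma~\ref{lem:nu_N}, so that Lemma~\ref{lem:EMRT}~\ref{it:EMRTi} applies directly. Your write-up simply makes these implicit steps explicit.
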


\subsection{Inequalities}\label{subsect:inequalities}

Before we move on, we establish a number of inequalities involving the symmetric difference.

\begin{lemma}
\label{la32}
For arbitrary partitions $\alpha,\beta,\th\in \P_X$ we have
\begin{thmenum}
\item
\label{it:32i}
$|\overline{\alpha\th}\sd\overline{\beta\th}|\leq |\overline{\alpha}\sd\overline{\beta}| + 2\rank(\alpha) + 2\rank(\beta)$,
\item
\label{it:32ii}
$|\overline{\th\alpha}\sd\overline{\th\beta}| \leq
|\overline{\alpha}\sd\overline{\beta}|$,
\item
\label{it:32iii}
$|\underline{\alpha\th}\sd\underline{\beta\th}|\leq 
|\underline{\alpha}\sd\underline{\beta}|$,
\item
\label{it:32iv}
$|\underline{\th\alpha}\sd\underline{\th\beta}| \leq
|\underline{\alpha}\sd\underline{\beta}| + 2\rank(\alpha) + 2\rank(\beta)$,
\item
\label{it:32v}
$|\alpha\th\sd\beta\th|\leq |\alpha\sd\beta|$,
\item
\label{it:32vi}
$|\th\alpha\sd\th\beta| \leq |\alpha\sd\beta|$,
\item 
\label{it:32vii}
$|\ol\al\sd\ol\be|\leq|\al\sd\be|$,
\item 
\label{it:32viii}
$|\ul\al\sd\ul\be|\leq|\al\sd\be|$,
\item 
\label{it:32ix}
$|\al\sd\be|\leq|\ol\al\sd\ol\be|+|\ul\al\sd\ul\be|+3\rank(\al)+3\rank(\be)$. 
\end{thmenum}
\end{lemma}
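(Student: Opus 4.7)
The plan is to establish each of the nine inequalities by constructing an explicit injection from the left-hand symmetric difference into the right-hand one, relying on the fact that distinct blocks of any partition are pairwise disjoint. The argument will work uniformly for both $\M=\P_X$ and $\M=\PB_X$. For the simplest parts (vii) and (viii), each block of $\overline{\alpha}$ either equals an upper non-transversal $C_j$ of $\alpha$ (which is itself a block of $\alpha$) or is the upper part $A_i$ of a unique transversal $A_i\cup B_i'$ of $\alpha$; sending the former to itself and the latter to $A_i\cup B_i'$ yields an injection $\overline{\alpha}\sm\overline{\beta}\to\alpha\sm\beta$, and (viii) follows by the involution $\alpha\mapsto\alpha^*$.

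For (v) and (vi), every block $B\in\alpha$ contributes a single clique to $\alpha_\downarrow$ and therefore lies inside a unique connected component of the product graph $\Pi(\alpha,\theta)$. Consequently each $E\in\alpha\theta$ carries an associated set $\mathcal{A}(E)\subseteq\alpha$ of contributing blocks, with distinct $E$'s yielding disjoint $\mathcal{A}(E)$'s. If $E\in\alpha\theta\sm\beta\theta$, then not every block in $\mathcal{A}(E)$ can be a block of $\beta$, since otherwise the same connected component, and hence $E$, would reappear in $\Pi(\beta,\theta)$; choosing any witness from $\mathcal{A}(E)\cap(\alpha\sm\beta)$ gives the required injection. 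For (i) and (iv), I invoke the triangle inequality
\[
|\overline{\alpha\theta}\sd\overline{\beta\theta}|\leq|\overline{\alpha\theta}\sd\overline{\alpha}|+|\overline{\alpha}\sd\overline{\beta}|+|\overline{\beta}\sd\overline{\beta\theta}|,
\]
and bound $|\overline{\alpha\theta}\sd\overline{\alpha}|\leq 2\rank(\alpha)$: the upper non-transversals $C_j$ of $\alpha$ are preserved as blocks of $\alpha\theta$, so the only source of discrepancy is the at most $\rank(\alpha)$ upper parts $A_i$ of transversals that may disappear from $\overline{\alpha}$ together with the at most $\rank(\alpha)$ merged blocks that may appear in $\overline{\alpha\theta}$.

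The delicate parts are (ii) and (iii). A preliminary observation is that the product graph $\Pi(\theta,\alpha)$ restricted to $X\cup X''$ depends only on $\theta$ and $\overline{\alpha}$, since each block of $\overline{\alpha}$ contributes exactly one clique on $X''$ under the natural identification $X\leftrightarrow X''$. I then mirror the (v)-argument and assign to each $E\in\overline{\theta\alpha}$ a pairwise-disjoint set $\mathcal{B}(E)\subseteq\overline{\alpha}$ of contributing blocks. The main obstacle is the potential scenario $\mathcal{B}(E)\subseteq\overline{\beta}$, in which it is not immediately clear that a witness in $\overline{\alpha}\sm\overline{\beta}$ exists, and a priori the $\beta$-component of $E$ might be strictly larger than its $\alpha$-component (due to some $C\in\overline{\beta}\sm\overline{\alpha}$ bridging distinct $\alpha$-components). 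This is resolved by a disjointness argument: any $C\in\overline{\beta}\sm\mathcal{B}(E)$ is disjoint from $\bigcup\mathcal{B}(E)$ (since $\overline{\beta}$ is a partition), so $C$ cannot introduce any new connectivity into $E$'s component, forcing $E$'s $\beta$-component to coincide with its $\alpha$-component. This gives $E\in\overline{\theta\beta}$, contradicting $E\in\overline{\theta\alpha}\sm\overline{\theta\beta}$; hence $\mathcal{B}(E)\not\subseteq\overline{\beta}$, and any block in $\mathcal{B}(E)\cap(\overline{\alpha}\sm\overline{\beta})$ serves as the required witness.

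Finally, (ix) runs in the opposite direction. A block of $\alpha$ is uniquely determined by its upper part (a block of $\overline{\alpha}$ or empty), its lower part (a block of $\underline{\alpha}$ or empty), and a single bit of information recording whether these two parts are joined into a transversal. Consequently any block of $\alpha\sd\beta$ is witnessed either by an element of $\overline{\alpha}\sd\overline{\beta}$, an element of $\underline{\alpha}\sd\underline{\beta}$, or a discrepancy in the transversal-pairing of common upper/lower kernel blocks; the last of these affects at most $\rank(\alpha)+\rank(\beta)$ blocks, and a careful triple-count over the three sources of change yields the $3\rank(\alpha)+3\rank(\beta)$ slack.
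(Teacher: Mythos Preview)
Your argument is correct, and for most parts it tracks the paper's proof closely: (vii)/(viii) and (v)/(vi) are essentially identical to the paper's treatment, and your sketch of (ix) is the paper's counting argument phrased at a higher level (the paper makes the ``triple count'' explicit by splitting $\alpha\setminus\beta$ into transversals, upper non-transversals, and lower non-transversals).

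The one genuine methodological difference is part (i). The paper bounds $|\overline{\alpha\theta}\setminus\overline{\beta\theta}|$ directly, observing that any block here is either an upper non-transversal of $\alpha$ (hence already in $\overline{\alpha}\setminus\overline{\beta}$ or the upper part of a transversal of $\beta$), or else contains the upper part of some transversal of $\alpha$; this gives $|\overline{\alpha}\setminus\overline{\beta}|+\rank(\alpha)+\rank(\beta)$ per side. Your route via the triangle inequality and the auxiliary bound $|\overline{\alpha\theta}\sd\overline{\alpha}|\leq 2\rank(\alpha)$ is cleaner conceptually and isolates a reusable fact (right-multiplication changes $\overline{\alpha}$ by at most $2\rank(\alpha)$ blocks), at the cost of being slightly less sharp in intermediate constants. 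For part (ii) your write-up is actually more explicit than the paper's: you spell out the disjointness argument that rules out a $\overline{\beta}$-block enlarging the component, whereas the paper asserts the analogous step more tersely. One small remark: the same disjointness reasoning is needed to justify ``the same connected component would reappear'' in your (v) argument --- you clearly have the right idea from (ii), but it is worth saying so there too.
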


\begin{proof}
By duality, it is enough to prove \ref{it:32i}, \ref{it:32ii}, \ref{it:32v}, \ref{it:32vii} and \ref{it:32ix}.  We treat these roughly in order of difficulty.

\pfitem{\ref{it:32vii}}   Consider a block $A\in\ol\al\sm\ol\be$.  Then $A=B\cap X$ for some block $B$ of $\al$ (possibly $B=A$).  If~$B$ was a block of $\be$, then $A=B\cap X$ would be a block of $\overline{\be}$, a contradiction.  So $B\in\al\sm\be$.  This shows that $|\ol\al\sm\ol\be|\leq|\al\sm\be|$.  A symmetrical argument gives $|\ol\be\sm\ol\al|\leq|\be\sm\al|$.  Adding these two inequalities gives the claimed result.

\pfitem{\ref{it:32v}}  Consider a block $A$ from ${\al\th\sm\be\th}$.  The product graph $\Pi(\al,\th)$ contains a connected component~$B$ such that ${A=B\cap(X\cup X')}$.  Now, $B$ is the union of some collection of blocks of~$\al_\downarrow$ and blocks of~$\th^\uparrow$.  All of these blocks from $\th^\uparrow$ are present in $\Pi(\be,\th)$.  Thus, since $A$ is not a block of $\be\th$, at least one of the blocks of $\al_\downarrow$ contained in $B$ must not be present in $\Pi(\be,\th)$; this corresponds to a block from $\al\sm\be$.  This shows that $|\al\th\sm\be\th|\leq|\al\sm\be|$, and the proof concludes as in the previous part, by adding this to the symmetrical statement.

\pfitem{\ref{it:32i}}
As in the previous cases, it is enough prove that $|\ol{\al\th}\sm\ol{\be\th}|\leq|\ol\al\sm\ol\be|+\rank(\al)+\rank(\be)$.  
Now, each block in $\overline{\alpha\th}$ is a union of blocks of $\overline{\alpha}$.
The upper non-transversals of $\alpha$ remain upper non-transversals in $\alpha\th$ too.
For such a block to belong to $\overline{\alpha\th}\sm\overline{\beta\th}$,
it must already belong to $\overline{\alpha}\sm\overline{\beta}$ or else be the upper part of a transversal of $\be$; there are no more than $|\overline{\alpha}\sm\overline{\beta}|$ and $\rank(\be)$ such blocks, respectively.
Every other block in $\ol{\alpha\th}$ must contain the upper part of at least one transversal of $\alpha$, so there are no more than $\rank(\alpha)$ of them.

\pfitem{\ref{it:32ii}}
Here it is enough to show that $|\overline{\th\alpha}\sm\overline{\th\beta}| \leq
|\overline{\alpha}\sm\overline{\beta}|$. 
Now, every block of $\overline{\th\alpha}$ is a union of blocks of $\overline{\th}$.
The upper non-transversals of $\th$ remain upper non-transversals of both $\th\alpha$ and $\th\beta$, so do not belong to $\overline{\th\alpha}\sd\overline{\th\beta}$.
Any other block of $\ol{\th\alpha}$ has the form $Y=\bigcup_{i\in I} A_i$ for some collection of transversals
$\set{A_i\cup B_i'}{i\in I}$ of $\th$; in this case, there must also be some (possibly empty) collection $\set{C_j'}{j\in J}$ of lower non-transversals of $\th$ such that $\bigcup_{i\in I}B_i\cup\bigcup_{j\in J}C_j$ is a union of some collection of blocks $\set{D_k}{k\in K}$ of $\ol\al$.  For such a block $Y$ to belong to $\overline{\th\alpha}\sm\overline{\th\beta}$, at least one of the $D_k$ must not belong to $\ol\be$; thus, there are at most $|\ol\al\sm\ol\be|$ such blocks $Y$.

\pfitem{\ref{it:32ix}} Consider a block $A\cup B'\in\al\sm\be$, where $A$ or $B$ (but not both) might be empty.  There are at most $\rank(\al)$ such blocks with $A$ and $B$ both non-empty.  If $B$ is empty, then either $A\in\ol\al\sm\ol\be$ or else $\be$ has a transversal $A\cup C'$ with $C\not=\emptyset$; thus, there are at most $|\ol\al\sm\ol\be|+\rank(\be)$ such blocks with $B$ empty.  Similarly, there are at most $|\ul\al\sm\ul\be|+\rank(\be)$ such blocks with $A$ empty.  This all shows that $|\al\sm\be|\leq|\ol\al\sm\ol\be|+|\ul\al\sm\ul\be|+\rank(\al)+2\rank(\be)$; the statement now follows in the usual way.
\end{proof}

\subsection{Congruences of type \ref{CT1}}\label{subsect:CT1}

We now embark on proving that the relations listed in Theorem \ref{thm-main} are congruences, starting with those of type \ref{CT1}. 
We begin with three lemmas that will also be useful in subsequent sections.
In the next proof, and in many subsequent ones, we make use of the following simple observation:

\begin{lemma}
\label{lem:card}
If $\ze=1$ or $\ze\geq\aleph_0$, then any finite sum of cardinals strictly less than $\ze$ is again strictly less than $\ze$.
\epfres
\end{lemma}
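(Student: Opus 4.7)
The plan is to handle the two cases for $\ze$ separately, both being standard consequences of cardinal arithmetic.

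First, if $\ze=1$, then the only cardinal strictly less than $\ze$ is $0$, so any finite sum of such cardinals equals $0<1=\ze$, and we are done.

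Now suppose $\ze\geq\aleph_0$, and let $\xi_1,\dots,\xi_n$ be cardinals with $\xi_i<\ze$ for each $i$. I would split into two subcases. If every $\xi_i$ is finite, then the sum $\xi_1+\dots+\xi_n$ is finite, hence strictly less than $\aleph_0\leq\ze$. Otherwise, at least one $\xi_i$ is infinite, and the standard identity $\kappa+\lambda=\max(\kappa,\lambda)$ for cardinals with $\max(\kappa,\lambda)\geq\aleph_0$ (see for instance \cite[Chapter 5]{Jech2003}) extends by induction on $n$ to give
\[
\xi_1+\dots+\xi_n=\max(\xi_1,\dots,\xi_n)<\ze.
\]

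The entire argument is short and relies only on textbook cardinal arithmetic, so there is no real obstacle; the only subtlety worth flagging is that the hypothesis excludes finite cardinals $\ze\geq 2$, where the conclusion would fail (e.g.\ $1+1=2\not<2$).
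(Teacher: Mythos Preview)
Your proof is correct. The paper itself gives no proof of this lemma (it is stated with just a \qed\ symbol, treating it as a standard fact of cardinal arithmetic), so your argument simply supplies the routine details the authors chose to omit.
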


\begin{lemma}
\label{la31}
If $\zeta\in \{1\}\cup[\aleph_0,|X|^+]$, then each of the relations $\lambda_\zeta$, $\rho_\zeta$, $\mu_\zeta$ is an equivalence.
\end{lemma}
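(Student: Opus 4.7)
The plan is to verify the three equivalence axioms for each of $\lambda_\zeta$, $\rho_\zeta$, $\mu_\zeta$, observing that the argument is essentially identical in all three cases since each has the form $\{(\alpha,\beta): |f(\alpha) \sd f(\beta)| < \zeta\}$ for some function $f$ (namely $f(\alpha) = \alpha$, $\overline{\alpha}$, or $\underline{\alpha}$).

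First I would dispatch reflexivity and symmetry in a single line: for any $\alpha$ we have $|f(\alpha) \sd f(\alpha)| = 0 < \zeta$ (since $\zeta \geq 1$), and symmetry of the relation is immediate from the symmetry of the operation $\sd$.

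The real content is transitivity, and here I would split on the value of $\zeta$. If $\zeta = 1$, then $|f(\alpha) \sd f(\beta)| < 1$ forces $f(\alpha) = f(\beta)$, and since $\alpha$ is clearly recoverable from itself in the case of $\mu_1$, and kernels/cokernels determine the relevant projections for $\lambda_1$ and $\rho_1$, the relation is an equivalence (in fact $\mu_1 = \Delta_{\M}$, while $\lambda_1$ and $\rho_1$ are the kernel/cokernel equivalences). For $\zeta \in [\aleph_0, |X|^+]$, I would appeal to the standard set-theoretic inclusion
\[
A \sd C \;\subseteq\; (A \sd B) \cup (B \sd C),
\]
which gives $|A \sd C| \leq |A \sd B| + |B \sd C|$. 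Taking $A = f(\alpha)$, $B = f(\beta)$, $C = f(\gamma)$ with $(\alpha,\beta)$ and $(\beta,\gamma)$ in the relation yields two cardinals strictly less than $\zeta$ whose sum, by Lemma~\ref{lem:card}, remains strictly less than $\zeta$. Hence $(\alpha,\gamma)$ lies in the relation.

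I do not anticipate any serious obstacle: the argument is a uniform set-theoretic calculation, and the only subtlety is invoking Lemma~\ref{lem:card} to handle the sum of two cardinals below $\zeta$, which is precisely why the hypothesis restricts $\zeta$ to $\{1\} \cup [\aleph_0, |X|^+]$ rather than allowing finite $\zeta \geq 2$ (for which transitivity would fail, as one could chain together pairs differing by one block to exceed any finite bound). The proof should occupy only a few lines.
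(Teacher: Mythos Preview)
Your proposal is correct and matches the paper's approach essentially line for line: reflexivity and symmetry are immediate, and transitivity follows from the inclusion $A \sd C \subseteq (A \sd B) \cup (B \sd C)$ together with Lemma~\ref{lem:card}. The only difference is that your case split on $\zeta = 1$ versus $\zeta \geq \aleph_0$ is unnecessary, since Lemma~\ref{lem:card} already handles both cases uniformly (a sum of two cardinals $< 1$ is trivially $< 1$); the paper accordingly does not split cases.
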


\begin{proof}
We prove the statement for $\lambda_\zeta$; the proof for $\rho_\zeta$ is dual, and for $\mu_\zeta$ analogous.
It is clear that $\lambda_\zeta$ is reflexive and symmetric.
Transitivity follows from $\overline{\alpha}\sd\overline{\gamma}\subseteq (\overline{\alpha}\sd\overline{\beta}) \cup (\overline{\beta}\sd\overline{\gamma})$
and Lemma \ref{lem:card}.
\end{proof}

\begin{lemma}
\label{la33}
If $\ze\in\{1\}\cup[\aleph_0,|X|^+]$ and $\eta\in[1,|X|^+]$ are such that $\eta\leq\zeta$, then the relations $\lambda_\zeta^\eta$ and $\rho_\zeta^\eta$ are congruences.  
\end{lemma}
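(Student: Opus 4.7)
My plan is to leverage Lemmas \ref{la31} and \ref{la32} together with the fact that the Rees congruence $R_\eta$ is already a congruence. For the equivalence property, I would simply note that $\lambda_\zeta^\eta = \lambda_\zeta \cap R_\eta$ is the intersection of two equivalence relations (by Lemma \ref{la31} and the definition of $R_\eta$), and is therefore an equivalence; the same reasoning applies to $\rho_\zeta^\eta$.

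For compatibility of $\lambda_\zeta^\eta$, I would take $(\alpha,\beta)\in\lambda_\zeta^\eta$ and an arbitrary $\theta\in\M$. Since $R_\eta$ is a congruence, the pairs $(\theta\alpha,\theta\beta)$ and $(\alpha\theta,\beta\theta)$ automatically lie in $R_\eta$, so the task reduces to showing that they also lie in $\lambda_\zeta$. Left compatibility follows at once from Lemma \ref{la32} \ref{it:32ii}: $|\overline{\theta\alpha}\sd\overline{\theta\beta}|\leq|\overline{\alpha}\sd\overline{\beta}|<\zeta$.

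Right compatibility is where the hypotheses on $\zeta$ and $\eta$ come into play. Lemma \ref{la32} \ref{it:32i} gives
\[
|\overline{\alpha\theta}\sd\overline{\beta\theta}|\leq|\overline{\alpha}\sd\overline{\beta}|+2\rank(\alpha)+2\rank(\beta).
\]
I would dispatch the trivial case $\alpha=\beta$ separately, and in the remaining case use the fact that $(\alpha,\beta)\in R_\eta\setminus\Delta_\M$ forces $\alpha,\beta\in I_\eta$, so $\rank(\alpha),\rank(\beta)<\eta\leq\zeta$. Combined with $|\overline{\alpha}\sd\overline{\beta}|<\zeta$ and the hypothesis $\zeta\in\{1\}\cup[\aleph_0,|X|^+]$, Lemma \ref{lem:card} delivers the desired bound $<\zeta$ for the right-hand side. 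A dual argument, appealing to parts \ref{it:32iii}--\ref{it:32iv} of Lemma \ref{la32}, handles $\rho_\zeta^\eta$.

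I do not anticipate any serious obstacle; the only delicate point is the boundary case $\zeta=1$, which via $\eta\leq\zeta$ forces $\eta=1$ and hence $\rank(\alpha)=\rank(\beta)=0$ for any non-equal related pair, so that the rank contributions on the right-hand side above vanish. The assumption $\eta\leq\zeta$ is precisely what is needed to ensure this right-compatibility argument goes through uniformly.
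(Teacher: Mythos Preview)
Your proposal is correct and follows essentially the same approach as the paper's proof: both use that $\lambda_\zeta^\eta=\lambda_\zeta\cap R_\eta$ is an intersection of equivalences (via Lemma~\ref{la31}), handle compatibility by separating the trivial case $\alpha=\beta$, and then apply Lemma~\ref{la32}~\ref{it:32i}--\ref{it:32ii} together with Lemma~\ref{lem:card} and the bound $\rank(\alpha),\rank(\beta)<\eta\leq\zeta$. Your explicit discussion of the boundary case $\zeta=1$ is additional commentary but not a departure from the paper's argument.
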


\pf
By duality, it suffices to prove the statement for $\lambda_\zeta^\eta$.  By Lemma \ref{la31}, $\lam_\ze$ is an equivalence; since $R_\eta$ is as well, so too is $\lambda_\zeta\cap R_\eta=\lambda_\zeta^\eta$. It remains to show that $\lam_\ze^\eta$ is compatible.
To do so, suppose $(\alpha,\beta)\in \lambda_\zeta^\eta$ and $\th\in\M$.
We need to prove that $(\alpha\th,\beta\th),(\th\alpha,\th\beta)\in\lambda_\zeta^\eta$.
If $\alpha=\beta$ this is obvious, so suppose $\alpha\neq\beta$.  
Since $(\alpha,\beta)\in\lam_\ze^\eta=\lam_\ze\cap R_\eta$ and $\al\not=\be$, it follows that $\al,\be\in I_\eta$ and $|\ol\al\sd\ol\be|<\zeta$.  
Since~$I_\eta$ is an ideal, we have $\al\th,\be\th,\th\al,\th\be\in I_\eta$.
By Lemma~\ref{la32}~\ref{it:32ii}, we have $|\overline{\th\alpha}\sd\overline{\th\beta}|\leq |\overline{\alpha}\sd\overline{\beta}|<\ze$, completing the proof that $(\th\alpha,\th\beta)\in\lambda_\zeta^\eta$.  
Since $\al,\be\in I_\eta$, we have $\rank(\al),\rank(\be)<\eta\leq\zeta$.
Using Lemmas \ref{la32} \ref{it:32i} and \ref{lem:card}, it follows that
$|\overline{\alpha\th}\sd \overline{\beta\th}| \leq |\overline{\alpha}\sd\overline{\beta}|+2\rank(\alpha)+2\rank(\beta) < \ze$, 
completing the proof that $(\alpha\th,\beta\th)\in\lambda_\zeta^\eta$.
\epf

We now have all the pieces needed to prove that all the relations of type \ref{CT1} are congruences. We split the considerations into two results, depending on whether $n\leq 2$ or $n>2$.

\begin{prop}
\label{prop:CT1}
If $N$ is any of $\S_1$, $\{\id_2\}$ or $\S_2$, and if $\zeta_1,\zeta_2\in\{1\}\cup [\aleph_0,|X|^+]$, then the relation~$\lambda_{\zeta_1}^N\cap \rho_{\zeta_2}^N$ is a congruence.
\end{prop}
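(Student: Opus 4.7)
The plan is to realise $\lam_{\ze_1}^N\cap\rho_{\ze_2}^N$ in the form $R_{I_n,\si}\cup\vt_{\permdec N}$ so that Lemma~\ref{lem:EMRT}~\ref{it:EMRTii} can be invoked, where $n\in\{1,2\}$ is such that $N\unlhd\S_n$ and $\si$ is a suitable congruence on the minimal ideal $I_1$.  First, since $\nu_N\sub D_n\times D_n$ while every non-diagonal pair in $\lam_{\ze_1}^n$ or $\rho_{\ze_2}^n$ lies in $I_n\times I_n$ (and hence involves elements of rank strictly less than $n$), and since $\De_{D_n}\sub\nu_N$, a short distributivity computation gives
\[
\lam_{\ze_1}^N\cap\rho_{\ze_2}^N=(\lam_{\ze_1}^n\cap\rho_{\ze_2}^n)\cup\nu_N.
\]

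Next, I would set $\si=(\lam_{\ze_1}\cap\rho_{\ze_2})\cap(I_1\times I_1)$.  The retraction $I_2\to I_1\colon \al\mt\wh\al$ of Lemma~\ref{la36} preserves kernel and cokernel, so $\ol{\wh\al}=\ol\al$ and $\ul{\wh\al}=\ul\al$ for every $\al\in I_n$; this immediately yields
\[
\lam_{\ze_1}^n\cap\rho_{\ze_2}^n=\De_{\M}\cup\bigset{(\al,\be)\in I_n\times I_n}{(\wh\al,\wh\be)\in\si}=R_{I_n,\si}.
\]
Combining with Lemma~\ref{lem:nu_N}, which identifies $\nu_N=\vt_{\permdec N}$, puts the target relation into the shape $R_{I_n,\si}\cup\vt_{\permdec N}$ as required.

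To apply Lemma~\ref{lem:EMRT}~\ref{it:EMRTii} it then remains to verify that $(I_n,\permdec N)$ is a retractable IN-pair and that $\si$ is a liftable congruence on $I_1$.  The former is trivial for $n=1$, since $\permdec{\S_1}$ is the trivial group, and is precisely Lemma~\ref{la39} for $n=2$.  For the latter, I would observe that $\De_{\M}\cup\si=\lam_{\ze_1}^1\cap\rho_{\ze_2}^1$, which is a congruence on $\M$ by Lemma~\ref{la33} (applied with $\eta=1$, so that $\eta\leq\ze_1,\ze_2$) together with the fact that intersections of congruences are congruences; this simultaneously certifies liftability and, upon restriction to $I_1$, shows $\si$ itself to be a congruence on $I_1$.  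The only genuinely delicate point I foresee is the rank-based decomposition in the first display; everything else is an assembly of pre-established ingredients, with the abstract framework of Lemma~\ref{lem:EMRT} handling compatibility for us.
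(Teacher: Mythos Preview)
Your proposal is correct and follows essentially the same approach as the paper: both realise the relation as $R_{I_n,\si}\cup\vt_{\permdec N}$ and invoke Lemma~\ref{lem:EMRT}~\ref{it:EMRTii}, using Lemma~\ref{la39} for the retractable IN-pair and Lemma~\ref{la33} for liftability of $\si$. The only cosmetic difference is that the paper first reduces by duality to showing that $\lam_\ze^N$ alone is a congruence (taking $\si=\lam_\ze^1\restr_{I_1}$), whereas you work directly with the two-sided intersection using $\si=(\lam_{\ze_1}\cap\rho_{\ze_2})\restr_{I_1}$; both routes are equally valid.
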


\begin{proof}
By duality, and since the intersection of two congruences is a congruence, it suffices to show that $\lambda_\zeta^N$ is a congruence, where $\ze=\ze_1$.
The case where $N=\S_1$ follows from Lemma~\ref{la33}, as $\lam_\ze^{\S_1}=\lam_\ze^1$, so we will assume that $n=2$ and $N$ is $\{\id_2\}$ or $\S_2$; we will also write $\si=\lam_{\ze}^1\restr_{I_1}$.  Since $\De_{\M}\cup\si=\lam_{\ze}^1$ is a congruence by Lemma \ref{la33}, it follows that $\si$ is a liftable congruence on~$I_1$ (in the language of Subsection \ref{subsect:EMRT}).
By Lemmas \ref{la39} and \ref{lem:EMRT} \ref{it:EMRTii}, it follows that the relation $R_{I_2,\si}\cup\vt_{\permdec N}$ is a congruence on~$\M$. On the other hand, we have
\begin{align*}
R_{I_2,\sigma}\cup\vt_{\permdec N} 
&= \set{(\alpha,\beta)\in I_2\times I_2}{(\widehat{\alpha},\widehat{\beta})\in\sigma}\cup\Delta_{\M}\cup\nu_N
&& \text{by definition of  $R_{I_2,\sigma}$; Lemma \ref{lem:nu_N}}
\\
&= \set{(\alpha,\beta)\in I_2\times I_2}{|\overline{\alpha}\sd\overline{\beta}|<\zeta}\cup\Delta_{\M}\cup\nu_N
&& \text{as } \sigma=\lambda_{\zeta}^1\restr_{I_1}; \ \ol{\;\!\wh\ga\;\!}=\overline{\gamma} \text{ for all } \gamma 
\\
&=\lambda_{\zeta}^2\cup\nu_N=\lambda_{\zeta}^N,
\end{align*}
implying that $\lambda_{\zeta}^N$ is a congruence, as claimed.
\end{proof}

\begin{rem}\label{rem:lam_1^2}
Taking $N=\{\id_2\}$, $\ze_1=1$ and $\ze_2=|X|^+$, Proposition \ref{prop:CT1} tells us that $\lam_1^2$ is a congruence, a fact that does not follow from Lemma \ref{la33}.  A similar statement holds for $\rho_1^2$.
\end{rem}


\begin{prop}
\label{prop:CT2}
For all $n\in [3,\aleph_0)$, $N\unlhd \S_n$ and $\zeta_1,\zeta_2\in [\aleph_0,|X|^+]$, the relation $\lambda_{\zeta_1}^N\cap\rho_{\zeta_2}^N$ is a congruence.
\end{prop}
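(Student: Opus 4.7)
The plan is to reduce to proving that $\lam_{\zeta_1}^N$ and $\rho_{\zeta_2}^N$ are \emph{each} a congruence, since the intersection of two congruences is a congruence. By the natural duality between $\P_X$ and its reverse (reflected in the $\al\mt\al^*$ involution), it suffices to treat $\tau:=\lam_{\zeta_1}^N=\lam_{\zeta_1}^n\cup\nu_N$; the argument for $\rho_{\zeta_2}^N$ is dual.

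First I would verify that $\tau$ is an equivalence relation. The key structural observation is that $\lam_{\zeta_1}^n$ is a congruence by Lemma~\ref{la33}, that $\nu_N$ is an equivalence on $D_n$ (it is the preimage under $\phi$ of the coset equivalence of $N$ in $\S_n$, restricted to $\H$-related pairs), and that their non-diagonal parts are supported on $I_n\times I_n$ and $D_n\times D_n$ respectively, which are disjoint since $I_n\cap D_n=\emptyset$. Consequently every $\tau$-class is either a singleton, a subset of $I_n$, or a subset of $D_n$, making reflexivity, symmetry and transitivity routine.

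Next I would check compatibility: given $(\alpha,\beta)\in\tau$ and $\theta\in\M$, I want $(\alpha\theta,\beta\theta),(\theta\alpha,\theta\beta)\in\tau$. The case $\alpha=\beta$ is trivial, and the case $(\alpha,\beta)\in\lam_{\zeta_1}^n$ is handled by Lemma~\ref{la33}. The interesting case is $(\alpha,\beta)\in\nu_N$ with $\alpha\neq\beta$. Here I would invoke Lemma~\ref{lem:R_N}, which says $R_N=R_n\cup\nu_N$ is a congruence, to deduce that $(\alpha\theta,\beta\theta)\in R_n\cup\nu_N$. Thus either $\alpha\theta=\beta\theta$, or $(\alpha\theta,\beta\theta)\in\nu_N\subseteq\tau$, or else both $\alpha\theta,\beta\theta\in I_n$ and we must show $(\alpha\theta,\beta\theta)\in\lam_{\zeta_1}^n$. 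The crucial observation is that $\al\H\be$ forces $\ker(\al)=\ker(\be)$, i.e.\ $\overline{\al}=\overline{\be}$, so $|\overline{\al}\sd\overline{\be}|=0$. Applying Lemma~\ref{la32}\ref{it:32i} gives
\[
|\overline{\al\th}\sd\overline{\be\th}|\leq 0+2\rank(\al)+2\rank(\be)=4n<\aleph_0\leq\zeta_1,
\]
as required. For $(\th\al,\th\be)$ the even cleaner bound $|\overline{\th\al}\sd\overline{\th\be}|\leq|\overline{\al}\sd\overline{\be}|=0$ from Lemma~\ref{la32}\ref{it:32ii} suffices. The dual argument for $\rho_{\zeta_2}^N$ uses parts \ref{it:32iii} and \ref{it:32iv} of Lemma~\ref{la32}.

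I expect the main subtlety to be the rank-dropping sub-case, where a $\nu_N$-pair is mapped into $I_n$ by multiplication with $\th$: one has to simultaneously exploit the fact that the $\H$-equivalence of $\al$ and $\be$ kills the $\overline{\al}\sd\overline{\be}$ term, and that the bounded ``error'' term $2\rank(\al)+2\rank(\be)=4n$ is absorbed by the hypothesis $\zeta_1\geq\aleph_0$. This is precisely why the statement requires $\zeta_1,\zeta_2\geq\aleph_0$ for $n\geq 3$, as opposed to the $n\leq 2$ case (Proposition~\ref{prop:CT1}), where retractability of the IN-pair $(I_n,\permdec N)$ allowed Lemma~\ref{lem:EMRT}\ref{it:EMRTii} to be invoked directly without this cardinal absorption.
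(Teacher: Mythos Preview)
Your proof is correct. You verify directly that $\lam_{\zeta_1}^N$ is an equivalence (using the disjointness of the supports of $\lam_{\zeta_1}^n\setminus\De_{\M}$ and $\nu_N\setminus\De_{\M}$) and then check compatibility by a case split, invoking Lemma~\ref{la33} for pairs in $\lam_{\zeta_1}^n$, Lemma~\ref{lem:R_N} for pairs in $\nu_N$, and Lemma~\ref{la32}\ref{it:32i},\ref{it:32ii} to handle the rank-dropping sub-case. All steps are sound.

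The paper takes a shorter route: it simply observes the identity
\[
\lam_{\zeta_1}^N \;=\; \lam_{\zeta_1}^{n+1}\cap R_N,
\]
where both $\lam_{\zeta_1}^{n+1}$ (Lemma~\ref{la33}, since $n+1\leq\aleph_0\leq\zeta_1$) and $R_N$ (Lemma~\ref{lem:R_N}) are already known to be congruences, so their intersection is too. The verification of the identity uses $R_n\subseteq R_{n+1}$ and $\nu_N\subseteq{\H}\subseteq\lam_{\zeta_1}$ (as $\H$-related pairs share the same kernel). What the paper's approach buys is that the equivalence and compatibility properties come for free, with no case analysis; what your approach buys is that it makes explicit \emph{why} the hypothesis $\zeta_1\geq\aleph_0$ is needed, namely to absorb the $4n$ error term from Lemma~\ref{la32}\ref{it:32i} in the rank-dropping case. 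Both arguments ultimately rest on the same two lemmas, just combined differently.
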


\pf
Again, it suffices to prove that $\lam_\ze^N$ is a congruence, where $\ze=\ze_1$.  For this, first note that $\lam_\ze^{n+1}\cap R_N = \lam_\ze^{n+1} \cap(R_n\cup\nu_N) = (\lam_\ze^{n+1}\cap R_n)\cup(\lam_\ze\cap\nu_N) = \lam_\ze^n\cup\nu_N = \lam_\ze^N$.  Since $R_N$ and~$\lam_\ze^{n+1}$ are congruences (by Lemmas \ref{lem:R_N} and \ref{la33}), $\lam_\ze^N$ is a congruence.
\epf

\subsection{Congruences of type \ref{CT2}}\label{subsect:CT3}

We now start working towards proving that the relations of type \ref{CT2}
are congruences.

\begin{lemma}
\label{la314}
The relation $\mu_\xi$ is compatible for any $\xi\in [1,|X|^+]$.
\end{lemma}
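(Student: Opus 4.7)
The plan is to derive compatibility directly from the symmetric-difference inequalities already established in Lemma \ref{la32}. Recall that by definition $\mu_\xi = \{(\alpha,\beta) \in \M \times \M : |\alpha \sd \beta| < \xi\}$, where $\alpha$ and $\beta$ are viewed as sets of blocks. Compatibility requires showing that for any $(\alpha,\beta) \in \mu_\xi$ and any $\theta \in \M$, both $(\alpha\theta,\beta\theta)$ and $(\theta\alpha,\theta\beta)$ lie in $\mu_\xi$.

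First I would take $(\alpha,\beta) \in \mu_\xi$ and $\theta \in \M$, so that $|\alpha \sd \beta| < \xi$. Applying Lemma \ref{la32}\ref{it:32v} gives $|\alpha\theta \sd \beta\theta| \leq |\alpha \sd \beta| < \xi$, hence $(\alpha\theta,\beta\theta) \in \mu_\xi$. Dually, Lemma \ref{la32}\ref{it:32vi} yields $|\theta\alpha \sd \theta\beta| \leq |\alpha \sd \beta| < \xi$, so $(\theta\alpha,\theta\beta) \in \mu_\xi$.

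I do not foresee any real obstacle here: the entire argument is a one-line invocation of Lemma \ref{la32}. Note that, unlike the corresponding results for $\lambda_\zeta^\eta$ and $\rho_\zeta^\eta$, no additional ``correction terms'' involving $\rank(\alpha)$ or $\rank(\beta)$ appear, so there is no need to restrict $\xi$ to $\{1\} \cup [\aleph_0, |X|^+]$ for compatibility to hold; the restriction only becomes necessary once transitivity of $\mu_\xi$ is considered (as in Lemma \ref{la31}).
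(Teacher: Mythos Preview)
Your proposal is correct and takes essentially the same approach as the paper: the paper's proof is a single sentence stating that compatibility follows directly from the definition of $\mu_\xi$ and the inequalities of Lemma \ref{la32} \ref{it:32v} and \ref{it:32vi}, which is exactly what you have written out in slightly more detail. Your additional remark about why no restriction on $\xi$ is needed for compatibility (versus transitivity) is also accurate and matches the logical structure of the surrounding lemmas.
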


\begin{proof}
This follows directly from the definition of $\mu_\xi$ and the inequalities of Lemma \ref{la32} \ref{it:32v} and~\ref{it:32vi}.
\end{proof}

\begin{lemma}
\label{la315}
The relation $\mu_\xi$ is a congruence for any $\xi\in \{1\}\cup [\aleph_0,|X|^+]$.
\end{lemma}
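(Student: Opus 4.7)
The statement combines two facts that have essentially been established already, so the plan is simply to invoke the two preceding lemmas in sequence. Recall that a congruence on $\M$ is by definition an equivalence relation that is compatible with multiplication, so it suffices to verify these two properties of $\mu_\xi$ separately.

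First, I would observe that for $\xi\in\{1\}\cup[\aleph_0,|X|^+]$, Lemma \ref{la31} directly gives that $\mu_\xi$ is an equivalence relation on $\M$. This is where the restriction on $\xi$ is used: the symmetric-difference-based transitivity argument relies on Lemma \ref{lem:card}, which needs $\xi$ to be either $1$ or an infinite cardinal so that a finite sum of cardinals strictly less than $\xi$ is again strictly less than $\xi$.

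Second, I would invoke Lemma \ref{la314}, which gives compatibility of $\mu_\xi$ for the wider range $\xi\in[1,|X|^+]$, and in particular for all $\xi$ under consideration here. Combining the two, $\mu_\xi$ is a compatible equivalence, hence a congruence, and the proof is complete. There is no real obstacle: the only mildly subtle point is remembering that the extra hypothesis $\xi\in\{1\}\cup[\aleph_0,|X|^+]$, compared with Lemma \ref{la314}, is needed purely to secure transitivity, not compatibility.
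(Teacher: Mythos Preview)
Your proposal is correct and matches the paper's own proof exactly: the paper simply says that the result follows from Lemmas~\ref{la31} and~\ref{la314}. Your additional commentary on why the restriction $\xi\in\{1\}\cup[\aleph_0,|X|^+]$ is needed only for transitivity (via Lemma~\ref{lem:card}) is accurate and adds helpful context.
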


\begin{proof}
This follows from Lemmas \ref{la31} and \ref{la314}.
\end{proof}

It follows from Lemma \ref{la315} that for any cardinals $\eta,\xi\in\{1\}\cup[\aleph_0,|X|^+]$ with $\xi<\eta$, the relation $\mu_\xi^\eta=\mu_\xi\cap R_\eta$ is a congruence.  At this point it will be convenient for later use to prove a simple lemma showing how such congruences may be expressed in the notation of Theorem \ref{thm-main}.

\begin{lemma}\label{lem:mu}
For any $\eta,\xi\in\{1\}\cup[\aleph_0,|X|^+]$ with $\xi<\eta$, we have $\mu_\xi^\eta=(\lam_\xi^\xi\cap\rho_\xi^\xi)\cup\mu_\xi^\eta\cup\mu_1^{|X|^+}$.
\end{lemma}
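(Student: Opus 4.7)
The proof will be essentially a direct verification using the inequalities from Lemma \ref{la32}, together with the trivial observation that $\mu_1 = \Delta_{\M}$. The containment $\subseteq$ is immediate because $\mu_\xi^\eta$ itself appears as one of the summands on the right-hand side, so all the work is in the reverse inclusion $\supseteq$, which in turn reduces to two sub-containments.

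First I would dispose of the trivial summand: since $|\al \sd \be| < 1$ forces $\al = \be$, we have $\mu_1 = \Delta_{\M}$, and therefore $\mu_1^{|X|^+} = \Delta_{\M} \subseteq \mu_\xi^\eta$ (as $\mu_\xi^\eta$ is an equivalence relation, in fact a congruence by the discussion following Lemma \ref{la315}). This disposes of one of the two non-trivial summands with no real effort.

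The main step is to show $\lam_\xi^\xi \cap \rho_\xi^\xi \subseteq \mu_\xi^\eta$. Given $(\al,\be)$ in the left-hand side with $\al\neq\be$, the definitions force $\al,\be\in I_\xi$, $|\overline{\al}\sd\overline{\be}|<\xi$ and $|\underline{\al}\sd\underline{\be}|<\xi$, and in particular $\rank(\al),\rank(\be)<\xi$. Applying Lemma \ref{la32} \ref{it:32ix} yields
\[
|\al\sd\be| \;\leq\; |\overline{\al}\sd\overline{\be}| + |\underline{\al}\sd\underline{\be}| + 3\rank(\al) + 3\rank(\be),
\]
and since $\xi\in\{1\}\cup[\aleph_0,|X|^+]$, Lemma \ref{lem:card} lets us conclude $|\al\sd\be|<\xi$. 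Combined with the fact that $\al,\be\in I_\xi\subseteq I_\eta$ (using $\xi<\eta$), this gives $(\al,\be)\in\mu_\xi\cap R_\eta = \mu_\xi^\eta$, as required.

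There is no genuine obstacle here; the only thing to be mildly careful about is the degenerate case $\xi=1$, where $\lam_\xi^\xi\cap\rho_\xi^\xi$ collapses to $\Delta_{\M}$ (since partitions in $I_1$ with the same kernel and cokernel coincide), and the identity becomes tautological. For $\xi\geq\aleph_0$ the argument is the one given above, invoking Lemma \ref{lem:card} to absorb the finite additive contributions from $\rank(\al)$ and $\rank(\be)$.
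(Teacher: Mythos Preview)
Your proof is correct and follows essentially the same approach as the paper: both reduce to showing $\lam_\xi^\xi\cap\rho_\xi^\xi\subseteq\mu_\xi^\eta$, and both establish this via Lemma~\ref{la32}~\ref{it:32ix} combined with Lemma~\ref{lem:card}. Your additional remarks on the trivial summand $\mu_1^{|X|^+}$ and the degenerate case $\xi=1$ are fine but not strictly necessary, as the general argument already covers them.
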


\pf
Clearly we only need to show that $\lam_\xi^\xi\cap\rho_\xi^\xi\sub\mu_\xi^\eta$.  To do so, let $(\al,\be)\in\lam_\xi^\xi\cap\rho_\xi^\xi$.  If $\al=\be$, then of course $(\al,\be)\in\mu_\xi^\eta$, so suppose $\al\not=\be$.  Then $\al,\be\in I_\xi$ and $|\ol\al\sd\ol\be|,|\ul\al\sd\ul\be|<\xi$.  By Lemmas \ref{la32} \ref{it:32ix} and \ref{lem:card} we have $|\al\sd\be| \leq |\ol\al\sd\ol\be| + |\ul\al\sd\ul\be| + 3\rank(\al) + 3\rank(\be) < \xi$, so that $(\al,\be)\in\mu_\xi^\xi\sub\mu_\xi^\eta$.
\epf

\begin{lemma}
\label{la316}
If $\xi\in\{1\}\cup[\aleph_0,|X|^+]$, $(\alpha,\beta)\in\mu_\xi$ and $\rank(\alpha)\geq\xi$, then $\rank(\alpha)=\rank(\beta)$.
\end{lemma}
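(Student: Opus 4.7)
The plan is a direct counting argument on the set of transversals, exploiting the fact that a block shared between $\al$ and $\be$ must be a transversal of both (or a non-transversal of both), together with elementary cardinal arithmetic.

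First I would dispose of the degenerate case $\xi=1$: here $(\alpha,\beta)\in\mu_1$ forces $|\alpha\sd\beta|<1$, hence $\alpha=\beta$, and there is nothing to prove. So assume $\xi\in[\aleph_0,|X|^+]$, which by hypothesis makes $\rank(\al)\geq\xi\geq\aleph_0$ infinite. Let $T_\al$ denote the set of transversals of $\al$, so $|T_\al|=\rank(\al)$, and define $T=T_\al\cap\be$, the set of blocks common to $\al$ and $\be$ that happen to be transversals of $\al$. A block of $\be$ which contains both upper and lower vertices is by definition a transversal of $\be$, so $T\sub T_\be$, which gives $|T|\leq \rank(\be)$.

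Next I would bound $|T_\al\sm T|=|T_\al\sm\be|\leq|\al\sm\be|\leq|\al\sd\be|<\xi\leq\rank(\al)=|T_\al|$. Since $|T_\al|$ is infinite and we are removing strictly fewer than $|T_\al|$ elements, standard cardinal arithmetic (using $a+b=\max(a,b)$ for infinite cardinals) gives $|T|=|T_\al|=\rank(\al)$. Combining, $\rank(\be)\geq|T|=\rank(\al)\geq\xi$.

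Having established $\rank(\be)\geq\xi$, the hypotheses of the lemma now hold with the roles of $\al$ and $\be$ swapped, and the same argument yields $\rank(\al)\geq\rank(\be)$. Hence $\rank(\al)=\rank(\be)$. There is no real obstacle here; the only thing to double-check is the cardinal arithmetic step in the singular case, which goes through cleanly because $\rank(\al)$ being infinite and $|T_\al\sm T|<\rank(\al)$ forces $|T|=\rank(\al)$ via $|T|+|T_\al\sm T|=|T_\al|$.
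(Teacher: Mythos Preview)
Your proof is correct and follows essentially the same approach as the paper's: both dispose of $\xi=1$ trivially, then use the counting argument that fewer than $\rank(\al)$ transversals of $\al$ can fail to be blocks of $\be$, forcing $\rank(\be)\geq\rank(\al)$, and finish by symmetry. The only cosmetic difference is that the paper first passes to $\mu_\ka$ (with $\ka=\rank(\al)$) via the inclusion $\mu_\xi\sub\mu_\ka$, whereas you bound $|T_\al\sm\be|<\xi\leq\rank(\al)$ directly; both routes yield the same conclusion.
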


\begin{proof}
The result is trivial for $\xi=1$, since $\mu_1=\De_{\M}$, so we assume that $\xi\geq\aleph_0$.  Write ${\ka=\rank(\al)}$, noting that $\ka\geq\xi\geq\aleph_0$.  So $\alpha$ has $\kappa$ transversals; since $(\al,\be)\in\mu_\xi\sub\mu_\ka$, strictly fewer than $\kappa$ of these are not transversals of $\beta$. It follows that some $\kappa$ transversals of $\alpha$ are also transversals of $\beta$, and hence $\rank(\beta)\geq \kappa=\rank(\al)$.  Since this also implies that $\rank(\be)\geq\xi$, we may repeat the same reasoning, with $\al$ and $\be$ swapped, to obtain $\rank(\al)\geq\rank(\be)$.
\end{proof}

\begin{lemma}
\label{la317}
If $\xi_2\leq\xi_1\leq\eta_1\leq\eta_2$, then $\mu_{\xi_2}^{\eta_2}\restr_{I_{\eta_1}}\subseteq\mu_{\xi_1}^{\eta_1}$.
\end{lemma}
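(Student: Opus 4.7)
The plan is to unpack the definitions and observe that both inclusions driving the containment --- namely $\mu_{\xi_2}\sub\mu_{\xi_1}$ (from $\xi_2\leq\xi_1$) and the trivial inclusion of domains induced by $\eta_1\leq\eta_2$ --- combine essentially without friction. So I expect no real obstacle; the only thing to be careful about is the diagonal, since elements of $R_{\eta_2}$ may lie outside $I_{\eta_2}\times I_{\eta_2}$ when they are of the form $(\al,\al)$.

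First I would fix an arbitrary pair $(\al,\be)\in\mu_{\xi_2}^{\eta_2}\restr_{I_{\eta_1}}$ and split into the two standard cases.  If $\al=\be$, then $(\al,\be)\in\De_{\M}\sub\mu_{\xi_1}^{\eta_1}$, so this case is immediate.  Otherwise $\al\ne\be$, and by definition of restriction we have $\al,\be\in I_{\eta_1}$, so that $(\al,\be)\in I_{\eta_1}\times I_{\eta_1}\sub R_{\eta_1}$.

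The remaining step is to show $(\al,\be)\in\mu_{\xi_1}$.  Since $(\al,\be)\in\mu_{\xi_2}^{\eta_2}\sub\mu_{\xi_2}$, we have $|\al\sd\be|<\xi_2$, and the hypothesis $\xi_2\leq\xi_1$ then yields $|\al\sd\be|<\xi_1$, i.e.\ $(\al,\be)\in\mu_{\xi_1}$.  Combining this with the previous paragraph gives $(\al,\be)\in\mu_{\xi_1}\cap R_{\eta_1}=\mu_{\xi_1}^{\eta_1}$, completing the proof.  The hypothesis $\xi_1\leq\eta_1$ plays no role in the verification itself; it is simply the compatibility condition ensuring that $\mu_{\xi_1}^{\eta_1}$ is one of the relations appearing in the statement of the main classification (and in any case, if $\xi_1>\eta_1$ then $\mu_{\xi_1}^{\eta_1}=\mu_{\eta_1}^{\eta_1}=R_{\eta_1}$, which makes the conclusion even easier).
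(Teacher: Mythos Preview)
Your proof is correct and follows essentially the same approach as the paper's, which is a one-line argument: from $(\al,\be)\in\mu_{\xi_2}^{\eta_2}\restr_{I_{\eta_1}}$ one gets $\al,\be\in I_{\eta_1}$ and $|\al\sd\be|<\xi_2\leq\xi_1$, hence $(\al,\be)\in\mu_{\xi_1}^{\eta_1}$. Your case split on $\al=\be$ versus $\al\ne\be$ is harmless but unnecessary, since the restriction $\restr_{I_{\eta_1}}$ already forces $\al,\be\in I_{\eta_1}$ regardless of whether they are equal.
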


\begin{proof}
If $(\alpha,\beta)\in \mu_{\xi_2}^{\eta_2}\restr_{I_{\eta_1}}$, then $\al,\be\in I_{\eta_1}$ and $|\alpha\sd\beta|<\xi_2\leq\xi_1$, and hence $(\alpha,\beta)\in \mu_{\xi_1}^{\eta_1}$.
\end{proof}

\begin{lemma}
\label{la318}
If $\zeta_1,\zeta_2\geq\eta$ and $\xi_1\leq\eta\leq\eta_1$, then
$\mu_{\xi_1}^{\eta_1}\restr_{I_\eta}\subseteq \lambda_{\zeta_1}^\eta\cap \rho_{\zeta_2}^\eta$.
\end{lemma}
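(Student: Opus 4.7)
The plan is to unwind the definitions of the four relations involved and apply the monotonicity of the symmetric difference under the passage from $\al$ to $\overline{\al}$ and $\underline{\al}$, which was already recorded in Lemma \ref{la32} parts \ref{it:32vii} and \ref{it:32viii}. This should turn the inclusion into an essentially one-line verification, so I do not expect any genuine obstacle.

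Concretely, I would take an arbitrary pair $(\al,\be)\in \mu_{\xi_1}^{\eta_1}\restr_{I_\eta}$ and split according to whether $\al=\be$ or not. In the first case both $\lam_{\zeta_1}^\eta$ and $\rho_{\zeta_2}^\eta$ contain the pair trivially, being reflexive. In the second case, unpacking $\mu_{\xi_1}^{\eta_1}=\mu_{\xi_1}\cap R_{\eta_1}$ yields $|\al\sd\be|<\xi_1$, and the restriction to $I_\eta$ gives $\al,\be\in I_\eta$, hence $(\al,\be)\in R_\eta$ automatically.

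Now I would invoke Lemma \ref{la32} \ref{it:32vii} to obtain
\[
|\ol\al\sd\ol\be|\leq |\al\sd\be|<\xi_1\leq \eta\leq \zeta_1,
\]
so $(\al,\be)\in\lam_{\zeta_1}$, and combined with $(\al,\be)\in R_\eta$ this gives $(\al,\be)\in\lam_{\zeta_1}^\eta$. The dual computation using Lemma \ref{la32} \ref{it:32viii} and the hypothesis $\xi_1\leq\eta\leq \zeta_2$ yields $(\al,\be)\in\rho_{\zeta_2}^\eta$. Intersecting the two memberships closes the inclusion. The only minor point worth flagging, but one that needs no further argument, is that the chain of strict/non-strict inequalities above is valid for all admissible values of $\xi_1$ and $\zeta_1$, including $\xi_1=1$ (where $|\al\sd\be|<1$ forces $\al=\be$, reducing to the trivial case) and the cardinal range $\xi_1\in[\aleph_0,\eta]$, $\zeta_1\in[\eta,|X|^+]$ relevant in the context in which the lemma is applied.
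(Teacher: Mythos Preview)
Your proof is correct and follows essentially the same approach as the paper: take $(\al,\be)\in\mu_{\xi_1}^{\eta_1}\restr_{I_\eta}$, use the restriction to get $\al,\be\in I_\eta$, and then apply Lemma~\ref{la32}~\ref{it:32vii} (and~\ref{it:32viii}) to bound $|\ol\al\sd\ol\be|\leq|\al\sd\be|<\xi_1\leq\eta\leq\zeta_1$. The paper does not bother with the $\al=\be$ case split since $\al,\be\in I_\eta$ already gives $(\al,\be)\in R_\eta$, and $|\al\sd\be|<\xi_1$ holds trivially when $\al=\be$; but your extra care does no harm.
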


\begin{proof}
Suppose $(\alpha,\beta)\in \mu_{\xi_1}^{\eta_1}\restr_{I_\eta}$, 
so that $\al,\be\in I_\eta$ 
and $|\alpha\sd\beta|<\xi_1$.  
Then, using Lemma~\ref{la32}~\ref{it:32vii}, we have $|\overline{\alpha}\sd\overline{\beta}|\leq |\alpha\sd\beta|<\xi_1\leq\eta\leq\zeta_1$, so that $(\al,\be)\in\lam_{\ze_1}^\eta$, and similarly $(\al,\be)\in\rho_{\ze_2}^\eta$.
\end{proof}

We are now ready to show that the relations of type \ref{CT2} are congruences:

\begin{prop}\label{prop:CT3}
If
\bit
\item 
$k\geq1$, $\eta\in [\aleph_0,|X|]$, $\ze_1,\ze_2,\eta_1,\ldots,\eta_k\in [\eta,|X|^+]$, $\xi_1,\ldots,\xi_k\in \{1\}\cup[\aleph_0,\eta]$, and 
\item 
$\xi_k<\dots<\xi_1\leq \eta<\eta_1<\dots<\eta_k=|X|^+$,
\eit
then the relation $(\lambda_{\zeta_1}^\eta\cap \rho_{\zeta_2}^\eta) \cup \mu_{\xi_1}^{\eta_1}\cup \cdots \cup \mu_{\xi_k}^{\eta_k}$ is a congruence.
\end{prop}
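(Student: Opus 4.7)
The plan is to verify each of the three properties an equivalence requires together with compatibility. Observing that $\sigma$ is a union of relations each of which is already known to be a congruence, the work essentially reduces to transitivity; everything else is immediate.

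First I would observe that, under the stated hypotheses, each of the ``components''
\[
\tau_0 := \lambda_{\zeta_1}^\eta\cap\rho_{\zeta_2}^\eta \AND \tau_i := \mu_{\xi_i}^{\eta_i} \quad (i=1,\dots,k)
\]
is itself a congruence: $\tau_0$ by Lemma \ref{la33} (since $\eta\leq\zeta_1,\zeta_2$) together with the fact that intersections of congruences are congruences, and the $\tau_i$ by Lemma \ref{la315} together with the fact that intersections with Rees congruences $R_{\eta_i}$ preserve the congruence property. Consequently, $\sigma=\tau_0\cup\tau_1\cup\cdots\cup\tau_k$ is reflexive and symmetric, and it is compatible: any pair $(\alpha,\beta)\in\sigma$ lies in some $\tau_\ell$, and multiplying on either side by $\theta\in\M$ keeps the resulting pair in that same $\tau_\ell$, hence in $\sigma$.

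The real work is to show $\sigma$ is transitive. Given $(\alpha,\beta)\in\tau_i$ and $(\beta,\gamma)\in\tau_j$ (both pairs assumed nontrivial), I would split into cases according to the values of $i,j\in\{0,1,\ldots,k\}$. When $i=j$ there is nothing to do, as $\tau_i$ is already transitive. When $0=i<j$ (or symmetrically), I have $\alpha,\beta\in I_\eta$ and $(\beta,\gamma)\in\mu_{\xi_j}^{\eta_j}$; the key point is that $\gamma\in I_\eta$ as well. Indeed, if $\rank(\beta)\geq\xi_j$ this follows from Lemma \ref{la316} ($\rank(\gamma)=\rank(\beta)<\eta$), and otherwise $\rank(\gamma)\leq\rank(\beta)+|\beta\sd\gamma|<\xi_j\leq\eta$ by Lemma \ref{lem:card} (the case $\xi_j=1$ being trivial). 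Lemma \ref{la318} then gives $(\beta,\gamma)\in\mu_{\xi_j}^{\eta_j}\restr_{I_\eta}\sub\tau_0$, so transitivity of $\tau_0$ yields $(\alpha,\gamma)\in\tau_0\sub\sigma$.

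The remaining case is $0<i<j$ (the case $i>j$ being symmetric). Here $\alpha,\beta\in I_{\eta_i}$ with $|\alpha\sd\beta|<\xi_i$, and $\beta,\gamma\in I_{\eta_j}$ with $|\beta\sd\gamma|<\xi_j<\xi_i$. Exactly the same rank argument as above (using $\xi_j\leq\eta\leq\eta_i$) shows that $\gamma\in I_{\eta_i}$. Since $\xi_i>\xi_j\geq 1$ forces $\xi_i\geq\aleph_0$, the triangle inequality for symmetric differences together with Lemma \ref{lem:card} gives
\[
|\alpha\sd\gamma|\leq |\alpha\sd\beta|+|\beta\sd\gamma|<\xi_i,
\]
so $(\alpha,\gamma)\in\mu_{\xi_i}^{\eta_i}=\tau_i\sub\sigma$, as required.

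The main obstacle is the bookkeeping in the transitivity argument: one must verify, in each case, both that the relevant pair lands inside the appropriate Rees ideal (which is where Lemma \ref{la316} is essential, and where the inequality $\xi_j\leq\eta<\eta_i$ is used) and that the symmetric differences add correctly via Lemma \ref{lem:card}. Once these two checks are made in every case, all properties of a congruence have been verified and the proposition follows.
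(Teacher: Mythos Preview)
Your proof is correct and follows essentially the same approach as the paper's: both observe that $\sigma$ is a union of congruences (giving reflexivity, symmetry, and compatibility for free) and then handle transitivity by the same case analysis on which components the two pairs come from, using Lemma~\ref{la316} to force $\gamma$ into the appropriate ideal and Lemma~\ref{la318} to pass $(\beta,\gamma)$ into $\tau_0$ in the mixed case. The only cosmetic differences are that the paper argues $\gamma\in I_\eta$ (resp.\ $I_{\eta_i}$) by contradiction rather than your direct case split on $\rank(\beta)$, and in the $0<i<j$ case the paper invokes Lemma~\ref{la317} to get $(\beta,\gamma)\in\tau_i$ and then uses transitivity of $\tau_i$, whereas you bound $|\alpha\sd\gamma|$ directly via the triangle inequality---but these amount to the same computation.
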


\pf
Denote the relation in question by $\tau$.
By Lemma \ref{la33}, $\lambda_{\zeta_1}^\eta\cap \rho_{\zeta_2}^\eta$ is a congruence; so too is each  $\mu_{\xi_i}^{\eta_i}=\mu_{\xi_i}\cap R_{\eta_i}$, by Lemma \ref{la315}.  Thus $\tau$ is a union of congruences, and therefore is symmetric, reflexive and compatible.


It remains to prove transitivity of $\tau$.  To do so, suppose $(\alpha,\beta),(\beta,\gamma)\in\tau$.  If $\alpha=\beta$ or $\beta=\gamma$, then clearly $(\alpha,\gamma)\in \tau$, so we may assume that $\alpha\neq\beta$ and $\beta\neq\gamma$.
Now, each of the relations $\lambda_{\zeta_1}^\eta\cap\lambda_{\zeta_2}^\eta,\mu_{\xi_1}^{\eta_1},\dots,\mu_{\xi_k}^{\eta_k}$ is an equivalence (as noted above); thus, if both $(\alpha,\beta)$ and $(\beta,\gamma)$ belong to the same one of these relations, then so too does $(\al,\ga)$, completing the proof in this case.
Up to symmetry, the remaining cases to consider are: 
\begin{thmenum}
\item \label{it:tci} $(\alpha,\beta)\in\lambda_{\zeta_1}^\eta\cap\rho_{\zeta_2}^\eta$ and $(\beta,\gamma)\in\mu_{\xi_i}^{\eta_i}$, for some $1\leq i\leq k$, and 
\item \label{it:tcii} $(\alpha,\beta)\in \mu_{\xi_i}^{\eta_i}$ and $(\beta,\gamma)\in\mu_{\xi_j}^{\eta_j}$, for some $1\leq i<j\leq k$.
\end{thmenum}
We consider these separately.  In both cases, recall that $\al\not=\be$ and $\be\not=\ga$.

\pfitem{\ref{it:tci}}  Here we have $\al,\be\in I_\eta$, $|\ol\al\sd\ol\be|<\ze_1$, $|\ul\al\sd\ul\be|<\ze_2$, $\be,\ga\in I_{\eta_i}$ and $|\be\sd\ga|<\xi_i$.  If we had $\rank(\ga)\geq\eta$, then we would also have $\rank(\ga)\geq\xi_i$; since $(\be,\ga)\in\mu_{\xi_i}$, Lemma \ref{la316} would then give $\rank(\be)=\rank(\ga)\geq\eta$, contradicting $\be\in I_\eta$.  So we must in fact have $\rank(\ga)<\eta$: i.e., $\ga\in I_\eta$.  But then $(\be,\ga)\in\mu_{\xi_i}^{\eta_i}\restr_{I_\eta}$, and so Lemma \ref{la318} gives $(\be,\ga)\in\lam_{\ze_1}^\eta\cap\rho_{\ze_2}^\eta$.  It now follows that $(\alpha,\gamma)\in\tau$ by transitivity of $\lambda_{\zeta_1}^\eta\cap\rho_{\zeta_2}^\eta$.

\pfitem{\ref{it:tcii}}  Here we have $\al,\be\in I_{\eta_i}$, $|\al\sd\be|<\xi_i$, $\be,\ga\in I_{\eta_j}$ and $|\be\sd\ga|<\xi_j$.  If we had ${\rank(\ga)\geq\eta_i}$, then we would also have $\rank(\ga)\geq\xi_j$; since $(\be,\ga)\in\mu_{\xi_j}$, Lemma \ref{la316} would then give ${\rank(\be)=\rank(\ga)\geq\eta_i}$, contradicting $\be\in I_{\eta_i}$.  So it follows that $\rank(\ga)<\eta_i$, and so $(\be,\ga)\in\mu_{\xi_j}^{\eta_j}\restr_{I_{\eta_i}}$; Lemma~\ref{la317} then gives $(\be,\ga)\in\mu_{\xi_i}^{\eta_i}$.  Thus, $(\alpha,\gamma)\in\tau$ by transitivity of~$\mu_{\xi_i}^{\eta_i}$.
\epf

\section{Second stage of the proof: any congruence has one of the stated forms}\label{sect:stage2}

We now move on to the second stage of our proof of Theorem \ref{thm-main}, which involves showing that any congruence on $\M$ (again standing for $\P_X$ or $\PB_X$ with $X$ infinite) is of one of the forms listed in the theorem.

Throughout this section, $\si$ denotes an arbitrary congruence on~$\M$.  As outlined in Subsection~\ref{subsect:strategy}, we will proceed by first identifying a number of parameters ($\eta$, $\ze_1$, $\ze_2$, etc.)~associated to~$\si$, then showing that the permissible values of these parameters are as stated in Theorem~\ref{thm-main}, and finally showing that $\sigma$ is equal to the congruence from the theorem thus singled out.  The main results of this section are summarised in Propositions \ref{prop:CT1/2} and \ref{prop:CT3b}.

Before we begin, we introduce a piece of notation relating to an arbitrary set of cardinals~$\Xi$.  It is well known that $\Xi$, being a set, has a strict upper bound: e.g., $\sum_{\xi\in\Xi}\xi^+$.  Since the cardinals are well-ordered, there exists a least such bound;  we call it the \emph{least strict upper bound} of $\Xi$, and denote it by
\[
\LSUB( \Xi) = \min\set{\ka}{\xi<\ka \text{ for all } \xi\in\Xi}.
\]

\subsection[The parameter $\eta$]{\boldmath The parameter $\eta$}\label{subsect:eta}

We begin with the observation that the congruence $\si$ might identify partitions of unequal ranks.  That is, there may exist some $(\al,\be)\in\si$ with $\rank(\al)>\rank(\be)$; if we write $\ka=\rank(\al)$, then this says that $(\al,\be)\in\si\cap(D_\ka\times I_\ka)$.  Roughly speaking, our first parameter, $\eta(\si)$, measures how high up (in the ordering of $\J$-classes of $\M$) this phenomenon occurs.
Specifically, we define
\[
\eta=\eta(\si) = {\LSUB} \bigset{\ka}{\si\cap(D_\ka\times I_\ka)\not=\emptyset}.
\]
Note for example that $\eta(\De_{\M})=0$ and $\eta(\nabla_{\M})=|X|^+$.
More generally, for a Rees congruence~$R_\kappa$ with $\ka\geq2$ we have $\eta(R_\kappa)=\kappa$.
We begin with a simple lemma (in which for convenience we additionally define~$I_0=\emptyset$ to cover the $\eta(\si)=0$ case):

\begin{lemma}
\label{la323a}
With $\eta=\eta(\si)$, we have $\sigma=\sigma\restr_{I_\eta}\cup  \displaystyle{\bigcup_{\kappa\in[\eta,|X|]} \sigma\restr_{D_\kappa}} $.  
\end{lemma}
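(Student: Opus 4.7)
The plan is to establish the equality by mutual inclusion. The inclusion $\supseteq$ is immediate, since both $\si\restr_{I_\eta}$ and each $\si\restr_{D_\kappa}$ are subsets of $\si$ by definition. So the real content is the inclusion $\subseteq$: any $(\al,\be)\in\si$ lies in $\si\restr_{I_\eta}$ or in $\si\restr_{D_\kappa}$ for some $\kappa\in[\eta,|X|]$.

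To prove this, take $(\al,\be)\in\si$; by symmetry of $\si$, I may assume without loss of generality that $\rank(\al)\geq\rank(\be)$, and I set $\kappa=\rank(\al)\in[0,|X|]$. I then split into two cases according to whether $\kappa<\eta$ or $\kappa\geq\eta$. If $\kappa<\eta$, then also $\rank(\be)\leq\kappa<\eta$, so both $\al$ and $\be$ lie in $I_\eta$ (using the convention $I_0=\emptyset$, this case simply does not arise when $\eta=0$), and hence $(\al,\be)\in\si\restr_{I_\eta}$, as required.

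The substantive case is $\kappa\geq\eta$; here $\al\in D_\kappa$, and the goal is to show $\be\in D_\kappa$ too, which places $(\al,\be)$ in $\si\restr_{D_\kappa}$ (and note that $\kappa\in[\eta,|X|]$ since $\kappa=\rank(\al)\leq|X|$). The key step is ruling out the possibility $\rank(\be)<\kappa$: in that event $\be\in I_\kappa$, so $(\al,\be)\in\si\cap(D_\kappa\times I_\kappa)\neq\emptyset$, placing $\kappa$ in the set $\{\mu:\si\cap(D_\mu\times I_\mu)\neq\emptyset\}$; but by the very definition of $\eta=\LSUB$ of this set, we would then have $\kappa<\eta$, contradicting $\kappa\geq\eta$. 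Hence $\rank(\be)=\kappa$ and $(\al,\be)\in\si\restr_{D_\kappa}$.

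This is essentially a bookkeeping argument rather than a hard result; the only subtle point is the extremal behaviour. When $\eta=|X|^+$ the union on the right is vacuous but $I_\eta=\M$, so $\si\restr_{I_\eta}=\si$ and the identity is trivial; when $\eta=0$ the first term is empty (under the convention $I_0=\emptyset$) and every pair is handled by the $D_\kappa$-case. The only place where anything happens is the contradiction step extracted from the definition of $\LSUB$, and I expect that to be a one-line argument rather than an obstacle.
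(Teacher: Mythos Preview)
Your proof is correct and follows essentially the same approach as the paper: both establish the forward inclusion by assuming $\kappa=\rank(\al)\geq\rank(\be)$, handling $\kappa<\eta$ trivially, and in the case $\kappa\geq\eta$ deducing $\rank(\be)=\kappa$ directly from the definition of $\eta$ as the least strict upper bound. The paper's version is simply terser, omitting your explicit discussion of the edge cases $\eta=0$ and $\eta=|X|^+$.
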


\pf
Clearly only the forward inclusion requires a proof, so suppose $(\al,\be)\in\si$.  By symmetry, we may assume that $\ka=\rank(\al)\geq\rank(\be)$.  If $\ka<\eta$, then $(\al,\be)\in\si\restr_{I_\eta}$.  If $\ka\geq\eta$, then we must have $\rank(\be)\geq\ka$ by definition of $\eta$, and so $\rank(\be)=\ka$, giving $(\alpha,\beta)\in\sigma\restr_{D_\kappa}$, as required.
\epf

Because the set $\bigset{\ka}{\si\cap(D_\ka\times I_\ka)\not=\emptyset}$ never contains $0$, the next lemma follows immediately from the definition of $\eta(\si)$:

\begin{lemma}
\label{la323}
We have $\eta(\sigma)\in \{0\}\cup [2,|X|^+]$.  \epfres
\end{lemma}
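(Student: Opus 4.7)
The plan is to unpack the definitions and observe that the proof is essentially bookkeeping, as the remark preceding the lemma already suggests. Recall that $\eta(\sigma) = \LSUB(A)$ where $A = \set{\kappa}{\sigma \cap (D_\kappa \times I_\kappa) \neq \emptyset}$. The only possible values of $\eta(\sigma)$ are cardinals in $[0,|X|^+]$ (since every relevant $\kappa$ lies in $[0,|X|]$), so the sole task is to rule out $\eta(\sigma) = 1$.

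To do so, I would first note that $0 \notin A$, using the convention $I_0 = \emptyset$ established just before Lemma \ref{la323a}: for then $D_0 \times I_0 = \emptyset$, so trivially $\sigma \cap (D_0 \times I_0) = \emptyset$. Next, suppose towards a contradiction that $\eta(\sigma) = 1$. By the definition of $\LSUB$, this means $1$ is the least cardinal that strictly exceeds every element of $A$, so in particular every $\kappa \in A$ satisfies $\kappa < 1$, forcing $\kappa = 0$. Thus $A \subseteq \{0\}$, and combined with $0 \notin A$ we get $A = \emptyset$. But then $\LSUB(A) = \LSUB(\emptyset) = 0 \neq 1$, a contradiction. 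Hence $\eta(\sigma) \in \{0\} \cup [2,|X|^+]$, as claimed.

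There is no real obstacle here: the whole content is the observation that the rank-$0$ $\D$-class cannot witness a pair in $\sigma$ dropping strictly in rank (because there is nothing below it), so the set $A$ has a forbidden value at $0$ and consequently $\LSUB(A)$ cannot equal the successor $1$.
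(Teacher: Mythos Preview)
Your proof is correct and follows essentially the same approach as the paper: the paper simply remarks that the set $\bigset{\ka}{\si\cap(D_\ka\times I_\ka)\not=\emptyset}$ never contains $0$, from which the lemma follows immediately by the definition of $\eta(\si)$; your argument just unpacks this implication explicitly.
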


The parameter $\eta(\sigma)$ is the main classifying parameter in our theorem: 
the congruences of type~\ref{CT1} are those with $\eta(\sigma)$ finite,
while \ref{CT2} consists of all the congruences for which $\eta(\sigma)$ is infinite.

The remainder of this subsection is devoted to establishing a key property of $\eta$: namely, that~$(\alpha,\widehat{\alpha})\in \sigma$ for all $\alpha$ of rank smaller than~$\eta$ (the $\al\mt\wh\al$ map was defined in Subsection \ref{subsect:stability}). 
This will be achieved in Lemma \ref{la325}, the proof of which requires several intermediate lemmas.

\begin{lemma}
\label{la-ty1}
If $(\alpha,\beta)\in\sigma$, then for every $\gamma\in \M$ with ${\rank(\gamma)\leq\rank(\alpha)}$, there exists $\delta\in \M$ with $\rank(\delta)\leq\rank(\beta)$ such that $(\gamma,\delta)\in\sigma$.   
\end{lemma}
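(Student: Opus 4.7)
The plan is to exploit the total ordering of ideals of $\M$ by rank (Corollary~\ref{cor:ideals_M}~\ref{it:idMi}) to express $\gamma$ as a two-sided multiple of $\alpha$, and then to transport this factorisation across the $\sigma$-related pair $(\alpha,\beta)$ using compatibility. In other words, the whole lemma reduces to finding $\tau,\tau'\in\M$ with $\gamma=\tau\alpha\tau'$, after which one simply sets $\delta=\tau\beta\tau'$.

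Concretely, I would first invoke Lemma~\ref{lem:Green_M}~\ref{it:GMiii}: the hypothesis $\rank(\gamma)\leq\rank(\alpha)$ is equivalent to $\gamma\leqJ\alpha$, which is to say $\gamma\in\M^1\alpha\M^1=\M\alpha\M$ (the last equality holds because $\M$ is a monoid with identity $\ep_X$). Thus we may fix $\tau,\tau'\in\M$ such that $\gamma=\tau\alpha\tau'$. Setting $\delta:=\tau\beta\tau'\in\M$, two applications of compatibility of $\sigma$ give $(\gamma,\delta)=(\tau\alpha\tau',\tau\beta\tau')\in\sigma$. Finally, $\delta=\tau\beta\tau'\leqJ\beta$, so a second appeal to Lemma~\ref{lem:Green_M}~\ref{it:GMiii} yields $\rank(\delta)\leq\rank(\beta)$, as required.

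The argument relies only on the chain structure of $\M$'s ideals and on $\M$ being a monoid with identity, so it applies uniformly to both $\M=\P_X$ and $\M=\PB_X$ without any case distinction. I do not anticipate a substantive obstacle: the lemma is essentially an immediate consequence of the ideal structure of $\M$ combined with the defining property of a congruence, and there is no need to construct partitions explicitly.
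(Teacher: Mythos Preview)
Your proof is correct and is essentially identical to the paper's own argument: both use Lemma~\ref{lem:Green_M}~\ref{it:GMiii} to write $\gamma=\theta_1\alpha\theta_2$, set $\delta=\theta_1\beta\theta_2$, and then invoke compatibility of $\sigma$ together with a second application of Lemma~\ref{lem:Green_M}~\ref{it:GMiii}.
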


\begin{proof}
From $\rank(\ga)\leq\rank(\al)$, we have $\ga\leqJ\al$ by Lemma \ref{lem:Green_M} \ref{it:GMiii}, and hence $\gamma=\theta_1\alpha\theta_2$ for some $\theta_1,\theta_2\in\M$.
Setting $\delta=\theta_1\beta\theta_2$, we have $(\gamma,\delta)=(\th_1\al\th_2,\th_1\be\th_2)\in\sigma$ because $\sigma$ is a congruence.  Since $\de\leqJ\be$, another application of Lemma \ref{lem:Green_M} \ref{it:GMiii} gives $\rank(\de)\leq\rank(\be)$.
\end{proof}

\begin{lemma}
\label{la-ty2}
If $(\epsilon_Y,\alpha)\in\sigma$ where $Y\subseteq X$ is finite and $\rank(\alpha)<|Y|$,
then there exists $\alpha_0\in D_0$ such that $(\epsilon_Y,\alpha_0)\in \sigma$.
\end{lemma}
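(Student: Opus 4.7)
The plan is to iteratively reduce the rank of $\alpha$ until reaching $D_0$. Set $n := |Y|$ and $k := \rank(\alpha)$, so $k < n$.

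First I would show that $(\ep_Y, \alpha^m) \in \si$ for every $m \geq 1$. Left-multiplying $(\ep_Y,\alpha) \in \si$ by $\ep_Y$ gives $(\ep_Y, \ep_Y\alpha) \in \si$ (using $\ep_Y^2 = \ep_Y$), and right-multiplying by $\alpha$ gives $(\ep_Y\alpha, \alpha^2) \in \si$; transitivity produces $(\ep_Y, \alpha^2) \in \si$, and an easy induction handles the rest. Since $\alpha$ has finite rank $k$ and the $\D$-class $D_k$ is stable (Lemma~\ref{la37}), the sequence $\alpha, \alpha^2, \alpha^3, \ldots$ eventually enters a group $\H$-class (isomorphic to $\S_{k'}$ for some $k' \leq k$), and hence some power $e := \alpha^m$ is an idempotent of rank $k'$, still satisfying $(\ep_Y, e) \in \si$. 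If $k' = 0$, set $\alpha_0 := e$ and we are done, so assume $1 \leq k' < n$.

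The reduction step exploits the fact that every permutation $\pi \in \S_Y$, viewed as a unit of $\M$ by extending to the identity on $X\sm Y$, satisfies $\pi \ep_Y \pi^{-1} = \ep_Y$. Hence $(\ep_Y, \pi e \pi^{-1}) \in \si$ by compatibility, and combining with $(\ep_Y, e) \in \si$ we obtain $(\ep_Y, e\cdot \pi e\pi^{-1}) \in \si$ with $\rank(e\cdot\pi e\pi^{-1}) \leq k'$; similarly $(\ep_Y, \ep_Y e) \in \si$ and $(\ep_Y, e \ep_Y) \in \si$. The central claim is that at least one of these constructions produces a partition $\beta$ with $\rank(\beta) < k'$, allowing induction on $k'$; iterating at most $k'$ times will then yield the required $\alpha_0 \in D_0$.

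For the central claim, the inequality $k' < n = |Y|$ is doing all the work, via a pigeonhole argument. If some upper part $A_i$ of a transversal of $e$ is disjoint from $Y$, then $\rank(\ep_Y e) < k'$, and symmetrically for $e \ep_Y$ when some $B_i$ misses $Y$. Otherwise every $A_i$ and $B_i$ meets $Y$, but since $e$ has only $k'$ transversals while $|Y| > k'$, either two distinct elements of $Y$ share some $A_i$, or some element of $Y$ lies outside $\bigcup_i A_i$ (in an upper non-transversal or singleton of $e$); analogously on the lower side. In each sub-case, a suitably chosen transposition $\pi \in \S_Y$ makes $\pi e \pi^{-1}$ differ from $e$ in a way that forces $e\cdot\pi e\pi^{-1}$ to have rank strictly less than $k'$. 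The hard part will be the careful combinatorial verification of strict rank reduction across all sub-cases, particularly when $e$ is highly $\S_Y$-symmetric; a clean way to manage this is to first convert $e$ to the shape $\ep_Z$ for some $Z \sub X$ with $|Z| = k'$ via compatible multiplications, after which the argument is transparent: if $Z \cap Y = \emptyset$ then $\ep_Y\ep_Z = \ep_\emptyset \in D_0$ directly, while otherwise picking $z_0 \in Z \cap Y$ and $y_0 \in Y\sm Z$ (which exists as $|Z|<|Y|$), the transposition $\pi = (z_0\,y_0) \in \S_Y$ gives $\pi \ep_Z \pi^{-1} = \ep_{Z'}$ with $Z'\neq Z$, and $\ep_Z\cdot\ep_{Z'} = \ep_{Z\cap Z'}$ has rank $k'-1$.
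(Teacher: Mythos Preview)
Your reduction to an idempotent $e$ via iterated powers is sound: once the ranks $\rank(\alpha^m)$ stabilise at $k'$, stability of $D_{k'}$ forces $\alpha^{m_0}$ into a finite group $\H$-class, so some power is idempotent. But the rank-reduction step that follows has a real gap. The ``clean way'' you propose---converting $e$ to $\epsilon_Z$ via compatible multiplications---does not preserve the pairing with $\epsilon_Y$: multiplying $(\epsilon_Y,e)$ on both sides to turn $e$ into $\epsilon_Z$ will in general turn $\epsilon_Y$ into some other partition, so you cannot then argue with $\epsilon_Y\epsilon_Z$ or with transpositions in $\S_Y$. Worse, your central claim is actually false. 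In $\P_X$, take $Y=\{1,2\}$ and let $e$ be the partition with the single block $X\cup X'$ (an idempotent of rank~$1$). Then $\epsilon_Y e$, $e\epsilon_Y$ and $\epsilon_Y e\epsilon_Y$ all have rank~$1$, and $\pi e\pi^{-1}=e$ for every $\pi\in\S_Y$; every product you can form from $\epsilon_Y$, $e$ and their $\S_Y$-conjugates still has rank~$1$. Your toolkit is too restrictive: it only uses elements that stabilise $\epsilon_Y$, and such elements cannot break an $\S_Y$-invariant $e$.

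The paper's argument is quite different and sidesteps this by invoking Lemma~\ref{la-ty1}. Rather than trying to directly manufacture a smaller-rank partner for $\epsilon_Y$, it chooses a subset $Z\subseteq Y$ with $|Z|=l=\rank(\alpha_0)$ so that $\epsilon_Z\alpha_0$ has rank strictly below $l$; this yields a pair $(\epsilon_Z,\epsilon_Z\alpha_0)\in\sigma$ exhibiting a rank drop at level $l$, and Lemma~\ref{la-ty1} then transfers the drop back to $\alpha_0$, producing $(\alpha_0,\alpha_1)\in\sigma$ with $\rank(\alpha_1)<l$, hence $(\epsilon_Y,\alpha_1)\in\sigma$ by transitivity. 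The insight you are missing is that one does not need operations fixing $\epsilon_Y$: it suffices to exhibit a rank drop \emph{anywhere} in $\sigma$ at the relevant rank, and Lemma~\ref{la-ty1} carries it home.
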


\begin{proof}
Suppose $Y=\{a_1,\dots,a_k\}$, and let $\alpha_0$ be a partition of the smallest possible rank such that $(\epsilon_Y,\alpha_0)\in\sigma$, noting that $\rank(\al_0)\leq\rank(\al)<|Y|$.
We must prove that $\rank(\alpha_0)=0$.
To do so, suppose to the contrary that $\rank(\alpha_0)=l>0$, and let the transversals of $\alpha_0$ be $\set{A_i\cup B_i'}{ i=1,\dots,l}$.
Since $|Y|>\rank(\al_0)=l$, either some element of~$Y$ does not belong to $A_1\cup\cdots\cup A_l$, or else there exists some $A_i$ that contains two distinct elements of~$Y$.  In any case, there exist $l$ distinct elements of $Y$, say $a_1,\dots,a_l$, that do not all belong to distinct~$A_i$.  Let $Z=\{a_1,\dots,a_l\}$.  Then $(\epsilon_Z, \epsilon_Z\alpha_0)= (\epsilon_Z\epsilon_Y,\epsilon_Z\alpha_0)\in \sigma$, and we have $\rank(\ep_Z)=l>\rank(\epsilon_Z\alpha_0)$.  By Lemma \ref{la-ty1}, since ${\rank(\al_0)=l=\rank(\ep_Z)}$, we have $(\alpha_0,\alpha_1)\in \sigma$ for some~$\alpha_1$ with ${\rank(\alpha_1)\leq\rank(\ep_Z\al_0)< l=\rank(\alpha_0)}$. By transitivity we also have ${(\epsilon_Y,\alpha_1)\in\sigma}$, contradicting the minimality of $\rank(\alpha_0)$, and completing the proof.
\end{proof}

\begin{lemma}
\label{la-ty3}
If $(\epsilon_Y,\alpha)\in\sigma$ where $Y\sub X$ is infinite and $\rank(\alpha)<|Y|=|Y\setminus\dom(\alpha)|$, then there exists $\alpha_0\in D_0$ such that $(\epsilon_Y,\alpha_0)\in\sigma$.
\end{lemma}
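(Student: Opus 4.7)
My plan closely parallels the proof of Lemma \ref{la-ty2}, but the infinite setting together with the hypothesis $|Y\sm\dom(\al)|=|Y|$ lets us bypass the minimal-rank iteration: a single idempotent multiplication followed by an application of Lemma \ref{la-ty1} will suffice.

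First I would choose a subset $W\sub Y\sm\dom(\al)$ with $|W|=|Y|$, which is available by hypothesis. Multiplying the given pair $(\ep_Y,\al)\in\si$ on the left by $\ep_W$, and noting that $\ep_W\ep_Y=\ep_W$ since $W\sub Y$, yields $(\ep_W,\ep_W\al)\in\si$.

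The main step is to show that $\ep_W\al$ has rank $0$. I would verify this by inspecting the product graph $\Pi(\ep_W,\al)$: for each $w\in W$, the upper vertex $w$ is joined through $(\ep_W)_\downarrow$ only to the middle vertex $w''$, and because $w\notin\dom(\al)$, this $w''$ lies in an upper non-transversal or singleton of $\al^\uparrow$ and so reaches no lower vertex. Every other upper vertex $x\in X\sm W$ is a singleton in $(\ep_W)_\downarrow$ and meets only its diagonal copy from $\De_X$ inside $\al^\uparrow$. Hence $\ep_W\al$ has no transversals, so $\ep_W\al\in D_0$.

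Finally, to transfer the conclusion from $\ep_W$ back to $\ep_Y$, I would invoke Lemma \ref{la-ty1} on the pair $(\ep_W,\ep_W\al)\in\si$ with $\ga=\ep_Y$; this is legitimate since $\rank(\ep_Y)=|Y|=|W|=\rank(\ep_W)$. The lemma then supplies some $\al_0\in\M$ with $\rank(\al_0)\leq 0$ and $(\ep_Y,\al_0)\in\si$, which is the desired conclusion. Only the idempotents $\ep_W,\ep_Y$ and the given $\al$ enter the argument, and all lie in $\M$ for $\M=\P_X$ or $\M=\PB_X$, so the two monoids are treated uniformly. I regard the product-graph computation confirming $\ep_W\al\in D_0$ as the main point of the proof, since it is precisely where the hypothesis $|Y\sm\dom(\al)|=|Y|$ is used, via the inclusion $W\sub Y\sm\dom(\al)$.
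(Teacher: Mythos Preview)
Your proof is correct and follows essentially the same approach as the paper. The paper simply takes $Z=Y\sm\dom(\al)$ in full (rather than choosing a subset $W$ of it), observes $(\ep_Z,\ep_Z\al)=(\ep_Z\ep_Y,\ep_Z\al)\in\si$ with $\rank(\ep_Z\al)=0$, and then applies Lemma~\ref{la-ty1} exactly as you do; your extra product-graph justification for $\rank(\ep_W\al)=0$ is just an explicit unpacking of what the paper leaves implicit.
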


\begin{proof}
Let $Z=Y\setminus\dom(\alpha)$.  Then $(\epsilon_Z,\epsilon_Z\alpha)=(\epsilon_Z\epsilon_Y,\epsilon_Z\alpha)\in\sigma$.  Since $\rank(\ep_Y)=\rank(\ep_Z)$, Lemma \ref{la-ty1} says that $(\ep_Y,\al_0)\in\si$ for some $\al_0\in\M$ with $\rank(\al_0)\leq\rank(\ep_Z\al)=0$.
\end{proof}

\begin{lemma}
\label{la-ty4}
If $(\epsilon_Y,\alpha)\in\sigma$ where $Y\subseteq X$ and $\rank(\alpha)<|Y|$, then there exists $\alpha_0\in D_0$ such that
$(\epsilon_Y,\alpha_0)\in\sigma$.  
\end{lemma}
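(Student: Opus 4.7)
My plan is to prove Lemma~\ref{la-ty4} by a case analysis on $Y$ and $\alpha$, reducing each case to a previously established lemma. If $Y$ is finite, the statement is precisely Lemma~\ref{la-ty2}. If $Y$ is infinite and $|Y\setminus\dom(\alpha)|=|Y|$, the statement is precisely Lemma~\ref{la-ty3}. The remaining (novel) case is $Y$ infinite with $|Y\setminus\dom(\alpha)|<|Y|$, which forces $|Y\cap\dom(\alpha)|=|Y|$.

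For the novel case, I would adopt a minimum-rank argument patterned on the proof of Lemma~\ref{la-ty2}. Choose $\alpha_0$ of smallest possible rank with $(\ep_Y,\alpha_0)\in\sigma$, suppose $\mu=\rank(\alpha_0)>0$ for contradiction, and reduce by Lemma~\ref{la-ty3} to the sub-case $|Y\setminus\dom(\alpha_0)|<|Y|$. Writing the transversals of $\alpha_0$ as $A_i\cup B_i'$ for $i\in I$ with $|I|=\mu<|Y|$, we have $\sum_{i\in I}|A_i\cap Y|=|Y\cap\dom(\alpha_0)|=|Y|$, so cardinal arithmetic (Lemma~\ref{lem:card}) produces some $i_0\in I$ with $|A_{i_0}\cap Y|\geq\mu$. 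Choosing $Z\subseteq A_{i_0}\cap Y$ with $|Z|=\mu$ gives $(\ep_Z,\ep_Z\alpha_0)=(\ep_Z\ep_Y,\ep_Z\alpha_0)\in\sigma$ with $\rank(\ep_Z\alpha_0)\leq 1$, since only $A_{i_0}$ meets $Z$. For $\mu\geq 2$, Lemma~\ref{la-ty1} applied with $\gamma=\alpha_0$ (satisfying $\rank(\gamma)=\mu=\rank(\ep_Z)$) yields $(\alpha_0,\alpha_1)\in\sigma$ with $\rank(\alpha_1)\leq 1<\mu$, and transitivity gives $(\ep_Y,\alpha_1)\in\sigma$, contradicting the minimality of $\mu$.

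I expect the main obstacle to be the degenerate case $\mu=1$ with $Y\subseteq A_1$, where the above argument produces only $\rank(\ep_Z\alpha_0)\leq 1=\mu$, without the strict inequality needed to invoke Lemma~\ref{la-ty1} productively. To overcome this I would use the multiplicative structure of $\M$ directly: the product $\alpha_0^2$ lies in $D_0$ precisely when $A_1\cap B_1=\emptyset$, in which case $(\ep_Y,\alpha_0^2)=(\ep_Y\ep_Y,\alpha_0\alpha_0)\in\sigma$ is the desired rank-$0$ pair. When $A_1\cap B_1\neq\emptyset$, I would first pre- or post-multiply $\alpha_0$ by a partition chosen (using the infinitude of $X$) to disjointify the upper and lower parts of the single transversal, then square and use Lemma~\ref{la-ty1} to transfer the rank-$0$ result back to $\ep_Y$.
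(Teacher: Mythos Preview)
Your minimum-rank strategy is genuinely different from the paper's proof, which does not iterate on rank at all: instead, for the $\P_X$ case the paper directly constructs a partition $\theta$ with $\rank(\theta)=|Y|$ and $\rank(\theta\alpha)=1$, then exhibits an explicit rank-$1$/rank-$0$ pair $\big(\binom{v}{u},\emptypart\big)\in\sigma$ by sandwiching, and transfers back via two applications of Lemma~\ref{la-ty1}. Your approach is arguably cleaner for $\mu\geq 2$, and the two routes converge on the same use of Lemma~\ref{la-ty1} at the end.

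However, your treatment of the $\mu=1$ sub-case has a genuine gap. The plan ``disjointify the upper and lower parts via pre- or post-multiplication, then square'' cannot succeed in general. Take $\alpha_0=\binom{X}{X}$, the rank-$1$ partition with single block $X\cup X'$; here $A_1=B_1=X$. For any $\theta$ with at least one transversal, the product $\theta\alpha_0$ has transversal $\dom(\theta)\cup X'$ and $\alpha_0\theta$ has transversal $X\cup\codom(\theta)'$; in both cases the upper and lower parts meet, so disjointification is impossible. Moreover $\alpha_0^2=\alpha_0$, so squaring is futile. The ``infinitude of $X$'' does not help because there is no room outside $A_1$ or $B_1$.

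The fix is much simpler than your proposal and stays within your own framework. In the $\mu=1$ case you have $|A_1\cap Y|=|Y\cap\dom(\alpha_0)|=|Y|\geq\aleph_0$, so pick any two-element $Z'\subseteq A_1\cap Y$. Then $(\ep_{Z'},\ep_{Z'}\alpha_0)=(\ep_{Z'}\ep_Y,\ep_{Z'}\alpha_0)\in\sigma$ with $\rank(\ep_{Z'})=2$ and $\rank(\ep_{Z'}\alpha_0)=1$; now Lemma~\ref{la-ty2} gives $(\ep_{Z'},\alpha')\in\sigma$ with $\alpha'\in D_0$, and Lemma~\ref{la-ty1} (with $\gamma=\alpha_0$, $\rank(\gamma)=1\leq 2$) yields a rank-$0$ partner for $\alpha_0$, contradicting $\mu=1$.

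A minor point: your appeal to Lemma~\ref{lem:card} is misplaced, since that lemma concerns \emph{finite} sums while your index set $I$ has size $\mu$, possibly infinite. The claim that some $|A_{i_0}\cap Y|\geq\mu$ is nonetheless correct by elementary cardinal arithmetic: if every $|A_i\cap Y|<\mu$ then $|Y\cap\dom(\alpha_0)|\leq|I|\cdot\mu=\mu<|Y|$, a contradiction.
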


\begin{proof}
This follows from Lemma \ref{la-ty2} if $Y$ is finite, or from Lemma~\ref{la-ty3} if $Y$ is infinite and $|Y\setminus\dom(\alpha)|=|Y|$. As these are the only possibilities for $\M=\PB_X$, the lemma is proved for this monoid.  So for remainder of the proof we assume that $\M=\P_X$, that $Y$ is infinite, and that $|Y\setminus\dom(\alpha)|<|Y|$.  We may also assume that $\dom(\al)\sub Y$; indeed, if this were not the case, then we could replace $\al$ with $\al_1=\ep_Y\al$, noting that $(\ep_Y,\al_1)=(\ep_Y\ep_Y,\ep_Y\al)\in\si$, ${\rank(\al_1)\leq\rank(\al)<|Y|}$, $\dom(\al_1)\sub Y$ and $|Y\sm\dom(\al_1)|=|Y\sm\dom(\al)|<|Y|$.  Since $Y$ is infinite, the assumptions $|Y\setminus\dom(\alpha)|<|Y|$ and $\dom(\al)\sub Y$ together imply $|{\dom(\al)}|=|Y|$.

Suppose the transversals of $\alpha$ are $\set{A_i\cup B_i'}{i\in I}$, noting that $|I|<|Y|$.  Pick arbitrary $a_i\in A_i$ for each $i\in I$, let $Z_1=\set{a_i}{i\in I}$, and put $Z_2=\dom(\alpha)\setminus Z_1$.  Since $|Z_1|=|I|$ and $|{\dom(\alpha)}|=|Y|>|I|$, it follows that $|Z_2|=|Y|$.  Let $\theta=\partn z{}z{Z_1}_{z\in Z_2}$, and put $\al_2=\th\al$.  Then $(\th,\al_2)=(\th\ep_Y,\th\al)\in\sigma$, and we note that $\rank(\th)=|Z_2|=|Y|$ and $\rank(\al_2)=1$, with the single transversal of $\al_2$ being $Z_2\cup\codom(\al)'$.  Now pick any $u\in\codom(\alpha)$ and any $v\in Z_2\sm\{u\}$, and note that $(\binom vu,\emptypart)=(\binom vv \al_2\binom uu,\binom vv\th\binom uu)\in\si$.
Since $\rank(\al_2)=1=\rank(\binom vu)$, Lemma~\ref{la-ty1} says that there exists $\alpha_3\in D_0$ such that $(\al_2,\al_3)\in\sigma$, and then $(\th,\alpha_3)\in\sigma$ by transitivity.  Since $\rank(\epsilon_Y)=\rank(\th)$, another application of Lemma~\ref{la-ty1} shows that there exists $\alpha_0\in D_0$ with $(\epsilon_Y,\alpha_0)\in\sigma$, as required.
\end{proof}

\begin{lemma}
\label{la324}
If $(\alpha,\beta)\in\sigma$ with $\rank(\alpha)>\rank(\beta)$, then
$(\gamma,\widehat{\gamma})\in\sigma$ for all $\gamma\in\M$ with $\rank(\gamma)\leq\rank(\alpha)$.
\end{lemma}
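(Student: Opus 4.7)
My plan is to reduce the conclusion to the special pair $(\ep_Y,\emptypart)\in\si$ with $|Y|=\ka:=\rank(\al)$, and then transfer this pair to arbitrary $\ga$ by a sandwich construction. Fix $Y\sub X$ with $|Y|=\ka$. Since $\rank(\ep_Y)=\ka\leq\rank(\al)$, Lemma~\ref{la-ty1} provides $\de\in\M$ with $\rank(\de)\leq\rank(\be)<\ka=|Y|$ and $(\ep_Y,\de)\in\si$; Lemma~\ref{la-ty4} then upgrades this to $(\ep_Y,\al_0)\in\si$ for some $\al_0\in D_0$.

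The next (and central) step is to show $(\ep_Y,\emptypart)\in\si$. Since $\emptypart$ has no transversals, one checks directly that $\ep_Y\emptypart=\emptypart\ep_Y=\emptypart$; right- and left-multiplying the pair from the previous paragraph by $\emptypart$ therefore produces $(\al_0\emptypart,\emptypart)\in\si$ and $(\emptypart,\emptypart\al_0)\in\si$. Right-multiplying the first of these by $\al_0$ yields $(\al_0\emptypart\al_0,\emptypart\al_0)\in\si$, and a direct product-graph calculation gives the cancellation identity $\al_0\emptypart\al_0=\al_0$: since neither factor has transversals, no cross-connections are forged across the middle row, so the upper and lower non-transversal structures of the two outer copies of $\al_0$ survive intact. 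Two applications of transitivity now produce $(\al_0,\emptypart)\in\si$ and then $(\ep_Y,\emptypart)\in\si$.

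Finally, given any $\ga=\partn{A_i}{G_j}{B_i}{H_k}_{i\in I,\,j\in J,\,k\in K}$ with $|I|=\rank(\ga)\leq\ka$, pick $Z=\{z_i:i\in I\}\sub Y$ and set
\[
\th_1=\partn{A_i}{G_j}{z_i}{},\qquad \th_2=\partn{z_i}{}{B_i}{H_k}.
\]
Routine product-graph checks confirm $\th_1\ep_Z\th_2=\ga$ and $\th_1\emptypart\th_2=\wh\ga$. Left-multiplying $(\ep_Y,\emptypart)\in\si$ by $\ep_Z$, and using $\ep_Z\ep_Y=\ep_Z$ and $\ep_Z\emptypart=\emptypart$, gives $(\ep_Z,\emptypart)\in\si$; compatibility with $\th_1,\th_2$ then delivers $(\ga,\wh\ga)\in\si$, as required.

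I expect the middle step to be the main obstacle: the preceding lemmas only guarantee existence of some unknown $\al_0\in D_0$, and we need to eliminate this existential in favour of the canonical element $\emptypart$. The identity $\al_0\emptypart\al_0=\al_0$ is the key lever, letting $\al_0$ act as its own witness so that transitivity can close the argument. All constructed partitions have block sizes bounded by those already appearing in $\ga$, $\al_0$ and the $\ep$-type idempotents, so the proof applies uniformly to $\M=\P_X$ and $\M=\PB_X$.
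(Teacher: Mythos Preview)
Your proof is correct and rests on the same key identity the paper uses, namely $\al\emptypart\al=\wh\al$ (so in particular $\al_0\emptypart\al_0=\al_0$ for $\al_0\in D_0$). The organization differs slightly: you first upgrade $(\ep_Y,\al_0)\in\si$ to $(\ep_Y,\emptypart)\in\si$ via the sandwich trick applied to $\al_0$, and then build explicit multipliers $\th_1,\th_2$ to transfer $(\ep_Z,\emptypart)$ to $(\ga,\wh\ga)$. The paper instead applies Lemma~\ref{la-ty1} a second time, directly to $\ga$, obtaining $(\ga,\ga_0)\in\si$ for some $\ga_0\in D_0$; then the single compatibility step $(\ga\emptypart\ga,\ga_0\emptypart\ga_0)=(\wh\ga,\ga_0)\in\si$ and one use of transitivity finish the argument. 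This bypasses both your middle step and the explicit construction of $\th_1,\th_2$, so it is shorter, but the underlying mechanism is the same.
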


\begin{proof}
Let $Y\subseteq X$ be any subset of cardinality $\rank(\alpha)$.  By Lemma \ref{la-ty1} we have $(\epsilon_Y,\de)\in\si$ for some $\de\in\M$ with  $\rank(\de)\leq\rank(\beta)< |Y|$.
By Lemma \ref{la-ty4} it follows that there exists $\de_0\in D_0$ such that
$(\epsilon_Y,\de_0)\in\sigma$.
Now let $\ga\in\M$ with $\rank(\ga)\leq\rank(\al)=|Y|$.
Using Lemma~\ref{la-ty1} again, we have $(\gamma,\gamma_0)\in\sigma$ for some $\gamma_0\in D_0$.  But then $(\ga_0,\widehat{\ga})=(\ga_0\emptypart\ga_0,\ga\emptypart\ga)\in\sigma$, and hence $(\ga,\widehat{\ga})\in\sigma$ by transitivity.
\end{proof}

\begin{lemma}
\label{la325}
For every $\alpha\in\M$ with $\rank(\alpha)<\eta(\sigma)$, we have $(\alpha,\widehat{\alpha})\in\sigma$.
\end{lemma}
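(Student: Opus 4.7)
The plan is to reduce this immediately to Lemma \ref{la324} by unpacking the definition of $\eta(\sigma) = \LSUB\bigset{\kappa}{\sigma\cap(D_\kappa\times I_\kappa)\neq\emptyset}$. The case $\eta(\sigma)=0$ is vacuous, since no $\alpha$ satisfies $\rank(\alpha)<0$, so I will assume $\eta(\sigma)\geq 1$ and write $\Xi$ for the displayed set of cardinals.

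First I would extract a cardinal $\kappa\in\Xi$ with $\rank(\alpha)\leq\kappa$. The hypothesis $\rank(\alpha)<\eta(\sigma)$ says precisely that $\rank(\alpha)$ fails to be a strict upper bound of $\Xi$ (else $\eta(\sigma)\leq\rank(\alpha)$ by minimality of the LSUB), so some $\kappa\in\Xi$ satisfies $\rank(\alpha)\leq\kappa$. By definition of $\Xi$, there exists a pair $(\beta,\gamma)\in\sigma$ with $\rank(\beta)=\kappa$ and $\rank(\gamma)<\kappa$, so in particular $\rank(\beta)>\rank(\gamma)$.

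Now I would apply Lemma \ref{la324} to this pair $(\beta,\gamma)\in\sigma$: since $\rank(\alpha)\leq\kappa=\rank(\beta)$, the conclusion of that lemma yields $(\alpha,\widehat{\alpha})\in\sigma$, finishing the proof.

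There is no real obstacle here: the only subtlety is checking that the LSUB definition does give the required $\kappa\in\Xi$ above $\rank(\alpha)$, which splits transparently into the case where $\Xi$ has a maximum (then $\eta(\sigma)$ is its successor cardinal and $\rank(\alpha)<\eta(\sigma)$ forces $\rank(\alpha)\leq\max\Xi$) and the case where $\Xi$ has no maximum (then $\eta(\sigma)=\sup\Xi$, and any cardinal below it is strictly exceeded by some element of $\Xi$). All the real work has already been done in Lemma \ref{la324}.
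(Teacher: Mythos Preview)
Your proof is correct and follows essentially the same approach as the paper's: both unpack the definition of $\eta(\sigma)$ as a least strict upper bound to extract a pair $(\beta,\gamma)\in\sigma$ with $\rank(\beta)\geq\rank(\alpha)$ and $\rank(\beta)>\rank(\gamma)$, then apply Lemma~\ref{la324}. Your version is slightly more explicit about the case analysis on whether $\Xi$ attains its supremum, but this is just expository detail.
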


\begin{proof}
Suppose $\rank(\alpha)=\kappa<\eta(\si)$.  By definition of $\eta(\si)$, there exists a pair $(\beta,\gamma)\in\sigma$ with
$\rank(\beta)\geq\kappa$ and $\rank(\beta)>\rank(\gamma)$.
It now follows from Lemma \ref{la324} that $(\alpha,\widehat{\alpha})\in\sigma$.
\end{proof}

One consequence of Lemma \ref{la325} is that the set $\set{\ka}{\si\cap(D_\ka\times I_\ka)\not=\emptyset}$ used above to define~${\eta=\eta(\si)}$ is in fact the entire interval $[1,\eta)$.

\subsection[The parameters $\zeta_1$ and $\zeta_2$]{\boldmath The parameters $\zeta_1$ and $\zeta_2$}\label{subsect:zeta}

The other two parameters that apply to an arbitrary congruence $\sigma$ are denoted $\ze_1$ and $\ze_2$.  Roughly speaking, they measure by how much the kernels and cokernels of $\sigma$-related pairs in~$D_0$ can differ.  Formally, we define
\begin{align*}
\zeta_1 = \zeta_1(\sigma) &= {\LSUB} \bigset{|\ol{\alpha}\sd\ol{\beta}|}{(\al,\be)\in\si\restr_{D_0}} , \\[2truemm]
\zeta_2 = \zeta_2(\sigma) &= {\LSUB} \bigset{|\ul{\alpha}\sd\ul{\beta}|}{(\al,\be)\in\si\restr_{D_0}}.
\end{align*}

Again, we proceed to gather some important facts about $\zeta_1$ and $\zeta_2$.  The main result here is that these parameters tell us everything about the way $\si$ identifies partitions of rank below $\eta=\eta(\si)$; see Lemma \ref{la327}.  We also identify some restrictions on the possible values of $\ze_1$ and~$\ze_2$; see Lemmas \ref{la328} and \ref{la328b}.
We begin with the following obvious observation:

\begin{lemma}\label{lem:D0_al_be}
If $\al,\be\in D_0$, then $\ol{\al\be}=\ol\al$ and $\ul{\al\be}=\ul\be$.  \epfres
\end{lemma}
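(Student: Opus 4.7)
The plan is to unpack the product $\al\be$ via the product graph $\Pi(\al,\be)$ and use the hypothesis that neither $\al$ nor $\be$ has any transversals.

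First I would record what $\al,\be\in D_0$ means concretely: every block of $\al$ lies entirely inside $X$ (an upper non-transversal) or entirely inside $X'$ (a lower non-transversal), and similarly for $\be$. Consequently, in $\al_\downarrow$ (obtained by relabelling each $x'$ as $x''$ and adjoining the singletons of $X'$) every non-singleton block lies inside $X$ or inside $X''$; and in $\be^\uparrow$ every non-singleton block lies inside $X''$ or inside $X'$. In particular, no edge of $\Pi(\al,\be)$ joins a vertex of $X$ with a vertex of $X'\cup X''$, and no edge joins a vertex of $X'$ with a vertex of $X\cup X''$.

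Next I would read off the blocks of $\al\be$ from this observation. Since a vertex $x\in X$ is only connected in $\Pi(\al,\be)$ to other vertices of $X$, via edges supplied by the upper non-transversals of $\al$, the blocks of $\al\be$ contained in $X$ coincide with the upper non-transversals of $\al$; dually, the blocks of $\al\be$ contained in $X'$ coincide with the lower non-transversals of $\be$. In particular $\al\be$ itself has no transversal, and so $\ol{\al\be}$ (respectively $\ul{\al\be}$) is determined entirely by the upper (respectively lower) non-transversals of $\al\be$.

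Combining these two steps, $\ol{\al\be}$ equals the partition of $X$ given by the upper non-transversals of $\al$, which by definition is $\ol\al$; and symmetrically $\ul{\al\be}=\ul\be$. There is no real obstacle here once the product-graph formulation of multiplication is in hand; the only point to be careful about is that the argument works verbatim in both $\P_X$ and $\PB_X$, since the construction never introduces a block of size larger than those already present in $\al$ and $\be$.
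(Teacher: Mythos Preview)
Your argument is correct and is exactly the natural verification via the product graph; the paper itself treats this lemma as obvious and records no proof at all (it ends with the no-proof marker \verb|\epfres|). So your write-up is essentially an expansion of what the authors left implicit.
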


We will also require the following two technical lemmas; the proofs diverge significantly for~$\P_X$ and $\PB_X$, and will be postponed until Section \ref{sect:tech}.  There are obvious dual versions, but we will not state these.

\begin{lemma}
\label{la328a}
Suppose $(\alpha,\beta)\in \sigma\restr_{D_0}$ with $\overline{\alpha}\neq\overline{\beta}$.
Then for any $\gamma,\delta\in D_0$ with $|\overline{\gamma}\sd\overline{\delta}|<\aleph_0$ and $\underline{\gamma}=\underline{\delta}$, we have $(\gamma,\delta)\in\sigma$.
\end{lemma}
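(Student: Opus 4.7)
The plan is to proceed in three broad phases, the middle one being where the proofs for $\P_X$ and $\PB_X$ will part ways and have to be carried out separately.

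\emph{Phase 1 (Normalising the lower parts).} Starting from the given pair $(\alpha,\beta)\in\sigma\restr_{D_0}$ with $\ol\al\neq\ol\be$, I would first apply compatibility to obtain $(\alpha\emptypart,\beta\emptypart)\in\sigma$. By Lemma~\ref{lem:D0_al_be}, this new pair lies in $D_0\times D_0$, both elements have all-singletons lower parts, and the upper parts are unchanged, so are still unequal. Thus we may assume from the outset that $\ul\al=\ul\be$ is the all-singletons partition, while $\ol\al\neq\ol\be$.

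\emph{Phase 2 (Extracting an elementary merge pair).} The key step is to produce in $\sigma$ an \emph{elementary} pair $(\tau_0,\tau_1)\in D_0\times D_0$ with $\ul{\tau_0}=\ul{\tau_1}$ equal to all singletons, and such that $\ol{\tau_0}$ differs from $\ol{\tau_1}$ by a single merge: for some distinct $a,b\in X$, the partition $\ol{\tau_0}$ has $\{a\}$ and $\{b\}$ as separate singleton blocks whereas $\ol{\tau_1}$ has $\{a,b\}$ as a single block, and all other blocks (all singletons) are identical in both. Since $\ol\al\neq\ol\be$, one can pick $a,b\in X$ witnessing the difference (in the same block of one of $\ol\al,\ol\be$ but different blocks of the other), and the idea is to build left/right multipliers $\theta_1,\theta_2\in\M$ so that $\theta_1\alpha\theta_2$ and $\theta_1\beta\theta_2$ respectively equal $\tau_0$ and $\tau_1$: intuitively, $\theta_1$ and $\theta_2$ should collapse everything away from $\{a,b\}$ into singletons while preserving the distinction between $\al$ and $\be$ at $\{a,b\}$. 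In $\P_X$, unrestricted block sizes afford considerable freedom in this construction; in $\PB_X$ the blocks-of-size-at-most-two restriction forces a substantially more delicate combinatorial argument, which is why the two proofs have to be separated in Section~\ref{sect:tech}.

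\emph{Phase 3 (Bootstrapping to arbitrary $(\gamma,\delta)$).} Given any $\gamma,\delta\in D_0$ with $\ul\ga=\ul\de$ and $|\ol\ga\sd\ol\de|<\aleph_0$, only finitely many upper blocks change between $\ol\ga$ and $\ol\de$, so we can decompose the transformation $\ol\ga\to\ol\de$ into a finite sequence of elementary merges and splits of pairs of blocks. This yields a chain $\gamma=\gamma_0,\gamma_1,\dots,\gamma_n=\delta$ in $D_0$, all with common lower part $\ul\ga$, where consecutive pairs differ by exactly one such elementary move. For each $i$, I would choose $\phi_1^{(i)},\phi_2^{(i)}\in\M$ with $\phi_1^{(i)}\tau_0\phi_2^{(i)}=\gamma_{i-1}$ and $\phi_1^{(i)}\tau_1\phi_2^{(i)}=\gamma_i$: informally, $\phi_1^{(i)}$ routes the two blocks being merged at step $i$ through positions $a$ and $b$ of $\tau_0,\tau_1$, while $\phi_2^{(i)}$ installs the desired lower partition $\ul\ga$. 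Compatibility of $\sigma$ then places each $(\gamma_{i-1},\gamma_i)$ in $\sigma$, and transitivity delivers $(\gamma,\delta)\in\sigma$.

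The main obstacle is Phase~2: extracting a clean elementary merge pair from a generic $(\alpha,\beta)$ whose upper parts may differ in complicated, global ways. Localising that difference to a single two-element merge, while remaining inside $\PB_X$ when that is the ambient monoid, is the principal technical challenge and is the reason the proofs in Section~\ref{sect:tech} diverge between the two monoids.
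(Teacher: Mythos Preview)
Your three-phase plan is correct and would yield a valid proof, and it is close in spirit to what the paper does. However, your diagnosis of where the difficulty lies is off.

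Phase~2 is \emph{not} the obstacle; it is a one-line construction in both monoids. Since $\ol\al\neq\ol\be$, pick $x_1,x_2$ lying in the same block of one of $\ol\al,\ol\be$ and in different blocks of the other; then with $\theta=\binom{a\ b}{x_1\ x_2}$ (transversals $\{a,x_1'\},\{b,x_2'\}$, all else singletons, which belongs to $\PB_X$ as well as $\P_X$), the pair $(\theta\al\emptypart,\theta\be\emptypart)$ is exactly your $(\tau_1,\tau_0)$ or $(\tau_0,\tau_1)$. This is Lemma~\ref{la-tt1}\ref{it:tt1i} for $\P_X$, and the same construction works verbatim in $\PB_X$. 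So your assertion that ``the blocks-of-size-at-most-two restriction forces a substantially more delicate combinatorial argument'' in Phase~2 is incorrect for this lemma; the $\PB_X$ proof (Lemma~\ref{la-tt5}) is in fact \emph{shorter} than the $\P_X$ one.

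Where the paper differs from your plan is in the organisation of Phase~3. Rather than an ad~hoc chain of merges and splits, the paper first reduces to the refinement case $\ol\ga\pre\ol\de$ by passing through the join $\ga\vee\de$ (for $\P_X$) or meet $\ga\wedge\de$ (for $\PB_X$), using Lemma~\ref{lem:joins} to control the symmetric difference. For $\PB_X$ the paper does not even extract your pair $(\tau_0,\tau_1)$: it uses the original $(\al,\be)$ directly at each elementary step via the multiplier $\theta=\partXXIV{x}{y}{A_i}{u}{v}{}$, where $\{u,v\}$ is a hook of $\be$ not in $\al$ and $\{x,y\}$ is the hook being added to $\ga$. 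Your route via explicit $\phi_1^{(i)},\phi_2^{(i)}$ also works (your $\phi_1^{(i)}$ can have transversals mapping the two blocks $E,F$ to $a',b'$ and the remaining blocks as upper non-transversals; in $\PB_X$ these remaining blocks are hooks, so $\phi_1^{(i)}\in\PB_X$), but the paper's formulation is a little slicker.

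The genuine divergence between the two monoids in Section~\ref{sect:tech} is really driven by the \emph{infinite} companion Lemma~\ref{la328c}, not this finite one.
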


\begin{proof}
This is proved in Lemma \ref{la-tt5} \ref{it:tt5ii} for $\PB_X$, and in Lemma \ref{la-tt1} \ref{it:tt1v} for $\P_X$.
\end{proof}

\begin{lemma}
\label{la328c}
Suppose $(\alpha,\beta)\in \sigma\restr_{D_0}$ with $|\ol\al\sd\ol\be|\geq\aleph_0$.
Then for any $\gamma,\delta\in D_0$ with $|\overline{\gamma}\sd\overline{\delta}|\leq|\ol\al\sd\ol\be|$ and $\underline{\gamma}=\underline{\delta}$, we have $(\gamma,\delta)\in\sigma$.
\end{lemma}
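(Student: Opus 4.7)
The plan is to leverage the $\kappa := |\ol\alpha \sd \ol\beta| \geq \aleph_0$ ``room'' inside $\sigma$ to realize the target pair $(\gamma,\delta)$ via a single application of closure under multiplication, rather than via iterated transitivity as in Lemma \ref{la328a} (which, being essentially finite, cannot by itself reach pairs with infinite kernel-difference). The key structural input is Lemma \ref{lem:D0_al_be}: products of elements of $D_0$ behave like one-sided composition on kernels and cokernels, which affords fine control over how the pair evolves under multiplication.

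I would proceed in two phases. \emph{Phase one} normalizes the cokernels: pick any $\theta \in D_0$ with $\ul\theta = \ul\gamma = \ul\delta$. Then $(\alpha\theta,\beta\theta)\in\sigma$ is a pair with both factors in $D_0$, kernels still $\ol\alpha$ and $\ol\beta$ (and so symmetric difference still $\kappa$), and common cokernel $\ul\gamma$. Replacing $(\alpha,\beta)$ by $(\alpha\theta,\beta\theta)$, we may assume $\ul\alpha=\ul\beta=\ul\gamma=\ul\delta$. \emph{Phase two} then seeks a multiplier $\psi\in\M$ realizing $\psi\alpha=\gamma$ and $\psi\beta=\delta$, so that $(\gamma,\delta)=(\psi\alpha,\psi\beta)\in\sigma$. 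The feasibility of this step rests on the observation that $\ol{\psi\alpha}$ and $\ol{\psi\beta}$ are governed by how $\psi$'s lower parts interact with $\ol\alpha$ and $\ol\beta$ respectively; since these partitions disagree on $\kappa$-many blocks, a single $\psi$ can impose two distinct target kernels on the two sides, and the hypothesis $|\ol\gamma\sd\ol\delta|\leq\kappa$ ensures that the required encoding fits within the available room.

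The main obstacle is the concrete realization of $\psi$ inside $\M$. A naive attempt that takes $\psi\in D_0$ fails immediately, because $\ol{\psi\alpha}=\ol{\psi\beta}=\ol\psi$ by Lemma \ref{lem:D0_al_be}, so the two kernels collapse and no distinction between $\gamma$ and $\delta$ can survive. One is therefore compelled to use a $\psi$ with carefully placed transversals whose lower parts probe the disagreements between $\ol\alpha$ and $\ol\beta$. In $\M=\P_X$ the absence of any size restriction on blocks should permit an essentially set-theoretic construction in which $\psi$ groups $X$ along a common refinement of $\ol\gamma$ and $\ol\delta$ and then routes the resulting components through the distinguishing blocks of $\ol\alpha\sd\ol\beta$. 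In $\M=\PB_X$ the constraint that all blocks have size at most $2$ rules out such a construction directly and will instead force a combinatorial matching-style argument in which the encoding is performed in atomic ``two-at-a-time'' steps, with Lemma \ref{la328a} invoked at the end to absorb any finite residual discrepancy between $\psi\alpha$, $\psi\beta$ and the targets $\gamma$, $\delta$. These two quite different implementations of phase two are, I expect, exactly why the excerpt postpones the detailed proof to Section \ref{sect:tech} and treats the two monoids separately there.
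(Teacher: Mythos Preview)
Your high-level strategy is sound and matches the paper's: normalize cokernels by right-multiplication (your Phase one is exactly right), then find a left multiplier whose transversals probe the disagreement between $\ol\alpha$ and $\ol\beta$ to produce the two target kernels. However, there is a genuine gap in Phase two. You aim to construct a single $\psi$ with $\ol{\psi\alpha}=\ol\gamma$ and $\ol{\psi\beta}=\ol\delta$ for \emph{arbitrary} $\gamma,\delta$, but this is not directly feasible: the asymmetry in $\ol\alpha\sd\ol\beta$ may be entirely one-sided (e.g.\ $\ol\alpha\pre\ol\beta$), in which case your $\psi$ can only \emph{create} connections when passing from $\alpha$ to $\beta$, never destroy them, so you cannot realize a pair where $\ol\gamma$ has blocks that $\ol\delta$ lacks. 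The paper's resolution, which you do not mention, is a refinement reduction: first handle only the case $\ol\gamma\pre\ol\delta$ (so all the ``extra'' structure is on one side), and then reduce the general case to this by passing through $\gamma\wedge\delta$ (for $\PB_X$) or $\gamma\vee\delta$ (for $\P_X$), using Lemma~\ref{lem:joins} to control the sizes and transitivity to glue the two halves.

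Your specific sketches for the two monoids also diverge from the paper. For $\PB_X$ the paper does not iterate two-at-a-time or invoke Lemma~\ref{la328a} for a residual; instead, after the meet reduction it constructs $\psi$ in one shot, using a graph-theoretic independent-set argument (Lemma~\ref{la-tt4}) to select $\kappa$ hooks of $\ol\beta\setminus\ol\alpha$ that do not interfere via hooks of $\alpha$. For $\P_X$ the paper does not route directly through $\ol\alpha\sd\ol\beta$; rather, it first manufactures a ``clean'' $\sigma$-pair $(\upsilon_\Y,\emptypart)$ of the right size (a family of $\xi$ disjoint blocks versus all singletons) and only then builds the final multiplier against this pair. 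Your direct routing idea may be salvageable for $\P_X$, but as stated it is not detailed enough to see past the obstruction above.
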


\begin{proof}
This is proved in Lemma \ref{la-tt6} \ref{it:tt6ii} for $\PB_X$, and in Lemma~\ref{la-tt2} \ref{it:tt2v} for $\P_X$.
\end{proof}

\newpage

\begin{lemma}
\label{la326a}
Suppose $\alpha,\beta,\gamma,\delta\in D_0$ and  $(\alpha,\beta)\in\sigma$. 
\begin{thmenum}
\item
\label{it:326i}
If $|\overline{\gamma}\sd\overline{\delta}|\leq |\overline{\alpha}\sd\overline{\beta}|$ and
$\underline{\gamma}=\underline{\delta}$,
then $(\gamma,\delta)\in\sigma$.
\item
\label{it:326ii}
If
$|\overline{\gamma}\sd\overline{\delta}|\leq |\overline{\alpha}\sd\overline{\beta}|$ and
$|\underline{\gamma}\sd\underline{\delta}|\leq |\underline{\alpha}\sd\underline{\beta}|$,
then $(\gamma,\delta)\in\sigma$.
\end{thmenum}
\end{lemma}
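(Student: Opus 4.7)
Part \ref{it:326i} splits naturally into three cases according to the value of $|\ol\alpha\sd\ol\beta|$. If $\ol\alpha=\ol\beta$, then the hypothesis $|\ol\gamma\sd\ol\delta|\leq 0$ forces $\ol\gamma=\ol\delta$; combined with $\ul\gamma=\ul\delta$ and the fact that elements of $D_0$ are determined by their kernel and cokernel partitions (since their blocks are all non-transversals, lying entirely in $X$ or entirely in $X'$), we conclude $\gamma=\delta$, and the conclusion is trivial. If $1\leq |\ol\alpha\sd\ol\beta|<\aleph_0$, then $|\ol\gamma\sd\ol\delta|<\aleph_0$ also, and Lemma \ref{la328a} applies directly to yield $(\gamma,\delta)\in\sigma$. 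If $|\ol\alpha\sd\ol\beta|\geq\aleph_0$, then Lemma \ref{la328c} applies immediately. In each case we obtain $(\gamma,\delta)\in\sigma$.

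Part \ref{it:326ii} will follow by moving from $\gamma$ to $\delta$ via an intermediate element. Specifically, I would introduce the unique partition $\epsilon\in D_0$ determined by $\ol\epsilon=\ol\delta$ and $\ul\epsilon=\ul\gamma$, again using that elements of $D_0$ are encoded entirely by the pair $(\ol{\phantom{x}},\ul{\phantom{x}})$. Applying part \ref{it:326i} to the pair $(\gamma,\epsilon)$, whose upper parts differ by $\ol\gamma\sd\ol\delta$ (within the permitted bound) and whose lower parts coincide, yields $(\gamma,\epsilon)\in\sigma$. Applying the dual of part \ref{it:326i}, obtained via the regular $*$-structure on $\M$ discussed in Subsection \ref{subsect:stability}, to the pair $(\epsilon,\delta)$ gives $(\epsilon,\delta)\in\sigma$. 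Transitivity of $\sigma$ then delivers $(\gamma,\delta)\in\sigma$.

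The whole argument is a case analysis plus a two-step transitivity argument; the real content has been packaged into the auxiliary Lemmas \ref{la328a} and \ref{la328c}, whose proofs are postponed to Section \ref{sect:tech} and require substantially different constructions for $\P_X$ and $\PB_X$. Consequently, no genuine obstacle remains for the proof of Lemma \ref{la326a} itself; the only thing to watch is that the intermediate partition $\epsilon$ really does lie in $D_0$ (which it does by construction) and, in the $\PB_X$ case, that it has all blocks of size at most $2$, which will be automatic provided the blocks of $\ol\delta$ and $\ul\gamma$ are of size at most $2$, as they are because $\gamma,\delta\in\PB_X$.
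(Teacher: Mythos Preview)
Your argument is correct. Part \ref{it:326i} matches the paper's proof exactly (the paper just says it follows from Lemmas \ref{la328a} and \ref{la328c} for finite and infinite $|\ol\al\sd\ol\be|$; you spell out the trivial $\ol\al=\ol\be$ case explicitly, which is fine).

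For part \ref{it:326ii} you take a genuinely different route. The paper fixes an \emph{arbitrary} $\th\in D_0$ and uses Lemma \ref{lem:D0_al_be} to get $(\ga\th,\de\th)\in\si$ from part \ref{it:326i} and $(\th\ga,\th\de)\in\si$ from its dual; compatibility then gives $\big((\ga\th)(\th\ga),(\de\th)(\th\de)\big)=(\ga,\de)\in\si$, exploiting the rectangular-band identity on $D_0$. You instead construct the specific intermediate element $\ep\in D_0$ with $\ol\ep=\ol\de$ and $\ul\ep=\ul\ga$, and reach $(\ga,\de)$ by transitivity through $\ep$. Both proofs invoke part \ref{it:326i} and its dual once each; yours is the more direct ``two-step'' argument and avoids the product identity, while the paper's version avoids having to check that the auxiliary element lies in $\M$ (your remark handling the $\PB_X$ case is needed precisely for this). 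The invocation of duality is legitimate in both proofs since part \ref{it:326i} is established for an \emph{arbitrary} congruence, hence in particular for $\si^*$, and unwinding the involution returns a statement about~$\si$.
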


\begin{proof}
\ref{it:326i} This follows from Lemmas \ref{la328a} and \ref{la328c} for finite and infinite $|\ol\al\sd\ol\be|$, respectively.

\pfitem{\ref{it:326ii}}
Fix any $\th\in D_0$.
Then, using Lemma \ref{lem:D0_al_be}, we have
$
|\ol{\ga\th}\sd\ol{\de\th}| = |\ol\ga\sd\ol\de| \leq |\ol\al\sd\ol\be| $ and $ \ul{\ga\th}=\ul\th=\ul{\de\th}
$.
Thus, $(\ga\th,\de\th)\in\si$ by part \ref{it:326i}.  By duality we have $(\th\ga,\th\de)\in\si$; together, these then give
$
(\ga,\de) = \big( (\ga\th)(\th\ga), (\de\th)(\th\de) \big) \in\si
$,
as required.
\end{proof}

The parameters $\zeta_1$ and $\zeta_2$ completely determine the restriction of $\sigma$ to the bottom $\D$-class~${D_0=I_1}$, as we now show (recall that $R_\xi$ is the Rees congruence associated to the ideal~$I_\xi$):

\begin{lemma}
\label{la326}
We have $\sigma\cap R_1=\lambda_{\zeta_1}^1\cap\rho_{\zeta_2}^1$. 
\end{lemma}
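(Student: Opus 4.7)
My plan is to prove the two inclusions separately, with the forward one being essentially immediate from the definitions of $\zeta_1$ and $\zeta_2$, and the reverse one requiring a small construction that combines two witness pairs into a single one to which Lemma~\ref{la326a}~\ref{it:326ii} can be applied.

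For the forward inclusion, suppose $(\alpha,\beta)\in\sigma\cap R_1$. If $\alpha=\beta$, the pair lies trivially in $\lambda_{\zeta_1}^1\cap\rho_{\zeta_2}^1$. Otherwise $\alpha,\beta\in D_0$, so $(\alpha,\beta)\in\sigma\restr_{D_0}$, and by definition the cardinals $|\ol\alpha\sd\ol\beta|$ and $|\ul\alpha\sd\ul\beta|$ lie in the sets whose least strict upper bounds are $\zeta_1$ and $\zeta_2$ respectively; hence $|\ol\alpha\sd\ol\beta|<\zeta_1$ and $|\ul\alpha\sd\ul\beta|<\zeta_2$, giving $(\alpha,\beta)\in\lambda_{\zeta_1}^1\cap\rho_{\zeta_2}^1$.

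For the reverse inclusion, let $(\gamma,\delta)\in\lambda_{\zeta_1}^1\cap\rho_{\zeta_2}^1$; we may assume $\gamma\neq\delta$, so that $\gamma,\delta\in D_0$ with $|\ol\gamma\sd\ol\delta|<\zeta_1$ and $|\ul\gamma\sd\ul\delta|<\zeta_2$. The goal is to apply Lemma~\ref{la326a}~\ref{it:326ii}, which requires a single pair in $\sigma$ dominating $(\gamma,\delta)$ in both kernel and cokernel symmetric difference. Because $\zeta_1$ is the least strict upper bound of $\{|\ol\alpha\sd\ol\beta|:(\alpha,\beta)\in\sigma\restr_{D_0}\}$, the strict inequality $|\ol\gamma\sd\ol\delta|<\zeta_1$ ensures the existence of $(\alpha_1,\beta_1)\in\sigma\restr_{D_0}$ with $|\ol{\alpha_1}\sd\ol{\beta_1}|\geq|\ol\gamma\sd\ol\delta|$ (with diagonal pairs handling the case $|\ol\gamma\sd\ol\delta|=0$, which is available since $D_0\neq\emptyset$). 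Dually, we obtain $(\alpha_2,\beta_2)\in\sigma\restr_{D_0}$ with $|\ul{\alpha_2}\sd\ul{\beta_2}|\geq|\ul\gamma\sd\ul\delta|$.

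The key step is to amalgamate these into a single witness. By compatibility, $(\alpha_1\alpha_2,\beta_1\alpha_2)\in\sigma$ and $(\beta_1\alpha_2,\beta_1\beta_2)\in\sigma$, so transitivity gives $(\alpha_1\alpha_2,\beta_1\beta_2)\in\sigma$; since $D_0=I_1$ is the minimal ideal, this pair lies in $D_0\times D_0$. Applying Lemma~\ref{lem:D0_al_be} to both products yields $\ol{\alpha_1\alpha_2}=\ol{\alpha_1}$, $\ol{\beta_1\beta_2}=\ol{\beta_1}$, $\ul{\alpha_1\alpha_2}=\ul{\alpha_2}$ and $\ul{\beta_1\beta_2}=\ul{\beta_2}$, so the combined pair satisfies $|\ol{\alpha_1\alpha_2}\sd\ol{\beta_1\beta_2}|\geq|\ol\gamma\sd\ol\delta|$ and $|\ul{\alpha_1\alpha_2}\sd\ul{\beta_1\beta_2}|\geq|\ul\gamma\sd\ul\delta|$. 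Lemma~\ref{la326a}~\ref{it:326ii} then delivers $(\gamma,\delta)\in\sigma$, and since $\gamma,\delta\in D_0$ also $(\gamma,\delta)\in\sigma\cap R_1$, completing the proof. There is no real obstacle here: all the hard work has been absorbed into Lemma~\ref{la326a} (and through it into the deferred lemmas of Section~\ref{sect:tech}), so the present lemma is largely a bookkeeping exercise tying the parameter definitions to the structural identification already proved.
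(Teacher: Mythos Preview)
Your proof is correct and follows essentially the same approach as the paper: both argue the forward inclusion directly from the definitions of $\zeta_1,\zeta_2$, and for the reverse inclusion both pick witness pairs $(\alpha_1,\beta_1),(\alpha_2,\beta_2)\in\sigma\restr_{D_0}$ dominating the kernel and cokernel differences respectively, form the product pair $(\alpha_1\alpha_2,\beta_1\beta_2)\in\sigma$, invoke Lemma~\ref{lem:D0_al_be} to verify it dominates on both sides, and finish with Lemma~\ref{la326a}~\ref{it:326ii}. The only cosmetic difference is that the paper suppresses the transitivity step you spell out for $(\alpha_1\alpha_2,\beta_1\beta_2)\in\sigma$.
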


\begin{proof}
The forward inclusion follows directly from the definition of $\zeta_1$ and $\zeta_2$.  For the reverse, suppose $(\al,\be)\in\lam_{\ze_1}^1\cap\rho_{\ze_2}^1$.  Clearly $(\al,\be)\in\si\cap R_1$ if $\al=\be$, so suppose $\al\not=\be$, noting that then $\al,\be\in D_0$.  Put $\ka_1=|\ol\al\sd\ol\be|$ and $\ka_2=|\ul\al\sd\ul\be|$.  
Since $\ka_1<\ze_1$ and $\ka_2<\ze_2$, there exists $(\ga_1,\de_1),(\ga_2,\de_2)\in\si\restr_{D_0}$ such that $|\ol\ga_1\sd\ol\de_1|\geq\ka_1$ and $|\ul\ga_2\sd\ul\de_2|\geq\ka_2$.  
Put $\ga=\ga_1\ga_2$ and $\de=\de_1\de_2$.  So $(\ga,\de)\in\si$ and, using Lemma \ref{lem:D0_al_be}, we have
\[
|\ol\al\sd\ol\be| = \ka_1 \leq |\ol\ga_1\sd\ol\de_1| = |\ol{\ga_1\ga_2}\sd\ol{\de_1\de_2}| = |\ol\ga\sd\ol\de|
\ANDSIM |\ul\al\sd\ul\be| \leq |\ul\ga\sd\ul\de|.
\]
It then follows from Lemma \ref{la326a} \ref{it:326ii} that $(\al,\be)\in\si$.
\end{proof}

In fact, $\zeta_1$ and $\zeta_2$ completely determine the behaviour of $\sigma$ on the entire ideal $I_\eta$ (for the next statement, recall that we define $I_0=\emptyset$, so that $R_0=\De_{\M}=\lam_\ze^0=\rho_\ze^0$ for any $\ze$):

\begin{lemma}
\label{la327}
We have $\si\cap R_\eta = \lambda_{\zeta_1}^\eta\cap\rho_{\zeta_2}^\eta$.
\end{lemma}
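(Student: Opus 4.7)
The plan is to prove the two inclusions separately, in both directions using Lemma \ref{la325} to reduce every element of $I_\eta$ to its $\widehat{\,}$-image in $D_0$, and then applying Lemma \ref{la326} in the reverse direction (which handles the restriction to $D_0$).

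For the forward inclusion, suppose $(\alpha,\beta)\in\sigma\cap R_\eta$. If $\alpha=\beta$ there is nothing to show, so assume $\alpha\neq\beta$, which forces $\alpha,\beta\in I_\eta$. Since $\rank(\alpha),\rank(\beta)<\eta$, Lemma~\ref{la325} gives $(\alpha,\widehat{\alpha}),(\beta,\widehat{\beta})\in\sigma$, hence $(\widehat{\alpha},\widehat{\beta})\in\sigma\restr_{D_0}$ by transitivity. Since $\overline{\widehat{\alpha}}=\overline{\alpha}$ and $\underline{\widehat{\alpha}}=\underline{\alpha}$ (and similarly for $\beta$), the definitions of $\zeta_1$ and $\zeta_2$ immediately give $|\overline{\alpha}\sd\overline{\beta}|<\zeta_1$ and $|\underline{\alpha}\sd\underline{\beta}|<\zeta_2$, so that $(\alpha,\beta)\in\lambda_{\zeta_1}^\eta\cap\rho_{\zeta_2}^\eta$.

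For the reverse inclusion, suppose $(\alpha,\beta)\in\lambda_{\zeta_1}^\eta\cap\rho_{\zeta_2}^\eta$. Again we may assume $\alpha\neq\beta$, so $\alpha,\beta\in I_\eta$, $|\overline{\alpha}\sd\overline{\beta}|<\zeta_1$ and $|\underline{\alpha}\sd\underline{\beta}|<\zeta_2$. Lemma~\ref{la325} yields $(\alpha,\widehat{\alpha}),(\beta,\widehat{\beta})\in\sigma$, so it suffices (by transitivity) to show that $(\widehat{\alpha},\widehat{\beta})\in\sigma$. But $\widehat{\alpha},\widehat{\beta}\in D_0$ and
\[
|\overline{\widehat{\alpha}}\sd\overline{\widehat{\beta}}|=|\overline{\alpha}\sd\overline{\beta}|<\zeta_1 \AND |\underline{\widehat{\alpha}}\sd\underline{\widehat{\beta}}|=|\underline{\alpha}\sd\underline{\beta}|<\zeta_2,
\]
so $(\widehat{\alpha},\widehat{\beta})\in\lambda_{\zeta_1}^1\cap\rho_{\zeta_2}^1$. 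Lemma~\ref{la326} then delivers $(\widehat{\alpha},\widehat{\beta})\in\sigma$, completing the proof.

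No step here is a serious obstacle; the work has already been done in Lemmas \ref{la325} and \ref{la326} (and, transitively, in Lemmas \ref{la328a} and \ref{la328c} whose proofs are deferred to Section~\ref{sect:tech}). The only point that needs care is noting that $\widehat{\;\!\cdot\;\!}$ preserves both $\overline{\;\!\cdot\;\!}$ and $\underline{\;\!\cdot\;\!}$, which is immediate from the definition of $\widehat{\alpha}$ given in Subsection~\ref{subsect:stability}.
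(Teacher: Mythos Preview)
Your proof is correct and follows essentially the same approach as the paper's: reduce to $D_0$ via the map $\alpha\mapsto\widehat{\alpha}$ using Lemma~\ref{la325}, then invoke Lemma~\ref{la326}. The only cosmetic difference is that the paper packages both directions into a single chain of equivalences, whereas you treat the two inclusions separately.
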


\begin{proof}
Since neither $\si\cap R_\eta$ nor $\lambda_{\zeta_1}^\eta\cap\rho_{\zeta_2}^\eta=(\lam_{\ze_1}\cap\rho_{\ze_2})\cap R_\eta$ identify any partition of rank $\geq\eta$ with any other distinct partition, we may prove the lemma by showing that
\[
(\al,\be)\in\si \iff (\al,\be)\in\lambda_{\zeta_1}\cap\rho_{\zeta_2} \qquad\text{for all $\al,\be\in I_\eta$.}
\]
But for any such $\al,\be$, we have
\begin{align*}
(\al,\be)\in\si &\iff (\wh\al,\wh\be)\in\si  &&\text{since $(\alpha,\widehat{\alpha}),(\beta,\widehat{\beta})\in\sigma$, by Lemma \ref{la325}}\\
&\iff (\wh\al,\wh\be)\in\si\cap R_1 &&\text{since $\wh\al,\wh\be\in D_0=I_1$}\\
&\iff (\wh\al,\wh\be)\in\lambda_{\zeta_1}^1\cap\rho_{\zeta_2}^1 &&\text{by Lemma \ref{la326}}\\
&\iff (\al,\be)\in\lambda_{\zeta_1}\cap\rho_{\zeta_2} &&\text{since $\ol{\;\!\wh\ga\;\!} = \ol{\ga}$ and $\ul{\wh\ga} = \ul{\ga}$ for any $\ga\in\M$.}
\qedhere
\end{align*}
\end{proof}

We conclude with two lemmas discussing the possible values of $\zeta_1$ and $\zeta_2$.

\begin{lemma}
\label{la328}
The only possible finite value for the parameters $\zeta_1$ and $\zeta_2$ is $1$.
\end{lemma}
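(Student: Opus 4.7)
The plan is to show that if $\zeta_1 = \zeta_1(\sigma) \geq 2$, then in fact $\zeta_1 \geq \aleph_0$; this rules out every finite value of $\zeta_1$ other than $1$. The corresponding statement for $\zeta_2$ then follows by $*$-duality, running the same argument through the dual of Lemma~\ref{la328a}.

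So suppose $\zeta_1 \geq 2$. By the definition of $\LSUB$, there must exist a pair $(\alpha,\beta) \in \sigma\restr_{D_0}$ with $|\overline{\alpha}\sd\overline{\beta}| \geq 1$, i.e., with $\overline{\alpha} \neq \overline{\beta}$. Lemma~\ref{la328a} then applies and guarantees that every pair $(\gamma,\delta) \in D_0 \times D_0$ satisfying $|\overline{\gamma}\sd\overline{\delta}| < \aleph_0$ and $\underline{\gamma} = \underline{\delta}$ already lies in $\sigma\restr_{D_0}$. It therefore suffices to exhibit, for each positive integer $n$, such a pair with $|\overline{\gamma}\sd\overline{\delta}| \geq n$, since this forces $\zeta_1 > n$ for all $n$, giving $\zeta_1 \geq \aleph_0$.

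To construct the pair, I use that $X$ is infinite to pick $2n$ distinct elements $a_1,\dots,a_{2n} \in X$. Let $\gamma$ be the rank-$0$ partition of $X \cup X'$ consisting entirely of singleton blocks, and let $\delta$ agree with $\gamma$ on all of $X'$ and on all upper singletons disjoint from $\{a_1,\dots,a_{2n}\}$, but coalesce $\{a_{2i-1}\}$ and $\{a_{2i}\}$ into the single block $\{a_{2i-1},a_{2i}\}$ for each $i=1,\dots,n$. Both partitions have all blocks of size at most $2$, so they lie in $\PB_X \subseteq \P_X$; the construction is therefore valid uniformly in both monoids $\mathcal{M}$. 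Evidently $\underline{\gamma}=\underline{\delta}$ and $|\overline{\gamma}\sd\overline{\delta}|=3n \geq n$, as required. The one non-routine ingredient in this argument is the invocation of Lemma~\ref{la328a}, whose proof (deferred to Section~\ref{sect:tech}) is where the structural differences between $\P_X$ and $\PB_X$ are absorbed; everything else is a straightforward combinatorial construction.
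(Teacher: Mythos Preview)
Your proof is correct and follows essentially the same approach as the paper: both assume $\zeta_1 > 1$, extract a pair $(\alpha,\beta)\in\sigma\restr_{D_0}$ with $\overline{\alpha}\neq\overline{\beta}$, invoke Lemma~\ref{la328a}, and then exhibit rank-$0$ pairs with arbitrarily large finite $|\overline{\gamma}\sd\overline{\delta}|$ to force $\zeta_1\geq\aleph_0$. The only (trivial) omission is that you do not explicitly note $\zeta_1\neq 0$, which the paper dispatches in one line via reflexivity of~$\sigma$; otherwise your argument is a slightly more explicit version of theirs.
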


\begin{proof}
We prove the assertion for $\zeta_1$, as the one for $\zeta_2$ is dual.  That $\zeta_1\neq 0$ follows straight from the definition, since $\si$ is reflexive.  Suppose now that $\zeta_1$ is finite and greater than $1$.  This means that there exist $\alpha,\beta\in D_0$ such that $(\al,\be)\in\si$ and $|\overline{\alpha}\sd\overline{\beta}|= \zeta_1-1$; in particular $\overline{\alpha}\neq\overline{\beta}$.  Since~$X$ is infinite, we can pick $\gamma,\delta\in D_0$ such that $\zeta_1\leq |\overline{\gamma}\sd\overline{\delta}| <\aleph_0$ and $\underline{\gamma}=\underline{\delta}$.  But then $(\gamma,\delta)\in\sigma $ by Lemma \ref{la328a}, which contradicts the definition of $\zeta_1$.
\end{proof}

\begin{lemma}
\label{la328b}
If $\eta=\eta(\sigma)>2$, then $\zeta_1,\zeta_2\geq \eta$.
\end{lemma}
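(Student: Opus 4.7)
\textbf{Proof plan for Lemma \ref{la328b}.}

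The involution $\alpha\mt\alpha^*$ on $\M$ is an anti-automorphism that interchanges kernel and cokernel, so the relation $\sigma^*=\{(\alpha^*,\beta^*):(\alpha,\beta)\in\sigma\}$ is a congruence satisfying $\eta(\sigma^*)=\eta(\sigma)$ and $\zeta_2(\sigma^*)=\zeta_1(\sigma)$. Consequently, by duality, it is enough to prove $\zeta_1\geq\eta$.

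Since $\eta>2$, Lemma \ref{la325} ensures that $(\epsilon_Y,\widehat{\epsilon_Y})\in\sigma$ for every $Y\sub X$ with $|Y|<\eta$. The partition $\epsilon_Y$ has discrete kernel and cokernel, so $\widehat{\epsilon_Y}=\emptypart$, and we conclude that $(\epsilon_Y,\emptypart)\in\sigma$ for every such $Y$. From this we generate witnesses inside $D_0$ by right-multiplication: for any $\tau\in D_0$, compatibility of $\sigma$ yields $(\epsilon_Y\tau,\emptypart\tau)\in\sigma$, and both partitions lie in $D_0$ because $\tau$ does.

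The key step is to choose $\tau$ so that $\overline{\epsilon_Y\tau}$ and $\overline{\emptypart\tau}$ differ substantially. Fix any cardinal $\kappa$ with $2\leq\kappa<\eta$ and a $Y\sub X$ with $|Y|=\kappa$. If $\kappa$ is infinite, write $Y=\{y_i:i\in I\}\cup\{z_i:i\in I\}$ (disjoint union with $|I|=\kappa$) and let $\tau$ be the rank-$0$ partition whose upper blocks are $\{y_i,z_i\}$ ($i\in I$) together with singletons for $X\sm Y$, and whose lower blocks are all singletons. If $\kappa$ is finite, choose any $a,b\in Y$ and take $\tau$ with single upper pair $\{a,b\}$ and all other blocks singletons. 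In either case every block of $\tau$ has size at most $2$, so $\tau\in\PB_X\sub\M$. A direct product-graph calculation then yields
\[
\overline{\epsilon_Y\tau}=\bigset{A\cap Y}{A\in\overline{\tau},\ A\cap Y\neq\emptyset}\cup\bigset{\{x\}}{x\in X\sm Y} \AND \overline{\emptypart\tau}=\bigset{\{x\}}{x\in X}.
\]
For the matching $\tau$ above, the symmetric difference therefore contains every pair $\{y_i,z_i\}$ along with both singletons $\{y_i\}$ and $\{z_i\}$, giving $|\overline{\epsilon_Y\tau}\sd\overline{\emptypart\tau}|\geq\kappa$ when $\kappa$ is infinite and $=3$ when $|Y|=2$.

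The infinite-$\kappa$ case directly forces $\zeta_1>\kappa$ for every infinite $\kappa<\eta$, while the finite-$\kappa$ case produces a pair $(\epsilon_Y\tau,\emptypart\tau)\in\sigma\restr_{D_0}$ with upper-partition difference at least $3>1$; Lemma \ref{la328} then promotes $\zeta_1$ from "not $1$" to $\zeta_1\geq\aleph_0$, covering all finite $\kappa<\eta$ at once. Combining both cases yields $\zeta_1\geq\eta$, and by the duality noted at the outset we also get $\zeta_2\geq\eta$. The main obstacle is the product-graph formula for $\overline{\epsilon_Y\tau}$: one must argue carefully that elements of $X\sm Y$ stay as singletons because $\epsilon_Y$ does not ``connect'' them to the middle row, that each upper $\tau$-block $A$ is relayed into the upper row of $\epsilon_Y\tau$ only through its intersection with $Y$, and that the same analysis applies uniformly to $\M=\P_X$ and $\M=\PB_X$ (which is why the matching construction is forced, as arbitrarily large single blocks are unavailable in $\PB_X$).
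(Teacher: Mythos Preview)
Your proof is correct and follows essentially the same route as the paper's. Both arguments use Lemma \ref{la325} to obtain $(\epsilon_Y,\emptypart)\in\sigma$ and then right-multiply by a rank-$0$ ``pairing'' partition to manufacture a pair in $\sigma\restr_{D_0}$ with large upper-partition symmetric difference; your version is organised as a direct argument over all $\kappa<\eta$, whereas the paper argues by contradiction from the assumption $\zeta_1<\eta$, but the constructions are the same (the paper's $\gamma,\delta$ are exactly your $\epsilon_Y,\tau$ in the infinite case).
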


\begin{proof}
We just prove $\zeta_1\geq \eta$, as $\ze_2\geq\eta$ is dual.  Suppose, aiming for contradiction, that $\zeta_1<\eta$.

First let $x,y,z\in X$ be three distinct elements, and define $\al = \partXI xyxy$, $\be=\partXI yzyz$ and~${\th = \partXX {x,y}{}}$.
Since $\widehat{\alpha}=\widehat{\beta}= \emptypart$ and $\eta>2=\rank(\alpha)=\rank(\beta)$, Lemma \ref{la325} implies that $(\alpha,\beta)\in\sigma$.
It then follows that $(\th,\emptypart)=(\alpha\theta,\beta\theta)\in \sigma$.
Since $\th,\emptypart\in D_0$ and $\ol\th\not=\ol\emptypart$, it follows that $\zeta_1\neq 1$, and so by Lemma \ref{la328}, $\ze_1\geq\aleph_0$.

Now pick pairwise distinct elements $a_i,b_i\in X$ ($i\in I$) where $|I|=\zeta_1$.
Let $\ga=\partXI{a_i}{b_i}{a_i}{b_i}_{i\in I}$ and $\de=\partXX{a_i,b_i}{}_{i\in I}$.
Since $\rank(\ga)=2|I|=\zeta_1<\eta$, we have $(\ga,\widehat{\ga})\in \sigma$ by
Lemma \ref{la325}.
Clearly $\widehat{\ga}=\emptypart$, and hence $(\ga,\emptypart)\in\sigma$,
implying $(\de,\emptypart)=(\ga\de,\emptypart\de)\in\sigma$.
But $|\overline{\de}\sd\overline{\emptypart}|=3|I|=\zeta_1$,
which contradicts the definition of $\zeta_1$.
\end{proof}

Clearly Lemma \ref{la328b} holds for $\eta=0$ as well, but it does not for $\eta=2$; consider $\lam_1^2$.

\subsection[Two technical lemmas concerning $\sigma$-related elements of unequal ranks]{\boldmath Two technical lemmas concerning $\sigma$-related elements of unequal ranks}\label{subsect:unequal}

A key feature of congruences with finite $\eta(\sigma)$ is that they restrict to the diagonal relation for all ranks greater than $\eta$; indeed, this will be shown in Lemma \ref{la331}, and is a consequence of the following two lemmas, which essentially show how $\sigma$-related pairs of equal rank give rise to~$\sigma$-related pairs of unequal (possibly smaller) finite ranks.

\begin{lemma}
\label{la329}
Suppose $\alpha,\beta\in D_\kappa$ where $\kappa\geq 1$ and $(\alpha,\beta)\in\sigma\setminus{\H}$.
Then for every finite cardinal~$q$ with $1\leq q\leq\kappa$, there exist $\gamma,\delta\in\M$ such that $(\gamma,\delta)\in\sigma$, $\rank(\gamma)=q$ and $\rank(\delta)<q$.
\end{lemma}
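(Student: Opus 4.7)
The plan is to leverage the failure of $\H$-relatedness to produce $\sigma$-related pairs of unequal ranks by multiplying $\alpha$ and $\beta$ by suitable partitions. Since ${\H}={\R}\cap{\L}$ and the $*$-involution on $\M$ provides a symmetric dual argument, I may assume $(\alpha,\beta)\notin\R$; by Lemma~\ref{lem:Green_M}\,\ref{it:GMiv} this splits into two scenarios: either $\dom\alpha\neq\dom\beta$, or $\dom\alpha=\dom\beta$ while $\ker\alpha\neq\ker\beta$.

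In the first scenario, I would pick $x\in\dom\alpha\setminus\dom\beta$ and then elements $a_1,\ldots,a_{q-1}$ from $q-1$ further transversals of $\alpha$ distinct from the one containing $x$, which is available since $\rank\alpha=\kappa\geq q$. Setting $Y=\{x,a_1,\ldots,a_{q-1}\}$, the partition $\ep_Y\alpha$ meets exactly $q$ transversals of $\alpha$ and has rank $q$, whereas $\ep_Y\beta$ misses the one through $x$ and has rank at most $q-1$; thus $(\ep_Y\alpha,\ep_Y\beta)$ does the job. In the second scenario I would pick $x,y$ sharing a block of $\alpha$ but separated in $\beta$. In $\PB_X$ the size-$2$ constraint forces this block to be an upper non-transversal; in $\P_X$ it may alternatively be the upper part of a transversal, a variant handled by an analogous $\ep_Y$-argument when $q\geq 2$ and, for $q=1$, by observing that one of $(\ep_{\{x\}}\alpha,\ep_{\{x\}}\beta)$, $(\ep_{\{y\}}\alpha,\ep_{\{y\}}\beta)$ is a rank-$1$ pair with distinct codomains, and then right-multiplying by $\ep_{\{v\}}$ for a suitable $v$ to drop one side to rank $0$.

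The substantive case, where $\{x,y\}$ is an upper non-transversal of $\alpha$, would be handled by a \emph{bridge} construction that works uniformly in both monoids. Fix $w\in\dom\alpha=\dom\beta$ and take $\gamma\in\PB_X$ with transversal $\{p,y'\}$, lower non-transversal $\{x',w'\}$, and $q-1$ further size-$2$ transversals $\{q_i,w_i'\}$ so that $w,w_2,\ldots,w_q$ lie in pairwise distinct transversals of $\alpha$; because in this sub-case the kernels of $\alpha$ and $\beta$ agree on $\dom\alpha$, these $w_i$ also lie in pairwise distinct transversals of $\beta$. In the product graph $\Pi(\gamma,\alpha)$ the middle block $\{x'',y''\}$ coming from the $\{x,y\}$ non-transversal of $\alpha$ fuses a chain $p\to y''\to x''\to w''\to B'$, with $B'$ the lower part of $\alpha$'s transversal through $w$, giving a fresh transversal in $\gamma\alpha$; in $\Pi(\gamma,\beta)$ the separation of $x$ and $y$ in $\beta$ keeps $x''$ and $y''$ in distinct middle blocks, and the chain is broken. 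The $q-1$ extras are untouched by the bridge and contribute equally in both products, yielding $\rank(\gamma\alpha)=q$ and $\rank(\gamma\beta)=q-1$. The hard part will be verifying this rank arithmetic carefully: I will need to check that the bridge merges only the intended chain and does not swallow any of the extras (which follows from the choice of pairwise distinct transversals), and that the chain itself is not absorbed into an upper non-transversal of $\alpha$ (which is secured by choosing $w$ inside $\dom\alpha$).
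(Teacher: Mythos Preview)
Your approach is essentially the same as the paper's: the same reduction to $(\alpha,\beta)\notin\R$, the same split into $\dom\alpha\neq\dom\beta$ versus $\ker\alpha\neq\ker\beta$, the same $\ep_Y$-arguments, and your ``bridge'' is precisely the paper's Subcase~2.2 construction (just with the roles of the labelled elements permuted). Your explicit treatment of $q=1$ in the transversal sub-case via the codomain trick is a nice addition.

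One caveat: your assertion that ``in this sub-case the kernels of $\alpha$ and $\beta$ agree on $\dom\alpha$'' is unwarranted---you have only chosen a single pair $(x,y)\in\ker\alpha\setminus\ker\beta$ with $x,y\notin\dom\alpha$, and nothing prevents further kernel disagreements inside $\dom\alpha$. Consequently the $w_i$ need not lie in pairwise distinct transversals of $\beta$, and $\rank(\gamma\beta)$ need not equal $q-1$ exactly. This does no harm: you only need $\rank(\gamma\beta)<q$, and that follows simply because $\{p\}$ is a singleton block of $\gamma\beta$ (the block of $\beta$ containing $y$ is an upper non-transversal not containing $x$ or any $w,w_i$), while the remaining upper vertices $q_2,\ldots,q_q$ contribute at most $q-1$ transversals. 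Similarly, in $\P_X$ the non-transversal of $\alpha$ containing $x,y$ may be larger than $\{x,y\}$, but your bridge argument goes through unchanged since the extra elements of that block are singletons in $\gamma$'s lower row.
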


\begin{proof}
Since $(\alpha,\beta)\not\in{\H}$, we have $(\alpha,\beta)\not\in{\R}$ or $(\alpha,\beta)\not\in{\L}$.
Without loss of generality we may assume that the former is the case, which means (by Lemma \ref{lem:Green_M} \ref{it:GMiv}) that $\dom(\alpha)\neq\dom(\beta)$ or $\ker(\al)\not=\ker(\be)$.  

\pfcase{1}
Suppose first that $\dom(\alpha)\neq\dom(\beta)$.  Without loss of generality, we may assume that $\dom(\al)\not\sub\dom(\be)$, so there exists a transversal $A_1\cup B_1'$ of $\alpha$ such that $A_1\setminus \dom(\beta)\neq \emptyset$; let $a_1\in A_1\setminus\dom(\beta)$.
Let $A_i\cup B_i'$ ($i=2,\dots,q$) be any other $q-1$ transversals of $\alpha$; they exist because $q\leq\kappa=\rank(\alpha)$.
Pick arbitrary $a_i\in A_i$ ($i=2,\dots,q$).
Let $\th=\partpermIII{a_1}\cdots{a_q}{a_1}\cdots{a_q}$, and put
$(\gamma,\de)=(\theta\alpha,\theta\beta)\in\sigma$.
Note that $\rank(\gamma)=q$, with the transversals $\{a_i\}\cup B_i'$ ($i=1,\dots, q$).
On the other hand, $\rank(\delta)<q$, because $\dom(\delta)\subseteq\dom(\th)= \{a_1,\dots, a_q\}$ and $a_1\not\in\dom(\delta)$.

\pfcase{2}
Suppose now that $\dom(\al)=\dom(\be)$ but $\ker(\al)\not=\ker(\be)$.
Without loss of generality assume that $\ker(\al)\not\sub\ker(\be)$, so there exists $(x_1,x_2)\in\ker(\al)\sm\ker(\be)$.
Note that $x_1\not=x_2$, and that either $x_1,x_2$ both belong to $\dom(\alpha)=\dom(\beta)$ or else neither does.

\pfsubcase{2.1}  Suppose first that $x_1,x_2\in\dom(\alpha)=\dom(\beta)$.
Let $C_1\cup D_1'$ and $C_2\cup D_2'$ be the transversals of $\beta$ containing $x_1$ and $x_2$, respectively.
Let $C_i\cup D_i'$ ($i=3,\dots,q$) be an arbitrary further $q-2$ transversals of $\beta$,
and let $x_i\in C_i$ ($i=3,\dots,q$).
Define $\th=\partpermIII{x_1}\cdots{x_q}{x_1}\cdots{x_q}$,
and put $(\gamma,\de)=(\theta\be,\theta\al)\in\sigma$.
Then $\rank(\gamma)=q$, with the transversals $\{x_i\}\cup D_i'$ ($i=1,\dots,q$).
On the other hand, $\rank(\delta)<q$ because $\dom(\delta)\subseteq\dom(\th)=\{x_1,\dots,x_q\}$,
and $x_1,x_2$ are in the same transversal of $\delta$, as they already are in the same transversal of $\alpha$.

\pfsubcase{2.2}
Finally, suppose $x_1,x_2\not\in\dom(\alpha)=\dom(\beta)$.
Pick arbitrary transversals $A_i\cup B_i'$ ($i=1,\dots,q$) of $\alpha$,
and arbitrary elements $a_i\in A_i$ ($i=1,\dots,q$).
Let $\th=\partXXI{a_1}{a_{q-1}}{a_q}{}{a_1}{a_{q-1}}{x_1}{x_2,a_q}$,
and put $(\gamma,\de)=(\theta\alpha,\theta\beta)\in\sigma$.
Then $\rank(\gamma)=q$, with the transversals $\{a_i\}\cup B_i'$ (${i=1,\dots,q}$).
However, $\dom(\de)\sub\dom(\th)=\{a_1,\ldots,a_q\}$, yet $\{a_q\}$ is a singleton block of $\delta$ (to see this, recall that $\dom(\be)=\dom(\al)$ and $(x_1,x_2)\not\in\ker(\be)$), and hence $\rank(\delta)<q$,
completing the proof of this subcase and of the lemma.
\end{proof}

\begin{lemma}
\label{la330}
Suppose $\alpha,\beta\in D_\kappa$ where $\kappa\geq 2$, $(\alpha,\beta)\in\sigma\cap{\H}$ and $\alpha\neq\beta$.
Then for every finite cardinal $q$ with $1\leq q <\kappa$ there exist $\gamma,\delta\in D_q$ such that $(\gamma,\delta)\in \sigma\setminus{\H}$.
\end{lemma}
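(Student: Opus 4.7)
The plan is to construct $\gamma$ and $\delta$ as sandwich products of the form $\gamma=\epsilon_A\alpha\epsilon_B$ and $\delta=\epsilon_A\beta\epsilon_B$ for carefully chosen $A,B\subseteq X$, arranging that both lie in $D_q$ but that $\codom(\gamma)\neq\codom(\delta)$; this forces $\neg(\gamma\H\delta)$ via Lemma \ref{lem:Green_M} \ref{it:GMv}, and $(\gamma,\delta)\in\sigma$ is automatic as $\sigma$ is a congruence.

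To set this up, I would use the assumption $\alpha\H\beta$ with $\alpha\neq\beta$: by Lemma \ref{lem:Green_M}, $\alpha$ and $\beta$ share the same non-transversal blocks, and their transversals may be written as $A_i\cup B_i'$ and $A_i\cup B_{i\phi}'$ respectively, indexed by $i\in I$ with $|I|=\kappa$, for some $\phi\in\S_I$ with $\phi\neq\id_I$. Hence I may pick $i_0\in I$ with $j_0:=i_0\phi\neq i_0$. Since $q<\kappa$ and $q$ is finite, $|I\setminus\{i_0,j_0\}|\geq q-1$, so I can choose further distinct elements $i_1,\ldots,i_{q-1}\in I\setminus\{i_0,j_0\}$ and let $S=\{i_0,i_1,\ldots,i_{q-1}\}$; the crucial features are $|S|=q$ and $j_0\in S\phi\setminus S$, where $S\phi:=\{i\phi:i\in S\}$.

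Next, pick a single representative $a_i\in A_i$ for each $i\in S$, and $b_j\in B_j$ for each $j\in S\cup S\phi$; set $A=\{a_i:i\in S\}$ and $B=\{b_j:j\in S\cup S\phi\}$. A direct analysis of the sandwich $\gamma=\epsilon_A\alpha\epsilon_B$ shows that a transversal $A_i\cup B_i'$ of $\alpha$ contributes a transversal of $\gamma$ precisely when $A_i\cap A\neq\emptyset$ and $B_i\cap B\neq\emptyset$, i.e., when $i\in S\cap(S\cup S\phi)=S$; hence $\gamma$ has exactly $q$ transversals, of the form $\{a_i\}\cup\{b_i\}'$ for $i\in S$. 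Analogously, $\delta$ has $q$ transversals $\{a_i\}\cup\{b_{i\phi}\}'$ (using that $i\phi\in S\phi\subseteq S\cup S\phi$ for each $i\in S$), so both $\gamma,\delta\in D_q$. Finally, $\codom(\gamma)=\{b_i:i\in S\}$ and $\codom(\delta)=\{b_j:j\in S\phi\}$ differ, since $b_{j_0}\in\codom(\delta)\setminus\codom(\gamma)$ (the $b_j$'s are distinct, coming from disjoint $B_j$'s, and $j_0\in S\phi\setminus S$).

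The main obstacle in this construction is the calibration of $B$ so that $\gamma$ and $\delta$ each have rank exactly $q$, neither more nor less: including one $b_j$ from each $B_j$ with $j\in S\cup S\phi$ is precisely what is needed to give $\delta$ its full $q$ transversals, while simultaneously preventing $\gamma$ from gaining any transversals beyond those indexed by $S$. The construction applies uniformly to $\M=\P_X$ and $\M=\PB_X$, because $\epsilon_A,\epsilon_B\in\PB_X$.
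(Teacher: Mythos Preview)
Your proof is correct and follows essentially the same idea as the paper's: pick $q$ indices $S\subseteq I$ containing $i_0$ but not $j_0=i_0\phi$, then project $\alpha$ and $\beta$ onto those transversals to obtain rank-$q$ elements whose codomains differ (forcing $(\gamma,\delta)\notin\L\supseteq\H$). The only difference is cosmetic: the paper left-multiplies by a single partition $\theta$ whose transversals are $A_{i_j}\cup A_{i_j}'$, whereas you sandwich by $\epsilon_A$ and $\epsilon_B$; the right multiplication by $\epsilon_B$ is not actually needed, since $\epsilon_A\alpha$ and $\epsilon_A\beta$ already have rank $q$ and distinct codomains.
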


\begin{proof}
Let the transversals of $\alpha$ be $A_i\cup B_i'$ ($i\in I$), noting that $|I|=\kappa$.
Since $(\alpha,\beta)\in{\H}$, the transversals of $\beta$ are $A_i\cup B_{i\pi}'$, for some permutation $\pi\in\S_I$.
Furthermore, $(\al,\be)\in{\H}$ and $\alpha\neq\beta$ imply $\pi\neq\id_I$; say $i_1\pi=k\neq i_1$.
Pick a further $q-1$ transversals $A_{i_j}\cup  B_{i_j}'$ ($j=2,\dots, q$)
of $\alpha$,
making sure that $k\not\in \{i_1,\dots, i_q\}$;
this is possible because $q<\kappa=\rank(\alpha)$.  Define $\th=\partpermIII{A_{i_1}}\cdots{A_{i_q}}{A_{i_1}}\cdots{A_{i_q}}$, $\gamma=\theta\alpha$ and $\delta=\theta\beta$; clearly $(\gamma,\delta)\in\sigma$.
The transversals of $\gamma$ and $\delta$ are $A_{i_j}\cup B_{i_j}'$ and $A_{i_j}\cup B_{i_j\pi}'$ ($j=1,\dots,q$), respectively.
It follows that $\rank(\gamma)=\rank(\delta)=q$, but that $(\gamma,\delta)\not\in{\L}$ because $B_{k}\subseteq \codom(\de)$
and $B_{k}\not\subseteq \codom(\ga)$; cf.~Lemma \ref{lem:Green_M} \ref{it:GMv}.  Since ${\L}\supseteq{\H}$, it follows that $(\ga,\de)\in\si\sm{\H}$, as required.
\end{proof}

\subsection[Congruences with finite $\eta(\sigma)$: type \ref{CT1}]{\boldmath Congruences with finite $\eta(\sigma)$: type \ref{CT1}}\label{subsect:finite_eta}

We are now almost ready to deal with the congruences with $\eta(\si)$ finite; we will show in Proposition \ref{prop:CT1/2} that these are precisely the congruences of type \ref{CT1}, as enumerated in Theorem~\ref{thm-main}. 
Note that the description of \ref{CT1} congruences in Theorem \ref{thm-main} actually does not feature the parameter $\eta(\sigma)$, but
instead a closely related one:
\[
n=n(\sigma)= \begin{cases} 1 & \text{if } \eta(\sigma)=0\\
\eta(\sigma) & \text{if $2\leq\eta(\si)<\aleph_0$.} \end{cases}
\]
(Recall that $\eta(\si)\not=1$; cf.~Lemma \ref{la323}.)  We begin with the lemma promised at the beginning of Subsection \ref{subsect:unequal}.

\begin{lemma}
\label{la331}
If $\eta=\eta(\sigma)$ is finite, then $\sigma\subseteq R_{\eta+1}$.
\end{lemma}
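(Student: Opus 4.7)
The plan is to argue by contradiction. Suppose $\sigma \not\subseteq R_{\eta+1}$, so there is a pair $(\alpha,\beta) \in \sigma$ with $\alpha \neq \beta$ and $\max\{\rank(\alpha),\rank(\beta)\} \geq \eta+1$. The goal is to manufacture, from this pair, a further pair in $\sigma \cap (D_{\kappa'} \times I_{\kappa'})$ for some $\kappa' \geq \eta$, which directly contradicts the defining property of $\eta = \LSUB\{\kappa : \sigma \cap (D_\kappa \times I_\kappa) \neq \emptyset\}$. First I would observe that the ranks of $\alpha$ and $\beta$ must agree: since $\rank(\alpha) \geq \eta+1$ exceeds $\eta$, the set defining $\eta$ contains no element as large as $\rank(\alpha)$, so $(\alpha,\beta) \notin D_{\rank(\alpha)} \times I_{\rank(\alpha)}$, forcing $\rank(\beta) \geq \rank(\alpha)$; by symmetry $\rank(\alpha) = \rank(\beta) = \kappa$ with $\kappa \geq \eta+1$.

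From here I would split according to whether $(\alpha,\beta) \in \H$. In the non-$\H$ case, Lemma \ref{la329} applied with $q = \eta+1$ (which satisfies $1 \leq q \leq \kappa$) hands us $(\gamma,\delta) \in \sigma$ with $\rank(\gamma) = \eta+1$ and $\rank(\delta) < \eta+1$; this pair lies in $\sigma \cap (D_{\eta+1} \times I_{\eta+1})$, so $\eta+1$ belongs to the set whose $\LSUB$ is supposed to equal $\eta$ --- contradiction. In the $\H$ case, I would first use the fact (from the lemma on group $\H$-classes preceding Subsection~\ref{subsect:EMRT}) that any group $\H$-class contained in $D_1$ is isomorphic to $\S_1$ and hence trivial, so $\kappa = 1$ would force $\alpha = \beta$; thus $\kappa \geq 2$. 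I would then set $q = \max\{1,\eta\}$, note $1 \leq q < \kappa$, apply Lemma \ref{la330} to descend to a pair $(\gamma,\delta) \in D_q \times D_q$ with $(\gamma,\delta) \in \sigma \setminus \H$, and then apply Lemma \ref{la329} to that new pair (with the same value of $q$) to obtain a pair in $\sigma \cap (D_q \times I_q)$. Since $q \geq \eta$, this again violates the definition of $\eta$.

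No genuine obstacle arises: the substantive content is already packaged into Lemmas \ref{la329} and \ref{la330}, and the proof is just a careful assembly of their conclusions. The only delicate point is ensuring both lemmas are invoked within their stated hypotheses. In particular, Lemma \ref{la330} demands $q < \kappa$, which is precisely why one cannot simply reuse $q = \eta+1$ in the $\H$ case when $\kappa = \eta+1$; and the $\eta = 0$ corner forces the fallback $q = 1$, which in turn is what makes the preliminary ruling-out of $\kappa = 1$ (via triviality of $\S_1$) necessary. These are bookkeeping issues rather than real difficulties.
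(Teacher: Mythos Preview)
The proposal is correct and follows essentially the same route as the paper's proof: the same contradiction setup, the same case split on whether $(\alpha,\beta)\in{\H}$, the same applications of Lemmas~\ref{la329} and~\ref{la330} with the same choices of $q$ (namely $q=\eta+1$ in the non-$\H$ case and $q=\max(1,\eta)$ in the $\H$ case). One tiny remark: your justification that $\kappa\neq 1$ in the $\H$ case appeals to the lemma on \emph{group} $\H$-classes, whereas the $\H$-class of $\alpha$ need not be a group; the paper simply uses that \emph{all} $\H$-classes in $D_1$ are trivial (which follows either directly from Lemma~\ref{lem:Green_M}, or from your observation together with Green's lemma, since all $\H$-classes in a $\D$-class are equipotent).
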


\begin{proof}
We need to prove that if $(\alpha,\beta)\in\sigma$ with $\alpha\neq\beta$ then $\rank(\alpha),\rank(\beta)\leq \eta$.
Suppose, aiming for contradiction, that there is such a pair, but with $\kappa=\rank(\alpha)\geq \eta+1$.
Clearly, we must have $\rank(\beta)=\kappa$ as well, by the definition of $ \eta=\eta(\si)$.
Now, if $(\alpha,\beta)\not\in{\H}$ then by Lemma~\ref{la329} (keeping in mind that $\ka\geq1$), there exist $\gamma,\delta\in\M$ with $(\gamma,\delta)\in\sigma$, $\rank(\gamma)=\eta+1$ and $\rank(\delta)\leq \eta$; but this contradicts the definition of $ \eta=\eta(\si)$.
So suppose now that $(\alpha,\beta)\in{\H}$.
Since the $\H$-classes in $D_1$ are trivial, we cannot have $\kappa=1$, and hence $\kappa\geq 2$.
Put $q=\max(1,\eta)$ and note that $1\leq q<\ka$.
By Lemma~\ref{la330}, there exist $\gamma,\delta\in D_q$  such that $(\gamma,\delta)\in\sigma\setminus{\H}$.
Now using Lemma \ref{la329}, we see that there exist~$\gamma_1,\delta_1\in\M$ with $(\gamma_1,\delta_1)\in\sigma$, $\rank(\gamma_1)=q$ and $\rank(\delta_1)<q$, and this yet again contradicts the definition of $\eta=\eta(\si)$ since $q\geq \eta$.
\end{proof}

It will be convenient to single out the case in which $\eta(\si)=0$:

\begin{lemma}
\label{la332}
If $\eta(\sigma)=0$, then $\sigma=\lambda_{\zeta_1}^1\cap\rho_{\zeta_2}^1$.
\end{lemma}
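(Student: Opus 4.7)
The plan is to combine two results that are already available in the preceding development, so this should be very short. First I would apply Lemma~\ref{la331}: since $\eta=\eta(\sigma)=0$ is finite, the lemma gives $\sigma \subseteq R_{\eta+1} = R_1$. This means every non-diagonal pair in $\sigma$ consists of two partitions lying in $I_1 = D_0$, i.e.\ $\sigma$ identifies only rank-$0$ partitions with one another.

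Next, I would use this to rewrite $\sigma = \sigma \cap R_1$. Indeed, the inclusion $\sigma \subseteq \sigma\cap R_1$ follows from $\sigma\subseteq R_1$, and the reverse inclusion is trivial. Now Lemma~\ref{la326} says precisely that $\sigma\cap R_1 = \lambda_{\zeta_1}^1\cap\rho_{\zeta_2}^1$. Therefore
\[
\sigma \;=\; \sigma\cap R_1 \;=\; \lambda_{\zeta_1}^1\cap\rho_{\zeta_2}^1,
\]
which is exactly what we wanted.

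There is no real obstacle here: both Lemma~\ref{la331} and Lemma~\ref{la326} have already done the substantive work. The former confines $\sigma$ to acting only on $D_0$ (the hypothesis $\eta(\sigma)=0$ makes the parameter $\eta+1$ collapse to $1$), and the latter identifies the restriction of any congruence to $R_1$ in terms of the two parameters $\zeta_1(\sigma)$ and $\zeta_2(\sigma)$. So this lemma functions essentially as a clean corollary, isolating the $\eta=0$ case of the forthcoming classification statement.
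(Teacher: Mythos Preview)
Your proof is correct and follows exactly the same approach as the paper: apply Lemma~\ref{la331} to get $\sigma\subseteq R_1$, then Lemma~\ref{la326} to identify $\sigma\cap R_1=\lambda_{\zeta_1}^1\cap\rho_{\zeta_2}^1$.
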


\begin{proof}
By Lemma \ref{la331} we have $\sigma\subseteq R_1$, and by Lemma \ref{la326} we have $\sigma\cap R_1=\lambda_{\zeta_1}^1\cap\rho_{\zeta_2}^1$.
\end{proof}

For the rest of this subsection, we assume that $n=\eta(\sigma)\in [2,\aleph_0)$.  (By Lemma~\ref{la323}, it is impossible to have $\eta(\si)=1$.)

\begin{lemma}
\label{la333}
If $n=\eta(\si)\in[2,\aleph_0)$, then $\sigma=(\lambda_{\zeta_1}^n \cap\rho_{\zeta_2}^n )\cup \sigma\restr_{D_n }$.
\end{lemma}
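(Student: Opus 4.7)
The plan is to combine Lemma \ref{la331}, which confines $\sigma$ to $R_{n+1}$, with Lemma \ref{la327}, which pins down $\sigma\cap R_n$ as $\lambda_{\zeta_1}^n\cap\rho_{\zeta_2}^n$, and read off the conclusion from the definition of $\eta(\sigma)$. No new constructions should be needed; it is purely a matter of case analysis on the ranks of a $\sigma$-related pair.

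For the reverse inclusion I would observe that $\lambda_{\zeta_1}^n\cap\rho_{\zeta_2}^n = \sigma\cap R_n \subseteq \sigma$ by Lemma \ref{la327}, and trivially $\sigma\restr_{D_n}\subseteq\sigma$, so the union is contained in $\sigma$.

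For the forward inclusion I would take $(\alpha,\beta)\in\sigma$ and split into cases. If $\alpha=\beta$, the pair lies on the diagonal, which is contained in $\lambda_{\zeta_1}^n\cap\rho_{\zeta_2}^n$. Otherwise Lemma \ref{la331} yields $\alpha,\beta\in I_{n+1}$, i.e.\ $\rank(\alpha),\rank(\beta)\leq n$. The key additional observation is that the definition of $\eta(\sigma)=n$ rules out any pair in $\sigma\cap(D_n\times I_n)$: so if one of $\alpha,\beta$ has rank $n$, then by symmetry so does the other. This splits the remaining cases cleanly: either both ranks equal $n$, giving $(\alpha,\beta)\in \sigma\restr_{D_n}$; or both are strictly less than $n$, giving $(\alpha,\beta)\in \sigma\cap(I_n\times I_n)\subseteq \sigma\cap R_n = \lambda_{\zeta_1}^n\cap\rho_{\zeta_2}^n$.

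There is no real obstacle here; the content of the lemma has already been isolated in Lemmas \ref{la327} and \ref{la331}, and what remains is merely a tidy repackaging using the definition of $\eta(\sigma)$ to exclude ``mixed-rank'' pairs at level $n$.
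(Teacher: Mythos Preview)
Your proof is correct and essentially matches the paper's approach. The paper's proof is a one-liner citing Lemmas \ref{la323a}, \ref{la327} and \ref{la331}; you use \ref{la327} and \ref{la331} but, in place of citing \ref{la323a}, you re-derive its content (the exclusion of mixed-rank pairs at level $n$) directly from the definition of $\eta(\sigma)$, which amounts to the same thing.
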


\begin{proof}
This follows from Lemmas \ref{la323a}, \ref{la327} and \ref{la331}.
\end{proof}

Next, we proceed to associate a normal subgroup $N=N(\sigma)\unlhd \S_n $ to the congruence $\sigma$ with $n=\eta(\si)\in[2,\aleph_0)$.  To do so, recall that we assume $X$ contains $[1,\aleph_0)=\{1,2,\ldots\}$, and that we have defined $\permdec\pi=\partpermIII1\cdots n{1\pi}\cdots{n\pi}\in\M$ for each permutation $\pi\in\S_n$.  In this way, the $\H$-class of the idempotent $\ep_n=\permdec\id_n$ is the group $\permdec\S_n=\set{\permdec\pi}{\pi\in\S_n}$, the identity of which is $\ep_n$.  Because $\si$ is a congruence, it is clear that the set $\set{\al\in\permdec\S_n}{(\ep_n,\al)\in\si}$ is a normal subgroup of $\permdec\S_n$; it is therefore of the form $\permdec N$ for some normal subgroup $N=N(\si)$ of~$\S_n$.

\begin{lemma}
\label{la334}
If $n=\eta(\si)\in[2,\aleph_0)$, then with $N=N(\si)$ as above, we have $\sigma\restr_{D_n }=\nu_N$.
\end{lemma}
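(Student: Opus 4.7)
The plan is to establish the equality $\sigma\restr_{D_n}=\nu_N$ by proving both inclusions. A useful preliminary observation is that any pair in $\sigma\restr_{D_n}$ is automatically $\H$-related. Indeed, if $(\alpha,\beta)\in\sigma\restr_{D_n}$ with $(\alpha,\beta)\not\in{\H}$, then applying Lemma \ref{la329} with $\kappa=q=n$ produces some $(\gamma,\delta)\in\sigma$ having $\rank(\gamma)=n$ and $\rank(\delta)<n$. This places $n$ into the set $\{\kappa:\sigma\cap(D_\kappa\times I_\kappa)\neq\emptyset\}$, contradicting $\eta(\sigma)=n=\LSUB$ of this set. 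So from now on I may assume $\alpha\H\beta$, write the transversals of $\alpha$ as $A_i\cup B_i'$ ($i=1,\dots,n$), and those of $\beta$ as $A_i\cup B_{i\pi}'$, where $\pi=\phi(\alpha,\beta)\in\S_n$.

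For the inclusion $\sigma\restr_{D_n}\subseteq\nu_N$, the key idea is to sandwich $(\alpha,\beta)$ into $(\ep_n,\permdec\pi)$. Pick representatives $a_i\in A_i$ and $b_i\in B_i$, and define $\theta_1,\theta_2\in\M$ to be the partitions whose only non-singleton blocks are the transversals $\{a_i,i'\}$ and $\{b_i,i'\}$ ($i=1,\dots,n$), respectively; both clearly belong to $\P_X$ and to $\PB_X$. A direct product-graph computation then confirms $\theta_1\alpha\theta_2=\ep_n$ and $\theta_1\beta\theta_2=\permdec\pi$. Compatibility of $\sigma$ thus yields $(\ep_n,\permdec\pi)\in\sigma$, and the defining property of $N$ gives $\pi\in N$, so $(\alpha,\beta)\in\nu_N$.

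For the reverse inclusion $\nu_N\subseteq\sigma\restr_{D_n}$, the strategy is dual: expand $(\ep_n,\permdec\pi)$ back into $(\alpha,\beta)$. Given $(\alpha,\beta)\in\nu_N$, the assumption $\pi\in N$ supplies $(\ep_n,\permdec\pi)\in\sigma$. Now construct $\theta_1',\theta_2'\in\M$ satisfying $\theta_1'\ep_n\theta_2'=\alpha$ and $\theta_1'\permdec\pi\theta_2'=\beta$, by letting $\theta_1'$ have transversals $A_i\cup\{i\}'$ together with the upper non-transversals of $\alpha$ (and any leftover lower elements as singletons), and dually letting $\theta_2'$ have transversals $\{i\}\cup B_i'$ together with the lower non-transversals of $\alpha$. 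These partitions lie in both $\P_X$ and $\PB_X$ because the blocks of $\alpha$ already have size at most two in the partial Brauer setting. Compatibility of $\sigma$ then delivers $(\alpha,\beta)\in\sigma\restr_{D_n}$, completing the proof. The main technical hurdle throughout is the careful bookkeeping of the product-graph computations, especially ensuring the kernel and cokernel data propagate correctly so that the argument runs uniformly over both $\P_X$ and $\PB_X$; once the sandwich and expansion constructions are written down correctly, the verifications are essentially routine.
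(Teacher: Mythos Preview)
Your approach is essentially identical to the paper's: first use Lemma~\ref{la329} to force $(\alpha,\beta)\in{\H}$, then sandwich down to $(\ep_n,\permdec\pi)$ for one inclusion and expand back for the other. The reverse inclusion is correct as written, and matches the paper's $\theta_3,\theta_4$ exactly.

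However, there is a concrete error in your forward inclusion: the transversals of $\theta_1$ are oriented the wrong way. With $\theta_1$ having transversals $\{a_i,i'\}$ (upper $a_i$, lower $i$), the domain of $\theta_1$ is $\{a_1,\dots,a_n\}$, so $\dom(\theta_1\alpha\theta_2)\subseteq\{a_1,\dots,a_n\}$, which has no reason to equal $\{1,\dots,n\}$; moreover, in the product graph for $\theta_1\alpha$ the middle vertex $i''$ lands in whichever block of $\alpha$ happens to contain the element $i\in X$, not in $A_i$. The claimed identity $\theta_1\alpha\theta_2=\ep_n$ therefore fails in general. The fix is simply to swap the roles: take $\theta_1$ with transversals $\{i,a_i'\}$ (upper $i$, lower $a_i$). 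Then $\dom(\theta_1)=\{1,\dots,n\}$, the middle vertex $a_i''\in A_i''$ connects through $\alpha$'s transversal to $B_i'$, and your $\theta_2$ (which is correctly oriented) finishes the job. This is precisely what the paper does, except that the paper uses the full set $A_i$ in place of your representative $a_i$; either choice works.
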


\begin{proof} 
($\subseteq$)
Suppose $(\alpha,\beta)\in\sigma\restr_{D_n }$.
By Lemma \ref{la329} (with $\ka=q=n$), and the definition of $n=\eta(\si)$, we must have $(\alpha,\beta)\in{\H}$,
so we may write
\begin{equation}
\label{eq:albe}
\al = \partXXII{A_1}{A_n}{C_i}{B_1}{B_n}{D_j} \AND \be = \partXXII{A_1}{A_n}{C_i}{B_{1\pi}}{B_{n\pi}}{D_j},
\end{equation}
where $\pi=\phi(\alpha,\beta)\in \S_n $; to show that $(\al,\be)\in\nu_N$, we must show that $\pi\in N$.  (The permutation $\phi(\al,\be)$ was defined before Theorem \ref{thm-main}; it is well defined up to conjugation.)
Let ${\th_1=\partpermIII1\cdots n{A_1}\cdots{A_n}}$ and $\th_2=\partpermIII{B_1}\cdots{B_n}1\cdots n$.
Then $(\ep_n,\permdec\pi)=(\th_1\al\th_2,\th_1\be\th_2)\in\si$, so that $\permdec\pi\in\permdec N$ by definition, whence~$\pi\in N$.

\pfitem{($\supseteq$)}
Suppose now that $(\alpha,\beta)\in\nu_N$.  
Recall that $\nu_N\subseteq{\H}$, and write $\alpha,\beta$ as in \eqref{eq:albe}, with
${\pi=\phi(\alpha,\beta)\in N}$.  Then $(\ep_n,\permdec\pi)\in\si$,  so with $\th_3=\partXXII{A_1}{A_n}{C_i}1 n{}$ and $\th_4=\partXXII1 n{}{B_1}{B_n}{D_j}$, we have $(\al,\be)=(\th_3\ep_n\th_4,\th_3\permdec\pi\th_4)\in\si$; moreover, $\rank(\al)=\rank(\be)=n$, so ${(\al,\be)\in\si\restr_{D_n}}$.
\end{proof}

\begin{lemma}
\label{la335}
If $\eta(\si)\in[2,\aleph_0)$, then with $N=N(\si)$ as above, we have $\sigma=\lambda_{\zeta_1}^N\cap \rho_{\zeta_2}^N$.
\end{lemma}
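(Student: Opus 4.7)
The plan is essentially to combine the two previous lemmas by a short set-theoretic manipulation. By Lemma~\ref{la333} and Lemma~\ref{la334} we already have the decomposition
\[
\sigma \;=\; (\lambda_{\zeta_1}^n \cap \rho_{\zeta_2}^n) \,\cup\, \nu_N,
\]
so all that remains is to verify that the right-hand side agrees with
$\lambda_{\zeta_1}^N \cap \rho_{\zeta_2}^N$.

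Unpacking the definitions
$\lambda_{\zeta_1}^N = \lambda_{\zeta_1}^n \cup \nu_N$ and
$\rho_{\zeta_2}^N = \rho_{\zeta_2}^n \cup \nu_N$, and distributing intersection over union, I would write
\[
\lambda_{\zeta_1}^N \cap \rho_{\zeta_2}^N
= (\lambda_{\zeta_1}^n \cup \nu_N)\cap(\rho_{\zeta_2}^n \cup \nu_N)
= (\lambda_{\zeta_1}^n\cap\rho_{\zeta_2}^n)\cup(\lambda_{\zeta_1}^n\cap\nu_N)\cup(\nu_N\cap\rho_{\zeta_2}^n)\cup\nu_N.
\]
The two middle terms are subsets of $\nu_N$, so they are absorbed, leaving
$\lambda_{\zeta_1}^N\cap\rho_{\zeta_2}^N = (\lambda_{\zeta_1}^n\cap\rho_{\zeta_2}^n)\cup\nu_N$, which is precisely the expression for $\sigma$ obtained above.

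There is no real obstacle here: the substantive content has all been carried by Lemmas~\ref{la333} and~\ref{la334} (which in turn rest on the $\eta$-analysis of Subsection~\ref{subsect:eta} and the group-theoretic identification of $N(\sigma)$). This final lemma is a purely formal repackaging of those results into the shape demanded by the statement of Theorem~\ref{thm-main}.
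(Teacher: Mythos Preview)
Your proof is correct and takes essentially the same approach as the paper, which simply states that the result follows from Lemmas~\ref{la333} and~\ref{la334}. You have made explicit the set-theoretic identity $(\lambda_{\zeta_1}^n\cap\rho_{\zeta_2}^n)\cup\nu_N = \lambda_{\zeta_1}^N\cap\rho_{\zeta_2}^N$ that the paper leaves implicit.
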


\begin{proof}
This follows from Lemmas \ref{la333} and \ref{la334}.
\end{proof}

Lemma \ref{la335} (together with Lemma \ref{la328}) constitutes a complete classification of congruences with $\eta (\sigma)=2$.
For the remaining finite values of $\eta(\si)$ we must also rule out the cases where~$\zeta_1=1$ or $\zeta_2=1$:

\begin{lemma}
\label{la336}
If $\eta(\sigma)\in [3,\aleph_0)$, then $\zeta_1,\zeta_2\neq 1$.
\end{lemma}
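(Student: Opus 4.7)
The plan is to derive this as a direct corollary of Lemma \ref{la328b}. Since the hypothesis $\eta(\sigma) \in [3, \aleph_0)$ implies in particular that $\eta(\sigma) > 2$, Lemma \ref{la328b} applies and yields the strictly stronger conclusion $\zeta_1, \zeta_2 \geq \eta(\sigma) \geq 3$. In particular $\zeta_1, \zeta_2 \neq 1$, which is precisely what is asserted.

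Because the result is such a quick consequence of Lemma \ref{la328b}, there is essentially no obstacle to overcome: the work has effectively already been done. The statement is singled out as a separate lemma presumably for bookkeeping reasons, since it provides exactly the restriction on $\zeta_1, \zeta_2$ needed to match the parameter ranges prescribed for type \ref{CT1} congruences in Theorem \ref{thm-main} when $n \geq 3$.

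If one preferred a self-contained argument that did not invoke the full strength of Lemma \ref{la328b}, it would suffice to reuse the first half of the proof of that lemma. Namely, pick distinct $x,y,z \in X$ and set $\alpha = \partXI{x}{y}{x}{y}$ and $\beta = \partXI{y}{z}{y}{z}$, both of rank $2 < \eta$ with $\widehat{\alpha} = \widehat{\beta} = \emptypart$; Lemma \ref{la325} then gives $(\alpha, \emptypart), (\beta, \emptypart) \in \sigma$, whence $(\alpha, \beta) \in \sigma$ by transitivity. Multiplying on the right by $\theta = \partXX{x,y}{}$ produces $(\theta, \emptypart) = (\alpha\theta, \beta\theta) \in \sigma$, a pair in $D_0$ with $\overline{\theta} \neq \overline{\emptypart}$, forcing $\zeta_1 \neq 1$ by the definition of $\zeta_1$. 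The conclusion $\zeta_2 \neq 1$ follows by the usual duality.
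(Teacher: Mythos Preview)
Your proof is correct and matches the paper's approach: the paper derives the result directly from Lemmas \ref{la328} and \ref{la328b}, and you correctly observe that Lemma \ref{la328b} alone already suffices for the stated conclusion $\zeta_1,\zeta_2\neq 1$. The additional citation of Lemma \ref{la328} in the paper simply upgrades this to $\zeta_1,\zeta_2\geq\aleph_0$, which is what is ultimately needed in Proposition \ref{prop:CT1/2}.
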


\begin{proof}
This follows directly from Lemmas \ref{la328} and \ref{la328b}.
\end{proof}

For convenience, we summarise the main conclusions of this subsection:

\begin{prop}\label{prop:CT1/2}
Any congruence $\si$ on $\M$ with $\eta(\si)<\aleph_0$ is of type \ref{CT1}, as listed in Theorem \ref{thm-main}.
\end{prop}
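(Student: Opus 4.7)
The plan is to simply assemble the lemmas already established in this subsection, together with Lemmas \ref{la323} and \ref{la328}. First I would note that, by Lemma \ref{la323}, the hypothesis $\eta(\sigma)<\aleph_0$ forces $\eta(\sigma)\in\{0\}\cup[2,\aleph_0)$, so the argument splits into two cases according to whether $\eta(\sigma)=0$ or $\eta(\sigma)=n\in[2,\aleph_0)$.

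In the case $\eta(\sigma)=0$, Lemma \ref{la332} gives $\sigma=\lambda_{\zeta_1}^1\cap\rho_{\zeta_2}^1$ with $\zeta_i=\zeta_i(\sigma)$. I would then match this against the definition of type \ref{CT1} by taking $n=1$ and $N=\S_1$: since $\nu_{\S_1}=\Delta_{D_1}$ and $\lambda_\zeta^{\S_1}=\lambda_\zeta^1\cup\Delta_{D_1}=\lambda_\zeta^1$ (and similarly for $\rho$), the relation $\sigma$ coincides with $\lambda_{\zeta_1}^N\cap\rho_{\zeta_2}^N$. The admissibility of the parameters $\zeta_1,\zeta_2\in\{1\}\cup[\aleph_0,|X|^+]$ for $n=1\leq 2$ follows from Lemma \ref{la328}.

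In the case $\eta(\sigma)=n\in[2,\aleph_0)$, I would invoke Lemma \ref{la335} directly, which gives $\sigma=\lambda_{\zeta_1}^N\cap\rho_{\zeta_2}^N$ with $N=N(\sigma)\unlhd\S_n$. For the parameter restrictions, Lemma \ref{la328} rules out all finite values of $\zeta_1,\zeta_2$ other than $1$, so $\zeta_1,\zeta_2\in\{1\}\cup[\aleph_0,|X|^+]$, which is precisely the condition allowed for $n=2$ in \ref{CT1}; when $n\geq 3$, the additional information supplied by Lemma \ref{la336} eliminates the value $1$, leaving $\zeta_1,\zeta_2\in[\aleph_0,|X|^+]$, again matching \ref{CT1}.

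Since this proposition is purely a summary, there is no genuine obstacle: all the technical work has already been carried out in Lemmas \ref{la323}, \ref{la328}, \ref{la332}, \ref{la335} and \ref{la336}. The only thing requiring a moment of care is the verification in the $\eta(\sigma)=0$ case that the chosen $(n,N)=(1,\S_1)$ really does produce the relation given by Lemma \ref{la332}; this is an immediate unravelling of the definitions of $\lambda_\zeta^N$ and $\nu_N$ for the trivial group~$\S_1$.
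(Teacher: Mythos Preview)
Your proposal is correct and follows exactly the same approach as the paper: the paper's proof is simply the one-line statement ``This follows from Lemmas \ref{la323}, \ref{la328}, \ref{la332}, \ref{la335} and~\ref{la336}'', and you have spelled out precisely how these lemmas combine. Your additional remark verifying $\lambda_\zeta^{\S_1}=\lambda_\zeta^1$ is a harmless (and correct) unpacking of definitions that the paper leaves implicit.
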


\pf This follows from Lemmas \ref{la323}, \ref{la328}, \ref{la332}, \ref{la335} and~\ref{la336}. \epf

\subsection[Congruences with infinite $\eta(\sigma)$: type \ref{CT2}]{\boldmath Congruences with infinite $\eta(\sigma)$: type \ref{CT2}}\label{subsect:infinite_eta}

Now we move to considering a congruence $\sigma$ with $\eta(\sigma)$ infinite.  We begin by isolating the case in which $\eta(\si)=|X|^+$.

\begin{lemma}\label{lem:nabla}
We have $\eta(\si)=|X|^+$ if and only if $\si=\nabla_{\M}$.
\end{lemma}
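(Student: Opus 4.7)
The plan is to prove the two implications separately, keeping the forward direction almost trivial and reserving the work for the reverse direction, where the key insight is correctly decoding the definition of $\LSUB$.

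For the forward direction ($\Leftarrow$), I would exhibit a single concrete witness: the pair $(\epsilon_X, \emptypart)$ lies in $\nabla_{\M}$ and belongs to $D_{|X|} \times I_{|X|}$, since $\rank(\epsilon_X) = |X|$ and $\rank(\emptypart) = 0$. Thus $|X|$ belongs to the set $\Xi = \{\kappa : \nabla_{\M} \cap (D_\kappa \times I_\kappa) \neq \emptyset\}$, and since every element of $\M$ has rank at most $|X|$, no $\kappa > |X|$ can appear in $\Xi$. The definition of $\LSUB$ then forces $\eta(\nabla_{\M}) = |X|^+$.

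For the reverse direction ($\Rightarrow$), suppose $\eta(\sigma) = |X|^+$. The crucial first step is the observation that $|X|^+$ is a \emph{successor} cardinal, so the equality $\LSUB(\Xi) = |X|^+$ is possible only if $|X| \in \Xi$: were every element of $\Xi$ strictly below $|X|$, then $\LSUB(\Xi)$ would itself be at most $|X|$. In particular $\eta(\sigma) = |X|^+ > 2$, so Lemma \ref{la328b} applies and yields $\zeta_1(\sigma), \zeta_2(\sigma) \geq |X|^+$.

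With these parameters identified, I would invoke Lemma \ref{la327} with $\eta = |X|^+$ to obtain
\[
\sigma \cap R_{|X|^+} \;=\; \lambda_{|X|^+}^{|X|^+} \cap \rho_{|X|^+}^{|X|^+}.
\]
Three elementary observations then collapse both sides to $\nabla_{\M}$: since no partition has rank exceeding $|X|$, we have $I_{|X|^+} = \M$, and hence $R_{|X|^+} = \nabla_{\M}$, so the left-hand side equals $\sigma$; moreover, for any $\alpha,\beta\in\M$, the partitions $\overline{\alpha}$ and $\overline{\beta}$ of $X$ have at most $|X|$ blocks each, giving $|\overline{\alpha}\sd\overline{\beta}| \leq |X| < |X|^+$, so $\lambda_{|X|^+} = \nabla_{\M}$, and dually $\rho_{|X|^+} = \nabla_{\M}$. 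Combining these yields $\sigma = \nabla_{\M}$. The main (and indeed only) subtlety is the step extracting $|X| \in \Xi$ from $\LSUB(\Xi) = |X|^+$; everything afterwards is bookkeeping with established lemmas.
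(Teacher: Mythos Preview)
Your proof is correct. Both directions are sound, and in particular your reverse implication via Lemmas~\ref{la328b} and~\ref{la327} goes through cleanly: since $\eta(\sigma)=|X|^+>2$, Lemma~\ref{la328b} gives $\zeta_1,\zeta_2\geq|X|^+$ (hence equal to $|X|^+$, as these parameters cannot exceed $|X|^+$), and then Lemma~\ref{la327} collapses to $\sigma=\nabla_{\M}$ exactly as you describe.

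However, your route differs from the paper's primary argument. For the direction $\eta(\sigma)=|X|^+\Rightarrow\sigma=\nabla_{\M}$, the paper argues more directly: since $\rank(\epsilon_X)=|X|<|X|^+=\eta(\sigma)$, Lemma~\ref{la325} gives $(\epsilon_X,\widehat{\epsilon_X})=(\epsilon_X,\emptypart)\in\sigma$, and then for any $\alpha\in\M$ the congruence property yields $(\alpha,\emptypart)=(\epsilon_X\alpha\epsilon_X,\emptypart\alpha\emptypart)\in\sigma$. This is shorter and avoids the heavier parameter machinery. That said, the paper explicitly remarks immediately after its proof that your approach via Lemmas~\ref{la327} and~\ref{la328b} also works, so you have independently found the acknowledged alternative. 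One minor comment: your observation that $|X|\in\Xi$ (from $|X|^+$ being a successor) is correct but not actually needed for your argument, since $\eta(\sigma)=|X|^+>2$ follows directly from the hypothesis and the infiniteness of $X$.
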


\pf
We have already observed that $\eta(\nabla_{\M})=|X|^+$.  Conversely, if $\eta(\si)=|X|^+$, then  ${(\ep_X,\emptypart)=(\ep_X,\wh\ep_X)\in\si}$ by Lemma \ref{la325}; thus, for any $\al\in\M$, ${(\al,\emptypart)=(\ep_X\al\ep_X,\emptypart\al\emptypart)\in\si}$, so that all elements of $\M$ are $\si$-related.
\epf

The forward implication in Lemma \ref{lem:nabla} also follows quickly from Lemmas \ref{la327} and \ref{la328b}.

For the remainder of the section, we assume that $\eta=\eta(\si)\in[\aleph_0,|X|]$, with the ultimate aim being to show that $\si$ is of type \ref{CT2}; see Lemmas \ref{la341} and \ref{la344}.  
By Lemma \ref{la327}, we already know that $\si\cap R_\eta=\lam_{\ze_1}^\eta\cap\rho_{\ze_1}^\eta$.  Thus, in light of Lemma \ref{la323a}, it remains to describe $\si\restr_{D_\ka}$ for each $\ka\in[\eta,|X|]$.  In doing so, we will also see how to determine the parameters $k=k(\sigma)$, and $\eta_i=\eta_i(\sigma)$ and $\xi_i=\xi_i(\sigma)$ for $i=1,\dots,k$.

To this end we define a map
\[
\Psi \colon  [\eta,|X|]\rightarrow [0,|X|^+]  \colon   \kappa\mapsto\kappa^\ast= {\LSUB} \bigset{|\al\sd\be|}{(\al,\be)\in\si\restr_{D_\ka}}.
\]
Roughly speaking, $\ka^*$ represents the boundary that the values of $|\al\sd\be|$ may approach but not attain (or exceed), as $(\al,\be)$ ranges over all $\si$-related pairs from $D_\ka$.

The two most important properties of $\Psi$ are recorded in Lemmas \ref{la338} and~\ref{la337} below.  The proofs of these two lemmas rely on the following technical lemma, whose proof will be given in Section~\ref{sect:tech}.

\begin{lemma}\label{lem:YZ}
If $(\al,\be)\in\si\restr_{D_\ka}$ where $\ka\geq\eta\geq\aleph_0$ and $|\al\sd\be|\not=0$, then for any disjoint subsets $Y,Z\sub X$ with $|Y|\leq\ka$ and $|Z|\leq|\al\sd\be|$, we have $(\ep_{Y\cup Z},\ep_Y)\in\si$.
\end{lemma}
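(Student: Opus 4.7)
The plan is to split the argument by whether $|Y\cup Z|<\eta$ or $|Y\cup Z|\geq\eta$. In the first case both $\rank(\ep_{Y\cup Z})=|Y\cup Z|$ and $\rank(\ep_Y)\leq|Y\cup Z|$ are strictly below $\eta$, so Lemma~\ref{la325} applied to each, together with the observation that $\wh{\ep_W}=\emptypart$ for every $W\sub X$, gives $(\ep_{Y\cup Z},\emptypart),(\ep_Y,\emptypart)\in\si$; transitivity then finishes. (Note that in this case the given pair $(\al,\be)$ is not used at all, because there is already enough collapsing produced by $\eta$ alone.)

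For the substantive case $|Y\cup Z|\geq\eta$, the strategy is to construct $\th_1,\th_2\in\M$ with $\th_1\al\th_2=\ep_{Y\cup Z}$ and $\th_1\be\th_2=\ep_Y$, whence compatibility of $\si$ yields the claim. Writing $\al$ with transversals $A_i\cup B_i'$ ($i\in I$, $|I|=\ka$), we would select $|Y|$ of these that remain transversals of $\be$ with matching kernel and cokernel structure (to act as ``$Y$-slots''), and $|Z|$ further blocks on which $\al$ and $\be$ differ (to act as ``$Z$-slots''), using $|Z|\leq|\al\sd\be|$. The multiplier $\th_1$ then projects each element of $Y\cup Z$, viewed as an upper vertex, onto the upper part of its designated slot, collapsing all other upper elements to singletons; $\th_2$ performs the dual projection on codomains. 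By design each $Y$-slot survives in both products as a transversal $\{y,y'\}$, while each $Z$-slot survives in the $\al$-product as $\{z,z'\}$ but collapses to the singletons $\{z\},\{z'\}$ in the $\be$-product. A preliminary conjugation by a permutation from $\S_X\leq\M$ reduces to convenient choices of $Y$ and $Z$, since only their cardinalities matter.

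The principal difficulty is that the differences between $\al$ and $\be$ can manifest in many distinct shapes---missing or additional transversals, kernel/cokernel collisions, or non-transversal blocks of varied sizes---and the construction of $\th_1,\th_2$ must accommodate all of them uniformly. A subtler difficulty arises when $|Y\cup Z|$ exceeds $\ka$, which is possible when $|\al\sd\be|>\ka$, with the excess necessarily lying among non-transversal blocks; then the desired pair has rank exceeding $\ka$ and cannot arise directly as an image of $(\al,\be)$, so one must build it up from several conjugates of $(\al,\be)$ under $\S_X$, combining the resulting pairs via transitivity, or else argue separately by exploiting the non-transversal differences. Finally, the whole argument must work in both $\M=\P_X$ and $\M=\PB_X$; the latter is the more restrictive setting, because every block has size at most $2$, which forces specific forms on the multipliers $\th_1,\th_2$ and on the types of differences that can be exploited---presumably the reason the proof is deferred to Section~\ref{sect:tech} and treated separately for the two monoids.
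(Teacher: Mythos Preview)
Your first case ($|Y\cup Z|<\eta$, handled via Lemma~\ref{la325}) is correct, though the paper instead opens Lemma~\ref{lem:tech3} by reducing to $|Y|=\ka$ via injections, which renders this case vacuous.

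The main case is where your plan has a genuine gap. The single-step construction you sketch---find $\th_1,\th_2$ with $\th_1\al\th_2=\ep_{Y\cup Z}$ and $\th_1\be\th_2=\ep_Y$---does not work in general, and the paper does not attempt it. Two concrete obstructions: first, $\al\cap\be$ may have fewer than $\ka$ common transversals, so your ``$Y$-slots'' need not exist; second, even when they do, a block of $\al\sm\be$ used as a $Z$-slot can, after multiplication, still align with some block of $\be$ and produce an unwanted transversal of $\th_1\be\th_2$. The paper's argument (Lemma~\ref{lem:tech3} together with the chain of preparatory lemmas in Subsection~\ref{subsect:YZ}) is built precisely around these failures: one first attempts a construction of roughly the shape you describe, then tests whether the resulting $\al_1$ has few enough of the transversals of $\ep_Z$, and if not, reads structural information off the failure to enable a second, better-targeted construction (see the claim/alternative structure in Lemmas~\ref{lem:tech07} and~\ref{lem:tech06}). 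There is also a separate strand (Lemma~\ref{lem:tech09}) for when $\al\cap\be$ has fewer than $\ka$ transversals, where one proves the stronger conclusion $(\ep_Y,\emptypart)\in\si$.

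Your worry about $|Y\cup Z|>\ka$ is well placed at the level of the bare statement, but the paper dispatches it by first proving $|\al\sd\be|<\ka$ (Lemma~\ref{lem:tech1}); that inequality itself requires the same machinery, applied with $\xi=\ka$, to contradict the definition of $\eta$. Finally, your guess that the two monoids are treated separately here is mistaken: all of Subsection~\ref{subsect:YZ}, including Lemma~\ref{lem:tech3}, is proved uniformly for $\P_X$ and $\PB_X$; the separate treatments are reserved for Lemmas~\ref{la328a}, \ref{la328c}, \ref{la340b} and~\ref{la340}.
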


\pf
This is proved in Lemma \ref{lem:tech3} \ref{it:h3ii}.
\epf

\begin{lemma}
\label{la338}
For any $\kappa\in[\eta,|X|]$, we have $\kappa^\ast\leq \eta$.
\end{lemma}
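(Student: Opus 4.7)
The plan is to prove $\kappa^\ast \leq \eta$ by contradiction, exploiting the defining property of $\eta = \eta(\sigma)$ together with Lemma \ref{lem:YZ}. First I would unpack the definition of $\LSUB$: the inequality $\kappa^\ast \leq \eta$ is equivalent to asserting that $|\alpha \sd \beta| < \eta$ for every pair $(\alpha,\beta) \in \sigma\restr_{D_\kappa}$, because $\LSUB$ of a set of cardinals is $\leq \eta$ precisely when $\eta$ is a strict upper bound. So the goal becomes to assume some pair $(\alpha,\beta) \in \sigma\restr_{D_\kappa}$ satisfies $|\alpha \sd \beta| \geq \eta$, and derive a contradiction.

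To produce the contradiction, I would apply Lemma \ref{lem:YZ} with $Y = \emptyset$ and $Z \subseteq X$ chosen so that $|Z| = \eta$; such a $Z$ exists since $\eta \leq |X|$, and the hypotheses $|Y| \leq \kappa$, $|Z| \leq |\alpha \sd \beta|$, $|\alpha \sd \beta| \neq 0$ and $Y \cap Z = \emptyset$ are all immediate (the last two using $\eta \geq \aleph_0$). The conclusion is
\[
(\epsilon_Z,\emptypart) = (\epsilon_{Y\cup Z},\epsilon_Y) \in \sigma,
\]
where $\rank(\epsilon_Z) = |Z| = \eta$ and $\rank(\emptypart) = 0 < \eta$.

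This pair therefore lies in $\sigma \cap (D_\eta \times I_\eta)$, showing that $\eta$ itself belongs to the set $\set{\kappa}{\sigma \cap (D_\kappa \times I_\kappa) \neq \emptyset}$. But then the least strict upper bound of that set, namely $\eta(\sigma)$, must exceed $\eta$, contradicting $\eta(\sigma) = \eta$. The only real difficulty in this argument is the content of Lemma \ref{lem:YZ}, whose proof is deferred to Section \ref{sect:tech}; granted that lemma, the present proof is essentially a one-line contradiction. I do not foresee needing to split into cases, nor to treat $\P_X$ and $\PB_X$ separately here, since Lemma \ref{lem:YZ} is already stated uniformly for $\M$.
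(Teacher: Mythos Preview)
Your proposal is correct and follows essentially the same approach as the paper: both argue by contradiction, apply Lemma~\ref{lem:YZ} with $Y=\emptyset$, and obtain a pair $(\epsilon_Z,\emptypart)\in\sigma$ violating the definition of $\eta=\eta(\sigma)$. The only cosmetic difference is that the paper takes $|Z|=\xi=|\alpha\sd\beta|$ rather than $|Z|=\eta$, but either choice works since Lemma~\ref{lem:YZ} only requires $|Z|\leq|\alpha\sd\beta|$.
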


\begin{proof}
Aiming for a contradiction, suppose $\kappa^\ast>\eta$.   So there exists a pair ${(\alpha,\beta)\in\si\restr_{D_\kappa}}$ such that $\xi=|\alpha\sd\beta |\geq\eta$.  But then for any $Z\sub X$ with $|Z|=\xi$, Lemma~\ref{lem:YZ} (with $Y=\emptyset$) gives $(\ep_Z,\emptypart)\in\si$.  Since $\ep_Z\in D_\xi$ and $\emptypart\in D_0$ with $\xi\geq\eta$, this contradicts the definition of~$\eta=\eta(\si)$.
\end{proof}

\begin{lemma}
\label{la337}
The map $\Psi$ is order-reversing: i.e., $\kappa_1\leq\kappa_2 \implies \kappa_1^\ast\geq \kappa_2^\ast$ for all~${\ka_1,\ka_2\in[\eta,|X|]}$.
\end{lemma}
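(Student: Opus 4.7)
The plan is to show that every element $\xi$ of $S_2:=\bigset{|\alpha\sd\beta|}{(\alpha,\beta)\in\sigma\restr_{D_{\kappa_2}}}$ satisfies $\xi<\kappa_1^\ast$; by the definition of $\LSUB$ this immediately yields $\kappa_1^\ast\geq\LSUB S_2=\kappa_2^\ast$. To this end I would fix such an $\xi$, realised by a pair $(\alpha,\beta)\in\sigma\restr_{D_{\kappa_2}}$. The case $\xi=0$ is trivial since $\kappa_1^\ast\geq 1$, so I would assume $\xi>0$.

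The crucial cardinal-arithmetic observation is that, by Lemma~\ref{la338} applied to $\kappa_2$, one has $\xi<\kappa_2^\ast\leq\eta\leq\kappa_1$; and since $\kappa_1\geq\eta\geq\aleph_0$ is infinite with $\xi<\kappa_1$, we get $\kappa_1+\xi=\kappa_1$. I would then choose disjoint subsets $Y,Z\subseteq X$ with $|Y|=\kappa_1$ and $|Z|=\xi$ (possible because $\kappa_1\leq|X|$) and invoke Lemma~\ref{lem:YZ} to conclude that $(\epsilon_{Y\cup Z},\epsilon_Y)\in\sigma$. The cardinal computation $|Y\cup Z|=\kappa_1$ places this pair in $\sigma\restr_{D_{\kappa_1}}$; a block inspection, using the convention that singleton non-transversals are included, shows that $\epsilon_{Y\cup Z}\sd\epsilon_Y$ contains the blocks $\{z,z'\}$, $\{z\}$ and $\{z'\}$ for each $z\in Z$, so $|\epsilon_{Y\cup Z}\sd\epsilon_Y|\geq|Z|=\xi$. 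This cardinality therefore lies in $S_1$, whence $\xi\leq|\epsilon_{Y\cup Z}\sd\epsilon_Y|<\kappa_1^\ast$, as desired.

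The proof is short precisely because the technical heart has been packaged into Lemma~\ref{lem:YZ}, whose proofs for $\P_X$ and $\PB_X$ require genuinely different combinatorial arguments deferred to Section~\ref{sect:tech}. Once that lemma is granted, the main obstacle in the present argument is simply the cardinal-arithmetic bookkeeping ensuring that $\epsilon_{Y\cup Z}$ and $\epsilon_Y$ have rank exactly $\kappa_1$; this is what guarantees that the produced $\sigma$-pair witnesses a value in $S_1$ (rather than in some $S_\rho$ for a different $\rho$), and it is exactly the strict inequality $\xi<\eta\leq\kappa_1$ coming from Lemma~\ref{la338} that makes this possible.
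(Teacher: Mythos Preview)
Your proof is correct and takes essentially the same approach as the paper: both invoke Lemma~\ref{lem:YZ} (with a pair from $\sigma\restr_{D_{\kappa_2}}$) to produce $(\epsilon_{Y\cup Z},\epsilon_Y)\in\sigma\restr_{D_{\kappa_1}}$, and both use Lemma~\ref{la338} to ensure $|Z|\leq\kappa_1$ so that the ranks work out. The only difference is presentational: the paper argues by contradiction (assuming $\kappa_1^\ast<\kappa_2^\ast$ and deriving a witness in $S_1$ of size $\geq\kappa_1^\ast$), whereas you argue directly that every element of $S_2$ is bounded by some element of $S_1$ and hence is strictly below $\kappa_1^\ast$.
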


\begin{proof}
Write  ${\xi_1=\kappa_1^\ast}$ and $\xi_2=\kappa_2^\ast$ and suppose, aiming for a contradiction, that $\xi_1<\xi_2$.  This means that there exists a pair $(\alpha,\beta)\in \si\restr_{D_{\kappa_2}}$ with $|\alpha\sd\beta|\geq\xi_1$ (and note that $\xi_1>0$ as~$\si$ is reflexive).  Pick disjoint $Y,Z\sub X$ with $|Y|=\ka_1$ and $|Z|=\xi_1$.  Since $|Y|=\ka_1\leq\ka_2$, Lemma~\ref{lem:YZ} then gives $(\ep_{Y\cup Z},\ep_Y)\in\si$.  But $\ep_{Y\cup Z},\ep_Y\in D_{\ka_1}$ (since ${|Z|=\xi_1=\ka_1^*\leq\eta\leq\ka_1=|Y|}$, using~Lemma \ref{la338} for the first inequality) and ${|\ep_{Y\cup Z}\sd\ep_Y|=3|Z|\geq\xi_1}$, contradicting the definition of $\xi_1=\ka_1^*$.
\end{proof}

By Lemmas \ref{la338} and \ref{la337}, $\Psi$ maps the interval $[\eta,|X|]$ in an order-reversing fashion into the interval $[0,\eta]$.
Since the cardinals are well-ordered, it follows that the image of $\Psi$ is finite.  (Otherwise it would contain an infinite chain $\ka_1^*<\ka_2^*<\cdots$, in which case $\ka_1>\ka_2>\cdots$, a contradiction; cf.~\cite[p234]{CPbook2}.)  We write 
\begin{equation}
\label{eq:xi}
\im(\Psi) = \{\xi_1,\dots,\xi_k\}=\{\xi_1(\sigma),\dots,\xi_k(\sigma)\} , \quad\text{where $k=k(\si)\geq1$, and where $\xi_1>\cdots>\xi_k$.}
\end{equation}
Now let
\[
\label{eq:eta}
\eta_i =\eta_i(\sigma)= {\min}\set{ \kappa }{ \kappa^\ast=\xi_{i+1}} \qquad\text{for each $0\leq i\leq k-1$.}
\]
By definition, and since $\Psi$ is order-reversing,  we have $\eta^*=\xi_1$, and so $\eta_0=\eta=\eta(\si)$.  Also, let us define ${\eta_k=\eta_k(\sigma)=|X|^+}$.  Note that
\begin{equation}
\label{eq:eta}
\eta_0<\eta_1<\cdots<\eta_k=|X|^+.
\end{equation}

\begin{rem}
\label{rem:PsiPar} 
The parameters $k(\sigma)$, $\xi_i(\sigma)$ and $\eta_i(\sigma)$ have been defined from the mapping $\Psi$ and are uniquely determined by this mapping.
Conversely, it is easy to see that $\Psi$ itself is uniquely determined by the values of these parameters. Thus, $\Psi$ can also be regarded as a parameter of~$\sigma$, and in that case we shall write $\Psi=\Psi(\sigma)$; the value of this mapping at $\kappa$ will then be denoted~$\Psi(\sigma)(\kappa)$.
This point of view will be particularly useful in Part \ref{part:II} where we analyse the structure of the lattice $\Cong(\M)$.
Recall that we consider the universal congruence $\nabla_{\M}$ to be of type \ref{CT2} with $\eta=|X|^+$, in which case the interval $[\eta,|X|]$ is empty; we therefore consider~$\Psi(\nabla_{\M})$ to be the empty mapping.  
\end{rem}

We have now defined all of the relevant parameters associated to the congruence $\si$.  In Lemma~\ref{la341} below, we will show that they are constrained in the way stated in Theorem \ref{thm-main}, and in Lemma~\ref{la344} that $\si$ is precisely the congruence from the theorem with these parameter values.  First we need two technical lemmas, the proofs of which will be given in Section \ref{sect:tech}.

\begin{lemma}
\label{la340b}
If $(\alpha,\beta)\in\sigma\restr_{D_\kappa}$ where $\kappa\geq\eta\geq\aleph_0$ and $\al\not=\be$, then for any $\gamma,\delta\in D_\kappa$ with $|\gamma\sd\delta|<\aleph_0$, we have $(\gamma,\delta)\in \sigma$.
\end{lemma}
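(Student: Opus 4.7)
The plan is to use the pair $(\alpha,\beta)\in\sigma\cap(D_\kappa\times D_\kappa)$ with $\alpha\neq\beta$ as a \emph{seed} which, via Lemma~\ref{lem:YZ} together with repeated pre- and post-multiplication, amplifies into $\sigma$-identifications for every pair of partitions in $D_\kappa$ differing in finitely many blocks. Since $\alpha\neq\beta$ we have $|\alpha\sd\beta|\geq 1$, so Lemma~\ref{lem:YZ} applied with a singleton $Z=\{z\}$ immediately yields $(\ep_{Y\cup\{z\}},\ep_Y)\in\sigma$ for every $Y\subseteq X$ with $|Y|\leq\kappa$ and every $z\in X\setminus Y$. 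Iterating finitely many times and invoking transitivity extends this at once to $(\ep_{Y\cup Z},\ep_Y)\in\sigma$ for any finite $Z\subseteq X\setminus Y$ and any $Y\subseteq X$ with $|Y|\leq\kappa$.

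Next, given $\gamma,\delta\in D_\kappa$ with $m:=|\gamma\sd\delta|<\aleph_0$, I will build a finite chain $\gamma=\rho_0,\rho_1,\dots,\rho_N=\delta$ of partitions in $D_\kappa$ such that each consecutive pair $(\rho_i,\rho_{i+1})$ can be expressed as $(\theta_1\,\ep_{Y\cup Z}\,\theta_2,\theta_1\,\ep_Y\,\theta_2)$, or in a dual form where the roles of kernel and cokernel are exchanged, for suitable $\theta_1,\theta_2\in\M$ and finite $Z\subseteq X$. Each link will then lie in $\sigma$ by compatibility, and transitivity will deliver $(\gamma,\delta)\in\sigma$. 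The intermediate $\rho_i$ will be engineered to agree with both $\gamma$ and $\delta$ on a fixed $\kappa$-sized ``core'' of common transversals (which exists because $\gamma$ and $\delta$ differ in only finitely many blocks out of $\kappa$-many), while modifying one block in the finite symmetric difference at a time. The multipliers $\theta_1,\theta_2$ will preserve the core as transversals and implement the local modification through the chosen $\ep_{Y\cup Z}$-to-$\ep_Y$ step.

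The principal obstacle lies in $\M=\P_X$, where the blocks in $\gamma\sd\delta$ may have arbitrarily large cardinality despite being finite in number, so the ``local modification'' is not confined to a finite region of $X\cup X'$. The plan for this case is to choose $\theta_1,\theta_2$ that funnel the (possibly large) affected blocks through a finite scaffold inside $Y\cup Z$, carry out the elementary insertion or deletion of a singleton there, and then redistribute; in $\M=\PB_X$ the union of the differing blocks is automatically finite and the construction becomes essentially combinatorial on a finite set. Either way, the argument will follow the set-theoretic style already employed for the $\P_X$-half of Lemmas~\ref{la328a} and~\ref{la328c}, and is most naturally postponed to Section~\ref{sect:tech} where those companion lemmas are also treated.
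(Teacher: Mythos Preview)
Your starting point is right and matches the paper: the pair $(\alpha,\beta)$ with $\alpha\neq\beta$ feeds into Lemma~\ref{lem:YZ} (established in Section~\ref{sect:tech} as Lemma~\ref{lem:tech3}\ref{it:h3i}) to give $(\ep_{Y\cup Z},\ep_Y)\in\sigma$ for arbitrary finite $Z$, and this is indeed the engine driving everything.

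Where you diverge is in how you deploy that engine. You propose a finite chain $\gamma=\rho_0,\ldots,\rho_N=\delta$ with each step realised as $(\theta_1\ep_{Y\cup Z}\theta_2,\theta_1\ep_Y\theta_2)$. The paper instead goes through a \emph{single} intermediate: for $\PB_X$ it passes via the meet $\gamma\wedge\delta$ (Lemma~\ref{la-sw4}), and for $\P_X$ via the join $\gamma\vee\delta$ (Lemma~\ref{la-rg1}\ref{it:rg1iv}). In the $\PB_X$ case the paper sandwiches $\gamma$ itself between $\ep$-type partitions on both sides simultaneously, writing $(\gamma,\gamma\wedge\delta)=(\theta_1\ep_{W_1}\gamma\ep_{W_2}\theta_2,\theta_1\ep_{Y_1}\gamma\ep_{Y_2}\theta_2)$, which handles all the finitely many differing blocks in one shot rather than iterating. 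For $\P_X$ the paper introduces an extra gadget you do not mention: the partitions $\omega_{Y,Z}$ and $\omega_{Y,\mathcal Z}$ in which $Z$ (or each $Z_i$) forms a single transversal block. The paper first upgrades $(\ep_{Y,Z},\ep_Y)\in\sigma$ to $(\ep_{Y,\mathcal Z},\omega_{Y,\mathcal Z})\in\sigma$ (Lemma~\ref{la-rg1}\ref{it:rg1i}--\ref{it:rg1ii}), and it is this pair that does the merging needed to reach $\gamma\vee\delta$ in one step.

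Your chain approach is viable and arguably more elementary, since each link only needs a small $Z$ and a simple local modification; the trade-off is bookkeeping (ensuring each $\rho_i$ remains in $D_\kappa$ and writing down the multipliers explicitly for each of the several flavours of block modification). The paper's meet/join approach buys a cleaner two-line reduction at the cost of the preparatory $\omega$-lemmas. Your description of the $\P_X$ case as ``funnel through a finite scaffold and redistribute'' is accurate in spirit but would need to be made concrete; the $\omega_{Y,\mathcal Z}$ device is precisely the missing piece that makes such redistribution systematic.
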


\begin{proof}
This is proved in Lemma \ref{la-sw4} for $\PB_X$, and in Lemma \ref{la-rg1} \ref{it:rg1iv} for $\P_X$.
\end{proof}

\begin{lemma}
\label{la340}
If $(\alpha,\beta)\in\sigma\restr_{D_\kappa}$ where $\kappa\geq\eta\geq\aleph_0$, then for any $\gamma,\delta\in D_\kappa$ with ${|\gamma\sd\delta|\leq |\alpha\sd\beta|}$, we have $(\gamma,\delta)\in \sigma$.
\end{lemma}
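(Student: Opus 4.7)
I split the argument into two cases based on the cardinality of $|\gamma\sd\delta|$. The case $\gamma=\delta$ is trivial, and otherwise $\alpha\neq\beta$ because $|\gamma\sd\delta|\leq|\alpha\sd\beta|$. If $|\gamma\sd\delta|<\aleph_0$, the conclusion is immediate from Lemma \ref{la340b} applied to the pair $(\alpha,\beta)\in\sigma\restr_{D_\kappa}$ with $\alpha\neq\beta$.

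Suppose therefore that $\nu:=|\gamma\sd\delta|\geq\aleph_0$, and set $\mu:=|\alpha\sd\beta|\geq\nu$. The plan is to use Lemma \ref{lem:YZ} to produce a concrete $\sigma$-related pair of idempotents whose symmetric difference has cardinality exactly $\nu$, and then to transport this pair to $(\gamma,\delta)$ via left and right multiplication. Specifically, pick disjoint $Y,Z\subseteq X$ with $|Y|=\kappa$ and $|Z|=\nu$; in the principal subcase $\nu\leq\kappa$ this is possible because $\kappa\leq|X|$ and $\kappa+\nu=\kappa$, and the subcase $\nu>\kappa$ is handled analogously by adjusting the cardinalities. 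Lemma \ref{lem:YZ} then gives $(\ep_{Y\cup Z},\ep_Y)\in\sigma$, with both partitions in $D_\kappa$ and symmetric difference of cardinality $3\nu=\nu$. I would then exhibit $\phi_1,\phi_2\in\M$ realising $\phi_1\ep_{Y\cup Z}\phi_2=\gamma$ and $\phi_1\ep_Y\phi_2=\delta$; compatibility of $\sigma$ yields $(\gamma,\delta)\in\sigma$.

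The principal obstacle is the final factorisation step: since $\nu\leq\kappa$ the partitions $\gamma,\delta$ share at least $\kappa$ common blocks, so the shared structure can in principle be aligned with $Y$ while the $\nu$-many differences are matched against the extra transversals of $\ep_{Y\cup Z}$ over $Z$, but carrying this out explicitly is delicate, especially doing so uniformly for $\M=\P_X$ and $\M=\PB_X$. In the latter, the requirement that $\phi_1,\phi_2\in\PB_X$ restricts all blocks to have size at most two and severely constrains the factorisation. A natural fallback mirrors the kernel/cokernel decoupling of Lemma \ref{la326a} \ref{it:326ii}: first reduce to the case $\ul\gamma=\ul\delta$ by a $\sigma$-step that modifies only the upper kernel, then dually reduce to $\ol\gamma=\ol\delta$, and combine via transitivity. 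Keeping all intermediate partitions inside $D_\kappa$ is the subtle bookkeeping that presumably necessitates the substantially different treatments for $\P_X$ and $\PB_X$ deferred to Section \ref{sect:tech}.
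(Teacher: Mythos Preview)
Your overall scaffolding is right: the trivial/finite cases reduce to Lemma~\ref{la340b}, and for infinite $|\gamma\sd\delta|$ the input is indeed the pair $(\ep_{Y\cup Z},\ep_Y)\in\sigma$ from Lemma~\ref{lem:YZ}. But the proposed execution has a genuine gap.

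The single-shot factorisation $\phi_1\ep_{Y\cup Z}\phi_2=\gamma$, $\phi_1\ep_Y\phi_2=\delta$ cannot work for general $\gamma,\delta\in D_\kappa$. Passing from $\ep_{Y\cup Z}$ to $\ep_Y$ only \emph{deletes} transversals; consequently $\phi_1\ep_Y\phi_2$ is always a refinement of $\phi_1\ep_{Y\cup Z}\phi_2$ in the partition order. So your factorisation would force $\delta\preceq\gamma$, which is not assumed. Your fallback of decoupling upper and lower kernels, imitating Lemma~\ref{la326a}\ref{it:326ii}, is also not the right reduction here: that trick works in $D_0$ because products of rank-$0$ elements freely separate kernels from cokernels, but in $D_\kappa$ such manipulations disturb rank and the transversal structure. (Also, the subcase ``$\nu>\kappa$'' is vacuous: once $\kappa\geq\eta$, Lemma~\ref{lem:tech1} gives $|\alpha\sd\beta|<\kappa$, hence $\nu<\kappa$.)

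The paper's fix is exactly to insert a refinement-comparable intermediate. For $\PB_X$ (Lemma~\ref{la-sw5}, following~\ref{la-sw4}) one uses the meet $\gamma\wedge\delta\in D_\kappa$ and proves $(\gamma,\gamma\wedge\delta)\in\sigma$ by sandwiching $\gamma$ itself between appropriate factors: $(\theta_1\ep_{W_1}\gamma\ep_{W_2}\theta_2,\ \theta_1\ep_{Y_1}\gamma\ep_{Y_2}\theta_2)=(\gamma,\gamma\wedge\delta)$, where $\ep_{Y_i}$ filters out exactly the blocks of $\gamma\setminus\delta$. For $\P_X$ (Lemma~\ref{la-hd5}) one goes via the join $\gamma\vee\delta$, but now the move from $\gamma$ to $\gamma\vee\delta$ requires \emph{merging} blocks, so the bare pair $(\ep_{Y\cup Z},\ep_Y)$ is not enough: one first upgrades it to $(\ep_{Y,Z},\omega_{Y,Z})$ and then $(\ep_{Y,\Z},\omega_{Y,\Z})$ in two further steps, before the factorisation through these yields the $\gamma\preceq\delta$ case. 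This refinement-based reduction, not a kernel/cokernel split, is the missing idea.
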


\begin{proof}
This is proved in Lemma \ref{la-sw5} for $\PB_X$, and in Lemma \ref{la-hd5} \ref{it:hd5iv} for $\P_X$.
\end{proof}

Next we show that certain values from $[0,\eta]$ are never in the image of $\Psi$.

\begin{lemma}
\label{la339}
For any $\kappa\in[\eta,|X|]$, the only possible finite value for $\ka^*$ is $1$.
\end{lemma}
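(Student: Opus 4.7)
The plan is to argue by contradiction: suppose $\kappa^{\ast}$ is finite but $\kappa^{\ast}\geq 2$, and then use Lemma \ref{la340b} to produce $\sigma$-related pairs in $D_{\kappa}$ with arbitrarily large finite symmetric difference, violating the definition of $\kappa^{\ast}$ as a strict upper bound.

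First I would note that $\kappa^{\ast}\neq 0$: indeed, since $\sigma$ is reflexive the set $\bigset{|\alpha\sd\beta|}{(\alpha,\beta)\in\sigma\restr_{D_\kappa}}$ is non-empty (it contains $0$), so $\kappa^{\ast}\geq 1$. So the only way the lemma can fail is if $2\leq \kappa^{\ast}<\aleph_0$. Under that assumption, by the definition of $\kappa^{\ast}=\LSUB(\cdots)$, there exists a pair $(\alpha,\beta)\in\sigma\restr_{D_\kappa}$ with $|\alpha\sd\beta|=\kappa^{\ast}-1\geq 1$. In particular $\alpha\neq\beta$, so the hypotheses of Lemma \ref{la340b} are met; that lemma then yields
\[
(\gamma,\delta)\in\sigma\qquad\text{for every } \gamma,\delta\in D_\kappa \text{ with } |\gamma\sd\delta|<\aleph_0.
\]

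Next I would exhibit, for every finite $m$, a pair $(\gamma_m,\delta_m)\in D_\kappa\times D_\kappa$ with $|\gamma_m\sd\delta_m|$ finite but at least $\kappa^{\ast}$. Since $\kappa\geq\eta\geq\aleph_0$ we may fix $Y\subseteq X$ with $|Y|=\kappa$ and pick $2m$ distinct elements $y_1,\dots,y_{2m}\in Y$. Let $\gamma_m=\epsilon_Y$, and let $\delta_m$ be the partition obtained from $\gamma_m$ by replacing each pair of blocks $\{y_{2i-1},y_{2i-1}'\}$ and $\{y_{2i},y_{2i}'\}$ with $\{y_{2i-1},y_{2i}'\}$ and $\{y_{2i},y_{2i-1}'\}$ (for $i=1,\dots,m$), leaving all other blocks $\{y,y'\}$ of $\gamma_m$ unchanged. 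Both $\gamma_m$ and $\delta_m$ lie in $D_\kappa$, both belong to $\PB_X$ (and therefore also to $\P_X$), and $|\gamma_m\sd\delta_m|=4m$.

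Combining the two steps, for $m$ large enough that $4m\geq \kappa^{\ast}$, the previous paragraph gives $(\gamma_m,\delta_m)\in\sigma\restr_{D_\kappa}$, and the symmetric difference $|\gamma_m\sd\delta_m|$ exceeds $\kappa^{\ast}-1$, contradicting the definition of $\kappa^{\ast}$ as the least strict upper bound of the set $\bigset{|\alpha\sd\beta|}{(\alpha,\beta)\in\sigma\restr_{D_\kappa}}$. Hence the assumption $\kappa^{\ast}\geq 2$ fails, so any finite value of $\kappa^{\ast}$ must equal $1$. There is no real obstacle in the argument: the only subtlety is making sure the witnesses $\gamma_m,\delta_m$ stay in $D_\kappa$ and in $\PB_X$, which is handled by modifying only finitely many transversals of $\epsilon_Y$.
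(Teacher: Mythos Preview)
Your proof is correct and follows essentially the same approach as the paper: both argue by contradiction, use the definition of $\kappa^*$ to find $(\alpha,\beta)\in\sigma\restr_{D_\kappa}$ with $\alpha\neq\beta$, and then invoke Lemma~\ref{la340b} to obtain pairs in $\sigma\restr_{D_\kappa}$ with finite symmetric difference at least $\kappa^*$. The only difference is that you explicitly construct the witnesses $\gamma_m,\delta_m$, whereas the paper simply asserts that such pairs exist.
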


\begin{proof}
That $\kappa^\ast\neq 0$ is clear from the definition, since $\si$ is reflexive.
Suppose $\kappa^\ast\in [2,\aleph_0)$, and write $m=\ka^*$.
By definition, there exist $\alpha,\beta\in D_\kappa$ such that $(\alpha,\beta)\in\sigma$ and ${|\alpha\sd\beta|=m-1\geq 1}$: i.e., $\al\not=\be$.
Let $\gamma,\delta\in D_\kappa$ be any two partitions with $m\leq|\gamma\sd\delta|<\aleph_0$.
By Lemma \ref{la340b},
we have $(\gamma,\delta)\in\sigma$, and this contradicts the definition of $\kappa^\ast=m$.
\end{proof}

The next result is immediate from the above definitions, Lemmas~\ref{la328b},~\ref{la338} and~\ref{la339}, and from \eqref{eq:xi} and \eqref{eq:eta}:

\begin{lemma}
\label{la341}
If $\eta=\eta(\si)\in[\aleph_0,|X|]$, then
\bit
\item 
$k\geq1$, $\ze_1,\ze_2,\eta_1,\ldots,\eta_k\in [\eta,|X|^+]$, $\xi_1,\ldots,\xi_k\in \{1\}\cup[\aleph_0,\eta]$, and 
\item 
$\xi_k<\dots<\xi_1\leq \eta<\eta_1<\dots<\eta_k=|X|^+$. \epfres
\eit
\end{lemma}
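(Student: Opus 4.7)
The plan is to verify each of the constraints by appealing directly to results already in place; no new construction is needed. First I would note that $k\geq 1$ is part of the definition recorded in \eqref{eq:xi}, which is legitimate because the domain $[\eta,|X|]$ of $\Psi$ is non-empty (as $\eta\leq|X|$), so $\im(\Psi)\neq\emptyset$. The enumeration in \eqref{eq:xi} also gives the strict chain $\xi_k<\cdots<\xi_1$ directly. For the $\eta_i$, the chain $\eta=\eta_0<\eta_1<\cdots<\eta_k=|X|^+$ is exactly \eqref{eq:eta}, which immediately yields $\eta<\eta_1<\cdots<\eta_k=|X|^+$, and hence all $\eta_i$ for $i\geq 1$ lie in $[\eta,|X|^+]$.

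Next I would turn to the permissible values of $\xi_1,\dots,\xi_k$. The inequality $\xi_1\leq\eta$ follows because $\xi_1=\Psi(\eta)=\eta^\ast$, and Lemma \ref{la338} asserts $\kappa^\ast\leq\eta$ for every $\kappa\in[\eta,|X|]$; combined with $\xi_k<\cdots<\xi_1$ this gives $\xi_i\leq\eta$ for all $i$. To show each $\xi_i\in\{1\}\cup[\aleph_0,\eta]$, I would invoke Lemma \ref{la339}, which says that the only finite value $\kappa^\ast$ can take is $1$; hence each image point $\xi_i$ of $\Psi$ is either $1$ or at least $\aleph_0$, and is at most $\eta$ by the above.

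Finally, for the parameters $\zeta_1,\zeta_2$ I would appeal to Lemma \ref{la328b}: since by hypothesis $\eta\geq\aleph_0>2$, this lemma gives $\zeta_1,\zeta_2\geq\eta$. The upper bound $\zeta_1,\zeta_2\leq|X|^+$ is automatic from their definition as least strict upper bounds of sets of cardinals that never exceed $|X|$, so $\zeta_1,\zeta_2\in[\eta,|X|^+]$ as required. Since each clause of the lemma reduces to a direct appeal to a prior result, I do not anticipate any obstacle; the only thing to be careful about is that the hypothesis $\eta\in[\aleph_0,|X|]$ guarantees both that $\eta>2$ (to activate Lemma \ref{la328b}) and that $[\eta,|X|]$ is non-empty (to guarantee $k\geq 1$), and both of these are immediate.
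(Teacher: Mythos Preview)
Your proposal is correct and follows essentially the same approach as the paper, which simply states that the result is immediate from the definitions, Lemmas \ref{la328b}, \ref{la338} and \ref{la339}, and equations \eqref{eq:xi} and \eqref{eq:eta}. You have carefully unpacked precisely these references, and your justifications for each constraint are accurate.
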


We are now ready to complete the last major step.

\begin{lemma}
\label{la344}
If $\eta=\eta(\sigma)\in[\aleph_0,|X|]$, then with the parameters as defined above, we have 
\[
{\sigma=(\lambda_{\zeta_1}^\eta\cap\rho_{\zeta_2}^\eta)\cup\mu_{\xi_1}^{\eta_1}\cup\dots\cup\mu_{\xi_k}^{\eta_k}}.
\]
\end{lemma}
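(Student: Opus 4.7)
The plan. Two structural decompositions reduce the task almost to a routine check. First, Lemma~\ref{la323a} gives $\sigma = \sigma\restr_{I_\eta} \cup \bigcup_{\kappa \in [\eta,|X|]} \sigma\restr_{D_\kappa}$, and Lemma~\ref{la327} identifies $\sigma\cap R_\eta$ (which differs from $\sigma\restr_{I_\eta}$ only by the diagonal) with $\lambda_{\zeta_1}^\eta\cap\rho_{\zeta_2}^\eta$. Hence $\sigma = (\lambda_{\zeta_1}^\eta\cap\rho_{\zeta_2}^\eta) \cup \bigcup_{\kappa\in[\eta,|X|]} \sigma\restr_{D_\kappa}$, and it suffices to show, up to diagonal inclusion, that $\bigcup_{\kappa\in[\eta,|X|]} \sigma\restr_{D_\kappa}$ coincides with $\mu_{\xi_1}^{\eta_1} \cup\cdots\cup\mu_{\xi_k}^{\eta_k}$.

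For the forward inclusion I would fix $\kappa\in[\eta,|X|]$ and choose the unique $i\in\{1,\ldots,k\}$ with $\kappa\in[\eta_{i-1},\eta_i)$. Because $\Psi$ is order-reversing (Lemma~\ref{la337}) and, by construction, $\eta_{i-1}$ is the minimal rank at which $\Psi$ takes the value $\xi_i$ while $\eta_i$ is the minimal rank at which it drops to $\xi_{i+1}$, one reads off $\kappa^\ast=\xi_i$. Any pair $(\alpha,\beta)\in\sigma\restr_{D_\kappa}$ then satisfies $|\alpha\sd\beta|<\xi_i$, and since $\kappa<\eta_i$ gives $D_\kappa\subseteq I_{\eta_i}$, such a pair lies in $\mu_{\xi_i}^{\eta_i}$.

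For the reverse inclusion, take a non-diagonal pair $(\alpha,\beta)\in\mu_{\xi_j}^{\eta_j}$, so $\alpha,\beta\in I_{\eta_j}$ and $|\alpha\sd\beta|<\xi_j$. If $\alpha,\beta\in I_\eta$ then $(\alpha,\beta)\in\lambda_{\zeta_1}^\eta\cap\rho_{\zeta_2}^\eta$ by Lemma~\ref{la318}, whose hypotheses ($\zeta_1,\zeta_2\geq\eta$, $\xi_j\leq\xi_1\leq\eta\leq\eta_j$) are supplied by Lemmas~\ref{la328b} and~\ref{la341}. Otherwise $\rank(\alpha)\geq\eta\geq\xi_j$, so Lemma~\ref{la316} forces $\rank(\alpha)=\rank(\beta)=\kappa$ for some common $\kappa\in[\eta,\eta_j)$. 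Picking $i$ with $\kappa\in[\eta_{i-1},\eta_i)$, strict monotonicity of the $\eta_{\ell}$'s yields $j\geq i$, hence $|\alpha\sd\beta|<\xi_j\leq\xi_i=\kappa^\ast$.

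The final and main step is to upgrade this numerical inequality to membership in $\sigma\restr_{D_\kappa}$. The LSUB definition of $\kappa^\ast$ immediately yields a pair $(\gamma,\delta)\in\sigma\restr_{D_\kappa}$ with $|\gamma\sd\delta|\geq|\alpha\sd\beta|$, since otherwise $|\alpha\sd\beta|$ would itself be a strict upper bound for the set defining $\kappa^\ast$, contradicting its minimality. Feeding this pair into the transfer Lemma~\ref{la340} delivers $(\alpha,\beta)\in\sigma$, closing the proof. The main obstacle is precisely this last step: all the genuine combinatorial substance is concentrated in Lemma~\ref{la340}, whose proof (deferred to Section~\ref{sect:tech}) proceeds by substantially different arguments for $\P_X$ and $\PB_X$.
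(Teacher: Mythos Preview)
Your proof is correct and follows essentially the same route as the paper's: decompose via Lemmas~\ref{la323a} and~\ref{la327}, then for each direction locate the relevant interval $[\eta_{i-1},\eta_i)$ containing $\kappa$, read off $\kappa^\ast=\xi_i$, and close with Lemma~\ref{la340} after extracting a witness pair from the LSUB definition. The only cosmetic differences are that the paper argues directly by inequalities (via Lemma~\ref{la32}\ref{it:32vii} and Lemma~\ref{la341}) where you invoke Lemma~\ref{la318}, and that your introductory reduction ``it suffices to show, up to diagonal inclusion, that $\bigcup_\kappa\sigma\restr_{D_\kappa}$ coincides with $\mu_{\xi_1}^{\eta_1}\cup\cdots\cup\mu_{\xi_k}^{\eta_k}$'' is slightly imprecise, since the $\mu_{\xi_j}^{\eta_j}$ contain non-diagonal pairs inside $I_\eta$; your actual case split (using Lemma~\ref{la318} to push such pairs into $\lambda_{\zeta_1}^\eta\cap\rho_{\zeta_2}^\eta$) handles this correctly.
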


\begin{proof}
Denote the relation on the right-hand side by $\tau$.  

\pfitem{($\subseteq$)}
First suppose $(\alpha,\beta)\in\sigma$, and write $\ka=\rank(\al)$.  If $\ka<\eta$, then Lemma \ref{la323a} gives $\rank(\beta)<\eta$ as well; together with Lemma \ref{la327}, it follows that $(\al,\be)\in\si\cap R_\eta=\lam_{\ze_1}^\eta\cap\rho_{\ze_2}^\eta\sub\tau$.
Now suppose $\kappa\geq \eta$, so that $\rank(\beta)=\kappa$ as well, by definition of $\eta$.
Let $1\leq i\leq k$ be such that $\ka\in[\eta_{i-1},\eta_i)$.  Then $\al,\be\in D_\ka\sub I_{\eta_i}$, and also $|\alpha\sd\beta|<\kappa^\ast=\xi_i$.  Thus, $(\alpha,\beta)\in\mu_{\xi_i}^{\eta_i}\subseteq\tau$.

\pfitem{($\supseteq$)}
Now suppose $(\alpha,\beta)\in\tau$.  If $(\alpha,\beta)\in\lambda_{\zeta_1}^\eta\cap\rho_{\zeta_2}^\eta$, then $(\alpha,\beta)\in \sigma$ by Lemma \ref{la327}.  So suppose instead that $(\alpha,\beta)\in \mu_{\xi_i}^{\eta_i}\setminus (\lambda_{\zeta_1}^\eta\cap\rho_{\zeta_2}^\eta)$ for some $1\leq i\leq k$.  
Since then $\al\not=\be$, we must have $\al,\be\in I_{\eta_i}$ and $|\al\sd\be|<\xi_i$.  By Lemmas \ref{la32} \ref{it:32vii} and \ref{la341}, we have
\[
|\ol\al\sd\ol\be| \leq |\al\sd\be| <\xi_i \leq\eta \leq \ze_1
\ANDSIM
|\ul\al\sd\ul\be|<\ze_2,
\]
so that $(\al,\be)\in\lam_{\ze_1}\cap\rho_{\ze_2}$.  Since $(\al,\be)\not\in\lam_{\ze_1}^\eta\cap\rho_{\ze_2}^\eta$, it follows (renaming $\al,\be$ if necessary) that ${\ka=\rank(\al)\geq\eta\geq\xi_i}$.  By Lemma~\ref{la316}, we have $\rank(\be)=\ka$ as well.  Since ${(\al,\be)\in\mu_{\xi_i}^{\eta_i}\sub R_{\eta_i}}$ and $\al\not=\be$, we have $\ka<\eta_i$, and so $\ka\in[\eta,\eta_i)=[\eta_0,\eta_i)$.  Let $1\leq j\leq i$ be such that $\ka\in[\eta_{j-1},\eta_j)$; it then follows that ${\kappa^\ast=\xi_j\geq \xi_i>|\al\sd\be|}$.  By definition of $\kappa^\ast$, it follows that there exists $(\gamma,\delta)\in \si\restr_{D_\kappa}$ with ${|\gamma\sd\delta|\geq|\al\sd\be|}$.  But then $(\alpha,\beta)\in\sigma$ by Lemma \ref{la340}.
\end{proof}

Again, we give a summary for convenience; it follows immediately from Lemmas~\ref{lem:nabla},~\ref{la341} and~\ref{la344}.

\begin{prop}\label{prop:CT3b}
Any non-universal congruence $\si$ on $\M$ with $\eta(\si)\geq\aleph_0$ is of type \ref{CT2}, as listed in Theorem \ref{thm-main}. \epfres
\end{prop}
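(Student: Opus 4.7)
The plan is to assemble Proposition \ref{prop:CT3b} directly from the three lemmas \ref{lem:nabla}, \ref{la341} and \ref{la344}, which between them have already extracted everything we need about a congruence $\si$ with $\eta(\si)\geq\aleph_0$. First I would invoke Lemma \ref{lem:nabla} to rule out $\eta(\si)=|X|^+$: since $\si$ is assumed non-universal, this lemma forces $\eta(\si)\in[\aleph_0,|X|]$, which is precisely the range under which Lemmas \ref{la341} and \ref{la344} operate.

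Next I would read off from the analysis in Subsection \ref{subsect:infinite_eta} the parameters associated to $\si$, namely $\eta=\eta(\si)$, the cardinals $\ze_1=\ze_1(\si)$ and $\ze_2=\ze_2(\si)$ from Subsection \ref{subsect:zeta}, together with $k=k(\si)$, the $\xi_i=\xi_i(\si)$ and the $\eta_i=\eta_i(\si)$ extracted from the map $\Psi$, remembering that $\eta_0=\eta$ (this identification comes from the definition of $\eta_0$ plus $\eta^\ast=\xi_1$). Lemma \ref{la341} then guarantees that these cardinals satisfy exactly the inequalities and membership conditions listed under \ref{CT2} in the statement of Theorem \ref{thm-main}: namely $k\geq 1$, $\ze_1,\ze_2,\eta_1,\dots,\eta_k\in[\eta,|X|^+]$, $\xi_1,\dots,\xi_k\in\{1\}\cup[\aleph_0,\eta]$, and $\xi_k<\cdots<\xi_1\leq\eta<\eta_1<\cdots<\eta_k=|X|^+$.

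Finally, Lemma \ref{la344} gives the explicit equality
\[
\si=(\lam_{\ze_1}^\eta\cap\rho_{\ze_2}^\eta)\cup\mu_{\xi_1}^{\eta_1}\cup\cdots\cup\mu_{\xi_k}^{\eta_k},
\]
which is literally the shape of a type \ref{CT2} congruence. Combining this equality with the parameter constraints supplied by Lemma \ref{la341} yields the conclusion. I do not anticipate any genuine obstacle here; all the substantive work, particularly the technical inputs via Lemmas \ref{lem:YZ}, \ref{la340b} and \ref{la340} which were deferred to Section \ref{sect:tech}, has already been absorbed into the three cited lemmas, so Proposition \ref{prop:CT3b} is essentially a bookkeeping corollary. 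The only point to double-check is that the case $\eta(\si)=\aleph_0$ is not accidentally excluded by any of the three lemmas, but inspection shows each one is stated for $\eta\in[\aleph_0,|X|]$, so this case is in fact the base case rather than an edge case.
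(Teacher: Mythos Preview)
Your proposal is correct and matches the paper's own proof exactly: the paper simply states that the proposition follows from Lemmas \ref{lem:nabla}, \ref{la341} and \ref{la344}, which is precisely the three-step argument you outline.
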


Save for the lemmas whose proofs have been deferred to the next section, this completes the proof of Theorem \ref{thm-main}.

\section{Technical lemmas}\label{sect:tech}

A number of lemmas from Section \ref{sect:stage2} are as yet unproved, and the goal of this section is to provide the proofs.  The lemmas in question naturally fall into three categories:
\bit
\item Lemma \ref{lem:YZ} concerns equivalence of partitions of the form $\ep_Y$ and $\ep_{Y\cup Z}$ under certain conditions, and is proved in Subsection \ref{subsect:YZ} for both monoids.
\item Lemmas \ref{la328a} and \ref{la328c} concern restrictions of congruences to the bottom $\D$-class; they are proved in Subsection \ref{subsect:PBX1} for $\PB_X$ and Subsection \ref{subsect:PX1} for $\P_X$.
\item Lemmas \ref{la340b} and \ref{la340} concern restrictions of congruences to $\D$-classes at or above~$D_\eta$, where $\eta=\eta(\si)$ is infinite; they are proved in Subsection \ref{subsect:PBX2} for $\PB_X$ and Subsection \ref{subsect:PX2} for $\P_X$.  
\eit
Subsection \ref{subsect:part_lat} contains some preliminary discussion on meets, joins and refinement of partitions, which will be relevant to the calculations of Subsections \ref{subsect:PBX1}--\ref{subsect:PX2}.  

Before we begin, we prove a lemma about infinite graphs that will be used in Subsections~\ref{subsect:YZ} and \ref{subsect:PBX1}.  Recall that an \emph{independent set} in a graph $\Ga$ is a subset $A$ of the vertex set of~$\Ga$ such that there are no edges between the vertices from $A$.  

\begin{lemma}
\label{la-tt4}
Let $\Ga$ be a (simple, undirected) graph with $\kappa\geq\aleph_0$ vertices in which every vertex has finite degree.  Then $\Ga$ contains an independent set of size $\kappa$.
\end{lemma}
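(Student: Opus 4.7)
The plan is to build the independent set by transfinite recursion, adding one vertex at a time while ruling out only a ``small'' set of forbidden vertices at each stage.

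First, I would fix a well-ordering of the vertex set as $V(\Gamma) = \{v_\alpha : \alpha < \kappa\}$, and then recursively construct distinct vertices $w_\alpha$ ($\alpha < \kappa$) so that $S_\alpha := \{w_\beta : \beta < \alpha\}$ is independent for every $\alpha$. At stage $\alpha$, having chosen $S_\alpha$, let $F_\alpha$ be the union of $S_\alpha$ together with all vertices adjacent to some element of $S_\alpha$. Since every vertex of $\Gamma$ has finite degree, $|F_\alpha| \leq |\alpha| \cdot \aleph_0$. Once $w_\alpha \in V(\Gamma) \setminus F_\alpha$ has been selected, it is automatically distinct from, and non-adjacent to, every previously chosen $w_\beta$, so $S_\alpha \cup \{w_\alpha\}$ remains independent.

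The key cardinal-arithmetic observation needed to keep the recursion alive is that $|F_\alpha| < \kappa$ at every stage. Indeed, if $\alpha$ is finite then $F_\alpha$ is a finite union of finite sets; if $\alpha$ is infinite then the inequality $\alpha < \kappa$ forces $\kappa > \aleph_0$, and consequently $|F_\alpha| \leq |\alpha| \cdot \aleph_0 = |\alpha| < \kappa$. Because $|V(\Gamma)| = \kappa$, it follows that $V(\Gamma) \setminus F_\alpha$ is nonempty (in fact of size $\kappa$), so the choice of $w_\alpha$ can be made --- for concreteness, as the $v_\gamma$ with least index $\gamma$ lying outside $F_\alpha$. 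The resulting set $\{w_\alpha : \alpha < \kappa\}$ is then an independent set of cardinality $\kappa$, as required.

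There is no genuine obstacle in the proof: the construction is a standard greedy transfinite recursion, and the only point that really needs attention is the cardinal estimate showing that the ``forbidden'' neighbourhood at each stage has cardinality strictly less than $\kappa$, so that a new vertex is always available.
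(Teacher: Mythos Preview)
Your proof is correct. The transfinite greedy recursion works exactly as you describe: the crucial point is that at stage $\alpha$ the forbidden set $F_\alpha$ has cardinality at most $|\alpha|\cdot\aleph_0$, which is strictly less than $\kappa$ in both the finite and infinite cases (the latter because $\kappa$, being a cardinal, admits no infinite ordinal $\alpha<\kappa$ unless $\kappa>\aleph_0$), so a fresh vertex is always available.

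The paper takes a different, more structural route. It first observes that the finite-degree hypothesis forces every connected component of $\Gamma$ to be countable. If there are $\kappa$ components, one simply picks a single vertex from each. Otherwise a short counting argument gives $\kappa=\aleph_0$, and at least one component $C$ is infinite; an ordinary (finitary) recursion inside $C$ then produces the required countably infinite independent set, since at each finite stage only finitely many vertices are forbidden. Your argument is more uniform --- a single transfinite construction with no case split on the component structure --- while the paper's version avoids transfinite recursion entirely, reducing everything to either a choice function on components or a simple $\omega$-length induction. Both are standard and equally acceptable here.
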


\begin{proof}
The finite degree assumption implies that all connected components of $\Ga$ have cardinality at most $\aleph_0$.
If $\Ga$ has $\kappa$ connected components then picking one representative from each component yields the desired independent set.
Otherwise we have $\kappa=\aleph_0$, and at least one connected component, say $C$, is infinite.
Define a sequence of elements $c_1,c_2,c_3,\ldots\in C$ recursively as follows.
First, let $c_1\in C$ be arbitrary.  Now suppose $k\geq1$ and that we have already defined $c_1,\ldots, c_k\in C$ so that $\{c_1,\ldots,c_k\}$ is an independent set.  Let $N$ be the set consisting of $c_1,\ldots,c_k$ and all neighbours of these vertices; since $N$ is finite, we may pick any $c_{k+1}\in C\sm N$.  Clearly $\{c_1,c_2,c_3,\ldots\}$ is an independent set of size $\aleph_0=\kappa$ in $\Ga$.
\end{proof}

\subsection{Lemma \ref{lem:YZ}}\label{subsect:YZ}

Throughout this subsection, $\M$ denotes either $\PB_X$ or $\P_X$, where $X$ is an infinite set, and $\si$ is an arbitrary congruence on $\M$.
Our main goal here is to prove Lemma \ref{lem:YZ}, which gives conditions for $\si$ to contain a pair of the form $(\ep_{Y\cup Z},\ep_Y)$, where $Y$ and $Z$ are subsets of $X$ with certain prescribed sizes.  This will be achieved in Lemma \ref{lem:tech3}, after a series of preliminary lemmas, some of which will also be of use in subsequent subsections.

We begin with a lemma about products of the form $\al\be\ga$; it concerns a certain scenario in which the upper and lower parts of a two-element transversal of $\al\be\ga$ are upper and lower parts of transversals of $\al$ and $\ga$, respectively.

\begin{lemma}\label{lem:trans1}
Suppose $\al,\be,\ga\in\M$ are such that every lower non-transversal of~$\al$ and every upper non-transversal of $\ga$ are singletons.  Suppose also that $\al$ and $\ga$ contain the transversals $\{x,y'\}$ and $\{u,v'\}$, respectively.
If $\{x,v'\}$ is a transversal of $\al\be\ga$, then there exists a transversal $A\cup B'$ of $\be$ such that ${A\cap\codom(\al)=\{y\}}$ and $B\cap\dom(\ga)=\{u\}$.
\end{lemma}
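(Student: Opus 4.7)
The plan is to work with the three-level product graph $\Pi=\Pi(\al,\be,\ga)$, obtained by stacking the diagrams of $\al$, $\be$, $\ga$ so that its vertex set $X\cup X''\cup X'''\cup X'$ is arranged in four rows (top, middle-1, middle-2, bottom); the blocks of $\al\be\ga$ are the non-empty intersections of the connected components of $\Pi$ with $X\cup X'$. Let $C$ be the component containing $x$. The hypothesis that $\{x,v'\}$ is a transversal of $\al\be\ga$ immediately gives $C\cap X=\{x\}$ and $C\cap X'=\{v'\}$.

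First I would record the consequences of the hypotheses on $\al$ and $\ga$. Since every lower non-transversal of $\al$ is a singleton, the only $\al$-edges with both endpoints in middle-1 come from $\al$-transversals; dually for $\ga$ and middle-2. Moreover, any $\al$-transversal $A_i\cup B_i'$ that lies inside $C$ must have $A_i\sub C\cap X=\{x\}$, and so $A_i=\{x\}$; by disjointness of blocks this transversal coincides with $\{x,y'\}$. Thus inside $C$ the only edge between the top row and middle-1 joins $x$ to $y$. Dually, the only edge inside $C$ between middle-2 and the bottom row joins $u$ to $v'$.

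The rest is a pure connectivity argument. Any path from $y$ to $u$ inside $C$ enters the top row only through the edge $x$--$y$ (and then bounces back to $y$) and the bottom row only through $u$--$v'$; discarding such trivial detours, the path uses only $\be$-edges (middle-1 edges from upper non-transversals of $\be$, middle-2 edges from lower non-transversals of $\be$, and middle-1 to middle-2 edges from $\be$-transversals). Hence $y$ and $u$ lie in the same block of $\be$. Since $y$ sits in middle-1 and $u$ in middle-2, this block must be a transversal $A\cup B'$ with $y\in A$ and $u\in B$.

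Finally, $A\cap\codom(\al)=\{y\}$: for any $a\in A\cap\codom(\al)$, the $\al$-transversal containing $a$ lies in $C$, and by the earlier observation equals $\{x,y'\}$, whence $a=y$. The equality $B\cap\dom(\ga)=\{u\}$ is dual. The main conceptual hurdle is setting up the four-row product graph cleanly and making the ``inside $C$'' phrasing precise for each type of edge; once that is done, the singleton-non-transversal hypotheses on $\al$ and $\ga$ isolate $y$ and $u$ as the unique bridge vertices through which $C$ can meet the top and bottom rows, and the argument runs uniformly for $\M=\P_X$ and $\M=\PB_X$.
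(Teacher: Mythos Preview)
Your proof is correct and essentially follows the same approach as the paper's. The only difference is organizational: the paper computes $\al\be$ first (showing that the block of $\al\be$ containing $x$ is $\{x\}\cup B'$ for the $\be$-block $A\cup B'$ containing $y$) and then applies a dual argument to the product $(\al\be)\ga$, whereas you set up the four-row product graph for $\al\be\ga$ at once and argue directly about the component $C$. Both arguments hinge on the same observation, namely that the singleton-non-transversal hypotheses force the unique bridges $x$--$y$ and $u$--$v'$ inside $C$.
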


\pf
Let the block of $\be$ containing $y$ be $A\cup B'$, where $B\sub X$ is possibly empty.  If any transversal $C\cup D'\not=\{x,y'\}$ of $\al$ satisfied $D\cap A\not=\emptyset$, then $C\cup\{x\}$ would be contained in a block of $\al\be$, and hence also of $(\al\be)\ga$, a contradiction.  It follows that $A\cap\codom(\al)=\{y\}$, and so also (by the assumption on lower non-transversals of $\al$) that $\{z'\}$ is a block of $\al$ for all $z\in A\sm\{y\}$.  Thus, $\{x\}\cup A''\cup B'$ is a connected component of the product graph $\Pi(\al,\be)$, and so $B\not=\emptyset$ (or else $\{x\}$ would be a block of $\al\be$, and hence also $(\al\be)\ga$, a contradiction).  

To summarise the previous paragraph:  the block of $\al\be$ containing $x$ is of the form $\{x\}\cup B'$ for some transversal $A\cup B'$ of $\be$ with $A\cap\codom(\alpha)=\{y\}$.  By a dual argument applied to the product $(\al\be)\ga$, the block of $\al\be\ga$ containing $v'$ is of the form $E\cup\{v'\}$ for some transversal $E\cup F'$ of $\al\be$ with $F\cap\dom(\ga)=\{u\}$.  But the block of $\al\be\ga$ containing $v'$ is $\{x,v'\}$, so we must have $E=\{x\}$, so that $\{x\}\cup F'$ is a transversal of $\al\be$.
Since we have already seen that $\{x\}\cup B'$ is a transversal of $\al\be$, it follows that $B=F$, and so $B\cap\dom(\ga)=F\cap\dom(\ga)=\{u\}$.
\epf

Recall that we wish to prove that under certain conditions,  $\si$ contains pairs of the form $(\epsilon_{Y\cup Z},\epsilon_Y)$.  We begin building towards this by first considering the special (and technical) cases where $\si$ is known to contain a pair of the form $(\epsilon_Y,\al)$ or $(\epsilon_{Y\cup Z},\al)$.

\begin{lemma}\label{lem:tech01}
If $\ka\geq\aleph_0$ and $(\ep_Y,\al)\in\si$, where $|Y|=\ka$ and $\al$ contains fewer than $\ka$ of the transversals of $\ep_Y$, then we have $(\ep_Y,\emptypart)\in\si$.
\end{lemma}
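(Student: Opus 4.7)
Let $Y_0 = \{y\in Y : \{y,y'\}$ is a block of $\al\}$, so by hypothesis $|Y_0|<\ka$, and hence $Y_1 := Y\setminus Y_0$ satisfies $|Y_1|=\ka$. The plan is first to produce a pair $(\ep_{Y_2},\al_0)\in\si$ with $|Y_2|=\ka$ and $\al_0\in D_0$, then collapse this to $(\ep_{Y_2},\emptypart)\in\si$ by multiplications on both sides by $\emptypart$, and finally transfer back to $\ep_Y$.

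The collapse step runs cleanly. Given any $(\ep_{Y_2},\al_0)\in\si$ with $\al_0\in D_0$, multiplying by $\emptypart$ on the right and on the left respectively yields $(\emptypart,\al_0\emptypart)\in\si$ and $(\emptypart,\emptypart\al_0)\in\si$, using $\ep_{Y_2}\emptypart=\emptypart\ep_{Y_2}=\emptypart$. A short product-graph computation shows $\al_0\emptypart$ is the rank-$0$ partition whose upper blocks are those of $\al_0$ and whose lower vertices are all singletons, and dually for $\emptypart\al_0$. Multiplying these two $\si$-pairs then gives $(\emptypart,\al_0\emptypart\cdot\emptypart\al_0)=(\emptypart,\widehat{\al_0})=(\emptypart,\al_0)\in\si$, using the identity $\widehat{\al_0}=\al_0$ for $\al_0\in D_0$. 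Transitivity then yields $(\ep_{Y_2},\emptypart)\in\si$.

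The harder step is producing the initial pair with $\al_0\in D_0$. By Lemma~\ref{la-ty4}, it suffices to produce $(\ep_{Y_2},\al')\in\si$ with $|Y_2|=\ka$ and $\rank(\al')<\ka$. I would construct this via a ``shifting'' partition $\be$ of rank $\ka$, with transversals $\{y,\pi(y)'\}$ for $y$ in an appropriately chosen $Y_2\sub Y_1$ and singletons elsewhere, where $\pi$ is a fixed-point-free involution on $Y_2$ (available since $|Y_2|=\ka\geq\aleph_0$). A product-graph computation gives $\be\ep_Y\be=\ep_{Y_2}$ (using $\pi^2=\id$), so $(\ep_{Y_2},\be\al\be)\in\si$. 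The transversals of $\be\al\be$ correspond, up to $\pi$-relabelling, to blocks of $\al$ meeting both $Y_2$ in $X$ and the subset $\{y':y\in Y_2\}$ in $X'$; by choosing $Y_2$ carefully, using the hypothesis to control the ``diagonal'' blocks and adapting to the off-diagonal structure of $\al$, one arranges $\rank(\be\al\be)<\ka$. Having obtained $(\ep_{Y_2},\emptypart)\in\si$, the relation $(\ep_Y,\emptypart)\in\si$ follows by conjugating with a permutation of $X$ sending $Y_2$ to $Y$, which exists as soon as $|X\setminus Y_2|=|X\setminus Y|$; the few edge cases (e.g.\ $Y=X$ with $Y_0\neq\emptyset$) can be handled by a direct multiplicative argument leveraging the already-derived $D_0$-pairs. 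The main obstacle is the rank reduction: the hypothesis constrains only the ``diagonal'' transversals $\{y,y'\}$ of $\al$, whereas $\al$ may have arbitrarily many off-diagonal transversals $\{y_1,y_2'\}$ with $y_1\neq y_2$ from $Y$, so forcing $\rank(\be\al\be)<\ka$ after the $\pi$-twist requires a delicate combinatorial argument on the product graph, and is likely where the proofs for $\PB_X$ and $\P_X$ diverge.
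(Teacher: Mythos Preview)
Your proposal has a genuine gap at the decisive step: you assert that ``by choosing $Y_2$ carefully \ldots\ one arranges $\rank(\be\al\be)<\ka$'', but give no argument, and in fact this cannot be done in general with your setup.  Take $\M=\P_X$, let $X=Y\cup W$ with $|Y|=|W|=\ka$ disjoint, and let $\al$ have blocks $\{y,w_y,y'\}$ for each $y\in Y$ (where $y\mapsto w_y$ is a bijection $Y\to W$), with all $w'$ singletons.  Then $\al$ has \emph{no} transversals of $\ep_Y$, so the hypothesis holds and $Y_1=Y$.  For any $Y_2\sub Y$ of size $\ka$ and any involution $\pi$ on $Y_2$, a product-graph computation gives $\be\al\be=\ep_{Y_2}$: from $y\in Y_2$ one reaches $\pi(y)''$, then the $\al$-block $\{\pi(y),w_{\pi(y)},\pi(y)'\}$, then back down through $\pi(y)$ to $y'$.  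So $\rank(\be\al\be)=\ka$ always.  The restriction to pre- and post-multiplying by the \emph{same} element $\be$ is what kills you here.

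The paper proceeds differently.  It first localises by setting $\al_1=\ep_Y\al\ep_Y$ (so all transversals of $\al_1$ lie in $Y\cup Y'$) and then does a four-way case split on the transversal structure of $\al_1$.  The crucial idea in the non-trivial cases is the \emph{opposite} of yours: rather than pushing the $\al$-side down to low rank, one exploits the off-diagonal structure of $\al$ (or $\al_1$) to find distinct $\th_1,\th_2$ with $\th_1\al\th_2=\ep_Y$ but $\th_1\ep_Y\th_2=\emptypart$.  For instance, in the counterexample above one takes $\th_1=\binom{y}{w_y}_{y\in Y}$ and $\th_2=\ep_Y$: then $\th_1\al\th_2=\ep_Y$ (each $w_y$ routes through $\al$'s block back to $y'$), while $\th_1\ep_Y\th_2=\emptypart$ since $w_y\notin Y$.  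The off-diagonal case (transversals $\{u,v'\}$ with $u\neq v$) uses the graph-theoretic independence lemma (Lemma~\ref{la-tt4}) to separate the $u$'s from the $v$'s.  Incidentally, this lemma is one of the places where the proof is uniform for $\P_X$ and $\PB_X$; the divergence you anticipate does not occur here.

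Two smaller points.  Your ``collapse'' step is correct but roundabout: once you have $(\ep_{Y_2},\al_0)\in\si$ with $\rank(\al_0)<\ka$, Lemma~\ref{la324} directly gives $(\ep_{Y_2},\emptypart)=(\ep_{Y_2},\wh{\ep_{Y_2}})\in\si$.  And the ``transfer back'' via a permutation of $X$ is unnecessary: take $\th_1=\binom{y}{\psi(y)}_{y\in Y}$ and $\th_2=\binom{\psi(y)}{y}_{y\in Y}$ for any bijection $\psi:Y\to Y_2$; then $\th_1\ep_{Y_2}\th_2=\ep_Y$ and $\th_1\emptypart\th_2=\emptypart$, with no constraint on complements.
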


\pf
Let $\al_1=\ep_Y\al\ep_Y$, noting that $(\ep_Y,\al_1)=(\ep_Y\ep_Y\ep_Y,\ep_Y\al\ep_Y)\in\si$, and that every transversal of $\al_1$ is contained in $Y\cup Y'$.

\pfcase{1}
If $\al_1$ has fewer than $\ka=|Y|$ transversals, then Lemma \ref{la324} (the proof of which did not rely on any of the technical lemmas proved in this section) gives ${(\ep_Y,\emptypart)=(\ep_Y,\wh\ep_Y)\in\si}$.

\pfcase{2}
Next, suppose $\al_1$ has $\ka$ transversals $\bigset{\{u_i,u_i'\}}{i\in I}$.  For each $i\in I$, there is a transversal $A_i\cup B_i'$ of $\al$ such that $A_i\cap Y=\{u_i\}=B_i\cap Y$.  By the assumption on the transversals of $\al$, some $\ka$ of these transversals $A_i\cup B_i'$ have size at least $3$.  By symmetry, we may assume there is a subset $J\sub I$ of size $\ka$ such that $|A_j|\geq2$ for all $j\in J$.  For each $j\in J$, fix some ${v_j\in A_j\sm\{u_j\}}$; note that $v_j\in X\sm Y$ for each $j$.  Also, since ${|Y|=\ka=|J|}$, we may write ${Y=\set{y_j}{j\in J}}$.  Then with $\th_1=\binom{y_j}{v_j}_{j\in J}$ and $\th_2=\binom{u_j}{y_j}_{j\in J}$, we have ${(\ep_Y,\emptypart)=(\th_1\al\th_2,\th_1\ep_Y\th_2)\in\si}$.

\pfcase{3}
Next, suppose $\al_1$ contains a set of $\ka$ transversals $\TT = \set{T_i}{i\in I}$, where $T_i=\{u_i,v_i'\}$ with $u_i\not=v_i$ (and $u_i,v_i\in Y$) for each $i$.  Define a graph $\Ga$ with vertex set $\TT$, and with an edge between distinct $T_i$ and $T_j$ if some transversal of $\ep_Y$ has non-trivial intersection with both $T_i$ and $T_j$ (meaning that $u_i=v_j$ or $u_j=v_i$).  Then $\Ga$ has $\ka$ vertices, each of degree at most $2$.  Lemma \ref{la-tt4} guarantees the existence of an independent set in $\Ga$ of size $\ka$, say $\set{T_j}{j\in J}$, where $J\sub I$.  Note that the independence condition says that the sets $\set{u_j}{j\in J}$ and $\set{v_j}{j\in J}$ are disjoint.  Since $|Y|=\ka=|J|$, we may write $Y=\set{y_j}{j\in J}$.  Then with $\th_1=\binom{y_j}{u_j}_{j\in J}$ and $\th_2=\binom{v_j}{y_j}_{j\in J}$, we have $(\ep_Y,\emptypart)=(\th_1\al_1\th_2,\th_1\ep_Y\th_2)\in\si$.

\pfcase{4}
Finally, suppose $\al_1$ contains a set of $\ka$ transversals, each with at least three elements, say $\set{A_i\cup B_i'}{i\in I}$.  Then for each $i$, we may fix some $a_i\in A_i$ and $b_i\in B_i$ with $a_i\not=b_i$.  Since $|Y|=\ka=|I|$, we may write $Y=\set{y_i}{i\in I}$.  Let $\th_3=\binom{y_i}{a_i}_{i\in I}$ and $\th_4=\binom{b_i}{y_i}_{i\in I}$, and put $\al_2=\th_3\ep_Y\th_4$.  Since $\th_3\al_1\th_4=\ep_Y$, it follows that $(\ep_Y,\al_2)\in\si$.  
Since any transversal of $\al_2$ is of the form $\{y_i,y_j'\}$ for distinct $i,j\in I$, $\al_2$ satisfies the assumptions of either Case 1 or Case 3, and so we are done.
\epf

For the proof of the next lemma, it is convenient to introduce some extra terminology.  
Suppose~${X=Y\cup W}$ where $Y$ and $W$ are disjoint.  For $\al\in\MY$ and $\be\in\MW$, the union of $\al$ and~$\be$ belongs to $\M$; we will denote it by $\al\sqcup\be$.  The set $\set{\al\sqcup\be}{\al\in\MY,\ \be\in\MW}$ of all partitions created in this way is a submonoid of $\M$ and is isomorphic to the direct product of~$\MY$ and~$\MW$.  
This has an important consequence for congruences on $\M$.  Suppose we knew that $(\al,\be)\in\si$ (where as usual $\si$ is a congruence on $\M$), and that $\al=\th\sqcup\al_1$ and $\be=\th\sqcup\be_1$, for some $\th\in\MY$ and $\al_1,\be_1\in\MW$.  Now let $\tau$ be the congruence on $\MW$ generated by the pair $(\al_1,\be_1)$.  Then for any $(\al_2,\be_2)\in\tau$, we have $(\th\sqcup\al_2,\th\sqcup\be_2)\in\si$.

\begin{lemma}\label{lem:tech08}
If $\aleph_0\leq\xi\leq\ka$ and $(\ep_{Y\cup Z},\al)\in\si$, where $Y$ and $Z$ are disjoint subsets of $X$ with $|Y|=\ka$ and $|Z|=\xi$, and if $\al$ contains the transversals of $\ep_Y$
but fewer than $\xi$ of the transversals of $\ep_Z$, then we have $(\ep_{Y\cup Z},\ep_Y)\in\si$.
\end{lemma}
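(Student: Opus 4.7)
The plan is to exploit the direct product decomposition associated with the splitting $X = Y \sqcup W$, where $W = X \sm Y$, together with Lemma \ref{lem:tech01} applied inside $\M_W$. Since $\al$ contains every transversal $\{y,y'\}$ of $\ep_Y$, each $y$ and each $y'$ is already saturated in $\al$, so I can decompose $\al = \ep_Y \sqcup \al'$ for a unique $\al' \in \M_W$; similarly $\ep_{Y\cup Z} = \ep_Y \sqcup \ep_Z$, where $\ep_Z$ is interpreted in $\M_W$ (with $Z\sub W$). Note that $W$ is infinite since $|W|\geq|Z|=\xi\geq\aleph_0$, and that $\al'\in\M_W$ belongs to the appropriate type ($\P_W$ or $\PB_W$) because $\al$ does.

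Next I would invoke the general observation stated just before the lemma: the pair $(\ep_{Y\cup Z},\al) = (\ep_Y\sqcup\ep_Z,\ep_Y\sqcup\al')\in\si$ allows one to define the congruence $\tau$ on $\M_W$ generated by $(\ep_Z,\al')$, and then $(\be_1,\be_2)\in\tau$ will imply $(\ep_Y\sqcup\be_1,\ep_Y\sqcup\be_2)\in\si$. The hypothesis that $\al$ contains fewer than $\xi$ of the transversals of $\ep_Z$ translates directly to the statement that $\al'\in\M_W$ contains fewer than $\xi$ of the transversals of $\ep_Z$, since those particular blocks of $\al$ lie entirely in $W\cup W'$ and are exactly the corresponding transversals of $\al'$.

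Now I would apply Lemma \ref{lem:tech01} to the congruence $\tau$ on $\M_W$, taking the roles of ``$\si$'', ``$Y$'' and ``$\ka$'' in that lemma to be played by $\tau$, $Z$ and $\xi$ respectively. The hypotheses are exactly those just verified, and the lemma yields $(\ep_Z,\emptypart_W)\in\tau$, where $\emptypart_W$ is the all-singletons partition on $W\cup W'$. Pushing this back through the observation gives $(\ep_Y\sqcup\ep_Z,\ep_Y\sqcup\emptypart_W)\in\si$, and since $\ep_Y\sqcup\ep_Z=\ep_{Y\cup Z}$ and $\ep_Y\sqcup\emptypart_W=\ep_Y$, the desired conclusion $(\ep_{Y\cup Z},\ep_Y)\in\si$ follows.

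I do not anticipate any substantial obstacle: the whole argument is a clean reduction to Lemma \ref{lem:tech01} inside the submonoid $\M_W$. The only points that require a moment of care are (a) checking that $\al=\ep_Y\sqcup\al'$ really is a direct product decomposition (which uses that the $\{y,y'\}$ exhaust $Y\cup Y'$), (b) transferring the ``fewer than $\xi$'' hypothesis from $\al$ to $\al'$, and (c) confirming that $W$ is large enough and of the correct ``type'' so that Lemma \ref{lem:tech01} is genuinely available for the congruence $\tau$ on $\M_W$.
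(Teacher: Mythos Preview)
Your proposal is correct and follows essentially the same approach as the paper: decompose along $X=Y\sqcup W$, write $\al=\ep_Y\sqcup\al'$ and $\ep_{Y\cup Z}=\ep_Y\sqcup\ep_Z$, let $\tau$ be the congruence on $\M_W$ generated by $(\ep_Z,\al')$, apply Lemma~\ref{lem:tech01} inside $\M_W$ to obtain $(\ep_Z,\emptypart)\in\tau$, and transfer back via the direct-product observation to conclude $(\ep_{Y\cup Z},\ep_Y)\in\si$. The care points you flag are exactly those the paper implicitly handles.
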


\pf
Write $W=X\sm Y$.  
During this proof, in any expression $\be\sqcup\ga$, it is assumed that $\be\in\MY$ and $\ga\in\MW$; thus, for example, if we write $\ep_Y=\ep_Y\sqcup\emptypart$, the ``$\ep_Y$'' on the left is the usual element of $\M$, but the one on the right is the corresponding element of~$\MY$ (indeed the identity of~$\MY$), and ``$\emptypart$'' denotes the element of $\MW$ all of whose blocks are singletons.

Beginning the proof now, note that by the form of $\al$, we have $\al=\ep_Y\sqcup\al_1$ for some ${\al_1\in\MW}$.  Also $\ep_{Y\cup Z}=\ep_Y\sqcup\ep_Z$, so it follows that ${(\ep_Y\sqcup\ep_Z,\ep_Y\sqcup\al_1)\in\si}$.  Let~$\tau$ be the congruence on $\MW$ generated by $(\ep_Z,\al_1)$.  By Lemma \ref{lem:tech01} (in the monoid~$\MW$), we have $(\ep_Z,\emptypart)\in\tau$.  As noted before the statement of the lemma, it follows that~$\si$ contains ${(\ep_Y\sqcup\ep_Z,\ep_Y\sqcup\emptypart)=(\ep_{Y\cup Z},\ep_Y)}$.
\epf

\begin{lemma}\label{lem:tech02}
If $\aleph_0\leq\xi\leq\ka$ and $(\ep_Y,\al)\in\si$, where $|Y|=\ka$, $\al$ contains the transversals of $\ep_Y$, and the union of the non-singleton non-transversals of $\al$ has size at least $\xi$, then for any subset $Z\sub X\sm Y$ of size $\xi$, we have ${(\ep_{Y\cup Z},\ep_Y)\in\si}$.
\end{lemma}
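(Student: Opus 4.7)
My plan is to use compatibility of $\si$ applied to the given pair $(\ep_Y,\al)$ together with carefully constructed partitions $\ga_1,\ga_2\in\M$, and then conjugation by a permutation of $X$, to deduce $(\ep_{Y\cup Z},\ep_Y)\in\si$. By duality I would first assume the union of \emph{upper} non-singleton non-transversals of $\al$ has cardinality at least $\xi$, and select $\xi$ disjoint pairs $\{c_i^{(1)},c_i^{(2)}\}$ ($i\in I$), each contained in a single non-transversal block of $\al$. I would pick $\xi$ distinct elements $y^{(i)}\in Y$ so that $Y_0:=Y\sm\{y^{(i)}:i\in I\}$ still has cardinality $\ka$ (possible since $\xi\leq\ka$ are both infinite, so $Y$ admits a partition into pieces of size $\ka$ and $\xi$), and set $Z_0:=\{z_i:i\in I\}$ for any $\xi$ distinct $z_i\in X\sm Y$.

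Next I would define $\ga_1\in\M$ to have transversals $\{y,y'\}$ for $y\in Y_0$, transversals $\{z_i,c_i^{(1)'}\}$ for $i\in I$, and lower non-transversal pairs $\{c_i^{(2)'},y^{(i)'}\}$ for $i\in I$; and $\ga_2\in\M$ to have transversals $\{y,y'\}$ for $y\in Y_0$ and $\{y^{(i)},z_i'\}$ for $i\in I$. The key step is a four-row product-graph computation showing that, for each $i\in I$, there is a connected path from $z_i$ in the top row to $z_i$ in the bottom row through $\ga_1\al\ga_2$, using in order $\ga_1$'s transversal to $c_i^{(1)}$, then $\al$'s upper non-transversal linking $c_i^{(1)}$ to $c_i^{(2)}$, then $\ga_1$'s lower non-transversal taking $c_i^{(2)}$ to $y^{(i)}$, then $\al$'s transversal $\{y^{(i)},y^{(i)'}\}$ (crossing from the upper to the lower half of $\al$), and finally $\ga_2$'s transversal back to $z_i$. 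This produces the transversal $\{z_i,z_i'\}$ in $\ga_1\al\ga_2$; the analogous path through $\ga_1\ep_Y\ga_2$ is broken at the middle step because $\ep_Y$ contains no block joining $c_i^{(1)}$ and $c_i^{(2)}$. After checking that distinct routings remain separate and that $\al$'s other blocks cause no spurious merging, one obtains $\ga_1\ep_Y\ga_2=\ep_{Y_0}$ and $\ga_1\al\ga_2=\ep_{Y_0\cup Z_0}$, so compatibility gives $(\ep_{Y_0},\ep_{Y_0\cup Z_0})\in\si$. To finish, I would take a permutation $\pi\in\S_X\sub\M$ with $\pi(Y_0)=Y$ and $\pi(Z_0)=Z$ (which exists because the prescribed bijections $Y_0\to Y$ and $Z_0\to Z$ extend to a permutation of $X$, the complements of $Y_0\cup Z_0$ and $Y\cup Z$ having equal infinite cardinality); conjugating by $\pi$ delivers $(\ep_Y,\ep_{Y\cup Z})\in\si$, that is, $(\ep_{Y\cup Z},\ep_Y)\in\si$.

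The hardest part will be the product-graph verification, particularly showing that routings for different $i$'s do not collapse into one block and that $\al$'s additional transversals and lower non-transversals do not interfere. For $\M=\PB_X$ this is straightforward because the pairs $\{c_i^{(1)},c_i^{(2)}\}$ automatically belong to distinct blocks of $\al$. For $\M=\P_X$ a subtle extra case arises when $\al$ has a single upper non-transversal of size at least $\xi$ but fewer than $\xi$ such non-transversals overall: then all the routings pass through one block of $\al$ and the $\{z_i,z_i'\}$ transversals collapse together, so a separate argument is needed, such as producing a weaker intermediate pair and then applying Lemma \ref{lem:tech08}.
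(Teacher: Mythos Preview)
Your overall strategy is essentially the same as the paper's: both multiply $(\ep_Y,\al)$ on the left by a partition whose transversals send each $z_i$ to an element of an upper non-transversal of $\al$, and whose lower non-transversals link a second such element back to a designated element of $Y$, so that the route $z_i\to c_i^{(1)}\to c_i^{(2)}\to y^{(i)}\to y^{(i)'}\to z_i'$ exists through $\al$ but breaks through $\ep_Y$.  The paper's Case~1 ($|K|\geq\xi$) does exactly this, and its Case~2 ($|K|<\xi$, only for $\P_X$) reduces to Lemma~\ref{lem:tech08} just as you propose.  The only structural difference is that the paper picks a bijection $\psi:I\to I\sm J$ and builds its left and right multipliers so that $\th_1\al\th=\ep_{Y\cup Z}$ and $\th_1\ep_Y\th=\ep_Y$ directly, with no post-processing needed.

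There is, however, a genuine gap in your final step.  You assert that the complements $X\sm(Y_0\cup Z_0)$ and $X\sm(Y\cup Z)$ have equal infinite cardinality, so that a bijection $Y_0\to Y$, $Z_0\to Z$ extends to a permutation of $X$.  This can fail: if $|X|=\ka$ and $Z=X\sm Y$ (which is permitted, as the hypotheses allow $|X\sm Y|=\xi$), then $X\sm(Y\cup Z)=\emptyset$, whereas $Y_0\subsetneq Y$ forces $X\sm(Y_0\cup Z_0)\neq\emptyset$.  The fix is easy: replace conjugation by left--right multiplication.  Having established $(\ep_{Y_0\cup Z_0},\ep_{Y_0})\in\si$ with $|Y_0|=\ka$ and $|Z_0|=\xi$, choose injections $\phi_1:Y\to Y_0$ and $\phi_2:Z\to Z_0$, set $\th_1=\binom{y\ \ z}{y\phi_1\ z\phi_2}_{y\in Y,\,z\in Z}$ and $\th_2=\binom{y\phi_1\ z\phi_2}{y\ \ z}_{y\in Y,\,z\in Z}$, and compute $(\ep_{Y\cup Z},\ep_Y)=(\th_1\ep_{Y_0\cup Z_0}\th_2,\th_1\ep_{Y_0}\th_2)\in\si$.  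This is exactly the reduction used at the start of the proof of Lemma~\ref{lem:tech3}, and it requires no cardinality comparison of complements.
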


\pf
Fix an arbitrary subset $Z\sub X\sm Y$ of size $\xi$.  Write $Y=\set{y_i}{i\in I}$ and $Z=\set{z_j}{j\in J}$; since $|Z|\leq|Y|=\ka$ and $\ka\geq\aleph_0$, we may assume that $J\sub I$ and $|I\sm J|=\ka$.  We fix a bijection $\psi\colon I\to I\sm J$, and define $\th=\partpermII{y_{i\psi}}{y_j}{y_i}{z_j}_{i\in I,\ j\in J}$.
By symmetry, we may assume that $\al$ has a set of upper non-transversals $\set{A_k}{k\in K}$ where $|A_k|\geq2$ for all $k\in K$, and where $\bigcup_{k\in K}A_k$ has size at least~$\xi$.

\pfcase{1}
Suppose first that $|K|\geq\xi$.  For convenience, we may assume that $J\sub K$.  For each $j\in J$, fix distinct $a_j,b_j\in A_j$.  Then with $\th_1=\partXXIV{y_i}{z_j}{}{y_{i\psi}}{a_j}{b_j,y_j}_{i\in I,\ j\in J}$, 
and with $\th$ as defined above, 
we have $(\ep_{Y\cup Z},\ep_Y)=(\th_1\al\th,\th_1\ep_Y\th)\in\si$.

\pfcase{2}
Suppose now that $|K|<\xi$ (and note that this case cannot occur if $\M=\PB_X$).  Choose any subset $W\sub\bigcup_{k\in K}A_k$ of size $\xi$, and write $W=\set{w_j}{j\in J}$.  Define the partition $\th_2=\partpermII{y_i}{z_j}{y_{i\psi}}{y_j,w_j}_{i\in I,\ j\in J}$, and put $\al_1=\th_2\al\th$.  This time $\th_2\ep_Y\th=\ep_{Y\cup Z}$, so we have $(\ep_{Y\cup Z},\al_1)\in\si$.  But $\al_1$ satisfies the conditions of Lemma~\ref{lem:tech08} (note that $\al_1$ has at most $|K|<\xi$ transversals contained in $Z\cup Z'$), so the proof is complete after applying that lemma.
\epf

We now move on to three further lemmas, which give somewhat more general situations, albeit still technical in nature, under which the congruence $\sigma$ must contain a pair of the form~$(\epsilon_{Y\cup Z},\epsilon_Y)$.

\begin{lemma}\label{lem:tech07}
If $\aleph_0\leq\xi\leq\ka$ and $(\al,\be)\in\si$, where $\al\cap\be$ has $\ka$ transversals, and $\al\sd\be$ has at least $\xi$ transversals, then for any disjoint subsets $Y,Z\sub X$ with $|Y|=\ka$ and $|Z|=\xi$, we have ${(\ep_{Y\cup Z},\ep_Y)\in\si}$.
\end{lemma}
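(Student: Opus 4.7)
The plan is to exploit the compatibility of $\si$ with multiplication in $\M$: since $(\al,\be)\in\si$, we have $(\th_1\al\th_2,\th_1\be\th_2)\in\si$ for any $\th_1,\th_2\in\M$. I will choose $\th_1,\th_2$ so that $\th_1\al\th_2=\ep_{Y\cup Z}$ while $\th_1\be\th_2$ satisfies the hypothesis of Lemma~\ref{lem:tech08} (or, failing that, of Lemma~\ref{lem:tech02}), which will then deliver $(\ep_{Y\cup Z},\ep_Y)\in\si$.

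Since $\xi\geq\aleph_0$ and $\al\sd\be$ contains at least $\xi$ transversals, up to swapping $\al$ and $\be$ we may assume that $\al\sm\be$ contains $\geq\xi$ transversals. Enumerate $\ka$ common transversals of $\al\cap\be$ as $\{A_i\cup B_i':i\in I\}$ with $|I|=\ka$, and $\xi$ transversals of $\al\sm\be$ as $\{C_j\cup D_j':j\in J\}$ with $|J|=\xi$. Pick representatives $a_i\in A_i$, $b_i\in B_i$, $c_j\in C_j$, $d_j\in D_j$. Define $\th_1\in\M$ to have transversals $\{y_i,a_i'\}_{i\in I}$ and $\{z_j,c_j'\}_{j\in J}$, with all other vertices in singleton blocks; define $\th_2\in\M$ dually with transversals $\{b_i,y_i'\}_{i\in I}$ and $\{d_j,z_j'\}_{j\in J}$. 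A direct product-graph computation, in the style of the proofs of the earlier lemmas in this subsection and using that distinct $\al$-blocks are disjoint, shows that $\th_1\al\th_2=\ep_{Y\cup Z}$: the common transversals of $\al$ collapse to $\{y_i,y_i'\}$ and the $\al\sm\be$ transversals to $\{z_j,z_j'\}$, while all other vertices become singletons.

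For $\th_1\be\th_2$, the common transversals again produce $\{y_i,y_i'\}$, so this partition contains all transversals of $\ep_Y$. For each $j\in J$, since $C_j\cup D_j'$ is not a block of $\be$, one can typically choose $c_j,d_j$ so that $c_j$ and $d_j'$ lie in different $\be$-blocks; then $z_j$ and $z_j'$ lie in different blocks of $\th_1\be\th_2$, no transversal $\{z_j,z_j'\}$ arises, and Lemma~\ref{lem:tech08} applies. The main obstacle is the recalcitrant sub-case in which, for (possibly all) $j\in J$, every pair $(c,d)\in C_j\times D_j$ has $c$ and $d'$ in the same $\be$-block $M_j$, which must then strictly contain $C_j\cup D_j'$; the transversal $\{z_j,z_j'\}$ then persists in $\th_1\be\th_2$ and Lemma~\ref{lem:tech08} is not directly applicable. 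To handle this, one exploits the extra vertices of $M_j\setminus(C_j\cup D_j')$ via a further sandwich multiplication or an additional idempotent factor, converting them into non-singleton non-transversals of the second coordinate and so putting the argument into the framework of Lemma~\ref{lem:tech02} instead. Splitting $J$ according to the sub-case and treating each piece separately finishes the proof.
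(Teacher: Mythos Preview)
Your overall strategy is correct and matches the paper: sandwich $(\al,\be)$ between carefully chosen $\th_1,\th_2$ so that one coordinate becomes $\ep_{Y\cup Z}$, check that the other coordinate retains the transversals of $\ep_Y$ but few enough of those of $\ep_Z$, and invoke Lemma~\ref{lem:tech08}. Your computation that $\th_1\al\th_2=\ep_{Y\cup Z}$ and that the common transversals produce $\{y_i,y_i'\}$ in $\th_1\be\th_2$ is fine (and your implicit use of Lemma~\ref{lem:trans1} to rule out $\{z_j,z_j'\}$ when $c_j,d_j'$ lie in different $\be$-blocks is valid).

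The genuine gap is in the recalcitrant case. Your proposed remedy --- exploit the extra vertices of $M_j\sm(C_j\cup D_j')$ and appeal to Lemma~\ref{lem:tech02} --- is not a proof. Lemma~\ref{lem:tech02} needs a pair $(\ep_Y,\ga)\in\si$ in which $\ga$ has non-singleton \emph{non-transversals} whose union has size $\geq\xi$; but the blocks $M_j$ are transversals of $\be$, and the ``extra vertices'' need not show up as non-transversals under any sandwich you describe. Moreover, the $M_j$'s may coincide across many $j$, and when they do not, each extra vertex may lie outside $\codom(\th_1)\cup\dom(\th_2)$ and hence be invisible to the product. The phrase ``a further sandwich multiplication or an additional idempotent factor'' does not specify a construction that works.

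The paper handles this case differently, and the missing idea is the use of Lemma~\ref{lem:trans1} in the reverse direction. When $\{z_k,z_k'\}$ persists in $\th_1\al\th_2$ (with the roles of $\al,\be$ swapped relative to yours), Lemma~\ref{lem:trans1} produces, for each such $k$, a transversal $E_k\cup F_k'\in\al\sm\be$ that overlaps $C_k\cup D_k'\in\be\sm\al$ in both its upper and lower parts. Since these two blocks are distinct, one is not contained in the other, so (after passing to a sub-index set of size $\xi$) one can pick $e_l\in E_l\sm C_l$ and $d_l\in D_l\cap F_l$. A \emph{second} sandwich --- using the $A_i$'s together with these new points $e_l,d_l$, and with the roles of $\al$ and $\be$ reversed --- then sends $\al$ to $\ep_{Y\cup Z}$, while the $\be$-block containing $d_l'$ is $C_l\cup D_l'\not\ni e_l$, so by Lemma~\ref{lem:trans1} no $\{z,z'\}$ block can appear. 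Lemma~\ref{lem:tech08} then applies cleanly. This second, explicit sandwich (rather than an appeal to Lemma~\ref{lem:tech02}) is what you need to complete the argument.
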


\pf
Fix arbitrary disjoint subsets $Y,Z\sub X$ with $|Y|=\ka$ and $|Z|=\xi$, write $Y=\set{y_i}{i\in I}$ and $Z=\set{z_j}{j\in J}$, and let the transversals of $\al\cap\be$ be ${\set{A_i\cup B_i'}{i\in I}}$.  We begin by claiming that either
\begin{enumerate}[label=(\alph*)]
\item \label{it:h07a} $(\ep_{Y\cup Z},\al_1)\in\si$ for some $\al_1\in\M$ containing all the transversals of $\ep_Y$, but fewer than $\xi$ of the transversals of $\ep_Z$, or else 
\item \label{it:h07b}  there exist transversals $\set{C_k\cup D_k'}{k\in K}\sub\be\sm\al$ and $\set{E_k\cup F_k'}{k\in K}\sub\al\sm\be$, where $|K|=\xi$, and such that $C_k\cap E_k$ and $D_k\cap F_k$ are non-empty for each $k\in K$.
\end{enumerate}
Since $\al\sd\be$ has at least $\xi$ transversals, we may assume without loss of generality that $\be\sm\al$ contains $\xi$ transversals, say ${\set{C_j\cup D_j'}{j\in J}}$. 
For each $j\in J$, fix arbitrary $c_j\in C_j$ and $d_j\in D_j$.
Let $\th_1=\partpermII{y_i}{z_j}{A_i}{c_j}_{i\in I,\ j\in J}$ and $\th_2=\partpermII{B_i}{d_j}{y_i}{z_j}_{i\in I,\ j\in J}$, and put $\al_1=\th_1\al\th_2$.  Since $\th_1\be\th_2=\ep_{Y\cup Z}$, we have $(\ep_{Y\cup Z},\al_1)\in\si$.  Now, $\al_1$ contains the transversals of~$\ep_Y$.  If $\al_1$ contains fewer than $\xi$ of the transversals of $\ep_Z$, then \ref{it:h07a} holds, so let us assume that~$\al_1$ contains $\xi$ of the transversals of $\ep_Z$, say $\bigset{\{z_k,z_k'\}}{k\in K}$ where $K\sub J$.  
Lemma~\ref{lem:trans1} (applied to the product $\al_1=\th_1\al\th_2$) says that for any $k\in K$, $\al$ contains a transversal $E_k\cup F_k'$ such that $E_k\cap\codom(\th_1)=\{c_k\}$ and $F_k\cap\dom(\th_2)=\{d_k\}$.  This shows that $C_k\cap E_k$ and $D_k\cap F_k$ are non-empty for each $k\in K$.  
Since $C_k\cup D_k'\in\beta\setminus\alpha$ it follows that
$E_k\cup F_k'\in \al\sm\be$ for all $k\in K$. Hence \ref{it:h07b} holds, and the claim is proved. 

Returning now to the main proof, note that if \ref{it:h07a} holds, then Lemma \ref{lem:tech08} immediately gives $(\ep_{Y\cup Z},\ep_Y)\in\si$. Thus, for the remainder of the proof, we will assume that \ref{it:h07b} holds.  
For any $k\in K$, we have $C_k\cup D_k'\not=E_k\cup F_k'$, so either $C_k\cup D_k'\not\sub E_k\cup F_k'$ or $E_k\cup F_k'\not\sub C_k\cup D_k'$.  Since $|K|=\xi\geq\aleph_0$, we may assume by symmetry that $\xi$ values of $k$ satisfy the latter.  By symmetry again, we may assume that $\xi$ values of $k$ satisfy $E_k\not\sub C_k$; let $L$ be the set of all such $k$.  For each $l\in L$, fix some $d_l\in D_l\cap F_l$ and $e_l\in E_l\sm C_l$.  Since $|J|=\xi=|L|$, we may write $J=\set{j_l}{l\in L}$.  Let $\th_3=\partpermII{y_i}{z_{j_l}}{A_i}{e_l}_{i\in I,\ l\in L}$ and $\th_4=\partpermII{B_i}{d_l}{y_i}{z_{j_l}}_{i\in I,\ l\in L}$, and put $\al_2=\th_3\be\th_4$.  Then $\th_3\al\th_4=\ep_{Y\cup Z}$, so $(\ep_{Y\cup Z},\al_2)\in\si$.  But $\al_2$ satisfies the conditions of Lemma~\ref{lem:tech08} (note that $\{z,z'\}$ is not a block of~$\al_2$ for any $z\in Z$), so that lemma completes the proof.
\epf

\begin{lemma}\label{lem:tech06}
If $\aleph_0\leq\xi\leq\ka$ and $(\al,\be)\in\si$, where $\al\cap\be$ has $\ka$ transversals, and $\al\sd\be$ has fewer than~$\xi$ transversals but at least $\xi$ non-transversals, then for any disjoint subsets $Y,Z\sub X$ with $|Y|=\ka$ and $|Z|=\xi$, we have ${(\ep_{Y\cup Z},\ep_Y)\in\si}$.
\end{lemma}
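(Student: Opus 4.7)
I will reduce to Lemma~\ref{lem:tech02}. My goal is to exhibit partitions $\th_1,\th_2\in\M$ such that one of the products $\th_1\al\th_2,\th_1\be\th_2$ equals $\ep_Y$ and the other, call it $\al_1$, contains the transversals of $\ep_Y$ together with $\xi$ non-singleton upper non-transversals; the pair $(\ep_Y,\al_1)\in\sigma$ (obtained up to swapping, which is fine since $\sigma$ is symmetric) then satisfies the hypotheses of Lemma~\ref{lem:tech02}, yielding $(\ep_{Y\cup Z_0},\ep_Y)\in\sigma$ for any $Z_0\sub X\sm Y$ with $|Z_0|=\xi$. The statement for arbitrary disjoint $Y,Z\sub X$ with $|Y|=\ka$ and $|Z|=\xi$ then follows by conjugating with a suitable permutation in $\S_X$.

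\textbf{Setup and witness extraction.} By the symmetry of $\sigma$ and the upper/lower duality of $\M$, I may assume without loss of generality that $\al\sm\be$ contains $\xi$ upper non-transversals $\set{C_j}{j\in J}$. Let the transversals of $\al\cap\be$ be $\set{A_i\cup B_i'}{i\in I}$ with $|I|=\ka$, and write $Y=\set{y_i}{i\in I}$. For each $j\in J$, since $C_j\notin\be$, the $\be$-block of any $c_j\in C_j$ differs from $C_j$. A case analysis (on whether $|C_j|\geq 2$ and on how the elements of $C_j$ are distributed among $\be$'s blocks) produces a witness pair $(c_j,d_j)\in X\times X$ with the property that $c_j$ and $d_j$ lie in the same block of exactly one of $\al,\be$. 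Since $c_j,d_j$ necessarily lie in $X\sm\dom(\al)$ or $X\sm\dom(\be)$, they automatically avoid $\bigcup_i A_i$. After discarding fewer than $\xi$ indices if necessary (using that $\al\sd\be$ has fewer than $\xi$ transversals), I further assume the family $\set{c_j,d_j}{j\in J}$ is pairwise distinct and disjoint from a family $\set{z_j,w_j}{j\in J}\sub X$ of fresh auxiliary elements.

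\textbf{Construction and verification.} Define $\th_1$ to have transversals $\{y_i\}\cup A_i'$ $(i\in I)$ together with $\{z_j,c_j'\}$ and $\{w_j,d_j'\}$ $(j\in J)$, and singletons elsewhere; define $\th_2$ to have transversals $B_i\cup\{y_i'\}$ $(i\in I)$ and singletons elsewhere. A product-graph computation along the lines of those in the proofs of Lemmas~\ref{lem:tech07} and~\ref{lem:tech08} shows that for each $\tau\in\{\al,\be\}$ the product $\th_1\tau\th_2$ contains the transversals $\{y_i,y_i'\}$ arising from $A_i\cup B_i'$, and that the top elements $z_j,w_j$ end up in a common upper non-transversal of $\th_1\tau\th_2$ if and only if $c_j$ and $d_j$ lie in a common $\tau$-block. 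Hence one of $\th_1\al\th_2,\th_1\be\th_2$ equals $\ep_Y$ (when $c_j$ and $d_j$ are separated in $\tau$) while the other has $\xi$ non-singleton upper non-transversals $\{z_j,w_j\}$, whose union has size $2\xi=\xi$; this is precisely the configuration required to apply Lemma~\ref{lem:tech02}, and the proof concludes.

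\textbf{Main obstacle.} The principal difficulty lies in the witness extraction: upper non-transversal information is ``forgotten'' under one-sided multiplication, so the discrepancy between $\al$ and $\be$ can only be made visible in the product through pairs of elements whose block-relation differs in $\al$ and in $\be$. Carrying out the case analysis carefully, and ensuring that such witnesses $(c_j,d_j)$ can be extracted for $\xi$ indices $j$ with the required disjointness properties, is the heart of the proof. A secondary, more routine concern is that the given $Y,Z$ may overlap the supports of $\al,\be$; this is handled by the conjugation argument mentioned above, with the boundary case $\ka=|X|$ requiring a direct verification rather than an appeal to disjointness.
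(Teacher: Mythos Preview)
Your reduction to Lemma~\ref{lem:tech02} via a single two-sided multiplication does not go through as claimed. The fatal step is the assertion that one of $\th_1\al\th_2,\th_1\be\th_2$ equals $\ep_Y$; this requires not only that each pair $(c_j,d_j)$ be separated in the chosen $\tau$, but that no cross-connections arise between $\{z_j,w_j\}$ and $\{z_{j'},w_{j'}\}$ for $j\ne j'$. Concretely, take $\al$ with upper non-transversals $C_j=\{c_j,d_j\}$ and $\be$ with upper non-transversals $\{d_{j-1},c_j\}$ (indexed cyclically on a set of size $\xi$). Then $(c_j,d_j)$ are joined in $\al$ and separated in $\be$, and the $c_j,d_j$ are pairwise distinct; yet in $\th_1\be\th_2$ one finds $z_j$ connected to $w_{j-1}$ (via the $\be$-block $\{d_{j-1},c_j\}$), so $\th_1\be\th_2$ has upper hooks $\{z_j,w_{j-1}\}$ and is not $\ep_Y$, while $\th_1\al\th_2$ has upper hooks $\{z_j,w_j\}$ and is not $\ep_Y$ either. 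Thus neither product is $\ep_Y$, and your verification claim fails. The underlying problem is that the element $d_j$ need not lie in $C_j$, so its $\tau$-block is uncontrolled and may contain some $c_{j'}$ or $d_{j'}$.

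The paper sidesteps this by a two-stage argument. In the first stage it uses only the elements $c_j\in C_j$ (with $C_j\in\be\sm\al$), so that the $c_j$ sit in pairwise disjoint $\be$-blocks; this guarantees $\th_1\be\th_2=\ep_Y$ outright, and one then analyses $\al_1=\th_1\al\th_2$. Either $\al_1$ already has enough non-singleton non-transversals (and Lemma~\ref{lem:tech02} applies), or $\xi$ of the $z_k$ are singletons in $\al_1$, which forces the $\al$-blocks $D_l\ni c_l$ to be pairwise distinct upper non-transversals of $\al\sm\be$. Only then does the paper choose second witnesses $d_m\in D_m\sm C_m$, and it uses them in a \emph{different} construction (a left multiplication creating new transversals) to reduce to Lemma~\ref{lem:tech07}, not Lemma~\ref{lem:tech02}. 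Your single-shot construction tries to do both jobs at once and cannot control the interaction between the $d_j$'s and the $C_{j'}$'s; you would need either an independence argument of the sort in Lemma~\ref{la-tt4} to decouple the witnesses, or a structural dichotomy as in the paper.
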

\pf
Fix arbitrary disjoint subsets $Y,Z\sub X$ with $|Y|=\ka$ and $|Z|=\xi$,  write ${Y=\set{y_i}{i\in I}}$ and $Z=\set{z_j}{j\in J}$, and suppose the transversals of $\al\cap\be$ are $\set{A_i\cup B_i'}{i\in I}$.  Since $|Z|\leq|Y|$, we may assume for convenience that $J\sub I$ and $|I\sm J|=\ka$.  We begin by claiming that either
\begin{enumerate}[label=(\alph*)]
\item \label{it:h06a} $(\ep_Y,\al_1)\in\si$ for some $\al_1\in\M$ containing all the transversals of $\ep_Y$, and such that the union of the non-singleton non-transversals of $\al_1$ has size at least $\xi$, or else 
\item \label{it:h06b} there exist non-transversals $\set{C_l}{l\in L}\sub\be\sm\al$ and $\set{D_l}{l\in L}\sub\al\sm\be$, where $|L|=\xi$, and such that $C_l\cap D_l$ is non-empty for each $l\in L$.
\end{enumerate}
Since $\al\sd\be$ has at least $\xi$ non-transversals, we may assume without loss of generality that $\be\sm\al$ contains $\xi$ upper non-transversals, say $\set{C_j}{j\in J}$.  For each $j\in J$, fix some $c_j\in C_j$.  Define $\th_1=\partpermII{y_i}{z_j}{A_i}{c_j}_{i\in I,\ j\in J}$ and $\th_2=\binom{B_i}{y_i}_{i\in I}$, and put $\al_1=\th_1\al\th_2$.  Since $\th_1\be\th_2=\ep_Y$, we have $(\ep_Y,\al_1)\in\si$.  Note that $\al_1$ contains the transversals of $\ep_Y$.  If the union of the non-singleton non-transversals of $\al_1$ has size at least $\xi$, then \ref{it:h06a} holds, so let us assume that the union of the non-singleton non-transversals of $\al_1$ has size strictly less than $\xi$.  Thus, since $|Z|=\xi$, and since $\dom(\al_1)=Y$, it follows that $\xi$ of the elements of $Z$ belong to singleton blocks of~$\al_1$, say $\bigset{\{z_k\}}{k\in K}$, where~$K\sub J$.  
For any $k\in K$, $c_k$ belongs either to some upper non-transversal of $\al$, or else to the upper part of some transversal from $\al\sm\be$ (by construction, we cannot have $c_k\in A_i$ for any $i$).  Since $\al\sd\be$ contains fewer than $\xi$ transversals, it follows that the set
\[
L=\set{k\in K}{c_k\text{ belongs to an upper non-transversal of }\al}
\]
has size $\xi$.  For each $l\in L$, let $D_l$ be the non-transversal of $\al$ containing $c_l$.  For distinct $l_1,l_2\in L$, we have $D_{l_1}\not=D_{l_2}$ (or else $z_{l_1}$ and $z_{l_2}$ would belong to the same block of $\th_1\al\th_2=\al_1$, a contradiction).  It quickly follows that the non-transversals $\set{C_l}{l\in L}$ and $\set{D_l}{l\in L}$ satisfy the conditions of \ref{it:h06b}.  This completes the proof of the claim.

Returning now to the main proof, note that if \ref{it:h06a} holds, then Lemma \ref{lem:tech02} immediately gives $(\ep_{Y\cup Z},\ep_Y)\in\si$. Thus, for the remainder of the proof, we will assume that \ref{it:h06b} holds.  
Since~${C_l\not=D_l}$ for all $l\in L$, we may assume by symmetry that the set $M=\set{l\in L}{D_l\not\sub C_l}$ has size~$\xi$.  For each $m\in M$, fix some $c_m\in C_m\cap D_m$ and $d_m\in D_m\sm C_m$.  
At this point, let us remember that $M\sub J\sub I$ and $|I\sm J|=\ka$.  It follows that $|I\sm M|=\ka$ as well.  For each $m\in M$, choose some $a_m\in A_m$.  Let $\th_3=\partXXIV{A_i}{c_m}{}{A_i}{c_m}{a_m,d_m}_{i\in I\sm M,\ m\in M}$, and put $(\al_2,\be_2)=(\th_3\al,\th_3\be)\in\si$.  Then $\al_2\cap\be_2$ contains~$\ka$ transversals, namely $A_i\cup B_i'$ for each $i\in I\sm M$, and $\al_2\sm\be_2$ contains $\xi$ transversals, namely $\{c_m\}\cup B_m'$ for each $m\in M$.
Thus, $(\al_2,\be_2)\in\si$ satisfies the conditions of Lemma \ref{lem:tech07}, so applying that lemma now completes the proof.
\epf

\begin{lemma}\label{lem:tech09}
If $(\al,\be)\in\si\restr_{D_\ka}$ where $\ka\geq\aleph_0$, and if $\al\cap\be$ has fewer than $\ka$ transversals, then for any subset $Y\sub X$ with $|Y|=\ka$, we have $(\ep_Y,\emptypart)\in\si$.
\end{lemma}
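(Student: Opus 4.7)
The plan is to apply Lemma~\ref{lem:tech01}: it suffices to produce a pair $(\epsilon_Y, \delta) \in \sigma$ with $|Y| = \kappa$ and $\delta$ containing fewer than $\kappa$ transversals of $\epsilon_Y$. I will obtain $\delta$ as $\theta_1 \beta \theta_2$ by choosing $\theta_1, \theta_2 \in \M$ so that simultaneously $\theta_1 \alpha \theta_2 = \epsilon_Y$.

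Write the transversals of $\alpha$ and $\beta$ as $A_i \cup B_i'$ and $C_i \cup D_i'$ with $i \in I$ and $|I| = \kappa$, partition $I = J \sqcup I'$ so that $J$ indexes the $< \kappa$ common transversals, and fix a fresh $Y = \{y_i : i \in I\}$ of size $\kappa$. The baseline construction is $\theta_1 = \binom{y_i}{a_i}_{i \in I}$ and $\theta_2 = \binom{b_i}{y_i}_{i \in I}$ for chosen $a_i \in A_i$ and $b_i \in B_i$. A direct product-graph computation shows $\theta_1 \alpha \theta_2 = \epsilon_Y$, and also reveals that the block $\{y_i, y_i'\}$ appears in $\theta_1 \beta \theta_2$ precisely when $a_i$ and $b_i$ lie in the same $\beta$-transversal $C_{k(i)} \cup D_{k(i)}'$ and are the unique chosen representatives inside $C_{k(i)}$ and $D_{k(i)}$ respectively. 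The task is to use the assumption $A_i \cup B_i' \neq C_k \cup D_k'$ (for all $k$ and $i \in I'$) to spoil this condition for all $\kappa$ many $i \in I'$.

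I will proceed by a case analysis on how $A_i, B_i$ relate to $\dom(\beta), \codom(\beta)$ and the $\beta$-transversals. In the easy cases where $\kappa$ many $A_i$ (or dually $B_i$) have a point outside $\dom(\beta)$ (resp.\ $\codom(\beta)$), or where $a_i, b_i$ can be chosen inside different $\beta$-transversals, the construction directly yields either $\rank(\theta_1\beta\theta_2) < \kappa$ or $\{y_i, y_i'\} \notin \theta_1\beta\theta_2$. The stubborn sub-case is when $\kappa$ many $i \in I'$ admit a unique $k(i)$ with $A_i \subseteq C_{k(i)}$ and $B_i \subseteq D_{k(i)}$; here $A_i \cup B_i' \neq C_{k(i)} \cup D_{k(i)}'$ forces $C_{k(i)} \supsetneq A_i$ or $D_{k(i)} \supsetneq B_i$ for $\kappa$ many $i$. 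Say the latter holds and pick $f_i \in D_{k(i)} \setminus B_i$: I augment $\theta_2$ with the extra block $\{f_i, z_i'\}$ for a fresh element $z_i$. Then in $\theta_1 \beta \theta_2$ this block merges via $f_i'' \in D_{k(i)}''$ into the main transversal to form $\{y_i, y_i', z_i'\}$, so the block $\{y_i, y_i'\}$ is destroyed, while in $\theta_1 \alpha \theta_2$ the same block only attaches to an $\alpha$-non-transversal containing $f_i$ and merely contributes a lower singleton, so the equality $\theta_1 \alpha \theta_2 = \epsilon_Y$ is preserved. The dual augmentation $\{z_i, e_i'\}$ of $\theta_1$ (with $e_i \in C_{k(i)} \setminus A_i$) handles $C_{k(i)} \supsetneq A_i$. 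The main obstacle is precisely this stubborn case: to ensure the augmentation merges \emph{differently} in $\alpha$ and $\beta$, one must verify that $f_i$ (respectively $e_i$) lies in a non-transversal of $\alpha$ rather than in some $B_{i'}$ (resp.\ $A_{i'}$), for otherwise one could repurpose $f_i$ as a second $b_{i'}$ and reduce back to an earlier sub-case. Once these details are checked, $\theta_1 \beta \theta_2$ contains at most $|J| < \kappa$ transversals of $\epsilon_Y$, and Lemma~\ref{lem:tech01} gives $(\epsilon_Y, \emptypart) \in \sigma$.
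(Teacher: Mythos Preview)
Your overall strategy matches the paper's: sample representatives from transversals, build $\theta_1,\theta_2$ so that one of $\theta_1\alpha\theta_2$, $\theta_1\beta\theta_2$ equals $\epsilon_Y$, and invoke Lemma~\ref{lem:tech01}. The easy sub-cases are fine. The gap is in your stubborn case.

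When you augment $\theta_2$ with a block $\{f_i,z_i'\}$, you need $f_i$ to lie outside $\codom(\alpha)$; otherwise $f_i\in B_{i'}$ for some $i'$, and in the product $\theta_1\alpha\theta_2$ the added block attaches (via the $\alpha$-transversal $A_{i'}\cup B_{i'}'$) to the $y_{i'}$-component, producing $\{y_{i'},y_{i'}',z_i'\}$ and destroying the equality $\theta_1\alpha\theta_2=\epsilon_Y$. Your proposed fix (``repurpose $f_i$ as a second $b_{i'}$'') is too vague to close this: re-choosing $b_{i'}$ to lie in $D_{k(i)}$ does indeed prevent $\{y_i,y_i'\}$ from appearing in $\theta_1\beta\theta_2$, but it imposes a constraint on $b_{i'}$ that may conflict with constraints coming from other stubborn indices, and you give no mechanism for resolving these dependencies across $\kappa$ many indices simultaneously. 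You also need the $f_i$ to be pairwise distinct and distinct from all $b_j$ for the augmented $\theta_2$ to be a partition at all.

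The paper sidesteps all of this with a clean two-pass argument and a role reversal. It samples $\kappa$ transversals $A_i\cup B_i'$ from $\beta\setminus\alpha$ (not from all of $\alpha$), so that $\theta_1\beta\theta_2=\epsilon_Y$. If $\alpha_1=\theta_1\alpha\theta_2$ still contains $\kappa$ of the blocks $\{y_j,y_j'\}$, Lemma~\ref{lem:trans1} locates for each such $j$ an $\alpha$-transversal $C_j\cup D_j'$ with $a_j\in C_j$, $b_j\in D_j$; since $A_j\cup B_j'\in\beta\setminus\alpha$, these differ, so (after a symmetry reduction) $\kappa$ many $j$ admit $c_j\in C_j\setminus A_j$. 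Now resample with $\theta_3=\binom{y}{c_k}$, $\theta_4=\binom{b_k}{y}$: then $\theta_3\alpha\theta_4=\epsilon_Y$ (same $\alpha$-transversal), while $\theta_3\beta\theta_4$ contains \emph{no} block $\{y,y'\}$, because the unique $\beta$-transversal through $b_k'$ is $A_k\cup B_k'$, which does not contain $c_k$. No augmentation, no dependency juggling.
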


\pf
Let $Y\sub X$ with $|Y|=\ka$, and write $Y=\set{y_i}{i\in I}$.  Since $\be\in D_\ka$, and since $\be=(\al\cap\be)\cup(\be\sm\al)$, the assumption on transversals of $\al\cap\be$ implies that $\be\sm\al$ contains $\ka$ transversals, say $\set{A_i\cup B_i'}{i\in I}$.  For each $i\in I$, fix some $a_i\in A_i$ and $b_i\in B_i$.  Let $\th_1=\binom{y_i}{a_i}_{i\in I}$ and $\th_2=\binom{b_i}{y_i}_{i\in I}$, and put $\al_1=\th_1\al\th_2$.  Since $\th_1\be\th_2=\ep_Y$, we have $(\ep_Y,\al_1)\in\si$.  If $\al_1$ had fewer than $\ka$ of the transversals of $\ep_Y$, then the desired conclusion would follow from Lemma \ref{lem:tech01}.  Thus, we assume $\al_1$ has $\ka$ such transversals, say $\bigset{\{y_j,y_j'\}}{j\in J}$, where $J\sub I$.  

Lemma \ref{lem:trans1} (applied to the product $\al_1=\th_1\al\th_2$) says that for any $j\in J$, $\al$ contains a transversal $C_j\cup D_j'$ such that $C_j\cap\codom(\th_1)=\{a_j\}$ and $D_j\cap\dom(\th_2)=\{b_j\}$.  As in the last paragraph of the proof of Lemma \ref{lem:tech07}, we may assume that the set $K=\set{j\in J}{C_j\not\sub A_j}$ has size $\ka$.  For each $k\in K$, fix some $c_k\in C_k\sm A_k$.  Also, since $|I|=\ka=|K|$, we may write $I=\set{i_k}{k\in K}$.  Let $\th_3=\binom{y_{i_k}}{c_k}_{k\in K}$ and $\th_4=\binom{b_k}{y_{i_k}}_{k\in K}$, and put $\al_2=\th_3\be\th_4$.  Since $\th_3\al\th_4=\ep_Y$, we have $(\ep_Y,\al_2)\in\si$.  Since $\al_2$ contains no transversals of $\ep_Y$, Lemma \ref{lem:tech01} gives $(\ep_Y,\emptypart)\in\si$.
\epf

Finally, we are now in the position to state and prove the following more general result.

\begin{lemma}\label{lem:tech2}
If $(\al,\be)\in\si\restr_{D_\ka}$ with $\aleph_0\leq|\al\sd\be|=\xi\leq\ka$, then for any disjoint subsets $Y,Z\sub X$ with $|Y|=\ka$ and $|Z|=\xi$, we have $(\ep_{Y\cup Z},\ep_Y)\in\si$.
\end{lemma}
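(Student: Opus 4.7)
The plan is to deduce Lemma~\ref{lem:tech2} as a case analysis from the three preceding lemmas (\ref{lem:tech07}, \ref{lem:tech06}, and \ref{lem:tech09}), splitting according to the number of transversals that $\al\cap\be$ contains, and then (in one subcase) according to whether the blocks of $\al\sd\be$ lying in the symmetric difference are mostly transversals or mostly non-transversals.

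First I would fix disjoint $Y,Z\sub X$ with $|Y|=\ka$ and $|Z|=\xi$, and split into two main cases depending on whether $\al\cap\be$ has $\ka$ transversals or strictly fewer. Since $\al,\be\in D_\ka$ and $|\al\sd\be|=\xi\leq\ka$, removing the at most $\xi$ transversals in $\al\sd\be$ from the $\ka$ transversals of $\al$ still leaves a set of transversals in $\al\cap\be$ of cardinality $\ka$ whenever $\xi<\ka$; the interesting situation where this fails is when $\xi=\ka$, in which case it is genuinely possible for $\al\cap\be$ to have fewer than $\ka$ transversals.

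In the case where $\al\cap\be$ has $\ka$ transversals, I would look at the blocks in $\al\sd\be$. Since $|\al\sd\be|=\xi\geq\aleph_0$ and every block of $\al\sd\be$ is either a transversal or a non-transversal, by infinite cardinal arithmetic at least one of these two subcollections has size $\xi$. If $\al\sd\be$ has at least $\xi$ transversals, Lemma~\ref{lem:tech07} applies directly and yields $(\ep_{Y\cup Z},\ep_Y)\in\si$. Otherwise, $\al\sd\be$ has fewer than $\xi$ transversals and hence at least $\xi$ non-transversals, and Lemma~\ref{lem:tech06} applies to give the same conclusion.

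In the remaining case where $\al\cap\be$ has fewer than $\ka$ transversals, Lemma~\ref{lem:tech09} applies (note the hypotheses there concern only $\al\cap\be$ and do not require the symmetric difference condition). Applying it to the subset $Y$ gives $(\ep_Y,\emptypart)\in\si$, and applying it to the subset $Y\cup Z$, which also has cardinality $\ka$ since $\ka+\xi=\ka$, gives $(\ep_{Y\cup Z},\emptypart)\in\si$. Transitivity and symmetry of $\si$ then yield $(\ep_{Y\cup Z},\ep_Y)\in\si$, completing the proof.

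The main potential obstacle is just verifying that this case analysis is exhaustive and that each invocation of the preceding lemmas meets its hypotheses exactly; in particular one must check that the cardinal inequalities $\aleph_0\leq\xi\leq\ka$ carry over appropriately (so that Lemmas~\ref{lem:tech07} and~\ref{lem:tech06} apply with their parameter $\xi$ equal to ours), and that the $|Y\cup Z|=\ka$ computation used in the final case is correct. No new construction is required beyond what is already packaged in the earlier lemmas.
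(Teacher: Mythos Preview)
Your proposal is correct and follows essentially the same approach as the paper: the paper's proof also splits into the three cases (fewer than $\ka$ transversals in $\al\cap\be$; $\ka$ transversals in $\al\cap\be$ with $\xi$ transversals in $\al\sd\be$; $\ka$ transversals in $\al\cap\be$ with fewer than $\xi$ transversals but $\xi$ non-transversals in $\al\sd\be$), invoking Lemmas~\ref{lem:tech09}, \ref{lem:tech07}, and \ref{lem:tech06} respectively, and using transitivity in the first case after applying Lemma~\ref{lem:tech09} to both $Y$ and $Y\cup Z$. Your additional remark that the first case can only arise when $\xi=\ka$ is correct but not needed for the argument.
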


\pf
We identify three possibilities:
\begin{thmenum}
\item \label{it:h2i} $\al\cap\be$ has fewer than $\ka$ transversals, or
\item \label{it:h2ii} $\al\cap\be$ has $\ka$ transversals and $\al\sd\be$ has $\xi$ transversals, or
\item \label{it:h2iii} $\al\cap\be$ has $\ka$ transversals, while $\al\sd\be$ has fewer than $\xi$ transversals but $\xi$ non-transversals.
\end{thmenum}
Lemmas \ref{lem:tech07} and \ref{lem:tech06} deal with cases \ref{it:h2ii} and \ref{it:h2iii}, respectively.  For \ref{it:h2i}, Lemma \ref{lem:tech09} gives $(\ep_{Y\cup Z},\emptypart),(\ep_Y,\emptypart)\in\si$, noting that $|Y\cup Z|=\ka$, so that $(\ep_{Y\cup Z},\ep_Y)\in\si$ by transitivity.
\epf

To prove the main result of this subsection (Lemma \ref{lem:tech3} below), we will also need the next lemma, which provides a modest upper bound for the set $\bigset{|\al\sd\be|}{(\al,\be)\in\si\restr_{D_\ka}}$ where~${\ka\geq\eta=\eta(\si)\geq\aleph_0}$.  
A much stronger bound is exhibited in Lemma \ref{la338}, which is derived as a consequence of Lemma~\ref{lem:tech3}.

\begin{lemma}\label{lem:tech1}
If $(\al,\be)\in\si\restr_{D_\ka}$ where $\kappa\geq\eta=\eta(\si)\geq\aleph_0$, then $|\al\sd\be|<\ka$; consequently, $\al\cap\be$ contains $\ka$ transversals.
\end{lemma}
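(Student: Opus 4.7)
The plan is to argue by contradiction: assume $|\al\sd\be|\geq\ka$ and produce a pair in $\si\cap(D_\ka\times I_\ka)$, violating the defining property of $\eta=\eta(\si)$. I would split according to the number of transversals of $\al\cap\be$, since this is precisely the dichotomy already packaged by the preceding technical lemmas in this subsection. First, if $\al\cap\be$ has strictly fewer than $\ka$ transversals, then Lemma \ref{lem:tech09} applied to any $Y\sub X$ of size $\ka$ (such a $Y$ exists since $\ka\leq|X|$) immediately gives $(\ep_Y,\emptypart)\in\si$; as $\ep_Y\in D_\ka$ and $\emptypart\in D_0\sub I_\ka$, this is the desired contradiction.

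Assume now that $\al\cap\be$ has exactly $\ka$ transversals. The hypothesis $|\al\sd\be|\geq\ka$ subdivides further: either $\al\sd\be$ contains at least $\ka$ transversals, in which case Lemma \ref{lem:tech07} with $\xi=\ka$ applies, or else $\al\sd\be$ has fewer than $\ka$ transversals, which (since $\ka$ is infinite) forces at least $\ka$ non-transversals and lets Lemma \ref{lem:tech06} with $\xi=\ka$ apply. Either invocation, applied to arbitrary disjoint $Y,Z\sub X$ with $|Y|=|Z|=\ka$, produces $(\ep_{Y\cup Z},\ep_Y)\in\si$. Conjugating by $\ep_Z$ and using $\ep_A\ep_B=\ep_{A\cap B}$ collapses this pair to $(\ep_Z,\emptypart)=(\ep_Z\ep_{Y\cup Z}\ep_Z,\ep_Z\ep_Y\ep_Z)\in\si$, again contradicting $\ka\geq\eta$.

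The ``consequently'' clause is then a routine cardinality count: the $\ka$ transversals of $\al$ partition into those of $\al\cap\be$ and those of $\al\sm\be\sub\al\sd\be$; since the latter set now has size strictly less than $\ka$ and $\ka$ is infinite, the former must comprise all $\ka$ transversals. There is no real obstacle here beyond bookkeeping: the only point that needs care is verifying that Lemmas \ref{lem:tech07}, \ref{lem:tech06} and \ref{lem:tech09} collectively exhaust all ways the ``excess'' of $|\al\sd\be|$ can be distributed between transversal and non-transversal blocks, but the case split above is manifestly complete.
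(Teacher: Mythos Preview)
Your proof is correct and follows essentially the same approach as the paper's own proof: argue by contradiction, appeal to whichever of Lemmas \ref{lem:tech07}, \ref{lem:tech06}, \ref{lem:tech09} applies (with $\xi=\ka$), and in each case deduce $(\ep_Z,\emptypart)\in\si$ for some $Z$ of size $\ka$, contradicting the definition of $\eta(\si)$. The paper's version is terser (it simply asserts that one of the three lemmas applies rather than spelling out the case split), and it uses one-sided multiplication $(\ep_Z\ep_{Y\cup Z},\ep_Z\ep_Y)$ rather than your two-sided ``conjugation'', but these are cosmetic differences.
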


\pf
It suffices to prove the first assertion; indeed, the second follows from the first, together with the facts that $\al=(\al\cap\be)\cup(\al\sm\be)$, $\al\in D_\ka$ and $\ka\geq\aleph_0$.
To prove the first assertion, suppose to the contrary that $|\al\sd\be|\geq\ka$.  Fix arbitrary disjoint subsets $Y,Z\sub X$ with ${|Y|=|Z|=\ka}$.
One of Lemmas 
\ref{lem:tech07}, \ref{lem:tech06} or \ref{lem:tech09} applies, with $\xi=\kappa$,
and so we have $(\epsilon_{Y\cup Z},\epsilon_Y)\in\sigma$ or $(\epsilon_Z,\epsilon_\emptyset)\in\sigma$.
In fact, the former option implies 
$(\epsilon_Z,\epsilon_\emptyset)=(\epsilon_Z\epsilon_{Y\cup Z},\epsilon_Z\epsilon_Y)\in\sigma$,
so $(\epsilon_Z,\epsilon_\emptyset)\in\sigma$ in all cases.
But this contradicts the definition of $\eta=\eta(\sigma)$ as $(\ep_Z,\emptypart)\in D_\ka\times D_0$ with~$\ka\geq\eta$.
\epf

We may now tie together all the loose ends, and prove the main result of this subsection.  Part~\ref{it:h3ii} of the next lemma is Lemma \ref{lem:YZ}, while part \ref{it:h3i} is a finite analogue that will be of use later on in this section.

\begin{lemma}[cf.~Lemma \ref{lem:YZ}]\label{lem:tech3}
Suppose $(\al,\be)\in\si\restr_{D_\ka}$ where $\ka\geq\eta=\eta(\si)\geq\aleph_0$ and ${|\al\sd\be|\not=0}$.
\begin{thmenum}
\item \label{it:h3i} For any disjoint subsets $Y,Z\sub X$ with $|Y|\leq\ka$ and $|Z|<\aleph_0$, we have $(\ep_{Y\cup Z},\ep_Y)\in\si$.
\item \label{it:h3ii}For any disjoint subsets $Y,Z\sub X$ with $|Y|\leq\ka$ and $|Z|\leq|\al\sd\be|$, we have $(\ep_{Y\cup Z},\ep_Y)\in\si$.
\end{thmenum}
\end{lemma}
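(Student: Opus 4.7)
The overall strategy will be to reduce both parts to Lemma \ref{lem:tech2} where possible, handle the easy subcases via Lemma \ref{la325} and extension, and employ a direct construction for the one remaining case. Part (ii) will essentially follow from Lemma \ref{lem:tech2} by extending $Y$ and $Z$; part (i) will reduce to it by induction. The main work will be in the final awkward case where $|Y|$ is infinite but $|\al\sd\be|$ is only finite.

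For part (ii), let $\xi = |\al \sd \be|$. When $\xi \geq \aleph_0$ I will extend $Y$ to $Y' \supseteq Y$ with $|Y'| = \ka$ and $Y' \cap Z = \emptyset$, and extend $Z$ to $Z' \supseteq Z$ with $|Z'| = \xi$ and $Y' \cap Z' = \emptyset$; these are possible since $|Y| \leq \ka$, $|Z| \leq \xi \leq \ka$, and $\ka$ is infinite. Lemma \ref{lem:tech2} then yields $(\ep_{Y' \cup Z'}, \ep_{Y'}) \in \si$. Left-multiplying by $\ep_{Y \cup Z}$ and simplifying via $\ep_{Y \cup Z} \ep_{Y' \cup Z'} = \ep_{Y \cup Z}$ (since $Y \cup Z \sub Y' \cup Z'$) and $\ep_{Y \cup Z} \ep_{Y'} = \ep_Y$ (since $Y \sub Y'$ and $Z \cap Y' = \emptyset$) produces the required $(\ep_{Y \cup Z}, \ep_Y) \in \si$. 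When $\xi$ is finite, $|Z| \leq \xi$ is finite and the claim reduces to part (i).

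For part (i), I will induct on $|Z|$. The base case $|Z| = 0$ is trivial. For the inductive step at $|Z| = k \geq 2$, picking $z \in Z$ and applying the inductive hypothesis to $(Y, Z \sm \{z\})$ and to $(Y \cup (Z \sm \{z\}), \{z\})$ will by transitivity reduce everything to the case $|Z| = 1$. For the $|Z| = 1$ case, if $|Y| < \eta$ then $|Y \cup \{z\}| < \eta$ (as $\eta$ is infinite), so Lemma \ref{la325} gives $(\ep_{Y \cup \{z\}}, \emptypart), (\ep_Y, \emptypart) \in \si$ and transitivity concludes. If $|Y| \geq \eta$ and $\xi \geq \aleph_0$, the part (ii) argument applies directly with $|Z| = 1$.

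The main obstacle will be the remaining sub-case: $|Z| = 1$, $|Y| \geq \eta$ (so $|Y|$ is infinite), and $\xi$ finite, in which neither Lemma \ref{la325} nor Lemma \ref{lem:tech2} applies directly. Here I will use Lemma \ref{lem:tech1} to extract $|Y|$ common transversals $\{a_i, b_i'\}_{i \in I'}$ of $\al \cap \be$ with all the $a_i, b_i$ lying outside $Y \cup \{z\}$ and outside the finite set of elements appearing in blocks of $\al \sd \be$; writing $Y = \set{y_i}{i \in I'}$ and picking a block of $\al \sm \be$ (which I will take in the generic case to be a transversal $\{p, q'\}$), I will set $\phi_1 = \binom{y_i}{a_i}_{i \in I'} \cup \binom{z}{p}$ and $\phi_2 = \binom{b_i}{y_i}_{i \in I'} \cup \binom{q}{z}$. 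A product-graph computation will then verify $\phi_1 \al \phi_2 = \ep_{Y \cup \{z\}}$ and $\phi_1 \be \phi_2 = \ep_Y$, so that compatibility of $\si$ gives $(\ep_{Y \cup \{z\}}, \ep_Y) \in \si$. Parallel constructions will be needed in the configurations where $\al \sm \be$ contains non-transversal blocks, or where the corresponding structure in $\be$ involves a different pairing; these tend to be more intricate in $\P_X$ than in $\PB_X$ because $\P_X$ allows blocks of arbitrary size, but each is a routine verification.
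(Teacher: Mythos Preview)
Your overall strategy matches the paper's closely: reduce part (ii) to Lemma \ref{lem:tech2} (after verifying $\xi<\ka$ via Lemma \ref{lem:tech1}, which you should cite explicitly), reduce part (i) to $|Z|=1$ by induction, and then handle $|Z|=1$ by a direct construction using the common transversals of $\al\cap\be$ guaranteed by Lemma \ref{lem:tech1}. The paper streamlines matters by first reducing to $|Y|=\ka$ via the same left-multiplication trick you use for part (ii), thereby avoiding your separate appeal to Lemma \ref{la325} for the $|Y|<\eta$ sub-case; your route through Lemma \ref{la325} is valid and not circular, but the paper's uniform reduction is cleaner. Also, your requirement that the chosen $a_i,b_i$ lie outside $Y\cup\{z\}$ is unnecessary (and awkward to arrange when $|Y|=\ka$); the paper imposes no such condition.

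There is, however, a genuine gap in your ``generic case'' construction for $\P_X$. You pick a transversal from $\al\sm\be$, take $p$ in its upper part and $q'$ in its lower part, and assert $\phi_1\be\phi_2=\ep_Y$. But in $\P_X$ the block $A\cup B'\in\al\sm\be$ may be properly contained in a single transversal of $\be$, in which case $p$ and $q'$ lie in the \emph{same} block of $\be$ and your product connects $z$ to $z'$, yielding $\ep_{Y\cup\{z\}}$ rather than $\ep_Y$. The paper handles this explicitly: if $A\cup B'$ is not properly contained in any transversal of $\be$, one chooses $a\in A$ and $b\in B$ with $a,b'$ in distinct blocks of $\be$; if it is, one swaps the roles of $\al$ and $\be$ (the containing transversal of $\be$ lies in $\be\sm\al$ and cannot itself be properly contained in a transversal of $\al$). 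Your phrase ``where the corresponding structure in $\be$ involves a different pairing'' does not clearly capture this dichotomy. In $\PB_X$ the issue cannot arise since blocks have size at most $2$. The non-transversal case also needs more than a handwave: the paper shows how an upper non-transversal $C\in\al\sm\be$ (after possibly swapping) yields $c,d\in C$ in distinct blocks of $\be$, and then a single pre-multiplication produces a pair in $\si\restr_{D_\ka}$ with a transversal in the symmetric difference, reducing to the previous case.
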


\begin{proof}
We first note that it suffices to prove the result 
assuming $|Y|=\kappa$ throughout and $|Z|=|\alpha\sd\beta |$
in part \ref{it:h3ii}.  
Indeed, if $(\ep_{Y\cup Z},\ep_Y)\in\si$ for some disjoint subsets $Y,Z\sub X$, then for any disjoint $U,V\sub X$ with $|U|\leq|Y|$ and $|V|\leq|Z|$, we fix injections $\psi\colon U\to Y$ and $\phi\colon V\to Z$, define the partitions $\th_1=\partpermII uv{u\psi}{v\phi}_{u\in U,\ v\in V}$ and $\th_2=\partpermII {u\psi}{v\phi}uv_{u\in U,\ v\in V}$, and obtain $(\ep_{U\cup V},\ep_U)=(\th_1\ep_{Y\cup Z}\th_2,\th_1\ep_Y\th_2)\in\si$.

\pfitem{\ref{it:h3i}}  Observe that it suffices to assume that $|Z|=1$.  Indeed, if the result is true for $|Z|=1$, and if $W=\{w_1,\ldots,w_n\}\sub X$ is disjoint from $Y$, then writing $Z_i=Y\cup\{w_1,\ldots,w_i\}$ for each $0\leq i\leq n$, we have $(\ep_{Z_0},\ep_{Z_1}),(\ep_{Z_1},\ep_{Z_2}),\ldots,(\ep_{Z_{n-1}},\ep_{Z_n})\in\si$, at which point transitivity gives $(\ep_Y,\ep_{Y\cup W})=(\ep_{Z_0},\ep_{Z_n})\in\si$.

So, now, write $Y=\set{y_i}{i\in I}$ and $Z=\{z\}$.  By Lemma~\ref{lem:tech1}, $\al\cap\be$ contains $\ka$ transversals, say $\set{A_i\cup B_i'}{i\in I}$.  By symmetry, we may assume that either $\al\sm\be$ contains a transversal, or else $\al\sd\be$ contains no transversals but $\al\sm\be$ contains an upper non-transversal.

\pfcase{1}
Suppose first that $\al\sm\be$ contains a transversal, say $A\cup B'$.  If this transversal is not properly contained in any transversal of $\be$, then we choose some $a\in A$ and $b'\in B'$ such that they do not belong to the same block of $\be$. Then with $\th_1=\partpermII{y_i}z{A_i}a_{i\in I}$ and $\th_2=\partpermII{B_i}b{y_i}z_{i\in I}$, we have $(\ep_{Y\cup Z},\ep_Y)=(\th_1\al\th_2,\th_1\be\th_2)\in\si$.  If $A\cup B'$ is properly contained in some transversal $C\cup D'$ of~$\be$, then $C\cup D'$ is not properly contained in any transversal of $\al$, and we can we then repeat the previous argument with the roles of $\al$ and $\be$ reversed.

\pfcase{2}
Now suppose $\al\sd\be$ contains no transversals but $\al\sm\be$ contains an upper non-transversal, say~$C$.  Renaming $\al,\be$ if necessary (if $C$ is a proper subset of a non-transversal of $\be$), we may assume there exists $c,d\in C$ such that $c$ and $d$ belong to distinct blocks of $\be$.  Fix some $i\in I$ and some $a\in A_i$.  Let $\th=\partXXIV{A_j}c{}{A_j}c{a,d}_{j\in I\sm\{i\}}$, and put $(\al_1,\be_1)=(\th\al,\th\be)\in\si\restr_{D_\ka}$.  Then $\al_1\sm\be_1$ contains the transversal $\{c\}\cup B_i'$, so we have reduced to Case 1.

\pfitem{\ref{it:h3ii}}
If $\xi=|\al\sd\be|$ is finite, we apply part \ref{it:h3i}; if $\xi$ is infinite, then we apply Lemma \ref{lem:tech2}, keeping in mind that Lemma \ref{lem:tech1} gives $\xi<\ka$.
\epf

\subsection{The lattice of partitions}\label{subsect:part_lat}

In the remainder of Section \ref{sect:tech}, it will often be convenient to make use of two additional operations on $\P_X$, which we denote by $\wedge$ and $\vee$.  These come from the fact that partitions (of arbitrary sets) have a natural lattice order;
see, for example, \cite[Section V.4]{MR2768581}.  We briefly review the relevant concepts here.

If $\al$ and $\be$ are partitions of some set (such as $X$ or $X\cup X'$), we write $\al\pre\be$ to indicate that~$\al$ \emph{refines} $\be$, meaning that every block of $\al$ is contained in a block of $\be$; this is the same as saying that the equivalence relation corresponding to $\al$ is contained in that corresponding to $\be$.  
Note that if $\alpha\pre\beta$ then $|\alpha\setminus\beta |\geq |\beta\setminus\alpha |$;
in particular, if $|\alpha\sd\beta | $ is infinite then $|\alpha\sd\beta |=|\alpha\setminus\beta |$.
Since inclusion is a lattice ordering on equivalences, we have natural meet and join operations on partitions; 
we write $\al\wedge\be$  for the greatest partition~$\ga$ satisfying $\ga\pre\al,\be$, and 
$\al\vee\be$ for the least $\gamma$ satisfying $\al,\be\pre\ga$.

We will need the following lemma on a number of occasions.

\begin{lemma}\label{lem:joins}
Let $\al$ and $\be$ be partitions of some set, and let $\th$ be either of 
$\al\vee\be$ or $\al\wedge\be$.  Then
\begin{thmenum}
\item \label{it:joinsi} $|\al\sd\th|,|\be\sd\th|\leq|\al\sd\be|\leq|\al\sd\th|+|\be\sd\th|$,
\item \label{it:joinsii} $|\al\sd\be| = {\max}\big(|\al\sd\th|,|\be\sd\th|\big)$ if $|\al\sd\be|\geq\aleph_0$.
\end{thmenum}
\end{lemma}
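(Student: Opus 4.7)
The proof divides naturally by the choice of $\th$, with part (ii) being a routine consequence of (i). The right inequality of (i), $|\al\sd\be| \leq |\al\sd\th| + |\be\sd\th|$, is immediate from the set-theoretic triangle inequality $\al\sd\be \sub (\al\sd\th) \cup (\th\sd\be)$, viewing $\al$, $\be$ and $\th$ as sets of blocks. By symmetry between $\al$ and $\be$, the left inequality reduces to showing $|\al\sd\th| \leq |\al\sd\be|$.

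For $\th = \al\vee\be$, I would first establish the key observation that any common block of $\al$ and $\be$ is also a block of $\th$: such a block $A$ is isolated in the alternating-path structure on $\al$- and $\be$-blocks that defines the join (since the only $\al$- or $\be$-block meeting $A$ is $A$ itself), so $A$ forms its own connected component and hence a block of $\th$. Contrapositively, every $A \in \al\sm\th$ lies in $\al\sm\be$, giving the inclusion $\al\sm\th \sub \al\sm\be$ and hence $|\al\sm\th| \leq |\al\sm\be|$. The same observation shows that for each $T \in \th\sm\al$, the (at least two) $\be$-blocks contained in $T$ all lie in $\be\sm\al$; summing over such $T$ (which contribute disjoint $\be$-blocks) gives $|\be\sm\al| \geq 2|\th\sm\al|$. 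Adding yields $|\al\sd\be| \geq |\al\sm\th| + 2|\th\sm\al| \geq |\al\sd\th|$.

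For $\th = \al\wedge\be$, the blocks of $\th$ are the nonempty intersections $A\cap B$ with $A \in \al$, $B \in \be$. The same isolated-block reasoning yields $|\al\sm\th| \leq |\al\sm\be|$. For $|\th\sm\al|$, each $T = A\cap B \in \th\sm\al$ satisfies $A \in \al\sm\be$ and $B \in \be\sm\al$ (a common block of $\al$ and $\be$ in the role of either $A$ or $B$ would force $T \in \th\cap\al$), and the pair $(A,B)$ determines $T$. Hence $|\th\sm\al| \leq |\al\sm\be|\cdot|\be\sm\al|$. This is the step I expect to be the main obstacle, since the product of two cardinalities can in general strictly exceed their sum; the intended resolution presumably invokes that in the applications of this lemma the cardinal $|\al\sd\be|$ is infinite, whereby $|\al\sm\be|\cdot|\be\sm\al| \leq |\al\sd\be|^2 = |\al\sd\be|$ by standard cardinal arithmetic, giving $|\al\sd\th| \leq |\al\sm\be| + |\al\sd\be| = |\al\sd\be|$.

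Part (ii) follows cleanly from (i): given $|\al\sd\be| \geq \aleph_0$, the right inequality of (i) yields $|\al\sd\be| \leq |\al\sd\th| + |\be\sd\th| \leq 2\max(|\al\sd\th|, |\be\sd\th|)$, which forces that maximum to be infinite, so $2\max = \max$; combining with $\max \leq |\al\sd\be|$ from the left inequality of (i) produces the desired equality.
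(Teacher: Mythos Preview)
Your instinct about the meet case is exactly right, and in fact sharper than you realise: part (i) is \emph{false} as stated for $\th=\al\wedge\be$ when $|\al\sd\be|$ is finite. Take $\al=\{\{1,2\},\{3,4\}\}$ and $\be=\{\{1,3\},\{2,4\}\}$; then $\al\wedge\be$ is the partition into singletons, $|\al\sd\be|=4$, but $|\al\sd(\al\wedge\be)|=6$. The paper's own proof contains an error here: it asserts $|\th\sm\al|\le|\be\sm\al|$ on the grounds that every block of $\th\sm\al$ is a subset of some block of $\be\sm\al$, but that containment map is not injective (in the example, four singletons map to two pairs). Your product bound $|\th\sm\al|\le|\al\sm\be|\cdot|\be\sm\al|$ is the honest estimate, and it does yield (i) whenever $|\al\sd\be|\ge\aleph_0$, exactly as you outline.

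Your arguments for the join case, for part (ii), and for the right-hand inequality of (i) are all correct and match the paper's approach in spirit. One small correction in the join paragraph: a block $T\in\th\sm\al$ need not contain at least two $\be$-blocks (take $\al=\{\{1\},\{2\}\}$, $\be=\{\{1,2\}\}$), but it always contains at least one, and your injection argument only needs that.

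For the record, the paper only invokes (i) with the meet inside $\PB_X$ (Lemmas~\ref{la-tt5}\ref{it:tt5ii} and~\ref{la-tt6}\ref{it:tt6ii}), and there the genuinely needed conclusion is merely that $|\ol\ga\sd\ol\de|<\aleph_0$ implies $|\ol\ga\sd(\ol\ga\wedge\ol\de)|<\aleph_0$ (respectively, $\le\xi$ implies $\le\xi$ for infinite $\xi$). Your product bound delivers this, so nothing downstream is affected. But you should not try to prove (i) for the meet in full generality---it cannot be done.
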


\begin{proof}
We just prove \ref{it:joinsi}, as \ref{it:joinsii} quickly follows.  First note that 
\[
\alpha\setminus\theta \subseteq \alpha\setminus\beta\subseteq (\alpha\setminus\theta)\cup(\theta\setminus\beta)
\AND
\beta\setminus\theta \subseteq \beta\setminus\alpha\subseteq (\beta\setminus\theta)\cup(\theta\setminus\alpha) ,
\]
so that
\[
|\alpha\setminus\theta| \leq |\alpha\setminus\beta| \leq |\alpha\setminus\theta|+|\theta\setminus\beta|
\AND
|\beta\setminus\theta| \leq |\beta\setminus\alpha| \leq |\beta\setminus\theta|+|\theta\setminus\alpha|.
\]
Furthermore, we have $|\theta\setminus\alpha |\leq |\beta\setminus\alpha |$.
Indeed, if $\theta=\alpha\vee\beta$ then
every block of $\theta\setminus\alpha$ is a union of blocks from $\be\sm\al$,
while if $\theta=\alpha\wedge\beta$ then every block of $\theta\setminus\alpha$ is a subset of some block of~$\be\sm\al$.
Dually, $|\theta\setminus\beta |\leq |\alpha\setminus \beta |$,
and the result follows.
\end{proof}

While the partition monoid $\P_X$ is a lattice under the above operations $\wedge$ and $\vee$, the partial Brauer monoid $\PB_X$ is closed under $\wedge$ but not $\vee$.  
Note that $\emptypart$ is the $\pre$-least element in both~$\P_X$ and $\PB_X$, while $\binom XX$ is the $\pre$-greatest element in $\P_X$.  There is no $\pre$-greatest element in $\PB_X$, though there are many $\pre$-maximal elements; these are precisely the partitions from  $\PB_X$ with at most one singleton block.
The $\wedge$ operation on $\PB_X$ will be used extensively in Subsections~\ref{subsect:PBX1} and \ref{subsect:PBX2}, and the $\vee$ operation on $\P_X$ in Subsections \ref{subsect:PX1} and \ref{subsect:PX2}.

We record here the following obvious fact concerning partitions from $\P_X$ of rank $0$:

\begin{lemma}
\label{lem:Jr0}
For $\alpha,\beta\in\P_X$ of rank $0$, we have 
\[
\epfreseq
\overline{\alpha\wedge\beta}=\overline{\alpha}\wedge\overline{\beta} \COMMA
\underline{\alpha\wedge\beta}=\underline{\alpha}\wedge\underline{\beta} \COMMA
\overline{\alpha\vee\beta}=\overline{\alpha}\vee\overline{\beta} \COMMA
\underline{\alpha\vee\beta}=\underline{\alpha}\vee\underline{\beta}.  
\]
\end{lemma}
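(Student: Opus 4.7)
The plan is to exploit the fact that a partition of rank $0$ decouples completely into its restriction to $X$ and its restriction to $X'$, since no block meets both halves of $X\cup X'$. Under this decoupling, the meet and join operations on partitions of $X\cup X'$ are carried out independently on the upper and lower parts, which is exactly what the claimed equalities assert.

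First I would record the elementary observation that if $\gamma\in\P_X$ has rank $0$, then every block of $\gamma$ lies entirely in $X$ or entirely in $X'$; consequently, viewed as an equivalence relation on $X\cup X'$, we have $\gamma=\overline{\gamma}\cup\underline{\gamma}$, with the two summands living in $X\times X$ and $X'\times X'$ respectively and having no interaction. I would then check that, under this hypothesis, both $\alpha\wedge\beta$ and $\alpha\vee\beta$ still have rank $0$: for the meet this is immediate from $\alpha\wedge\beta\pre\alpha$, while for the join it follows from the observation that the transitive closure of $\alpha\cup\beta$ (as equivalences on $X\cup X'$) cannot connect an element of $X$ to one of $X'$, since no single generating pair does so.

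Next, for the meet identities, I would use the standard fact that the meet of two partitions corresponds to the intersection of the associated equivalence relations. Restriction to $X$ commutes with intersection, so
\[
\overline{\alpha\wedge\beta}=(\alpha\cap\beta)\cap(X\times X)=\overline{\alpha}\cap\overline{\beta}=\overline{\alpha}\wedge\overline{\beta},
\]
and similarly $\underline{\alpha\wedge\beta}=\underline{\alpha}\wedge\underline{\beta}$. For the join identities, I would use that the join of two partitions corresponds to the transitive closure of the union of the associated equivalence relations. Because no pair in $\alpha\cup\beta$ connects $X$ with $X'$, any chain witnessing $\alpha\vee\beta$-equivalence of two elements of $X$ consists entirely of elements of $X$; hence restriction to $X$ commutes with the transitive closure in this rank-$0$ setting, giving $\overline{\alpha\vee\beta}=\overline{\alpha}\vee\overline{\beta}$, and dually for $\underline{\alpha\vee\beta}$.

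There is no serious obstacle here: the only point requiring care is the join identity, where one must verify that the transitive closure does not produce new equivalences on $X$ via detours through $X'$; but the rank-$0$ assumption rules this out immediately. The proof is essentially bookkeeping once the decoupling observation is in place.
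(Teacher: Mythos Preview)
Your argument is correct and is precisely the natural unpacking of why the result holds. The paper does not actually give a proof: the lemma is stated as an ``obvious fact'' and marked with a qed symbol immediately after the displayed equalities, so your write-up simply supplies the routine verification that the paper omits.
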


We mention in passing the following compatibility result, even though it will not be needed subsequently.
It was proved in \cite[Lemma 6.1]{FitzGerald2013}, and can be seen directly 
using the definition of product as $\al\be = (\al_\downarrow\vee\be^\uparrow)\restr_{X\cup X'}$ (cf.~Subsection \ref{subsect:M}), and the fact that refinement of partitions corresponds to inclusion of equivalences.

\begin{prop}\label{prop:ordered}
The order $\pre$ is compatible with multiplication in the monoid $\M$ (standing for $\P_X$ or $\PB_X$ for any set $X$), meaning that 
\begin{align*}
[\al_1\pre\al_2 \text{ and } \be_1\pre\be_2] &\implies \al_1\be_1\pre\al_2\be_2 &&\text{for all $\al_1,\al_2,\be_1,\be_2\in\M$.}
\epfreseq
\end{align*}
\end{prop}

As a result of Proposition \ref{prop:ordered}, we may think of $\M$ as an ordered monoid: i.e., an algebra of type $(\M,\cdot,\pre)$; cf.~\cite[Chapter 11]{Blyth2005}.  In fact, since $\pre$ is also obviously compatible with the involution (i.e., $\al\pre\be\implies\al^*\pre\be^*$), we may think of $\M$ as an ordered $*$-monoid $(\M,\cdot,{}^*,\pre)$.  These structures could be further enhanced by adding the $\wedge$ and $\vee$ operations in the case $\M=\P_X$, or just the $\wedge$ operation in the case $\M=\PB_X$.  It would be interesting to study these enhanced algebraic structures in their own right (cf.~\cite{Maddux2006,HJM2016,JS2011}), but this is beyond the scope of the current article.  
However, we will make one last observation before moving on.  In light of Proposition~\ref{prop:ordered}, 
we have inequalities such as
\[
\th(\al\wedge\be)\pre(\th\al)\wedge(\th\be) \AND \th(\al\vee\be)\succeq(\th\al)\vee(\th\be),
\]
but these can be strict.  For example in $\P_3$, 
\[
\th(\al\wedge\be)\not=(\th\al)\wedge(\th\be) \qquad\text{for}\qquad
\th = \custpartn{1,2,3}{1,2,3}{\stline21\stline23} \COMMa \al = \custpartn{1,2,3}{1,2,3}{\stline12\stline22} \COMMa \be = \custpartn{1,2,3}{1,2,3}{\stline22\stline32} , 
\]
while
\[
\th(\al\vee\be)\not=(\th\al)\vee(\th\be) \qquad\text{for}\qquad
\th = \custpartn{1,2,3}{1,2,3}{\stline11} \COMMa \al = \custpartn{1,2,3}{1,2,3}{\stline22} \COMMa \be = \custpartn{1,2,3}{1,2,3}{\stline12\stline23} .
\]

\subsection[Lemmas \ref{la328a} and \ref{la328c} for $\PB_X$]{\boldmath Lemmas \ref{la328a} and \ref{la328c} for $\PB_X$}\label{subsect:PBX1}

This subsection and the next exclusively concern the partial Brauer monoid $\PB_X$, where $X$ is infinite.  
Throughout this subsection, $\si$ denotes an arbitrary congruence on $\PB_X$.  By an \emph{upper} or \emph{lower hook} we mean a two-element subset of $X$ or of $X'$, respectively.  Unless otherwise stated, when we refer to a set simply as a \emph{hook}, we mean an \emph{upper} hook.

\begin{lemma}[cf.~Lemma \ref{la328a}]
\label{la-tt5}
Suppose $(\alpha,\beta)\in \sigma\restr_{D_0}$ with $\overline{\alpha}\neq\overline{\beta}$.
\begin{thmenum}
\item \label{it:tt5i}
For any $\gamma,\delta\in D_0$ with $|\overline{\gamma}\sd\overline{\delta}|<\aleph_0$, $\ol\ga\pre\ol\de$ and $\underline{\gamma}=\underline{\delta}$, we have $(\gamma,\delta)\in\sigma$.
\item \label{it:tt5ii}
For any $\gamma,\delta\in D_0$ with $|\overline{\gamma}\sd\overline{\delta}|<\aleph_0$ and $\underline{\gamma}=\underline{\delta}$, we have $(\gamma,\delta)\in\sigma$.
\end{thmenum}
\end{lemma}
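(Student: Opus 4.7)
The plan is to isolate a single, highly structured $\sigma$-related pair from the given data and then propagate it by left/right multiplications. Write $\pi_{x,y}\in\PB_X$ for the rank-$0$ partition whose only non-singleton block is the upper hook $\{x,y\}$. The whole argument reduces to first establishing, and then exploiting, the fact that $(\pi_{x,y},\emptypart)\in\sigma$ for every distinct pair $x,y\in X$.

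To produce the initial such pair, I would note that because blocks in $\PB_X$ have size at most $2$, a short case analysis on $\ol\alpha\sd\ol\beta\neq\emptyset$ shows that some hook $\{x,y\}$ belongs to $\ol\alpha\sd\ol\beta$; swapping $\alpha,\beta$ if necessary, $\{x,y\}\in\ol\alpha\setminus\ol\beta$. A direct computation (exactly as in the worked examples in Section \ref{subsect:M}) gives that $\ep_{\{x,y\}}\alpha\,\emptypart=\pi_{x,y}$ while $\ep_{\{x,y\}}\beta\,\emptypart=\emptypart$, because $x$ and $y$ lie in different blocks of $\ol\beta$ and so are forced apart by~$\ep_{\{x,y\}}$. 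Hence $(\pi_{x,y},\emptypart)\in\sigma$. To transport this to an arbitrary hook $\{u,v\}$, sandwich by $\theta_1$ with transversals $\{u,x'\},\{v,y'\}$ and all other vertices singletons (and $\theta_2=\emptypart$); a quick product check gives $\theta_1\pi_{x,y}\theta_2=\pi_{u,v}$ and $\theta_1\emptypart\theta_2=\emptypart$.

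The core step of (i) is a single hook-merger: suppose $\gamma,\delta\in D_0$ agree except that two upper singletons $\{u\},\{v\}\in\ol\gamma$ are replaced by the hook $\{u,v\}\in\ol\delta$, with $\ul\gamma=\ul\delta$. Take $\theta_1\in\PB_X$ having transversals $\{u,u'\},\{v,v'\}$, upper non-transversals given by $\ol\gamma\setminus\{\{u\},\{v\}\}$, and all lower vertices singleton, and take $\theta_2=\gamma$. Computing the two products shows $\theta_1\pi_{u,v}\theta_2=\delta$ (the hook in the middle layer of $\pi_{u,v}$ fuses the two transversals of $\theta_1$, creating the hook $\{u,v\}$ on top, with the lower structure supplied by $\theta_2=\gamma$) while $\theta_1\emptypart\theta_2=\gamma$. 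Because $\ol\gamma\pre\ol\delta$ and $|\ol\gamma\sd\ol\delta|<\aleph_0$, the refinement restriction (combined with the size-$2$ block bound) forces $\ol\delta$ to arise from $\ol\gamma$ by merging finitely many pairs of singletons into hooks; iterating the single-merger step along a finite chain and using transitivity of~$\sigma$ gives $(\gamma,\delta)\in\sigma$, proving~\ref{it:tt5i}.

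For \ref{it:tt5ii}, I would take the common refinement and let $\rho\in D_0$ be the unique rank-$0$ partition with $\ol\rho=\ol\gamma\wedge\ol\delta$ and $\ul\rho=\ul\gamma=\ul\delta$; by Lemma \ref{lem:joins} \ref{it:joinsi}, $|\ol\rho\sd\ol\gamma|$ and $|\ol\rho\sd\ol\delta|$ are both finite, and clearly $\ol\rho\pre\ol\gamma,\ol\delta$. Applying part \ref{it:tt5i} twice yields $(\rho,\gamma),(\rho,\delta)\in\sigma$, and transitivity finishes the proof. The main obstacle is the bookkeeping in the two product calculations (verifying that every constructed partition really lies in $\PB_X$ and that the products collapse as claimed); in particular, one must check that in $\theta_1\pi_{u,v}\theta_2$ no intermediate connected component creates a block of size greater than $2$ in the final output, which is why $\theta_1$ is chosen to have $u,v$ as its \emph{only} upper transversal endpoints touching the hook of $\pi_{u,v}$.
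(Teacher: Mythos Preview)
Your proof is correct and follows essentially the same strategy as the paper: reduce part~\ref{it:tt5i} to the case where $\delta$ differs from $\gamma$ by a single additional upper hook, realise that step via a suitable sandwich product exploiting a hook in $\ol\alpha\sd\ol\beta$, and then deduce part~\ref{it:tt5ii} via the meet $\ol\gamma\wedge\ol\delta$ exactly as the paper does.

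The only organisational difference is that you first isolate the canonical pair $(\pi_{u,v},\emptypart)\in\sigma$ and then use it in the hook-merger, whereas the paper performs the hook-merger in one step directly from $(\alpha,\beta)$: with $\theta$ having transversals $\{x,u'\},\{y,v'\}$ and upper hooks $\{A_i\}$ (the hooks of $\gamma$), one gets $(\gamma,\delta)=(\theta\alpha\gamma,\theta\beta\gamma)$ immediately. Your modular version is closer in spirit to the paper's treatment of the $\P_X$ case (Lemma~\ref{la-tt1}, via $\upsilon_Y$), and is slightly longer but perfectly valid.
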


\pf
\ref{it:tt5i}  Since $|\ol\ga\sd\ol\de|<\aleph_0$, there is a sequence $\ga=\ga_0\pre\ga_1\pre\cdots\pre\ga_k=\de$ such that each $\ul\ga_i=\ul\ga=\ul\de$, and $\ga_i$ has all but one of the (upper) hooks of $\ga_{i+1}$ for each $i$.  In light of this, it suffices inductively to assume that~$\ga$ has all but one of the hooks of $\de$.  Let the hooks of $\ga$ be $\set{A_i}{i\in I}$, and let the additional hook of $\de$ be $\{x,y\}$.  Since $\ol\al\not=\ol\be$, we may assume without loss of generality that $\be$ has some hook $\{u,v\}$ that is not a hook of $\al$.  Then with $\th=\partXXIV xy{A_i}uv{}_{i\in I}$, we have $(\ga,\de)=(\th\al\ga,\th\be\ga)\in\si$.

\pfitem{\ref{it:tt5ii}}  Let $\th=\ga\wedge\de\in D_0$.  Since $\ol\th=\ol\ga\wedge\ol\de$ by Lemma \ref{lem:Jr0}, we have $\ol\th\pre\ol\ga$.  By Lemma \ref{lem:joins} \ref{it:joinsi}, we also have $|\ol\ga\sd\ol\th| = |\ol\ga\sd(\ol\ga\wedge\ol\de)|\leq|\ol\ga\sd\ol\de|<\aleph_0$.  Again by Lemma \ref{lem:Jr0} we have $\ul\th=\ul\ga\wedge\ul\de=\ul\ga$.  Thus, part \ref{it:tt5i} gives $(\ga,\th)\in\si$.  By symmetry $(\de,\th)\in\si$, and by transitivity $(\ga,\de)\in\si$.
\epf

\newpage

\begin{lemma}[cf.~Lemma \ref{la328c}]
\label{la-tt6}
Suppose $(\alpha,\beta)\in\sigma\restr_{D_0}$ with $|\overline{\alpha}\sd\overline{\beta}|=\xi\geq\aleph_0$. 
\begin{thmenum}
\item \label{it:tt6i}
For any $\gamma,\delta\in D_0$ with $|\overline{\gamma}\sd\overline{\delta}|\leq\xi$, $\ol\ga\pre\ol\de$ and $\underline{\gamma}=\underline{\delta}$, we have $(\gamma,\delta)\in\sigma$.
\item \label{it:tt6ii}
For any $\gamma,\delta\in D_0$ with $|\overline{\gamma}\sd\overline{\delta}|\leq\xi$ and $\underline{\gamma}=\underline{\delta}$, we have $(\gamma,\delta)\in\sigma$.
\end{thmenum}
\end{lemma}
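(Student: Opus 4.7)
The plan is to mimic the structure of Lemma \ref{la-tt5}: deduce \ref{it:tt6ii} from \ref{it:tt6i} via the meet $\ga\wedge\de$, and prove \ref{it:tt6i} directly by constructing $\th_1,\th_2\in\PB_X$ such that $(\th_1\al\th_2,\th_1\be\th_2)=(\ga,\de)$. For \ref{it:tt6ii} the argument is essentially verbatim from Lemma \ref{la-tt5}\ref{it:tt5ii}: set $\th=\ga\wedge\de$, observe that $\th\in\PB_X\cap D_0$, $\ol\th=\ol\ga\wedge\ol\de\pre\ol\ga,\ol\de$, and $\ul\th=\ul\ga\wedge\ul\de=\ul\ga=\ul\de$ by Lemma \ref{lem:Jr0}; use Lemma \ref{lem:joins}\ref{it:joinsi} to bound $|\ol\th\sd\ol\ga|,|\ol\th\sd\ol\de|\leq|\ol\ga\sd\ol\de|\leq\xi$; apply \ref{it:tt6i} to the pairs $(\th,\ga)$ and $(\th,\de)$; and conclude by transitivity. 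So the real work is in \ref{it:tt6i}.

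For \ref{it:tt6i}, I will first choose $\th_2\in D_0$ with $\ol\th_2$ consisting entirely of singletons and $\ul\th_2=\ul\ga=\ul\de$. A quick computation with the product graph shows that for any $\zeta\in D_0$ the product $\th_1\zeta\th_2$ lies in $D_0$ with upper part $\ol{\th_1\zeta}$ and lower part $\ul\th_2=\ul\ga$. Thus it suffices to produce $\th_1\in\PB_X$ with $\ol{\th_1\al}=\ol\ga$ and $\ol{\th_1\be}=\ol\de$. Write the set of hooks in $\ol\de\sm\ol\ga$ as $\set{\{x_i,y_i\}}{i\in I}$, with $|I|=\ze\leq\xi$; since $\ol\ga\pre\ol\de$ in $\PB_X$, each $\{x_i\},\{y_i\}$ is a singleton of $\ol\ga$, and $\ol\ga$ and $\ol\de$ agree elsewhere. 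The idea is to define $\th_1$ by declaring its transversals to be $\{x_i,u_i'\}$ and $\{y_i,v_i'\}$ for cleverly chosen hooks $\set{\{u_i,v_i\}}{i\in I}\sub\ol\be\sm\ol\al$, its upper non-transversals to be the common blocks of $\ol\ga\cap\ol\de$, and its remaining lower vertices to be singletons. A straightforward product-graph check then gives $\ol{\th_1\be}=\ol\de$ (because $\{u_i,v_i\}\in\ol\be$ fuses $x_i$ and $y_i$) and $\ol{\th_1\al}=\ol\ga$ (because $\{u_i,v_i\}\notin\ol\al$ keeps $x_i$ and $y_i$ apart).

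Two combinatorial steps are needed to pick the $\{u_i,v_i\}$. First, I need enough hooks in $\ol\be\sm\ol\al$ to draw from: by symmetry of $\si$ I may swap $\al$ and $\be$, and a short cardinal argument shows that $|\ol\al\sd\ol\be|=\xi\geq\aleph_0$ forces the sets of upper hooks to satisfy $|H_{\ol\al}\sd H_{\ol\be}|=\xi$ (any symmetric difference in singleton blocks produces a corresponding hook difference), whence without loss of generality $|H_{\ol\be}\sm H_{\ol\al}|=\xi$. Second, I need the chosen hooks to be \emph{non-interfering}, meaning that the $\ol\al$-hook-partner of any $u_i$ or $v_i$ (if one exists) should not appear in $\bigcup_j\{u_j,v_j\}$, for otherwise $\th_1\al$ might spuriously link some $x_i$ or $y_i$ to another. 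I encode this as a graph on $H_{\ol\be}\sm H_{\ol\al}$ in which two hooks are adjacent if they interfere; because each hook's two endpoints have at most one $\ol\al$-partner apiece and $\ol\be$ is a partition, this graph has maximum degree at most $2$, so Lemma \ref{la-tt4} produces an independent set of cardinality $\xi$, from which I select the required $\ze$ hooks.

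The main obstacle is this last combinatorial step: keeping the interference graph honest enough that the resulting $\th_1\al$ and $\th_1\be$ really come out as $\ol\ga$ and $\ol\de$ on the nose, with no spurious mergers arising from upper non-transversals of $\al$ or $\be$ and no accidental collisions among the $u_i,v_i$. Once the independent set is in place and $\th_1$ is defined, the verification that $\ol{\th_1\al}=\ol\ga$ and $\ol{\th_1\be}=\ol\de$ is a routine product-graph calculation, completing the proof.
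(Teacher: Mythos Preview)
Your proposal is correct and follows essentially the same approach as the paper: the paper builds a single $\th$ with transversals $\{x_j,u_j'\},\{y_j,v_j'\}$ and upper hooks $A_i$ (the hooks of $\ol\ga$), then obtains $(\ga,\de)=(\th\al\ga,\th\be\ga)$, which is exactly your $(\th_1\al\th_2,\th_1\be\th_2)$ with $\th_1=\th$ and $\th_2$ replaced by right-multiplication by $\ga$ (these do the same thing since $\al,\be\in D_0$). The interference graph and the appeal to Lemma~\ref{la-tt4} for an independent set of size $\xi$, as well as the meet-argument for part~\ref{it:tt6ii}, are identical to the paper's.
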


\pf \ref{it:tt6i}
Let the (upper) hooks of $\ga$ be $\set{A_i}{i\in I}$, and the remaining hooks of $\de$ be ${\set{B_j}{j\in J}}$.  We will write $\ka=|J|$.  Since $\ol\ga\pre\ol\de$ we have $3\ka=|\ol\ga\sd\ol\de|\leq\xi$; since $\xi\geq\aleph_0$ it follows that $\ka\leq\xi$.  
It follows from $|\overline{\alpha}\sd\overline{\beta}|=\xi\geq\aleph_0$ that at least one of $\overline{\alpha}\setminus\overline{\beta}$ or $\overline{\beta}\setminus\overline{\alpha}$ contains~$\xi$ hooks.
Without loss of generality, we assume this is the case for $\overline{\be}\setminus\overline{\al}$, and we write~$\HH$ for the set of hooks in $\overline{\be}\setminus\overline{\al}$.  
Let $\Ga$ be the graph with vertex set $\HH$, such that there is an edge between~$H_1,H_2\in\HH$ if there is at least one hook of $\al$ with one vertex from $H_1$ and one from $H_2$.  
Then~$\Ga$ satisfies the conditions of Lemma \ref{la-tt4}, and therefore has an independent set of size $\xi$.
Within this independent set we fix a subset of size $\ka$, say $\bigset{H_j}{ j\in J}$.
Now write~$B_j=\{x_j,y_j\}$ and $H_j=\{u_j,v_j\}$ for each~$j$.  Then with $\th=\partXXIV {x_j}{y_j}{A_i}{u_j}{v_j}{}_{i\in I,\ j\in J}$, we have $(\ga,\de)=(\th\al\ga,\th\be\ga)\in\si$.

\pfitem{\ref{it:tt6ii}}  
This follows from \ref{it:tt6i} in the same way that Lemma \ref{la-tt5} \ref{it:tt5ii} follows from Lemma \ref{la-tt5} \ref{it:tt5i}.
\epf

\subsection[Lemmas \ref{la340b} and \ref{la340} for $\PB_X$]{\boldmath Lemmas \ref{la340b} and \ref{la340} for $\PB_X$}\label{subsect:PBX2}

In the following two lemmas $\si$ denotes an arbitrary congruence on $\PB_X$ with $\eta=\eta(\si)\geq\aleph_0$.

\begin{lemma}[cf.~Lemma \ref{la340b}]
\label{la-sw4}
If $(\alpha,\beta)\in\sigma\restr_{D_\kappa}$ where $\kappa\geq\eta\geq\aleph_0$ and $\alpha\neq\beta$, then for any $\gamma,\delta\in D_\kappa$ with $|\gamma\sd\delta|<\aleph_0$, we have $(\gamma,\delta)\in \sigma$.
\end{lemma}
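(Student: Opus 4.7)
The plan is to reduce the statement to a single ``hook addition'' between partitions of rank $\kappa$ differing by exactly one block merge, and then to realise each such elementary modification as the left--right multiplicative image of a pair furnished by Lemma~\ref{lem:tech3}~\ref{it:h3i}.

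First I would take the meet $\theta=\gamma\wedge\delta$ in $\PB_X$. Since each block of $\gamma\wedge\delta$ is a non-empty intersection of a block of $\gamma$ with a block of $\delta$, every transversal of $\theta$ is a shared transversal of $\gamma$ and $\delta$; because $\gamma,\delta\in D_\kappa$ differ in only finitely many blocks, they share $\kappa$ transversals, so $\theta\in D_\kappa\cap\PB_X$. Lemma~\ref{lem:joins}~\ref{it:joinsi} gives $|\gamma\sd\theta|,|\delta\sd\theta|\leq|\gamma\sd\delta|<\aleph_0$, and transitivity reduces the task to proving $(\gamma,\theta)\in\sigma$ and the symmetric $(\delta,\theta)\in\sigma$, where now the second partition refines the first. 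Next I would build a chain $\theta=\theta_0\prec\theta_1\prec\cdots\prec\theta_n=\gamma$ in $D_\kappa\cap\PB_X$ in which each $\theta_{i+1}$ is obtained from $\theta_i$ by merging two singletons. Since every $\theta_i$ has rank $\kappa$, no merge can combine an upper with a lower singleton (which would create an extra transversal); each merge therefore unites either two upper singletons into an upper hook or two lower singletons into a lower hook. Invoking the involutive duality it suffices to handle the upper case, namely to show $(\theta,\gamma)\in\sigma$ whenever $\gamma=(\theta\setminus\{\{u\},\{v\}\})\cup\{\{u,v\}\}$ for distinct $u,v$ with $\{u\},\{v\}\in\theta$.

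To produce such a pair I would fix distinct auxiliaries $c,d\in X\setminus\codom(\theta)$ (provisionally assuming $\codom(\theta)\neq X$) and pick $Y\subseteq X$ with $|Y|=\kappa$, $\codom(\theta)\subseteq Y$, and $Y\cap\{c,d\}=\emptyset$. Lemma~\ref{lem:tech3}~\ref{it:h3i} applied with $Z=\{c,d\}$ gives $(\ep_{Y\cup Z},\ep_Y)\in\sigma$. Right-multiplying this pair by the partition $\phi=\{\{y,y'\}:y\in X\setminus\{c,d\}\}\cup\{\{c,d\},\{c'\},\{d'\}\}$ and tracing the product graphs shows $\ep_Y\phi=\ep_Y$ while $\ep_{Y\cup Z}\phi=\ep_Y\cup\{\{c,d\}\}$ (the middle hook $\{c'',d''\}$ coming from $\phi$ links the transversals $\{c,c'\},\{d,d'\}$ of $\ep_{Y\cup Z}$ into an upper hook), giving the model pair $(\ep_Y,\ep_Y\cup\{\{c,d\}\})\in\sigma$. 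The transport step constructs $\psi_1,\psi_2\in\PB_X$ so that $\psi_1\ep_Y\psi_2=\theta$ and $\psi_1(\ep_Y\cup\{\{c,d\}\})\psi_2=\gamma$: $\psi_1$ carries $\theta$'s transversals together with the auxiliary transversals $\{u,c'\},\{v,d'\}$ (with $\{u\},\{v\}$ removed from the upper non-transversals), and $\psi_2$ provides $\theta$'s codomain reindexing and lower non-transversals, with $\{c\},\{d\}$ as upper singletons. Because $c,d\notin Y$, multiplication by $\ep_Y$ in the middle breaks these auxiliary transversals, so $u,v$ reappear as upper singletons; by contrast the hook $\{c,d\}$ inside $\ep_Y\cup\{\{c,d\}\}$ forces the chain $u$--$c''$--$d''$--$v$ in the product graph, producing exactly the desired upper hook $\{u,v\}$.

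The hard part will be the boundary case $\codom(\theta)=X$, which can occur only when $\kappa=|X|$ and leaves no room for the auxiliaries $c,d$. I would handle this by dualising: apply the lower-hook analogue of the construction to $\theta^*$, noting that $\codom(\theta^*)=\dom(\theta)\neq X$ since $u,v\notin\dom(\theta)$. Alternatively one could first replace $\theta$ by a $\sigma$-equivalent partition $\theta'$ obtained by breaking two of its transversals (so that $\codom(\theta')\neq X$), run the main argument on $\theta'$, and transfer the conclusion back via the same mechanism. The remaining work is the careful but routine product-graph bookkeeping required to verify the two identities $\psi_1\ep_Y\psi_2=\theta$ and $\psi_1(\ep_Y\cup\{\{c,d\}\})\psi_2=\gamma$.
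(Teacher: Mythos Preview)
Your initial reduction via the meet $\theta=\gamma\wedge\delta$ is the same as the paper's, and the idea of invoking Lemma~\ref{lem:tech3}~\ref{it:h3i} is also shared.  However, there is a genuine gap in your chain argument.  You assert that ``since every $\theta_i$ has rank $\kappa$, no merge can combine an upper with a lower singleton (which would create an extra transversal)''.  This is false: because $\kappa\geq\aleph_0$, adding one transversal gives rank $\kappa+1=\kappa$, so such merges are permitted \emph{and do occur}.  Concretely, if $\gamma$ has a transversal $\{a,b'\}$ that is not a block of $\delta$, then $\{a\}$ and $\{b'\}$ are singletons of $\gamma\wedge\delta$, and the passage from $\gamma\wedge\delta$ to $\gamma$ must merge them into a transversal.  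Your proof therefore omits an entire case (the ``transversal'' elementary step), and as written does not establish the lemma.  The boundary case $\codom(\theta)=X$ is also not handled cleanly: your proposed dualisation would require auxiliaries outside $\dom(\theta^*)=\codom(\theta)=X$, which is the very obstruction you are trying to avoid.

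The paper's argument sidesteps both issues by doing everything in a single step rather than inducting hook by hook.  With $Y_1,Y_2$ the sets of upper and lower endpoints of the common transversals and $Z_1\cup Z_2'$ the (finite) union of the blocks of $\gamma\sm\delta$, Lemma~\ref{lem:tech3}~\ref{it:h3i} gives $(\ep_{Y_1},\ep_{W_1}),(\ep_{Y_2},\ep_{W_2})\in\sigma$ where $W_i=Y_i\cup Z_i$.  One then sandwiches $\gamma$ itself, obtaining $(\gamma,\gamma\wedge\delta)=(\theta_1\ep_{W_1}\gamma\ep_{W_2}\theta_2,\theta_1\ep_{Y_1}\gamma\ep_{Y_2}\theta_2)\in\sigma$ for suitable $\theta_1,\theta_2$.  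The point is that multiplying by $\ep_{Y_i}$ (rather than $\ep_{W_i}$) simply severs every block of $\gamma\sm\delta$---transversals, upper hooks and lower hooks alike---into singletons, with no case distinction and no need for auxiliary elements outside the picture.
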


\begin{proof}
We will show that $(\ga,\ga\wedge\de)\in\si$; by symmetry, it will follow that $(\de,\ga\wedge\de)\in\si$, and then by transitivity that $(\ga,\de)\in\si$.  Suppose the transversals, upper hooks and lower hooks of $\ga\cap\de$ are $\bigset{\{a_i,b_i'\}}{i\in I}$, $\set{A_j}{j\in J}$ and $\set{B_k'}{k\in K}$; by Lemma \ref{lem:tech1}, we have~$|I|=\ka$.  Put $Y_1=\set{a_i}{i\in I}$ and $Y_2=\set{b_i}{i\in I}$, noting that these are both of size $\ka$.  Let~$Z$ be the union of all the blocks from $\ga\sm\de$, noting that $Z$ is finite, and write $Z=Z_1\cup Z_2'$ where $Z_1,Z_2\sub X$.  Finally, let $W_1=Y_1\cup Z_1$ and $W_2=Y_2\cup Z_2$.  Then Lemma~\ref{lem:tech3}~\ref{it:h3i} gives $(\ep_{Y_1},\ep_{W_1}),(\ep_{Y_2},\ep_{W_2})\in\si$.  Then with $\th_1=\partn w{A_j}w{}_{w\in W_1,\, j\in J}$ and $\th_2=\partn w{}w{B_k}_{w\in W_2,\, k\in K}$, we have $(\ga,\ga\wedge\de) = (\th_1\ep_{W_1}\ga\ep_{W_2}\th_2,\th_1\ep_{Y_1}\ga\ep_{Y_2}\th_2)\in\si$, as required.
\end{proof}

\begin{lemma}[cf.~Lemma \ref{la340}]
\label{la-sw5}
If $(\alpha,\beta)\in \sigma\restr_{D_\kappa}$ where $\kappa\geq\eta\geq\aleph_0$, then for any $\gamma,\delta\in D_\kappa$ with $|\gamma\sd\delta|\leq|\alpha\sd\beta|$, we have $(\gamma,\delta)\in \sigma$.
\end{lemma}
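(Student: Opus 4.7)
The plan is to mimic the proof of Lemma \ref{la-sw4} essentially verbatim, the only substantive change being to invoke Lemma \ref{lem:tech3} \ref{it:h3ii} in place of Lemma \ref{lem:tech3} \ref{it:h3i}. Indeed, if $|\alpha\sd\beta|<\aleph_0$, then $|\gamma\sd\delta|<\aleph_0$ and Lemma \ref{la-sw4} applies directly, so the only case requiring new work is when $\xi=|\alpha\sd\beta|\geq\aleph_0$.

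In this case I would aim to show $(\gamma,\gamma\wedge\delta)\in\sigma$; by symmetry this then yields $(\delta,\gamma\wedge\delta)\in\sigma$, and transitivity gives $(\gamma,\delta)\in\sigma$. Lemma \ref{lem:tech1} applied to $(\alpha,\beta)$ ensures $\xi<\kappa$, so $|\gamma\sd\delta|\leq\xi<\kappa$, which forces $\gamma\cap\delta$ to contain $\kappa$ transversals. I would then adopt the notation from the proof of Lemma \ref{la-sw4}: enumerate the common transversals as $\{a_i,b_i'\}$ ($i\in I$, $|I|=\kappa$) and the common upper and lower hooks as $A_j$ ($j\in J$) and $B_k'$ ($k\in K$); set $Y_1=\{a_i:i\in I\}$ and $Y_2=\{b_i:i\in I\}$; and let $Z=Z_1\cup Z_2'$, with $Z_l\sub X$, be the union of all blocks of $\gamma\sm\delta$.

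The crucial size bookkeeping is that $|Y_l|=\kappa$, $Y_l\cap Z_l=\emptyset$, and (since each block in $\PB_X$ has at most two elements) $|Z_l|\leq 2|\gamma\sm\delta|\leq 2|\gamma\sd\delta|\leq\xi$. Lemma \ref{lem:tech3} \ref{it:h3ii} therefore delivers $(\ep_{Y_l},\ep_{Y_l\cup Z_l})\in\sigma$ for $l=1,2$, and defining $\theta_1,\theta_2$ exactly as in the proof of Lemma \ref{la-sw4}, the same calculation establishes the identities $\gamma=\theta_1\ep_{Y_1\cup Z_1}\gamma\ep_{Y_2\cup Z_2}\theta_2$ and $\gamma\wedge\delta=\theta_1\ep_{Y_1}\gamma\ep_{Y_2}\theta_2$, whence $(\gamma,\gamma\wedge\delta)\in\sigma$. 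The main (in fact only) potential obstacle is the size bound $|Z_l|\leq\xi$, and this is precisely why the stronger \ref{it:h3ii} is needed here: part \ref{it:h3i} sufficed for the finite overflow handled in Lemma \ref{la-sw4}, but \ref{it:h3ii} is required for the infinite overflow that now arises.
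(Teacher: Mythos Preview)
Your proposal is correct and follows essentially the same approach as the paper: the paper's proof simply says that the finite $|\gamma\sd\delta|$ case is covered by Lemma \ref{la-sw4}, and that the infinite case is proved exactly as in Lemma \ref{la-sw4} but with $|Z|\leq|\gamma\sd\delta|$ rather than $Z$ finite (implicitly invoking Lemma \ref{lem:tech3} \ref{it:h3ii} in place of \ref{it:h3i}). Your version spells out the size bookkeeping and the use of Lemma \ref{lem:tech1} explicitly, which is fine; the only cosmetic difference is that the paper splits cases on $|\gamma\sd\delta|$ rather than $|\alpha\sd\beta|$, but your argument covers all cases correctly either way.
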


\begin{proof}
The case of finite $|\gamma\sd\delta|$ is covered by Lemma \ref{la-sw4}.  The infinite case is proved in exactly the same way as Lemma \ref{la-sw4}; this time rather than $Z$ being finite, we have $|Z|\leq|\gamma\sd\delta|$.
\end{proof}

\subsection[Lemmas \ref{la328a} and \ref{la328c} for $\P_X$]{\boldmath Lemmas \ref{la328a} and \ref{la328c} for $\P_X$}\label{subsect:PX1}

This subsection and the next exclusively concern the partition monoid $\P_X$, where $X$ is infinite.  Throughout this subsection, $\si$ denotes an arbitrary congruence on $\P_X$.

For any non-empty subset $Y\subseteq X$ we will write~$\upsilon_Y=\partXX Y{}\in\P_X$; so $\ups_Y$ has rank $0$, has $Y$ as a block, with all other blocks being singletons (note that $\ups_Y=\emptypart$ if~$|Y|=1$).  By a \emph{disjoint family of subsets} of $X$, we mean a collection $\Y=\set{Y_i}{i\in I}$, where the $Y_i$ are pairwise disjoint non-empty subsets of $X$; for such a family $\Y$, we write $\ups_\Y=\partXX {Y_i}{}_{i\in I}$.

\newpage

\begin{lemma}[cf.~Lemma \ref{la328a}]
\label{la-tt1}
Suppose $(\alpha,\beta)\in \sigma\restr_{D_0}$ with $\overline{\alpha}\neq\overline{\beta}$.
\begin{thmenum}
\item \label{it:tt1i}
 For any two-element subset $Y\subseteq X$, we have $(\upsilon_Y,\emptypart)\in\sigma$.
\item \label{it:tt1ii}
 For any finite subset $Y\subseteq X$, we have $(\upsilon_Y,\emptypart)\in\sigma$.
\item \label{it:tt1iii}
 For any finite disjoint family $\Y$ of finite subsets of $X$, we have $(\ups_\Y,\emptypart)\in\si$.
\item \label{it:tt1iv}
For any $\gamma,\delta\in D_0$ with $|\overline{\gamma}\sd\overline{\delta}|<\aleph_0$, $\ol\ga\pre\ol\de$ and $\underline{\gamma}=\underline{\delta}$, we have $(\gamma,\delta)\in\sigma$.
\item \label{it:tt1v}
For any $\gamma,\delta\in D_0$ with $|\overline{\gamma}\sd\overline{\delta}|<\aleph_0$ and $\underline{\gamma}=\underline{\delta}$, we have $(\gamma,\delta)\in\sigma$.
\end{thmenum}
\end{lemma}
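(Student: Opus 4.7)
The plan is to prove the five parts in sequence, each building on the previous.

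For part (i), I would use the hypothesis $\ol\al\neq\ol\be$ to find, without loss of generality, elements $a,b\in X$ lying in a common block of $\ol\al$ but in distinct blocks of $\ol\be$. With $Y=\{y_1,y_2\}$, define $\th$ as the partition whose only non-singleton blocks are the transversals $\{y_1,a'\}$ and $\{y_2,b'\}$. A direct product-graph computation should then show $\th\al\emptypart=\upsilon_Y$ (since the common $\ol\al$-block through $a,b$ links $y_1$ and $y_2$ via the middle level) while $\th\be\emptypart=\emptypart$ (since $a,b$ lie in different $\ol\be$-blocks, the paths stay separate). Applying the congruence yields the claim.

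For parts (ii) and (iii), I would proceed by induction (on $|Y|$ and $|\Y|$ respectively), with (i) as the base. For the inductive step of (ii), extending $Y'=\{y_1,\ldots,y_n\}$ by $y_{n+1}$, I use an auxiliary partition $\phi$ with transversals $\{y_i,y_i'\}$ for $i<n$ together with a single three-element transversal $\{y_n,y_{n+1},y_n'\}$ (singletons elsewhere), chosen so that $\phi\upsilon_{Y'}=\upsilon_Y$ and $\phi\emptypart=\upsilon_{\{y_n,y_{n+1}\}}$; transitivity then finishes via the inductive hypothesis and (i). For the inductive step of (iii), extending $\Y'$ by one additional block $Y_k$, the analogous $\phi$ has transversals $\{y,y'\}$ for each $y\in\bigcup\Y'$ together with an upper non-transversal block $Y_k$, yielding $\phi\ups_{\Y'}=\ups_\Y$ and $\phi\emptypart=\upsilon_{Y_k}$.

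For part (iv), I would first reduce to the single-merger base case using $\ol\ga\pre\ol\de$ and $|\ol\ga\sd\ol\de|<\aleph_0$ to interpolate a finite chain $\ga=\ga_0\pre\ga_1\pre\cdots\pre\ga_k=\de$, each step merging exactly two upper blocks into one. In the base case, $\de$ is obtained from $\ga$ by replacing two upper blocks $G_1,G_2$ with $G_1\cup G_2$; picking $a\in G_1,b\in G_2$, the key device is the ``full-block lift'' $\th=\partn{G_i}{}{G_i}{}$, whose transversals are $G_i\cup G_i'$ with the $G_i$ enumerating all upper blocks of $\ga$. A direct calculation should give $\th\upsilon_{\{a,b\}}\ga=\de$ and $\th\emptypart\ga=\ga$; applying the congruence to $(\upsilon_{\{a,b\}},\emptypart)\in\si$ from (i) then yields $(\de,\ga)\in\si$. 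Finally, (v) follows from (iv) applied to $(\ga\wedge\de,\ga)$ and $(\ga\wedge\de,\de)$, with the needed refinement and finiteness conditions provided by Lemmas \ref{lem:Jr0} and \ref{lem:joins}, followed by transitivity.

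The main obstacle will be the sandwich construction in (iv), where we must transport the elementary identification $(\upsilon_{\{a,b\}},\emptypart)$ into the block-merger identification $(\de,\ga)$ while preserving the lower non-transversal structure common to $\ga$ and $\de$. The choice $\th=\partn{G_i}{}{G_i}{}$ resolves this: its full-block transversals ``spread'' the hook $\{a,b\}$ across all of $G_1$ and $G_2$ at the middle level of the product graph, while right multiplication by $\ga$ itself restores the lower blocks.
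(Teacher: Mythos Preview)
Your proof is correct, but it takes a different route from the paper in parts (iii)--(v).

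For (iii), the paper does it in one shot rather than by induction on $|\Y|$: with $Y=\bigcup_{i\in I}Y_i$, part (ii) gives $(\upsilon_Y,\emptypart)\in\sigma$, and then left-multiplying by $\theta=\binom{Y_i}{Y_i}_{i\in I}$ yields $(\upsilon_Y,\upsilon_\Y)=(\theta\upsilon_Y,\theta\emptypart)\in\sigma$, whence $(\upsilon_\Y,\emptypart)\in\sigma$ by transitivity. For (iv), the paper again avoids your chain-reduction entirely and handles all the block-mergers simultaneously: writing $\ol\ga\cap\ol\de=\{A_i\}_{i\in I}$, $\ol\de\setminus\ol\ga=\{B_j\}_{j\in J}$, and letting $\{C_{jk}\}_{k\in K_j}$ be the blocks of $\ol\ga$ inside $B_j$, one chooses a finite disjoint family $\Y=\{Y_j\}_{j\in J}$ with $|Y_j|=|K_j|$, invokes (iii) to get $(\upsilon_\Y,\emptypart)\in\sigma$, and then a single sandwich $\theta=\partn{C_{jk}}{A_i}{y_{jk}}{}$ gives $(\gamma,\delta)=(\theta\emptypart\gamma,\theta\upsilon_\Y\gamma)\in\sigma$. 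For (v), the paper uses the join $\gamma\vee\delta$ where you use the meet $\gamma\wedge\delta$; both work equally well via Lemmas~\ref{lem:Jr0} and~\ref{lem:joins}.

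The trade-off: the paper's (iv) is slicker and makes the role of (iii) clear (it is the direct input to (iv)), whereas in your approach (iii) is logically orphaned---you prove it because the lemma statement demands it, but your (iv) only ever calls (i). On the other hand, your single-merger reduction is arguably more transparent, and your ``full-block lift'' $\theta=\binom{G_i}{G_i}$ is a clean device. Both arguments are fully rigorous.
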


\begin{proof}
\ref{it:tt1i}
Write $Y=\{ y_1,y_2\}$.
Without loss of generality, we may assume that there exist distinct $x_1,x_2\in X$ that belong to the same block of $\overline{\alpha}$ but to different blocks of $\overline{\beta}$.
Then with $\th=\partpermII{y_1}{y_2}{x_1}{x_2}$,
we have $(\upsilon_Y,\emptypart)=(\theta \alpha\emptypart,\theta\beta\emptypart)\in\sigma$.

\pfitem{\ref{it:tt1ii}}
We use induction on $n=|Y|$.
For $n=1$ there is nothing to prove, and $n=2$ is part~\ref{it:tt1i}.
So suppose $n\geq 3$ and that the assertion holds for all subsets of $X$ of size less than~$n$.
Let $Y=\{y_1,\dots,y_n\}$ be an arbitrary subset of size $n$.
Further, let $Y_1=\{y_1,\dots,y_{n-1}\}$ and ${Y_2=\{y_{n-1},y_n\}}$.
By induction, we have $(\upsilon_{Y_1},\emptypart),(\upsilon_{Y_2},\emptypart)\in\sigma$; transitivity then gives ${(\ups_{Y_1},\ups_{Y_2})\in\si}$.
Then with $\th=\partpermII{Y_1}{y_n}{Y_1}{y_n}$,
we have
$(\upsilon_{Y_1},\upsilon_Y)=(\theta\upsilon_{Y_1},\theta\upsilon_{Y_2})\in\sigma$.  Another appeal to transitivity gives $(\upsilon_Y,\emptypart)\in\sigma$.

\pfitem{\ref{it:tt1iii}}
Write $\Y=\set{Y_i}{i\in I}$ and put $Y=\bigcup_{i\in I} Y_i$; since $|Y|<\aleph_0$, part \ref{it:tt1ii} gives $(\upsilon_Y,\emptypart)\in\sigma$.  Then with  $\theta=\binom{Y_i}{Y_i}_{i\in I}$, we have $( \upsilon_Y,\upsilon_\Y)=( \theta\upsilon_Y,\theta\emptypart)\in \sigma$, and hence $( \upsilon_\Y,\emptypart)\in\sigma$ by transitivity.

\pfitem{\ref{it:tt1iv}}
Write $\ol\ga\cap\ol\de=\set{A_i}{i\in I}$ and $\ol\de\sm\ol\ga=\set{B_j}{j\in J}$, noting that $J$ is finite.  For each $j\in J$, let the blocks of $\ol\ga$ contained in~$B_j$ be $\set{C_{jk}}{k\in K_j}$, again noting that each $K_j$ is finite.  Let $\Y=\set{Y_j}{j\in J}$, where the sets~$Y_j=\set{y_{jk}}{k\in K_j}\sub X$ are pairwise disjoint.  By \ref{it:tt1iii}, we have $(\ups_\Y,\emptypart)\in\si$.  Then with $\th=\partn{C_{jk}}{A_i}{y_{jk}}{}_{i\in I,\ j\in J,\ k\in K_j}$, we have $(\gamma,\delta)=(\theta\emptypart\ga,\theta\upsilon_\Y\ga)\in\sigma$.

\pfitem{\ref{it:tt1v}}
Let $\th=\ga\vee\de\in D_0$.  Since $\ol\th=\ol\ga\vee\ol\de$ by Lemma \ref{lem:Jr0}, we have $\ol\ga\pre\ol\th$.  By Lemma~\ref{lem:joins}~\ref{it:joinsi}, we also have $|\ol\ga\sd\ol\th| = |\ol\ga\sd(\ol\ga\vee\ol\de)|\leq|\ol\ga\sd\ol\de|<\aleph_0$.  Again by Lemma \ref{lem:Jr0} we have $\ul\th=\ul\ga\vee\ul\de=\ul\ga$.  Thus, part \ref{it:tt1iv} gives $(\ga,\th)\in\si$.  By symmetry $(\de,\th)\in\si$, and by transitivity $(\ga,\de)\in\si$.
\end{proof}

\begin{lemma}[cf.~Lemma \ref{la328c}]
\label{la-tt2}
Suppose $(\alpha,\beta)\in\sigma\restr_{D_0}$ with $|\overline{\alpha}\sd\overline{\beta}|=\xi\geq\aleph_0$. 
\begin{thmenum}
\item \label{it:tt2i} There exists $(\al_1,\beta_1)\in\si\restr_{D_0}$ such that $|\overline{\alpha}_1\sd\overline{\beta}_1|=\xi$ and $\overline{\alpha}_1\pre\overline{\beta}_1$.
\item \label{it:tt2ii} For any subset $Y\subseteq X$ of size $\xi$, we have $(\upsilon_Y,\emptypart)\in\sigma$.
\item \label{it:tt2iii} For any disjoint family $\Y$ of $\xi$ subsets of $X$ each of size $\xi$, we have $(\ups_\Y,\emptypart)\in\si$.
\item \label{it:tt2iv} For any  $\gamma,\delta\in D_0$ with $|\overline{\gamma}\sd\overline{\delta}|\leq\xi$, $\ol\ga\pre\ol\de$ and $\underline{\gamma}=\underline{\delta}$, we have $(\gamma,\delta)\in\sigma$.
\item  \label{it:tt2v} For any  $\gamma,\delta\in D_0$ with $|\overline{\gamma}\sd\overline{\delta}|\leq\xi$ and $\underline{\gamma}=\underline{\delta}$, we have $(\gamma,\delta)\in\sigma$.
\end{thmenum}
\end{lemma}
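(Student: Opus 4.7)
The plan is to mirror the five-part structure of the finite-case Lemma~\ref{la-tt1}: (i) produces a first refinement pair; (ii) isolates the ``single large block'' case; (iii) extends to disjoint families; (iv) handles arbitrary refinement pairs; and (v) reduces the general case to (iv) via the join $\vee$ on the partition lattice.

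For (i), I would use the assumption $|\ol\al\sd\ol\be|=\xi\geq\aleph_0$ to extract a family of $\xi$ pairs $(x_i,y_i)_{i\in I}$ (with $|I|=\xi$) such that $x_i,y_i$ lie in the same block of $\ol\al$ but in different blocks of $\ol\be$. This requires a short pigeonhole argument on whether blocks of $\ol\al\sm\ol\be$ are split by, or strictly contained in, blocks of $\ol\be$, together with a symmetric swap of $\al,\be$ if necessary. Choosing pairwise distinct auxiliary elements $y_1^{(i)},y_2^{(i)}\in X$ (fresh with respect to all the $x_i,y_i$) and letting $\th\in\P_X$ have transversals $\{y_1^{(i)},x_i'\}$ and $\{y_2^{(i)},y_i'\}$ with all other blocks singletons, a product-graph computation shows $\th\al\emptypart=\ups_{\Y_0}$ with $\Y_0=\{\{y_1^{(i)},y_2^{(i)}\}:i\in I\}$ (since $x_i,y_i$ link in $\al$, so $y_1^{(i)},y_2^{(i)}$ get identified through $X''$), whereas $\th\be\emptypart=\emptypart$. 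Setting $(\al_1,\be_1)=(\emptypart,\ups_{\Y_0})$ yields a pair in $\si\restr_{D_0}$ with $\ol{\al_1}\pre\ol{\be_1}$ and $|\ol{\al_1}\sd\ol{\be_1}|=3\xi=\xi$.

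For (ii), I left-multiply the pair from (i) by the partition $\phi$ consisting of a single transversal with upper part $\{a\}$ (for a fresh $a\in X$) and lower part $\bigcup\Y_0'$. A product-graph computation gives $\phi\emptypart=\ups_Z$ with $Z=\{a\}\cup\bigcup\Y_0$ of size $\xi$, while $\phi\ups_{\Y_0}=\emptypart$; so $(\ups_Z,\emptypart)\in\si$, and conjugating by an arbitrary $\pi\in\S_X$ transports this to $(\ups_Y,\emptypart)\in\si$ for every $Y\sub X$ of size~$\xi$. For (iii), writing $Y=\bigcup\Y$ (also of size $\xi$) and using the rank-$|J|$ partition $\th=\binom{Y_j}{Y_j}_{j\in J}$ with transversals $Y_j\cup Y_j'$, one checks $\th\ups_Y=\ups_Y$ and $\th\emptypart=\ups_\Y$, yielding $(\ups_Y,\ups_\Y)\in\si$; transitivity with (ii) then delivers $(\ups_\Y,\emptypart)\in\si$.

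Parts (iv) and (v) proceed largely as in the finite case. For (iv), I decompose $\ol\ga\cap\ol\de=\{A_i\}$, $\ol\ga\sm\ol\de=\{C_{jk}\}$, and $\ol\de\sm\ol\ga=\{B_j=\bigsqcup_k C_{jk}\}$, pick a disjoint family of fresh $Y_j=\{y_{jk}:k\in K_j\}\sub X$, and apply (iii) to get $(\ups_\Y,\emptypart)\in\si$; if $|Y_j|$ or $|J|$ is smaller than~$\xi$, I first embed $\Y$ into a padded family $\tilde\Y$ of $\xi$ disjoint sets of size~$\xi$, apply (iii) to $\tilde\Y$, and then left-multiply by $\ep_{\bigcup\Y}$ to project back down. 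I then use $\th=\partn{C_{jk}}{A_i}{y_{jk}}{}$ to conclude $(\ga,\de)=(\th\emptypart\ga,\th\ups_\Y\ga)\in\si$, exactly as in Lemma~\ref{la-tt1}\ref{it:tt1iv}. For (v), let $\th=\ga\vee\de\in D_0$: Lemma~\ref{lem:Jr0} gives $\ol\th=\ol\ga\vee\ol\de\succeq\ol\ga,\ol\de$ and $\ul\th=\ul\ga=\ul\de$, and Lemma~\ref{lem:joins}~\ref{it:joinsi} bounds $|\ol\ga\sd\ol\th|,|\ol\de\sd\ol\th|\leq\xi$, so two applications of (iv) plus transitivity deliver~$(\ga,\de)\in\si$. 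The principal difficulty will be part (i): the combinatorial construction must carefully avoid accidental coincidences among the auxiliary $y_j^{(i)}$'s, the $x_i,y_i$'s, and the blocks of $\al,\be$, so that the product-graph calculation really produces the advertised $\ups_{\Y_0}$ and $\emptypart$.
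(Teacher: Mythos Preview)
Your overall plan and parts \ref{it:tt2iii}--\ref{it:tt2v} are essentially the same as the paper's, but parts \ref{it:tt2i} and \ref{it:tt2ii} diverge, and in \ref{it:tt2ii} there is a genuine error.

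For \ref{it:tt2ii}, the partition $\phi$ you describe has a \emph{single} transversal $\{a\}\cup(\bigcup\Y_0)'$, so every $y_j^{(i)}$ is an upper singleton of $\phi$; consequently $y_j^{(i)}$ is a singleton block of $\phi\gamma$ for \emph{every} $\gamma$, and in particular $\phi\emptypart=\phi\ups_{\Y_0}=\emptypart$. The claimed identity $\phi\emptypart=\ups_Z$ is simply false. What does work (and is essentially the paper's construction specialised to your pair $(\emptypart,\ups_{\Y_0})$) is to write $Y=\{y_i:i\in I\}$, take $\th$ with transversals $\{y_i,(y_2^{(i)})'\}$ and a single lower non-transversal $\{(y_1^{(i)})':i\in I\}$, and check that $\th\emptypart=\emptypart$ while $\th\ups_{\Y_0}=\ups_Y$; the large lower block is what glues all the $y_i$ together via the hooks of $\ups_{\Y_0}$.

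For \ref{it:tt2i}, your approach is considerably more delicate than the paper's, and the difficulties you flag are more serious than bookkeeping. The pigeonhole step does not automatically yield $\xi$ blocks of $\ol\al$ that are ``split'' by $\ol\be$: for instance if $\ol\al$ consists of $\xi$ two-element blocks all contained in a single block of $\ol\be$, then only after swapping $\al,\be$ do you get a workable configuration, and then the output is $(\emptypart,\ups_{\bigcup\Y_0})$ rather than $(\emptypart,\ups_{\Y_0})$. Moreover, establishing $\th\be\emptypart=\emptypart$ requires all of the $x_i,y_i$ to lie in pairwise distinct $\ol\be$-blocks, which needs further argument. There is also a minor issue with the ``fresh'' $y_j^{(i)}$ when $\xi=|X|$. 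All of this can be made to work with enough care, but the paper's approach is a one-liner: set $\th=\al\vee\be$, so that $\ol\al,\ol\be\pre\ol\th$ by Lemma~\ref{lem:Jr0}, assume by Lemma~\ref{lem:joins}\ref{it:joinsii} that $|\ol\al\sd\ol\th|=\xi$, and left-multiply by $\th_1=\binom{A_i}{A_i}$ where $\ol\al=\{A_i\}$, giving $\ol{\th_1\al}=\ol\al\pre\ol\th=\ol{\th_1\be}$.
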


\begin{proof}
\ref{it:tt2i}
Let $\th=\al\vee\be$, noting that $\ol\al,\ol\be\pre\ol\al\vee\ol\be=\ol\th$ by Lemma \ref{lem:Jr0}.  By Lemma \ref{lem:joins} \ref{it:joinsii}, and since $|\ol\al\sd\ol\be|=\xi\geq\aleph_0$, we may assume without loss of generality that $|\ol\al\sd\ol\th|=\xi$.  
Write $\ol\al=\set{A_i}{i\in I}$, and with $\th_1=\binom{A_i}{A_i}_{i\in I}$ define $(\al_1,\be_1)=(\th_1\al,\th_1\be)\in\si$.  Then $\al_1,\be_1\in D_0$, and also $\ol\al_1=\ol\al$ and $\ol\be_1=\ol\th$, so that $|\ol\al_1\sd\ol\be_1|=\xi$ and $\ol\al_1\pre\ol\be_1$.

\pfitem{\ref{it:tt2ii}}
By \ref{it:tt2i}, we may assume that $\ol\al\pre\ol\be$, and we note that $\xi=|\ol\al\sd\ol\be|=|\ol\al\sm\ol\be|$.  Let $Z$ be a set that contains precisely one element of each block of $\ol\al\sm\ol\be$.  Since $\ol\al\pre\ol\be$, we may fix a subset $Z_1\sub Z$ containing precisely one element of each block of $\ol\be\sm\ol\al$.  We also put $Z_2=Z\sm Z_1$.  Since every block of $\overline{\beta}\setminus\overline{\alpha}$ contains at least two blocks of $\overline{\alpha}\setminus\overline{\beta}$, it follows that $|Z_2|\geq|Z_1|$, and so $\xi=|\ol\al\sm\ol\be|=|Z|=|Z_1|+|Z_2|={\max}\big(|Z_1|,|Z_2|\big)=|Z_2|$.  We may therefore fix some bijection $\psi\colon Y\to Z_2$.  Then with $\th=\partn y{}{y\psi}{Z_1}_{y\in Y}$, we have $(\upsilon_Y,\emptypart)=(\theta\beta\emptypart,\theta\alpha\emptypart)\in\sigma$.

\pfitem{\ref{it:tt2iii}}
This is essentially identical to Lemma \ref{la-tt1} \ref{it:tt1iii}.

\pfitem{\ref{it:tt2iv}}
Let $\Y=\set{Y_i}{i\in I}$ where $|I|=\xi=|Y_i|$ for all $i\in I$, so that $(\ups_\Y,\emptypart)\in\sigma$ by part \ref{it:tt2iii}.  Since $|\ol\ga\sd\ol\de|\leq\xi=|I|$, we may write $\ol\de\sm\ol\ga=\set{D_j}{j\in J}$, where $J\sub I$.  For $j\in J$, let the blocks of $\ol\ga$ contained in $D_j$ be $\set{C_{jk}}{k\in K_j}$; since $|K_j|\leq|\ol\ga\sd\ol\de|\leq\xi$, we may fix an injective map $\psi_j\colon K_j\to Y_j$ for each $j$ (recall that $J\sub I$).  Further, let $\ol\ga\cap\ol\de=\set{E_l}{l\in L}$.  
Then with
$
\th=\partn{C_{jk}}{E_l}{k\psi_j}{}_{j\in J,\ k\in K_j,\ l\in L},
$
we have $(\ga,\de)=(\th\emptypart\ga,\th\ups_\Y\ga)\in\si$, as required.

\pfitem{\ref{it:tt2v}}
This is essentially identical to Lemma \ref{la-tt1} \ref{it:tt1v}.
\end{proof}

\subsection[Lemmas \ref{la340b} and \ref{la340} for $\P_X$]{\boldmath Lemmas \ref{la340b} and \ref{la340} for $\P_X$}\label{subsect:PX2}

In the following two lemmas, $\si$ denotes an arbitrary congruence on $\P_X$ with $\eta=\eta(\si)\geq\aleph_0$.

For two disjoint sets $Y,Z\subseteq X$, let $\epsilon_{Y,Z}=\epsilon_{Y\cup Z}$, and
let $\omega_{Y,Z}=\partpermII yZyZ_{y\in Y}$.
More generally, for a disjoint family $\Z=\set{ Z_i}{i\in I}$ of subsets of $X$, all of whose members are also disjoint from $Y$,  let
$\epsilon_{Y,\Z}=\epsilon_{Y,Z}$ with $Z=\bigcup_{i\in I}Z_i$, and let
$\omega_{Y,\Z}=\partpermII y{Z_i}y{Z_i}_{y\in Y,\ i\in I}$.

\begin{lemma}[cf.~Lemma \ref{la340b}]
\label{la-rg1}
Suppose $(\alpha,\beta)\in\sigma\restr_{D_\kappa}$ where $\kappa\geq\eta\geq\aleph_0$ and $\alpha\neq\beta$. 
\begin{thmenum}
\item \label{it:rg1i}
For any disjoint $Y,Z\subseteq X$ with $|Y|=\kappa$ and $|Z|<\aleph_0$, 
we have $(\epsilon_{Y,Z},\omega_{Y,Z})\in\sigma$.
\item \label{it:rg1ii}
For any $Y\subseteq X$ of size $\kappa$, and any finite disjoint family $\Z$ of finite subsets of $X$, all of them disjoint from $Y$, we have
$(\epsilon_{Y,\Z},\omega_{Y,\Z})\in\sigma$.
\item \label{it:rg1iii}
For any $\gamma,\delta\in D_\kappa$ with $|\gamma\sd\delta|<\aleph_0$ and $\ga\pre\de$, we have $(\gamma,\delta)\in\sigma$.
\item \label{it:rg1iv}
For any $\gamma,\delta\in D_\kappa$ with $|\gamma\sd\delta|<\aleph_0$, we have $(\gamma,\delta)\in\sigma$.
\end{thmenum}
\end{lemma}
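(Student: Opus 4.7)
The lemma is proved in four progressively more general parts, and the natural plan is to build them up in the listed order: (i) is the crucial constructive base case, (ii) extends to finite families by iteration, (iii) handles general comparable pairs in $D_\kappa$, and (iv) removes the comparability assumption by passing through $\gamma\vee\delta$. Throughout, we have access to the pairs $(\epsilon_{Y\cup Z},\epsilon_Y)\in\sigma$ supplied by Lemma \ref{lem:tech3}\ref{it:h3i}, applied to the given pair $(\alpha,\beta)$.

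The main obstacle is part (i). The two partitions $\epsilon_{Y,Z}$ and $\omega_{Y,Z}$ have the same rank $\kappa$ (since $\kappa$ is infinite) but differ in the structure on $Z$: one uses $|Z|$ singleton transversals $\{z,z'\}$, while the other uses one merged transversal $Z\cup Z'$. Since multiplication can never increase rank, one cannot produce $\omega_{Y,Z}$ from $\epsilon_Y$ by sandwiching $(\epsilon_{Y\cup Z},\epsilon_Y)$ directly. The plan is therefore to first extract intermediate identities from $(\epsilon_{Y\cup Z},\epsilon_Y)\in\sigma$ by left-/right-multiplying by $\omega_{Y,Z}$: since $\omega_{Y,Z}\epsilon_{Y\cup Z}=\omega_{Y,Z}$ while $\omega_{Y,Z}\epsilon_Y$ collapses the $Z\cup Z'$ transversal of $\omega_{Y,Z}$ into an upper non-transversal $\{Z\}$ (and dually on the right), one obtains $\sigma$-pairs relating $\omega_{Y,Z}$ to partitions of the shape $\tau_1=\{y,y'\}_{y\in Y}\cup\{Z\}$ and $\tau_2=\{y,y'\}_{y\in Y}\cup\{Z'\}$. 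Combining these by transitivity gives $(\omega_{Y,Z},\psi_{Y,Z})\in\sigma$, where $\psi_{Y,Z}$ has non-transversals $\{Z\}$ and $\{Z'\}$. The remaining task is then to chain $\epsilon_{Y,Z}$ to $\omega_{Y,Z}$ through $\psi_{Y,Z}$; the bridge from $\epsilon_{Y,Z}$ to $\psi_{Y,Z}$ reduces, via appropriate multiplications by rank-$0$ partitions, to rank-$0$ identities of the type covered by Lemmas \ref{la-tt1} and \ref{la-tt2}, themselves applicable because of the rank-$0$ pairs (such as $(\ups_Z,\emptypart)$ and $(\nu_Z,\emptypart)$) that arise by sandwiching the original $(\alpha,\beta)$. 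Pinning down the exact sequence of sandwich moves is the technically delicate step.

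Once part (i) is in hand, part (ii) is a straightforward finite induction on $|\Z|$, absorbing all-but-one of the $Z_i$ together with the original $Y$ into the new ``$Y$'' at each step. Part (iii) proceeds in the spirit of the $\PB_X$ argument of Lemma \ref{la-sw4}: since $\gamma\pre\delta$ and $|\gamma\sd\delta|<\aleph_0$, the $\kappa$ common transversals of $\gamma$ and $\delta$ provide a base from which representatives form a set of size $\kappa$ playing the role of $Y$, while the finitely many $\gamma$-blocks that merge into single $\delta$-blocks form a finite family playing the role of $\Z$; sandwiching the pair from (ii) with partitions that re-introduce the shared non-transversal structure of $\gamma\cap\delta$ then realises $(\gamma,\delta)$. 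Finally, for part (iv) we take $\theta=\gamma\vee\delta$: by Lemma \ref{lem:joins}\ref{it:joinsi} both $|\gamma\sd\theta|$ and $|\delta\sd\theta|$ are bounded by $|\gamma\sd\delta|<\aleph_0$, so only finitely many transversals get merged and $\theta\in D_\kappa$ with $\gamma,\delta\pre\theta$; part (iii) applied to $(\gamma,\theta)$ and $(\delta,\theta)$ followed by transitivity gives $(\gamma,\delta)\in\sigma$.
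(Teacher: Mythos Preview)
Your plan for parts \ref{it:rg1iii} and \ref{it:rg1iv} matches the paper exactly: index the blocks, use part \ref{it:rg1ii} to get $(\epsilon_{Y,\Z},\omega_{Y,\Z})\in\sigma$, sandwich to recover $(\gamma,\delta)$; then for \ref{it:rg1iv} pass through $\gamma\vee\delta$ and apply \ref{it:rg1iii} twice.

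For part \ref{it:rg1i}, however, you are working much harder than necessary, and the sketch as written has two gaps. First, ``combining by transitivity'' does not give $(\omega_{Y,Z},\psi_{Y,Z})\in\sigma$: from $(\omega_{Y,Z},\tau_1)$ and $(\omega_{Y,Z},\tau_2)$ transitivity only yields $(\tau_1,\tau_2)$; you would need a further multiplication (e.g.\ right-multiply $(\omega_{Y,Z},\tau_1)$ by $\omega_{Y,Z}$). Second, and more seriously, the proposed bridge from $\epsilon_{Y,Z}$ to $\psi_{Y,Z}$ ``via rank-$0$ lemmas'' is not workable as described: multiplying by rank-$0$ partitions drops rank to $0$, so Lemmas \ref{la-tt1} and \ref{la-tt2} cannot directly produce rank-$\kappa$ identities, and you have not indicated what alternate mechanism would.

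The paper bypasses all of this with a single standard congruence identity. With $\alpha=\epsilon_Y$, $\beta=\epsilon_{Y\cup Z}$ (so $(\alpha,\beta)\in\sigma$ by Lemma \ref{lem:tech3}\ref{it:h3i}) and $\omega=\omega_{Y,Z}$, one has $(\alpha\omega\alpha,\beta\omega\beta)\in\sigma$; a direct computation gives $\alpha\omega\alpha=\epsilon_Y$ and $\beta\omega\beta=\omega_{Y,Z}$, so $(\epsilon_Y,\omega_{Y,Z})\in\sigma$, and transitivity with $(\epsilon_Y,\epsilon_{Y\cup Z})$ finishes. Your opening move of multiplying by $\omega_{Y,Z}$ is in fact the right one --- it is just that one more multiplication by $\epsilon_Y$ on the other side (rather than a detour through $\psi_{Y,Z}$) already lands on $\epsilon_Y$. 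The same trick handles \ref{it:rg1ii} in one step (taking $(\alpha,\beta)=(\epsilon_{Y,\Z},\omega_{Y,Z})$ from part \ref{it:rg1i} and $\omega=\omega_{Y,\Z}$), so no induction on $|\Z|$ is needed either.
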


\begin{proof}
\ref{it:rg1i}
By Lemma \ref{lem:tech3} \ref{it:h3i}, we may assume that $(\al,\be)=(\ep_Y,\ep_{Y,Z})$.  We must show that $(\be,\om)\in\si$, where for brevity we write $\om=\om_{Y,Z}$.  But $(\al,\om)=(\al\om\al,\be\om\be)\in\si$, and so transitivity gives $(\be,\om)\in\si$.

\pfitem{\ref{it:rg1ii}}
Write $\Z=\set{Z_i}{i\in I}$, $Z=\bigcup_{i\in I}Z_i$ and $\om=\om_{Y,\Z}$.  By part \ref{it:rg1i}, we may assume that $(\al,\be)=(\ep_{Y,\Z},\om_{Y,Z})$.  This time, we have $(\om,\be)=(\al\om\al,\be\om\be)\in\si$.  Transitivity gives $(\al,\om)\in\si$.

\pfitem{\ref{it:rg1iii}}
Let us first index the various blocks of $\gamma$ and $\delta$; in what follows, all indexing sets ($I^T$, $I^U$, and so on) are assumed to be pairwise disjoint.  First, for the blocks of $\delta\setminus\gamma$, we assume
\begin{itemize}
\item
the transversals in $\delta\setminus\gamma$ are $\set{A_i\cup B_i'}{i\in I^T}$,
\item
the upper non-transversals in $\delta\setminus\gamma$ are $\set{A_i}{i\in I^U}$,
\item
the lower non-transversals in $\delta\setminus\gamma$ are $\set{B_i'}{i\in I^L}$.
\end{itemize}
Recall that the blocks of $\gamma$ are contained in blocks of $\delta$. So, for each $i\in I^T$, we assume
\begin{itemize}
\item
the transversals of $\gamma$ contained in $A_i\cup B_i'$ are $\set{A_{ij}\cup B_{ij}'}{j\in J^{TT}_i}$,
\item
the upper non-transversals of $\gamma$ contained in $A_i\cup B_i'$ are $\set{A_{ij}}{j\in J^{TU}_i}$,
\item
the lower non-transversals of $\gamma$ contained in $A_i\cup B_i'$ are $\set{B_{ij}'}{j\in J^{TL}_i}$.
\end{itemize}
Likewise, for $i\in I^U$, we assume
\begin{itemize}
\item
the upper non-transversals of $\gamma$ contained in $A_i$ are $\set{A_{ij}}{j\in J^{UU}_i}$,
\end{itemize}
while for $i\in I^L$, we assume
\begin{itemize}
\item
the lower non-transversals of $\gamma$ contained in $B_i'$ are $\set{B_{ij}'}{j\in J^{LL}_i}$.
\end{itemize}
Finally, we assume
\begin{itemize}
\item
the transversals of $\ga\cap\de$ are $\set{C_k\cup D_k'}{k\in K^T}$,
\item
the upper non-transversals in $\ga\cap\de$ are $\set{C_k}{k\in K^U}$,
\item
the lower non-transversals in $\ga\cap\de$ are $\set{D_k'}{k\in K^L}$.
\end{itemize}
Note that all $I$- and $J$-type index sets are finite since $|\gamma\sd\delta|<\aleph_0$, and therefore $|K^T|=\kappa$, because $\gamma$ and $\delta$ have rank $\kappa\geq\aleph_0$.

Let $P^U$ denote the set of all pairs $(i,j)$ for which there is a block $A_{ij}$: i.e.,
\[
P^U=\bigset{ (i,j) } { [i\in I^T \text{ and } j\in J^{TT}_i\cup J^{TU}_i]\mbox{ or }
[i\in I^U \text{ and } j\in J^{UU}_i]}.
\]
Define $P^L$ analogously with respect to $B_{ij}$ blocks, noting that
\[
P^U\cap P^L=\bigset{(i,j)}{i\in I^T,\ j\in J_i^{TT}}.
\]
Let $P=P^U\cup P^L$, again noting that $P$ is finite.
Put $I=I^T\cup I^U\cup I^L$, and fix an arbitrary disjoint family~$\Z=\set{Z_i}{i\in I}$, where for each $i\in I$, $Z_i=\set{z_{ij}}{(i,j)\in P}$, and let $Z=\bigcup_{i\in I}Z_i$.  
Also let~$Y=\set{ y_k}{ k\in K^T}$ be an arbitrary subset of $X$ of size $\kappa$ disjoint from $Z$.  Then $(\epsilon_{Y,\Z},\omega_{Y,\Z})\in\sigma$ by part \ref{it:rg1ii}.
Then with
\[
\theta_1= \partXXIV{C_k}{A_{ij}}{C_l}{y_k}{z_{ij}}{}_{k\in K^T,\ (i,j)\in P^U,\ l\in K^U}
\AND
\theta_2= \partXXIV{y_k}{z_{ij}}{}{D_k}{B_{ij}}{D_l}_{k\in K^T,\ (i,j)\in P^L,\ l\in K^L},
\]
we have $(\gamma,\delta)=(\theta_1\epsilon_{Y,\Z}\theta_2,\theta_1\omega_{Y,\Z}\theta_2)\in\sigma$ as well.

\pfitem{\ref{it:rg1iv}}
Let $\th=\gamma\vee\delta$, noting that $\ga,\de\pre\th$ and that $\rank(\th)=\ka$ since $\ga\cap\de$ contains $\ka$ transversals (which itself follows from $|\ga\sd\de|<\aleph_0$). From $|\gamma\sd\delta|<\aleph_0$ and Lemma \ref{lem:joins} \ref{it:joinsi}, it follows that $|\gamma\sd\th|, |\delta\sd\th|<\aleph_0$ as well.  
From \ref{it:rg1iii}, we now have $(\gamma,\th),(\delta,\th)\in\sigma$, and hence $(\gamma,\delta)\in\sigma$ by transitivity.
\end{proof}

\begin{lemma}[cf.~Lemma \ref{la340}]
\label{la-hd5}
Suppose $(\alpha,\beta)\in \sigma\restr_{D_\kappa}$ where $\kappa\geq\eta\geq\aleph_0$
and $|\alpha\sd\beta|=\xi$.
\begin{thmenum}
\item \label{it:hd5i} For any disjoint $Y,Z\subseteq X$ with $|Y|=\kappa$ and $|Z|=\xi$, we have $(\epsilon_{Y,Z},\omega_{Y,Z})\in\sigma$.
\item \label{it:hd5ii} For any $Y\subseteq X$ of size $\kappa$, and any disjoint family $\Z$ of $\xi$ subsets of $X$, all of them of size~$\xi$, and all of them disjoint from $Y$, we have $(\epsilon_{Y,\Z},\omega_{Y,\Z})\in\sigma$.
\item \label{it:hd5iii} For any $\gamma,\delta\in D_\kappa$ with $|\gamma\sd\delta|\leq\xi$ and $\ga\pre\de$, we have $(\gamma,\delta)\in\sigma$. 
\item \label{it:hd5iv} For any $\gamma,\delta\in D_\kappa$ with $|\gamma\sd\delta|\leq\xi$, we have $(\gamma,\delta)\in\sigma$. 
\end{thmenum}
\end{lemma}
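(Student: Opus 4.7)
The plan is to follow, almost verbatim, the structure of the finite analogue Lemma \ref{la-rg1}, with Lemma \ref{lem:tech3}\ref{it:h3i} systematically replaced by its cardinal counterpart Lemma \ref{lem:tech3}\ref{it:h3ii}. The four parts are proved in order, each feeding the next.

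For part \ref{it:hd5i}, Lemma \ref{lem:tech3}\ref{it:h3ii} lets us assume $(\alpha,\beta) = (\epsilon_Y,\epsilon_{Y,Z})$ for the specified $Y$ and $Z$ (note $|Z|=\xi=|\alpha\sd\beta|$ is what the lemma requires). Then a direct computation gives $\alpha\omega\alpha = \alpha$ and $\beta\omega\beta = \omega$ for $\omega = \omega_{Y,Z}$, so $(\alpha,\omega) = (\alpha\omega\alpha,\beta\omega\beta)\in\sigma$; combined with the assumed pair, transitivity yields the conclusion. Part \ref{it:hd5ii} is derived from \ref{it:hd5i} in the same way that Lemma \ref{la-rg1}\ref{it:rg1ii} is derived from \ref{it:rg1i}: applying \ref{it:hd5i} with $Z=\bigcup_{i\in I}Z_i$ gives $(\epsilon_{Y,\Z},\omega_{Y,Z})\in\sigma$, and sandwich-multiplication by $\omega' = \omega_{Y,\Z}$ delivers $(\omega',\omega_{Y,Z}) = (\alpha\omega'\alpha,\beta\omega'\beta)\in\sigma$, whence transitivity concludes.

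Part \ref{it:hd5iii} is the combinatorial core and mirrors Lemma \ref{la-rg1}\ref{it:rg1iii}. I would index all blocks of $\gamma$ and $\delta$ exactly as there, using index sets $I^T,I^U,I^L,J_i^{\bullet\bullet},K^T,K^U,K^L$; the key observation is that each of the $I$- and $J$-type index sets now has cardinality at most $\xi$ (rather than being finite), so the auxiliary set $P=P^U\cup P^L$ also has cardinality at most $\xi$. Choose a disjoint family $\Z = \set{Z_i}{i\in I}$ consisting of $\xi$ subsets of $X$ each of size $\xi$, padding up to $\xi$ if the naturally arising index sets are smaller, and let $Y\subseteq X$ be any set of size $\kappa$ disjoint from $\bigcup\Z$. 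Part \ref{it:hd5ii} supplies $(\epsilon_{Y,\Z},\omega_{Y,\Z})\in\sigma$, and the same explicit choices of $\theta_1,\theta_2$ as in the finite case realise $(\gamma,\delta) = (\theta_1\epsilon_{Y,\Z}\theta_2,\theta_1\omega_{Y,\Z}\theta_2)\in\sigma$.

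For part \ref{it:hd5iv}, set $\theta = \gamma\vee\delta$, so $\gamma,\delta\pre\theta$. Lemma \ref{lem:tech1} gives $|\gamma\sd\delta|<\kappa$, so $\gamma\cap\delta$ has $\kappa$ transversals, of which fewer than $\kappa$ can be merged in passing to $\theta$; hence $\theta\in D_\kappa$. Lemma \ref{lem:joins}\ref{it:joinsi} bounds $|\gamma\sd\theta|$ and $|\delta\sd\theta|$ by $|\gamma\sd\delta|\leq\xi$, and two applications of part \ref{it:hd5iii} followed by transitivity conclude the case $\xi\geq\aleph_0$; the case of finite $\xi$ is immediate from Lemma \ref{la-rg1}\ref{it:rg1iv}. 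The main technical obstacle is the cardinality bookkeeping in part \ref{it:hd5iii}: one must verify that every index set arising from the block enumeration has cardinality at most $\xi$, so that a suitable disjoint family of $\xi$ subsets of size $\xi$ can be drawn from $X$ and part \ref{it:hd5ii} invoked.
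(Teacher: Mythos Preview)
Your proposal is correct and follows essentially the same approach as the paper, which also proves all four parts by mimicking Lemma \ref{la-rg1} with Lemma \ref{lem:tech3}\ref{it:h3ii} in place of \ref{it:h3i} and with cardinality bounds $\leq\xi$ in place of finiteness. One small correction of placement: the argument that $\gamma\cap\delta$ has $\kappa$ transversals (via $|\gamma\sd\delta|\leq\xi=|\alpha\sd\beta|<\kappa$, the last inequality coming from Lemma \ref{lem:tech1} applied to the pair $(\alpha,\beta)$, not to $(\gamma,\delta)$) is needed already in part \ref{it:hd5iii} to guarantee $|K^T|=\kappa$, not only in part \ref{it:hd5iv}.
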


\pf
The proofs of all four parts are essentially identical to those of the corresponding parts of Lemma~\ref{la-rg1}.  In part \ref{it:hd5i}, we apply the second part of Lemma \ref{lem:tech3} instead of the first.  In part~\ref{it:hd5iii}, the $I$-, $J$- and $P$-type index sets are of size at most $\xi$, rather than being finite; also we have $|\ga\sd\de|\leq\xi=|\al\sd\be|<\ka$ (the latter from Lemma \ref{lem:tech1}), and since $\ga,\de\in D_\ka$ it follows that $\ga\cap\de$ has $\ka$ transversals, so $|K^T|=\ka$.  In part~\ref{it:hd5iv}, after defining $\th=\ga\vee\de$, $|\ga\sd\th|$ and $|\de\sd\th|$ are at most $\xi$, rather than being finite.
\epf

\part{The lattice of congruences}\label{part:II}

In the first part of the paper we classified all of the congruences on the partition monoid $\P_X$ and partial Brauer monoid $\PB_X$ over an arbitrary infinite set $X$.  
This second part constitutes a detailed analysis of the congruence lattices $\Cong(\P_X)$ and $\Cong(\PB_X)$.  

In Section \ref{sect:reversals} we investigate the order-reversing mappings $\Psi(\si)$ associated to congruences of type \ref{CT2}, which will play a crucial role in many of the subsequent sections.
In Section \ref{sect:order} we characterise the order relation in the lattices, and give formulae for meets and joins.
In Section \ref{sect:Hasse} we discuss Hasse diagrams of the lattices.
%
Section \ref{sect:global} concerns ``global'' properties of the 
lattices: we show they are distributive and well quasi-ordered, and we also describe the $*$-congruences (congruences that also preserve the involution $\al\mt\al^*$) and the lattice formed by them.  
In Section \ref{sect:gen}, we describe the principal congruences, and then for each congruence calculate the minimal size of a set of generating pairs.  
Finally, in Section \ref{sect:OM} we compare and contrast the results of this paper with existing results on finite diagram monoids and (finite and infinite) transformation monoids, before discussing directions for future research.

In all that follows, we continue to use $\M$ to stand for either $\P_X$ or $\PB_X$.  We generally use Theorem \ref{thm-main} without explicit reference, and we regard the parameters appearing in the theorem (and also defined in Section \ref{sect:stage2}) as functions having the congruence itself as their argument; thus, if $\si\in\Cong(\M)$, we will refer to $n(\si)$ and/or $\eta(\si)$, to $\ze_1(\si)$ and $\ze_2(\si)$, and so on.

\section{Reversals}\label{sect:reversals}

Throughout the rest of the paper, it will often be convenient to consider the mapping ${\Psi=\Psi(\sigma)}$ as a parameter of a congruence $\sigma$ of type \ref{CT2}, alternative to the parameters $k(\sigma)$, $\xi_i(\sigma)$ and~$\eta_i(\sigma)$, as explained in Remark \ref{rem:PsiPar}.  
Recall that $\Psi$ is an order-reversing mapping 
${[\eta,|X|]\rightarrow \{1\} \cup [\aleph_0,\eta]}$, where $\eta=\eta(\si)\in [\aleph_0,|X|^+]$. We will refer to any such mapping as a \emph{reversal}, and we write~${\cR=\cR_{|X|}}$ for the set of all reversals. 
Note that the empty mapping $\emptyset$ is a reversal with~${\eta=|X|^+}$.
Here we gather properties of reversals that will be used in subsequent subsections, the key fact being that $\cR$ is lattice under an order $\pre$ defined below; we prove that~$(\cR,\pre)$ is distributive in Subsection \ref{subsect:dist_R} (see Proposition \ref{prop:Psi_lattice}) and well quasi-ordered in Subsection~\ref{subsect:wqo_R} (see Corollary \ref{cor:Rwqo}).
Throughout, we use standard abbreviations: poset (partially ordered set), qoset (quasi-ordered set) and wqo (well quasi-ordered).


\subsection{Distributivity of reversals}\label{subsect:dist_R}

We begin with some basic facts about posets; for more background, see for example \cite{Blyth2005,DPbook}.
Let $(P,\leq)$ be a poset, and $I$ an arbitrary set.  The set $P^I$ of all functions~${I\to P}$ (equivalently, all $I$-tuples over $P$) is partially ordered under the component-wise order~$\leqC$ defined as follows: if $f,g\in P^I$, then
\[
f\leqC g \quad\iff\quad f(i)\leq g(i) \quad\text{for all $i\in I$}.
\]
It is routine to check that if $P$ is a (distributive) lattice then so too is $(P^I,\leqC)$.
If $f,g\in P^I$ where $P$ is a lattice and $I$ a set, we denote the meet and join of $f$ and $g$ in $P^I$ by $f\wedgeC g$ and~$f\veeC g$, respectively; for example, we have $(f\wedgeC g)(i)=f(i)\wedge g(i)$ for all $i\in I$.

Recall that a map $f\colon P\to Q$ between posets is \emph{order-reversing} if $p\leq q\implies f(p)\geq f(q)$ for all $p,q\in P$.  We write $\Rev(P,Q)\sub Q^P$ for the set of all such mappings.  If $Q$ is a lattice and $P$ an arbitrary poset, and if $f,g\in\Rev(P,Q)$, then it is easy to check that $f\wedgeC g$ and $f\veeC g$ both belong to~$\Rev(P,Q)$; i.e., $\Rev(P,Q)$ is a sublattice of $(Q^P,\leqC)$:

\begin{lemma}\label{lem:PQ}
If $P$ is a poset and $Q$ a (distributive) lattice, then $\Rev(P,Q)$ is a (distributive) lattice under $\leqC$. \epfres
\end{lemma}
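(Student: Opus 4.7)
The plan is to exploit the fact, already noted in the text immediately before the statement, that $(Q^P,\leqC)$ is itself a (distributive) lattice when $Q$ is. Once this is in hand, it will be enough to show that $\Rev(P,Q)$ is closed under the component-wise meet and join operations of $Q^P$: any sublattice of a distributive lattice is distributive, so distributivity transfers for free.

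First I would fix $f,g\in\Rev(P,Q)$ and any $p,q\in P$ with $p\leq q$, and verify that $f\wedgeC g$ and $f\veeC g$ are order-reversing. Since $f$ and $g$ are order-reversing, we have $f(p)\geq f(q)$ and $g(p)\geq g(q)$ in $Q$. The meet and join operations in any lattice are monotone in each argument, so $f(p)\wedge g(p)\geq f(q)\wedge g(q)$ and $f(p)\vee g(p)\geq f(q)\vee g(q)$. By the definitions of $\wedgeC$ and $\veeC$, this says exactly that $(f\wedgeC g)(p)\geq(f\wedgeC g)(q)$ and $(f\veeC g)(p)\geq(f\veeC g)(q)$, as required. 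Hence $\Rev(P,Q)$ is closed under $\wedgeC$ and $\veeC$, i.e.~it is a sublattice of $(Q^P,\leqC)$, and in particular a lattice in its own right.

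For the distributivity part, I would simply observe that distributivity of a lattice is inherited by any sublattice, so if $Q$ is distributive then $(Q^P,\leqC)$ is distributive (via the standard componentwise argument) and therefore so is $\Rev(P,Q)$.

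There is no real obstacle here: the proof is essentially a one-line monotonicity check together with the standard sublattice principle, and all the work has already been set up in the paragraph immediately preceding the lemma.
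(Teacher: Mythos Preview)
Your proposal is correct and follows exactly the approach the paper takes: the paper states this lemma without proof (marked \verb|\epfres|), relying on the two facts set up in the preceding paragraph—that $(Q^P,\leqC)$ is a (distributive) lattice and that $\Rev(P,Q)$ is closed under $\wedgeC$ and $\veeC$—which is precisely what you spell out.
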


We now return our attention to reversals.
If $\Psi\colon [\eta,|X|]\rightarrow \{1\} \cup [\aleph_0,\eta]$ is a reversal, then we define its \emph{extension} $\ext{\Psi}\colon  [0,|X|]\rightarrow [0,|X|^+]$ by
\[
\ext{\Psi}(\kappa) = \begin{cases}
|X|^+ & \text{if }\ka\in[0,\eta)\\
\Psi(\kappa) & \text{if } \ka\in[\eta,|X|].
\end{cases}
\]
It is clear that $\ext\Psi$ is order-reversing, and uniquely determined by $\Psi$, so we have an injective map
\[
\cR\to\RevX\colon \Psi\mapsto\ext\Psi.
\]

We now define an order $\pre$ on $\cR$.  To do so, consider two reversals
\begin{equation}
\label{eq:Psi}
\Psi_1\colon [\eta,|X|]\to\{1\}\cup[\aleph_0,\eta]  \AND \Psi_2\colon [\eta',|X|]\to\{1\}\cup[\aleph_0,\eta'].
\end{equation}
We write
\begin{equation}
\label{eq:prePsi}
\Psi_1\pre\Psi_2 \quad\iff\quad \text{$\eta\leq\eta'$ \ and \ $\Psi_1(\ka)\leq\Psi_2(\ka)$ \ for all \ $\ka\in[\eta',|X|]$.}
\end{equation}
In fact, it is clear that
\begin{equation}
\label{eq:prePsi2}
\Psi_1\pre\Psi_2 \quad\iff\quad  \ext{\Psi_1}\leqC\ext{\Psi_2},
\end{equation}
where $\leqC$ denotes the component-wise order on $\RevX$.  Together with the fact that the map $\Psi\mt\ext\Psi$ is injective, it follows that $\pre$ is a partial order on $\cR$, and that $\Psi\mt\ext\Psi$ is an order-embedding of $\cR$ in $\RevX$.

Note that since $[0,|X|^+]$ is totally ordered, it is a distributive lattice, with the meet and join of two cardinals being their minimum and maximum, respectively.  It follows from Lemma~\ref{lem:PQ} that $\RevX$ is itself a distributive lattice under $\leqC$.

If $\Psi_1$ and $\Psi_2$ are two reversals as in~\eqref{eq:Psi}, then by their form, $\ext{\Psi_1}\wedgeC\ext{\Psi_2}$ and $\ext{\Psi_1}\veeC\ext{\Psi_2}$ are both in the image of the $\Psi\mt\ext\Psi$ map, so we may define $\Psi_1\wedge\Psi_2$ and~$\Psi_1\vee\Psi_2$ to be the unique reversals satisfying 
\[
\ext{(\Psi_1\wedge\Psi_2)}=\ext{\Psi_1}\wedgeC\ext{\Psi_2} \AND \ext{(\Psi_1\vee\Psi_2)}=\ext{\Psi_1}\veeC\ext{\Psi_2}.
\]
Explicitly, if $\eta\leq\eta'$, then 
\[
\Psi_1\wedge\Psi_2=\Psi_2\wedge\Psi_1\colon [\eta,|X|]\to\{1\}\cup[\aleph_0,\eta]  \ANd \Psi_1\vee\Psi_2=\Psi_2\vee\Psi_1\colon [\eta',|X|]\to\{1\}\cup[\aleph_0,\eta']
\]
are given by
\begin{align}
\label{eq:meet}
(\Psi_1\wedge\Psi_2)(\ka) &= \begin{cases}
\Psi_1(\ka) &\hspace{1.5mm}\text{for $\ka\in[\eta,\eta')$}\\
\min(\Psi_1(\ka),\Psi_2(\ka)) &\hspace{1.5mm}\text{for $\ka\in[\eta',|X|]$,}
\end{cases}\\
\intertext{and}
\label{eq:join}
(\Psi_1\vee\Psi_2)(\ka) &= \max(\Psi_1(\ka),\Psi_2(\ka)) \qquad\text{for $\ka\in[\eta',|X|]$.}
\end{align}
It follows quickly from \eqref{eq:prePsi2} that $\Psi_1\wedge\Psi_2$ and $\Psi_1\vee\Psi_2$ are the meet and join of $\Psi_1$ and $\Psi_2$ in~$\cR$, justifying the suggestive notation, and so the map 
$\Psi\mapsto\ext{\Psi}$ is in fact a lattice embedding of~$\cR$ in $\RevX$.  Since the latter is distributive, as observed above, we immediately deduce the following:

\begin{prop}\label{prop:Psi_lattice}
The set $\cR=\cR_{|X|}$ of all reversals is a distributive lattice under the order $\pre$ given by \eqref{eq:prePsi}, and with meet and join operations given by \eqref{eq:meet} and \eqref{eq:join}.  \epfres
\end{prop}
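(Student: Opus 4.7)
The plan is to leverage the embedding $\Psi\mapsto\ext\Psi$ of $\cR$ into $\RevX$ that has already been set up in the discussion preceding the proposition, and to transport the lattice structure from the latter to the former.

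First I would verify carefully that the map $\Psi\mapsto\ext\Psi$ is an order-embedding of $(\cR,\pre)$ into $(\RevX,\leqC)$; this is essentially the content of \eqref{eq:prePsi2}, together with the already-noted injectivity of the map. This immediately shows that $\pre$ is a partial order on $\cR$.

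Next I would check that the image of $\cR$ in $\RevX$ is closed under the componentwise meet and join. Given two reversals $\Psi_1,\Psi_2$ as in \eqref{eq:Psi} with $\eta\leq\eta'$, a direct computation from the definition of $\ext{\Psi_i}$ shows that $\ext{\Psi_1}\wedgeC\ext{\Psi_2}$ equals $|X|^+$ on $[0,\eta)$, equals $\Psi_1(\kappa)$ on $[\eta,\eta')$ (since $\ext{\Psi_2}(\kappa)=|X|^+$ there, while $\Psi_1(\kappa)\leq\eta\leq|X|$), and equals $\min(\Psi_1(\kappa),\Psi_2(\kappa))$ on $[\eta',|X|]$; dually for the join, which is $|X|^+$ on $[0,\eta')$ and $\max(\Psi_1(\kappa),\Psi_2(\kappa))$ on $[\eta',|X|]$. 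In both cases the resulting function is order-reversing on its whole domain and takes the value $|X|^+$ on an initial segment of $[0,|X|]$, so it is the extension of a unique reversal, namely the one given by \eqref{eq:meet} or \eqref{eq:join} respectively.

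It then follows that $\Psi\mapsto\ext\Psi$ is a lattice embedding of $\cR$ into $\RevX$, with meet and join given by the stated formulae. Since Lemma \ref{lem:PQ} tells us that $\RevX$ is a distributive lattice (as $[0,|X|^+]$ is a chain, hence a distributive lattice, and so $\Rev(P,Q)$ inherits distributivity from $Q^P$), and since any sublattice of a distributive lattice is distributive, we conclude that $\cR$ is a distributive lattice under $\pre$. No step here looks genuinely difficult; the only point requiring care is the routine case-split verification that the componentwise meet and join of two extended reversals are themselves of the form $\ext\Psi$ for some reversal $\Psi$, which is why it is spelled out above.
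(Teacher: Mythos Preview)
Your proposal is correct and follows essentially the same route as the paper: the paper establishes everything in the discussion immediately preceding the proposition (the order-embedding via \eqref{eq:prePsi2}, closure of the image under $\wedgeC$ and $\veeC$, and the appeal to Lemma~\ref{lem:PQ}), so that the proposition is stated with no further proof, and your write-up simply recapitulates that argument with the case-split spelled out a bit more explicitly.
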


\subsection{Well quasi-orderedness of reversals}\label{subsect:wqo_R}

Recall that a qoset is \emph{well quasi-ordered} (wqo) if it contains no infinite strictly descending chains and no infinite antichains.  Here we prove that the lattice $(\cR,\pre)$ of all reversals is wqo.  

In order to prove this we first need to gather some fundamental facts about qosets.  Unless specified otherwise, we use $\leq$ to denote the quasi-order in any qoset.  Clearly any subset of a wqo qoset is itself wqo under the induced quasi-order.  

The next lemma is part of \cite[Theorem 2.1]{higman52}:

\begin{lemma}\label{lem:alt_wqo}
A qoset $Q$ is wqo if and only if the following condition is satisfied:
\bit
\item[] for any infinite sequence $q_1,q_2,q_3,\ldots$ in $Q$, there exists $i<j$ such that $q_i\leq q_j$. \epfres
\eit
\end{lemma}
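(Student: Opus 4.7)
The plan is to prove each direction separately, with the forward direction being the substantive one.

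For the ($\Leftarrow$) direction, I would argue by contrapositive. If $Q$ is not wqo then, by definition, $Q$ contains either an infinite strictly descending chain $q_1 > q_2 > \cdots$ or an infinite antichain consisting of pairwise incomparable distinct elements, which can be enumerated as $q_1,q_2,\ldots$. In both cases one has $q_i \not\leq q_j$ for all $i<j$: in the descending case this follows because $q_i \leq q_j$ together with $q_j \leq q_i$ would contradict strictness via transitivity, while in the antichain case it is immediate. So the sequence $q_1,q_2,\ldots$ violates the stated condition.

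For the ($\Rightarrow$) direction, I would assume $Q$ is wqo and let $q_1,q_2,\ldots$ be an arbitrary infinite sequence in $Q$. Call an index $i$ \emph{bad} if $q_i\not\leq q_j$ for every $j>i$, and \emph{good} otherwise. If some index $i$ is good, we immediately obtain a witness $j>i$ with $q_i\leq q_j$, as required. So the interesting case is when every index is bad; I would derive a contradiction from this assumption. In this case, for any $i<j$ we have $q_i\not\leq q_j$, so either $q_j<q_i$ (in the quasi-order sense), or $q_i$ and $q_j$ are incomparable. This defines a $2$-colouring of the pairs $\{i,j\}$ with $i<j$, and by the infinite Ramsey theorem there is an infinite subset $i_1<i_2<\cdots$ on which the colouring is constant. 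If the constant colour is ``$q_j<q_i$'', then $q_{i_1}>q_{i_2}>q_{i_3}>\cdots$ is an infinite strictly descending chain in $Q$; if it is ``incomparable'', then $\{q_{i_k}:k\geq 1\}$ is an infinite antichain (the $q_{i_k}$ are automatically pairwise distinct since badness precludes any $q_{i_k}\leq q_{i_l}$ for $k<l$). Either conclusion contradicts the assumption that $Q$ is wqo.

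The only non-trivial ingredient is the infinite Ramsey theorem, and no additional subtlety beyond its standard application is expected; the rest is bookkeeping with the definitions of bad index and of wqo.
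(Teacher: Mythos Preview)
Your proof is correct. The paper itself does not prove this lemma at all; it simply cites it as part of \cite[Theorem 2.1]{higman52} and moves on. Your argument via the infinite Ramsey theorem is the standard self-contained proof of this equivalence, and all the details (handling strictness in the quasi-order setting, and noting that badness forces distinctness of the antichain elements) are in order. So there is nothing to compare: you have supplied a proof where the paper gives only a reference.
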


For a qoset $Q$ denote by $Q^\ast$ the set of all finite sequences of elements of $Q$.
This set can be equipped with the so-called \emph{domination} quasi-order $\leqdom$: 
\begin{align*}
(q_1,\dots,q_m) &\leqdom (q_1',\dots,q_n') \\
& \iff 
\text{there exist $1\leq j_1<j_2<\dots<j_m\leq n$ such that $q_i\leq q_{j_i}'$ for all $i$.}
\end{align*}

\begin{lemma}[{Higman's Lemma, \cite[Theorem 4.3]{higman52}}]
\label{lem:higman}
If a qoset $Q$ is wqo then so is $(Q^\ast,\leqdom)$.~\epfres
\end{lemma}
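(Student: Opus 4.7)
My plan is to use Nash-Williams' minimal bad sequence argument. Aiming for a contradiction, suppose that $(Q^\ast, \leqdom)$ is not wqo. Then by Lemma \ref{lem:alt_wqo} there exists a \emph{bad} sequence: an infinite sequence $s_1, s_2, s_3, \ldots$ in $Q^\ast$ such that $s_i \not\leqdom s_j$ for all $i<j$. Using dependent choice, I construct a \emph{minimal} bad sequence by choosing $s_1 \in Q^\ast$ to have minimum length among all first terms of bad sequences, then $s_2$ to have minimum length among all second terms of bad sequences starting with $s_1$, and so on. Each $s_i$ must then be non-empty, since the empty sequence is $\leqdom$-dominated by every sequence (the defining condition is vacuous when $m=0$), so an empty $s_i$ would supply $s_i \leqdom s_{i+1}$, contradicting badness. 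Hence I can write $s_i = (q_i) \cdot r_i$ with $q_i \in Q$ and $r_i \in Q^\ast$ strictly shorter than $s_i$.

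An auxiliary fact I will need is that wqo-ness of $Q$ implies that every infinite sequence in $Q$ contains an infinite non-decreasing subsequence; this follows from Lemma \ref{lem:alt_wqo} by noting that only finitely many indices can fail to be followed by a dominating index (otherwise those ``bad'' indices would yield a bad sequence in $Q$ itself). Applying this to $(q_i)_{i\geq 1}$, I extract indices $i_1<i_2<\cdots$ with $q_{i_1}\leq q_{i_2}\leq\cdots$. I then perform surgery on the bad sequence, forming the new sequence $s_1,\dots,s_{i_1-1},r_{i_1},r_{i_2},r_{i_3},\ldots$, and verify that it is also bad by case analysis on the positions $a<b$ of a hypothetical domination. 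Either (i) both $a,b<i_1$, contradicting original badness directly; or (ii) $a<i_1\leq b$, where position $b$ corresponds to some $r_{i_k}$, in which case $s_a\leqdom r_{i_k}\leqdom s_{i_k}$ contradicts badness; or (iii) $i_1\leq a<b$, where positions $a,b$ correspond to $r_{i_k},r_{i_l}$ with $k<l$, and then $r_{i_k}\leqdom r_{i_l}$ combined with $q_{i_k}\leq q_{i_l}$ gives $s_{i_k}\leqdom s_{i_l}$, again a contradiction. Since the $i_1$-th term $r_{i_1}$ of the new bad sequence is strictly shorter than $s_{i_1}$, this violates the minimality of $s_{i_1}$.

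The main obstacle is conceptual rather than computational: the construction of the minimal bad sequence must be set up carefully (this is where dependent choice enters), and the key observation making the surgery work is that the prefix elements $q_{i_k}$ dropped from the new sequence themselves form a non-decreasing chain — it is precisely this that allows any domination $r_{i_k}\leqdom r_{i_l}$ to be upgraded to $s_{i_k}\leqdom s_{i_l}$, closing the contradiction. Once this interplay between the wqo property of $Q$ and the surgery is identified, the remaining verifications are routine unpacking of the definition of $\leqdom$.
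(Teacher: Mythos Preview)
Your argument is the standard Nash-Williams minimal bad sequence proof, and it is correct in all its details: the minimality construction is set up properly, the auxiliary fact about extracting infinite non-decreasing subsequences follows from Lemma~\ref{lem:alt_wqo} exactly as you say, and the three-case analysis of the surgery step is sound.

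The paper itself does not prove this lemma: it is quoted as a classical result from Higman's 1952 paper and marked with a \qed\ symbol immediately after the statement. So there is no proof in the paper to compare against. Your write-up supplies a complete and standard proof where the paper simply cites one, which is entirely appropriate.
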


An immediate consequence is the following (viewing the direct product $Q_1\times\dots\times Q_k$ as a subqoset of ${(Q_1\cup\cdots\cup Q_k)^*}$):

\begin{lemma}[{Dixon's Lemma, see also \cite[Theorem 2.3]{higman52}}]
\label{lem:dixon}
If the qosets $Q_1,\ldots,Q_k$ are wqo, then so is their direct product
$Q_1\times\dots\times Q_k$ under the component-wise quasi-order. \epfres
\end{lemma}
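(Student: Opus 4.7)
The plan is to reduce Dixon's Lemma directly to Higman's Lemma (Lemma \ref{lem:higman}), exactly as hinted in the parenthetical remark after the statement. First I would assemble the $Q_i$ into a single auxiliary qoset $Q$: as a set, take the disjoint union $Q_1\sqcup\cdots\sqcup Q_k$, and declare $x\leq y$ to hold in $Q$ precisely when $x,y$ belong to the same component $Q_i$ and $x\leq y$ holds there, so that elements of different components are incomparable. I would then verify that $Q$ is wqo using the criterion of Lemma \ref{lem:alt_wqo}: any infinite sequence in $Q$ must, by the pigeonhole principle, contain an infinite subsequence entirely inside some $Q_i$, and wqo of $Q_i$ supplies two entries of this subsequence comparable in $Q_i$, hence in $Q$. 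Higman's Lemma then yields that $(Q^*,\leqdom)$ is wqo.

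Next I would realise $Q_1\times\cdots\times Q_k$ as a subqoset of $Q^*$ via the embedding $\phi:(q_1,\dots,q_k)\mapsto(q_1,\dots,q_k)$, the right-hand side being regarded as a length-$k$ sequence in $Q^*$. Since every subqoset of a wqo qoset is wqo, it is enough to show that $\phi$ is an order-embedding from the component-wise order into $\leqdom$. In the forward direction, component-wise domination yields the witnessing indices $j_i=i$ immediately. For the converse, assume $\phi(p_1,\dots,p_k)\leqdom\phi(q_1,\dots,q_k)$, so there exist $1\leq j_1<\cdots<j_k\leq k$ with $p_i\leq q_{j_i}$ in $Q$ for every $i$; the strict monotonicity combined with the bound $j_k\leq k$ forces $j_i=i$ for all $i$, and since $p_i,q_i\in Q_i$ the inequality $p_i\leq q_i$ in $Q$ is, by construction, the same as $p_i\leq q_i$ in $Q_i$.

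There is no real obstacle here, but it is worth flagging the one subtlety that demands care: the definition of $Q$ must keep distinct components incomparable. This ensures both that the restriction of the $Q$-order to each $Q_i$ recovers the original order on $Q_i$ and that the ``trivial'' choice of indices $j_i=i$ is the only one available when witnessing $\phi(\bar p)\leqdom\phi(\bar q)$. Any richer choice of order on $Q$ would risk introducing spurious cross-component matchings and break the correspondence between $\leqdom$ on the image of $\phi$ and component-wise order on the product.
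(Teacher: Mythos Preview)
Your proposal is correct and follows exactly the approach indicated by the paper, which gives only the parenthetical hint ``viewing the direct product $Q_1\times\dots\times Q_k$ as a subqoset of $(Q_1\cup\cdots\cup Q_k)^*$'' and otherwise omits the proof. Your careful use of the disjoint union and verification that $\phi$ is an order-embedding is a faithful and appropriately detailed expansion of that hint.
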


As in Subsection \ref{subsect:dist_R}, for posets $P$ and $Q$, we write $\Rev(P,Q)$ for the set of all order-reversing functions $P\to Q$.  So $\Rev(P,Q)$ is a poset under the component-wise order $\leqC$.  The proof of the next result uses ideas similar to those introduced in Subsection \ref{subsect:infinite_eta}.

\begin{prop}\label{prop:RevPQ}
If $P$ and $Q$ are well-ordered chains, then $\Rev(P,Q)$ is well quasi-ordered under $\leqC$.
\end{prop}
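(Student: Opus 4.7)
The plan is to encode each reversal as a finite sequence and apply Higman's Lemma with respect to a suitable weakening of the standard domination order. First I would observe that every $f\in\Rev(P,Q)$ has finite image: otherwise, well-orderedness of $P$ would allow us to extract a strictly increasing sequence $p_1<p_2<\cdots$ in $P$ with $f(p_1),f(p_2),\ldots$ pairwise distinct, and then, since $f$ is order-reversing, $f(p_1)>f(p_2)>\cdots$ would be a strictly descending infinite sequence in $Q$, contradicting the fact that $Q$ is well-ordered. Thus each $f$ takes distinct values $q_1>q_2>\cdots>q_k$ on level sets $L_i=f^{-1}(q_i)$, which are intervals of $P$; setting $a_i=\min L_i$, we have $a_1=\min P<a_2<\cdots<a_k$ (assuming $P\neq\emptyset$; the empty case is trivial).

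Next I would encode $f$ by the finite sequence
\[
\Phi(f)=\big((a_2,q_1),(a_3,q_2),\ldots,(a_k,q_{k-1}),(\infty,q_k)\big)\in\big((P\cup\{\infty\})\times Q\big)^\ast,
\]
each pair $(a_{i+1},q_i)$ encoding the value $q_i$ and the right endpoint of the $i$-th level (with $a_{k+1}=\infty$). Both $P\cup\{\infty\}$ and $Q$ are well-ordered chains, hence wqo, so $(P\cup\{\infty\})\times Q$ is wqo by Dixon's Lemma (Lemma \ref{lem:dixon}) and $\big((P\cup\{\infty\})\times Q\big)^\ast$ is wqo under $\leqdom$ by Higman's Lemma (Lemma \ref{lem:higman}).

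A crucial point then emerges: the standard domination $\leqdom$, which requires \emph{strictly} increasing matching indices, is too strong to capture $\leqC$, because $f\leqC g$ is compatible with $f$ having strictly fewer levels than $g$. So I would introduce the weakening $\leqWD$ defined by $\sigma\leqWD\tau$ iff there exists a \emph{non-decreasing} (not necessarily strict) map $\phi:\{1,\ldots,|\sigma|\}\to\{1,\ldots,|\tau|\}$ with $\sigma(i)\leq\tau(\phi(i))$ for all $i$. Since $\leqdom\subseteq\leqWD$, well quasi-orderedness under Higman immediately transfers to $\leqWD$.

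The main task, and principal obstacle, is then to verify the equivalence $\Phi(f)\leqWD\Phi(g)\iff f\leqC g$. For the converse direction, one sets $\phi(i)=\max\{j:q^{(g)}_j\geq q^{(f)}_i\}$, which is non-decreasing in $i$ (because $q^{(f)}_i$ is strictly decreasing), and checks that $f\leqC g$ translates exactly to $a^{(f)}_{i+1}\leq a^{(g)}_{\phi(i)+1}$ for all $i$. For the forward direction, if $\phi$ witnesses $\Phi(f)\leqWD\Phi(g)$, then $q^{(g)}_{\phi(i)}\geq q^{(f)}_i$ forces $\phi(i)\leq j^*_i := \max\{j:q^{(g)}_j\geq q^{(f)}_i\}$, so by monotonicity $a^{(g)}_{\phi(i)+1}\leq a^{(g)}_{j^*_i+1}$; combining with the first-coordinate inequality $a^{(f)}_{i+1}\leq a^{(g)}_{\phi(i)+1}$ shows that $g$ stays at least $q^{(f)}_i$ throughout the $i$-th level of $f$, yielding $f\leqC g$. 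Once this equivalence is in hand, any infinite sequence in $\Rev(P,Q)$ transfers to an infinite sequence in $\big((P\cup\{\infty\})\times Q\big)^\ast$, which admits a $\leqWD$-increasing pair by the above wqo results, and hence yields the required $\leqC$-increasing pair in $\Rev(P,Q)$; well quasi-orderedness then follows via Lemma \ref{lem:alt_wqo}.
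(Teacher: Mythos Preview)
Your proof is correct and follows essentially the same strategy as the paper: encode each $f\in\Rev(P,Q)$ as a finite sequence over a product of well-ordered chains, invoke Higman and Dixon to obtain wqo of the sequence space, and transfer wqo back via the encoding. Your encoding $\Phi(f)$ is nearly identical to the paper's (the paper prepends one extra pair $(p_0,\top)$).

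The one noteworthy difference is that you do more than necessary. The paper works entirely with the standard domination order $\leqdom$ and proves only the one-way implication $\tb{f}\leqdom\tb{g}\Rightarrow f\leqC g$; this alone suffices for wqo via Lemma~\ref{lem:alt_wqo}. Your remark that $\leqdom$ is ``too strong to capture $\leqC$'' is true only if one insists on an \emph{equivalence}, which the argument does not require. Your detour through the weak domination order $\leqWD$ and the converse implication is therefore correct but superfluous: since $\leqdom\subseteq\leqWD$, your forward implication $\Phi(f)\leqWD\Phi(g)\Rightarrow f\leqC g$ already yields $\Phi(f)\leqdom\Phi(g)\Rightarrow f\leqC g$, and that is all the final step uses. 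The upside of your route is that you obtain a genuine order-isomorphism onto the image, which is a pleasant bonus but not needed here.
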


\pf
We denote the orders on both $P$ and $Q$ by $\leq$.
Let $\top$ be a symbol belonging to neither~$P$ nor $Q$, and denote by $\Ptop$ and $\Qtop$ the well-ordered chains obtained by adjoining $\top$ as a new top element to $P$ and $Q$.

Consider some $f\in\Rev(P,Q)$.  Since the image of $f$ is a descending chain in the well-ordered set $Q$, it must be finite, say $\{q_1,\ldots,q_m\}$ where $q_1>\cdots>q_m$.  Since~$P$ is well-ordered, we may define $p_{i-1}=\min\set{p\in P}{f(p)=q_i}$ for each $i\in\{1,\ldots,m\}$.  We also define $q_0=\top=p_m$.  Note that $p_0<\cdots<p_{m-1}<p_m=\top$ and ${\top=q_0>q_1>\cdots>q_m}$.  We then define 
\[
\tb f = \big( (p_0,q_0),(p_1,q_1),\ldots,(p_m,q_m)\big)\in (\Ptop\times\Qtop)^*.
\]
We claim that 
\begin{equation}\label{eq:fg}
\tb f \leqdom \tb g \quad \implies \quad f\leqC g  \qquad\text{for all $f,g\in\Rev(P,Q)$,}
\end{equation}
where here $\leqdom$ is the domination order on $(\Ptop\times\Qtop)^*$.  

To prove the claim, suppose $f,g\in\Rev(P,Q)$ are such that $\tb f\leqdom\tb g$, and write
\[
\tb f = \big( (p_0,q_0),(p_1,q_1),\ldots,(p_m,q_m)\big) \AND \tb g = \big( (p_0',q_0'),(p_1',q_1'),\ldots,(p_n',q_n')\big).
\]
Note then that
\begin{align*}
p_0<\cdots<p_m=\top \COMMA 
p_0'<\cdots<p_n'=\top \COMMA 
\top=q_0>\cdots>q_m \COMMA 
\top=q_0'>\cdots>q_n' ,
\end{align*}
and that $f(p_{i-1})=q_i$ for all $1\leq i\leq m$, and $g(p_{i-1}')=q_i'$ for all $1\leq i\leq n$.
By assumption, there exist $0\leq j_0<j_1<\cdots<j_m\leq n$ such that $(p_i,q_i)\leq(p_{j_i}',q_{j_i}')$ in $\Ptop\times\Qtop$ for each $0\leq i\leq m$.  

Now let $p\in P$ be arbitrary.  The claim will be proved if we can show that $f(p)\leq g(p)$.  Let us write $f(p)=q_k$ and $g(p)=q_l'$, where $1\leq k\leq m$ and $1\leq l\leq n$.  Note that since $f(p)=q_k=f(p_{k-1})$, we have $p_{k-1}\leq p<p_k$.  

Suppose first that $k=m$.  Then $\top=p_m\leq p_{j_m}'$, so that $p_{j_m}'=\top$, which forces $j_m=n$.  But then 
$f(p) = q_m \leq q_{j_m}' = q_n' \leq q_l' = g(p)$.

Now suppose $1\leq k<m$.
Note then that this forces  $p_k<p_m=\top$, and also $0<j_k<n$.
Now $p<p_k\leq p_{j_k}'$, so from minimality of $p_{j_k}'$ and the fact that
$g$ is order reversing it follows that
$q_l'=g(p)>g(p_{j_k}')=q_{j_k+1}'$.
This means that $l<j_k+1$: i.e., that $l\leq j_k$.
But then ${f(p)=q_k\leq q_{j_k}'\leq q_l'=g(p)}$.
Thus \eqref{eq:fg} is proved.

 Returning to the main proof now, consider an infinite sequence $f_1,f_2,f_3,\ldots$ of elements from $\Rev(P,Q)$.  By Lemmas~\ref{lem:higman} and \ref{lem:dixon}, $(\Ptop\times\Qtop)^*$ is wqo under $\leqdom$, so by Lemma \ref{lem:alt_wqo} it follows that $\tb{f_i}\leqdom\tb{f_j}$ for some $i<j$.  But then by \eqref{eq:fg} we have $f_i\leqC f_j$; the proof concludes by again appealing to Lemma~\ref{lem:alt_wqo}.
\epf

\begin{cor}\label{cor:Rwqo}
The lattice $(\cR,\pre)$ is well quasi-ordered.  
\end{cor}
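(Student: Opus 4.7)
The plan is to deduce Corollary \ref{cor:Rwqo} as an essentially immediate consequence of Proposition \ref{prop:RevPQ}, exploiting the order-embedding of $\cR$ into $\RevX$ that was already established in Subsection~\ref{subsect:dist_R}.

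First, I would note that both $[0,|X|]$ and $[0,|X|^+]$ are sets of cardinals and therefore well-ordered chains. Proposition~\ref{prop:RevPQ} then applies directly, giving that $\RevX=\Rev\bigl([0,|X|],[0,|X|^+]\bigr)$ is well quasi-ordered under the component-wise order $\leqC$.

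Next, I would invoke the material of Subsection~\ref{subsect:dist_R}: the map $\Psi\mapsto\ext{\Psi}$ is an injection $\cR\to\RevX$, and by the equivalence \eqref{eq:prePsi2} it satisfies
\[
\Psi_1\pre\Psi_2\iff\ext{\Psi_1}\leqC\ext{\Psi_2}.
\]
Hence $(\cR,\pre)$ embeds as a subposet of $(\RevX,\leqC)$.

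Finally, since any subset of a wqo qoset is wqo under the induced order, the conclusion follows at once. There is no real obstacle: all the substantive work was carried out in Proposition~\ref{prop:RevPQ} (via Higman's and Dixon's Lemmas and the coding $f\mapsto\tb f$), so the corollary is purely a bookkeeping step.
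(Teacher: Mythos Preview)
Your proposal is correct and matches the paper's own proof essentially verbatim: the paper simply notes that $\Psi\mapsto\ext\Psi$ is a (lattice, hence order-) embedding of $\cR$ into $\RevX$, invokes Proposition~\ref{prop:RevPQ} to conclude that the latter is wqo, and finishes by the fact that subposets of wqo posets are wqo.
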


\pf
We noted at the end of Subsection \ref{subsect:dist_R} that the map $\Psi\to\ext\Psi$ is a lattice embedding of~$\cR$ in ${\Rev}\big([0,|X|],[0,|X|^+]\big)$.  Since the latter is wqo by Proposition \ref{prop:RevPQ}, the claim follows.
\epf

\section{The lattice order and operations}\label{sect:order}

In this section we describe the fundamental properties of the congruence lattice $\Cong(\M)$, where as usual $\M$ denotes either $\P_X$ or $\PB_X$ with $X$ a fixed infinite set.  Specifically, we characterise the inclusion order in Subsection \ref{subsect:inclusion} (see Theorem \ref{thm:comparisons}), and give formulae for the meet and join of arbitrary pairs of congruences in Subsection \ref{subsect:mj} (see Theorem \ref{thm:mj}).  We also record in Corollary \ref{cor:iso} the isomorphism between the lattices $\Cong(\P_X)$ and $\Cong(\PB_X)$.

\subsection{The inclusion order}\label{subsect:inclusion}

The purpose of this subsection is to prove the following result, which characterises the partial order by inclusion on congruences of $\M$, and which we derive as a consequence of Theorem \ref{thm-main} and certain ideas developed during its proof.  For the statement, recall that we consider~$\nabla_{\M}$ to be a congruence of type \ref{CT2} with $k=1$, $\eta=\zeta_1=\zeta_2=\eta_1=|X|^+$ and $\xi_1=1$.

\begin{thm}
\label{thm:comparisons}
Let $X$ be an infinite set, let $\M$ stand for either $\P_X$ or $\PB_X$, and let $\sigma$ and~$\tau$ be congruences on $\M$.
Then $\sigma\subseteq \tau$ if and only if one of the following is true:
\begin{thmenum}
\item 
\label{it:comp1}
$\sigma$ and $\tau$ are both of type \ref{CT1},
$\zeta_1(\sigma)\leq\zeta_1(\tau)$, $\zeta_2(\sigma)\leq\zeta_2(\tau)$, and one of the following holds:
\bit
\item
$n(\sigma)<n(\tau)$, or
\item
$n(\sigma)=n(\tau)$ and $N(\sigma)\leq N(\tau)$,
\eit
\item
\label{it:comp2}
$\sigma$ is of type \ref{CT1}, $\tau$ is of type \ref{CT2}, and
$\zeta_1(\sigma)\leq\zeta_1(\tau)$ and $\zeta_2(\sigma)\leq\zeta_2(\tau)$,
\item
\label{it:comp3}
$\sigma$ and $\tau$ are both of type \ref{CT2}, $\eta(\sigma)\leq\eta(\tau)$,
$\zeta_1(\sigma)\leq\zeta_1(\tau)$, $\zeta_2(\sigma)\leq\zeta_2(\tau)$, and
there exist $0\leq j_1\leq j_2\leq \dots\leq j_{k(\sigma)}\leq k(\tau)$ such that
\[
\xi_i(\sigma)\leq\xi_{j_i}(\tau) \ANd \eta_i(\sigma)\leq\eta_{j_i}(\tau)
\qquad\text{for all } i=1,\dots, k(\sigma),
\]
with the convention that $\xi_0(\tau)=\eta_0(\tau)=\eta(\tau)$.
\end{thmenum}
\end{thm}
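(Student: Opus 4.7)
I would prove the theorem in both directions, with much of the work being monotonicity arguments for the forward direction and explicit verification via Theorem \ref{thm-main} for the reverse.

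For the forward direction, assume $\sigma \subseteq \tau$. Each of the parameters $\eta$, $\zeta_1$, $\zeta_2$, and (for CT2 congruences) the reversal $\Psi$, was defined in Section \ref{sect:stage2} as a $\LSUB$ over a set that grows monotonically with the congruence; hence $\eta(\sigma) \leq \eta(\tau)$, $\zeta_i(\sigma) \leq \zeta_i(\tau)$, and $\Psi(\sigma)(\kappa) \leq \Psi(\tau)(\kappa)$ wherever both are defined. Since CT1 congruences have finite $\eta$ and CT2 ones have infinite $\eta$, $\sigma$ being of type CT2 forces $\tau$ to be of type CT2 too, so only the three cases listed can occur. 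When both are CT1 with $n(\sigma) = n(\tau) = n$, the containment $N(\sigma) \leq N(\tau)$ is immediate from $N(\rho) = \{\pi \in \S_n : (\epsilon_n, \permdec\pi) \in \rho\}$. When both are CT2, the required sequence $j_1 \leq \dots \leq j_{k(\sigma)}$ is obtained by taking $j_i = \min\{j : \eta_i(\sigma) \leq \eta_j(\tau)\}$; such $j_i$ exists because $\eta_{k(\tau)}(\tau) = |X|^+$, is non-decreasing in $i$ by the strict increase of the $\eta_i(\sigma)$, and satisfies $\xi_i(\sigma) \leq \xi_{j_i}(\tau)$ by evaluating the pointwise inequality $\Psi(\sigma)(\kappa) \leq \Psi(\tau)(\kappa)$ at any $\kappa$ lying simultaneously in the $i$-th step of $\Psi(\sigma)$ and the $j_i$-th step of $\Psi(\tau)$, with the boundary convention $\xi_0(\tau) = \eta(\tau)$ absorbing the degenerate $j_i = 0$ cases.

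For the reverse direction, Theorem \ref{thm-main} expresses $\sigma$ as a finite union of elementary relations $\lambda_\zeta^\eta$, $\rho_\zeta^\eta$, $\mu_\xi^\eta$ and $\nu_N$, and the task is to verify inclusion of each summand in $\tau$. Both $\lambda_\zeta^\eta$ and $\rho_\zeta^\eta$ are monotone in both subscript and superscript, so the $\zeta_i$-inequalities together with $n(\sigma) \leq \eta(\tau)$ (in cases \ref{it:comp1} and \ref{it:comp2}) or $\eta(\sigma) \leq \eta(\tau)$ (in case \ref{it:comp3}) dispose of the $\lambda$ and $\rho$ components. The $\nu_{N(\sigma)}$ part is handled either by $N(\sigma) \leq N(\tau)$ in case \ref{it:comp1} with equal $n$, or in case \ref{it:comp2} by invoking Lemma \ref{la325} applied to $\tau$: any two $\H$-related elements of $D_n$ with $n < \eta(\tau)$ share the same $\wh{\cdot}$ and are therefore $\tau$-related, yielding $\nu_{\S_n} \subseteq \tau$. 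In case \ref{it:comp3}, a component $\mu_{\xi_i(\sigma)}^{\eta_i(\sigma)}$ with $j_i \geq 1$ lies inside $\mu_{\xi_{j_i}(\tau)}^{\eta_{j_i}(\tau)} \subseteq \tau$ by monotonicity of $\mu_\xi^\eta$, whereas when $j_i = 0$ it is absorbed into $\lambda_{\zeta_1(\tau)}^{\eta(\tau)} \cap \rho_{\zeta_2(\tau)}^{\eta(\tau)}$ via the inequalities of Lemma \ref{la32} \ref{it:32vii}--\ref{it:32viii}, precisely as in the proof of Lemma \ref{lem:mu}.

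The main technical obstacle lies in case \ref{it:comp3}, where the $j_i$ sequence must be extracted from (and shown to imply) the pointwise inequality of reversals $\Psi(\sigma) \pre \Psi(\tau)$ in the lattice $\cR$ of Section \ref{sect:reversals}. This is not conceptually deep, but it does require careful bookkeeping: both $\Psi(\sigma)$ and $\Psi(\tau)$ are descending step functions on overlapping domains, so their comparison must be tracked on each of the finitely many steps of $\Psi(\sigma)$, with special attention given to the boundary case $j_i = 0$ and to verifying the non-decreasing property of the $j_i$.
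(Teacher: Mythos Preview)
Your proposal is correct and follows essentially the same strategy as the paper's proof. Two minor points: in the forward direction of case~\ref{it:comp3} you argue via pointwise monotonicity of the reversals $\Psi(\sigma)\leq\Psi(\tau)$, which is cleaner than the paper's contradiction argument (constructing an explicit pair in $\sigma\setminus\tau$) and in effect anticipates Corollary~\ref{cor:CT2comp}; and in the reverse direction you leave the subcase of~\ref{it:comp1} with $n(\sigma)<n(\tau)$ unaddressed. The paper handles that subcase (and case~\ref{it:comp2}) by the simpler observation that $(\alpha,\beta)\in\nu_N\subseteq{\H}$ forces $|\overline\alpha\sd\overline\beta|=|\underline\alpha\sd\underline\beta|=0$, so $\nu_{N(\sigma)}\subseteq\lambda_{\zeta_1(\tau)}^{n(\tau)}\cap\rho_{\zeta_2(\tau)}^{n(\tau)}$ directly --- no appeal to Lemma~\ref{la325} is needed.
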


\begin{proof}
Throughout the proof we will write the parameters associated with~$\sigma$ with a single dash, and those of $\tau$ with two; for instance, $\eta(\sigma)=\eta'$ and $\eta(\tau)=\eta''$.

\pfitem{($\Leftarrow$)}
We first show that if one of \ref{it:comp1}, \ref{it:comp2} or \ref{it:comp3} is satisfied then $\sigma\subseteq\tau$.

Suppose first that \ref{it:comp1} holds.
From $\zeta_1'\leq\zeta_1''$ and $\zeta_2'\leq\zeta_2''$ it follows that
$\lambda_{\zeta_1'}\subseteq\lambda_{\zeta_1''}$ and
$\rho_{\zeta_2'}\subseteq\rho_{\zeta_2''}$, and from $n'\leq n''$ we have $R_{n'}\subseteq R_{n''}$.
Since $\nu_{N'}\subseteq (D_{n'}\times D_{n'})\cap{\H}$, and since for $(\alpha,\beta)\in{\H}$ we have $|\overline{\alpha}\sd\overline{\beta}|=|\underline{\alpha}\sd\underline{\beta}|=0$,
it follows that if $n'<n''$ we have
 $\nu_{N'}\subseteq\lambda_{\zeta_1''}\cap\rho_{\zeta_2''}\cap R_{n''}$.
On the other hand, if $n'=n''$ and $N'\leq N''$ then we have
$\nu_{N'}\subseteq\nu_{N''}$ straight from the definition of
these relations in Subsection \ref{subsect:statement}.
In either case we have
\[
\sigma=(\lambda_{\zeta_1'}\cap\rho_{\zeta_2'}\cap R_{n'})\cup\nu_{N'}
\subseteq
(\lambda_{\zeta_1''}\cap\rho_{\zeta_2''}\cap R_{n''})\cup\nu_{N''}=\tau.
\]

Next suppose \ref{it:comp2} holds.
Again, we have
$\lambda_{\zeta_1'}\subseteq\lambda_{\zeta_1''}$ and
$\rho_{\zeta_2'}\subseteq\rho_{\zeta_2''}$.
Since $n'$ is finite and~$\eta''$ infinite, we have
$R_{n'}\subseteq R_{\eta''}$ and (as in the previous case)
$\nu_{N'}\subseteq \lambda_{\zeta_1''}\cap\rho_{\zeta_2''}\cap R_{\eta''}$. 
Combining, we have
\[
\sigma=(\lambda_{\zeta_1'}\cap\rho_{\zeta_2'}\cap R_{n'})\cup \nu_{N'}
\subseteq (\lambda_{\zeta_1''}\cap \rho_{\zeta_2''}\cap R_{\eta''})
\cup\mu_{\xi_1''}^{\eta_1''}\cup\dots\cup\mu_{\xi_{k''}''}^{\eta_{k''}''}=\tau.
\]

Finally suppose \ref{it:comp3} holds. As above, we have
\begin{equation}
\label{eq:cmp11}
\lambda_{\zeta_1'}^{\eta'}\cap \rho_{\zeta_2'}^{\eta'}\subseteq
\lambda_{\zeta_1''}^{\eta''}\cap \rho_{\zeta_2''}^{\eta''}\subseteq \tau.
\end{equation}
We also claim that
\begin{equation}
\label{eq:cmp12}
\mu_{\xi_i'}^{\eta_i'}\subseteq \tau \qquad\text{for all } i=1,\dots,k'.
\end{equation}
Indeed, if $j_i=0$, then $\eta_i'\leq\eta_0''=\eta''$ and also $\xi_i'\leq\xi_0''=\eta''\leq\min(\zeta_1'',\zeta_2'')$; together with Lemma~\ref{la32}~\ref{it:32vii} and~\ref{it:32viii}, it quickly follows that $\mu_{\xi_i'}^{\eta_i'}\subseteq \lambda_{\zeta_1''}^{\eta''}\cap \rho_{\zeta_2''}^{\eta''}\subseteq \tau$.
Suppose now that $j_i\geq 1$.
Then from $\xi_i'\leq\xi_{j_i}''$ and $\eta_i'\leq\eta_{j_i}''$ 
we have $\mu_{\xi_i'}\subseteq\mu_{\xi_{j_i}''}$ and $R_{\eta_i'}\subseteq R_{\eta_{j_i}''}$,
and hence $\mu_{\xi_i'}^{\eta_i'}\subseteq \mu_{\xi_{j_i}''}^{\eta_{j_i}''}\subseteq\tau$,
completing the proof of \eqref{eq:cmp12}.
Combining \eqref{eq:cmp11} and \eqref{eq:cmp12} yields $\sigma\subseteq\tau$, as desired.

\pfitem{($\Rightarrow$)}
Suppose now that $\sigma\subseteq\tau$; we must show that one of \ref{it:comp1}--\ref{it:comp3} holds.
First, from $\si\sub\tau$ and the definitions of the parameters $\eta,\ze_1,\ze_2$ we immediately have
\begin{equation}
\label{eq:cmp1}
\eta'\leq\eta''\COMMA \zeta_1'\leq\zeta_1''\COMMA \zeta_2'\leq\zeta_2''.
\end{equation}
We now split our considerations into cases, depending on the types of $\sigma$ and $\tau$.
Note that $\eta'\leq\eta''$ immediately implies that it is impossible for $\si$ to be of type \ref{CT2} and $\tau$ of type~\ref{CT1}.

\pfcase1  Suppose first that $\sigma$ and $\tau$ are both of type \ref{CT1}.
We first obtain $n'\leq n''$ from~\eqref{eq:cmp1}.
It remains to show that if $n'=n''$ then $N'\leq N''$.
Indeed, if $n'=n''$ but $N'\nleq N''$, say with $\pi\in N'\setminus N''$,
then $(\permdec{\pi},\permdec{\id_{n'}})\in \nu_{N'}\setminus\nu_{N''}$, which would contradict $\sigma\subseteq \tau$ because
$\nu_{N'}=\sigma\restr_{D_{n'}}$ and $\nu_{N''}=\tau\restr_{D_{n'}}$.
Thus, in this case, condition \ref{it:comp1} is satisfied.

\pfcase2  If $\sigma$ is of type \ref{CT1} and $\tau$ is of type \ref{CT2}, then \eqref{eq:cmp1} implies that condition~\ref{it:comp2} holds.

\pfcase3  Finally, suppose $\sigma$ and $\tau$ are both of type \ref{CT2}.
For each $i=1,\dots,k'$ let
\[
j_i=\min\set{ j}{ 0\leq j\leq k'',\ \eta_i'\leq \eta_j''}.
\]
Notice that $j_i$ is well defined because $\eta_i'\leq |X|^+=\eta_{k''}''$.
From $\eta_1'<\eta_2'<\dots<\eta_{k'}'$ it follows that
\[
0\leq j_1\leq j_2\leq\dots\leq j_{k'}\leq k''.
\]
Thus, since $\eta_i'\leq \eta_{j_i}''$ for all $i$ by definition, the proof will be complete if we can show that
\begin{equation}
\label{eq:cmp3}
\xi_i'\leq \xi_{j_i}'' \qquad\text{ for all } i=1,\dots,k'.
\end{equation}
Clearly $\xi_i'\leq \xi_{j_i}''$ if $\xi_i'=1$ or if $j_i=0$, since in the latter case we have $\xi_i'\leq\eta'\leq\eta''=\xi_0''$.
To deal with the remaining cases, suppose, aiming for a contradiction, that $\xi_i'>\xi_{j_i}''$ for some $i$
with~$\xi_i'\geq\aleph_0$ and $j_i>0$.
Let $\alpha,\beta\in\M$ be two partitions of rank $\eta_{j_i-1}''$ satisfying
$|\alpha\sd\beta|=\xi_{j_i}''$.
From $\eta_{j_i-1}''<\eta_i'$ and $|\alpha\sd\beta|=\xi_{j_i}''<\xi_i'$ we have
$(\alpha,\beta)\in \mu_{\xi_i'}^{\eta_i'}\subseteq\sigma$.
We claim that
\begin{equation}
\label{eq:cmp2}
(\alpha,\beta)\not\in\tau=
(\lambda_{\zeta_1''}^{\eta''}\cap \rho_{\zeta_2''}^{\eta''})
\cup\mu_{\xi_1''}^{\eta_1''}\cup\dots\cup\mu_{\xi_{k''}''}^{\eta_{k''}''}.
\end{equation}
From $j_i\neq 0$ (and $\al\not=\be$) we have that $\eta_{j_i-1}''\geq \eta''$ and so
$(\alpha,\beta)\not\in\lambda_{\zeta_1''}^{\eta''}\cap\rho_{\zeta_2''}^{\eta''}$.
Now consider an arbitrary $j\in\{1,\dots,k''\}$.
If $j<j_i$ then 
$\rank(\alpha)=\rank(\beta)=\eta_{j_i-1}''\geq\eta_j''$, and hence
$(\alpha,\beta)\not\in\mu_{\xi_j''}^{\eta_j''}$.
If, on the other hand, $j\geq j_i$ then
from $|\alpha\sd\beta|=\xi_{j_i}''\geq \xi_j''$ it again follows
that $(\alpha,\beta)\not\in\mu_{\xi_j''}^{\eta_j''}$.
Thus $(\alpha,\beta)$ does not belong to any of the relations the union of which makes up $\tau$, and hence~\eqref{eq:cmp2} is proved.
But we now have $(\alpha,\beta)\in\sigma\setminus\tau$, which contradicts $\sigma\subseteq\tau$. This means that \eqref{eq:cmp3} holds.
As noted above, this completes the proof of the theorem.
\end{proof}

Of course every congruence is contained in $\nabla_{\M}$.  Note that when $\tau=\nabla_{\M}$ in part~\ref{it:comp3} of the above theorem, we take $j_i=0$ for all $i=1,\ldots,k(\si)$.

The next result follows immediately from Theorems \ref{thm-main} and \ref{thm:comparisons} for infinite $X$, and from \cite[Theorems 5.4 and~6.1]{EMRT2018} for finite $X$.

\begin{cor}\label{cor:iso}
For any set $X$, the lattices $\Cong(\P_X)$ and $\Cong(\PB_X)$ are isomorphic.  An explicit isomorphism $\Cong(\P_X)\to\Cong(\PB_X)$ is given by the mapping  $\si\mt\si\restr_{\PB_X}$.  \epfres
\end{cor}

\subsection{Meets and joins}\label{subsect:mj}

We now use Theorems \ref{thm-main} and \ref{thm:comparisons}, and ideas from Section \ref{sect:reversals}, to describe the meet $\si\wedge\tau$ and join $\si\vee\tau$ of an arbitrary pair of congruences $\si,\tau$ on $\M$.
In the statement and proof, it will also be convenient to make use of a total order $\pre$ defined on the set
\[
\NN=\set{N}{N\normal\S_n \text{ for some }n\in[1,\aleph_0)}
\]
of all normal subgroups of all finite $\S_n$ as follows.  If $N\normal\S_n$ and $N'\normal\S_{n'}$, then
\begin{align}
\label{eq:N_pre}
N\pre N' \quad &\iff\quad n<n' \qquad\text{or}\qquad n=n' \text{ \ and \ } N\leq N'.
\intertext{Since the normal subgroups of $\S_n$ form a chain for every $n$, this is a total order on $\NN$, and so
we may speak of the maximum and minimum of any pair $N,N'\in\NN$, which we will denote by $\max(N,N')$ and $\min(N,N')$.
Using the order $\pre$, Theorem~\ref{thm:comparisons}~\ref{it:comp1}
concerning congruences $\sigma,\tau$ of type \ref{CT1} can be re-stated as follows:}
\label{eq:CT1_cont}
\si\sub\tau \quad &\iff \quad \text{$\ze_1(\si)\leq\ze_1(\tau)$, \ $\ze_2(\si)\leq\ze_2(\tau)$ \ and \ $N(\si)\pre N(\tau)$.}
\end{align}

\begin{thm}
\label{thm:mj}
Let $X$ be an infinite set, let $\M$ stand for either $\P_X$ or $\PB_X$, and let $\sigma$ and $\tau$ be congruences on $\M$.  Then
\begin{align*}
\eta(\si\wedge\tau)&=\min(\eta(\si),\eta(\tau)), & \ze_1(\si\wedge\tau)&=\min(\ze_1(\si),\ze_1(\tau)), & \ze_2(\si\wedge\tau)&=\min(\ze_2(\si),\ze_2(\tau)), \\
\eta(\si\vee\tau)&=\max(\eta(\si),\eta(\tau)), & \ze_1(\si\vee\tau)&=\max(\ze_1(\si),\ze_1(\tau)), & \ze_2(\si\vee\tau)&=\max(\ze_2(\si),\ze_2(\tau)), 
\end{align*}
and additionally:
\begin{thmenum}
\item 
\label{it:mj1}
if $\sigma$ and $\tau$ are both of type \ref{CT1} then so too are $\sigma\wedge\tau$ and $\sigma\vee\tau$, with
\[
N(\si\wedge\tau)=\min(N(\si),N(\tau))
\AND
N(\si\vee\tau)=\max(N(\si),N(\tau)),
\]

\item
\label{it:mj2}
if $\sigma$ and $\tau$ have different types, then
$\sigma\wedge\tau$ is of type \ref{CT1} and $\sigma\vee\tau$  is of type~\ref{CT2}, with
\[
N(\si\wedge\tau)=
\begin{cases} 
N(\si) & \text{if } \eta(\sigma)<\aleph_0\leq\eta(\tau)\\
N(\tau) & \text{if } \eta(\tau)<\aleph_0\leq\eta(\sigma)
\end{cases}
\ \ANd \ 
\Psi(\si\vee\tau)=
\begin{cases}
\Psi(\tau)& \text{if } \eta(\sigma)<\aleph_0\leq\eta(\tau)\\
\Psi(\sigma) & \text{if } \eta(\tau)<\aleph_0\leq\eta(\sigma),
\end{cases}
\]

\item
\label{it:mj3}
if $\sigma$ and $\tau$ are both of type \ref{CT2}, then so too are $\sigma\wedge\tau$ and $\sigma\vee\tau$, with
\[
\Psi(\si\wedge\tau)=\Psi(\si)\wedge\Psi(\tau)
\AND
\Psi(\si\vee\tau)=\Psi(\si)\vee\Psi(\tau).
\]

\end{thmenum}
\end{thm}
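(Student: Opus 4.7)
The plan is to use the classification Theorem \ref{thm-main} together with the inclusion criterion Theorem \ref{thm:comparisons}. Since each congruence is uniquely determined by its parameters, it suffices, for each case and each of the meet and join, to exhibit a congruence with the claimed parameters and check via Theorem \ref{thm:comparisons} that it sits below or above both $\sigma$ and $\tau$ as required. The universal properties of meet and join then force equality.

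Some observations will apply uniformly. Applying Theorem \ref{thm:comparisons} to the four inclusions $\sigma \wedge \tau \subseteq \sigma, \tau$ and $\sigma, \tau \subseteq \sigma \vee \tau$ immediately gives the one-sided inequalities $\eta(\sigma \wedge \tau) \leq \min$, $\zeta_i(\sigma \wedge \tau) \leq \min$, $\eta(\sigma \vee \tau) \geq \max$, $\zeta_i(\sigma \vee \tau) \geq \max$; the reverse inequalities will be recovered once candidate congruences are produced. Also, since any type \ref{CT2} congruence has infinite $\eta$ while any type \ref{CT1} one has finite $n$, the absence of a ``\ref{CT2}-in-\ref{CT1}'' clause in Theorem \ref{thm:comparisons} means no type \ref{CT2} congruence can be contained in a type \ref{CT1} one. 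Consequently $\sigma \wedge \tau$ is of type \ref{CT1} whenever either of $\sigma, \tau$ is, and $\sigma \vee \tau$ is of type \ref{CT2} whenever either is.

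The three cases then follow the same template. In \ref{it:mj1}, I define $\rho_\wedge$ and $\rho_\vee$ of type \ref{CT1} with the asserted parameters, using the total order $\pre$ on $\NN$ from \eqref{eq:N_pre} for $N$; the criterion \eqref{eq:CT1_cont} instantly gives $\rho_\wedge \subseteq \sigma, \tau \subseteq \rho_\vee$, and applying \eqref{eq:CT1_cont} once more to $\sigma \wedge \tau \subseteq \sigma, \tau$ and $\sigma, \tau \subseteq \sigma \vee \tau$ forces $\sigma \wedge \tau = \rho_\wedge$ and $\sigma \vee \tau = \rho_\vee$. Case \ref{it:mj2} is identical except that Theorem \ref{thm:comparisons} \ref{it:comp2} governs the \ref{CT1}-in-\ref{CT2} comparisons; the formulas $N(\sigma \wedge \tau) = N(\sigma)$ and $\Psi(\sigma \vee \tau) = \Psi(\tau)$ (when $\sigma$ is the \ref{CT1} partner) arise precisely because \ref{it:comp2} places no constraint on $N$ or $\Psi$, leaving the full values of $N(\sigma)$ and $\Psi(\tau)$ available as the optima. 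In case \ref{it:mj3}, the $\Psi$-candidates are $\Psi(\sigma) \wedge \Psi(\tau)$ and $\Psi(\sigma) \vee \Psi(\tau)$ computed in the distributive lattice $\cR$ via \eqref{eq:meet}--\eqref{eq:join}; Theorem \ref{thm:comparisons} \ref{it:comp3} is then invoked to verify that the candidates lie correctly with respect to $\sigma$ and $\tau$, and the fact that $\rho_\wedge$ is of type \ref{CT2} (its $\eta$ is $\min(\eta(\sigma), \eta(\tau)) \geq \aleph_0$) forces $\sigma \wedge \tau$ to be of type \ref{CT2} as well.

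The main technical point is a translation required in case \ref{it:mj3}: the inclusion condition of Theorem \ref{thm:comparisons} \ref{it:comp3}, phrased as the existence of indices $0 \leq j_1 \leq \cdots \leq j_{k(\sigma)} \leq k(\tau)$, must be identified with the pointwise inequality $\Psi(\sigma) \pre \Psi(\tau)$ defined in \eqref{eq:prePsi}. Since each $\Psi$ is a decreasing step function, constant on the intervals $[\eta_{i-1}, \eta_i)$ with values $\xi_i$, this identification amounts to a direct unpacking of definitions together with the observation that the indices $j_i$ can be read off as the unique values for which $\Psi(\tau)$ realises the value dominating $\xi_i(\sigma)$ on the interval $[\eta_{i-1}(\sigma), \eta_i(\sigma))$. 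Once this translation is in place, the meet and join formulas in the reversal lattice from \eqref{eq:meet}--\eqref{eq:join} yield the $\Psi$-formulas in the theorem, and no further combinatorial work with partitions is required---all the heavy lifting sits in Theorems \ref{thm-main} and \ref{thm:comparisons}.
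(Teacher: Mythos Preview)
Your approach is essentially the same as the paper's: construct candidate congruences with the asserted parameters, verify via Theorem~\ref{thm-main} that these parameters are permissible, and then use Theorem~\ref{thm:comparisons} (in its various incarnations \eqref{eq:CT1_cont}, \ref{it:comp2}, \ref{it:comp3}) both to show the candidates bound $\sigma,\tau$ correctly and to show that $\sigma\wedge\tau$ and $\sigma\vee\tau$ themselves have parameters bounded by the candidates'. The only organisational difference is that you establish the equivalence between the index condition in Theorem~\ref{thm:comparisons}~\ref{it:comp3} and the order $\Psi(\sigma)\pre\Psi(\tau)$ inline, whereas the paper records this equivalence separately as Corollary~\ref{cor:CT2comp}, derived \emph{after} Theorem~\ref{thm:mj} from the identity $\sigma\subseteq\tau\iff\sigma=\sigma\wedge\tau$; both orderings are valid, and the paper's route avoids unpacking the step-function combinatorics of the $j_i$ directly.
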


\begin{proof}
All the statements can be proved by following the same method:
\begin{itemize}
\item
prove that the stated parameters form a permissible combination, and hence define a congruence $\vs$ by Theorem \ref{thm-main},
\item
prove that $\vs\subseteq \sigma,\tau$ (or $\sigma,\tau\subseteq \vs$) in the case of $\sigma\wedge\tau$ (or $\sigma\vee\tau$), respectively,
\item
prove that $\vs$ is the greatest (or least) congruence with the above property in the case of~$\sigma\wedge\tau$ (or $\sigma\vee\tau$), respectively.
\end{itemize}
As a sample proof we go through these steps for $\sigma\wedge\tau$ in part \ref{it:mj1}.
Since $\si\wedge\tau=\tau\wedge\si$, we may assume that $n(\si)\leq n(\tau)$.
So let
\[
n = n(\si) = \min(n(\si),n(\tau)) \COMMA
N = \min(N(\si),N(\tau)) \COMMA
\zeta_i=\min(\zeta_i(\sigma),\zeta_i(\tau)),\ i=1,2.
\]
Certainly $n\in[1,\aleph_0)$, and also $N\normal\S_n$
using \eqref{eq:N_pre}.
Next, we clearly have ${\zeta_i\in \{1\}\cup [\aleph_0,|X|^+]}$ for $i=1,2$. Moreover $\zeta_i\neq 1$ if $n=n(\si)\geq 3$; indeed, if $n\geq3$, then $\ze_i(\si)\geq\aleph_0$, and since also $n(\tau)\geq n\geq3$, we have $\ze_i(\tau)\geq\aleph_0$ as well.  
It now follows from Theorem \ref{thm-main} that there exists a congruence $\vs=\lam_{\ze_1}^N\cap\rho_{\ze_2}^N$ of type \ref{CT1}.
We have $\vs\sub\si,\tau$ by construction; cf.~\eqref{eq:CT1_cont}.

Conversely, let $\vs'$ be any congruence of $\M$ satisfying $\vs'\subseteq\sigma,\tau$; we must show that $\vs'\sub\vs$.
By Theorem \ref{thm:comparisons}, $\vs'$ must be of type \ref{CT1}, and 
then \eqref{eq:CT1_cont} yields
\begin{align*}
n(\vs')&\leq \min(n(\sigma),n(\tau))=n=n(\vs), & \zeta_1(\vs')&\leq \min(\zeta_1(\sigma),\zeta_1(\tau))=\zeta_1=\zeta_1(\vs), \\
N(\vs')&\pre \min(N(\sigma),N(\tau))=N=N(\vs), & \zeta_2(\vs')&\leq \min(\zeta_2(\sigma),\zeta_2(\tau))=\zeta_2=\zeta_2(\vs).
\end{align*}
Again appealing to \eqref{eq:CT1_cont}, it follows that $\vs'\sub\vs$, as required.
\end{proof}

Before moving on, it will be convenient to deduce an alternative characterisation of the containment order on congruences of type \ref{CT2}, analogous to \eqref{eq:CT1_cont} for \ref{CT1} congruences.  By Proposition \ref{prop:Psi_lattice}, the set of all reversals is a lattice under the ordering $\pre$ defined in \eqref{eq:prePsi}.  It follows that for any reversals $\Psi_1,\Psi_2$ we have $\Psi_1\pre\Psi_2 \iff \Psi_1=\Psi_1\wedge\Psi_2 \iff \Psi_2=\Psi_1\vee\Psi_2$.  Using the latter observation, the next result follows quickly from Theorem~\ref{thm:mj}~\ref{it:mj3} and the fact that $\si\sub\tau \iff \si=\si\wedge\tau \iff \tau=\si\vee\tau$.

\begin{cor}\label{cor:CT2comp}
If $\si$ and $\tau$ are two congruences on $\M$ of type \ref{CT2}, then 
\[
\si\sub\tau \quad \iff \quad \text{$\ze_1(\si)\leq\ze_1(\tau)$, \ $\ze_2(\si)\leq\ze_2(\tau)$ \ and \ $\Psi(\si)\pre \Psi(\tau)$.}
\epfreseq
\]
\end{cor}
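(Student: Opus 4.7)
The plan is to derive this corollary directly from Theorem \ref{thm:mj}\ref{it:mj3} together with the lattice structure on reversals established in Proposition \ref{prop:Psi_lattice}. The general strategy is to exploit the standard lattice-theoretic equivalence: in any lattice, $x \leq y$ if and only if $x = x \wedge y$ (equivalently, $y = x \vee y$). Applied to $\Cong(\M)$, this gives $\si \sub \tau \iff \si = \si \wedge \tau$, and similarly for reversals via Proposition~\ref{prop:Psi_lattice}, $\Psi_1 \pre \Psi_2 \iff \Psi_1 = \Psi_1 \wedge \Psi_2$.

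First I would invoke Theorem \ref{thm:mj}\ref{it:mj3}, which guarantees that $\si \wedge \tau$ is again of type \ref{CT2}, with parameters
\[
\ze_i(\si\wedge\tau) = \min(\ze_i(\si),\ze_i(\tau)) \quad (i=1,2), \qquad \Psi(\si\wedge\tau) = \Psi(\si) \wedge \Psi(\tau).
\]
Since Theorem~\ref{thm-main} tells us that a type \ref{CT2} congruence is determined by its parameters (with $\eta$ recoverable from the domain of $\Psi$), the equation $\si = \si \wedge \tau$ is equivalent to the three coordinate-wise equalities
\[
\ze_1(\si) = \min(\ze_1(\si),\ze_1(\tau)), \quad \ze_2(\si) = \min(\ze_2(\si),\ze_2(\tau)), \quad \Psi(\si) = \Psi(\si) \wedge \Psi(\tau).
\]
The first two translate directly to $\ze_1(\si) \leq \ze_1(\tau)$ and $\ze_2(\si) \leq \ze_2(\tau)$, while the third is $\Psi(\si) \pre \Psi(\tau)$ by the lattice identity for reversals.

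It remains only to note that the condition $\eta(\si) \leq \eta(\tau)$ appearing in Theorem~\ref{thm:comparisons}\ref{it:comp3} is already absorbed into $\Psi(\si) \pre \Psi(\tau)$: inspecting the defining equation \eqref{eq:prePsi}, the inequality $\eta \leq \eta'$ between the domain-endpoints of $\Psi(\si)$ and $\Psi(\tau)$ is built into the very definition of $\pre$ on $\cR$. Hence no separate $\eta$-condition need be recorded. There is no genuine obstacle here; the work has all been done in Section~\ref{sect:reversals} and Theorem~\ref{thm:mj}, and the corollary is essentially a repackaging of those results in a form that parallels the characterisation \eqref{eq:CT1_cont} for type \ref{CT1} congruences.
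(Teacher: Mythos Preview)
Your proof is correct and follows essentially the same approach as the paper: both exploit the lattice identity $\si\sub\tau\iff\si=\si\wedge\tau$ (the paper phrases it via the join, $\tau=\si\vee\tau$, but this is immaterial), combine it with the formulae for the parameters of $\si\wedge\tau$ from Theorem~\ref{thm:mj}\ref{it:mj3}, and translate the reversal condition back via Proposition~\ref{prop:Psi_lattice}. Your additional remark that the $\eta$-inequality is absorbed into $\Psi(\si)\pre\Psi(\tau)$ via \eqref{eq:prePsi} is correct and makes explicit something the paper leaves implicit.
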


\section{Hasse diagrams}\label{sect:Hasse}

Theorems \ref{thm-main} and \ref{thm:comparisons} completely describe the structure of the congruence lattice $\Cong(\M)$; here as usual $\M$ stands for either $\P_X$ or $\PB_X$ for an infinite set~$X$.  From these theorems, it is possible to obtain a visual/geometric understanding of the lattice, or at least of certain sections of it; thus, in this subsection, we discuss Hasse diagrams.  These diagrams also give a visual interpretation of Theorem~\ref{thm:mj}, which describes meets and joins of arbitrary pairs of congruences.

It is unfeasible to draw Hasse diagrams for the entire lattice $\Cong(\M)$ in general, due to both the number of congruences, and the complicated nature of their comparisons, particularly those between congruences of type \ref{CT2}; cf.~Theorem \ref{thm:comparisons} \ref{it:comp3} and Corollary \ref{cor:CT2comp}. Nonetheless, it is possible to visualise fairly accurately various sections of the lattice, and to piece these together into pictures of the whole lattice for ``small'' $X$. The key concept for doing this is that of a \emph{layer}, which consists of all congruences of a certain type where the parameters $\zeta_1$ and $\zeta_2$ are allowed to range over all permissible values, and all the other parameters are fixed. 
We will denote by $\Lay_1(N)$ a typical layer consisting of congruences of type~\ref{CT1},
and by $\Lay_2(\eta,\xi_1,\dots,\xi_k,\eta_1,\dots,\eta_k)$ a layer consisting of congruences of type \ref{CT2}; as we have seen, we could equally well speak of layers $\Lay_2(\Psi)$ of type \ref{CT2} congruences,
where~${\Psi\colon [\eta,|X|]\rightarrow\{1\}\cup [\aleph_0,\eta)}$ is a reversal.
Now, Theorems \ref{thm:comparisons} and \ref{thm:mj} imply that every layer is a sublattice of $\Cong(\M)$, and is isomorphic to the direct product of two copies of the chain of permissible values for $\zeta_1,\zeta_2$ under the component-wise ordering. Specifically, 
\bit
\item $\Lay_1(N)$ is isomorphic to $\big(\{1\}\cup [\aleph_0,|X|^+]\big)\times \big(\{1\}\cup [\aleph_0,|X|^+]\big)$ if $N\normal\S_n$ with $n\leq2$, 
\item $\Lay_1(N)$ is isomorphic to $ [\aleph_0,|X|^+]\times [\aleph_0,|X|^+]$ if $N\normal\S_n$ with $n\geq3$, and 
\item $\Lay_2(\eta,\xi_1,\dots,\xi_k,\eta_1,\dots,\eta_k)$ is isomorphic to $ [\eta,|X|^+]\times [\eta,|X|^+]$.
\eit
The poset corresponding to $\Lay_1(\S_1)$ is shown in Figure \ref{fig:1layer}.  The layers of type \ref{CT1} with $n=2$ have exactly the same Hasse diagram, while for the layers of type \ref{CT1} with $n\geq3$, and those of type \ref{CT2}, only the indexing sets change, as just discussed.

\begin{figure}[ht]
\begin{center}
\scalebox{0.8}{
\begin{tikzpicture}[x={(1mm,1mm)},y={(-1mm,1mm)},inner sep=0.5mm,distance=0.1mm]
\tikzstyle{every label}+=[distance=-3mm]
\begin{scope}[every node/.style={circle,fill=black,draw}]

\node (n11) at (0,0) {};
\node at (-1,-1.5) [rectangle,draw=none,fill=none] {\small$\Delta_{\M}$};
\node (n21) at (6,0) {};
\node (n31) at (12,0)  {};
\node (n41) at (22,0)  {};
\node (n51) at (36,0)  {};
\node (n61) at (42,0)  {};
\node [fill=none,draw=none] at (47,-2) {${\L}\cap R_1$};

\node (n12) at (0,6)  {};
\node (n13) at (0,12)  {};
\node (n14) at (0,26)  {};
\node (n15) at (0,36) {};
\node (n16) at (0,42) {};
\node [fill=none,draw=none] at (-2,47) {${\R}\cap R_1$};

\node (n22) at (6,6)  {};
\node (n23) at (6,12) {};
\node (n24) at (6,26) {};
\node (n25) at (6,36) {};
\node (n26) at (6,42) {};

\node (n32) at (12,6)  {};
\node (n33) at (12,12) {};
\node (n34) at (12,26) {};
\node (n35) at (12,36) {};
\node (n36) at (12,42) {};

\node (n42) at (22,6)  {};
\node (n43) at (22,12) {};
\node (n44) at (22,26) {};   
\node  at (52,15) [draw=none, fill=none] {$\lambda_{\ka_1}^1\cap\rho_{\ka_2}^1$};
\draw [arrows = {-Latex[length=3mm,width=2mm]}] (47,18) parabola [bend at end] (n44);
\node (n45) at (22,36) {};
\node (n46) at (22,42) {};

\node (n52) at (36,6)  {};
\node (n53) at (36,12) {};

\node (n54) at (36,26) {};
\node (n55) at (36,36) {};
\node (n56) at (36,42) {};

\node (n61) at (42,0)  {};
\node (n62) at (42,6)  {};
\node (n63) at (42,12)  {};
\node (n64) at (42,26)  {};
\node (n65) at (42,36)  {};
\node (n66) at (42,42) [label=above:$R_1$] {};
\end{scope}

\draw (n11) -- (n31);
\draw (n12) -- (n32);
\draw (n13) -- (n33);

\draw (n41)--(n43);
\draw (n51)--(n53);

\draw (n11) -- (n13);
\draw (n21) -- (n23);
\draw (n31) -- (n33);

\draw (n14)--(n34);
\draw (n15)--(n35);

\draw(n15)--(n16);
\draw(n25)--(n26);
\draw(n35)--(n36);
\draw(n45)--(n46);
\draw(n55)--(n56);
\draw(n65)--(n66);

\draw(n51)--(n61);
\draw(n52)--(n62);
\draw(n53)--(n63);
\draw(n54)--(n64);
\draw(n55)--(n65);
\draw(n56)--(n66);

\draw(n61)--(n63);
\draw(n16)--(n36);

\nc\dashedline[2]{\draw[dashed] (#1)--(#2);}

\dashedline{n31}{n41}
\dashedline{n32}{n42}
\dashedline{n33}{n43}
\dashedline{n34}{n44}
\dashedline{n35}{n45}
\dashedline{n36}{n46}

\dashedline{n51}{n41}
\dashedline{n52}{n42}
\dashedline{n53}{n43}
\dashedline{n54}{n44}
\dashedline{n55}{n45}
\dashedline{n56}{n46}

\dashedline{n13}{n14}
\dashedline{n23}{n24}
\dashedline{n33}{n34}
\dashedline{n43}{n44}
\dashedline{n53}{n54}
\dashedline{n63}{n64}

\dashedline{n15}{n14}
\dashedline{n25}{n24}
\dashedline{n35}{n34}
\dashedline{n45}{n44}
\dashedline{n55}{n54}
\dashedline{n65}{n64}

\draw [->] (0,-8)--(54,-8);
\node at (56,-10) {$\zeta_1$};

\draw [->] (-8,0)--(-8,54);
\node at (-10,56) {$\zeta_2$};

\draw [dotted] (0,-8)--(n11);
\node at (0,-10) {$1$};
\draw [dotted] (6,-8)--(n21);
\node at (6,-11) {$\aleph_0$};
\draw [dotted] (12,-8)--(n31);
\node at (12,-11) {$\aleph_1$};
\draw [dotted] (22,-8)--(n41);
\node at (22,-11) {$\ka_1$};
\draw [dotted] (36,-8)--(n51);
\draw [dotted] (42,-8)--(n61);
\node at (36,-11) {\small$|X|$};
\node at (42,-11) {\small$|X|^+$};

\draw [dotted] (-8,0)--(n11);
\node at (-10,0) {$1$};
\draw [dotted] (-8,6)--(n12);
\node at (-11,6) {$\aleph_0$};
\draw [dotted] (-8,12)--(n13);
\node at (-11,12) {$\aleph_1$};
\draw [dotted] (-8,26)--(n14);
\node at (-11,26) {$\ka_2$};
\draw [dotted] (-8,36)--(n15);
\draw [dotted] (-8,42)--(n16);
\node at (-11,36) {\small$|X|$};
\node at (-11,42) {\small$|X|^+$};

\foreach \x in {0,6,12,36,42} {\draw[thick, fill=white] (\x,\x) circle (.6);}

\end{tikzpicture}
}
\caption{Hasse diagram of the layer $\Lay_1(\S_1)$. The $*$-congruences are indicated by white vertices; cf.~Subsection \ref{subsect:*}.}
\label{fig:1layer}

\end{center}
\end{figure}

If $L_1$ and $L_2$ are two layers of congruences (as above), we write $L_1\leq L_2$ if there exists $\si\in L_1$ and $\tau\in L_2$ with $\si\sub\tau$.
Again by Theorem \ref{thm:comparisons}, we have $L_1\leq L_2$ precisely in the following situations:
\bit
\item
$L_1=\Lay_1(N)$ and $L_2=\Lay_1(N')$, with $N\pre N'$;
\item
$L_1=\Lay_1(N)$ and $L_2=\Lay_2(\eta,\xi_1,\dots,\xi_k,\eta_1,\dots,\eta_k)$;
\item
$L_1=\Lay_2(\eta,\xi_1,\dots,\xi_k,\eta_1,\dots,\eta_k)$ and
$L_2=\Lay_2(\eta',\xi_1',\dots,\xi_{k'}',\eta_1',\dots,\eta_{k'}')$, where $\eta\leq\eta'$, and
there exist $0\leq j_1\leq \dots\leq j_{k}\leq k'$ such that
$\xi_i\leq \xi_{j_i}'$ and $\eta_i\leq\eta_{j_i}'$ for each $i$.
\item
In the alternative viewpoint, $\Lay_2(\Psi)\leq\Lay_2(\Psi')$ if and only if $\Psi\pre\Psi'$.
\eit
If $L_1\leq L_2$, then the indexing set for $L_1$ contains that for $L_2$.
In each of the above cases the comparisons are as depicted in Figure \ref{fig:2layers}.

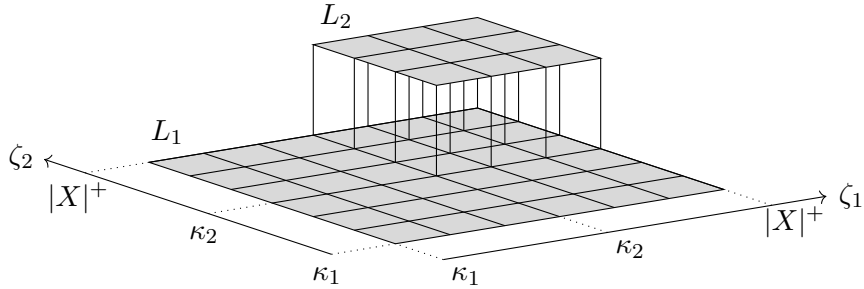
\begin{figure}[ht]

\begin{center}

\begin{tikzpicture}[x={(1.2mm,0.2mm)},y={(-0.9mm,0.3mm)},z={(0mm,1.2mm)},inner sep=0.7mm]

\coordinate  (n11) at (0,0);
\coordinate  (n21) at (6,0);
\coordinate  (n31) at (12,0);
\coordinate (n41) at (18,0);
\coordinate  (n51) at (24,0);
\coordinate (n61) at (30,0);
\coordinate  (n71) at (36,0);

\coordinate  (n12) at (0,6);
\coordinate (n22) at (6,6);
\coordinate (n32) at (12,6);
\coordinate (n42) at (18,6);
\coordinate (n52) at (24,6);
\coordinate (n62) at (30,6);
\coordinate (n72) at (36,6);

\coordinate (n13) at (0,12);
\coordinate (n23) at (6,12);
\coordinate (n33) at (12,12);
\coordinate (n43) at (18,12);
\coordinate (n53) at (24,12);
\coordinate (n63) at (30,12);
\coordinate (n73) at (36,12);

\coordinate  (n14) at (0,18);
\coordinate (n24) at (6,18);
\coordinate (n34) at (12,18);
\coordinate (n44) at (18,18);
\coordinate (n54) at (24,18);
\coordinate (n64) at (30,18);
\coordinate (n74) at (36,18);

\coordinate (n15) at (0,24);
\coordinate (n25) at (6,24);
\coordinate (n35) at (12,24);
\coordinate (n45) at (18,24);
\coordinate (n55) at (24,24);
\coordinate (n65) at (30,24);
\coordinate (n75) at (36,24);

\coordinate  (n16) at (0,30);
\coordinate (n26) at (6,30);
\coordinate (n36) at (12,30);
\coordinate (n46) at (18,30);
\coordinate (n56) at (24,30);
\coordinate (n66) at (30,30);
\coordinate (n76) at (36,30);

\coordinate  (n17) at (0,36);
\coordinate (n27) at (6,36);
\coordinate (n37) at (12,36);
\coordinate (n47) at (18,36);
\coordinate (n57) at (24,36);
\coordinate (n67) at (30,36);
\coordinate (n77) at (36,36);

\fill [black!15,draw=black] (n11)--(n71)--(n77)--(n17)--(n11);
\draw (n12)--(n72);
\draw (n13)--(n73);
\draw (n14)--(n74);
\draw (n15)--(n75);
\draw (n16)--(n76);
\draw (n17)--(n77);

\draw (n21)--(n27);
\draw (n31)--(n37);
\draw (n41)--(n47);
\draw (n51)--(n57);
\draw (n61)--(n67);
\draw (n71)--(n77);

\coordinate  (m11) at (18,18,10);
\coordinate  (m21) at (24,18,10);
\coordinate  (m31)  at (30,18,10);
\coordinate  (m41) at (36,18,10);

\coordinate  (m12) at (18,24,10);
\coordinate (m22) at (24,24,10);
\coordinate (m32) at (30,24,10);
\coordinate (m42) at (36,24,10);

\coordinate   (m13) at (18,30,10);
\coordinate (m23) at (24,30,10);
\coordinate (m33) at (30,30,10);

\coordinate (m43) at (36,30,10);

\coordinate  (m14) at (18,36,10);
\coordinate (m24) at (24,36,10);
\coordinate (m34) at (30,36,10);
\coordinate (m44) at (36,36,10);

\draw (n44) -- (m11);
\draw (n45) -- (m12);
\draw (n46) -- (m13);
\draw (n47) -- (m14);

\draw (n54) -- (m21);
\draw (n55) -- (m22);
\draw (n56) -- (m23);
\draw (n57) -- (m24);

\draw (n64) -- (m31);
\draw (n65) -- (m32);
\draw (n66) -- (m33);
\draw (n67) -- (m34);

\draw (n74) -- (m41);
\draw (n75) -- (m42);
\draw (n76) -- (m43);
\draw (n77) -- (m44);

\draw [fill=black!15] (m11)--(m41)--(m44)--(m14)--(m11);

\draw (m12)--(m42);
\draw (m13)--(m43);

\draw (m21)--(m24);
\draw (m31)--(m34);

\draw [->] (0,-7)-- (42,-7);
\node at (44,-8) {$\zeta_1$};
\draw [dotted] (0,-7)--(n11);
\node at (-2,-13) {$\ka_1$};
\draw [dotted] (18,-7)--(n41);
\node at (16,-13) {$\ka_2$};
\draw [dotted] (36,-7)--(n71);
\node at (34,-13) {$|X|^+$};

\draw [->] (-7,0)-- (-7,42);
\node at (-8,44) {$\zeta_2$};
\draw [dotted] (-7,0)--(n11);
\node at (-12,-6) {$\ka_1$};
\draw [dotted] (-7,18)--(n14);
\node at (-12,12) {$\ka_2$};
\draw [dotted] (-7,36)--(n17);
\node at (-13,29) {$|X|^+$};

\node at (8,44) {$L_1$};

\node at (46,70) {$L_2$};

\end{tikzpicture}

\caption{Comparisons between two layers $L_1\leq L_2$, with indexing sets having the smallest elements~$\ka_1\leq\ka_2$.}

\label{fig:2layers}

\end{center}

\end{figure}

Putting the above information together yields a visual representation of the sublattice of type~\ref{CT1} congruences, as shown in Figure \ref{fig:types12}.
From this diagram, one can see that $\Cong(\M)$ has precisely three atoms; these are $\lam_1^1\cap\rho_{\aleph_0}^1$, $\lam_{\aleph_0}^1\cap\rho_1^1$ and $\lam_1^2\cap\rho_1^2$.

\begin{figure}[ht]

\begin{center}

\begin{tikzpicture}[x={(1.2mm,0.2mm)},y={(-0.9mm,0.3mm)},z={(0mm,1.2mm)},inner sep=0.7mm]

\foreach \z in {0,1}
  {\fill [black!15,draw=black,fill opacity=1] (0,0,8*\z)--(24,0,8*\z)--(24,24,8*\z)--(0,24,8*\z)--(0,0,8*\z);
  \foreach \x in {1,...,3} \draw [black] (6*\x, 0,8*\z)--(6*\x,24,8*\z);
  \foreach \y in {1,...,3} \draw [black] (0,6*\y,8*\z)--(24,6*\y,8*\z);
  \foreach \x in {0,...,4}
  \foreach \y in {0,...,4}
     \draw (6*\x,6*\y, 8*\z)--(6*\x,6*\y, 8*\z +8); }

\fill [black!15,draw=black,fill opacity=1] (0,0,16)--(24,0,16)--(24,24,16)--(0,24,16)--(0,0,16);
  \foreach \x in {1,...,3} \draw [black] (6*\x, 0,16)--(6*\x,24,16);
  \foreach \y in {1,...,3} \draw [black] (0,6*\y,16)--(24,6*\y,16);

\foreach \z in {3,...,7} {
  \foreach \x in {1,...,4} 
  \foreach \y in {1,...,4}
     \draw (6*\x,6*\y, 8*\z)--(6*\x,6*\y, 8*\z -8); 
  \fill [black!15,draw=black,fill opacity=1] (6,6,8*\z)--(24,6,8*\z)--(24,24,8*\z)--(6,24,8*\z)--(6,6,8*\z);
  \foreach \x in {2,3} \draw [black] (6*\x, 6,8*\z)--(6*\x,24,8*\z);
  \foreach \y in {2,3} \draw [black] (6,6*\y,8*\z)--(24,6*\y,8*\z);
}

\foreach \x in {1,...,4} 
\foreach \y in {1,...,4} {
 \draw (6*\x,6*\y, 56)--(6*\x,6*\y, 59);
\fill (6*\x,6*\y,60) circle [radius=0.2mm];
\fill (6*\x,6*\y,61) circle [radius=0.2mm];
\fill (6*\x,6*\y,62) circle [radius=0.2mm];
}

\draw [->] (0,-7,0)-- (30,-7,0);
\node at (32,-8,0) {$\zeta_1$};
\draw [dotted] (0,-7,0)--(0,0,0);
\node at (-2,-13,0) {$1$};
\draw [dotted] (6,-7,0)--(6,0,0);
\node at (4,-14,0) {$\aleph_0$};
\draw [dotted] (24,-7,0)--(24,0,0);
\node at (22,-13,0) {$|X|^+$};

\draw [->] (-7,0,0)-- (-7,30,0);
\node at (-8,32,0) {$\zeta_2$};
\draw [dotted] (-7,0,0)--(0,0,0);
\node at (-12,-5,0) {$1$};
\draw [dotted] (-7,6,0)--(0,6,0);
\node at (-12,2,0) {$\aleph_0$};
\draw [dotted] (-7,24,0)--(0,24,0);
\node at (-13,17,0) {$|X|^+$};

\draw [->] (0,42,0)--(0,42,65);
\node at (0,45,65) {$N$};
\foreach \z in {0,...,2} 
  \draw [dotted] (0,42,8*\z)--(0,24,8*\z);
\foreach \z in {3,...,7} 
  \draw [dotted] (0,42,8*\z)--(6,24,8*\z);

\node at (-2,46,0) {$\S_1$};
\node at (-2,46,8) {$\{\id_2\}$};
\node at (-2,46,16) {$\S_2$};
\node at (-2,46,24) {$\{\id_3\}$};
\node at (-2,46,32) {$\mathcal{A}_3$};
\node at (-2,46,40) {$\S_3$};
\node at (-2,46,48) {$\{\id_4\}$};
\node at (-2,46,56) {$K_4$};

\end{tikzpicture}

\caption{Hasse diagram of congruences of type \ref{CT1}.  Here $\mathcal{A}_n$ denotes the alternating group, and $K_4=\{\id_4,(1,2)(3,4),(1,3)(2,4),(1,4)(2,3)\}$ the Klein $4$-group.}

\label{fig:types12}

\end{center}

\end{figure}
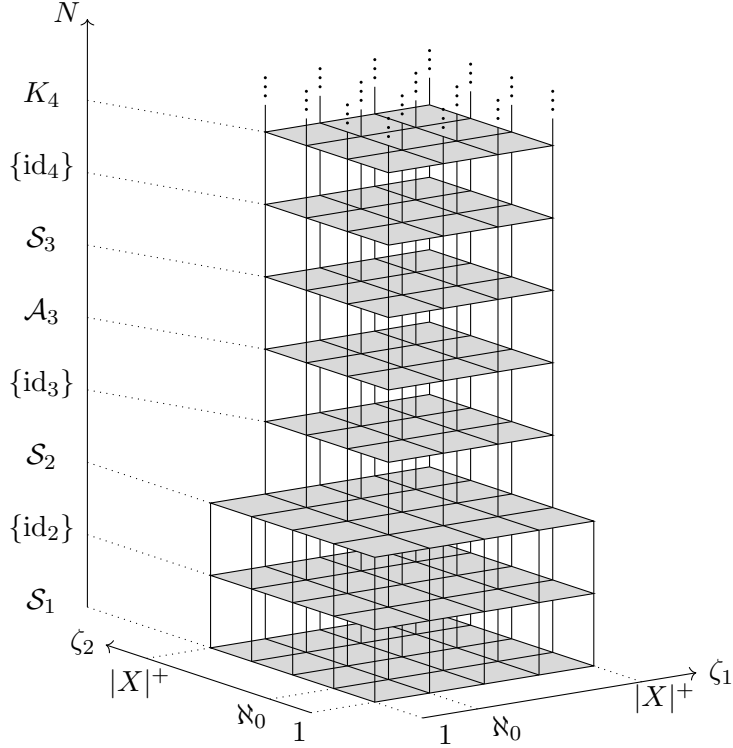

Hasse diagrams for type \ref{CT2} congruences are more complicated, primarily because the~$\pre$ order on reversals given in \eqref{eq:prePsi} (cf.~Theorem \ref{thm:comparisons} \ref{it:comp3} and Corollary \ref{cor:CT2comp}) is not a total order in general.  However, one may readily visualise the $\pre$ order for $|X|=\aleph_n$ for small natural numbers~$n$.  To do so, note that when $|X|=\aleph_n$, we have $|X|^+=\aleph_{n+1}$ and $\eta\in[\aleph_0,\aleph_{n+1}]$, so any reversal from $\cR_{\aleph_n}$ is of the form
\begin{equation}\label{eq:Psi_aleph}
\Psi\colon \{\aleph_k,\ldots,\aleph_n\}\to\{1,\aleph_0,\aleph_1,\ldots,\aleph_k\} \qquad\text{for some $k\in\{0,1,\ldots,n+1\}$.}
\end{equation}
Hasse diagrams of the lattices $(\cR_{\aleph_n},\pre)$ are given in Figure~\ref{fig:Psi} for $n=0,1,2$; for convenience, in the figure a reversal $\Psi$ as in \eqref{eq:Psi_aleph} is depicted as a tuple $(\Psi(\aleph_k),\ldots,\Psi(\aleph_n))$.
From such a diagram, we may deduce the Hasse diagram of \ref{CT2} congruences by inserting appropriate copies of Figure~\ref{fig:2layers}; this is done in Figure \ref{fig:CT2} for $|X|=\aleph_0$ and~$\aleph_1$.  Figure \ref{fig:CongPX} gives the Hasse diagram of the entire lattice $\Cong(\M)$ for $|X|=\aleph_2$.
As indicated by these diagrams, $\Cong(\M)$ has precisely one co-atom, namely the congruence $R_{|X|} \cup \mu_{|X|}^{|X|^+} = \left(\lam_{|X|^+}^{|X|}\cap\rho_{|X|^+}^{|X|}\right)\cup\mu_{|X|}^{|X|^+}$, corresponding to
the greatest non-empty reversal, which is $(|X|)$ in the above notation.

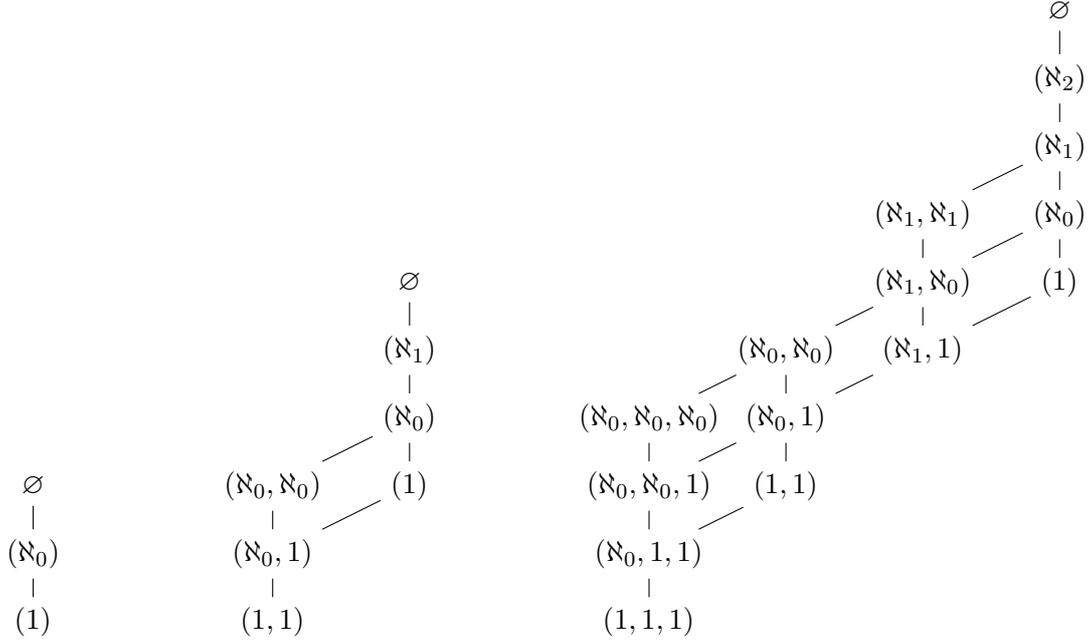
\begin{figure}[ht]
\begin{center}
\begin{tikzpicture}[scale=0.9]
\begin{scope}
\node (11) at (0,0) {$(1)$};
\node (12) at (0,1) {$(\aleph_0)$};
\node (13) at (0,2) {$\emptyset$};
\draw(11)--(12)--(13);
\end{scope}
\begin{scope}[shift={(3.5,0)}]
\node (11) at (0,0) {$(1,1)$};
\node (12) at (0,1) {$(\aleph_0,1)$};
\node (13) at (0,2) {$(\aleph_0,\aleph_0)$};
\node (21) at (2,2) {$(1)$};
\node (22) at (2,3) {$(\aleph_0)$};
\node (23) at (2,4) {$(\aleph_1)$};
\node (24) at (2,5) {$\emptyset$};
\draw(11)--(12)--(13) (12)--(21) (13)--(22) (21)--(22)--(23)--(24);
\end{scope}
\begin{scope}[shift={(9,0)}]
\node (11) at (0,0) {$(1,1,1)$};
\node (12) at (0,1) {$(\aleph_0,1,1)$};
\node (13) at (0,2) {$(\aleph_0,\aleph_0,1)$};
\node (14) at (0,3) {$(\aleph_0,\aleph_0,\aleph_0)$};
\node (21) at (2,2) {$(1,1)$};
\node (22) at (2,3) {$(\aleph_0,1)$};
\node (23) at (2,4) {$(\aleph_0,\aleph_0)$};
\node (31) at (4,4) {$(\aleph_1,1)$};
\node (32) at (4,5) {$(\aleph_1,\aleph_0)$};
\node (33) at (4,6) {$(\aleph_1,\aleph_1)$};
\node (41) at (6,5) {$(1)$};
\node (42) at (6,6) {$(\aleph_0)$};
\node (43) at (6,7) {$(\aleph_1)$};
\node (44) at (6,8) {$(\aleph_2)$};
\node (45) at (6,9) {$\emptyset$};
\draw(11)--(12)--(13)--(14) (12)--(21) (13)--(22) (14)--(23) (21)--(22)--(23) (22)--(31) (23)--(32) (31)--(32)--(33) (31)--(41) (32)--(42) (33)--(43) (41)--(42)--(43)--(44)--(45);
\end{scope}
\end{tikzpicture}
\caption{Hasse diagrams of the reversal posets $(\cR_{\aleph_n},\pre)$ for $n=0$ (left), $n=1$ (middle) and $n=2$ (right).}
\label{fig:Psi}
\end{center}
\end{figure}

\begin{figure}[ht]
\begin{center}
\begin{tikzpicture}[scale=0.9,x={(1.2mm,0.2mm)},y={(-0.9mm,0.3mm)},z={(0mm,1.2mm)},inner sep=0.7mm]
\begin{scope}
\congsquare0001
\congsquareconnections0000011
\congsquare0011
\congsquareconnections1111120
\fill (6,6,16) circle [radius=.5mm];
\end{scope}
\begin{scope}[shift={(30,-24)}]]
\congsquare0002
\congsquareconnections0000012
\congsquare0012
\congsquareconnections0010022
\congsquareconnections1114121
\congsquare0022
\congsquare4121
\congsquareconnections4124131
\congsquareconnections1124131
\congsquare4131
\congsquareconnections4134141
\congsquare4141
\congsquareconnections5245250
\fill (6*5,6*2,8*5) circle [radius=.5mm];
\end{scope}
\end{tikzpicture}
\caption{Hasse diagram of congruences of type \ref{CT2} for $|X|$ equal to $\aleph_0$ (left) and $\aleph_1$ (right).}
\label{fig:CT2}
\end{center}
\end{figure}
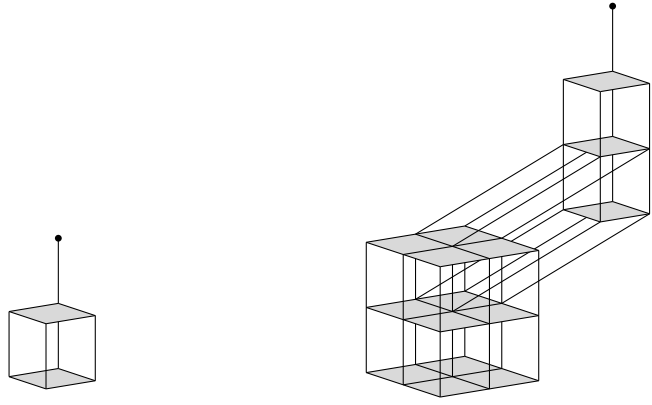

\begin{figure}[ht]
\begin{center}
\begin{tikzpicture}[scale=0.9,x={(1.2mm,0.2mm)},y={(-0.9mm,0.3mm)},z={(0mm,1.2mm)},inner sep=0.7mm,scale=.7]

\foreach \z in {0,1}
  {\fill [black!15,draw=black,fill opacity=1] (0,0,8*\z)--(24,0,8*\z)--(24,24,8*\z)--(0,24,8*\z)--(0,0,8*\z);
  \foreach \x in {1,...,3} {\draw [black] (6*\x, 0,8*\z)--(6*\x,24,8*\z);}
  \foreach \y in {1,...,3} {\draw [black] (0,6*\y,8*\z)--(24,6*\y,8*\z);}
  \foreach \x in {0,...,4}
  \foreach \y in {0,...,4}
     \draw (6*\x,6*\y, 8*\z)--(6*\x,6*\y, 8*\z +8); }

\fill [black!15,draw=black,fill opacity=1] (0,0,16)--(24,0,16)--(24,24,16)--(0,24,16)--(0,0,16);
  \foreach \x in {1,...,3} \draw [black] (6*\x, 0,16)--(6*\x,24,16);
  \foreach \y in {1,...,3} \draw [black] (0,6*\y,16)--(24,6*\y,16);

\foreach \z in {3,...,7} {
  \foreach \x in {1,...,4} 
  \foreach \y in {1,...,4}
     \draw (6*\x,6*\y, 8*\z)--(6*\x,6*\y, 8*\z -8); 
  \fill [black!15,draw=black,fill opacity=1] (6,6,8*\z)--(24,6,8*\z)--(24,24,8*\z)--(6,24,8*\z)--(6,6,8*\z);
  \foreach \x in {2,3} \draw [black] (6*\x, 6,8*\z)--(6*\x,24,8*\z);
  \foreach \y in {2,3} \draw [black] (6,6*\y,8*\z)--(24,6*\y,8*\z);
}

\foreach \x in {1,...,4} 
\foreach \y in {1,...,4}
\foreach \z in {0,...,10} {
 \draw (6*\x,6*\y, 56)--(6*\x,6*\y, 59);
\fill (6*\x,6*\y,60+\z) circle [radius=0.2mm];
}

\begin{scope}[shift={(6,6,80)}]
\congsquare0003
\congsquareconnections0000013
\congsquare0013
\congsquareconnections0010023
\congsquareconnections1116122
\congsquare0023
\congsquareconnections0020033
\congsquareconnections1126132
\congsquare0033
\congsquareconnections1136142

\congsquare6122
\congsquareconnections6126132
\congsquare6132
\congsquareconnections6136142
\congsquareconnections613{9}142
\congsquare6142
\congsquareconnections614{9}152
\congsquare{9}142
\congsquareconnections{9}14{9}152
\congsquareconnections{10}24{13}251
\congsquare{9}152
\congsquareconnections{9}15{9}162
\congsquareconnections{10}25{13}261
\congsquare{9}162
\congsquareconnections{10}26{13}271

\congsquare{13}251
\congsquareconnections{13}25{13}261
\congsquare{13}261
\congsquareconnections{13}26{13}271
\congsquare{13}271
\congsquareconnections{13}27{13}281
\congsquare{13}281

\congsquareconnections{14}38{14}390

\fill (14*6,3*6,9*8) circle [radius=.7mm];
\end{scope}

\end{tikzpicture}
\caption{Hasse diagram of $\Cong(\P_X)\cong\Cong(\PB_X)$ where $|X|=\aleph_2$.}
\label{fig:CongPX}
\end{center}
\end{figure}
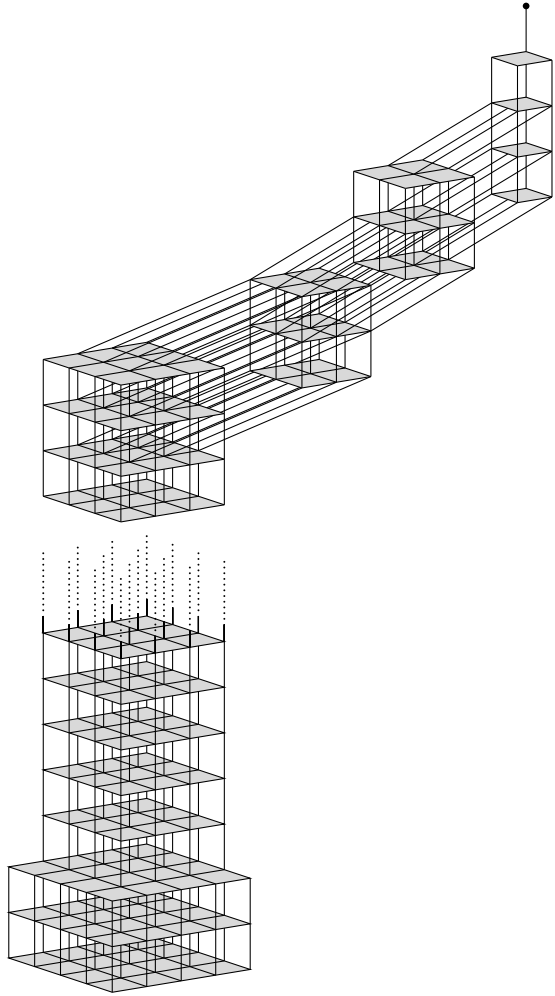

\section{Properties of the lattice}\label{sect:global}

In Section \ref{sect:order} we described the inclusion order and the meet and join operations on the congruence lattices $\Cong(\P_X)$ and $\Cong(\PB_X)$.  In this section, we use these descriptions to deduce some ``global'' properties of the lattices.  Specifically, we show that they are distributive in Subsection~\ref{subsect:dist}, and well quasi-ordered in Subsection~\ref{subsect:wqo}; we also describe in Subsection~\ref{subsect:*} the sublattice of $*$-congruences: i.e., the congruences that also preserve the involution.  As usual, throughout this section, $X$ is an arbitrary infinite set and $\M$ stands for either the partition monoid $\P_X$ or the partial Brauer monoid~$\PB_X$.

\subsection{Distributivity}\label{subsect:dist}

We now use Theorem \ref{thm:mj}, which describes meets and joins of congruences on $\M$, to show that the lattice $\Cong(\M)$ is distributive.  
In the following proof, we use the fact that totally ordered sets are distributive lattices with meet and join given by minimum and maximum, respectively; in particular,
\[
\min(a,\max(b,c)) = \max(\min(a,b),\min(a,c)) \AND \max(\min(a,b),a)=a ,
\]
for arbitrary $a,b,c$ belonging to a totally ordered set.

\begin{thm}
\label{thm:distr}
The congruence lattices of $\P_X$ and $\PB_X$, with $X$ infinite, are distributive.
\end{thm}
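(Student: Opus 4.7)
The plan is to embed $\Cong(\M)$ into a product of distributive lattices using the parameter description from Theorem \ref{thm-main} and the formulas for meets and joins in Theorem \ref{thm:mj}. For this I associate to each $\sigma\in\Cong(\M)$ a ``principal'' parameter $\pi(\sigma)$, taking values in the set $\NN$ of all normal subgroups of finite symmetric groups (totally ordered by $\pre$) when $\sigma$ is of type \ref{CT1}, and in the reversal lattice $\cR$ when $\sigma$ is of type~\ref{CT2}. Together with $\ze_1(\sigma)$ and $\ze_2(\sigma)$, both of which lie in the chain $C=\{1\}\cup[\aleph_0,|X|^+]$, this yields a tuple $\Theta(\sigma)=(\ze_1(\sigma),\ze_2(\sigma),\pi(\sigma))$; the map $\Theta$ is injective by Theorem \ref{thm-main}, since $\pi(\sigma)$ also encodes $\eta(\sigma)$ and (in the type \ref{CT2} case) all of $k,\xi_i,\eta_i$ via Remark \ref{rem:PsiPar}.

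Next, I equip the disjoint union $P=\NN\cup\cR$ with the order that extends $\pre$ on each piece and declares every $N\in\NN$ to lie strictly below every $\Psi\in\cR$. Then $P$ is a lattice in which $N\wedge\Psi=N$ and $N\vee\Psi=\Psi$ for $N\in\NN$ and $\Psi\in\cR$. A routine case analysis on which summand each of the three arguments lies in shows that $P$ is distributive, using that $\NN$ is a chain and that $\cR$ is distributive by Proposition \ref{prop:Psi_lattice}; all mixed cases collapse to identities of the form $a\wedge(b\vee c)=a$ or $a\wedge(b\vee c)=b\vee c$ once the relative positions of the summands are taken into account.

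Finally, Theorem \ref{thm:mj} shows that $\Theta$ is a lattice homomorphism from $\Cong(\M)$ into $C\times C\times P$: the $\ze_1$ and $\ze_2$ coordinates are handled by $\min$ and $\max$ in the chain $C$, the $\pi$ coordinate matches $\wedge,\vee$ in $P$ when $\sigma$ and $\tau$ have the same type (by parts \ref{it:mj1} and \ref{it:mj3}), and in the mixed case part \ref{it:mj2} produces precisely the rules $N\wedge\Psi=N$ and $N\vee\Psi=\Psi$ that hold in $P$. Since $C$ is a chain, $P$ is distributive, and finite products of distributive lattices are distributive, the injective image $\Theta(\Cong(\M))$ is a distributive sublattice of $C\times C\times P$, and hence $\Cong(\M)$ itself is distributive.

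The main obstacle is confirming distributivity of the ``ordinal sum'' $P$; apart from that, the argument is a direct bookkeeping exercise on the parameter tuples supplied by Theorems \ref{thm-main} and \ref{thm:mj}.
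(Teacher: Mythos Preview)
Your argument is correct and takes a genuinely different route from the paper. The paper verifies the distributive identity $\sigma_1\wedge(\sigma_2\vee\sigma_3)=(\sigma_1\wedge\sigma_2)\vee(\sigma_1\wedge\sigma_3)$ directly, by checking via Theorem~\ref{thm:mj} that both sides have the same type and then that all relevant parameters agree, going through the various combinations of types for $\sigma_1,\sigma_2,\sigma_3$. You instead package the parameters into a single injective lattice homomorphism $\Theta:\Cong(\M)\to C\times C\times P$, where $C=\{1\}\cup[\aleph_0,|X|^+]$ is a chain and $P$ is the ordinal sum $\NN\oplus\cR$, and then invoke the general fact that ordinal sums and products of distributive lattices are distributive. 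Both approaches ultimately rest on the same meet/join formulas from Theorem~\ref{thm:mj}; the paper's version is more self-contained, while yours is more structural and yields an explicit embedding that could be reused elsewhere (compare the similar but separate embeddings of the two types used in the proof of Theorem~\ref{thm:wqo}). The one point you flag as the ``main obstacle''---distributivity of the ordinal sum $P$---is indeed a standard lattice-theoretic fact and your sketch of the case analysis is adequate.
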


\begin{proof}
As is well known, it suffices to prove that meet distributes over join.  Thus, let $\si_1,\si_2,\si_3$ be any three congruences on $\M$, and write $\tau_1 = \si_1\wedge(\si_2\vee\si_3)$ and $\tau_2 = (\si_1\wedge\si_2)\vee(\si_1\wedge\si_3)$.  We prove that $\tau_1=\tau_2$ by showing that they have the same type, and then that the values of all the relevant parameters are equal for the two congruences, which is accomplished by repeated application of Theorem \ref{thm:mj}.  
We begin by observing that
\[
\eta(\tau_1)=\min(\eta(\si_1),\max(\eta(\si_2),\eta(\si_3)))
=\max(\min(\eta(\si_1),\eta(\si_2)),\min(\eta(\si_1),\eta(\si_3)))=
\eta(\tau_2).
\]

In particular, $\tau_1$ and $\tau_2$ are of the same type.
Also, when they are of type \ref{CT1}, $n(\tau_1)=n(\tau_2)$.
The proof that $\zeta_1(\tau_1)=\zeta_1(\tau_2)$ and
$\zeta_2(\tau_1)=\zeta_2(\tau_2)$ is identical to the above proof for $\eta$.
Equality of the remaining parameters depends on the type of $\tau_1$ and $\tau_2$,
which in turn depends on the types of $\si_1,\si_2,\si_3$.
Thus there are eight cases, and in each case it is just a matter of following through the formulas for meets and joins given in Theorem \ref{thm:mj}.  In fact, since $\vee$ is commutative, we may assume that $\eta(\si_2)\leq\eta(\si_3)$, which reduces the number of cases to six.  
As an illustrative sample, we treat a couple of cases here.  The other four are dealt with in a similar fashion.

If $\si_1$ and $\si_2$ are of type \ref{CT1}, and $\si_3$ of type \ref{CT2}, then $\tau_1$ and $\tau_2$ are of type \ref{CT1}; moreover, $\si_2\vee\si_3$ is of type \ref{CT2}, and $\si_1\wedge\si_2$ and $\si_1\wedge\si_3$  are of type \ref{CT1}, and we have ${N(\tau_1) = N(\si_1) = \max( \min(N(\si_1),N(\si_2)) , N(\si_1) ) = N(\tau_2)}$.

If $\si_1$, $\si_2$ and $\si_3$ are all of type \ref{CT2}, then so too are $\tau_1$ and $\tau_2$, and Proposition~\ref{prop:Psi_lattice} gives $\Psi(\tau_1) = \Psi(\si_1) \wedge (\Psi(\si_2)\vee\Psi(\si_3)) = (\Psi(\si_1)\wedge\Psi(\si_2)) \vee (\Psi(\si_1)\wedge\Psi(\si_3)) = \Psi(\tau_2)$.
\end{proof}

\subsection{Well quasi-orderedness}\label{subsect:wqo}

After Corollary \ref{cor:ideals_M} we made the observation that the ideals of $\M$ are (totally) well-ordered by inclusion. It immediately follows that the set of Rees congruences on $\M$ has the same property.  The lattice $\Cong(\M)$ of all congruences is certainly not totally ordered by inclusion.  Nonetheless, it satisfies the following:

\begin{thm}
\label{thm:wqo}
The lattice of congruences of infinite $\P_X$ or $\PB_X$ is well quasi-ordered under inclusion, meaning that it has no infinite strictly descending chains and no infinite antichains. 
\end{thm}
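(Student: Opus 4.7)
The plan is to apply the characterisation of well quasi-orderedness from Lemma \ref{lem:alt_wqo}: it suffices to show that for every infinite sequence $\si_1,\si_2,\ldots$ of congruences on $\M$ there exist indices $i<j$ with $\si_i\sub\si_j$. Since any infinite sequence contains an infinite subsequence in which all terms have the same type, I would reduce to the case where all $\si_m$ are of a single type, handling types \ref{CT1} and \ref{CT2} separately.

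If all $\si_m$ are of type \ref{CT1}, I will associate to each $\si_m$ the triple $(N(\si_m),\zeta_1(\si_m),\zeta_2(\si_m))$ lying in $\NN\times\big(\{1\}\cup[\aleph_0,|X|^+]\big)^2$. By \eqref{eq:CT1_cont}, this map is an order embedding into the component-wise order on the codomain, where $\NN$ carries the total order $\pre$ from \eqref{eq:N_pre} and the cardinal chains carry the usual cardinal order. Since the normal subgroups of each finite symmetric group $\S_n$ form a finite chain under inclusion, $(\NN,\pre)$ is a well-ordered chain of order type $\omega$; the cardinal chain $\{1\}\cup[\aleph_0,|X|^+]$ is also well-ordered. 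Well-ordered chains are trivially wqo, and Dixon's Lemma (Lemma \ref{lem:dixon}) gives that the product is wqo, so that its sub-qoset of type \ref{CT1} congruences is wqo as well.

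If all $\si_m$ are of type \ref{CT2}, I will send each $\si_m$ to $(\Psi(\si_m),\zeta_1(\si_m),\zeta_2(\si_m))\in\cR\times[\aleph_0,|X|^+]^2$. By Corollary \ref{cor:CT2comp}, this is an order embedding when $\cR$ is equipped with $\pre$ from \eqref{eq:prePsi} and the cardinal chains with the usual order. The lattice $(\cR,\pre)$ is wqo by Corollary \ref{cor:Rwqo}, while $[\aleph_0,|X|^+]$ is well-ordered; another application of Dixon's Lemma yields that the product, and hence the sub-qoset of type \ref{CT2} congruences, is wqo.

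The principal technical work for this theorem is already encapsulated in Corollary \ref{cor:Rwqo}; beyond that, the argument is a routine application of Dixon's Lemma to a well-chosen parameter space, and there is no real obstacle. The one subtlety worth flagging is that the mixed type \ref{CT1}/\ref{CT2} comparisons of Theorem \ref{thm:comparisons} \ref{it:comp2} are bypassed by the initial reduction to a subsequence of a single type, so they need not enter the proof at all.
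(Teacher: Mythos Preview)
Your proposal is correct and follows essentially the same approach as the paper: splitting into types \ref{CT1} and \ref{CT2}, embedding each via \eqref{eq:CT1_cont} and Corollary \ref{cor:CT2comp} into a product of wqo posets, and applying Dixon's Lemma together with Corollary \ref{cor:Rwqo}. The only cosmetic difference is that you invoke Lemma \ref{lem:alt_wqo} to pass to a single-type subsequence, whereas the paper simply observes that it suffices to show each of $\Cong_1(\M)$ and $\Cong_2(\M)$ is wqo; these are equivalent reductions.
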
 

\begin{proof}
Let $\Cong_1(\M)$ and $\Cong_2(\M)$ be the sublattices of $\Cong(\M)$ consisting of all congruences of type \ref{CT1} or \ref{CT2}, respectively.  Clearly it suffices to prove that these are both wqo.

Recall that the set $\NN=\set{ N }{ N\unlhd \S_n \text{ for some } n\in [1,\aleph_0)}$ is totally ordered under $\pre$, as defined in \eqref{eq:N_pre}.  For $N\in\NN$, we write $n(N)$ for the degree of the permutations from $N$ (i.e.,~$N\leq\S_{n(N)}$).  If $\Psi\in\cR$ is a non-empty reversal, we write $\eta(\Psi)=\min(\dom\Psi)$; we also define~$\eta(\emptyset)=|X|^+$.

We begin with $\Cong_1(\M)$. By Theorem \ref{thm:comparisons} \ref{it:comp1} (cf.~\eqref{eq:CT1_cont}) it is isomorphic to
\begin{align*}
C_1 ={} & \bigset{ (N,\zeta_1,\zeta_2) }{ N\in\NN,\ n(N)\leq2,\ \zeta_1,\zeta_2\in \{1\}\cup [\aleph_0,|X|^+]} \\[2truemm]
{}\cup{} &\bigset{ (N,\zeta_1,\zeta_2) }{N\in\NN,\  n(N)\geq3,\ \zeta_1,\zeta_2\in  [\aleph_0,|X|^+]},
\end{align*}
under the ordering
\[
(N,\zeta_1,\zeta_2) \leq (N',\zeta_1',\zeta_2') \quad\iff\quad
\text{$N\pre N'$, \ $\zeta_1\leq\zeta_1'$ \ and \  $\zeta_2\leq\zeta_2'$.}
\]
The poset $C_1$ is in turn a subposet of $D_1=\NN\times [1,|X|^+]\times [1,|X|^+]$ under the component-wise ordering.  Since $\NN$ and $[1,|X|^+]$ are wqo, so too is $D_1$ by Lemma \ref{lem:dixon}. It now follows that $C_1$ is wqo, and hence  $\Cong_1(\M)$ is wqo as well.

Let us now turn to $\Cong_2(\M)$.  By Corollary \ref{cor:CT2comp}, it is isomorphic to
\[
C_2 = \bigset{(\Psi,\ze_1,\ze_2)}{\Psi\in\cR,\ \ze_1,\ze_2\in[\eta(\Psi),|X|^+]},
\]
under the ordering 
\[
(\Psi,\ze_1,\ze_2) \leq (\Psi',\ze_1',\ze_2') \quad\iff\quad \text{$\Psi\pre \Psi'$, \ $\zeta_1\leq\zeta_1'$ \ and \  $\zeta_2\leq\zeta_2'$.}
\]
The poset $C_2$ is in turn a subposet of $D_2=\cR\times [1,|X|^+]\times [1,|X|^+]$ under the component-wise ordering.  Since $\cR$ and $[1,|X|^+]$ are wqo (the former by Corollary~\ref{cor:Rwqo}), so too is $D_2$ by Lemma~\ref{lem:dixon}. It now follows that $C_2$ is wqo, and hence  $\Cong_2(\M)$ is wqo as well.
\end{proof}

\subsection[The $*$-congruence lattice]{\boldmath The $*$-congruence lattice}\label{subsect:*}

Recall that a congruence $\si$ on a regular $*$-semigroup $S$ is a \emph{$*$-congruence} if it is also compatible with the involution of $S$: i.e., if $(x,y)\in\si\implies(x^*,y^*)\in\si$ for all $x,y\in S$.  In light of the identity~$x=x^{**}$, this definition is equivalent to $\si$ being equal to $\si^*=\set{(x^*,y^*)}{(x,y)\in\si}$.  The meet and join in $\Cong(S)$ of two $*$-congruences is easily checked to be a $*$-congruence, and it follows that the set $\Cong^*(S)$ of all $*$-congruences is a sublattice of $\Cong(S)$; 
this also follows from the general result that the congruence lattice of any 
(universal) algebra is a sublattice of the lattice of equivalence relations on the 
carrier set \cite[Theorem II.5.3]{BS1981}.

For any cardinal $1\leq\ze\leq|X|^+$, and for any normal subgroup $N$ of some finite $\S_n$, we have
\[
\lam_\ze^*=\rho_\ze \COMMA
\rho_\ze^*=\lam_\ze \COMMA
\mu_\ze^*=\mu_\ze \COMMA
R_\ze^*=R_\ze \COMMA
\nu_N^*=\nu_N.
\]
Together with Theorems \ref{thm-main}, \ref{thm:distr} and \ref{thm:wqo}, these observations quickly lead to the following:

\begin{thm}\label{thm:*}
Let $\M$ be either $\P_X$ or $\PB_X$, where $X$ is an infinite set, and let $\si$ be a congruence on $\M$.  Then $\si$ is a $*$-congruence if and only if $\ze_1(\si)=\ze_2(\si)$.  The $*$-congruence lattice $\Cong^*(\M)$ is distributive and well quasi-ordered.  \epfres
\end{thm}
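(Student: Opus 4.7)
The plan is to handle the two assertions separately: the first via Theorem \ref{thm-main}, and the second via Theorems \ref{thm:distr} and \ref{thm:wqo}.

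For the characterisation of $*$-congruences, I would compute $\sigma^*$ for each congruence $\sigma$ classified by Theorem \ref{thm-main}. The identities $\lambda_\zeta^* = \rho_\zeta$, $\rho_\zeta^* = \lambda_\zeta$, $\mu_\zeta^* = \mu_\zeta$, $R_\zeta^* = R_\zeta$ and $\nu_N^* = \nu_N$ listed just above the statement, combined with the elementary distributivity $(R\cap S)^* = R^*\cap S^*$ and $(R\cup S)^* = R^*\cup S^*$ for arbitrary relations, imply that $\sigma^*$ has the same form as $\sigma$ under Theorem \ref{thm-main}, with the roles of $\zeta_1$ and $\zeta_2$ interchanged while $\eta$, the sequences $(\xi_i)$, $(\eta_i)$, and the normal subgroup $N$ remain unchanged. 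The uniqueness of parameters established throughout Section \ref{sect:stage2} then yields $\sigma = \sigma^*$ if and only if $\zeta_1(\sigma) = \zeta_2(\sigma)$. Equivalently, one can work directly from the parameter definitions: $D_\kappa^* = D_\kappa$ and $I_\kappa^* = I_\kappa$ give invariance of $\eta$, while $|\ol\alpha\sd\ol\beta| = |\ul{\alpha^*}\sd\ul{\beta^*}|$ immediately gives $\zeta_1(\sigma^*) = \zeta_2(\sigma)$, and symmetrically $\zeta_2(\sigma^*)=\zeta_1(\sigma)$.

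For the lattice-theoretic assertions, $\Cong^*(\M)$ is a sublattice of $\Cong(\M)$, since $\sigma\mapsto\sigma^*$ is a lattice automorphism of $\Cong(\M)$ (it distributes over arbitrary intersections and joins of relations), so that $(\sigma\wedge\tau)^* = \sigma\wedge\tau$ and $(\sigma\vee\tau)^* = \sigma\vee\tau$ whenever $\sigma,\tau\in\Cong^*(\M)$. Distributivity is then inherited from Theorem \ref{thm:distr} as a standard sublattice property, and well quasi-orderedness from Theorem \ref{thm:wqo}, since wqo passes to subposets.

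The only mildly subtle point is verifying that the parameter $N$ from Subsection \ref{subsect:finite_eta} is $*$-invariant. This reduces to the observations that $\varepsilon_n^* = \varepsilon_n$ and $\permdec\pi^* = \permdec{\pi^{-1}}$, so that $(\varepsilon_n,\permdec\pi)\in\sigma^*$ if and only if $(\varepsilon_n,\permdec{\pi^{-1}})\in\sigma$, whence $N(\sigma^*) = N(\sigma)^{-1} = N(\sigma)$ since any subgroup is closed under inverses. All other invariances are routine verifications from the relevant definitions.
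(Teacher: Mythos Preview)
Your proposal is correct and follows essentially the same approach as the paper: the paper simply records the identities $\lam_\ze^*=\rho_\ze$, $\rho_\ze^*=\lam_\ze$, $\mu_\ze^*=\mu_\ze$, $R_\ze^*=R_\ze$, $\nu_N^*=\nu_N$ just before the statement and declares that, together with Theorems \ref{thm-main}, \ref{thm:distr} and \ref{thm:wqo}, these ``quickly lead to'' the result, which is exactly what you spell out in detail. Your additional remarks on the uniqueness of parameters and the $*$-invariance of $N$ are correct elaborations of steps the paper leaves implicit.
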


The inclusion order on $*$-congruences, and formulae for meets and joins, are of course all still given by Theorems \ref{thm:comparisons} and \ref{thm:mj}.  Hasse diagrams for $\Cong^*(\M)$ are obtained from those of $\Cong(\M)$ by replacing each layer with its vertical diagonal, as indicated in Figure \ref{fig:1layer}.

\section{Minimal generation of congruences}\label{sect:gen}

If $S$ is a semigroup and $\Om$ a subset of $S\times S$, we denote by $\Om^\sharp$ the congruence on $S$ generated by $\Om$: i.e., the least congruence containing $\Om$.  If $\Om=\big\{(x,y)\big\}$ consists of a single pair, we write $\cg xy=\Om^\sharp$, and refer to this as a \emph{principal congruence}.  The \emph{(congruence) rank} of a congruence~$\si$, denoted $\crank(\si)$, is the least cardinality of a subset $\Om\sub S\times S$ such that $\si=\Om^\sharp$.

In this section we explore these ideas for congruences on $\M$, which as usual denotes $\P_X$ or~$\PB_X$ for some infinite set $X$.  Specifically, we classify the principal congruences in Subsection~\ref{subsect:pc} (see Theorem \ref{thm:pc}), and we calculate the ranks of all congruences in Subsection \ref{subsect:crank} (see Theorems \ref{thm:ranksCT1} and~\ref{thm:ranksCT2}).

\subsection{Principal congruences}\label{subsect:pc}

The next result classifies all principal congruences $\cg\al\be$ on $\M$; since $\cg\al\be=\cg\be\al$, it suffices to consider pairs $(\al,\be)$ with $\rank(\al)\geq\rank(\be)$.  

\newpage

\begin{thm}\label{thm:pc}
Let $\M$ be either $\P_X$ or $\PB_X$, where $X$ is an infinite set, and let $\al,\be\in\M$ with $\rank(\al)\geq\rank(\be)$.
\ben

\item 
\label{it:pc1}
If $\al=\be$, then $\cg\al\be=\De_{\M}$.

\item 
\label{it:pc2}
If $n=\rank(\al)<\aleph_0$, $(\al,\be)\in{\H}$ and $\al\not=\be$, then $\cg\al\be=\lam_\ze^N\cap\rho_\ze^N$, where $N\normal\S_n$ is normally generated by $\phi(\al,\be)$, and
\[
\ze=\begin{cases}
1&\text{if $n=2$}\\
\aleph_0 &\text{if $n\geq3$.}
\end{cases}
\]

\item 
\label{it:pc3}
If $n=\rank(\al)\leq1$ and $(\al,\be)\not\in{\H}$, then $\cg\al\be=\lam_{\ze_1}^{n+1}\cap\rho_{\ze_2}^{n+1}$, where
\[ 
\ze_1 = \begin{cases}
1 &\text{if }\ \overline{\alpha}=\overline{\beta}\\
\max(\aleph_0,|\ol\al\sd\ol\be|^+) &\text{otherwise}
\end{cases}
\AND
\ze_2 = \begin{cases}
1 &\text{if }\ \underline{\alpha}=\underline{\beta}\\
\max(\aleph_0,|\ul\al\sd\ul\be|^+) &\text{otherwise.}
\end{cases}
\]

\item 
\label{it:pc4}
If $2\leq n=\rank(\al)<\aleph_0$ and $(\al,\be)\not\in{\H}$, then $\cg\al\be=\lam_{\ze_1}^{n+1}\cap\rho_{\ze_2}^{n+1}$, where
\[ 
\ze_1 = \max(\aleph_0,
|\ol\al\sd\ol\be|^+) 
\AND
\ze_2 = 
\max(\aleph_0,|\ul\al\sd\ul\be|^+).
\]

\item 
\label{it:pc5}
If $\ka=\rank(\al)\geq\aleph_0$ and $|\al\sd\be|\geq\ka$, then $\cg\al\be = \lam_{\ze_1}^{\ka^+}\cap\rho_{\ze_2}^{\ka^+} = (\lam_{\ze_1}^{\ka^+}\cap\rho_{\ze_2}^{\ka^+})\cup\mu_1^{|X|^+}$, where
$\ze_1=\max(\ka^+,|\ol\al\sd\ol\be|^+)$
and
$\ze_2=\max(\ka^+,|\ul\al\sd\ul\be|^+)$.

\item 
\label{it:pc6}
If $\ka=\rank(\al)\geq\aleph_0$ and $0<|\al\sd\be|<\ka$, then
${\cg\al\be = \mu_\eta^{\ka^+} = (\lam_\eta^\eta\cap\rho_\eta^\eta)\cup\mu_\eta^{\ka^+}\cup\mu_1^{|X|^+}}$,
where $\eta=\max(\aleph_0,|\al\sd\be|^+)$.

\een
\end{thm}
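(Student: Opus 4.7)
My plan is to apply Theorem \ref{thm-main} to determine the type and parameters of $\cg\al\be$, then invoke Theorem \ref{thm:comparisons} to conclude. In each case I would identify the candidate congruence $\sigma$ on the right-hand side of the formula, verify $(\al,\be)\in\sigma$ (giving $\cg\al\be\subseteq\sigma$), and then show that the parameters of any congruence $\tau$ containing $(\al,\be)$ dominate those of $\sigma$, so that $\sigma\subseteq\tau$ for every such $\tau$, and in particular $\sigma\subseteq\cg\al\be$. Case \ref{it:pc1} is of course immediate.

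For cases \ref{it:pc2}--\ref{it:pc4}, where $\rank(\al)<\aleph_0$, the congruence $\cg\al\be$ is of type~\ref{CT1}, because $\cg\al\be$ is generated by a pair both of whose members have rank at most $n=\rank(\al)$, and hence no non-diagonal pair of $\cg\al\be$ can have rank exceeding $n$. For case \ref{it:pc2}, where $(\al,\be)\in\H$, Lemma \ref{la330} gives $\eta(\cg\al\be)\geq n$, while the rank bound just noted gives $\eta(\cg\al\be)\leq n$; conjugation inside the group $\H$-class of $\ep_n$ then shows $N(\cg\al\be)$ equals the normal closure of $\phi(\al,\be)$, and $\ze_1,\ze_2$ take the least permissible values from Theorem \ref{thm-main}, since $\ol\al=\ol\be$ and $\ul\al=\ul\be$ give no direct $\lam$- or $\rho$-contribution. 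For cases \ref{it:pc3} and \ref{it:pc4}, where $(\al,\be)\notin\H$, Lemma \ref{la329} pushes $\eta(\cg\al\be)$ up to $n+1$, and the values of $\ze_1,\ze_2$ are then read off by restricting to $D_0$ via the map $\ga\mapsto\wh\ga$ (Lemma \ref{la325}) and applying the technical Lemmas \ref{la328a} and \ref{la328c}.

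For cases \ref{it:pc5} and \ref{it:pc6}, where $\rank(\al)=\ka\geq\aleph_0$, the congruence $\cg\al\be$ is of type \ref{CT2} since it identifies distinct elements of rank $\ka\geq\aleph_0$, which no type \ref{CT1} congruence can do; the upper bound $\eta(\cg\al\be)\leq\ka^+$ is again immediate from the rank bound. In case \ref{it:pc5}, Lemma \ref{lem:tech1} forces $\eta(\cg\al\be)\geq\ka^+$, for otherwise $|\al\sd\be|\geq\ka$ would contradict the inequality $|\al\sd\be|<\ka$ that holds whenever $\ka\geq\eta(\cg\al\be)$; this then forces $\Psi(\cg\al\be)$ to be the constant-one reversal on $[\ka^+,|X|]$, matching $\xi_1=1$ and $\eta_1=|X|^+$ in the official presentation of Theorem \ref{thm-main}. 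In case \ref{it:pc6}, the key step is to apply Lemma \ref{lem:tech3}~\ref{it:h3ii} to $(\al,\be)$ with appropriate $Y,Z$, producing a pair $(\ep_{Y\cup Z},\ep_Y)\in\cg\al\be$ that witnesses $\eta(\cg\al\be)\geq\max(\aleph_0,|\al\sd\be|^+)$; conversely, Lemma \ref{la340} shows that the entire relation $\mu_\eta^{\ka^+}$ is contained in $\cg\al\be$, giving the matching upper bound. In both cases, $\ze_1$ and $\ze_2$ follow by passing to $(\wh\al,\wh\be)\in D_0$ and applying Lemmas \ref{la328a} and \ref{la328c}.

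The main obstacle will be the tightness arguments for cases \ref{it:pc5} and \ref{it:pc6}: one must verify that no spurious additional pairs are forced into $\cg\al\be$ beyond those encoded in the stated formula. Specifically, one needs $\Psi(\cg\al\be)(\mu)=1$ for all $\mu>\ka$ in case \ref{it:pc5}, and a two-step reversal with $\xi_1=\eta$ and $\eta_1=\ka^+$ in case \ref{it:pc6}; both require combining Theorem \ref{thm:comparisons}~\ref{it:comp3} with the quantitative bounds from Section~\ref{sect:stage2} to rule out reversal mappings strictly below the candidate.
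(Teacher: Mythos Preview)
Your overall strategy matches the paper's: verify $(\al,\be)\in\tau$ for the candidate $\tau$, giving $\si=\cg\al\be\sub\tau$ for free, then bound the parameters of $\si$ from below and apply Theorem~\ref{thm:comparisons} to obtain $\tau\sub\si$. The lemmas you invoke in the first three paragraphs are the right ones, with one small omission: in case~\ref{it:pc2}, Lemma~\ref{la330} only produces a non-$\H$-related pair in $D_{n-1}$; you still need Lemma~\ref{la329} afterwards to obtain an unequal-rank pair and hence $\eta(\si)\geq n$.

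Your final paragraph, however, has the direction of the difficulty reversed. Once $(\al,\be)\in\tau$ is checked, $\si\sub\tau$ is automatic; there is no danger of ``spurious additional pairs'' in $\si$, and in particular the claim $\Psi(\si)(\mu)=1$ for $\mu>\ka$ in case~\ref{it:pc5} follows immediately from $\si\sub\tau$ rather than being an obstacle. The real work is the reverse containment $\tau\sub\si$, i.e.\ showing the parameters of $\si$ \emph{dominate} those of $\tau$. In case~\ref{it:pc5} the inequality $\Psi(\si)\succeq\Psi(\tau)$ is trivial because $\Psi(\tau)\equiv 1$; the non-trivial estimates are $\eta(\si)\geq\ka^+$ (where the sub-case $\rank(\be)<\ka$ is handled directly from the definition of $\eta$, not via Lemma~\ref{lem:tech1}) and $\ze_i(\si)\geq\ka^+$, which needs Lemma~\ref{la328b} in addition to the $(\wh\al,\wh\be)$ argument. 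In case~\ref{it:pc6} one must show $\Psi(\si)(\vk)\geq\eta$ for every $\vk\in[\eta,\ka]$; the paper does this by establishing $\Psi(\si)(\ka)\geq\eta$ (using Lemma~\ref{la339} to rule out finite values $\geq 2$) and then invoking the order-reversing property of $\Psi(\si)$. Your appeal to Lemma~\ref{la340} does not give ``the entire relation $\mu_\eta^{\ka^+}$'' directly, since that lemma only yields pairs within the single $\D$-class $D_\ka$.
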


\pf
First, one may check that the stated cases exhaust all possible pairs $(\al,\be)$; in part \ref{it:pc2}, note that $(\al,\be)\in{\H}$ and $\al\not=\be$ together imply $n\geq2$.  The proof of each part then follows the same pattern.  We respectively write $\si$ and $\tau$ for $\cg\al\be$ and the congruence it is claimed to equal (each $\tau$ is a congruence by Theorem \ref{thm-main}).  Since $(\al,\be)\in\tau$, we have $\si\sub\tau$ in all cases.  To obtain the reverse containment, we estimate the relevant parameters of $\si$ using the information provided by the generating pair $(\al,\be)$, and then apply Theorem \ref{thm:comparisons}.

\pfitem{\ref{it:pc1}}  This is clear.

\pfitem{\ref{it:pc2}}  From $\si\sub\tau$, we immediately obtain $n(\si)\leq n(\tau)=n$.  From Lemma \ref{la330} (with $\ka=n$ and $q=n-1$) and then Lemma \ref{la329} (with $\ka=q=n-1$), we have $n(\si)\geq n$, and so $n(\si)=n$.  Since $N(\si)$ is a normal subgroup of $\S_n$ containing $\phi(\al,\be)$, we clearly have $N(\si)\supseteq N=N(\tau)$.  Finally, let $i\in\{1,2\}$.  If $n\leq2$, then clearly $\ze_i(\si)\geq1=\ze_i(\tau)$; if $n\geq3$, then Lemmas \ref{la328} and~\ref{la328b} give $\ze_i(\si)\geq\aleph_0=\ze_i(\tau)$.

\pfitem{\ref{it:pc3}}  This time Lemma \ref{la329} (and $\si\sub\tau$) gives $n(\si)=n(\tau)=n+1$, and we clearly have ${N(\si)\supseteq\{\id_{n+1}\}=N(\tau)}$.  
By Lemma \ref{la325} and transitivity, we have $(\wh\al,\wh\be)\in\si\restr_{D_0}$, and we note that ${|\ol{\;\!\wh\al\;\!}\sd\ol{\;\!\wh\be\;\!}|=|\ol\al\sd\ol\be|}$.
If $0<|\ol\al\sd\ol\be|<\aleph_0$, then $\ze_1(\si)\geq2$, and so Lemma \ref{la328} gives ${\ze_1(\si)\geq\aleph_0=\ze_1(\tau)}$.  Otherwise, clearly $\ze_1(\si)\geq|\ol\al\sd\ol\be|^+=\ze_1(\tau)$.  The inequality $\ze_2(\si)\geq\ze_2(\tau)$ is dual.

\pfitem{\ref{it:pc4}}  The proof is essentially identical to the previous part, but noting that also $\ze_1(\si)\geq\aleph_0$ for $|\ol\al\sd\ol\be|=0$, by Lemmas \ref{la328} and \ref{la328b}.

\pfitem{\ref{it:pc5}}  From $\si\sub\tau$, we have $\eta(\si)\leq\eta(\tau)=\ka^+$.  If $\rank(\be)<\ka=\rank(\al)$, then $\eta(\si)\geq\ka^+$ by definition.  If $\rank(\be)=\ka$, then $(\al,\be)\in\si\restr_{D_\ka}$; if we had $\eta(\si)<\ka^+$, then $\ka\geq\eta(\si)$, so Lemma~\ref{lem:tech1} gives $|\al\sd\be|<\ka$, a contradiction; so $\eta(\si)\geq\ka^+$ in this case also.  Thus, regardless of the value of $\rank(\be)$, we have $\eta(\si)=\ka^+=\eta(\tau)$.  

Next note that since $\Psi(\si)$ and $\Psi(\tau)$ are both maps $[\ka^+,|X|]\to\{1\}\cup[\aleph_0,\ka^+]$, and since $\Psi(\tau)$ maps every element of $[\ka^+,|X|]$ to $1$, we clearly have $\Psi(\si)\succeq\Psi(\tau)$.

It remains to show that $\ze_1(\si)\geq\ze_1(\tau)$, the case of~$\ze_2$ being dual.  By Lemma \ref{la328b} we have $\zeta_1(\sigma)\geq\eta(\sigma)=\kappa^+$.
The proof that $\ze_1(\si)\geq|\ol\al\sd\ol\be|^+$ is analogous to the corresponding step in part \ref{it:pc3} above.
It follows that ${\ze_1(\si)\geq\max(\ka^+,|\ol\al\sd\ol\be|^+)=\ze_1=\ze_1(\tau)}$.

\pfitem{\ref{it:pc6}}  First note that $\mu_\eta^{\ka^+} = (\lam_\eta^\eta\cap\rho_\eta^\eta)\cup\mu_\eta^{\ka^+}\cup\mu_1^{|X|^+}$ by Lemma \ref{lem:mu}.  Again, $\si\sub\tau$ gives ${\eta(\si)\leq\eta(\tau)=\eta=\max(\aleph_0,|\al\sd\be|^+)}$.  For the converse, we clearly have $\eta(\si)\geq\aleph_0$, since $\si$ is of type \ref{CT2} as $\rank(\al)\geq\aleph_0$.  If $|\al\sd\be|<\aleph_0$, then it also follows that $\eta(\si)\geq|\al\sd\be|^+$ in this case.  So now we assume that $|\al\sd\be|\geq\aleph_0$.  Since $\al\in D_\ka$ and $|\al\sd\be|<\ka$, we must have $\be\in D_\ka$ as well (or else $\ka$ transversals of $\al$ would belong to $\al\sm\be$).  Thus, $(\al,\be)\in\si\restr_{D_\ka}$.  Taking any disjoint subsets $Y,Z\sub X$ with $|Y|=\ka$ and $|Z|=|\al\sd\be|$, Lemma \ref{lem:tech2} gives $(\ep_{Y\cup Z},\ep_Y)\in\si$, from which it follows that $(\ep_Z,\emptypart)=(\ep_{Y\cup Z}\ep_Z,\ep_Y\ep_Z)\in\si$.  But $\rank(\ep_Z)>\rank(\emptypart)$, and so $\eta(\si)>\rank(\ep_Z)=|\al\sd\be|$, giving $\eta(\si)\geq|\al\sd\be|^+$ in this case also.  Thus, regardless of the value of $|\al\sd\be|$, $\eta(\si)\geq\max(\aleph_0,|\al\sd\be|^+)=\eta$, and so $\eta(\si)=\eta$.  From this, and using Lemma~\ref{la328b}, we also obtain $\ze_i(\si)\geq\eta(\si)=\eta=\ze_i(\tau)$ for $i=1,2$.

Note that $\Psi(\si)(\ka)\geq|\al\sd\be|^+$ by definition, since $(\al,\be)\in\si\restr_{D_\ka}$.  Since $|\al\sd\be|>0$, we also have $\Psi(\si)(\ka)>1$, so Lemma \ref{la339} gives $\Psi(\si)(\ka)\geq\aleph_0$.  Thus, ${\Psi(\si)(\ka)\geq\max(\aleph_0,|\al\sd\be|^+)=\eta}$.  Since $\Psi(\si)$ is order-reversing, for any $\vk\in[\eta,\ka]$, we have $\Psi(\si)(\vk)\geq\Psi(\si)(\ka)\geq\eta=\Psi(\tau)(\vk)$; since also $\Psi(\tau)(\vk)=1$ for all $\vk\in[\ka^+,|X|]$, it follows that $\Psi(\si)\succeq\Psi(\tau)$.
\epf

Note that the only infinite limit cardinal that can appear as a parameter in a principal congruence is $\aleph_0$. 
Specifically, we can have $\zeta_i(\sigma)=\aleph_0$ in cases
\ref{it:pc2}--\ref{it:pc4}, \ref{it:pc6}, and also~${\eta(\sigma)=\aleph_0}$
in case \ref{it:pc6}.

\subsection{Congruence ranks}\label{subsect:crank}

In the next two theorems we calculate the rank of each congruence on $\M$.  It turns out that congruences can have infinite (even uncountable) ranks; to describe these, we require the concept of cofinality.

Recall that a subset $Q$ of a poset $P$ is \emph{cofinal} if for every $p\in P$, there exists $q\in Q$ such that $q\geq p$.  
The \emph{cofinality} of $P$, denoted $\cof(P)$, is defined to be the least cardinality of a cofinal subset of $P$.  
Note that if $P$ does not have any maximal elements, then any cofinal subset $Q$ of $P$ satisfies the (ostensibly stronger) condition: for every $p\in P$, there exists $q\in Q$ such that $q> p$.

\begin{lemma}\label{lem:cof}
Suppose  $\si$ is a congruence on a semigroup $S$, and that $\si=\bigcup_{p\in P}\si^{(p)}$ where~$P$ is a well-ordered chain and $\set{\si^{(p)}}{p\in P}$ is a non-decreasing chain of proper subcongruences of~$\si$.
Then $\crank(\si)\geq\cof(P)$.
\end{lemma}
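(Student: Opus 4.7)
The plan is a standard cofinality argument by contradiction. Suppose for the sake of contradiction that $\crank(\sigma)<\cof(P)$, and fix a generating set $\Omega\subseteq S\times S$ of $\sigma$ with $|\Omega|=\crank(\sigma)$, i.e., with $\Omega^\sharp=\sigma$.

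First I would use the hypothesis $\sigma=\bigcup_{p\in P}\sigma^{(p)}$ to attach a ``level'' to each generating pair: for every $(x,y)\in\Omega$ choose some $p(x,y)\in P$ with $(x,y)\in\sigma^{(p(x,y))}$, and let $Q=\set{p(x,y)}{(x,y)\in\Omega}\subseteq P$. Since $|Q|\leq|\Omega|=\crank(\sigma)<\cof(P)$, the set $Q$ cannot be cofinal in $P$. Because $P$ is a chain, failure of cofinality yields an element $p_0\in P$ with $q<p_0$ for every $q\in Q$; in particular $p(x,y)\leq p_0$ for each $(x,y)\in\Omega$.

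Next I would exploit monotonicity of the chain $\set{\sigma^{(p)}}{p\in P}$: for each $(x,y)\in\Omega$ we get
\[
(x,y)\in\sigma^{(p(x,y))}\subseteq\sigma^{(p_0)},
\]
so $\Omega\subseteq\sigma^{(p_0)}$. Since $\sigma^{(p_0)}$ is itself a congruence on $S$ and $\sigma$ is the least congruence containing $\Omega$, this forces $\sigma\subseteq\sigma^{(p_0)}$. Combined with the reverse inclusion $\sigma^{(p_0)}\subseteq\sigma$ built into the hypothesis, this gives $\sigma=\sigma^{(p_0)}$, contradicting the assumption that every $\sigma^{(p)}$ is a \emph{proper} subcongruence of $\sigma$.

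There is no serious obstacle here; the only subtlety is the purely order-theoretic step that a subset of a chain $P$ of cardinality strictly less than $\cof(P)$ cannot be cofinal, hence admits a strict upper bound in $P$. (The well-orderedness of $P$ asserted in the hypothesis is in fact not needed for this step, but it makes the existence of $p_0$ transparent and is harmless.) Everything else is a direct application of the definitions of $\crank$, of congruence generation, and of the ascending union presentation of $\sigma$.
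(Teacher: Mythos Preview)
Your proof is correct and is essentially the same argument as the paper's: both attach to each generating pair a level in $P$, observe that the resulting set $Q$ of levels has size at most $\crank(\sigma)$, and show that if $Q$ were bounded above by some $p_0$ then $\Omega\subseteq\sigma^{(p_0)}$ would force $\sigma\subseteq\sigma^{(p_0)}$, contradicting properness. The only cosmetic differences are that the paper argues directly (showing $Q$ is cofinal and concluding $\crank(\sigma)\geq|Q|\geq\cof(P)$) rather than by contradiction, and that the paper uses well-orderedness of $P$ to pick $q(x,y)=\min\set{p}{(x,y)\in\sigma^{(p)}}$ whereas you make an arbitrary choice --- as you correctly observe, your version does not actually need well-orderedness.
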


\pf
Suppose $\si=\Om^\sharp$ where $\Om\sub S\times S$ and $|\Om|=\crank(\si)$.  For each $(x,y)\in\Om$ write $q(x,y)=\min\set{p\in P}{(x,y)\in\si^{(p)}}$, and let $Q=\set{q(x,y)}{(x,y)\in\Om}$.  We claim that~$Q$ is cofinal in $P$.  Indeed, suppose to the contrary that there exists $p\in P$ such that $q<p$ for all~$q\in Q$: i.e., $q(x,y)<p$ for all $(x,y)\in\Om$.  Then by the chain assumption on the subcongruences it follows that $\Om\sub\si^{(p)}$, and hence $\si=\Om^\sharp\sub\si^{(p)}$, contradicting the fact that $\si^{(p)}$ is a proper subcongruence.  With the claim established, we have $\crank(\si)=|\Om|\geq|Q|\geq\cof(P)$.
\epf

If $\ze$ is a cardinal, we write $\cof(\ze)=\cof[0,\ze)$ for the cofinality of the set $[0,\ze)$ of all cardinals (strictly) less than $\ze$.
Clearly $\cof(\ze)=1$ if $\ze$ is a successor cardinal.  If $\ze$ is a limit cardinal, then $\aleph_0\leq\cof(\ze)\leq\ze$: for example, $\cof(\aleph_0)=\aleph_0=\cof(\aleph_\omega)$.  
The existence of uncountable cardinals~$\ze$ with $\cof(\ze)=\ze$ is unprovable in ZFC \cite[Theorem 12.12]{Jech2003}.

Note that the definition of $\cof(\ze)$ in the previous paragraph is not standard.  Indeed, $\cof(\ze)$ is usually defined to be the cofinality of the set of all \emph{ordinals} less than $\ze$ (which is in fact the usual definition of $\ze$ itself); see for example \cite[p31]{Jech2003}.  If $\ze$ is a limit cardinal, then the two notions coincide.  We have used the current definition so that successor cardinals will satisfy $\cof(\ze)=1$, which will simplify the statements of the following theorems.

On several occasions we will use without explicit reference the following two facts:
\bit
\item If $\xi,\ze$ are cardinals with $\xi<\ze$, then any cofinal subset of $[\xi,\ze)$ is also cofinal in $[0,\ze)$, from which it quickly follows that $\cof(\ze)=\cof[\xi,\ze)$.
\item If $\ze$ is an uncountable limit cardinal, and if $\Xi$ is cofinal in $[0,\ze)$, then $\set{\xi^+}{\xi\in\Xi,\ \xi\geq\aleph_0}$ is a cofinal subset of $[\aleph_0,\ze)$ consisting entirely of successor cardinals, and of the same size as $\Xi$.
\eit

We begin with type \ref{CT1} congruences:

\begin{thm}
\label{thm:ranksCT1}
Let $\si=\lambda_{\zeta_1}^N\cap\rho_{\zeta_2}^N$ be a congruence on~$\M$ of type \ref{CT1}, where $\M$ denotes either~$\P_X$ or $\PB_X$ for some infinite set $X$.
\begin{enumerate}[label=\textup{(\Roman*)},leftmargin=7mm]
\item
\label{it:ranks2}
If at least one of $\zeta_1,\zeta_2$ is an uncountable limit cardinal, then $\si$ is not finitely generated and 
\[
\crank(\sigma)={\max}\big({\cof(\zeta_1)},\cof(\zeta_2)\big).
\]

\item
\label{it:ranks1}
If neither $\zeta_1$ nor $\zeta_2$ is an uncountable limit cardinal, then $\sigma$ is finitely generated and its rank is as follows:
\begin{enumerate}[label=\textup{(\alph*)}, leftmargin=9mm]
\item
\label{it:ranks1a}
$0$ when $N=\S_1$ and $\zeta_1=\zeta_2=1$ \emph{(}i.e., $\sigma=\Delta_{\M}$\emph{)},
\item
\label{it:ranks1b}
$1$ when one of the following is satisfied:
\begin{enumerate}[label=\textup{(\roman*)}, leftmargin=9mm]
\item
\label{it:ranks1bi}
$N=\S_1$ 
and not both $\zeta_1,\zeta_2$ equal $1$,
\item
\label{it:ranks1bii}
$N=\{\id_n\}$ with $n\geq 2$,
\item
\label{it:ranks1biii}
$N=\S_2$ and $\zeta_1=\zeta_2=1$,
\item
\label{it:ranks1biv}
$n\geq 3$, $N\neq\{\id_n\}$ and $\zeta_1=\zeta_2=\aleph_0$,
\end{enumerate}
\item
\label{it:ranks1c}
$2$ when one of the following is satisfied:
\begin{enumerate}[label=\textup{(\roman*)}, leftmargin=9mm]
\item
\label{it:ranks1ci}
$N=\S_2$
and not both $\zeta_1,\zeta_2$ equal $1$,
\item
\label{it:ranks1cii}
$n\geq 3$, $N\neq\{\id_n\}$ and not both $\zeta_1,\zeta_2$ equal $\aleph_0$.
\end{enumerate}
\end{enumerate}
\end{enumerate}
\end{thm}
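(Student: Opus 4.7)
The plan is to split into Case \ref{it:ranks2} (some $\zeta_i$ is an uncountable limit) and Case \ref{it:ranks1} (neither is), leveraging Theorem \ref{thm:pc} (describing all principal congruences) and Theorem \ref{thm:mj} (describing meets and joins) throughout.

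For the lower bound in Case \ref{it:ranks2}, assume without loss of generality that $\zeta_1$ is an uncountable limit cardinal. I will take a cofinal subset $P$ of successor cardinals in $[\aleph_0,\zeta_1)$ of size $\cof(\zeta_1)$ and form the non-decreasing chain of proper subcongruences $\sigma^{(\xi)}=\lambda_\xi^N\cap\rho_{\zeta_2}^N$ ($\xi\in P$), whose union is $\sigma$: proper inclusion follows from Theorem \ref{thm:comparisons}, and the union equals $\sigma$ because any non-$\nu_N$ pair $(\alpha,\beta)\in\sigma$ satisfies $|\overline{\alpha}\sd\overline{\beta}|<\zeta_1$, hence lies in some $\sigma^{(\xi)}$. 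Lemma \ref{lem:cof} then yields $\crank(\sigma)\geq\cof(\zeta_1)$; the symmetric argument gives $\crank(\sigma)\geq\cof(\zeta_2)$ whenever $\zeta_2$ is also an uncountable limit. For the matching upper bound, I will choose cofinal subsets $\Xi_i\subseteq[\aleph_0,\zeta_i)$ of size $\cof(\zeta_i)$ and, for each $\xi\in\Xi_1$, pick a non-$\H$ pair of rank $n-1$ whose kernels differ by exactly $\xi$ cardinality-worth of blocks while the cokernels agree; Theorem \ref{thm:pc}(iii) or (iv) shows each such pair generates a principal congruence with $\zeta_1$-parameter $\xi^+$ and $\zeta_2$-parameter at the minimum permissible value. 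Taking analogous pairs from $\Xi_2$ and, if $N\neq\{\id_n\}$, one additional pair $(\epsilon_n,\permdec{\pi})$ with $\pi\in\S_n$ a normal generator of $N$ (which exists because every normal subgroup of $\S_n$ is normally generated by a single element), the join of all these principal congruences equals $\sigma$ by Theorem \ref{thm:mj}. Since $\cof(\zeta_1)\geq\aleph_1$, the total number of pairs is $\max(\cof(\zeta_1),\cof(\zeta_2))$, completing the case.

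Case \ref{it:ranks1} reduces to a finite enumeration of subcases. The upper bounds in (a) and (b) follow immediately from Theorem \ref{thm:pc}: the trivial congruence requires no pairs, the $\H$-related pair from Theorem \ref{thm:pc}(ii) covers (b)(iii) and (b)(iv), and a suitable non-$\H$ pair from Theorem \ref{thm:pc}(iii) or (iv) covers (b)(i) and (b)(ii). For the rank-$2$ upper bound in (c), combining one pair of each type and applying Theorem \ref{thm:mj} yields $\sigma$. The main obstacle is the lower bound in (c), which I will establish via the dichotomy exposed by Theorem \ref{thm:pc}: if $\sigma=\cg{\alpha}{\beta}$ were principal, then either $(\alpha,\beta)\notin\H$, forcing $N(\sigma)=\{\id\}$ by parts (iii)/(iv); or $(\alpha,\beta)\in\H$, forcing via part (ii) that $\zeta_1(\sigma)=\zeta_2(\sigma)$ equals the minimum permissible value ($1$ for $n=2$, $\aleph_0$ for $n\geq3$). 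Neither option is compatible with (c), where $N(\sigma)\neq\{\id\}$ and $(\zeta_1,\zeta_2)$ strictly exceeds the minimum; hence $\crank(\sigma)\geq2$. This inability to simultaneously encode a non-trivial group-theoretic datum and a non-trivial kernel/cokernel discrepancy in a single generating pair is the structural phenomenon driving the separation between rank $1$ and rank $2$ in the theorem.
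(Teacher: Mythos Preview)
Your approach is essentially the same as the paper's and is correct in outline, but there are two small slips worth fixing. First, the claim ``$\cof(\zeta_1)\geq\aleph_1$'' is false (e.g.\ $\cof(\aleph_\omega)=\aleph_0$); what you need, and what suffices for the cardinal arithmetic, is $\cof(\zeta_1)\geq\aleph_0$, which holds for any limit cardinal. Second, in the upper bound for Case~\ref{it:ranks2} you invoke Theorem~\ref{thm:mj} to compute an \emph{infinite} join of principal congruences, but that theorem treats only binary joins; the extension is routine via Theorem~\ref{thm:comparisons}, but should be stated. The paper avoids this by instead writing $\sigma$ as a \emph{union} $\bigcup\{\lambda_{\kappa_1}^N\cap\rho_{\kappa_2}^N:\kappa_i\in\Xi_i\}$ over cofinal sets $\Xi_i$ of successor cardinals, each piece having rank at most~$2$ by Part~\ref{it:ranks1}; this is slightly cleaner since the union is visibly~$\sigma$ without any lattice-theoretic argument.
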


\begin{proof}
\pfitem{\ref{it:ranks1}}
We first assume that neither $\ze_1$ nor $\ze_2$ is an uncountable limit cardinal.  
One may confirm by direct inspection that under this assumption, the parameters associated to ${\si=\lam_{\ze_1}^N\cap\rho_{\ze_2}^N}$ satisfy precisely one of the listed groups of constraints in \ref{it:ranks1a}--\ref{it:ranks1c}.
It is therefore sufficient to verify that the congruences in each group have the rank as stated.
In fact, part \ref{it:ranks1a} is clear, as $\si=\Delta_{\M}$ is the least congruence, and hence is generated by $\emptyset$, while part \ref{it:ranks1b} follows from Theorem \ref{thm:pc}, so we just consider the remaining case.

\pfitem{\ref{it:ranks1c} \ref{it:ranks1ci}}  
Theorem \ref{thm:pc} says that $\si=\lam_{\ze_1}^{\S_2}\cap\rho_{\ze_2}^{\S_2}$ is not principal.  However, we have $\sigma=\sigma_1\cup\sigma_2$, where $\sigma_1=\lambda_{\zeta_1}^2\cap \rho_{\zeta_2}^2$ and $\sigma_2=\lambda_1^{\S_2}\cap \rho_1^{\S_2}$ are both principal.  It follows that $\crank(\sigma)=2$.

\pfitem{\ref{it:ranks1c} \ref{it:ranks1cii}}  
The proof is the same as the previous case, but with $\sigma_1=\lambda_{\zeta_1}^n\cap \rho_{\zeta_2}^n$ and~${\sigma_2=\lambda_{\aleph_0}^N\cap \rho_{\aleph_0}^N}$.

\pfitem{\ref{it:ranks2}}
We now consider the case in which at least one of $\ze_1,\ze_2$ is an uncountable limit cardinal; by symmetry we may assume that $\ze_1$ is.  The proof splits into two parts; first showing that the stated value of $\crank(\si)$ is a lower bound, and then an upper bound.

\pfitem{($\geq$)}  
First note that
$\si = \bigcup \bigset{ \lam_\xi^N\cap\rho_{\ze_2}^N}{ \xi\in[\aleph_0,\ze_1)}$.
Since the $\lam_\xi^N\cap\rho_{\ze_2}^N$ form a non-decreasing chain of proper subcongruences of $\si$ (by Theorems \ref{thm-main} and \ref{thm:comparisons}), Lemma \ref{lem:cof} says that $\crank(\si)\geq\cof[\aleph_0,\ze_1)=\cof(\ze_1)$.

If $\ze_2$ is also an uncountable limit cardinal, then the dual of the previous argument gives $\crank(\si)\geq\cof(\ze_2)$; otherwise ${\crank(\si)\geq\cof(\ze_1)\geq\aleph_0\geq\cof(\ze_2)}$.  Thus, in either case, we have ${\crank(\si)\geq{\max}\big({\cof}(\ze_1),\cof(\ze_2)\big)}$.

\pfitem{($\leq$)}  
Let $\Xi_1$ be a cofinal subset of $[\aleph_0,\ze_1)$ consisting entirely of successor cardinals and having size~$\cof(\ze_1)$.  
Define $\Xi_2\sub[\aleph_0,\ze_2)$ analogously if $\ze_2$ is also an uncountable limit cardinal;
otherwise let $\Xi_2=\{\ze_2\}$.  Then $\si = \bigcup\bigset{\lam_{\ka_1}^N\cap\rho_{\ka_2}^N}{\ka_i\in\Xi_i}$, with each $\lam_{\ka_1}^N\cap\rho_{\ka_2}^N$ of rank at most $2$ by part~\ref{it:ranks1}.  It follows that
$\crank(\si) \leq 2\cdot|\Xi_1|\cdot|\Xi_2| \leq 2\cdot\cof(\ze_1)\cdot\cof(\ze_2) = {\max}\big({\cof}(\ze_1),\cof(\ze_2)\big)$.
\end{proof}

We now work towards the corresponding result for type \ref{CT2} congruences.  As the statement is even more involved than for the \ref{CT1} congruences, it will be convenient to first identify the cases that need to be considered, and we do this in the next lemma.  For the proof, and for later use, note that if $\si$ is a congruence of type \ref{CT2} with $\eta(\si)=\aleph_0$, then from $\xi_k<\dots<\xi_1$ and $\xi_1,\dots,\xi_k\in\{1\}\cup [\aleph_0,\eta]$, it follows that $k\leq 2$, and that $\si$ has one of the forms
\begin{equation}\label{eq:aleph_0}
\si = (\lam_{\ze_1}^{\aleph_0}\cap\rho_{\ze_2}^{\aleph_0})\cup\mu_{\aleph_0}^{\eta_1}\cup\mu_1^{|X|^+} 
\quad \text{or}\quad 
\si = (\lam_{\ze_1}^{\aleph_0}\cap\rho_{\ze_2}^{\aleph_0})\cup\mu_{\aleph_0}^{|X|^+}
\quad \text{or}\quad 
\si = (\lam_{\ze_1}^{\aleph_0}\cap\rho_{\ze_2}^{\aleph_0})\cup\mu_1^{|X|^+}.
\end{equation}

\begin{lemma}\label{lem:CT2ranks}
The parameters associated to a congruence $\si=(\lambda_{\zeta_1}^\eta\cap\rho_{\zeta_2}^\eta)\cup\mu_{\xi_1}^{\eta_1}\cup\dots\cup\mu_{\xi_k}^{\eta_k}$ of type~\ref{CT2} satisfy precisely one of the following three conditions:
\begin{enumerate}[label=\textup{(\Roman*)},leftmargin=7mm]

\item
\label{it:I}
at least one of the following two conditions holds:
\begin{enumerate}[label=\textup{(I.\arabic*)},leftmargin=11mm]
\item
\label{it:Ii}
at least one of $\eta,\zeta_1,\zeta_2,\xi_1,\dots,\xi_k,\eta_1,\dots,\eta_k$
is an uncountable limit cardinal, or
\item
\label{it:Iii}
$k=1$, $\eta=\aleph_0$ and $\xi_1=1$ \emph{(}i.e., $\sigma=\lambda_{\zeta_1}^{\aleph_0}\cap \rho_{\zeta_2}^{\aleph_0}$\emph{)}, 
\een

\item
\label{it:II}
$\eta=\xi_1=\aleph_0$ and none of $\ze_1,\ze_2,\eta_1$ is an uncountable limit cardinal, or

\item
\label{it:III}
none of $\zeta_1,\zeta_2,\xi_1,\dots,\xi_k,\eta_1,\dots,\eta_k$ is an uncountable limit cardinal, and $\eta$ is not a limit cardinal.  
\end{enumerate}
\end{lemma}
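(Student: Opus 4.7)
The plan is to verify that conditions \ref{it:I}, \ref{it:II}, \ref{it:III} are mutually exclusive and jointly exhaustive; this is essentially a bookkeeping exercise making repeated use of the structural constraints $\xi_k<\cdots<\xi_1\leq\eta<\eta_1<\cdots<\eta_k=|X|^+$ from Theorem \ref{thm-main}, together with the fact that $\eta\geq\aleph_0$ for any type \ref{CT2} congruence.

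First I would dispose of mutual exclusion. Condition \ref{it:II} forces $\eta=\aleph_0$, which is a (countable) limit cardinal, so \ref{it:II} and \ref{it:III} cannot both hold. Similarly, \ref{it:I} and \ref{it:III} are incompatible: \ref{it:Ii} supplies some uncountable limit cardinal among the parameters, in conflict with the first half of \ref{it:III}, while \ref{it:Iii} forces $\eta=\aleph_0$, in conflict with the second half of \ref{it:III}. For \ref{it:I} versus \ref{it:II}: if \ref{it:II} holds, then no parameter in the list $\zeta_1,\zeta_2,\eta,\xi_1,\eta_1$ is an uncountable limit (so \ref{it:Ii} fails for these entries), and any further $\xi_i$ or $\eta_i$ would come from $k\geq 2$; however, $\xi_1=\aleph_0=\eta$ forces $k\leq 2$, and the only possible additional values are $\xi_2=1$ and $\eta_2=|X|^+$, neither of which is an uncountable limit. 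Finally, $\xi_1=\aleph_0\neq 1$ kills \ref{it:Iii}.

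The interesting direction is exhaustiveness: assuming \ref{it:I} and \ref{it:II} both fail, I would show that \ref{it:III} holds. Failure of \ref{it:Ii} immediately gives the first half of \ref{it:III}, so the task reduces to ruling out $\eta$ being a limit cardinal. Suppose, aiming for a contradiction, that $\eta$ is a limit; since no parameter is an uncountable limit, $\eta$ must equal $\aleph_0$. The constraint $\xi_1\leq\eta=\aleph_0$ together with $\xi_1\in\{1\}\cup[\aleph_0,\eta]$ pins $\xi_1$ to one of $1$ or $\aleph_0$. If $\xi_1=1$, the strictly decreasing chain $\xi_k<\cdots<\xi_1=1$ of positive cardinals forces $k=1$, whence $\sigma=(\lambda_{\zeta_1}^{\aleph_0}\cap\rho_{\zeta_2}^{\aleph_0})\cup\mu_1^{|X|^+}=\lambda_{\zeta_1}^{\aleph_0}\cap\rho_{\zeta_2}^{\aleph_0}$ and \ref{it:Iii} holds, contradicting the failure of \ref{it:I}. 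If instead $\xi_1=\aleph_0$, then $\eta=\xi_1=\aleph_0$, and since none of $\zeta_1,\zeta_2,\eta_1$ is an uncountable limit, \ref{it:II} holds, again a contradiction.

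The only real subtlety is the corner case $\eta=\aleph_0$, where the descending chain constraint $\xi_k<\cdots<\xi_1\leq\aleph_0$ forces $k\leq 2$ and restricts $\sigma$ to one of the three explicit shapes displayed in \eqref{eq:aleph_0}; it is exactly this tight structure that allows the split $\xi_1\in\{1,\aleph_0\}$ to trigger either \ref{it:Iii} or \ref{it:II}, closing the argument. Beyond this bookkeeping I expect no difficulty.
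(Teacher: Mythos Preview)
Your proof is correct and follows essentially the same approach as the paper's own proof: both arguments handle mutual exclusion by straightforward inspection (with the only nontrivial case, \ref{it:Ii} versus \ref{it:II}, disposed of via the observation that $\eta=\xi_1=\aleph_0$ forces $k\leq2$ with $\xi_2=1$ and $\eta_2=|X|^+$), and both handle exhaustiveness by reducing to $\eta=\aleph_0$ and splitting on whether $\xi_1=1$ or $\xi_1=\aleph_0$. Your exposition is in fact slightly more explicit than the paper's; the paper phrases exhaustiveness as ``assume \ref{it:Ii}, \ref{it:II}, \ref{it:III} fail and derive \ref{it:Iii}'' whereas you take the equivalent contrapositive ``assume \ref{it:I}, \ref{it:II} fail and derive \ref{it:III}''.
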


\pf
First we show that at least one of \ref{it:I}--\ref{it:III} holds.  To do so, suppose \ref{it:Ii}, \ref{it:II} and~\ref{it:III} do not hold.  Combining the negations of \ref{it:Ii} and~\ref{it:III}, we have $\eta=\aleph_0$ and so $\si$ has one of the forms in~\eqref{eq:aleph_0}.  Combining this with the negations of \ref{it:Ii} and \ref{it:II}, it follows that $\xi_1=1$ and so~$\si$ is of the third form listed in \eqref{eq:aleph_0}, meaning that \ref{it:Iii} holds.

The following pairs are clearly mutually exclusive: \ref{it:Ii} and~\ref{it:III}; \ref{it:Iii} and \ref{it:II}; \ref{it:Iii} and~\ref{it:III}; \ref{it:II} and \ref{it:III}.  That \ref{it:Ii} and \ref{it:II} are also mutually exclusive follows from \eqref{eq:aleph_0}.
\epf

Note in passing that while conditions \ref{it:I}, \ref{it:II} and \ref{it:III} above are mutually exclusive, the two sub-condi\-tions~\ref{it:Ii} and \ref{it:Iii} are not.
Here now is the result giving ranks of type \ref{CT2} congruences, with the subdivisions taken from the conditions in Lemma \ref{lem:CT2ranks}.

\begin{thm}
\label{thm:ranksCT2}
Let ${\sigma=(\lambda_{\zeta_1}^\eta\cap\rho_{\zeta_2}^\eta)\cup\mu_{\xi_1}^{\eta_1}\cup\dots\cup\mu_{\xi_k}^{\eta_k}}$ be a congruence on~$\M$ of type \ref{CT2}, where $\M$ denotes either $\P_X$ or $\PB_X$ for some infinite set $X$.
\begin{enumerate}[label=\textup{(\Roman*)},leftmargin=7mm]

\item
\label{it:CT2ranksI}
If at least one of the following two conditions holds:
\begin{enumerate}[label=\textup{(I.\arabic*)},leftmargin=11mm]
\item
\label{it:CT2ranksIi}
at least one of $\eta,\zeta_1,\zeta_2,\xi_1,\dots,\xi_k,\eta_1,\dots,\eta_k$
is an uncountable limit cardinal, or
\item
\label{it:CT2ranksIii}
$k=1$, $\eta=\aleph_0$ and $\xi_1=1$ \emph{(}i.e., $\sigma=\lambda_{\zeta_1}^{\aleph_0}\cap \rho_{\zeta_2}^{\aleph_0}$\emph{)},
\een
then $\si$ is not finitely generated and
\[
\crank(\sigma)={\max}\big({\cof(\eta)},\cof(\zeta_1),\cof(\zeta_2),\cof(\xi_1),\dots,\cof(\xi_k),\cof(\eta_1),\dots,\cof(\eta_k)\big).
\]

\item
\label{it:CT2ranksII}
If $\eta=\xi_1=\aleph_0$, and if none of $\ze_1,\ze_2,\eta_1$ is an uncountable limit cardinal, then $\si$ is finitely generated and
\[
\crank(\si) = \begin{cases}
1 &\text{if $\ze_1=\ze_2=\aleph_0$}\\
2 &\text{otherwise.}
\end{cases}
\]

\item
\label{it:CT2ranksIII}
If none of $\zeta_1,\zeta_2,\xi_1,\dots,\xi_k,\eta_1,\dots,\eta_k$ is an uncountable limit cardinal, and if $\eta$ is not a limit cardinal, then $\sigma$ is finitely generated and its rank is as follows:
\begin{enumerate}[label=\textup{(\alph*)}, leftmargin=9mm]
\item
\label{it:CT2ranksIIIa}
$k-1$ when $k\geq 2$, $\xi_k=1$ and $\xi_1=\zeta_1=\zeta_2=\eta$,
\item
\label{it:CT2ranksIIIb}
$k$ when one of the following holds:
\begin{enumerate}[label=\textup{(\roman*)}, leftmargin=9mm]
\item
\label{it:CT2ranksIIIbi}
$k=1$ and $\xi_1=1$ \emph{(}including the case in which $\si=\nabla_{\M}$\emph{)},
\item
\label{it:CT2ranksIIIbii}
$\xi_k\neq 1$ and $\xi_1=\zeta_1=\zeta_2=\eta$,
\item
\label{it:CT2ranksIIIbiii}
$k\geq2$, $\xi_k=1$ and at least one of $\xi_1,\zeta_1,\zeta_2$ does not equal $\eta$,
\end{enumerate}
\item
\label{it:CT2ranksIIIc}
$k+1$ when $\xi_k\neq 1$ and at least one of $\xi_1,\zeta_1,\zeta_2$ does not equal $\eta$.
\end{enumerate}
\end{enumerate}
\end{thm}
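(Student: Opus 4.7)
The plan is to treat the three parts of the statement in turn, and within each to establish the upper bound on $\crank(\sigma)$ by exhibiting explicit generators, and the lower bound by showing that no smaller set suffices. The main technical tools will be Theorem \ref{thm:pc} (describing principal congruences), Theorem \ref{thm:mj} (computing joins), Theorem \ref{thm:comparisons} (testing containments), and Lemma \ref{lem:cof} (for cofinality-based lower bounds). Throughout I will use the fact that $\crank(\sigma)$ equals the least $r$ such that $\sigma$ can be written as a join of $r$ principal congruences, so that the problem reduces to counting principal components.

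For part \ref{it:CT2ranksI}, I would first handle sub-case \ref{it:CT2ranksIii}: here $\sigma=\lam_{\ze_1}^{\aleph_0}\cap\rho_{\ze_2}^{\aleph_0}=\bigcup_{n\in[1,\aleph_0)}\bigl(\lam_{\ze_1}^n\cap\rho_{\ze_2}^n\bigr)$, each summand being a proper type-\ref{CT1} subcongruence of finite rank by Theorem \ref{thm:ranksCT1}, which gives $\crank(\sigma)=\aleph_0=\cof(\aleph_0)$ via Lemma \ref{lem:cof} and a covering argument. For sub-case \ref{it:CT2ranksIi}, for the lower bound I would single out each uncountable limit parameter $\pi\in\{\eta,\ze_1,\ze_2,\xi_1,\dots,\xi_k,\eta_1,\dots,\eta_k\}$ and express $\sigma$ as a union of proper subcongruences indexed by a cofinal subset of $[0,\pi)$ obtained by decreasing the value of $\pi$, apply Lemma \ref{lem:cof}, and take the maximum. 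For the upper bound I would choose cofinal sets $C_\pi$ of successor cardinals for each limit $\pi$, cover $\sigma$ by subcongruences indexed by the product $\prod_\pi C_\pi$, note that each such subcongruence falls into part \ref{it:CT2ranksIII} and is thus finitely generated, and bound the total rank by the cardinality of the product, which is the desired maximum of cofinalities.

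For part \ref{it:CT2ranksII}, use \eqref{eq:aleph_0} to reduce to the three explicit forms of $\sigma$. When $\ze_1=\ze_2=\aleph_0$, a single principal from Theorem \ref{thm:pc} \ref{it:pc6} suffices: take $(\alpha,\beta)$ with $\rank(\alpha)=\kappa$ for the unique $\kappa$ with $\kappa^+=\eta_1$ (which exists because $\eta_1$ is not a limit cardinal) and $0<|\alpha\sd\beta|<\aleph_0$, so that Lemma \ref{lem:mu} yields $\cg\al\be=\sigma$. When $\max(\ze_1,\ze_2)>\aleph_0$, an additional principal from Theorem \ref{thm:pc} \ref{it:pc3} (or dually) supplies the $\lambda$--$\rho$ content, and an inspection of Theorem \ref{thm:pc} confirms that no single principal can simultaneously deliver the $\mu$-layer and the uncountable $\lambda$- or $\rho$-content. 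For part \ref{it:CT2ranksIII}, each non-trivial layer $\mu_{\xi_i}^{\eta_i}$ (with $\xi_i\neq 1$) is realised as a principal congruence of the form given by Theorem \ref{thm:pc} \ref{it:pc6}, by taking the predecessor of $\eta_i$ as the rank of $\alpha_i$ and the predecessor of $\xi_i$ as $|\alpha_i\sd\beta_i|$ (finite if $\xi_i=\aleph_0$); an extra principal from \ref{it:pc3}--\ref{it:pc5} is needed for the $\lam_{\ze_1}^\eta\cap\rho_{\ze_2}^\eta$ component precisely when at least one of $\xi_1,\ze_1,\ze_2$ is not equal to $\eta$, i.e.\ when the top $\mu$ layer does not already absorb the required $\lambda,\rho$ content via Lemma \ref{lem:mu}. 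Counting the non-trivial layers (with $\xi_k=1$ contributing none) plus this possible extra generator yields the claimed values $k-1$, $k$, and $k+1$.

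The main obstacle will be the lower bound in part \ref{it:CT2ranksIII}: one must show that fewer than the claimed number of principals cannot be joined to produce $\sigma$. The key observation is that, by Theorem \ref{thm:pc}, every principal congruence has a reversal $\Psi(\cg\alpha\beta)$ of very restricted shape (essentially constant outside a single jump), so that by Theorem \ref{thm:mj} \ref{it:mj3} the number of principals needed is at least the number of distinct jumps of $\Psi(\sigma)$; the possible extra generator for the $\lam\cap\rho$ piece must then be accounted for separately by tracking the interaction between $\ze_1,\ze_2$ and the topmost $\mu$ layer via Theorem \ref{thm:comparisons}.
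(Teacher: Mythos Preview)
Your proposal is essentially correct and follows the same overall architecture as the paper's proof: upper bounds via explicit principal decompositions from Theorem~\ref{thm:pc}, and lower bounds via Lemma~\ref{lem:cof} in the infinite case. Two points of genuine divergence are worth noting.

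\textbf{Part \ref{it:CT2ranksI} upper bound.} You propose covering $\sigma$ by subcongruences indexed by the full product $\prod_\pi C_\pi$ of cofinal sets. The difficulty is that an arbitrary tuple from this product need not satisfy the ordering constraints $\xi_k'<\dots<\xi_1'\leq\eta'<\eta_1'<\dots$ required by Theorem~\ref{thm-main}; for instance, if both $\xi_1$ and $\eta$ are limits with $\xi_1<\eta$, a small $\eta'\in C_\eta$ may fall below a large $\xi_1'\in C_{\xi_1}$. The paper sidesteps this by first decomposing $\sigma=\si_0\cup\si_1\cup\dots\cup\si_k$ with $\si_0=\lambda_{\zeta_1}^\eta\cap\rho_{\zeta_2}^\eta$ and $\si_i=\mu_{\xi_i}^{\eta_i}$, and then bounding each $\crank(\si_i)$ separately by a product of cofinal sets; since each piece has at most three parameters with very mild constraints, the bookkeeping is trivial. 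Your approach can be repaired in the same way, but as stated it needs this extra step.

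\textbf{Part \ref{it:CT2ranksIII} lower bound.} Your argument via the shape of $\Psi$ is different from the paper's and does work: since every type-\ref{CT2} principal has $\Psi$ taking at most one value other than~$1$, and type-\ref{CT1} principals leave $\Psi$ unchanged under join by Theorem~\ref{thm:mj}\,\ref{it:mj2}, the non-trivial values $\{\xi_i:\xi_i\neq1\}$ in $\im\Psi(\sigma)$ each require a separate principal of type~\ref{it:pc6}, and then the $\eta,\zeta_1,\zeta_2$ content forces one more precisely when not all of $\xi_1,\zeta_1,\zeta_2$ equal~$\eta$. The paper instead sets $\ch\si_i=\bigcup_{j\neq i}\si_j$, verifies via \eqref{eq:ch_si} exactly which $\ch\si_i$ are proper subcongruences of~$\sigma$, and observes that any generating set must meet each nonempty $\si_i\setminus\ch\si_i$ (these sets being pairwise disjoint). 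This is more elementary --- it avoids analysing the $\Psi$-shape of principals and the mixed type-\ref{CT1}/\ref{CT2} case --- and delivers all four sub-cases of \ref{it:CT2ranksIII} by a single uniform count, whereas your approach requires separately tracking which principal supplies $\eta(\sigma)$ and which supplies $\zeta_i(\sigma)$.
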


\pf
Throughout the proof, we write
\[
\si_0 = \lambda_{\zeta_1}^\eta\cap\rho_{\zeta_2}^\eta \AND \si_i = \mu_{\xi_i}^{\eta_i} \quad\text{for each $1\leq i\leq k$.}
\]
Note that $\si_0,\si_1,\ldots,\si_k$ are all congruences, but they need not be relatively incomparable in general; for example, if $\xi_k=1$ then $\si_k=\De_{\M}$ is contained in each of the other $\si_i$.  We also write~$\ch\si_i=\bigcup_{j\not=i}\si_j$ for each $0\leq i\leq k$.  These are also congruences; specifically, we have
\begin{equation}\label{eq:ch_si}
\ch\si_i = \begin{cases}
\De_{\M}  &\quad\text{if $i=0$ and $\xi_1=1$} \\[2truemm]
(\lam_{\xi_1}^{\xi_1}\cap\rho_{\xi_1}^{\xi_1}) \cup\mu_{\xi_1}^{\eta_1}\cup\cdots\cup\mu_{\xi_k}^{\eta_k}  &\quad\text{if $i=0$ and $\xi_1\not=1$} \\[2truemm]
(\lam_{\ze_1}^\eta\cap\rho_{\ze_2}^\eta)  \cup\mu_{\xi_1}^{\eta_1}\cup\cdots\cup \mu_{\xi_{i-1}}^{\eta_{i-1}}\cup\mu_{\xi_{i+1}}^{\eta_{i+1}}\cup\cdots\cup\mu_{\xi_k}^{\eta_k}  &\quad\text{if $1\leq i\leq k-1$,} \\[2truemm]
(\lam_{\ze_1}^\eta\cap\rho_{\ze_2}^\eta)  \cup\mu_{\xi_1}^{\eta_1}\cup\cdots\cup \mu_{\xi_{k-1}}^{\eta_{k-1}}\cup\mu_1^{\eta_k} &\quad\text{if $i=k$.}
\end{cases}
\end{equation}
Note that we used Lemma \ref{lem:mu} in the case of $i=0$ and $\xi_1\not=1$.
We begin with the second part.

\pfitem{\ref{it:CT2ranksII}}
Suppose $\eta=\xi_1=\aleph_0$ and none of $\ze_1,\ze_2,\eta_1$ is an uncountable limit cardinal.
Here $\si$ has one of the first two forms in \eqref{eq:aleph_0}.  In fact, since $\mu_1=\De_{\M}$, these may both be simplified to $\si = (\lam_{\ze_1}^{\aleph_0}\cap\rho_{\ze_2}^{\aleph_0})\cup\mu_{\aleph_0}^{\eta_1}$.
If $\ze_1=\ze_2=\aleph_0$, then $\si$ is principal by Theorem \ref{thm:pc} \ref{it:pc6}.  
Suppose now that at least one of $\ze_1,\ze_2$ is uncountable.  By again consulting Theorem \ref{thm:pc} we see that $\si$ is not principal.  However, by Theorem \ref{thm:mj} \ref{it:mj2} we have $\si=\tau_1\vee\tau_2$ where $\tau_1 = (\lam_{\aleph_0}^{\aleph_0}\cap\rho_{\aleph_0}^{\aleph_0})\cup\mu_{\aleph_0}^{\eta_1}$ and $\tau_2=\lam_{\ze_1}^1\cap\rho_{\ze_2}^1$.  Since $\tau_1$ and $\tau_2$ are both principal by Theorem \ref{thm:pc}, it follows that $\crank(\si)=2$.

\pfitem{\ref{it:CT2ranksIII}}
Suppose next that none of $\zeta_1,\zeta_2,\xi_1,\dots,\xi_k,\eta_1,\dots,\eta_k$ is an uncountable limit cardinal, and that $\eta$ is not a limit cardinal.
First, one may check that the parameters associated to $\si$ satisfy exactly one of the stated sets of constraints.
The proof splits into two parts; first showing that the stated value of $\crank(\si)$ is a lower bound, and then an upper bound.

\pfitem{($\geq$)}  For any $0\leq i\leq k$, we have $\si=\si_i\cup\ch\si_i$, with $\si_i$ and $\ch\si_i$ both congruences.  In particular, if~$\ch\si_i$ is properly contained in $\si$ for some $i$, then any generating set for $\si$ must contain at least one element of $\si_i\sm\ch\si_i$. 
Noting that  $\si_i\sm\ch\si_i$ and $\si_j\sm\ch\si_j$
are disjoint when $i\neq j$, 
 to show that the stated value of $\crank(\si)$ is a lower bound, it suffices to show that the set of all such $i$ has size at least this stated value.  From \eqref{eq:ch_si}, one may easily check that:
\bit
\item in case \ref{it:CT2ranksIIIa}, $\ch\si_1,\ldots,\ch\si_{k-1}$ are all properly contained in $\si$,
\item in case \ref{it:CT2ranksIIIb} \ref{it:CT2ranksIIIbi}, $\ch\si_0$ is properly contained in $\si$,
\item in case \ref{it:CT2ranksIIIb} \ref{it:CT2ranksIIIbii}, $\ch\si_1,\ldots,\ch\si_k$ are all properly contained in $\si$,
\item in case \ref{it:CT2ranksIIIb} \ref{it:CT2ranksIIIbiii}, $\ch\si_0,\ldots,\ch\si_{k-1}$ are all properly contained in $\si$,
\item in case \ref{it:CT2ranksIIIc}, $\ch\si_0,\ldots,\ch\si_k$ are all properly contained in $\si$.
\eit
In each case, this leads to the desired lower bound.

\pfitem{$(\leq$)}  Since $\si=\si_0\cup\si_1\cup\cdots\cup\si_k$, and since each $\si_i$ is principal by Theorem \ref{thm:pc}, it follows that ${\crank(\si)\leq k+1}$.  This deals with case \ref{it:CT2ranksIIIc}.  If $\xi_k=1$, then $\si_k=\De_{\M}$, and so ${\si=\si_0\cup\si_1\cup\cdots\cup\si_{k-1}}$, giving $\crank(\si)\leq k$ in this case; this deals with \ref{it:CT2ranksIIIb} \ref{it:CT2ranksIIIbi} and \ref{it:CT2ranksIIIb} \ref{it:CT2ranksIIIbiii}.  If $\xi_1=\ze_1=\ze_2=\eta$, then $\si_0=\lam_{\xi_1}^{\xi_1}\cap\rho_{\xi_1}^{\xi_1}\sub\mu_{\xi_1}^{\xi_1}\sub\mu_{\xi_1}^{\eta_1}=\si_1$, using Lemma \ref{la32} \ref{it:32ix} for the first inclusion, so that $\si=\si_1\cup\cdots\cup\si_k$, giving $\crank(\si)\leq k$ in this case; this deals with~\ref{it:CT2ranksIIIb}~\ref{it:CT2ranksIIIbii}.  Combining the previous two sentences, we have $\si=\si_1\cup\cdots\cup\si_{k-1}$ in case~\ref{it:CT2ranksIIIa}, giving $\crank(\si)\leq k-1$ in this case.

\pfitem{\ref{it:CT2ranksI}}
Suppose first that \ref{it:CT2ranksIii} holds but not \ref{it:CT2ranksIi}, so that $\si=\lam_{\ze_1}^{\aleph_0}\cap\rho_{\ze_2}^{\aleph_0}$ with neither $\ze_1$ nor $\ze_2$ an uncountable limit cardinal.  Here we must show that $\crank(\si)=\aleph_0$.  Now,
\[
\si = \bigcup_{n\in[3,\aleph_0)}(\lambda_{\zeta_1}^n\cap \rho_{\zeta_2}^n),
\]
so Lemma \ref{lem:cof} gives $\crank(\si)\geq\cof[3,\aleph_0)=\aleph_0$.  On the other hand, Theorem~\ref{thm:pc}~\ref{it:pc4} says that each $\lambda_{\zeta_1}^n\cap \rho_{\zeta_2}^n$ is principal, and it follows that $\crank(\si)\leq\aleph_0$.

For the rest of the proof we assume that condition \ref{it:CT2ranksIi} holds.  As in Part \ref{it:CT2ranksIII}, we separately establish that the stated value is both a lower and an upper bound for $\crank(\sigma)$.

\pfitem{($\geq$)}
We must show that $\crank(\si)\geq\cof(\ka)$ for each $\kappa\in\{\eta,\zeta_1,\zeta_2,\xi_1,\dots,\xi_k,\eta_1,\dots,\eta_k\}$.  In fact it suffices to do so for every such $\ka$ that happens to be an uncountable limit cardinal; 
indeed, at least one such $\ka$ exists by assumption, and for any $\ka'\in\{\eta,\zeta_1,\zeta_2,\xi_1,\dots,\xi_k,\eta_1,\dots,\eta_k\}$
that is not an uncountable limit cardinal  we then have $\crank(\sigma)\geq\cof(\kappa)\geq\aleph_0\geq\cof(\kappa')$.

\pfcase{1}  First suppose some $\xi_i$ is an uncountable limit cardinal.  For the possibility that $i=k$, it will be convenient (only here) to define $\xi_{k+1}=\aleph_0$.  Since $\xi_{i+1}<\xi_i$, and since $\xi_i$ is a limit cardinal, the interval $[\xi_{i+1}^+,\xi_i)$ is non-empty.  For each $\ka\in[\xi_{i+1}^+,\xi_i)$, let 
\[
\si^{(\ka)} = \ch\si_i \cup \mu_{\ka}^{\eta_i} 
= (\lam_{\ze_1}^\eta\cap\rho_{\ze_2}^\eta)\cup\mu_{\xi_1}^{\eta_1}\cup\cdots\cup\mu_{\xi_{i-1}}^{\eta_{i-1}} \cup \mu_\ka^{\eta_i} \cup \mu_{\xi_{i+1}}^{\eta_{i+1}}\cup\cdots \cup\mu_{\xi_k}^{\eta_k}.
\]
All $\sigma^{(\ka)}$ are congruences by Theorem \ref{thm-main}, and they form a non-descending chain by Theorem~\ref{thm:comparisons}.  Since clearly $\si=\bigcup_{\ka\in[\xi_{i+1}^+,\xi_i)}\si^{(\ka)}$, Lemma \ref{lem:cof} then gives $\crank(\si)\geq\cof[\xi_{i+1}^+,\xi_i)=\cof(\xi_i)$.

\pfcase{2}  If some $\eta_i$ is an uncountable limit cardinal, then we use the same argument as the previous case, but with $\si^{(\ka)} = \ch\si_i \cup \mu_{\xi_i}^\ka$ for each $\ka\in[\eta_{i-1}^+,\eta_i)$, keeping in mind $\eta_0=\eta$.

\pfcase{3}  Next suppose $\eta$ is an uncountable limit cardinal.  If $\eta=\xi_1$, then $\crank(\si)\geq\cof(\eta)$ by Case 1, so suppose $\eta>\xi_1$.  Then we take $\si^{(\ka)} = \ch\si_0\cup(\lam_{\ze_1}^\ka\cap\rho_{\ze_2}^\ka)$ for each $\ka\in[\xi_1',\eta)$, where $\xi_1'=\max(\xi_1,\aleph_0)$.

\pfcase{4}  Finally suppose $\ze_1$ is an uncountable limit cardinal (the case of $\ze_2$ is dual).  If $\ze_1=\eta$, then $\crank(\si)\geq\cof(\ze_1)$ by Case 3, so suppose $\ze_1>\eta$.  Then we take $\si^{(\ka)} = \ch\si_0\cup(\lam_\ka^\eta\cap\rho_{\ze_2}^\eta)$ for each $\ka\in[\eta,\ze_1)$.

\newpage

\pfitem{($\leq$)}
Since at least one of $\eta,\zeta_1,\zeta_2,\xi_1,\dots,\xi_k,\eta_1,\dots,\eta_k$ is a limit cardinal, it follows that the maximum of their cofinalities is infinite and is equal to the sum of all their cofinalities.  Hence, since $\crank(\si)\leq\sum_{i=0}^k\crank(\si_i)$, it is sufficient to show that
\begin{align}
\label{eq:cof1}
\crank(\si_0) &\leq
\cof(\eta)+\cof(\zeta_1)+\cof(\zeta_2),
\\ 
\label{eq:cof2}
\crank(\si_i) &\leq
\cof(\xi_i)+\cof(\eta_i) \qquad\qquad\qquad \text{for } i=1,\dots,k.
\end{align}

We begin with \eqref{eq:cof1}.  If each $\ka\in\{\eta,\ze_1,\ze_2\}$ is a successor cardinal, then $\crank(\si_0)=1$ by Theorem \ref{thm:pc} \ref{it:pc5}, and \eqref{eq:cof1} follows trivially; so we assume that at least one such $\ka$ is a limit cardinal, noting that $\cof(\ka)\geq\aleph_0$ for this $\ka$.  We now define three sets $\Xi_\eta$, $\Xi_{\zeta_1}$ and $\Xi_{\zeta_2}$ as follows.  If $\eta=\aleph_0$ we set $\Xi_\eta=[0,\aleph_0)$, if~$\eta$ is a successor cardinal we set $\Xi_\eta=\{\eta\}$, and otherwise we let~$\Xi_\eta$ be a cofinal subset of $[\aleph_0,\eta)$ of cardinality $\cof(\eta)$ consisting of successor cardinals.  Similarly, for~$i=1,2$, we let $\Xi_{\zeta_i}$ be a cofinal subset of $[\aleph_0,\zeta_i)$ of cardinality $\cof(\zeta_i)$ consisting of successor cardinals if $\zeta_i$ is an uncountable limit cardinal, and $\Xi_{\zeta_i}=\{\zeta_i\}$ otherwise (including the case of~$\ze_i=\aleph_0$).  Note that $|\Xi_\kappa|\leq\cof(\kappa)$ for each $\kappa\in\{\eta,\zeta_1,\zeta_2\}$.  Now,
\[
\si_0 = \lambda_{\zeta_1}^\eta\cap\rho_{\zeta_2}^\eta =\bigcup \bigset{\lambda_{\zeta_1'}^{\eta'}\cap\rho_{\zeta_2'}^{\eta'}}{\eta'\in\Xi_\eta,\ \zeta_1'\in\Xi_{\zeta_1},\ \zeta_2'\in\Xi_{\zeta_2},\ \zeta_1',\zeta_2'\geq\eta'},
\]
with each congruence $\lambda_{\zeta_1'}^{\eta'}\cap\rho_{\zeta_2'}^{\eta'}$ of rank at most~$2$ by Theorem \ref{thm:pc} \ref{it:pc5} or Theorem \ref{thm:ranksCT1} \ref{it:ranks1}.  Thus,
\begin{align*}
\crank(\si_0) &\leq \sum \bigset{{\crank}(\lambda_{\zeta_1'}^{\eta'}\cap\rho_{\zeta_2'}^{\eta'})}{\eta'\in\Xi_\eta,\ \zeta_1'\in\Xi_{\zeta_1},\ \zeta_2'\in\Xi_{\zeta_2},\ \zeta_1',\zeta_2'\geq\eta'} \\
&\leq 2\cdot|\Xi_\eta|\cdot |\Xi_{\zeta_1}|\cdot |\Xi_{\zeta_2}| \leq 2\cdot\cof(\eta)\cdot\cof(\zeta_1)\cdot\cof(\zeta_2) =  \cof(\eta)+\cof(\zeta_1)+\cof(\zeta_2),
\end{align*}
as required.

The proof of \eqref{eq:cof2} is similar.  If $\xi_i=1$, then $\si_i=\De_{\M}$ and \eqref{eq:cof2} holds trivially, so we assume that $\xi_i$ is infinite, noting that $\eta_i>\eta\geq\xi_i\geq\aleph_0$.  Let $\Xi_{\xi_i}=\{\xi_i\}$ if $\xi_i$ is not an uncountable limit cardinal; otherwise, let $\Xi_{\xi_i}$ be a cofinal subset of $[\aleph_0,\xi_i)$ consisting of successor cardinals and having size $\cof(\xi_i)$.  Let $\Xi_{\eta_i}=\{\eta_i\}$ if $\eta_i$ is a successor cardinal; otherwise $\eta_i$ is an uncountable limit cardinal, and we let $\Xi_{\eta_i}$ be a cofinal subset of $[\xi_i^+,\eta_i)$ consisting of successor cardinals and having size $\cof(\eta_i)$.  Note that every element of $\Xi_{\xi_i}$ is less than every element of $\Xi_{\eta_i}$.   Now,
\[
\si_i = \mu_{\xi_i}^{\eta_i} = \bigcup\bigset{\mu_{\xi'}^{\eta'}}{\xi'\in\Xi_{\xi_i},\ \eta'\in\Xi_{\eta_i}} ,
\]
with each congruence $\mu_{\xi'}^{\eta'}$ principal by Theorem \ref{thm:pc} \ref{it:pc6}.  Thus,
\[
\crank(\si_i) \leq |\Xi_{\xi_i}|\cdot|\Xi_{\eta_i}| \leq \cof(\xi_i)\cdot\cof(\eta_i) \leq \cof(\xi_i)+\cof(\eta_i).
\]
This completes the proof of \eqref{eq:cof2}, and indeed of the entire theorem.
\epf

\section{Other diagram monoids and transformation monoids}
\label{sect:OM}

In this section we present the historically earlier results classifying the congruences on finite diagram monoids (Subsection \ref{subsect:FDM}) and on finite and infinite transformation monoids (Subsection~\ref{subsect:FTM}), within the conceptual and notational framework developed in this paper, and we compare the respective congruence lattices. We conclude by discussing some possible directions for further research (Subsections \ref{subsect:OTM} and \ref{subsect:ideals}).

\subsection{Finite diagram monoids}\label{subsect:FDM}

Congruences on finite partition and partial Brauer monoids were classified in \cite{EMRT2018}, alongside several other finite diagram monoids including Brauer and Temperley-Lieb monoids.
In the following theorem we provide a translation of \cite[Theorems~5.4 and~6.1]{EMRT2018} using the terminology of this article.
The salient points are that only congruences of type \ref{CT1} are present 
(plus the universal congruence of course), and that the dimensions of layers are cut down to just $2\times 2$ and singletons.
When~${X=\{1,\ldots,n\}}$ we write $\P_n$ for $\P_X$, and so on, and we note that $|\ol\al\sd\ol\be|,|\ul\al\sd\ul\be|<2n$ for all $\al,\be\in\P_n$.

\begin{thm}
\label{thm:PnBn}
Let $n\in\N$, $n\geq 2$, and let $\Mn$ stand for $\P_n$ or $\PB_n$.  The distinct congruences on $\Mn$ are precisely the universal congruence $\nabla_{\Mn}$, and the congruences $\lambda_{\zeta_1}^N\cap\rho_{\zeta_2}^N$ where
\bit
\item
$N\unlhd\S_q$ for some $1\leq q\leq n$,
\item
$\zeta_1,\zeta_2\in \{1,2n\}$ if $q\leq 2$,
\item
$\zeta_1=\zeta_2=2n$ if $q\geq 3$.
\eit
The lattice $\Cong(\Mn)$ is shown in Figure \ref{fig:CongMn}.  The $*$-congruences are those with $\ze_1=\ze_2$, and these are represented by white vertices in the figure.
\end{thm}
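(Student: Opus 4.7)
The proof is essentially a translation of \cite[Theorems 5.4 and 6.1]{EMRT2018} into our notation, and can alternatively be obtained by adapting the machinery of Sections \ref{sect:stage1}--\ref{sect:stage2} to the finite case. I outline the latter approach. The universal congruence $\nabla_{\Mn}$ is always present, so we concentrate on $\sigma \neq \nabla_{\Mn}$. Since all partitions in $\Mn$ have rank at most $n$, the parameter $\eta(\sigma)$ defined in Subsection \ref{subsect:eta} is automatically finite, so no congruences of type \ref{CT2} arise; every non-universal congruence must be of ``type \ref{CT1}''.

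First I would verify that the listed relations are congruences: this is immediate from Propositions \ref{prop:CT1} and \ref{prop:CT2}, whose proofs never invoke infiniteness of $X$. For the converse, given any non-universal $\sigma$, define the parameters $q = n(\sigma)$, $N(\sigma) \unlhd \S_q$, and $\zeta_1(\sigma), \zeta_2(\sigma)$ exactly as in Subsections \ref{subsect:eta}--\ref{subsect:finite_eta}. Then Lemma \ref{la335}, together with the supporting Lemmas \ref{la325}, \ref{la327}, and \ref{la331}--\ref{la334}, yields $\sigma = \lambda_{\zeta_1}^N \cap \rho_{\zeta_2}^N$. All of these lemmas are combinatorial in nature and carry across to finite $X$ unchanged.

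The essentially new---though minor---point is restricting $\zeta_1, \zeta_2$ to $\{1, 2n\}$. Since $|\ol\alpha \sd \ol\beta|, |\ul\alpha \sd \ul\beta| < 2n$ for any $\alpha, \beta \in \P_n$, the parameters $\zeta_i$ are bounded above by $2n$ directly from their definitions; Lemma \ref{la328}, whose proof hinges on Lemma \ref{la328a} (established via the purely combinatorial Lemmas \ref{la-tt1} and \ref{la-tt5}), forces $\zeta_i \in \{1, 2n\}$; and Lemma \ref{la328b} forces $\zeta_1 = \zeta_2 = 2n$ when $q \geq 3$. The Hasse diagram in Figure \ref{fig:CongMn} then follows from Theorem \ref{thm:comparisons} applied to these restricted parameter values, with $\nabla_{\Mn}$ placed as the top element; the identification of $*$-congruences as those with $\zeta_1 = \zeta_2$ is Theorem \ref{thm:*}. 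The main obstacle---really a matter of bookkeeping---is verifying lemma by lemma that each cited result goes through for finite $X$, the infinitude of $X$ having been used in Sections \ref{sect:stage1}--\ref{sect:stage2} only to accommodate type \ref{CT2} congruences, which are absent here.
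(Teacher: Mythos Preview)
The paper does not actually prove Theorem~\ref{thm:PnBn}: it simply presents the result as a translation of \cite[Theorems~5.4 and~6.1]{EMRT2018}, with Table~\ref{tab:dict} serving as the dictionary between the two notations. Your opening sentence captures exactly this, so at that level you match the paper.

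Your alternative outline, adapting the machinery of Sections~\ref{sect:stage1}--\ref{sect:stage2} to finite $X$, is a genuinely different and more self-contained route than what the paper offers, and it is sound in substance. One small inaccuracy is worth flagging: you assert that the infinitude of $X$ in Sections~\ref{sect:stage1}--\ref{sect:stage2} is used ``only to accommodate type~\ref{CT2} congruences''. This is not quite true. The proof of Lemma~\ref{la328} explicitly begins ``Since~$X$ is infinite, we can pick $\gamma,\delta\in D_0$ such that $\zeta_1\leq|\overline{\gamma}\sd\overline{\delta}|<\aleph_0$\ldots'', and this lemma belongs to the type~\ref{CT1} analysis. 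The repair is easy---in the finite setting, once $\zeta_1>1$, Lemma~\ref{la328a} shows that \emph{every} pair $(\gamma,\delta)\in D_0\times D_0$ with $\underline{\gamma}=\underline{\delta}$ lies in $\sigma$ (since all such pairs satisfy $|\overline{\gamma}\sd\overline{\delta}|<\aleph_0$), forcing $\zeta_1=2n$ directly---but the lemma does require this modification rather than carrying over verbatim. Similarly, only the first paragraph of the proof of Lemma~\ref{la328b} (showing $\zeta_1\neq1$) is needed in the finite case; the second paragraph, which picks $2\zeta_1$ distinct elements of $X$, is both unnecessary and unavailable. With these minor adjustments noted, your outline is correct.
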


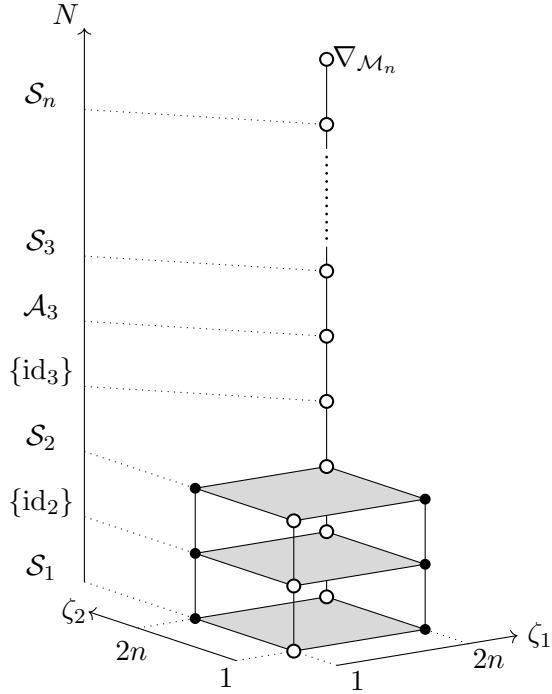
\begin{figure}[ht]
\begin{center}

\begin{tikzpicture}[scale=0.9,x={(1.2mm,0.2mm)},y={(-0.9mm,0.3mm)},z={(0mm,1.2mm)},inner sep=0.7mm]

\foreach \z in {0,1}
  {\fill [black!15,draw=black,fill opacity=1] (0,0,8*\z)--(16,0,8*\z)--(16,16,8*\z)--(0,16,8*\z)--(0,0,8*\z);
  \fill (0,0,8*\z) circle [radius=0.8mm];
  \fill (16,0,8*\z) circle [radius=0.8mm];
  \fill (16,16,8*\z) circle [radius=0.8mm];
  \fill (0,16,8*\z) circle [radius=0.8mm];
  \foreach \x in {0,1}
  \foreach \y in {0,1}
     \draw (16*\x,16*\y, 8*\z)--(16*\x,16*\y, 8*\z +8); }

\foreach \z in {2}
  {\fill [black!15,draw=black,fill opacity=1] (0,0,8*\z)--(16,0,8*\z)--(16,16,8*\z)--(0,16,8*\z)--(0,0,8*\z);
  \fill (0,0,8*\z) circle [radius=0.8mm];
  \fill (16,0,8*\z) circle [radius=0.8mm];
  \fill (16,16,8*\z) circle [radius=0.8mm];
  \fill (0,16,8*\z) circle [radius=0.8mm];
}

\draw (16,16,16) -- (16,16,43);
\draw (16,16,50+5) -- (16,16,61+5);

\foreach \x in {0,...,10} {\fill (16,16,44+\x) circle [radius=0.2mm];}

\foreach \z in {0,1,2}
{\fill (16,16,24+8*\z) circle [radius=0.8mm];}

\fill (16,16,53+5) circle [radius=0.8mm];
\fill (16,16,61+5) circle [radius=0.8mm];

\draw [->] (0,-7,0)-- (22,-7,0);
\node at (24,-8,0) {$\zeta_1$};
\draw [dotted] (0,-7,0)--(0,0,0);
\node at (-2,-13,0) {$1$};
\draw [dotted] (16,-7,0)--(16,0,0);
\node at (14,-13,0) {$2n$};

\draw [->] (-7,0,0)-- (-7,24,0);
\node at (-8,25,0) {$\zeta_2$};
\draw [dotted] (-7,0,0)--(0,0,0);
\node at (-12,-5,0) {$1$};
\draw [dotted] (-7,16,0)--(0,16,0);
\node at (-13,9,0) {$2n$};

\draw [->] (0,34,0)--(0,34,63+5);
\node at (0,37,61+8) {$N$};
\foreach \z in {0,...,2} 
  \draw [dotted] (0,34,8*\z)--(0,16,8*\z);
\foreach \z in {3,4,5} 
  \draw [dotted] (0,34,8*\z)--(16,16,8*\z);

\draw [dotted] (0,34,53+5)--(16,16,53+5);

\node at (0,41,0) {$\S_1$};
\node at (0,41,8) {$\{\id_2\}$};
\node at (0,41,16) {$\S_2$};
\node at (0,41,24) {$\{\id_3\}$};
\node at (0,41,32) {$\mathcal{A}_3$};
\node at (0,41,40) {$\S_3$};
\node at (0,41,53+5) {$\S_n$};
\node[right] at (16,16,61+5) {$\nabla_{\Mn}$};

\foreach \x in {0,8,16} {
\fill (0,0,\x) circle [radius=1.1mm];
\fill[white] (0,0,\x) circle [radius=0.8mm];
}

\foreach \x in {0,8,16,24,32,40,58,66} {
\fill (16,16,\x) circle [radius=1.1mm];
\fill[white] (16,16,\x) circle [radius=0.8mm];
}

\end{tikzpicture}

\end{center}
\caption{Hasse diagram of $\Cong(\P_n)$ and $\Cong(\PB_n)$ for $n\in\N$, $n\geq 2$.  See Theorem \ref{thm:PnBn} for more details; cf.~Figure~\ref{fig:types12} and~\cite[Figure 5]{EMRT2018}.}
\label{fig:CongMn}
\end{figure}

As a further aid, we provide a translation between Theorem \ref{thm:PnBn} and \cite[Theorems~5.4 and~6.1]{EMRT2018} in Table \ref{tab:dict}.

\begin{table}

\begin{center}
\begin{tabular}{|c|c|}
\hline
\textbf{Theorem \ref{thm:PnBn}} & \textbf{\cite[Theorems 5.4 and 6.1]{EMRT2018}}
\\ \hline
$\lambda_{1}^{\S_1}\cap\rho_1^{\S_1}=\Delta_{\Mn}$ &
$\mu_0=\Delta_{\Mn}$
\\
$\lambda_{2n}^{\S_1}\cap\rho_1^{\S_1}$ &
$\lambda_0$
\\
$\lambda_{1}^{\S_1}\cap\rho_{2n}^{\S_1}$ &
$\rho_0$
\\
$\lambda_{1}^{\{\id_2\}}\cap\rho_1^{\{\id_2\}}$ &
$\mu_1$
\\
$\lambda_{2n}^{\{\id_2\}}\cap\rho_1^{\{\id_2\}}$ &
$\lambda_1$
\\
$\lambda_{1}^{\{\id_2\}}\cap\rho_{2n}^{\{\id_2\}}$ &
$\rho_1$
\\
$\lambda_{1}^{\S_2}\cap\rho_1^{\S_2}$ &
$\mu_{\S_2}$
\\
$\lambda_{2n}^{\S_2}\cap\rho_1^{\S_2}$ &
$\lambda_{\S_2}$
\\
$\lambda_{1}^{\S_2}\cap\rho_{2n}^{\S_2}$ &
$\rho_{\S_2}$
\\
$\lambda_{2n}^N\cap\rho_{2n}^N$ &
$R_N$, $\{\id_q\}\not=N\normal\S_q$, $2\leq q\leq n$
\\
$\lambda_{2n}^{\{\id_{q}\}}\cap\rho_{2n}^{\{\id_{q}\}}$ &
$R_{q-1}$, $1\leq q\leq n$
\\
\hline
\end{tabular}

\caption{The correspondence between (non-universal) congruences on $\P_n$ and $\PB_n$ listed in Theorem \ref{thm:PnBn}
and those from \cite[Theorems 5.4 and 6.1]{EMRT2018}.}
\label{tab:dict}
\end{center}
\end{table}

\subsection{Full transformation monoids}\label{subsect:FTM}

The \emph{full transformation monoid} $\T_X$ is the monoid of all transformations of the set $X$ (i.e., all functions $X\to X$) under composition.
Congruences on full transformation monoids were classified by Mal'cev \cite{Malcev1952}.
Clifford and Preston present a very nice account of Mal'cev's results in \cite[Section 10.8]{CPbook2}, and it
has in fact to a great extent served as a motivation and a guide for our work.
An even more modern account, but restricted to the finite case, can be found in
\cite[Section~6.3]{GMbook}.
In what follows we explain how Clifford and Preston's rendering of Mal'cev's results for both the finite and infinite cases can be couched in our terminology.

As in \cite[Section 2]{EF2012},
for us a \emph{transformation} $\alpha$ on a set $X$ will be a special kind of partition, namely one in which every block has the form $A\cup\{b'\}$,
where $A\subseteq X$ and $b\in X$, including the possibility that $A=\emptyset$.
Equivalently, $\al\in\P_X$ is a transformation if and only if $\dom(\al)=X$ and $\coker(\al)=\De_X$.
The set of all such transformations is a submonoid of $\P_X$ isomorphic to the full transformation monoid $\T_X$; thus, from now on, we will identify $\T_X$ with this submonoid.
When~$X=\{1,\dots,n\}$ we write $\T_n$ for $\T_X$.

In what follows we will use all the notation developed for $\P_X$ as restricted to $\T_X$, in particular the ${\J}={\D}$-classes $D_\xi$ for $1\leq\xi\leq|X|$, ideals $I_\xi$ for $1\leq\xi\leq|X|^+$ (noting that $I_1=\emptyset$ since the minimal rank of a transformation is $1$), and relations such as
\bit
\item $R_\xi=\De_{\T_X}\cup(I_\xi\times I_\xi)$ for $1\leq\xi\leq|X|^+$, including $R_1=\De_{\T_X}$ and $R_{|X|^+}=\nabla_{\T_X}$,
\item $R_N=R_q\cup\nu_N$ for finite $1\leq q\leq|X|$ and $N\normal\S_q$, including $R_{\S_1}=\De_{\T_X}$. 
\eit
Note that $\T_X$ is not closed under the involution of $\P_X$; in fact, $\T_X$ has no involution at all, as evidenced by the fact that the bottom $\D$-class $D_1$ has more $\L$-classes than $\R$-classes, so there are no $*$-congruences to speak of.
The congruences on $\T_n$ for $n\in\N$ are listed in
\cite[Theorem~10.68]{CPbook2}, which can readily be translated into our notation as follows:

\begin{thm}\label{thm:CongTn}
The distinct congruences of the full transformation monoid $\T_n$ for $n\in \N$, $n\geq 2$, are precisely
the universal congruence $\nabla_{\T_n}$, and the congruences
$R_N$ for $N\unlhd \S_q$ $(1\leq q\leq n)$. The lattice $\Cong(\T_n)$ is a chain.
\end{thm}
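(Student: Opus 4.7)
The plan is to adapt the two-stage strategy of Sections~\ref{sect:stage1}--\ref{sect:stage2} to the finite, involution-free monoid $\T_n$. The classification is substantially simpler than for $\P_X$ or $\PB_X$: because $\T_n$ has no involution and no infinite ranks, there are no $\lambda_\zeta$, $\rho_\zeta$ or $\mu_\xi$ parameters to consider, and only congruences of type~\ref{CT1} arise.

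For the first stage, the structural facts from Subsection~\ref{subsect:stability} carry over directly: $\T_n$ is regular, every $\D$-class $D_q = \set{\alpha \in \T_n}{\rank(\alpha) = q}$ (for $1 \leq q \leq n$) equals its $\J$-class and is stable by finiteness, the ideals are $I_q = \set{\alpha}{\rank(\alpha) < q}$ (so that $I_1 = \emptyset$ and $I_{n+1} = \T_n$), and the maximal subgroups of $D_q$ are isomorphic to $\S_q$. Consequently, for each $q \in \{1, \dots, n\}$ and each $N \unlhd \S_q$, the pair $(I_q, \permdec N)$ is an IN-pair, and Lemma~\ref{lem:EMRT}\ref{it:EMRTi} then yields that $R_N = R_q \cup \nu_N$ is a congruence.

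For the second stage, I would fix a non-universal congruence $\sigma$ on $\T_n$ and set $q = \max\set{r}{R_r \subseteq \sigma}$, which is well-defined in $\{1, \dots, n\}$ since $R_1 = \Delta_{\T_n} \subseteq \sigma$ and $R_{n+1} = \nabla_{\T_n} \not\subseteq \sigma$. The crux is to show that any further identification in $\sigma$ lies within a single $\H$-class of $D_q$. To this end, the collapse argument from Lemmas~\ref{la-ty1}--\ref{la325} should transfer to $\T_n$ with only cosmetic modifications (the role of $\emptypart$ being played by any fixed constant map of rank~$1$), yielding: if $(\alpha, \beta) \in \sigma$ with $\rank(\alpha) > \rank(\beta)$, then $\sigma \supseteq R_{\rank(\alpha)}$. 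By maximality of $q$, this rules out any such pair with $\rank(\alpha) > q$; combined with Lemmas~\ref{la329} and~\ref{la330} (whose proofs carry over verbatim), it also rules out any non-trivial identification in $D_p$ for $p > q$ and any non-$\H$ identification in $D_q$. Hence $\sigma = R_q \cup (\sigma\restr_{D_q})$ with $\sigma\restr_{D_q} \subseteq {\H}$.

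Finally, the argument of Lemma~\ref{la334} identifies $\sigma\restr_{D_q}$ with $\nu_N$ for a unique $N \unlhd \S_q$, giving $\sigma = R_N$. The chain property is then immediate: the normal subgroups of each $\S_q$ form a chain, so the order $\pre$ on $\NN$ of~\eqref{eq:N_pre} is total, and one verifies directly that $R_N \subseteq R_{N'} \iff N \pre N'$, with $\nabla_{\T_n}$ sitting as the unique top element above the entire chain. The main obstacle will be the collapse step (the transformation-monoid version of Lemma~\ref{la325}), but this is a direct simplification of the partition-monoid argument and should pose no essential difficulty in the finite setting, since all the delicate set-theoretic considerations of Section~\ref{sect:tech} disappear entirely.
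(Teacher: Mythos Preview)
The paper does not prove Theorem~\ref{thm:CongTn}: it is stated purely as a translation into the paper's notation of the classical result of Mal'cev, as recorded in \cite[Theorem~10.68]{CPbook2}. No argument is given beyond the citation. Your proposal, by contrast, outlines a self-contained proof obtained by specialising the machinery of Sections~\ref{sect:stage1}--\ref{sect:stage2} to $\T_n$, and this is genuinely more than what the paper does for this statement. The strategy is sound and in fact mirrors what the paper later sketches for $\I_X$ in the proof of Theorem~\ref{thm:CongIX}.

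One caveat on your sketch: the auxiliary elements $\theta$, $\theta_1$, $\theta_2$, $\theta_3$, $\theta_4$ constructed in the proofs of Lemmas~\ref{la329}, \ref{la330} and~\ref{la334} are partitions with singleton non-transversals (i.e., partial maps), so they do not lie in $\T_n$; the proofs therefore do not carry over \emph{verbatim} as you assert. One must replace each such element by a total transformation, for instance by sending the remaining points of $X$ to a fixed element of the intended image (so that the resulting ranks and kernel structure are unaffected). This adjustment is entirely routine and causes no difficulty, but it is worth flagging, since it is exactly the kind of bookkeeping that separates $\T_n$ from $\P_n$ and $\PB_n$ in these arguments.
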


It is worthy of note that the parameters $\zeta_1$ and $\zeta_2$ play no role in the description of congruences on finite $\T_X$, and this will also be the case for infinite $X$.  This is in fact not surprising at all for $\zeta_2$, because $|\underline{\alpha}\sd\underline{\beta}|=0$ for all $\alpha,\beta\in\T_X$.  
The irrelevance of $\zeta_1$ is only a little less obvious: indeed, since transformations have no upper non-transversals, $|\overline{\alpha}\sd\overline{\beta}|<2\eta$ for all $\alpha,\beta\in I_\eta$, and hence $\lambda_\zeta^\eta=R_\eta$ for infinite $\zeta\geq\eta$.
Together, these two observations imply that $\lambda_{\ze_1}^\eta\cap\rho_{\ze_2}^\eta=R_\eta$ for infinite $\ze_1,\ze_2\geq\eta$.  (We also have $\ol\al=\{X\}$ for all $\al\in D_1=I_2$, so even the fact that $\ze_1,\ze_2$ are allowed to be $1$ when $\eta\leq2$ plays no role in the lower part of $\Cong(\T_X)$.)

When $X$ is infinite, $\T_X$ still has congruences of the form $R_N$ (for $N\normal\S_n$ with $1\leq n<\aleph_0$), but there are further congruences involving a parameter Clifford and Preston call the \emph{difference rank}.  
For $\al\in\T_X$ and $x\in X$ we write $x\alpha=y$, where $y'$ is the unique element  of $X'$ that belongs to the block of $\al$ containing $x$; for a subset $Z\subseteq X$ we write $Z\alpha=\set{ z\alpha}{ z\in Z}$.
For $\alpha,\beta\in\T_X$, let
\[
X_0=X_0(\alpha,\beta)=\set{ x\in X}{ x\alpha\neq x\beta},
\]
and then define the \emph{difference rank}
\[
\drank(\alpha,\beta)={\max}\big(|X_0\alpha|,|X_0\beta|\big).
\]
Let us immediately record the following relationship between $\drank(\alpha,\beta)$ and $|\alpha\sd\beta|$:

\begin{lemma}
\label{lem:drank}
For arbitrary $\alpha,\beta\in\T_X$ we have
\[
\drank(\alpha,\beta)\leq |\alpha\sd\beta|\leq 4\drank(\alpha,\beta).
\]
In particular, if either $\drank(\al,\be)$ or $|\al\sd\be|$ is zero or infinite, then ${\drank(\al,\be)=|\al\sd\be|}$.
\end{lemma}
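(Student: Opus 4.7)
The plan is to relate blocks of a transformation to fibres of the underlying map, and then count carefully. Recall that for any $\alpha\in\T_X$ the blocks are precisely the sets $y\alpha^{-1}\cup\{y'\}$ with $y\in X\alpha$, together with the singletons $\{y'\}$ with $y\in X\setminus X\alpha$. Consequently, a block of $\alpha$ fails to be a block of $\beta$ in exactly one of the following two ways:
\begin{itemize}
\item[(A)] it has the form $y\alpha^{-1}\cup\{y'\}$ with $y\in X\alpha$, and either $y\notin X\beta$ or $y\alpha^{-1}\neq y\beta^{-1}$;
\item[(B)] it has the form $\{y'\}$ with $y\in X\beta\setminus X\alpha$.
\end{itemize}

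For the lower bound $\drank(\alpha,\beta)\leq|\alpha\sd\beta|$, I would take an arbitrary $y\in X_0\alpha$, choose $x\in X_0$ with $x\alpha=y$, and note that $x\in y\alpha^{-1}\setminus y\beta^{-1}$, so the block $y\alpha^{-1}\cup\{y'\}$ of $\alpha$ cannot equal any block of $\beta$ containing $y'$ (nor the singleton $\{y'\}$, since it contains $x$). Distinct choices of $y$ yield distinct blocks, so this produces an injection $X_0\alpha\hookrightarrow \alpha\setminus\beta$, giving $|X_0\alpha|\leq|\alpha\setminus\beta|\leq|\alpha\sd\beta|$; by symmetry $|X_0\beta|\leq|\alpha\sd\beta|$, and hence $\drank(\alpha,\beta)\leq|\alpha\sd\beta|$.

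For the upper bound $|\alpha\sd\beta|\leq 4\drank(\alpha,\beta)$, I would show $|\alpha\setminus\beta|\leq 2\drank(\alpha,\beta)$ and then invoke symmetry for $|\beta\setminus\alpha|$. In case (A), if $y\notin X\beta$ then any $x$ with $x\alpha=y$ satisfies $x\beta\neq y=x\alpha$, so $x\in X_0$ and $y\in X_0\alpha$; and if $y\alpha^{-1}\neq y\beta^{-1}$ then any $x$ in their symmetric difference lies in $X_0$, with $y=x\alpha\in X_0\alpha$ or $y=x\beta\in X_0\beta$. In case (B), pick any $x$ with $x\beta=y$; then $x\alpha\neq y=x\beta$, so $y\in X_0\beta$. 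In both cases the block is uniquely determined by its associated $y\in X_0\alpha\cup X_0\beta$, so $|\alpha\setminus\beta|\leq|X_0\alpha|+|X_0\beta|\leq 2\drank(\alpha,\beta)$, as required. The final sentence of the lemma is then immediate, since $4\kappa=\kappa$ for $\kappa\geq\aleph_0$ and $\drank(\alpha,\beta)=0\iff\alpha=\beta\iff|\alpha\sd\beta|=0$.

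No step is genuinely hard here; the only thing to be careful about is the case analysis in the upper bound, ensuring that each block in $\alpha\setminus\beta$ really does yield a well-defined element of $X_0\alpha\cup X_0\beta$ and that the assignment is injective.
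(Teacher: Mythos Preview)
Your proof is correct and follows essentially the same approach as the paper: both arguments parametrise the blocks in $\alpha\setminus\beta$ by the unique element $y\in X$ with $y'$ in the block, and then show that each such $y$ lies in $X_0\alpha\cup X_0\beta$, giving $|\alpha\setminus\beta|\leq|X_0\alpha|+|X_0\beta|\leq 2\drank(\alpha,\beta)$. The only cosmetic difference is that you split explicitly into cases (A) and (B) according to whether $y\in X\alpha$, whereas the paper treats both at once by allowing the upper part of the $\beta$-block containing $y'$ to be empty.
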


\pf
It suffices to prove the claimed inequalities.
First note that for any $b\in X_0\al$, the block of $\al$ containing $b'$ does not belong to $\be$; it quickly follows that $|X_0\al|\leq|\al\sm\be|$.  Combining this with its dual yields
\[
\drank(\al,\be) = \max\big( |X_0\al|,|X_0\be|\big) \leq |X_0\al|+|X_0\be| \leq |\al\sm\be|+|\be\sm\al| = |\al\sd\be|,
\]
establishing the first inequality.  To prove the second, let
\[
Y = \set{b\in X}{\text{the block of $\al$ containing $b'$ does not belong to $\be$}},
\]
noting that $|Y|=|\al\sm\be|$.  Let $b\in Y$, and let the blocks of $\al$ and $\be$ containing $b'$ be $A\cup\{b'\}$ and $C\cup\{b'\}$, respectively.  Since $A\not=C$, we have either $A\not\sub C$ or $C\not\sub A$.  In the former case, $\emptyset\not=A\sm C\sub X_0$ and so $b\in X_0\al$; similarly, in the latter case we have $b\in X_0\be$.  This shows that $Y\sub X_0\al\cup X_0\be$, and so $|\al\sm\be|=|Y|\leq|X_0\al|+|X_0\be|\leq2\drank(\al,\be)$.  Adding this to its dual completes the proof.
\epf

For $\xi\in\{1\}\cup [\aleph_0,|X|^+]$ Clifford and Preston define a relation
\[
\Delta_\xi=\bigset{ (\alpha,\beta)\in\T_X\times\T_X}{ \drank(\alpha,\beta)<\xi},
\]
in terms of which \cite[Theorem 10.72]{CPbook2} asserts that, in addition to the congruences of the form~$R_N$, the remaining non-universal congruences on infinite $\T_X$ all have the form
\begin{equation}\label{eq:TX_CT2}
R_{\eta_1}\cup (\Delta_{\xi_1}\cap R_{\eta_2})\cup\dots\cup
(\Delta_{\xi_{k-1}}\cap R_{\eta_k})\cup \Delta_{\xi_k},
\end{equation}
where $k\geq1$, $\xi_k<\xi_{k-1}<\dots<\xi_1\leq\eta_1<\eta_2<\dots<\eta_k\leq |X|$,
and all $\xi_i,\eta_i$ are infinite with the possible exception of $\xi_k=1$.
By Lemma \ref{lem:drank}, it immediately follows that $\De_\xi=\mu_\xi$ for any $\xi\in\{1\}\cup [\aleph_0,|X|^+]$.
In particular, $\De_\xi$ is a congruence on $\T_X$ for any such $\xi$ (cf.~Lemma \ref{la315}), a fact that is not proved explicitly by Mal'cev, and which Semla and Sullivan note is ``not entirely obvious''; see the first footnote on p240 of their translation of \cite{Malcev1952}.  In order to avoid confusion with diagonal relations, we will continue to denote this relation by $\mu_\xi$.

In light of the above discussion (and renaming $\eta_1,\eta_2,\ldots,\eta_k$ as $\eta,\eta_1,\ldots,\eta_{k-1}$ in \eqref{eq:TX_CT2}, and defining $\eta_k=|X|^+$), it follows that \cite[Theorems 10.68, 10.72]{CPbook2} can be re-stated as follows:

\begin{thm}
\label{thm:CongTX}
The distinct congruences on the full transformation monoid $\T_X$, for $X$ infinite,
are precisely the universal congruence $\nabla_{\T_X}$, and the following:
\ben
\item
\label{it:TX1}
$R_N$, where $N\unlhd\S_n$, $n\in[1,\aleph_0)$,
\item
\label{it:TX2}
$R_\eta\cup\mu_{\xi_1}^{\eta_1}\cup\dots\cup\mu_{\xi_k}^{\eta_k}$, where
\bit
\item 
$k\geq1$, $\eta\in [\aleph_0,|X|]$, $\eta_1,\ldots,\eta_k\in [\eta,|X|^+]$, $\xi_1,\ldots,\xi_k\in \{1\}\cup[\aleph_0,\eta]$, and 
\item 
$\xi_k<\dots<\xi_1\leq \eta<\eta_1<\dots<\eta_{k-1}<\eta_k=|X|^+$.
\eit
\een
\end{thm}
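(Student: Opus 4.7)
The plan is to derive Theorem \ref{thm:CongTX} as a direct translation of Mal'cev's classical classification, as presented in \cite[Theorems 10.68, 10.72]{CPbook2}, into the notational framework of this article. Essentially all of the substantive work has been done elsewhere: the preparatory discussion preceding the theorem already fixes the relabelling of parameters, and all that will remain is to verify that the difference-rank relation $\Delta_\xi$ used by Clifford and Preston coincides with our symmetric-difference relation $\mu_\xi$ on $\T_X$, and then to do the routine bookkeeping.

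First I would show that $\Delta_\xi = \mu_\xi \cap (\T_X \times \T_X)$ for every $\xi \in \{1\} \cup [\aleph_0, |X|^+]$. For $\xi = 1$ both relations collapse to $\Delta_{\T_X}$, since $\drank(\al,\be) = 0 \iff \al = \be \iff |\al\sd\be| = 0$. For $\xi \geq \aleph_0$, Lemma \ref{lem:drank} gives
\[
\drank(\al,\be) \leq |\al\sd\be| \leq 4\,\drank(\al,\be)
\]
for all $\al,\be \in \T_X$, and since $\xi$ is infinite (so $4\ka < \xi \iff \ka < \xi$ for every cardinal $\ka$), one of these quantities is strictly less than $\xi$ if and only if the other is.

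Next I would invoke \cite[Theorems 10.68, 10.72]{CPbook2} directly to list the non-universal congruences on $\T_X$ in Clifford and Preston's original form: the congruences $R_N$ with $N \unlhd \S_n$ and $n \in [1,\aleph_0)$, and those of the form \eqref{eq:TX_CT2} subject to the stated constraints on the $\xi_i$ and $\eta_i$. Substituting $\Delta_{\xi_i} = \mu_{\xi_i}$ (so that $\Delta_{\xi_i} \cap R_{\eta_{i+1}} = \mu_{\xi_i}^{\eta_{i+1}}$) and then applying the index shift $\eta := \eta_1$, $\eta_i := \eta_{i+1}$ for $1 \leq i \leq k-1$, and $\eta_k := |X|^+$ converts the Clifford--Preston expression into
\[
R_\eta \cup \mu_{\xi_1}^{\eta_1} \cup \cdots \cup \mu_{\xi_k}^{\eta_k},
\]
with the constraints becoming exactly those displayed in part (ii).

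The main obstacle is essentially non-existent here, since the deep content rests on Mal'cev's theorem, invoked as a black box, and on Lemma \ref{lem:drank}, which has already been proved. The only subtlety worth flagging is the infinite-cardinal absorption used in the equivalence $\drank(\al,\be) < \xi \iff |\al\sd\be| < \xi$ for $\xi \geq \aleph_0$, which Mal'cev did not make explicit when asserting that $\Delta_\xi$ is a congruence; in our setting this is immediate from Lemma \ref{lem:drank} together with Lemma \ref{la315} applied in $\P_X$ and restricted to $\T_X$.
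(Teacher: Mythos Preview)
Your proposal is correct and follows essentially the same approach as the paper: the paper's ``proof'' is precisely the discussion preceding the theorem, which establishes $\Delta_\xi=\mu_\xi$ via Lemma~\ref{lem:drank}, invokes \cite[Theorems 10.68, 10.72]{CPbook2}, and performs the same index shift you describe. There is no separate proof environment in the paper for this result.
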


Theorem \ref{thm:comparisons} can easily be adapted to characterise the inclusion order on congruences of $\T_X$; we omit the details.
As another comparison between Theorems \ref{thm-main} and \ref{thm:CongTX},
one can observe the following relationships between the lattices $\Cong(\P_X)$ and $\Cong(\T_X)$.
\ben
\item\label{it:TX_PX1}
$\Cong(\T_X)$ embeds as a sublattice into $\Cong(\P_X)$, where the embedding maps any congruence of $\T_X$ listed in Theorem \ref{thm:CongTX} to the congruence of $\P_X$ with the same description in Theorem~\ref{thm-main} (noting that $R_N=\lam_{|X|^+}^N\cap\rho_{|X|^+}^N$ and $R_\eta=\lam_{|X|^+}^\eta\cap\rho_{|X|^+}^\eta$ in $\P_X$ for suitable $N$ and $\eta$).
\item\label{it:TX_PX2}
$\Cong(\T_X)$ is a quotient of $\Cong(\P_X)$.
An epimorphism $\Cong(\P_X)\rightarrow\Cong(\T_X)$
is given by mapping any congruence of $\P_X$ listed in Theorem \ref{thm-main} to the congruence of $\T_X$ with the same parameters (noting that $\lam_{\ze_1}^N\cap\rho_{\ze_2}^N=R_N$ in $\T_X$, etc., as observed above).
The kernel classes of this epimorphism are precisely the layers of $\Cong(\P_X)$ as defined in Section~\ref{sect:Hasse}.
\item\label{it:TX_PX3}
It follows from \ref{it:TX_PX1} (or \ref{it:TX_PX2}) and Theorem \ref{thm:wqo} that $\Cong(\T_X)$ is well quasi-ordered; we are not aware of any previous proof of this fact.
\item\label{it:TX_PX4}
It follows from \ref{it:TX_PX1} (or \ref{it:TX_PX2}) and Theorem \ref{thm:distr} that $\Cong(\T_X)$ is distributive.  This was already observed by Clifford and Preston in \cite[Theorem 10.77]{CPbook2}, as a consequence of the meet and join operations on $\Cong(\T_X)$ being precisely intersection and union.  The latter is not the case in $\Cong(\P_X)$; for example, the union of the congruences $\lam_1^1$ and $\rho_1^1$ is not a congruence.
\item \label{it:TX_PX5}
The congruences on infinite $\T_X$ listed in Theorem \ref{thm:CongTX} \ref{it:TX1} form a chain isomorphic to ${(\NN,{\pre})}$, as defined in Subsection \ref{subsect:mj}; thus, the structure of this part of the lattice $\Cong(\T_X)$ is independent of $|X|$, in contrast to the situation for $\P_X$ and $\PB_X$.  The remaining congruences (including $\nabla_{\T_X}$) form a lattice isomorphic to $(\cR,{\pre})$, as defined in Section~\ref{sect:reversals}.  Figure~\ref{fig:CongTX} shows the lattice $\Cong(\T_X)$ in the case that $|X|=\aleph_2$; cf.~Figures \ref{fig:Psi} and \ref{fig:CongPX}.
\een

\begin{figure}[ht]
\begin{center}
\begin{tikzpicture}[scale=0.7,rotate=270]
\foreach \x in {0,...,7} {\fill (0,\x)circle(.12);}
\draw(0,2)--(0,7.5);
\draw[dotted] (0,7.5)--(0,9);
\foreach \x/\y in {0/10,0/11} {\fill (\x,\y)circle(.12);}
\foreach \x/\y in {0/0,-1/1,1/1,-2/2,0/2,-1/3,1/3,0/4,2/4,-1/5,1/5,0/6} {\fill (0+\x*0.707,11+\y*0.707)circle(.12);}
\foreach \x/\y in {0/16.243,0/17.243} {\fill (\x,\y)circle(.12);}
\foreach \x/\y in {0/0,0/1,0/2,0/3,0/4,0/5,0/6,0/10,0/15.243,0/16.243} {\draw[->-=0.56] (\x,\y)--(\x,\y+1);}
\draw[->] (0,7)--(0,7.56);
\foreach \x/\y in {0/0,-1/1,1/1,0/2,1/3,0/4,2/4,1/5} {\draw[->-=0.56] (0+\x*0.707,11+\y*0.707)--(0+\x*0.707-0.707,11+\y*0.707+0.707);}
\foreach \x/\y in {0/0,-1/1,-2/2,0/2,-1/3,1/3,0/4,-1/5} {\draw[->-=0.56] (0+\x*0.707,11+\y*0.707)--(0+\x*0.707+0.707,11+\y*0.707+0.707);}
\end{tikzpicture}
\caption{Hasse diagram of $\Cong(\T_X)\cong\Cong(\I_X)$ when $|X|=\aleph_2$.  The inclusion relation is directed left-to-right.}
\label{fig:CongTX}
\end{center}
\end{figure}
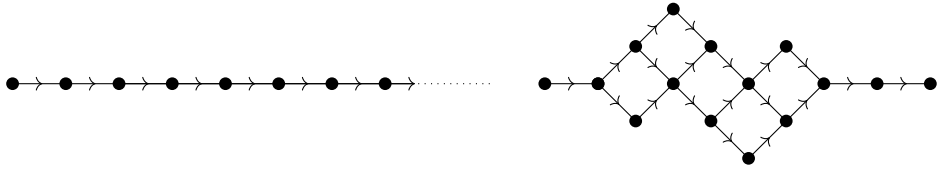

\subsection{Other monoids}\label{subsect:OTM}

Another significant monoid of transformations is the \emph{symmetric inverse monoid} $\I_X$. It consists of all \emph{partial bijections} on $X$, which for us will be partitions from $\P_X$ (indeed, from $\PB_X$) with two-element transversals and singleton non-transversals.
Equivalently, $\I_X$ consists of all partitions $\al$ satisfying $\ker(\alpha)=\coker(\alpha)=\Delta_X$; cf.~\cite[Section~2]{EF2012}.
When $X=\{1,\dots,n\}$ we write $\I_n$ for $\I_X$. 

Congruences on $\I_X$ were characterised by Liber \cite{Liber1953} using a similar approach to Mal'cev \cite{Malcev1952}; see also \cite{Scheiblich1973} for a proof using specialised techniques for inverse semigroups, and \cite[Section~6.3]{GMbook} for a recent treatment in the finite case.  Note that the involution in $\P_X$ restricts to the ordinary inversion operation in $\I_X$; since any semigroup congruence on an inverse semigroup is automatically compatible with inversion, any congruence on $\I_X$ is a $*$-congruence.  In our terminology, the results are as follows:

\begin{thm}
\label{thm:CongIn}
The distinct congruences of the symmetric inverse monoid $\I_n$ for $n\in \N$, $n\geq 2$, are precisely
the universal congruence $\nabla_{\I_n}$, and the congruences
$R_N$ for $N\unlhd \S_q$ $(1\leq q\leq n)$. The lattice $\Cong(\I_n)$ is a chain isomorphic to $\Cong(\T_n)$.
\end{thm}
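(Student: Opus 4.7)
The plan is to mimic the strategy of Section \ref{sect:stage2}, exploiting the fact that $\I_n$ is a finite, inverse submonoid of $\PB_n$ whose structure is considerably simpler. First I would verify the structural prerequisites: $\I_n$ is regular (indeed inverse), its ${\D}={\J}$-classes are $D_q = \set{\alpha\in\I_n}{\rank(\alpha)=q}$ for $0\leq q\leq n$ and are all stable with group $\H$-classes isomorphic to $\S_q$; the ideals form the chain $I_1\subset I_2\subset\cdots\subset I_{n+1}=\I_n$; and $I_2$ is retractable via $\alpha\mapsto\widehat\alpha$, with $\widehat\alpha=\emptypart$ for every $\alpha\in I_2$. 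These are routine adaptations of Lemmas \ref{la36}, \ref{la37} and \ref{la39}. Lemma \ref{lem:EMRT} then immediately produces the congruences $R_N=R_q\cup\nu_N$ for every $N\unlhd\S_q$ with $1\leq q\leq n$ (with $R_{\S_1}=\De_{\I_n}$), while $\nabla_{\I_n}$ is trivially a congruence.

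The crucial simplification for $\I_n$ is that $\ker(\alpha)=\coker(\alpha)=\Delta_X$ for every $\alpha\in\I_n$, so $\overline{\alpha}$ and $\underline{\alpha}$ are both the all-singleton partition of $X$. Consequently $|\overline{\alpha}\sd\overline{\beta}|=|\underline{\alpha}\sd\underline{\beta}|=0$ for all $\alpha,\beta\in\I_n$, so the analogues of the parameters $\zeta_1(\sigma)$ and $\zeta_2(\sigma)$ from Subsection \ref{subsect:zeta} are identically $1$; moreover $\widehat\alpha=\emptypart$ for every $\alpha\in\I_n$. Thus the $\lambda$- and $\rho$-layers collapse entirely, leaving only $\eta(\sigma)$ and (when $\eta(\sigma)\geq 2$) a normal subgroup $N(\sigma)\unlhd\S_{\eta(\sigma)}$ as the surviving invariants of an arbitrary $\sigma\in\Cong(\I_n)$.

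For such a $\sigma$, I would then follow the finite-$\eta$ program of Subsections \ref{subsect:eta}--\ref{subsect:finite_eta}. The analogue of Lemma \ref{la325} gives $(\alpha,\emptypart)\in\sigma$ for every $\alpha\in I_\eta$, so $\sigma\cap(I_\eta\times I_\eta)=I_\eta\times I_\eta$, matching the Rees part of $R_\eta$; the analogues of Lemmas \ref{la329}--\ref{la331} yield $\sigma\subseteq R_{\eta+1}$ and $\sigma\restr_{D_\eta}\subseteq{\H}$; and the analogue of Lemma \ref{la334} identifies $\sigma\restr_{D_\eta}$ with $\nu_N$ for $N=\set{\pi\in\S_\eta}{(\epsilon_\eta,\permdec\pi)\in\sigma}\unlhd\S_\eta$. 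A case split on $\eta(\sigma)\in\{0\}\cup[2,n+1]$ (noting $\eta(\sigma)\neq 1$ by Lemma \ref{la323}, and that $\eta(\sigma)=n+1$ forces $\sigma=\nabla_{\I_n}$ via the $\I_n$-analogue of Lemma \ref{lem:nabla}) recovers exactly the stated list.

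The chain structure of $\Cong(\I_n)$ then follows from Theorem \ref{thm:comparisons} \ref{it:comp1}: $R_N\subseteq R_{N'}$ iff $N\pre N'$ in the total order on $\NN$ defined in \eqref{eq:N_pre}, and the normal subgroups of each finite $\S_q$ form a chain. The isomorphism $\Cong(\I_n)\cong\Cong(\T_n)$ is immediate from Theorem \ref{thm:CongTn}, since both lattices are parameterized by the same data: a normal subgroup of some $\S_q$ with $1\leq q\leq n$, plus the top element $\nabla$. The main obstacle I anticipate is the $\I_n$-analogue of Lemma \ref{la325}, whose original proof invoked auxiliary partitions with upper or lower non-transversals that do not exist in $\I_n$; the workaround is a direct argument using the idempotents $\epsilon_Y\in\I_n$ together with the single generating pair witnessing $\eta(\sigma)>0$, exploiting that every rank-$0$ element of $\I_n$ equals $\emptypart$.
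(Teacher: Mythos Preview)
Your proposal is correct and follows essentially the same approach as the paper. Note that the paper does not actually prove Theorem~\ref{thm:CongIn}; it states it as a known result (citing Liber, Scheiblich, and Ganyushkin--Mazorchuk), and the sketch of proof that follows is for the infinite analogue Theorem~\ref{thm:CongIX}. That sketch is precisely your strategy: run through Sections~\ref{sect:stage2}--\ref{sect:tech}, checking that every auxiliary partition constructed is a partial bijection, and exploit simplifications such as $\widehat\alpha=\emptypart$.

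One small correction: the obstacle you anticipate with Lemma~\ref{la325} is not actually there. Elements of $\I_n$ do have upper and lower non-transversals---they are just all singletons. In the finite case, the chain Lemma~\ref{la-ty1} $\to$ Lemma~\ref{la-ty2} $\to$ Lemma~\ref{la324} $\to$ Lemma~\ref{la325} only constructs partitions of the form $\epsilon_Z$ and $\emptypart$, which are partial bijections. Likewise, the only part of Lemma~\ref{la329} that constructs a non-$\I_n$ partition is Subcase~2.2, but Case~2 never arises in $\I_n$ since $\ker(\alpha)=\ker(\beta)=\Delta_X$ always; so only Case~1 applies, and the $\theta$ there is $\epsilon_{\{a_1,\ldots,a_q\}}\in\I_n$. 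Similarly the $\theta$'s in Lemmas~\ref{la330} and~\ref{la334} are partial bijections when the $A_i,B_i$ are singletons. So no workaround is needed---everything goes through directly, exactly as the paper's sketch claims.
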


\begin{thm}
\label{thm:CongIX}
The distinct congruences on the symmetric inverse monoid $\I_X$, for $X$ infinite, are precisely the universal congruence $\nabla_{\I_X}$, and the following:
\ben
\item
$R_N$, where $N\unlhd\S_n$, $n\in[1,\aleph_0)$,
\item
$R_\eta \cup \mu_{\xi_1}^{\eta_1}\cup \cdots \cup \mu_{\xi_k}^{\eta_k}$,
 where 
\bit
\item 
$k\geq1$, $\eta\in [\aleph_0,|X|]$, $\eta_1,\ldots,\eta_k\in [\eta,|X|^+]$, $\xi_1,\ldots,\xi_k\in \{1\}\cup[\aleph_0,\eta]$, and 
\item 
$\xi_k<\dots<\xi_1\leq \eta<\eta_1<\dots<\eta_{k-1}<\eta_k=|X|^+$.
\eit
\een
The lattice $\Cong(\I_X)$ is isomorphic to $\Cong(\T_X)$.
\end{thm}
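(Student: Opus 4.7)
The strategy is to adapt the proof of Theorem \ref{thm-main} to the symmetric inverse monoid, exploiting a crucial simplification: for every $\alpha\in\I_X$, both $\overline{\alpha}$ and $\underline{\alpha}$ equal the all-singletons partition of $X$, so $|\overline{\alpha}\sd\overline{\beta}|=|\underline{\alpha}\sd\underline{\beta}|=0$ for all $\alpha,\beta\in\I_X$. Consequently $\lambda_\zeta$ and $\rho_\zeta$ restrict to the universal relation on $\I_X$, and $(\lambda_{\zeta_1}^\eta\cap\rho_{\zeta_2}^\eta)\cap(\I_X\times\I_X)=R_\eta$ regardless of $\zeta_1,\zeta_2$; this explains the absence of $\lambda,\rho$ parameters from the statement.

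I would begin with the structural setup. The monoid $\I_X$ is an inverse submonoid of $\PB_X$, hence regular, with Green's relations described by $\alpha\R\beta\iff\dom(\alpha)=\dom(\beta)$, $\alpha\L\beta\iff\codom(\alpha)=\codom(\beta)$, and $\alpha\J\beta\iff\rank(\alpha)=\rank(\beta)$. The ideals $I_\xi$ and $\D$-classes $D_\xi$ form the expected well-ordered chains, with $I_1=\{\emptypart\}$ as the minimal ideal, and each $D_n$ for finite $n$ stable with group $\H$-classes isomorphic to $\S_n$. The relations $R_N$ (for $N\unlhd\S_n$, finite $n$) and the unions $R_\eta\cup\mu_{\xi_1}^{\eta_1}\cup\cdots\cup\mu_{\xi_k}^{\eta_k}$ are then shown to be congruences on $\I_X$ by essentially the same arguments as Lemma \ref{lem:R_N} and Proposition \ref{prop:CT3}, with the $\lambda,\rho$ components absorbed into $R_\eta$.

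Next I would show every congruence $\sigma$ on $\I_X$ has one of the stated forms, following the pattern of Section \ref{sect:stage2}. Defining $\eta(\sigma)$ as there, the analogue of Lemma \ref{la325} asserts that every $\alpha\in\I_X$ of rank less than $\eta(\sigma)$ is $\sigma$-related to $\emptypart$ (since $\wh\alpha=\emptypart$ for all $\alpha\in\I_X$). When $\eta(\sigma)$ is finite, adaptations of Lemmas \ref{la329}, \ref{la330} and \ref{la331} force $\sigma\subseteq R_{\eta+1}$ and then $\sigma\cap(D_n\times D_n)=\nu_N$ for some $N\unlhd\S_n$, yielding $\sigma=R_N$. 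When $\eta(\sigma)$ is infinite, the reversal $\Psi(\sigma)$ is defined and the argument of Subsection \ref{subsect:infinite_eta} is replayed. The main obstacle is establishing the analogues of the technical lemmas in Section \ref{sect:tech}, particularly Lemmas \ref{lem:YZ}, \ref{la340b} and \ref{la340}; here, however, things are substantially easier than for $\PB_X$, because $\I_X$ admits only two-element transversals and singleton blocks, so the case analyses in Subsection \ref{subsect:YZ} collapse (Cases 2 and 4 of Lemma \ref{lem:tech01} vanish, leaving only the analogue of Case 3), and the constructions in Subsection \ref{subsect:PBX2} go through essentially unchanged since they already live within $\PB_X$ and preserve membership in $\I_X$ when the inputs do.

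Finally, the lattice isomorphism $\Cong(\I_X)\cong\Cong(\T_X)$ is immediate from the parametrisations: both lattices are indexed by the same combinatorial data -- either a normal subgroup $N\unlhd\S_n$ for some $n\in[1,\aleph_0)$, or a reversal $\Psi\in\cR$ equivalent to a tuple $(\eta,\xi_1,\ldots,\xi_k,\eta_1,\ldots,\eta_k)$ -- and the inclusion ordering on each is described by the same conditions, namely an adaptation of Theorem \ref{thm:comparisons} with the $\zeta_1,\zeta_2$ columns deleted. The bijection sending $R_N\mapsto R_N$ and $R_\eta\cup\mu_{\xi_1}^{\eta_1}\cup\cdots\cup\mu_{\xi_k}^{\eta_k}\mapsto R_\eta\cup\mu_{\xi_1}^{\eta_1}\cup\cdots\cup\mu_{\xi_k}^{\eta_k}$ is then readily verified to be an order isomorphism.
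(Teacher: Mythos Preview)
Your proposal is correct and follows essentially the same approach as the paper's sketch: rerun the arguments of Sections \ref{sect:stage2} and \ref{sect:tech} inside $\I_X$, checking that every auxiliary partition constructed is a partial bijection, and exploit the collapse $\wh\alpha=\emptypart$ (and, as you note, $\ol\alpha=\ul\alpha=$ all singletons) to eliminate the $\lambda,\rho$ parameters. The one small difference is that for the forward direction the paper simply observes the listed relations are restrictions to $\I_X$ of congruences already known on $\P_X$, rather than rerunning Lemma \ref{lem:R_N} and Proposition \ref{prop:CT3}; this is a shortcut you could adopt.
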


Although at least two proofs of the latter result already exist \cite{Liber1953,Scheiblich1973}, we note that the method in the current paper yields yet another:

\begin{proof}[{\bf Sketch of proof}]
That the listed relations are congruences follows from the fact that they are restrictions of their counterparts from $\P_X$.
To prove that every congruence of $\I_X$ is among those listed, one needs to go through the argument presented in Sections \ref{sect:stage2} and \ref{sect:tech} and check that it holds for $\I_X$. This is accomplished by checking that all the partitions constructed in the course of the proof are in fact partial bijections, provided that the given partitions are partial bijections to begin with.
In fact, many of the arguments simplify radically during this process,
for instance due to the fact that $\wh{\alpha}=\epsilon_{\emptyset}$ for all
$\alpha\in \I_X$.
\end{proof}

Using duality in category theory, FitzGerald and Leech introduced the \emph{dual symmetric inverse monoid} $\JJ_X$ in \cite{FL1998}; this monoid consists of all \emph{block bijections} on $X$: i.e., all bijections between quotient sets of $X$.  As in \cite{Maltcev2007} and \cite[Section 2]{EF2012}, $\JJ_X$ may be identified with the submonoid of~$\P_X$ consisting of all partitions $\al$ with ${\dom(\al)=\codom(\al)=X}$: i.e., all partitions with no non-transversals.  Congruences on finite $\JJ_X$ were classified in \cite{KM2011}, and the statement is analogous to Theorem \ref{thm:CongIn}.  It would be interesting to apply the methods in the current paper to the infinite case (which, as far as the authors are aware, has not previously been considered); note that the~$\lam/\rho$ relations would play no role here, for the same reason as in $\T_X$.  

The monoid $\F_X$ of all \emph{uniform} block bijections \cite{FL1998,Fit2003} also seems very worthy of attention; a block bijection $\binom{A_i}{B_i}$ from $\JJ_X$ is uniform if $|A_i|=|B_i|$ for all $i$.  The monoid $\F_X$ may also be characterised as the submonoid of~$\JJ_X$ generated by all idempotents and units \cite[Proposition~3.1]{FL1998}.  
While the monoid $\F_X$ has many similarities with $\JJ_X$ (and $\I_X$, $\P_X$, etc.), it has a far more complicated ideal structure; indeed, while the ideals of $\JJ_X$ form a chain, this not true in $\F_X$.  Even in the finite case, the poset of \emph{principal} ideals of $\F_n$ is isomorphic to the poset of all integer partitions of~$n$ under the reverse refinement order; cf.~\cite[Section 3]{FL1998}.
This poset is shown in Figure~\ref{fig:F5} for $n=4,5$.  Furthermore, maximal subgroups of~$\F_X$ are direct products of symmetric groups of various degrees, rather than simply being individual symmetric groups.  All of this leads to a very non-linear lattice structure.  However, computational evidence suggests the situation might be amenable to the kind of analysis carried out in this paper and in \cite{EMRT2018}.  Figure \ref{fig:F5} gives Hasse diagrams of $\Cong(\F_4)$ and $\Cong(\F_5)$, calculated using GAP \cite{GAP}. 

\begin{figure}
\begin{center}
\begin{tikzpicture}[scale=0.55]
\begin{scope}
\draw (2,10.242)--(2,6)--(-2,2)--(0,0)--(3,3)--(1,5) (2,6)--(3,5)--(-1,1) (2,2)--(0,4) (1,1)--(-1,3);
\foreach \x/\y in {-1/1,0/2,2/2,1/3,3/3,0/4,2/4,3/5,2/6,2/7.414,2/8.828} {\fill (\x,\y)circle(.12);}
\foreach \x/\y in {0/0,1/1,-2/2,1/5,2/10.242} {\fill (\x,\y)circle(.15); \fill[white] (\x,\y)circle(.1);}
\foreach \x/\y in {-1/3} {\fill (\x,\y)circle(.15); \fill[black!30] (\x,\y)circle(.1);}
\end{scope}
\begin{scope}[shift={(7,0)}]
\node (A) at (0,6.414) {\small $(1,1,1,1)$};
\node (B) at (0,4) {\small $(2,1,1)$};
\node (C) at (-2,2) {\small $(3,1)$};
\node (D) at (2,2) {\small $(2,2)$};
\node (E) at (0,0) {\small $(4)$};
\draw (A)--(B)--(C)--(E)--(D)--(B) ;
\end{scope}
\begin{scope}[shift={(17,0)}]
\draw (0,14.485)--(0,6)--(3,3)--(0,0)--(-3,3)--(0,6) (-2,4)--(1,1) (-1,5)--(2,2) (-2,2)--(1,5) (-1,1)--(2,4);
\foreach \x/\y in {-2/2,2/2,-3/3,-1/3,1/3,3/3,0/4,-1/5,1/5,0/7.414,0/8.828,0/11.657,0/13.071} {\fill (\x,\y)circle(.12);}
\foreach \x/\y in {0/14.485,0/10.242,0/6,-2/4,2/4,0/2,-1/1,1/1,0/0} {\fill (\x,\y)circle(.15); \fill[white] (\x,\y)circle(.1);}
\foreach \x/\y in {0/6,0/2} {\fill (\x,\y)circle(.15); \fill[black!30] (\x,\y)circle(.1);}
\end{scope}
\begin{scope}[shift={(24,0)}]
\node (A) at (0,8+2*.414) {\small $(1,1,1,1,1)$};
\node (B) at (0,6+.414) {\small $(2,1,1,1)$};
\node (C) at (2,4+.414) {\small $(3,1,1)$};
\node (D) at (-2,4+.414) {\small $(2,2,1)$};
\node (E) at (-2,2) {\small $(4,1)$};
\node (F) at (2,2) {\small $(3,2)$};
\node (G) at (0,0) {\small $(5)$};
\draw (A)--(B)--(C)--(E)--(G)--(F)--(D)--(B) (C)--(F) (D)--(E);
\end{scope}
\end{tikzpicture}
\end{center}
\vspace{-5mm}
\caption{Hasse diagrams of the lattice $\Cong(\F_n)$ and the poset of integer partitions of $n$, for $n=4$ (left) and $n=5$ (right).  Rees congruences are indicated by white or gray vertices for principal and non-principal ideals, respectively. }
\label{fig:F5}
\end{figure}
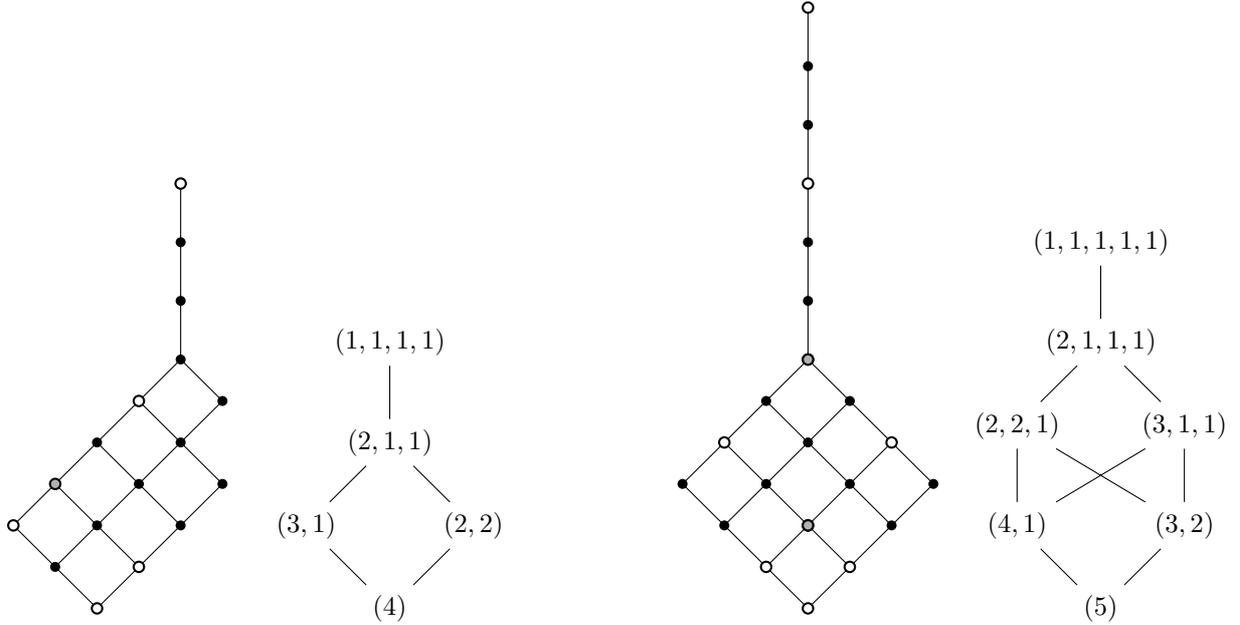

As noted in \cite[p6]{JEgrpm} and \cite[p277]{FL2011}, the \emph{partial transformation monoid} $\P\T_X$ (which consists of all partial transformations of $X$) does not canonically embed in $\P_X$ in the way that $\T_X$ and~$\I_X$ do.  Nevertheless, the methods of the current paper could certainly be adapted to recover the known description of $\Cong(\P\T_X)$ given by \v{S}utov \cite{Sutov1961}.  

Congruences on several other families of monoids could potentially be explored using the methods developed here and in \cite{EMRT2018}: examples include (finite and infinite) twisted diagram monoids \cite{DE4,BDP2002,ERtwisted1,ERtwisted2}, rook partition monoids \cite{Grood2006}, monoids of partitioned binary relations \cite{MM2013} and the submonoids of $\P_X$ and $\PB_X$ generated by all idempotents and units \cite{JEipms,EF2012,JE_IBM}.  The latter submonoids of $\P_X$ and $\PB_X$ are analogous to the submonoid $\F_X$ of $\JJ_X$ discussed above, and the elements of these submonoids may be characterised in terms of a property similar to uniformity of block bijections; see \cite[Theorem 6.1]{JE_IBM} and \cite[Theorem 33]{EF2012}.  Congruences on the corresponding submonoid of $\T_X$ were classified in \cite{MSS2000}.

There are also infinite analogues of the planar partition monoid $\PP_n$ and the Motzkin monoid~$\mathscr M_n$ considered in \cite[Section 7]{EMRT2018}.  Suppose $(X,\leq)$ is a totally ordered set.  We first extend~$\leq$ to a total order on $X\cup X'$ by further defining $x\leq y'$ for all $x,y\in X$, and ${x'\leq y'\iff y\leq x}$.  (For example, if $X=\{1,\ldots,n\}$ with the usual order, then we have $1<\cdots<n<n'<\cdots<1'$.)  We say a partition $\al\in\P_X$ is \emph{planar} if we never have $a<x<b<y$ where $a,b\in A$ and $x,y\in B$ for distinct blocks $A$ and $B$ of $\al$.  It follows from \cite[Lemma 7.1]{EMRT2018} that when $X=\{1,\ldots,n\}$ this is equivalent to there being a graphical representation of $\al$ where the edges are drawn within the rectangle spanned by the vertices and do not intersect; thus, in Figure \ref{fig:P6} for example, $\be$ is planar but~$\al$ is not.  One may show that the set~$\PP_{(X,\leq)}$ of all planar partitions is a submonoid of $\P_X$.  Note that the structure of $\PP_{(X,\leq)}$ depends crucially on the ordering on $X$, and not just its size; for example, taking $X$ to be $\N=\{0,1,2,\ldots\}$ or $\ZZ=\{\ldots,-2,-1,0,1,2,\ldots\}$ under the usual orderings, all subgroups of $\PP_{(\N,\leq)}$ are trivial, while $\PP_{(\ZZ,\leq)}$ contains infinite cyclic groups.  One may also define an infinite Motzkin monoid~$\mathscr M_{(X,\leq)}=\PP_{(X,\leq)}\cap\PB_X$; all of the partitions in Figure \ref{fig:PBX} belong to $\mathscr M_{(\N,\leq)}$.  It would be interesting to study these monoids, even in fairly ``controlled'' cases, for example when $(X,\leq)$ is well-ordered, or when $X$ is some subset of the reals or rationals under the usual order.

\subsection{Ideals}\label{subsect:ideals}

It would also be interesting to study congruences on the ideals of $\P_X$ or $\PB_X$ (or any of the other monoids discussed above), considered as semigroups in their own right.  Indeed, the current paper and \cite{EMRT2018} can be viewed as treating the ideal $I_{|X|^+}$, while the congruences on the ideal $I_1$ are easily described since $I_1=D_0$ is a rectangular band.  It is not clear whether congruences on~$I_\xi$ for $2\leq\xi\leq|X|$ will all be restrictions of congruences on $\P_X$ or $\PB_X$, or whether extra congruences can arise.

The corresponding question for congruences on ideals of full transformation semigroups was answered in 1977 by Klimov \cite{Klimov1977}.  The minimal ideal of a full transformation semigroup $\T_X$ is a right-zero semigroup of size $|X|$, so every equivalence on this ideal is a congruence.
As an application of the theory developed in \cite{Klimov1977}, it was shown that every congruence on a non-minimal ideal~$I$ of $\T_X$ is the restriction of a congruence on $\T_X$; in particular, the congruence lattice of such an ideal is isomorphic to the interval $[\De_{\T_X},R_I]$ in $\Cong(\T_X)$.  The key ingredients in the proof of this result are:
\bit
\item[(1)] Mal'cev's description of the congruences of~$\T_X$ (stated in Theorems~\ref{thm:CongTn} and~\ref{thm:CongTX} above); 
\item[(2)] the fact that every non-minimal ideal $I$ of $\T_X$ is \emph{fully reductive}, meaning that for every congruence $\si$ on $I$, and for every $\al,\be\in I$, the following implication holds:
\[
\bigset{(\ga\al\de,\ga\be\de)}{\ga,\de\in I} \sub \si \implies (\al,\be)\in\si;
\]
\item[(3)] the fact that any congruence on a fully reductive semigroup $S$ is liftable to any ideal extension of $S$.
\eit
%
%
%
%
Our main result (Theorem \ref{thm-main}) describes the congruences on infinite~$\M$, which as usual denotes either the partition monoid $\P_X$ or the partial Brauer monoid $\PB_X$.  One might then hope to deduce a description of the congruences on an arbitrary non-minimal ideal~$I_\xi$ of $\M$ by following Klimov's approach: i.e., by showing that such an ideal is fully reductive.  Intriguingly, however, it turns out that no proper ideal of infinite $\M$ is fully reductive:

\begin{prop}
If $X$ is infinite, then the only fully reductive ideal of $\M$ is $\M$ itself.
\end{prop}

\pf
Since $\M$ is a monoid, it is fully reductive.
Conversely, consider some proper ideal $I_\xi$ of $\M$, where $1\leq\xi\leq|X|$.  Let $\ze=\max(\aleph_0,\xi)$, noting that ${\aleph_0\leq\ze\leq|X|}$ and $\xi\leq\ze$.  The relation $\lam_\ze=\lam_\ze^{|X|^+}$ is a congruence on $\M$ (cf.~Lemma~\ref{la33}), so the restriction $\si=\lam_\ze\restr_{I_\xi}$ is a congruence on $I_\xi$.  
We prove the proposition by showing that there exist $\al,\be\in I_\xi$ such that
\[
\bigset{(\ga\al\de,\ga\be\de)}{\ga,\de\in I_\xi} \sub \si \qquad\text{but}\qquad (\al,\be)\not\in\si.
\]
To do so, consider any $\al,\be\in D_0$ with $|\ol\al\sd\ol\be|\geq\ze$.  Then $(\al,\be)\not\in\lam_\ze$, and so $(\al,\be)\not\in\si$.  
Now let $\ga,\de\in I_\xi$ be arbitrary.  We must show that $(\ga\al\de,\ga\be\de)\in\si$: i.e., that $|\ol{\ga\al\de}\sd\ol{\ga\be\de}|<\ze$.  Write $\ga=\partABCD$, noting that $|I|=\rank(\ga)<\xi$.  Each $C_j$ ($j\in J$) is a non-transversal of both $\ga\al\de$ and $\ga\be\de$, and so belongs to both $\ol{\ga\al\de}$ and $\ol{\ga\be\de}$.  Every other block of $\ol{\ga\al\de}$ and $\ol{\ga\be\de}$ is a union of the $A_i$.  It follows that $|\ol{\ga\al\de}\sd\ol{\ga\be\de}|\leq2|I|<2\xi\leq\ze$, completing the proof that $(\ga\al\de,\ga\be\de)\in\si$.
\epf

Thus, to describe the congruences of the ideals of diagram monoids, new techniques are required, and this is the subject of a recent work by the authors \cite{ER2020}.

\subsection*{Acknowledgements}

The first author is supported by ARC Future Fellowship FT190100632.
The second author is supported by EPSRC grant EP/S020616/1.
We thank Mikhail Volkov and Mark Sapir for useful discussions, and for drawing our attention to Klimov's paper \cite{Klimov1977}.
We also thank the referee for their careful reading of the paper, and for their valuable suggestions, especially for pointing out the second clause in Corollary \ref{cor:iso}.

\footnotesize
\def\bibspacing{-1.1pt}
\bibliography{biblio}
\bibliographystyle{abbrv}

\end{document}